\newif\ifsoldark
\newif\ifsollight
\newif\ifclassic
\newif\ifplain
\renewcommand*\backref[1]{\ifx#1\relax \else (p. #1) \fi} 
\numberwithin{equation}{section}
\theoremstyle{plain}
\newtheorem{theorem}[equation]{Theorem}
\newtheorem{lemma}[equation]{Lemma}
\newtheorem{corollary}[equation]{Corollary}
\newtheorem{proposition}[equation]{Proposition}
\theoremstyle{definition}
\newtheorem{definition}[equation]{Definition}
\theoremstyle{remark}
\newtheorem{remark}[equation]{Remark}
\newtheorem{claim}[equation]{Claim}
\newcommand{\RR}{\mathbb{R}}
\newcommand{\NN}{\mathbb{N}}
\newcommand{\CC}{\mathbb{C}}
\newcommand{\DD}{\mathbb{D}}
\newcommand{\DDt}{{\mathbb{D}_t}}
\newcommand{\rn}{{\mathbb{R}^n}}
\newcommand{\reu}{{\mathbb{R}^{n+1}_+}}
\newcommand{\ree}{\mathbb{R}^{n+1}}
\renewcommand{\div}{\textup{div}}
\newcommand{\pd}{\partial}
\newcommand{\m}{\mathcal{M}}
\newcommand{\lprn}{{L^p(\rn)}}
\newcommand{\lprni}{{L^{p,\infty}(\rn)}}
\newcommand{\ltrn}{{L^2(\rn)}}
\newcommand{\ltnu}{{L^2(\nu)}}
\newcommand{\lnrn}{{L^n(\rn)}}
\newcommand{\vp}{\varphi}
\newcommand{\vpe}{{\vp_\varepsilon}}
\DeclareMathOperator{\supp}{supp}
\newcommand{\dint}{\int\!\!\!\!\!\int}
\renewcommand{\L}{\mathcal{L}}
\renewcommand{\sl}{\mathcal{S}^{\mathcal{L}}}
\newcommand{\thto}{\Theta_{t,1}}
\newcommand{\thtm}{\Theta_{t,m}}
\newcommand{\thtmt}{\theta_{t,m-1}}
\newcommand{\thsmt}{\theta_{s,m-1}}
\newcommand{\thtmo}{\Theta_{t,m+1}}
\newcommand{\lthtm}{\theta_{t,m}}
\newcommand{\thtmb}{\Theta_{t,m}^B}
\newcommand{\thtmp}{\Theta_{t,m}^{\|}}
\newcommand{\s}{\mathbb{S}}
\renewcommand{\v}{\mathbb{V}}
\newcommand{\mntm}{\widetilde{\mathcal{N}}}
\newcommand{\nuxt}{\nu_{x,t}}
\newcommand{\nuyt}{\nu_{y,t}}
\newcommand{\tithm}{\widetilde{\Theta}_{t,m}}
\newcommand{\tit}{\widetilde{T}_t}
\newcommand{\rtt}{\widetilde{R}_t}
\newcommand{\reto}{R_t^{[1]}}
\newcommand{\rett}{R_t^{[2]}}
\newcommand{\vwt}{\dfrac{\nu(x)}{\nuxt}}
\newcommand{\cc}{\mathcal{C}}
\newcommand{\mntmo}{\widetilde{\mathcal{N}}_1}
\newcommand{\mntmt}{\widetilde{\mathcal{N}}_2}
\newcommand{\mntmq}{\widetilde{\mathcal{N}}_q}
\newcommand{\lmntmq}{\widetilde{\mathcal{N}}_q^\varepsilon}
\newcommand{\Q}{\mathcal{Q}}
\newcommand{\Qt}{\widetilde{\mathcal{Q}}}
\newcommand{\Pt}{\widetilde{P}}
\newcommand{\cxt}{{\mathcal{C}_{x,t}}}
\newcommand{\cxtd}{{\mathcal{C}^*_{x,t}}}
\def\Xint#1{\mathchoice
	{\XXint\displaystyle\textstyle{#1}}%
	{\XXint\textstyle\scriptstyle{#1}}%
	{\XXint\scriptstyle\scriptscriptstyle{#1}}%
	{\XXint\scriptscriptstyle\scriptscriptstyle{#1}}%
	\!\int}
\def\XXint#1#2#3{{\setbox0=\hbox{$#1{#2#3}{\int}$}
		\vcenter{\hbox{$#2#3$}}\kern-.5\wd0}}
\def\dashint{\Xint-}
\def\Yint#1{\mathchoice
	{\YYint\displaystyle\textstyle{#1}}%
	{\YYint\textstyle\scriptstyle{#1}}%
	{\YYint\scriptstyle\scriptscriptstyle{#1}}%
	{\YYint\scriptscriptstyle\scriptscriptstyle{#1}}%
	\!\dint}
\def\YYint#1#2#3{{\setbox0=\hbox{$#1{#2#3}{\iint}$}
		\vcenter{\hbox{$#2#3$}}\kern-.51\wd0}}
\def\longdash{{-}\mkern-3.5mu{-}} 
\def\fiint{\Yint\longdash}
\newcommand{\aqm}{a_{q,m}}
\newcommand{\ele}{E_{\lambda,\varepsilon}}
\newcommand{\dfint}{\fiint}
\newcommand{\divp}{\div_{\|}}
\newcommand{\nbp}{\nabla_{\|}}
\newcommand{\nb}{\nabla}
\newcommand{\ap}{A_{\|}}
\newcommand{\Lp}{\L_{\|}}
\newcommand{\dl}{\mathcal{D}^{\L}}
\newcommand{\vb}{\vec{b}}
\newcommand{\va}{\vec{a}}
\newcommand{\rjt}{\widetilde{R}_j}
\newcommand{\yot}{Y^{1,2}(\ree)}
\newcommand{\dyot}{Y^{1,2}(\ree)^*}
\newcommand{\yotp}{Y^{1,2}(\reu)}
\newcommand{\yotn}{{Y^{1,2}(\rn)}}
\newcommand{\dyotn}{{Y^{1,2}(\rn)^*}}
\newcommand{\yoh}{\dot{H}^{1/2}(\rn)}
\newcommand{\dyoh}{\dot{H}^{-1/2}(\rn)}
\newcommand{\sltp}{S^2_+}
\newcommand{\dltp}{D^2_+}
\newcommand{\cD}{\mathcal{D}}
\newcommand{\nablap}{\nabla_{\|}}
\newcommand{\cM}{\mathcal{M}}
\newcommand{\cA}{\mathcal{A}}
\newcommand{\dno}{D_{n+1}}
\newcommand{\tr}{\operatorname{Tr}}
\newcommand{\Tr}{\text{\normalfont Tr}}
\newcommand{\tro}{{\tr_0}}
\newcommand{\trt}{{\tr_t}}
\newcommand{\trtau}{{\tr_\tau}}
\newcommand{\F}{\mathcal{F}}
\newcommand{\cL}{\mathcal{L}}
\newcommand{\loc}{\operatorname{loc}}
\newcommand{\cn}{{\mathbb{C}^n}}
\renewcommand{\fint}{\dashint}
\newcommand{\ra}{\rightarrow}
\newcommand{\bb}{\mathbb}
\newcommand{\ep}{\varepsilon}
\newcommand{\slz}{\mathcal{S}^{\mathcal{L}_z}}
\newcommand{\tmu}{\widetilde{\mu}}
\newcommand{\dlp}{\mathcal{D}^{\L ,+}}
\newcommand{\om}{\Omega}
\newcommand{\kap}{K_{\alpha,p}}
\newcommand{\aqa}{A_{Q,\alpha}}
\def\Yint#1{\mathchoice
	{\YYint\displaystyle\textstyle{#1}}%
	{\YYint\textstyle\scriptstyle{#1}}%
	{\YYint\scriptstyle\scriptscriptstyle{#1}}%
	{\YYint\scriptscriptstyle\scriptscriptstyle{#1}}%
	\!\dint}
\def\YYint#1#2#3{{\setbox0=\hbox{$#1{#2#3}{\dint}$}
		\vcenter{\hbox{$#2#3$}}\kern-.51\wd0}}
\def\longdash{{-}\mkern-3.5mu{-}}
\newcommand{\Di}{\operatorname{(D)}}
\newcommand{\Ne}{\operatorname{(N)}}
\newcommand{\Reg}{\operatorname{(R)}}
\newcommand{\jllg}[1]{\marginpar{\scriptsize\color{jllgcolor}\textbf{JLLG:} #1}}
\setlist{nosep} 
\colorlet{citec}{blue}
\colorlet{urlc}{blue}
\colorlet{toc}{red}
\colorlet{hyperc}{blue}
\colorlet{bpcolor}{OliveGreen}
\colorlet{smcolor}{purple}
\colorlet{jllgcolor}{Red} 
\colorlet{shcolor}{Orange} 
\colorlet{sbcolor}{NavyBlue} 
\colorlet{impcolor}{ProcessBlue}
\colorlet{eqcolor}{blue}
\colorlet{lmcolor}{black}
\colorlet{propcolor}{black}
\colorlet{thmcolor}{black}
\colorlet{defcolor}{black}
\colorlet{rmcolor}{black}
\colorlet{excolor}{black}
\newcommand{\bbm}[1]{\mathbbm{#1}}
\newcommand{\n}[1]{\mathscr{#1}}
\newcommand{\Hf}{H^{\frac12}_0(\mathbb R^n)}
\newcommand{\Hfm}{H^{-\frac12}(\mathbb R^n)}
\newcommand*{\dt}[1]{%
	\accentset{\mbox{\Large\bfseries .}}{#1}}
\newcommand{\cee}{\mathbb{C}^{n+1}}
\newcommand{\real}{\operatorname{\Re e\,}}
\begin{document}
	
	\title[Critical Perturbation Theory, Part II]{Critical Perturbations for Second Order Elliptic Operators. Part II: Non-tangential maximal function estimates}
	\author{S. Bortz}
	
	\address{Simon Bortz \\
		Department of Mathematics, University of Alabama, Tuscaloosa, AL, 35487, USA}
	\email{sbortz@ua.edu} 
	
	\author{S. Hofmann}
	
	\address{Steve Hofmann \\
		Department of Mathematics, University of Missouri, Columbia, MO 65211, USA}
	\email{hofmanns@missouri.edu}
	
	\author{J. L. Luna Garcia}
	
	\address{Jos\'e Luis Luna Garcia \\
		Department of Mathematics and Statistics, McMaster University, Ontario, L8S 4K1, Canada}
	\email{jlwwc@mail.missouri.edu}
	
	\author{S. Mayboroda}
	
	\address{Svitlana Mayboroda \\
		School of Mathematics,	University of Minnesota, Minneapolis, MN 55455, USA} 
	\email{svitlana@math.umn.edu}
	
	\author{B. Poggi}
	
	\address{Bruno Poggi \\
		Department of Mathematics,	Universitat Autònoma de Barcelona, Barcelona, Catalonia}
	\email{bgpoggi.math@gmail.com}
	
	\thanks{This material is based upon work supported by National Science Foundation under Grant No.\ DMS-1440140 while the  authors were in residence at the MSRI in Berkeley, California, during the Spring 2017 semester. S. Bortz was  supported by the Simons foundation grant ``Travel support for Mathematicians'' (grant number 959861). S. Bortz and S. Mayboroda were partly supported by NSF INSPIRE Award DMS-1344235.  S. Hofmann was supported by NSF grant DMS-2000048. S. Mayboroda and B. Poggi were  supported in part by the NSF RAISE-TAQS grant DMS-1839077 and the Simons foundation grant 563916, SM.  B. Poggi was also supported by the University of Minnesota Doctoral Dissertation Fellowship, and by the European Research Council (ERC) under the European Union's Horizon 2020 research and innovation programme (grant agreement 101018680). S. Bortz would like to thank Moritz Egert for some helpful conversations. B. Poggi would like to thank Max Engelstein for some helpful discussions.}
	
\begin{abstract} This is the final part of a series of papers where we  study perturbations of divergence form second order elliptic operators $-\div A \nabla$ by first and zero order terms, whose complex coefficients lie in critical spaces, via the method of layer potentials. In particular, we show that the $L^2$ well-posedness (with natural non-tangential maximal function estimates) of the Dirichlet, Neumann and regularity problems for complex Hermitian, block form, or constant-coefficient divergence form elliptic operators in the upper half-space are all stable under such perturbations. Due to the lack of the classical De Giorgi-Nash-Moser theory in our setting, our method to prove the non-tangential maximal function estimates relies on a completely new argument: We obtain a certain weak-$L^p$ ``$N<S$'' estimate, which we eventually couple with square function bounds,  weighted extrapolation theory, and a bootstrapping argument to recover the full $L^2$ bound. Finally, we show the existence and uniqueness of solutions in a relatively broad class. 
	
As a corollary, we claim the first results in   an unbounded domain concerning the $L^p$-solvability of boundary value problems for the magnetic Schr\"odinger operator $-(\nabla-i{\bf a})^2+V$ when the magnetic potential ${\bf a}$ and the electric potential $V$ are accordingly small in the norm of a scale-invariant Lebesgue space.
\end{abstract}
	
	\maketitle

	\date{\today}

	\keywords{}

	{
		\hypersetup{linkcolor=toc}
		\tableofcontents
	}
	\hypersetup{linkcolor=hyperc}

\section{Introduction}

This is the second in a series of two papers, where we study the $L^2$ Dirichlet, Neumann and regularity problems for critical perturbations of second-order divergence-form equations by lower order terms. In the first paper \cite{bhlmp}, we obtained square function estimates and uniform $L^2$ estimates on slices of the layer potentials (see Theorem \ref{BHLMP1-Main.thrm}). In the present manuscript, we complete the $L^2$ theory for these operators, by proving the   non-tangential maximal function estimates (Theorem \ref{NTmaxestimatesthrm.thrm}), as well as the existence and uniqueness of solutions to the boundary value problems (Theorems \ref{L2Solv2.thrm} and \ref{L2Solv.thrm}).

Consider operators of the form
\begin{equation}\label{opdef.eq}
\cL :=-\text{div }(A\nabla+B_1)+B_2\cdot\nabla+V
\end{equation}
defined on $\bb R^{n+1}= \{(x,t): x\in \bb R^n, t\in \bb R\}$, $n \ge 3$, where $A=A(x)$ is an $(n+1)\times(n+1)$ matrix of $L^{\infty}$ complex coefficients, defined on $\bb R^n$ (independent of $t$) and satisfying a uniform ellipticity condition:
\begin{equation}\label{elliptic} 
\lambda|\xi|^2\leq~\real\langle A(x)\xi,\xi\rangle:=\real\sum\limits_{i,j=1}^{n+1}A_{ij}(x)\xi_j\overline{\xi_i},\qquad\Vert A\Vert_{L^{\infty}(\bb R^n)}\leq\frac1{\lambda},
\end{equation}
for some $\lambda>0$, and for all $\xi\in\bb C^{n+1}, x\in\bb R^n$. The first order complex coefficients $B_1=B_1(x),B_2=B_2(x)\in\big(L^n(\bb R^n)\big)^n$ (independent of $t$) and the complex potential $V=V(x)\in L^{\frac n2}(\bb R^n)$ (again independent of $t$) are such that
\begin{equation}\label{eq.small}
\max\big\{\Vert B_1\Vert_{L^n(\bb R^n)},~\Vert B_2\Vert_{L^n(\bb R^n)},~\Vert V\Vert_{L^{\frac n2}(\bb R^n)}\big\}\leq \rho
\end{equation}
for some $\rho$ depending on dimension and the ellipticity of $A$ in order to ensure the accretivity of the form associated to the operator $\cL$ on the space
$$Y^{1,2}(\ree) : = \big\{ u \in L^{2^*_{n+1}}(\ree): \nabla u \in L^2(\ree)\big\}$$
equipped with the norm
$$\lVert u \rVert_{Y^{1,2}(\ree)} := \lVert u \rVert_{L^{2^*_{n+1}}(\ree)} + \lVert \nabla u \rVert_{L^2(\ree)},$$
where $2^*_{n+1} := \tfrac{2(n+1)}{n-1}$ is the Sobolev exponent in $n+1$ dimensions.  Our smallness assumption on the critical norms (\ref{eq.small}) of the lower order terms is very natural in the $t$-independent setting, as it implies a small \emph{Carleson perturbation} assumption; see Remark \ref{rm.natural}.

We remark at this stage that, under the current hypotheses on the coefficients, the potential term $V$ may be absorbed into the drift terms $B_1, B_2$ by writing $V= -\div \nabla (-\Delta)^{-1/2}(-\Delta)^{-1/2}V$, and as a consequence, we will not explicitly mention the potential term in any of our estimates. A more detailed account of this reduction can be found in \cite[Lemma 2.17]{bhlmp}.

We interpret solutions of $\cL u=0$ in the weak sense; that is, $u\in W^{1,2}_{\loc}(\ree)$ is a solution of $\cL u=0$ in $\om\subset \ree$ if for every $\varphi \in C_c^\infty(\om)$, it holds that
\begin{equation}\nonumber
\dint_{\ree} \Big( (A\nabla u + B_1 u)\cdot\overline{\nabla \varphi} + B_2\cdot \nabla u \overline{\varphi} \Big) =  0.
\end{equation}

Our methods here are of a perturbative nature and we construct solutions via layer potentials. We denote $\mathcal{S}$ and $\mathcal{D}$ as the (abstract) single and double layer potentials, respectively. Further, smallness on the lower order terms $B_1$, $B_2$, and $V$ is imposed (depending on dimension and ellipticity) in order to guarantee boundedness of these layer potentials in natural Banach spaces\footnote{This means $L^p$ bounds on certain square functions or non-tangential maximal functions.}, but it is important to note that no structural assumptions are made on the matrix $A$ until we begin to prove existence and uniqueness (starting with Section 7). When we do prove existence and uniqueness, we must ensure that the ``boundary operators'' for the layer potentials associated to the operator\footnote{More generally, $L_0$ could be an operator of the same form as $L$, whose lower order terms are small enough to ensure that the  non-tangential maximal function estimates hold (see Theorem \ref{L2Solv.thrm}).}   $L_0 = \div A \nabla$ are invertible and we must also ensure that the lower order terms are small depending on dimension, ellipticity and operator norm of the inverses of the boundary operators (see Theorem \ref{L2Solv.thrm}). In special cases, where $A$ has some structural assumption, such as being Hermitian, we already know that the operator norm of the inverses of the boundary operators is uniformly bounded in terms of dimension and ellipticity so this restriction is redundant (see Theorem \ref{L2Solv2.thrm}).

The first paper in this series \cite{bhlmp} established $L^2$ square function and ``slice'' estimates for layer potential operators. The following theorem summarizes these results. We denote by $\sl$ and $\mathcal{D}^{\L,+}$ the single and double layer potentials, respectively (see Definitions \ref{def.sl} and \ref{def.dl}). 
\begin{theorem}[\cite{bhlmp}]\label{BHLMP1-Main.thrm} Let 
\[\cL :=-\div (A\nabla+B_1)+B_2\cdot\nabla+V\]
where $A, B_1,B_2, V$ are as above. There exists $\widetilde{\rho}_1>0$ depending on dimension and the ellipticity of $A$ such that if 
\[\max\big\{\Vert B_1\Vert_{L^n(\bb R^n)},~\Vert B_2\Vert_{L^n(\bb R^n)},~\Vert V\Vert_{L^{n/2}(\bb R^n)}\big\} <  \widetilde{\rho}_1,\]
then the following estimates hold for the single and double layer potentials.
\begin{enumerate}[(i)] 
	\item\label{item.est1} \[\dint_{\ree_+} \big|t^m\partial_t^{m}\nabla\mathcal{S}_t^{\cL} f(x)\big|^2\,  \frac{dx \, dt}{t} \leq C_m \| f \|_{L^2(\rn)}^2, \quad \text{for each } m \in \mathbb{N},\]
	\item\label{item.est2} \[\sup_{\tau > 0} \| \tr_\tau \mathcal{S}^{\L} f\|_{L^{\frac{2n}{n-2}}(\rn)}+ \sup_{\tau > 0}\Vert\Tr_{\tau}\nabla \mathcal{S}^{\L} f\Vert_{L^2(\bb R^n)}\le C \|f\|_{L^2(\rn)},\]
	\item\label{item.est3} \[\dint_{\ree_+} \big|t^m\partial_t^{m - 1} \nabla \mathcal{D}_t^{\cL,+} f(x)\big|^2\,  \frac{dx \, dt}{t} \leq C_m \| f \|_{L^2(\rn)}^2 \quad \text{for each } m \in \mathbb{N},\]
	\item\label{item.est4} \[ \sup_{\tau > 0} \|\tr_\tau \mathcal{D}^{\L,+} f\|_{L^2(\rn)} \le  C \| f \|_{L^2(\rn)}.\]
\end{enumerate}
Here, $C$ depends on dimension and ellipticity, while $C_m$ depends on $m$, dimension, and ellipticity.
\end{theorem}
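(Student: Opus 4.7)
My plan is to follow the layer potential machinery developed for $t$-independent divergence-form elliptic operators (in the spirit of Alfonseca--Auscher--Axelsson--Hofmann--Kim and Hofmann--Kenig--Mayboroda--Pipher), and adapt it to accommodate the first-order terms. First, using the reduction noted after (\ref{eq.small}), I would absorb the potential $V$ into the drift terms and work with $V = 0$. The accretivity of the form associated to $\cL$ on $\yot$ then gives, via Lax--Milgram, a fundamental solution $\Gamma$ for $\cL$ on $\ree$ together with the off-diagonal size and regularity controls needed to define the single and double layer potentials $\sl$ and $\cD^{\cL,+}$ as boundary integrals against $\Gamma$ (respectively its conormal derivative). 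It is convenient to record at this stage the trace theory of $\yotp$ that relates boundary values to fractional Sobolev norms on $\rn$, since it is what converts the interior estimates below into the slice statements.

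For the square function bounds (\ref{item.est1}) and (\ref{item.est3}), my approach is to recast $\cL u = 0$ in the upper half-space as a first-order system $\partial_t F = -DB F$ for a bisectorial operator $DB$ acting on $\bb C^{n+1}$-valued functions on $\rn$, where $D$ is a Dirac-type operator built from $\nabla_{\|}$ and $\div_{\|}$ and $B$ is built from $A$ together with the drift coefficients $B_1, B_2$. The key analytic input is an $L^2$-bounded $H^\infty$ functional calculus for $DB$, which yields square function estimates of the form $\int_0^\infty \| \psi(tDB) F\|_{\ltrn}^2\, \tfrac{dt}{t} \lesssim \| F\|_{\ltrn}^2$ for admissible symbols $\psi$. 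Taking $\psi(z) = z^m e^{-|z|}$ (or a similar resolvent polynomial) translates these into the desired bounds on $t^m \partial_t^m \nabla \sl f$ and $t^m \partial_t^{m-1} \nabla \cD^{\cL,+} f$. Since $B_1, B_2 \in L^n(\rn)$ enters $DB$ as a multiplicative perturbation whose operator norm on $\ltrn$ can be made small via the Sobolev embedding $\yotn \hookrightarrow L^{2n/(n-2)}(\rn)$, choosing the threshold $\widetilde{\rho}_1$ small preserves the functional calculus of the unperturbed $DB_0$ (corresponding to $-\div A\nabla$) with uniform constants by the standard perturbation lemma for bisectorial operators.

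The slice estimates (\ref{item.est2}) and (\ref{item.est4}) would then follow from the square function bounds by a Hardy-type argument: writing $\trtau u = \tro u - \int_0^\tau \partial_s u\, ds$, Cauchy--Schwarz in $s$ against the weight $ds/s$ converts the $L^2$ square function on $\partial_s u$ into a uniform-in-$\tau$ bound on $\trtau \cD^{\cL,+} f$ in $\ltrn$; the improved $L^{2n/(n-2)}$ bound on $\trtau \sl f$ is recovered by combining the uniform $L^2$ gradient slice estimate with the Sobolev embedding on $\rn$. The main obstacle throughout is the absence of De Giorgi--Nash--Moser regularity for solutions of $\cL u = 0$, which forbids any pointwise argument on $u$ or $\nabla u$ and forces every estimate to be realized at the level of the first-order formalism and its functional calculus. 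It is precisely the critical scaling in (\ref{eq.small}) that makes this viable: the $L^n$/$L^{n/2}$ norms of the lower order terms are invariant under the natural parabolic-type scaling of $\cL$, so they embed as genuine small perturbations of the principal part in the $DB$ framework without requiring pointwise control of solutions.
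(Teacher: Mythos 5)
There is a genuine gap at the heart of your argument for (\ref{item.est1}) and (\ref{item.est3}). In the first-order $DB$ framework, the matrix $B$ must act as a \emph{bounded}, accretive multiplication operator on $L^2(\rn)$-valued data (the conormal gradient), and stability of the $H^\infty$ calculus under perturbation requires smallness in the $L^2$ \emph{operator norm}. Multiplication by $B_1,B_2\in\lnrn$ (or $V\in L^{n/2}$) is not bounded on $\ltrn$ at all, so the lower order terms cannot be absorbed as a ``small multiplicative perturbation'' of the unperturbed $DB_0$: the Sobolev embedding you invoke gives $B_1u\in\ltrn$ only when one has a full derivative to spend (i.e. $u\in\yotn$), and $u$ itself is not a component of the conormal gradient vector, so the equation does not even close as $\partial_t F=-DBF$ with $B$ bounded. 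The smallness in (\ref{eq.small}) is smallness in a critical scale-invariant norm, not in operator norm on $L^2$, and this is precisely why the present series (and Part I, \cite{bhlmp}) develops the square function and slice estimates directly through layer potentials rather than through the functional calculus route of \cite{Aus-Axel-McI}, \cite{Aus-Axel-Hof}, which only accommodates perturbations of the second-order coefficients. A second, more local, flaw: your Hardy-type deduction of the slice bounds (\ref{item.est2}), (\ref{item.est4}) from the square functions via $\trtau u=\tro u-\int_0^\tau\partial_s u\,ds$ and Cauchy--Schwarz against $ds/s$ fails because $\int_0^\tau ds/s=\infty$; one needs instead the two-square-function trick (apply the fundamental theorem of calculus to $|\Theta_t f|^2$, producing a product of square functions at consecutive orders plus a fixed-height term, as in Proposition \ref{prop-prelim-vertical-max}), or an analogue of \cite[Theorem 6.17]{bhlmp}.

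For comparison, the paper does not reprove (\ref{item.est1})--(\ref{item.est2}): they are quoted from \cite{bhlmp}, where they are obtained by a direct (weighted/quasi-orthogonality) analysis of the single layer, not by a first-order reduction. The new items are handled by duality: (\ref{item.est3}) follows from the identity $\cD_t^{\cL,+}f=(\mathcal{S}_t^{\cL}\nabla)(A\vec N f)+(\mathcal{S}_t^{\cL}\overline{B}_2)(\vec N f)$ of Lemma \ref{lem-double-layer} together with the single-layer square function bounds of \cite[Theorem 1.3]{bhlmp}, and (\ref{item.est4}) then follows from (\ref{item.est3}) and \cite[Theorem 6.17]{bhlmp}. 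If you want to salvage your outline, you would need either to prove quadratic estimates for a genuinely unbounded (critical) perturbation of $DB_0$ --- which is an open-ended project, not a ``standard perturbation lemma'' --- or to fall back on the layer potential route used here.
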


Items \ref{item.est3} and \ref{item.est4} were not treated explicitly in \cite{bhlmp}. However, using the identity in Lemma \ref{lem-double-layer} of the present article, and the square function estimates for the single layer obtained in \cite[Theorem 1.3]{bhlmp}, estimate \ref{item.est3} follows. Finally, \ref{item.est4} then follows from \ref{item.est3} and \cite[Theorem 6.17]{bhlmp}.

In fact, the analysis in \cite{bhlmp} (primarily these estimates) along with the existence and uniqueness sections of this work are enough to prove the existence and uniqueness for solutions with square function estimates. On the other hand, we desire to have the  more natural non-tangential maximal function estimates for the single and double layer potentials, under (essentially) the same hypothesis as in Theorem \ref{BHLMP1-Main.thrm}. The non-tangential maximal function estimates are significantly stronger than the uniform slice estimates \ref{item.est2} and \ref{item.est4}.  This is where we place a significant amount of our effort in this work. Along the way, we will further be able to extrapolate the $L^2$ estimates to $L^p$ estimates in a window around $2$. We prove:

\begin{theorem}\label{NTmaxestimatesthrm.thrm}
Let  $\cL :=-\div (A\nabla+B_1)+B_2\cdot\nabla+V$, where $A, B_1,B_2, V$ are as above. There exist  $\rho_1 \in (0, \widetilde{\rho}_1)$ and $\ep_1>0$ depending on $n$ and $\lambda$ such that if 
\[\max\big\{\Vert B_1\Vert_{L^n(\bb R^n)},~\Vert B_2\Vert_{L^n(\bb R^n)},~\Vert V\Vert_{L^{\frac n2}(\bb R^n)}\big\} < \rho_1,\]
then the following estimates hold for each $p\in(2-\ep_1,2+\ep_1)$:
\begin{enumerate}[(i)]
	\item $\|\widetilde{N}_2(\nabla \mathcal{S}^{\L} f)\|_{L^p(\rn)} \le C \|f\|_{L^p(\rn)},$
	\item $\|\widetilde{N}_2(\mathcal{D}^{\L,+} f)\|_{L^p(\rn)} \le C \|f\|_{L^p(\rn)}$.
\end{enumerate}
Here, the constant $C$ depends only on dimension and ellipticity, and $\widetilde{N}_2$ is the modified non-tangential maximal function (see Definition \ref{def-ntmax} below).
\end{theorem}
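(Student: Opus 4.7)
The plan is to follow the blueprint announced in the abstract. Classical bounds of the form $\|\widetilde{N}_2 u\|_{L^2} \lesssim \|S u\|_{L^2}$ rely on De Giorgi--Nash--Moser estimates to convert $L^2$-averages on Whitney cubes into pointwise bounds, but that regularity is not available for $\cL$ because the drift and potential terms live only in the critical spaces $L^n$ and $L^{n/2}$. Hence the strategy splits into: (a) prove a \emph{weak}-$L^p$ ``$N<S$'' inequality for solutions of $\cL u = 0$ in $\reu$; (b) combine it with the strong $L^2$ square function bounds of Theorem \ref{BHLMP1-Main.thrm} to obtain weak-$L^p$ control of $\widetilde{N}_2$; (c) upgrade weak to strong via weighted (Rubio de Francia) extrapolation, simultaneously opening a small interval of exponents around $p=2$; (d) handle the double layer estimate (ii) via the identity in Lemma \ref{lem-double-layer} together with the trace bound (\ref{item.est4}).

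For step (a), I would establish, for $p$ slightly below $2$ and for a suitable class of solutions $u$ of $\cL u=0$ in $\reu$, an inequality of the form
$$ \| \widetilde{N}_2 (\nabla u) \|_{L^{p,\infty}(\rn)} \;\lesssim\; \| S_\alpha (\nabla u) \|_{L^p(\rn)} + (\text{controlled boundary term}), $$
where $S_\alpha$ denotes a natural conical square function. Since the usual pointwise Moser step is unavailable, the plan is to run a stopping time / good-$\lambda$ decomposition at level $\lambda$ that only uses the $L^2$-averaged information on Whitney regions that $\widetilde{N}_2$ already records, coupled with Caccioppoli's inequality (which still holds with small constant under (\ref{eq.small})) and careful bookkeeping of the Carleson-type error terms produced by $B_1$, $B_2$ and $V$. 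Passing to the weak space $L^{p,\infty}$ on the left is precisely the concession made for bypassing pointwise regularity of solutions.

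For steps (b)--(c), inserting $u = \mathcal{S}^{\L} f$ into the weak $N<S$ inequality and invoking Theorem \ref{BHLMP1-Main.thrm}(\ref{item.est1}) produces
$$ \| \widetilde{N}_2 (\nabla \mathcal{S}^{\L} f) \|_{L^{p,\infty}(\rn)} \;\lesssim\; \|f\|_{L^p(\rn)} $$
for some $p$ in a range to the left of $2$. Rubio de Francia-style extrapolation for Muckenhoupt $A_p$ weights then promotes this to a strong $L^p$ bound on a window $(2-\ep_1, 2+\ep_1)$, in particular recovering the $L^2$ conclusion. A bootstrap may be required to absorb boundary terms appearing with a smaller aperture of $\widetilde{N}_2$; this is a standard self-improvement. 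For (ii), the same scheme applied to the double layer (using item (\ref{item.est3}) in place of (\ref{item.est1})) yields control of $\widetilde{N}_2(\nabla \mathcal{D}^{\L,+} f)$; combining this with the trace bound (\ref{item.est4}) via a Hardy-type argument along vertical lines then controls $\widetilde{N}_2(\mathcal{D}^{\L,+} f)$ itself.

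The main obstacle is step (a). The classical pointwise $N<S$ inequality is a robust consequence of interior regularity for second order elliptic operators, and every known approach to it ultimately passes through Moser or De Giorgi. Fabricating a replacement that only uses $L^2$-averaged information, that is compatible with the small Carleson perturbation framework (\ref{eq.small}), and that is strong enough to survive extrapolation back to $p=2$ is the genuinely new ingredient of the argument. The remaining tasks --- verifying that the final smallness $\rho_1 < \widetilde\rho_1$ is self-consistent with the extrapolation and bootstrap constants, and organizing the $L^p$ window around $p=2$ --- are essentially bookkeeping once step (a) is implemented.
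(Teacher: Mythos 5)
Your high-level slogan matches the abstract, but the concrete plan has genuine gaps. The most serious is in steps (b)--(c): you cannot pass from the single unweighted $L^2$ square function bounds of Theorem \ref{BHLMP1-Main.thrm} plus a weak-$L^p$ ``$N<S$'' inequality to strong $L^p$ bounds on a window around $2$ by ``Rubio de Francia extrapolation.'' Extrapolation needs weighted inputs (estimates valid for a whole class of $A_p$ weights at one exponent); it does nothing with one unweighted weak-type bound. In the paper the weight machinery is applied to the \emph{square functions} themselves, not to $\widetilde N_2$: one proves weighted $L^2(\nu)$ bounds for conical and vertical square functions of the layer-potential operators via $L^r$--$L^2$ off-diagonal estimates, quasi-orthogonality, Carleson-measure arguments and restricted weight classes (Coifman--Rochberg), and those off-diagonal estimates are only available after inserting many transversal derivatives $t^m\partial_t^m$. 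The weak ``$N<S$'' estimate (Lemma \ref{lem-travelling-down-ntmax-weak-lp}) then controls $\mntmq$ of the operator with $m$ derivatives by the conical square function with $m+1$ derivatives plus $\v(t\nabla u)$, and the strong $L^p$ bound for $N$ comes from Marcinkiewicz interpolation between exponents in the window, not from extrapolating the weak bound. This derivative bookkeeping is essential and absent from your plan: the conical square function cannot be ``traveled down'' by integration by parts alone (the spatial average obstructs it); Lemma \ref{lem-traveling-down-conical} needs the non-tangential bound at the \emph{same} level $m$, while $N$ at level $m$ needs $S$ at level $m+1$, whence the two-step induction of Theorem \ref{thm-ntmax-estimates-final-version}. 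Relatedly, your step (a) as a good-$\lambda$ argument ``only using Caccioppoli'' does not close: the paper's sawtooth/stopping-time argument requires the uniform $L^p$ slice bounds (Corollary \ref{cor-lp-slices-estimates-grad-st-grad}) and, crucially, the estimate $\sup_{t>0}\|u(\cdot,t+\varphi(\cdot))\|_{L^p(\rn)}\lesssim\|\v(t\nabla u)\|_{L^p(\rn)}$ on Lipschitz graphs (Lemma \ref{lem-slices-on-Lipschitz-graph}), both of which are products of the very $L^p$ square-function theory you are trying to bypass, so (a) and (b)--(c) cannot be decoupled as you propose.

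Step (d) is also incorrect as stated: the bound $\|\widetilde N_2(\nabla\mathcal D^{\cL,+}f)\|_{L^p(\rn)}\lesssim\|f\|_{L^p(\rn)}$ is false even for the Laplacian, since it has the scaling of a regularity estimate with data merely in $L^p$; the factor $t$ in item (iii) of Theorem \ref{BHLMP1-Main.thrm} is exactly what makes the square function scale correctly, and it is absent in $\widetilde N_2(\nabla\mathcal D^{\cL,+}f)$. The paper instead bounds $\widetilde N_2(\mathcal D^{\cL,+}_t f)$ directly: the duality representation of Lemma \ref{lem-double-layer} transfers square function and slice estimates to $t\nabla\dl_t f$, the reverse H\"older inequality for solutions gives the averaged improvement of Proposition \ref{prop-improvement-ntmax}, and the same weak ``$N<S$'' lemma and traveling-down induction are run with $u=\dl_t g$, with no Hardy-type integration along vertical lines needed.
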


The idea to proving Theorem \ref{NTmaxestimatesthrm.thrm} begins with a weak ``$N<S$'' result; namely, we show a  weak-$L^p$ bound ($L^{p,\infty}$ bound) for the non-tangential maximal function in terms of the $L^p$ norms of the vertical and conical square functions (see Lemma \ref{lem-travelling-down-ntmax-weak-lp}). Then, interpolation will show that Theorem \ref{NTmaxestimatesthrm.thrm} holds   provided that the vertical and conical square functions are bounded in $L^p$ for an open interval (in $p$) around $2$. The starting point for obtaining such bounds  for the square functions  is to prove {\it general} bounds for operators with sufficient off-diagonal decay which satisfy a local reverse-H\"older inequality using the extrapolation theory from weighted norm inequalities \cite{rdf84, gcrdf85, drdf86, cump, cump12} (see Lemmas \ref{lem-extrapolation-conical} and \ref{lem-extrapolation-vertical}). Arguments similar to ours have been used in \cite{pa19} to treat square function estimates for operators built out of the heat or Poisson semigroups associated to an elliptic operator; however, in our case we must grapple with the added difficulty of having very mild off-diagonal decay. On the other hand, the local energy inequality for the equation (the Caccioppoli inequality) allows us to obtain the necessary off-diagonal decay for related operators with added (transversal) derivatives. Having done so, our remaining task is to ``remove'' these additional derivatives, a process which we call ``traveling down''. Due to its definition, this process for the vertical square function is a relatively simple integration by parts computation. For the conical square function, the additional spatial average impedes the simple integration by parts and our argument for this object requires the boundedness of the non-tangential maximal function with the {\it same} number of derivatives. Luckily, our Lemma \ref{lem-travelling-down-ntmax-weak-lp}, when combined with Proposition \ref{prop-improvement-ntmax}, gives that the non-tangential maximal function bounds (for this family of operators) depend on square functions with {\it more}\footnote{Note that in Lemma \ref{lem-travelling-down-ntmax-weak-lp}, we may use that  $\| \v(\Theta_{t,1} f)\|_{L^p(\rn)} \lesssim_{m} \| \v(\Theta_{t,m+1} f)\|_{L^p(\rn)}$, by the aforementioned integration by parts argument. The subscript $m$ refers to the number of transversal derivatives.} derivatives. This allows us to employ a two-step induction scheme where one alternates between bounding the $L^p$ norm for a non-tangential maximal function by the $L^p$ norm of square functions (with more derivatives) and then bounding the $L^p$ norm of the conical square function by the $L^p$ norm of a non-tangential maximal function (with the same number of derivatives). Thus, in finitely many steps, we remove these additional derivatives. (Recall we can start this process, that is, obtain $L^p$ bounds for the vertical and conical square functions, by introducing enough transversal derivatives.)

With the square function and non-tangential maximal function bounds for layer potentials in hand, we turn our attention to the solvability of the following boundary value problems with data in $L^p$ spaces:
We consider the Dirichlet problem  
\begin{equation}\label{eq.d2}
	\Di_p \begin{cases}
		\cL u = 0\qquad\text{in } \ree_+,\\[1mm]
		\lim_{t \to 0}u(\cdot ,t) = f\quad\text{strongly in } L^p(\rn) \text{ and }\quad u\longrightarrow f\quad\text{non-tangentially}\footnotemark, \\[1mm]
		\mntm_2 u\in L^p(\bb R^n),
	\end{cases}
\end{equation}
\footnotetext{Since the solutions $u$ do not satisfy pointwise bounds, non-tangential convergence is also understood in an averaged sense; see Definition \ref{def-ntmax}.} the Neumann problem
\begin{equation}\label{eq.n2}\Ne_p \begin{cases}
		\cL u = 0\qquad\text{in } \ree_+, \\[1mm]
		\tfrac{\partial u}{\partial \nu^{\cL}}: = -e_{n+1}(A\nabla u + B_1u)(\cdot,0) = g \in L^p(\rn),\footnotemark \\[1mm]
		\mntm_2 (\nabla u)\in L^p(\bb R^n), 
	\end{cases}
\end{equation}
\footnotetext{The boundary data is achieved in the distributional sense, see Section 2.} 
and the regularity problem
\begin{equation}\label{eq.r2}\Reg_p \begin{cases}
		\cL u = 0 \qquad\text{in } \ree_+,\\[1mm]
		u(\cdot, t) \to f\qquad\text{weakly in } Y^{1,p}(\rn) \text{ and non-tangentially},\\[1mm]
		\mntm_2 (\nabla u)\in L^p(\bb R^n), 
	\end{cases}
\end{equation}

\begin{remark}
	At this stage we would like to point out a couple of things related to the definition above. First, we chose to state the boundary value problems in terms of the (modified) nontangential maximal function as this is typically the quantity of interest. Second, if the solution $u$ is given by layer potentials (as will always be the case for us), appropriate square function estimates are \textit{always} available, regardless of solvability (see Remark \ref{rm.more} and Theorem \ref{BHLMP1-Main.thrm}).
\end{remark}
 
We are ready to state the main result of this series of articles. 
\begin{theorem}\label{L2Solv2.thrm}
Let $\cL_0$ be a divergence form operator of the form
\[\cL_0 :=-\div A_0 \nabla,\]
where $A_0$ is either Hermitian, block form or constant. Then there exist  $\rho_0 > 0$ and $\ep_0>0$ depending only on dimension and ellipticity such that if
\[\cL_1 = -\div ((A + M)\nabla+B_1)+B_2\cdot\nabla+V,\]
and
\[ \max\big\{\|M\|_{L^\infty(\rn)}, \Vert B_1\Vert_{L^n(\bb R^n)},~\Vert B_2\Vert_{L^n(\bb R^n)},~\Vert V\Vert_{L^{\frac n2}(\bb R^n)}\big\}< \rho_0 \]
then for each $p\in(2-\ep_0,2+\ep_0)$, the problems $\Di_p$,  $\Ne_p$, and $\Reg_p$ are uniquely\footnote{See Remark \ref{amongunique.rmk}.}  solvable\footnote{Solvability throughout this paper means that we have accompanying $L^p$ bounds for the non-tangential maximal function.} for the operator $\cL_1$, and the solutions can be represented by layer potentials.
\end{theorem}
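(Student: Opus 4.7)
The plan is to deduce Theorem \ref{L2Solv2.thrm} from the more general solvability criterion Theorem \ref{L2Solv.thrm} (which, I would expect, reduces well-posedness for the perturbed operator $\cL_1$ to the uniform invertibility on $L^p$ of the boundary layer potential operators associated to a base operator $\cL_0$). Accordingly, my first step is to verify that the three structural cases listed (Hermitian, block, constant-coefficient) already enjoy such uniform invertibility on $L^2(\rn)$: namely, that the Dirichlet, Neumann and regularity boundary operators $\tfrac12 I + K^{\cL_0}$, $-\tfrac12 I + K^{*,\cL_0}$, and $\nabla_{\|}\mathcal{S}^{\cL_0}_0$ for $\cL_0=-\div A_0\nabla$ are isomorphisms on $L^2(\rn)$ with operator norms for their inverses controlled solely by dimension and the ellipticity constant. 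In the Hermitian case this is contained in the solution of the Kato problem and the work of Auscher--Axelsson--McIntosh (and Hofmann--Mitrea--Morris); the block case reduces to the Kato conjecture itself; and the constant-coefficient case is classical. Crucially, in each case the operator norm of the inverses is \emph{universal}, so there is no circular dependence of $\rho_0$ on the base operator.

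Second, I would use Theorem \ref{NTmaxestimatesthrm.thrm} together with the square-function/slice bounds of Theorem \ref{BHLMP1-Main.thrm} to see that, whenever the smallness threshold in \eqref{eq.small} holds, the layer potentials $\mathcal{S}^{\cL_1}$ and $\mathcal{D}^{\cL_1,+}$ are bounded operators with the right $\widetilde{N}_2$ and $L^p$ trace bounds, for $p$ in a window $(2-\ep_0, 2+\ep_0)$ around $2$. Then I would establish the perturbation estimate
\[
\bigl\| T^{\cL_1} - T^{\cL_0} \bigr\|_{L^p(\rn)\to L^p(\rn)} \;\lesssim\; \|M\|_{L^\infty} + \|B_1\|_{L^n} + \|B_2\|_{L^n} + \|V\|_{L^{n/2}},
\]
for each of the four boundary operators $T = \tfrac12 I \pm K$, $\tfrac12 I \pm K^*$, $\nabla_{\|}\mathcal{S}_0$, by comparing the resolvent identity $\cL_1^{-1} - \cL_0^{-1} = \cL_1^{-1}(\cL_0 - \cL_1)\cL_0^{-1}$ against the critical Sobolev embeddings (the $L^n, L^{n/2}$ norms sitting exactly at the scaling of the embeddings used in \cite{bhlmp}). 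By the universal bound on $\|({T^{\cL_0}})^{-1}\|$ and a standard Neumann series, this makes each $T^{\cL_1}$ invertible on $L^2(\rn)$ for $\rho_0$ small enough depending only on $n$ and $\lambda$.

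Third, I would extend this invertibility to $L^p$ for $p$ close to $2$. The cleanest route is the Šne\u\i berg stability lemma: since the family $\{T^{\cL_1}\}$ acts boundedly on $L^p$ for $p$ in a full window around $2$ (by the extrapolation built into Theorem \ref{NTmaxestimatesthrm.thrm}), and is invertible at $p=2$, it remains invertible on an open sub-window $(2-\ep_0, 2+\ep_0)$, possibly after shrinking $\ep_0$. With invertibility in hand, solutions are produced directly: for $\Di_p$ with data $f$, set $u = \mathcal{D}^{\cL_1,+}\bigl( (\tfrac12 I + K^{\cL_1})^{-1} f\bigr)$; for $\Ne_p$ with data $g$, set $u = \mathcal{S}^{\cL_1}\bigl((-\tfrac12 I + K^{*,\cL_1})^{-1} g\bigr)$; and for $\Reg_p$ with data $f$, use $u = \mathcal{S}^{\cL_1}\bigl( (\nabla_{\|}\mathcal{S}^{\cL_1}_0)^{-1}\nabla_{\|} f\bigr)$ (interpreted appropriately). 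The non-tangential estimates for $u$ then follow directly from Theorem \ref{NTmaxestimatesthrm.thrm}, the boundary traces from part (ii) and (iv) of Theorem \ref{BHLMP1-Main.thrm}, and the non-tangential convergence from standard density arguments combined with the jump relations for $\mathcal{D}^{\cL_1,+}$ and $\nabla \mathcal{S}^{\cL_1}$.

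The main obstacle is the uniqueness half of the statement: one must show that \emph{any} $u$ with $\cL_1 u = 0$ and $\mntm_2 u \in L^p$ (or $\mntm_2(\nabla u)\in L^p$) matching zero boundary data must vanish. My plan is to prove a Green-type representation formula for solutions in the uniqueness class, of the form $u = \mathcal{D}^{\cL_1,+}(\trt u) - \mathcal{S}^{\cL_1}(\partial_{\nu^{\cL_1}} u)$, valid at $\tau>0$ and then passed to the boundary using the $L^p$ control on $\widetilde{N}_2$ and $\widetilde{N}_2(\nabla\cdot)$ to justify the limit. Once such a formula is in place, zero boundary data forces $u=0$ via the already-established invertibility of the boundary operators. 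The delicate point here is justifying the Green formula in the absence of De Giorgi--Nash--Moser regularity, which is exactly why the averaged/modified non-tangential maximal function $\widetilde{N}_2$ is built into the problem formulation and why the slice bounds of Theorem \ref{BHLMP1-Main.thrm}(ii),(iv) are indispensable: they give the $L^p$ continuity of the trace maps needed to pass to the boundary limit in the interior representation formula.
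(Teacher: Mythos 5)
Your proposal is correct and follows essentially the same route as the paper: reduce to the general criterion of Theorem \ref{L2Solv.thrm} (invertibility of the boundary operators, known with universal bounds for Hermitian, block and constant $A_0$), perturb the boundary operators using the smallness of $M, B_1, B_2, V$, extend invertibility to $p$ near $2$ via \v{S}ne\u{\i}berg together with the $L^p$ bounds from Theorem \ref{NTmaxestimatesthrm.thrm}, produce solutions by layer potentials, and prove uniqueness through a Green-type representation formula passed to the boundary in the good solution classes. The only difference is cosmetic: where you estimate the difference of boundary operators directly from the resolvent identity and run a Neumann series, the paper packages the same perturbation step as analyticity of the family $z\mapsto \cL_0 - z\cM$ plus a Cauchy-integral Lipschitz bound (Theorem \ref{invertibility.thm}), which yields the same conclusion.
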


Remark that the previous theorem gives the first solvability results for boundary value problems with control on the (modified) non-tangential maximal function for second-order elliptic operators with complex-valued lower-order terms.

Our most general theorem concerning   boundary value problems with $p=2$, is as follows\footnote{Theorem \ref{L2Solv.thrm} also has an appropriate analogue for $p$ sufficiently near $2$, but then the boundary operators have different domains and ranges; see Section \ref{sec.lp}.}.
\begin{theorem}\label{L2Solv.thrm}
Let $\cL_0$ be an operator of the form 
\[\cL_0 :=-\div (A\nabla+B_1)+B_2\cdot\nabla+V\]
where $A, B_1,B_2, V$ are as above and 
\[\max\big\{\Vert B_1\Vert_{L^n(\bb R^n)},~\Vert B_2\Vert_{L^n(\bb R^n)},~\Vert V\Vert_{L^{n/2}(\bb R^n)}\big\} <   \rho_1,\]
where $\rho_1$ is as in Theorem \ref{NTmaxestimatesthrm.thrm}. Suppose further that the associated boundary operators\footnote{See Section \ref{existence.sect} for the definitions of the operators $\sl_0$, $K$, and $\widetilde K$.}
\begin{gather}\nonumber
\mathcal{S}^{\cL_0}_0: L^2(\rn) \to Y^{1,2}(\rn), \pm\frac{1}{2}I+ \widetilde{K}^{\cL_0}: L^2(\rn) \to L^2(\rn), 
\\ \mp\frac{1}{2}I + K^{\cL_0}: L^2(\rn) \to L^2(\rn)
\end{gather}
are all invertible. Then the boundary value problems $\Di_2, \Ne_2, \Reg_2$ are uniquely\footnote{Again, see Remark \ref{amongunique.rmk}.} solvable for the operator $\cL_0$, with solutions given by the appropriate layer potentials. 

Moreover, there exists $\rho = \rho(\cL_0) > 0$ such that if
\[\cL_1 = -\div (\widetilde{A}\nabla+\widetilde{B}_1)+ \widetilde{B}_2\cdot\nabla+\widetilde{V}\]
with $\widetilde A, \widetilde{B}_1,\widetilde{B}_2, \widetilde{V}$ as above and satisfying
\[\max\big\{\Vert \widetilde{A} - A\Vert_{L^\infty(\rn)}, \Vert \widetilde{B}_1 - B_1\Vert_{L^n(\bb R^n)},~\Vert \widetilde{B}_2 - B_2\Vert_{L^n(\bb R^n)},~\Vert \widetilde{V} - V\Vert_{L^{\frac n2}(\bb R^n)}\big\}\leq   \rho,\]
then the   boundary operators, $\mathcal{S}^{\cL_0}_0, \pm\frac{1}{2}I+ \widetilde{K}^{\cL_0},  \mp\frac{1}{2}I + K^{\cL_0}$, are invertible 
and the   problems $\Di_2, \Ne_2, \Reg_2$ are uniquely solvable for the operator $\cL_1$, with the corresponding layer potential representations.

Here, the constant $\rho(\cL_0)$ is chosen with two constraints. The first is to ensure that $\widetilde{A}$ has ellipticity constant less than twice that for $A$ and
\[\max\big\{\Vert \widetilde{B}_1\Vert_{L^n(\bb R^n)},~\Vert \widetilde{B}_2\Vert_{L^n(\bb R^n)},~\Vert \widetilde{V}\Vert_{L^{\frac n2}(\bb R^n)}\big\} <  \rho'_1\]
where $\rho'_1$ is as in Theorem \ref{NTmaxestimatesthrm.thrm} for matrices with ellipticity twice that of $A$. The second constraint depends on the operator norms of the inverses of $\mathcal{S}^{\cL_0}_0, \pm\frac{1}{2}I+ \widetilde{K}^{\cL_0},  \mp\frac{1}{2}I + K^{\cL_0}$.
\end{theorem}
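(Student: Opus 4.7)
The plan is a two-step strategy: first solve the boundary value problems for $\cL_0$ directly via layer potentials, then extend to $\cL_1$ by a perturbation argument.

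For the operator $\cL_0$, solutions are produced by layer potentials precomposed with the inverses of the boundary operators. For $\Di_2$ with datum $f$, set $u := \mathcal{D}^{\cL_0,+}\big((-\tfrac12 I + K^{\cL_0})^{-1}f\big)$; for $\Ne_2$ with datum $g$, set $u := \mathcal{S}^{\cL_0}\big((-\tfrac12 I + \widetilde K^{\cL_0})^{-1}g\big)$; and for $\Reg_2$ with datum $f$, set $u := \mathcal{S}^{\cL_0}\big((\mathcal{S}^{\cL_0}_0)^{-1}f\big)$. Each solves $\cL_0 u = 0$ in $\reu$ by construction, and the hypothesized invertibility of the boundary operators ensures the density lies in $L^2(\rn)$. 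Theorem \ref{NTmaxestimatesthrm.thrm} then yields the required $\widetilde{\mathcal{N}}_2$ bounds on $u$ or $\nabla u$, while the jump relations --- to be developed in Section \ref{existence.sect} --- guarantee that the boundary data are attained in the senses demanded by the definitions of $\Di_2$, $\Ne_2$, $\Reg_2$. For uniqueness, given a solution with finite $\widetilde{\mathcal{N}}_2$ norm and vanishing boundary data, I would employ a Green's-type representation formula to show that $u$ coincides with an appropriate layer potential of its boundary trace; injectivity of the boundary operators (implied by invertibility) then forces the density, and hence $u$ itself, to vanish. The precise solution class in which uniqueness holds is the one to be clarified in Remark \ref{amongunique.rmk}.

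For the perturbation statement, the goal is to show that the three boundary operators depend Lipschitz-continuously on the coefficients in the $L^\infty \times L^n \times L^n \times L^{n/2}$ topology, so that the open set of invertible operators is preserved under small perturbations via a standard Neumann-series argument. The natural tool is a second-resolvent-type identity
\begin{equation*}
\mathcal{S}^{\cL_1} - \mathcal{S}^{\cL_0} \;=\; -\,\mathcal{S}^{\cL_1}(\cL_1 - \cL_0)\mathcal{S}^{\cL_0},
\end{equation*}
valid formally at the distributional level, with an analogous formula for $\mathcal{D}$. Decomposing $\cL_1 - \cL_0$ into its matrix, first-order, and zero-order parts and pairing via duality, each resulting term pairs a gradient of an $\cL_0$-layer potential against a gradient of an $\cL_1$-adjoint layer potential: the matrix term is bounded in $L^\infty \cdot L^2 \cdot L^2$, while the first- and zero-order terms fall into tent-space pairings between square functions (Theorem \ref{BHLMP1-Main.thrm}) and non-tangential maximal functions (Theorem \ref{NTmaxestimatesthrm.thrm}), after absorbing the critical $L^n$ or $L^{n/2}$ norms by Sobolev embedding. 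The two constraints on $\rho(\cL_0)$ reflect this scheme: the first ensures that the ellipticity and critical-smallness hypotheses of Theorem \ref{NTmaxestimatesthrm.thrm} are satisfied for $\cL_1$, while the second controls the operator-norm perturbation so that the Neumann series for each inverse converges. Once invertibility of the perturbed boundary operators is secured, the first part of the theorem applies verbatim to $\cL_1$.

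The principal obstacle is precisely the operator-norm continuity of the boundary operators in the coefficient topology. In contrast with the uniformly-continuous-coefficient real-valued setting, the lower-order terms here are \emph{critical} --- their smallness is detected only through integral inequalities --- and the absence of De Giorgi--Nash--Moser estimates precludes any pointwise control of solutions. The identification of the correct bilinear pairing, against which the layer potentials viewed as operators on densities can be controlled, is the key technical point. Conveniently, the tent-space duality already underlying Theorem \ref{NTmaxestimatesthrm.thrm} is exactly what is needed: square functions and non-tangential maximal functions are dual partners under the $L^2(\rn)$ pairing, and this duality resolves each perturbation term in the Lipschitz bound for the boundary operators.
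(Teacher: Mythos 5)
Your existence and uniqueness outline coincides with the paper's: the same layer-potential ansatz is used in Theorem \ref{existence.thm}, and uniqueness is run exactly as you sketch, via a Green's representation of good solutions and injectivity of the boundary operators (Lemmas \ref{NRuniquelem3.lem} and \ref{Duniquelem3.lem}, Theorems \ref{Dgooduniquethrm.thrm}--\ref{Rgooduniquethrm.thrm}), although carrying this out takes real work (mapping properties into the slice spaces $S^2_+$, $D^2_+$, Proposition \ref{commutingtderivativesandlayer.prop}, and a Mazur-lemma argument to compensate for the merely weak convergence of traces and conormals). One sign slip: for $\Ne_2$ in $\reu$ the jump relation gives $\partial_{\nu}^{\cL_0,+}\mathcal{S}^{\cL_0}g=\big(\tfrac12 I+\widetilde K^{\cL_0}\big)g$, so the ansatz is $\mathcal{S}^{\cL_0}\big((\tfrac12 I+\widetilde K^{\cL_0})^{-1}g\big)$, not $(-\tfrac12 I+\widetilde K^{\cL_0})^{-1}g$.

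The genuine gap is in the perturbation step. You propose to obtain operator-norm Lipschitz dependence of $\mathcal{S}^{\cL}_0$, $K^{\cL}$, $\widetilde K^{\cL}$ on the coefficients from the resolvent-type identity, bounding the matrix term by ``$L^\infty\cdot L^2\cdot L^2$'' and the lower-order terms by square-function/non-tangential pairings. But for a density $g\in \ltrn$ the gradient $\nabla\mathcal{S}^{\cL_0}g$ is \emph{not} in $L^2(\reu)$: the available information is the slice bound $\sup_{t}\|\nabla\mathcal{S}^{\cL_0}_t g\|_{\ltrn}\lesssim\|g\|_{\ltrn}$ and the square-function bound for $t\partial_t\nabla\mathcal{S}^{\cL_0}_t g$. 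Hence the pairing $\iint_{\reu} M\,\nabla\mathcal{S}^{\cL_0}g\cdot\overline{\nabla\Phi}$ cannot be closed by Cauchy--Schwarz over $\reu$, and for the $L^\infty$ perturbation $M$ there is no factor of $t$ with which to set up the tent-space duality you invoke; one would have to integrate by parts in $t$ and essentially rerun the square-function machinery for differences of operators, none of which is indicated. The paper sidesteps this entirely in Theorem \ref{invertibility.thm}: it embeds $\cL_0,\cL_1$ in the analytic family $\cL_z=\cL_0-z\cM$, uses the Neumann series for $\cL_z^{-1}$ on $\yot$ together with the \emph{uniform} $L^2(\rn)$ bounds on slices and conormals for the whole family (uniform because every $\cL_z$, $|z|$ small, satisfies Hypothesis A with comparable parameters), upgrades weak analyticity to analyticity with values in $\mathcal{B}(L^2(\rn))$ via Kato's theorem, and then extracts the Lipschitz bound for $z\mapsto K_z,\widetilde K_z,\mathcal{S}^{\cL_z}_0$ from the Cauchy integral formula, after which the Neumann-series invertibility argument applies. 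Without this analyticity device, or a genuinely quantitative substitute for the bilinear estimates you assert, your argument for invertibility of the perturbed boundary operators does not close.
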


The solvability result for Theorem \ref{L2Solv.thrm} is proved in Theorem  \ref{existence.thm}, while the uniqueness results are argued in Theorems \ref{Dgooduniquethrm.thrm}, \ref{Ngooduniquethrm.thrm}, and \ref{Rgooduniquethrm.thrm}. These theorems together also resolve  the case $p=2$ of Theorem \ref{L2Solv2.thrm}. The case $p\neq2$ of Theorem \ref{L2Solv2.thrm} will be addressed in Section \ref{sec.lp}.

Theorem \ref{L2Solv.thrm} is modeled after the results in \cite{AAAHK}, where a purely second order perturbation theory is developed, while in the presence of De Giorgi-Nash-Moser estimates for solutions of $\L$ and $\L^*$. We remark that it is only the failure of these estimates that prevents the application of the results in \cite{AAAHK} to a complex elliptic operator, such as $\L_0$ in block form or Hermitian. In this sense our results also bridge the gap between the ``standard" layer potential approach and the abstract, first-order approach used in \cite{Aus-Axel-McI} and \cite{Aus-Axel-Hof}, to obtain perturbation results. 

\begin{remark}\label{rm.more} The solutions to the problems $\Di_p$, $\Ne_p$, and $\Reg_p$ satisfy two further properties: square function estimates and decay at infinity. More precisely, the solution to $\Di_p$  satisfies that
\begin{gather*}
\lim_{t\ra\infty}u(\cdot,t)=0\quad\text{in the sense of distributions, and} \\
\Vert\bb S(t\nabla u)\Vert_{L^p(\bb R^n)}<\infty,
\end{gather*}
where $\bb S$ is the conical  square function from Definition \ref{def-square-functions}. The solutions to the Neumann and regularity problems satisfy:
\begin{gather*}
	\lim_{t\ra\infty}\nabla u(\cdot,t)=0\quad\text{in the sense of distributions, and} \\
	\Vert\bb S(t\partial_t\nabla u)\Vert_{L^p(\bb R^n)}<\infty.
\end{gather*}
\end{remark}

\begin{remark}\label{amongunique.rmk} Uniqueness, under the background hypothesis of invertible layer potentials and sufficient smallness of the lower order terms, is established among what we call ``good $\mathcal{D}$ solutions'' (in the case of $\Di_2$) and ``good $\mathcal{N}/\mathcal{R}$ solutions'' (in the case of $\Ne_2$ and  $\Reg_2$). We show that non-tangential maximal function estimates or square function estimates imply that solutions are ``good''. For instance, if $p=2$ and under the aforementioned background hypothesis, suppose that $u \in W^{1,2}_{\loc}(\ree_+)$ solves
\[\operatorname{(\widetilde{D})_2}\begin{cases}
\cL u = 0\qquad\text{in } \ree_+,\\[1mm]
\lim_{t \to 0}u(\cdot ,t) = f\qquad\text{strongly in } L^2(\rn),
\end{cases}
\] 
for some $f \in L^2$, and suppose that $u$ has {\it one} of the following properties:
\begin{itemize}
	\item $u$ is a good $\mathcal{D}$ solution,
	\item $\|\widetilde N_2u\|_{L^2(\rn)} < \infty$ or
	\item $\iint_{\ree_+}t |\nabla u(x,t)|^2 \, dx \, dt < \infty$.
\end{itemize} 
Then $u$ is the unique such solution and has the other two properties. In the case of the Neumann problem, our solutions are unique modulo constants if the operator $\mathcal{L}$ annihilates constants.
\end{remark}

Let us mention some well-known operators in mathematical physics for which our solvability results are new. For the magnetic Schr\"odinger operator $-(\nabla-i{\bf a})^2$ when ${\bf a}\in L^n(\bb R^n)^{n+1}$ is $t-$independent and has small $L^n(\bb R^n)$ norm, we have as a corollary to our Theorem \ref{L2Solv2.thrm} the first $L^p$ well-posedness results of the Dirichlet, Neumann and regularity problems on an unbounded domain. Another operator which satisfies our hypotheses is the Schr\"odinger operator $-\operatorname{div}A\nabla+V$ where $V\in L^{\frac n2}(\bb R^n)$ is $t-$independent, complex-valued, and has small $L^{\frac n2}(\bb R^n)$ norm, and thus we obtain new solvability results in this setting as well. However, we mention that, for non-negative $V$, there are solvability results for boundary value problems under the assumption that $V$ belongs to a reverse H\"older class \cite{shenn, sak, mt}.

\begin{remark}\label{rm.natural} Small Carleson perturbation conditions have been shown to preserve solvability of boundary value problems in the same $L^p$ space, at least when the operator is a  purely second-order divergence-form elliptic operator \cite{AA, Hof-May-Mour, ahmt, dp2019}. Let us point out the connection between our perturbation condition (\ref{eq.small}) and the theory of Carleson perturbations. It is easy to prove that, under the assumption that $B_1$ and $B_2$ are complex-valued and $t$-independent, if they satisfy the condition (\ref{eq.small}), then each of the measures $d\mu=(|B_1|+|B_2|)\,dx\,dt$ and $d\mu=(|B_1|^2+|B_2|^2)\,dx\,tdt$ satisfy the  Carleson measure condition
\begin{equation}\label{eq.measure}
\mu(B(x_0,r)\cap\overline{\Omega})\lesssim \rho r^n,\qquad\text{for each }x_0\in\bb R^n, r>0.
\end{equation} 
Note that this embedding of our condition (\ref{eq.small}) into the small Carleson perturbation condition (\ref{eq.measure}) does not hold for the analogue perturbative conditions on the second-order term. Indeed, suppose $A,A_0$ are $t$-independent complex matrices such that $\Vert A-A_0\Vert_{L^{\infty}(\bb R^n)}\leq\ep_0$. Then solvability results for the $L^2$ Dirichlet, Neumann, and regularity problems have been obtained   \cite{Aus-Axel-Hof, AAAHK, aam10}, but this perturbative condition does not imply the classical Carleson perturbation condition  \cite{FKP, AA, Hof-May-Mour}  that $\sup_{y\in B(x,t/2)}\frac{|A(y)-A_0(y)|^2}{t}\,dx\,dt$ is the density of a Carleson measure (and cannot imply it unless $A\equiv A_0$!).	
\end{remark}

We now make a few historical remarks concerning our results.  

Firstly, we should emphasize that all the results in this paper, as well as the ones in the first part \cite{bhlmp}, concern $t$-independent operators. While solvability results are available for $t$-dependent coefficients (see for instance \cite{Hof-May-Mour, dhp22, mpt22}) we will not concern ourselves with them at all here.

Even in this restricted setting of elliptic equations with $t$-independent coefficients the literature is vast, and we make no attempt at a complete historical account; we refer the reader to the first part of this series of papers, \cite{bhlmp}, for a more thorough overview of the area. We will now restrict attention to the works very closely related to our solvability and perturbation results.

The case of Hermitian $A_0$ is treated in \cite{Aus-Axel-McI}; in fact here the authors treat a perturbation theory analogous to our own (by a different method, and only for second order perturbations) for all three classes of $A_0$. We also note that the solvability for \emph{real} symmetric second-order equations was already known from the work in \cite{JK1} for the Dirichlet problem and \cite{KP1}, \cite{KP2} for the Neumann and regularity problems; moreover, in this setting of real equations, the above works also obtain solvability for the problems in $L^p$ for $2-\varepsilon<2\leq \infty$ in the case of Dirichlet, and $1<p<2+\varepsilon$ for Neumann and regularity. The issue of solvability by layer potentials goes back to \cite{V} for the case of the Laplacian, but the technique of using a Rellich identity to obtain the invertibility also works in the case of real symmetric \cite{KP1} or Hermitian \cite{Aus-Axel-McI} matrices.

For $A_0$ of block form, it was remarked by Kenig in \cite{kbook}, solvability is equivalent to the Kato conjecture, in the case of the regularity problem, and to boundedness of a Riesz Transform associated to the elliptic operator $\L_0$ for the Neumann problem\footnote{The Dirichlet problem is a consequence of semigroup theory, since the double layer potential in this setting is a constant multiple of the Poisson semigroup associated to $\L_0$.}. On the other hand, special results were known before, see \cite{bhlmp} and \cite{AAAHK} for a more detailed account of this. $L^p$ solvability results (via layer potentials) for perturbations of $t$-independent symmetric coefficients were obtained in \cite{hmm}.

Finally, when $A_0$ is a constant matrix, an explicit Poisson kernel is constructed in \cite{adn}, while the Dirichlet problem is solved in \cite{FJK}, and the Neumann and regularity problem in \cite{AAAHK}. See also \cite{mmm}.  
 
The works cited above dealt mostly with equations of pure second-order. The literature in the setting with lower order terms present (that is, not all of $b_1,b_2, V$ are identically $0$) is much more sparse, but has in recent years garnered a lot of attention, at least when the lower order terms are real. In \cite{hoflew}, \emph{parabolic} operators with singular drift terms $b_2$ were studied, and their results would later be applied toward $\Di_p$ for elliptic operators with singular drift terms $b_2$ in \cite{KP3} and \cite{DPP}. When $A\equiv I$, $b_1\equiv b_2\equiv0$ and $V>0$ satisfies certain conditions, Shen proved the solvability of $\Ne_p$ and $\Reg_p$ on Lipschitz domains in \cite{shenn}.    More recently,  Morris and Turner \cite{mt} proved the $L^2$ well-posedness of the Neumann and regularity problems in the  half-space setting for the Schr\"odinger operator $-\operatorname{div} A\nabla+V$ with $t-$independent Hermitian $A$ and $t-$independent potential $V$ in the reverse H\"older class $RH^{\frac n2}$.  The problems $\Di_2$ and $\Reg_2$ for equations with lower order terms have been considered by Sakellaris in \cite{sak} in bounded Lipschitz domains, under some continuity and sign assumptions on the coefficients. Solvability results and foundational estimates for solutions for the variational Dirichlet problem of equations with lower order terms on unbounded domains have been obtained by Mourgoglou in \cite{mourg}, with very lax assumptions on the (real) lower order terms. The fundamental solution for operators with lower order terms has been studied recently by \cite{bd22}, and the Neumann Green's function on Lipschitz domains for operators with lower order terms is considered in \cite{ks23}.

The paper is organized as follows. In Section 2, we review relevant preliminaries and definitions, including properties of the layer potentials and the theory of extrapolation of $A_p$ weights. In Section 3, we develop certain extrapolation theorems for both conical and vertical square functions, in the presence of sufficient off-diagonal decay. In Section 4, we use the general extrapolation results of Section 3 to obtain $L^p$ estimates for `slices' and for conical and vertical square functions of operators arising from the  layer potentials with enough transversal derivatives. In Section 5, we prove the non-tangential maximal function estimates, under the   background assumption of good square function bounds. In Section 6, we proceed to `travel down' on both the square and non-tangential maximal functions, to dispense of the hypothesis of good off-diagonal estimates.  In Sections 7 and 8, we show the existence and uniqueness, respectively, of solutions to the  boundary value problems $\Di_2$, $\Reg_2$ and $\Ne_2$, with representations of solutions via layer potentials. Finally, in Section 9, we prove the $L^p$ solvability of the Dirichlet, Neumann, and regularity problems, for $p\in(2-\ep_0,2+\ep_0)$, with $\ep_0$ small enough. 

\section{Notation and Preliminaries}\label{sec.prelim}

We recall  notation from  \cite{bhlmp}, as well as introduce  concepts to   be used throughout the article.

\begin{itemize}
\item Throughout, we assume that $n\geq 3$.   We write $(x,t)$ for the coordinates of $\bb R^{n+1}=\bb R^n\times\bb R$, where $x\in\bb R^n$ and $t\in\bb R$, and  $\reu:=\{ (x,t): x\in \rn, \, t>0\}$. The lower half-space will be denoted $\ree_-$. Similarly, for any $\tau\in \RR$ we write $\ree_\tau:=\{ (x,t)\in \ree: t>\tau\}$.
\item We always take $A=A(x)$ to be an $(n+1)\times(n+1)$ matrix of $L^{\infty}$, $t-$independent complex coefficients satisfying the ellipticity condition \eqref{elliptic}, while $B_1, B_2\in(L^n(\bb R^n))^{n+1}$ and $V\in L^{n/2}(\bb R^n)$ are complex-valued, $t-$independent (vector) functions  satisfying \eqref{eq.small}, with $\rho\ll1$.  Under these conditions,   the term $V$ can be ``hidden'' into  first-order terms $\widetilde B_1$, $\widetilde B_2$ (see \cite[Lemma 2.17]{bhlmp}); therefore, without loss of generality we will omit the zeroth order term $V$ from consideration.
\item For $-\infty\leq a<b\leq \infty$ we define the slab $\Sigma_a^b:= \{ (x,t)\in\bb R^{n+1}: a<t<b\}$. 
\item For a vector $\vec{v}=(v_1,\ldots,v_{n+1})\in \ree$, we write $\vec{v}_{\|}:= (v_1,\ldots, v_n)$, $ \vec{v}_\perp:=v_{n+1}$. For vector functions $B:\rn \to \CC^{n+1}$, we define $B_{\|}$ and $B_\perp$ analogously.
\item For a cube $Q\subset \rn$ we denote by $R_Q$ the \emph{Carleson region} above $Q$; that is, $R_Q:=Q\times(0,\ell(Q))$.
\item For $R>0$ we define $I_R:=(R,R)^{n+1}\subset \ree$, and $I_R^\pm:= I_R\cap \ree_\pm$. 
\item We denote by $\m$ the (uncentered) Hardy-Littlewood maximal function in $\rn$, and more generally for $r>0$ we define $\m_r(f):= \m(|f|^r)^{1/r}$. 
\item Given a cube $Q\subset \rn$ we denote by $Q^*$ a concentric dilate of $Q$ by a factor that depends only on $n$.
\item We denote by $I_1$ the  fractional integral of order 1 in $\rn$; that is, for nice enough $f$,
\[
I_1 f(x) = c_n \int_\rn \frac{f(y)}{|x-y|^{n-1}}\, dy.
\] 
\item We denote by $\DD$ the collection of all dyadic cubes in $\rn$, and for $t>0$ we define $\DD_t$ to be the cubes in $\DD$ which satisfy $\ell(Q)<t\leq 2\ell(Q)$. Similarly, for a cube $Q\subset \rn$ we denote by $\DD(Q)$ the collection of dyadic subcubes of $Q$.
\item For $(x,t)\in \reu$ we define the \emph{Whitney regions} $\cxt:= \big\{ (y,s)\in \reu: |x-y|<t/8, |t-s|<t/8 \big\}$. Given $x_0\in\bb R^n$, we denote by $\Gamma(x_0)$ the \emph{non-tangential cone} with vertex $x_0$, given by
\begin{equation}\label{eq.ntcone1}
\Gamma(x_0):=\{(x,t):|x-x_0|<t\}.
\end{equation}
\item Let $X$ be a topological space with Borel $\sigma-$algebra $\mathcal{B}$, and let $\mu$ be a non-negative $\sigma-$finite measure on $\mathcal{B}$.  If $p\in[1,\infty)$, we denote by $L^p(X,\mu)$ the Lebesgue space of $p-$th integrable (complex) functions $f$ on the measure space $(X,\mathcal{B}, \mu)$. We often write $L^p(\bb R^n)=L^p(\bb R^n,m_n)$, where $m_n$ is the $n-$dimensional Lebesgue measure. If $v\in L^1_{\loc}(\bb R^n)$, we write $L^p(\nu)=L^p(\bb R^n,\nu)$.
\item Given an open set $\Omega\subset\bb R^d$, $d\geq3$, we denote by $C_c^{\infty}(\Omega)$ the space  consisting of all compactly supported smooth complex-valued functions in $\Omega$. As usual, we denote $\n D=C_c^{\infty}(\bb R^{n+1})$, and we let $\n D'=\n D^*$ be the space of distributions on $\bb R^{n+1}$. The space $\n S$ consists of the Schwartz functions on $\bb R^{n+1}$, and $\n S'=\n S^*$ is the space of tempered distributions on $\bb R^{n+1}$.
\item We call a measurable function $\nu:\rn\to \RR$ a \emph{weight} if $\nu>0$ Lebesgue-a.e. on $\bb R^n$ and $\nu\in L^1_{\loc}(\rn)$. We say that $\nu$ is \emph{doubling} if the measure $\nu(x)dx $ is doubling; that is, if (with a slight abuse of notation) $\nu(2Q)\leq C_0 \nu(Q)$ for a constant $C_0>0$ and all cubes $Q\subset \rn$.
\item For $1<p<m$, the upper and lower Sobolev exponents of order 1 in $m$ dimensions are  respectively 
\[
p^*_m:= \tfrac{mp}{m-p},\qquad p_{*,m}:=\tfrac{mp}{m+p}.
\]
Sometimes, we drop $m$ from the subscript when the dimension is clear from the context. 
\item Given an open set $\Omega\subset\bb R^{n+1}$, for $p\in[1,\infty)$, we denote by $W^{1,p}(\Omega)$ the   Sobolev space of functions in $L^p(\Omega)$ whose weak gradients exist and lie in $(L^p(\Omega))^{n+1}$. We endow this space with the norm
\[
\Vert u\Vert_{W^{1,p}(\Omega)}:=\Vert u\Vert_{L^p(\Omega)} +\Vert\nabla u\Vert_{L^p(\Omega)}.
\]
We define $W_0^{1,p}(\Omega)$ as the completion of $C_c^{\infty}(\Omega)$ in the above norm. We shall have occasion to discuss the homogeneous Sobolev spaces as well: by $\dt W^{1,p}(\Omega)$ we denote the space of functions in $L^1_{\loc}(\Omega)$ whose weak gradients exist and lie in $L^p(\Omega)$. We equip this space with the seminorm $|u|_{\dt W^{1,p}(\Omega)}:=\Vert\nabla u\Vert_{L^p(\Omega)}$, and, if $\partial\Omega$ is sufficiently nice,  we point out that  $\dt W^{1,p}(\Omega)$ coincides with the completion of the quotient space  $C^{\infty}(\Omega)/\bb C$ in the $|\cdot|_{\dt W^{1,p}(\Omega)}$ (quotient) norm.
\end{itemize}

\begin{definition}\label{def-square-functions}
If $F:\reu\to \CC$, we define the \emph{conical square function} of $F$ as 
\begin{equation}\nonumber
\s F(x):= \Big( \dint_{\Gamma(x)} |F(y,t)|^2 \, \dfrac{dydt}{t^{n+1}} \Big)^{1/2},
\end{equation}
where $\Gamma(x):=\{ (y,t)\in \reu: |x-y|<t\}$. Similarly, we define the \emph{vertical square function} of $F$ as
\begin{equation}\nonumber
\v F (x):= \Big( \int_0^\infty |F(x,t)|^2\, \dfrac{dt}{t} \Big)^{1/2}.
\end{equation}
\end{definition}

\begin{remark} In the definition of $\s$, we could have chosen a different aperture; that is, for $\eta>0$, we can set
\begin{equation}\nonumber
\s_\eta F(x)= \Big(\dint_{|x-y|<\eta t} |F(y,t)|^2\, \dfrac{dydt}{t^{n+1}}\Big)^{1/2}.
\end{equation}
It is well-known that different apertures give rise to objects with equivalent $L^p$ norms  (see for instance {\cite[Proposition 4]{cms}} for the unweighted case and {\cite[Proposition 4.9]{chmpa}} for the weighted one).
\end{remark}

\noindent In contrast to the $L^2$ case, if $p\neq 2$ the ($L^p$ norms of) conical and vertical square functions are not equivalent.

\begin{proposition}[{\cite[Proposition 2.1]{ahm}}]\label{prop-prelim-comparability-square-functions}
Let $F:\reu \to \CC$ be measurable.
\begin{enumerate}[(i)]
	\item If $0<p\leq 2$ then $\| \v(F)\|_\lprn \lesssim_{n,p} \| \s(F)\|_\lprn$. 
	\item If $2\leq p<\infty$ then  $	\| \s(F)\|_\lprn \lesssim_{n,p} \| \v(F)\|_\lprn$. 
\end{enumerate}
\end{proposition}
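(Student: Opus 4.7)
The plan is to pivot around the endpoint $p=2$, where Fubini gives the exact identity
\[
\|\s F\|_{L^2(\rn)}^2 = c_n \dint_{\reu} |F(y,t)|^2 \, \tfrac{dy\, dt}{t} = c_n \|\v F\|_{L^2(\rn)}^2,
\]
and then run two complementary duality arguments in $L^{p/2}$.

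For part (ii), with $p \in (2,\infty)$, I would dualize $(\s F)^2$ in $L^{p/2}$, which is a genuine Banach space since $p/2 > 1$. Pairing with a nonnegative $\phi$ of unit $L^{(p/2)'}$-norm and swapping the order of integration gives
\[
\int_\rn (\s F)^2 \, \phi\, dx = c_n \dint_{\reu} |F(y,t)|^2 \Big(\fint_{B(y,t)} \phi\Big)\,\tfrac{dy\, dt}{t} \leq c_n \int_\rn (\v F)^2 \, \m\phi\, dx,
\]
and then H\"older together with the $L^{(p/2)'}$-boundedness of $\m$ (which holds precisely because $(p/2)' \in (1,\infty)$) closes out (ii). Taking the supremum over admissible $\phi$ recovers $\|\s F\|_{L^p(\rn)}^2 \lesssim \|\v F\|_{L^p(\rn)}^2$.

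For part (i), with $0 < p < 2$, this strategy collapses since $L^{(p/2)'}$ is no longer a normed space, and this is the main obstacle. To circumvent it, I would invoke the Coifman--Meyer--Stein tent-space theory, which identifies $\|\s F\|_{L^p(\rn)}$ with the $T^{p,2}$ quasinorm, and exploit the atomic decomposition $F = \sum_j \lambda_j a_j$ with $\sum_j |\lambda_j|^p \lesssim \|\s F\|_{L^p(\rn)}^p$. Each atom $a_j$ is supported in a tent over a ball $B_j \subset \rn$ with $\dint |a_j|^2 \, dy\, dt / t \leq |B_j|^{1-2/p}$; a direct support check shows $\v(a_j)$ is supported in a bounded dilate of $B_j$, so H\"older (valid since $p \leq 2$) combined with the atomic $L^2$-normalization yields $\|\v(a_j)\|_{L^p(\rn)} \lesssim 1$ uniformly in $j$. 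For $p \leq 1$ the subadditivity of $x \mapsto x^p$ finishes the argument; for $1 < p < 2$ one interpolates between $p=1$ and $p=2$. An alternative route, which I would fall back on if the atomic approach proves too cumbersome, is to establish a good-$\lambda$ inequality of the form $|\{\v F > 2\lambda,\, \s F \leq \gamma\lambda\}| \leq C\gamma^2 |\{\v F > \lambda\}|$ for small $\gamma$, which would then yield the claim for all $p > 0$ in one stroke via a standard distribution-function manipulation.
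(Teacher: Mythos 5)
Your primary argument is correct and is, in substance, the proof in the cited reference (the paper itself offers no proof, simply quoting \cite{ahm}): part (ii) is the standard $L^{p/2}$-duality computation with the Hardy--Littlewood maximal function, and part (i) is the Coifman--Meyer--Stein tent-space route. For $1<p<2$, phrase the interpolation step as interpolation of the \emph{linear} embedding $T^{p,2}\hookrightarrow L^p(\rn;L^2(\tfrac{dt}{t}))$, using $\|\v F\|_{\lprn}=\|F\|_{L^p(\rn;L^2(dt/t))}$ together with the Coifman--Meyer--Stein complex interpolation of tent spaces, since $\v$ itself is only sublinear; with that proviso the argument is complete.

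The fallback route, however, is not available: the good-$\lambda$ inequality $|\{\v F>2\lambda,\ \s F\leq\gamma\lambda\}|\leq C\gamma^2|\{\v F>\lambda\}|$ fails for general measurable $F$. Take $F(y,t)=\bbm{1}_E(y)\bbm{1}_{(\delta,2\delta)}(t)$ with $E\subset[0,1]^n$ a union of balls of radius $\ep\delta$ centered at the points of a $\delta$-grid. Then $\v F\approx 1$ precisely on $E$ and vanishes off $E$, while $\s F(x)^2\approx |E\cap B(x,c\delta)|/\delta^n\approx\ep^n$ uniformly in $x$; choosing $\lambda=1/3$ and any $\gamma\gtrsim\ep^{n/2}$, the left-hand side of the good-$\lambda$ inequality equals $|E|=|\{\v F>\lambda\}|$, which forces $C\gamma^2\geq 1$ and is contradicted by letting $\ep\to0$. (Note this does not contradict the proposition: since $p/2\leq1$ one has $(\fint_B\bbm{1}_E)^{p/2}\geq\fint_B\bbm{1}_E$, so $\|\s F\|_{\lprn}^p\gtrsim|E|\approx\|\v F\|_{\lprn}^p$ in this example.) The obstruction is that, without structural assumptions on $F$ (e.g.\ $F=\Theta_t f$ with off-diagonal decay, or $F$ a solution of an elliptic equation), Whitney-scale averages cannot control values on a single vertical line, which is exactly what a good-$\lambda$ comparison of $\v$ against $\s$ would require; this is also why the $L^p$ comparison in the proposition is one-sided and restricted to $p\leq2$.
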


\begin{definition}[Non-tangential Maximal Functions]\label{def-ntmax}
For  $F:\ree\to \CC$ and $q>0$, let 
\begin{equation}\label{eq.avg}
a_q(F)(x,t):= \Big(\,\fiint_{\cxt} |F(y,s)|^q\, dyds\Big)^{1/q}.
\end{equation}
We define the \emph{non-tangential maximal function} of $F$ as $$\mathcal{N}(F)(x):= \sup_{\Gamma(x)} |F|,$$ and $\mntmq(F)(x):= \mathcal{N}(a_q(F))(x)$. We also define the \emph{lifted modified non-tangential maximal function}, for $\varepsilon>0$, as $\mntmq^\varepsilon(F)(x):= \sup_{\substack{ |x-y|<t-\varepsilon\\ t>\varepsilon}} a_q(F)(y,t)$. Similarly, we define a truncated version of the non-tangential maximal function as $\mntmq^{(\varepsilon)}(F)(x):= \sup_{\substack{|x-y|<t\\ t>\varepsilon}} a_q(F)(y,t)$. Given a measurable function $g$ on $\bb R^n\times\{t=0\}$, we say that $F\longrightarrow g$ \emph{non-tangentially} if for almost every $x\in\bb R^n$, we have that
\begin{equation}\label{eq.ntconvavg}
\lim_{\substack{Y \to x\\ Y \in \Gamma(x)}}\widetilde{F}(Y) = g(x),
\end{equation}
where $\Gamma(x)$ is the non-tangential cone defined in (\ref{eq.ntcone1}), and $\widetilde F(z,t):=\fiint_{\mathcal{C}_{x,t}} F(y,s)\,dy\,ds$.
\end{definition}

We now prove a result on the boundary behavior of solutions, under the assumption that we have good control of a modified non-tangential maximal function.

\begin{proposition}\label{RegNTconverge.prop}
	Let $u\in W^{1,2}_{\loc}(\rn)$ solve $\L u=0$ in $\reu$. Then $u$ converges non-tangentially  at every $x\in\bb R^n$ where $\mntmo(\nb u)(x)<\infty$, in the sense that for any such $x\in\bb R^n$, the limit in (\ref{eq.ntconvavg}) exists and is finite.
\end{proposition}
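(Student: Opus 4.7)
The plan is to show that $\widetilde u$ is Cauchy as one approaches $x$ non-tangentially, via a Harnack-chain style argument based on the Poincaré inequality. Note that the equation $\L u = 0$ plays no direct role here: only the membership $u\in W^{1,2}_{\loc}(\reu)$ together with the hypothesis $M:=\mntmo(\nb u)(x)<\infty$ is used. Throughout I denote $\widetilde u(Y):=\fiint_{\mathcal{C}_Y} u\,dzds$.

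First I would establish the following local comparison: there is a dimensional constant $C_0\geq 1$ such that whenever $Y_1=(y_1,t_1)$ and $Y_2=(y_2,t_2)$ lie in $\Gamma(x)$ with $t_1\asymp t_2$ and $|Y_1-Y_2|\leq C_0\max\{t_1,t_2\}$, one has
\[
|\widetilde u(Y_1)-\widetilde u(Y_2)|\lesssim \max\{t_1,t_2\}\cdot M.
\]
This is obtained by constructing a dimensional number of intermediate cubes $B_1,\ldots,B_N$ of sidelength $\asymp t_1$ joining $\mathcal{C}_{Y_1}$ to $\mathcal{C}_{Y_2}$, each pair $B_i,B_{i+1}$ overlapping substantially and each $B_i$ sitting inside a Whitney region $\mathcal{C}_{Z_i}$ whose center $Z_i$ lies in a slightly enlarged non-tangential cone with vertex $x$. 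Applying the Poincaré inequality on $B_i\cup B_{i+1}$ gives
\[
\Big|\fiint_{B_i}u-\fiint_{B_{i+1}}u\Big|\lesssim t_1\fiint_{B_i\cup B_{i+1}}|\nb u|\lesssim t_1\, a_1(\nb u)(Z_i)\lesssim t_1\, M,
\]
using that aperture and Whitney-region size enter the definition of $\mntmo$ only up to dimensional constants. Summing these $N$ steps and comparing $\widetilde u(Y_j)$ with $\fiint_{B_j}u$ (again by Poincaré) yields the claim.

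Next I would apply this comparison along the vertical dyadic chain $Z_\ell:=(x,2^{-\ell})\in\Gamma(x)$, $\ell\in\NN$. Consecutive $Z_\ell,Z_{\ell+1}$ satisfy the hypothesis at scale $\asymp 2^{-\ell}$, so $|\widetilde u(Z_\ell)-\widetilde u(Z_{\ell+1})|\lesssim 2^{-\ell}M$, which is geometrically summable; thus $\{\widetilde u(Z_\ell)\}$ is Cauchy and converges to some limit $L\in\CC$, with $|\widetilde u(Z_\ell)-L|\lesssim 2^{-\ell}M$. For an arbitrary $Y=(y,t)\in\Gamma(x)$ with $t$ small, choose $\ell$ so that $2^{-\ell}\leq t<2^{-\ell+1}$; the local comparison between $Y$ and $Z_\ell$ gives $|\widetilde u(Y)-\widetilde u(Z_\ell)|\lesssim tM$, and hence $|\widetilde u(Y)-L|\lesssim tM\to 0$ as $Y\to x$ non-tangentially. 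This yields existence and finiteness of the non-tangential limit.

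The main technical step is the construction of the intermediate cubes $B_i$ and auxiliary Whitney centers $Z_i$: one must absorb the (bounded) widening of apertures and Whitney parameters into $M$, which is permissible because $\mntmo$ defined with different apertures and Whitney dilates is equivalent pointwise (up to a dimensional constant) to $\mntmo$ itself. Once this standard setup is fixed, the Poincaré-based telescoping and the final dyadic chaining are routine.
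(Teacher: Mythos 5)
Your proposal follows essentially the same route as the paper's proof (which adapts the argument of Kenig--Pipher): a Poincar\'e-type comparison of averages over nearby regions of comparable scale, chained horizontally within the cone and vertically along a geometric sequence of heights, followed by the Cauchy criterion with the quantitative oscillation bound $|\widetilde u(Y)-\widetilde u(Z)|\lesssim r\,\mntmo(\nabla u)(x)$ for $Y,Z\in\Gamma(x)\cap B(x,r)$. One justification in your writeup is, however, incorrect as stated: you pass to a ``slightly enlarged'' cone (and enlarged Whitney regions) on the grounds that $\mntmo$ built with different apertures or Whitney dilates is \emph{pointwise} equivalent to $\mntmo$. That is false --- equivalence of non-tangential maximal functions under change of aperture is an $L^p$-norm statement, not a pointwise one, and at the single point $x$ where finiteness is assumed you cannot enlarge the aperture for free. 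Fortunately the enlargement is unnecessary: $\Gamma(x)$ is convex, so the segment joining any two of its points (in particular joining $Y=(y,t)$ to $(x,t)$, or $(x,2^{-\ell})$ to $(x,2^{-\ell-1})$) stays in $\Gamma(x)$, and every chain cube can be taken to be a genuine Whitney region $\mathcal{C}_{z,s}$ with center $(z,s)\in\Gamma(x)$; the regions themselves may protrude from the cone, but only the centers enter the definition of $\mntmo$. With that repair --- which is exactly how the paper arranges its chains, the horizontal step passing through the midpoint $z=(x+y)/2$ and the vertical chain sitting on the axis above $x$ --- your argument is complete; indeed your use of the $L^1$ Poincar\'e inequality matches the $L^1$ averages in $\mntmo$ more literally than the $L^2$ form of the analogous claim used in the paper.
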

\begin{proof}
	We   follow   \cite[Theorem 3.1(a)]{KP1}, with modifications due to   lack of pointwise estimates for $u$. 	Let $x \in \rn$ be such that $\mntmo(\nabla u)(x) <\infty$. 	Our goal to show that for $Y, Z \in \Gamma(x) \cap B(x,r)$ we have 
	\begin{equation}\label{ntkp1.eq}
		|\widetilde{u}(Y) - \widetilde{u}(Z)| \le Cr\mntmo(\nabla u)(x),
	\end{equation}
	from which we may easily establish (via the Cauchy criterion) that $\underset{\substack{Y \to x\\ Y \in \Gamma(x)}}{\lim}\widetilde{u}(Y)$	exists, and consequently define $g(x)$ to be the limit. Write $Y = (y, t_1)$ and $Z = (z,t_2)$. Then, to establish \eqref{ntkp1.eq}, it is enough that
	\begin{equation}\label{ntkp2.eq}
		\max\big\{|\widetilde{u}(Y) - \widetilde{u}(x,t_1)|~,~ |\widetilde{u}(Z) - \widetilde{u}(x,t_2)|\big\} \le Cr\mntmo(\nabla u)(x),
	\end{equation}
	and
	\begin{equation}\label{ntkp3.eq}
		|\widetilde{u}(x,t_2) -  \widetilde{u}(x,t_1)| \le Cr\mntmo(\nabla u)(x).
	\end{equation}
	To prove \eqref{ntkp2.eq} and \eqref{ntkp3.eq} we use the following fact. 
	\begin{claim}\label{ntkpcl1.cl}
		For $X \in \ree$ and $r > 0$ let $I(X,r) := \{W \in \ree: |X -W|  < r\}$ be the open cube with center $X$ and side length $2r$. Let $I_i = I(X_i,r_i)$, $i = 1,2$, $\om \subset \ree$ open with $I_i \subset \om$ $i = 1,2$ and $\varphi \in W^{1,2}(\om)$. If $\alpha \in [0,2)$ and $|X_1 - X_2| \le \alpha \min\{r_1,r_2\}$ then
		\begin{equation*}
			\Big|\fint_{I_1} \varphi \,dx \, dt - \fint_{I_2} \varphi \, dx\,dt \Big| \le C_\alpha \max\{r_1,r_2\} \big( \max\big\{\tfrac{r_1}{r_2},\tfrac{r_2}{r_1}\big\} \big)^\frac{n+1}{2}\max_{i = 1,2}\Big(\fint_{I_i} |\nabla \varphi|^2 \Big)^{1/2},
		\end{equation*}
		where $C_\alpha = C(n, \alpha)$. In particular, if $r_1 \approx r_2 \approx r$ then
		\begin{equation}\label{closecubepoincare.eq}
			\Big|\fint_{I_1} \varphi \, dx\,dt - \fint_{I_2} \varphi \, dx\,dt \Big| \le C_\alpha r\max_{i = 1,2}\Big(\fint_{I_i} |\nabla \varphi|^2 \Big)^{1/2},
		\end{equation}
		where $C_\alpha$ depends on the implicit constants in the expression $r_1 \approx r_2 \approx r$.
	\end{claim}
	\emph{Proof of Claim \ref{ntkpcl1.cl}.} Let $X_3 = \tfrac{X_1 + X_2}{2}$, then $|X_3 - X_i| < \frac{\alpha r_i}{2}$,  $i = 1,2$, 	and hence $I_3 = I(x_3, r) \subset I_i$, $i = 1,2$ for $r = (1- \alpha/2)\min\{r_1,r_2\}$. It follows from the triangle inequality and the Poincar\'e inequality that
		\begin{multline*}
			\Big|\fint_{I_1} \varphi - \fint_{I_2} \varphi \Big|  \le  \Big|\fint_{I_1} \varphi - \fint_{I_3} \varphi \Big| +  \Big|\fint_{I_1} \varphi - \fint_{I_3} \varphi \Big|
			 \le  2\max_{i = 1,2}\Big(\fint_{I_3} \Big|\varphi - \fint_{I_i} \varphi \Big|^2 \Big)^{1/2}
			\\  \le C_\alpha\big( \max\big\{\tfrac{r_1}{r_2},\tfrac{r_2}{r_1}\big\} \big)^\frac{n+1}{2}\max_{i = 1,2}\Big(\fint_{I_i} \Big|\varphi - \fint_{I_i} \varphi \Big|^2 \Big)^{1/2}
			  \le C_\alpha \max\{r_1,r_2\} \big( \max\big\{\tfrac{r_1}{r_2},\tfrac{r_2}{r_1}\big\} \big)^\frac{n+1}{2}\max_{i = 1,2}\Big(\fint_{I_i} |\nabla \varphi|^2 \Big)^{1/2}.
		\end{multline*}
	
	Now let us prove \eqref{ntkp2.eq} for the term with $Y$ (the proof for the term with $Z$ is identical). Note that $|x - y| \le t_1 < r$ since $Y \in \Gamma(x) \cap B(x,r)$. Let 
	$I_2 = I(z,t_1/2)$, for $z = (x + y)/2$, then $|z - x| = |z - y| \le t_1/2$. This allows us to apply \eqref{closecubepoincare.eq} with $\varphi = u$, $I_1 = I(x,t_1/2)$, and  $I_1 = I(y,t_1/2)$   to obtain \eqref{ntkp2.eq}.
	
	We turn our attention to \eqref{ntkp3.eq} and we assume, without loss of generality, that $t_1 \le t_2$. Let $a = 2/3$, $s_k = a^kt_2$ for $k = 0,1,\dots, K$, where $K = \max\{k: a^k t_2 \ge t_1\}$. Notice that for $k = 0, \dots K -1$, $|s_k - s_{k +1}| = \frac{s_{k +1}}{2} = \min\Big\{\frac{s_k}{2}, \frac{s_{k +1}}{2}\Big\}$. 	Defining $s_{K + 1} = t_1$, we see that the choice of $K$ guarantees that $|s_{K}- s_{K+1} | \le \min\Big\{\frac{s_{K}}{2}, \frac{t_1}{2}\Big\}$. 	Set $I_k:= I((x,s_k), \tfrac{s_k}{2})$, $k = 0,\dots K + 1$, then the previous two inequalities allow us to apply \eqref{closecubepoincare.eq} with $\varphi = u$ and the consecutive cubes $I_k$ and $I_{k +1}$, $k = 0, 1, \dots, K$ (in place of $I_1$ and $I_2$ therein). One then obtains
	\begin{multline*}
		|\widetilde{u}(x,t_1) - \widetilde{u}(x,t_2)| \le |\widetilde{u}(x,s_{K  + 1}) - \widetilde{u}(x,s_{K})| + \sum_{k = 0}^{K - 1}|\widetilde{u}(x,s_{k}) - \widetilde{u}(x,s_{k +1})|
		\\ \lesssim t_1 \mntmo(\nabla u)(x) + t_2 \sum_{k = 0}^{K - 1} a^k \mntmo(\nabla u)(x)
		 \lesssim r\mntmo(\nabla u)(x),
	\end{multline*}
	as desired (since $\sum_{k \ge 0} a^k = 3$).
\end{proof}

\begin{definition}[CLP Family]\label{def-clp-family}
We say that a family of convolution operators on $\ltrn$, $(\Q_s)_s$ is a \emph{CLP family} (Calder\'on-Littlewood-Paley family), if there exist $\sigma>0$ and $\psi\in L^1(\rn)$ satisfying $|\psi(x)|\lesssim (1+|x|)^{-n-\sigma}$  and $|\hat{\psi}(\xi)|\lesssim \min(|\xi|^\sigma,|\xi|^{-\sigma})$, such that the following conditions hold:
\begin{enumerate}[(i)]
	\item For $f\in C_c^{\infty}(\bb R^n)$, we have the representation $\Q_sf= \psi_s* f:= s^{-n}\psi(\cdot/s)*f$.
	\item For each $f\in C_c^{\infty}(\bb R^n)$, we have the bound $$\sup_{s>0}\| \Q_s f\|_\ltrn + \sup_{s>0}\| s\nb \Q_s f\|_\ltrn \lesssim \| f\|_\ltrn.$$
	\item For each $f\in C_c^{\infty}(\bb R^n)$, $\Q_s$ satisfies the square function estimate  $$\| \s(\Q_s f)\|_\ltrn \approx \| \v(\Q_s f)\|_\ltrn \lesssim \| f\|_\ltrn.$$ 
	\item The Calder\'on Reproducing Formula holds; that is, $$\int_0^\infty \Q_s^2\, \frac{ds}{s}=\operatorname{Id}_{\ltrn},$$ where the convergence of the integral is in the strong operator topology on the Banach space of linear bounded operators on $\ltrn$. 
\end{enumerate}
\end{definition}

\begin{definition}[Carleson measure] A non-negative measure $\mu$ on $\reu$ is a \emph{Carleson measure} if $$\| \mu\|_{\mathcal{C}} := \sup_{Q: Q \text{ is a cube in }\bb R^n} \frac{\mu(R_Q)}{|Q|} <\infty.$$
\end{definition}

\begin{lemma}[John-Nirenberg Lemma for Carleson Measures]\label{lem-john-nirenberg-carleson-measures}
Let $\mu$ be a non-negative measure on $\reu$. Suppose there exist $\eta\in (0,1)$ and $C_0>0$ such that for all cubes $Q\subset \rn$, there exists a disjoint collection  $(Q_j)_{j\in \NN} \subset \DD(Q)$ satisfying $\sum_{j\geq 1} |Q_j|<\eta |Q|$  and   $\mu(R_Q\setminus (\cup_j R_{Q_j})) \leq C_0|Q|$. Then $\mu$ is a Carleson measure.
\end{lemma}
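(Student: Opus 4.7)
The plan is to iterate the hypothesis through dyadic generations and sum a geometric series. Fix a cube $Q \subset \rn$; the goal is to prove $\mu(R_Q) \leq \frac{C_0}{1-\eta}|Q|$, from which $\|\mu\|_{\mathcal{C}} \leq \frac{C_0}{1-\eta}$ follows upon taking the supremum. Set $\mathcal{F}_0 := \{Q\}$, and for each $m \geq 1$ build $\mathcal{F}_m$ by applying the hypothesis to every $Q' \in \mathcal{F}_{m-1}$ and collecting the resulting children into a single family. Distinct members of $\mathcal{F}_{m-1}$ are disjoint, and within each parent the children are disjoint by hypothesis, so $\mathcal{F}_m$ is pairwise disjoint. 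Since $\sum_j |Q_j| < \eta|Q'| < |Q'|$ forces every child to be a proper dyadic subcube of its parent, we have $\ell(Q'') \leq 2^{-m}\ell(Q)$ for every $Q'' \in \mathcal{F}_m$, and an easy induction gives $\sum_{Q'' \in \mathcal{F}_m} |Q''| \leq \eta^m|Q|$.

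Let $E_m := R_Q \setminus \bigcup_{Q'' \in \mathcal{F}_m} R_{Q''}$; note that $E_0=\emptyset$ and $E_m \subset E_{m+1}$, since each level-$(m+1)$ Carleson region lies inside a level-$m$ one. For each $Q' \in \mathcal{F}_{m-1}$ the piece of $R_{Q'}$ exposed on passing from $\mathcal{F}_{m-1}$ to $\mathcal{F}_m$ carries $\mu$-mass at most $C_0|Q'|$ by hypothesis, whence
\[
\mu(E_m) \leq \mu(E_{m-1}) + C_0 \!\!\!\sum_{Q' \in \mathcal{F}_{m-1}}\!\!\!|Q'| \leq \mu(E_{m-1}) + C_0 \eta^{m-1}|Q|.
\]
Iterating from $\mu(E_0) = 0$ yields $\mu(E_m) \leq C_0|Q|\sum_{k=0}^{m-1}\eta^k \leq \frac{C_0}{1-\eta}|Q|$ for every $m$.

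To conclude, I would use the geometric fact that $\bigcup_{Q'' \in \mathcal{F}_m} R_{Q''} \subset Q \times \big(0, 2^{-m}\ell(Q)\big)$, so that every $(x,t) \in R_Q$ belongs to $E_m$ as soon as $2^{-m}\ell(Q) < t$. Hence $\bigcup_m E_m = R_Q$ inside $\reu$, and by continuity of $\mu$ from below, $\mu(R_Q) = \lim_{m \to \infty} \mu(E_m) \leq \frac{C_0}{1-\eta}|Q|$; taking the supremum over $Q$ completes the proof. The one subtlety worth flagging is that no a priori finiteness of $\mu(R_Q)$ is assumed in the statement, so one must avoid any argument whose closure requires a tail $\sum_{Q'' \in \mathcal{F}_m} \mu(R_{Q''}) \to 0$; working directly with the monotone increasing sequence $E_m$ and continuity of $\mu$ from below, as above, sidesteps this entirely and is the key design choice in the proof.
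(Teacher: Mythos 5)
Your proof is correct. The paper states this lemma without giving a proof, so there is nothing to match it against line by line; what you wrote is a complete and standard argument, and the details that matter are all in place: the families $\mathcal{F}_m$ are pairwise disjoint (children of the same parent are disjoint by hypothesis, children of distinct parents sit inside disjoint parents), every child is a proper dyadic subcube so $\ell(Q'')\le 2^{-m}\ell(Q)$, the packing bound $\sum_{\mathcal{F}_m}|Q''|\le \eta^m|Q|$ follows by induction, and the exhaustion $\bigcup_m E_m = R_Q$ together with continuity of $\mu$ from below closes the argument without any a priori finiteness of $\mu(R_Q)$.

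For comparison, the proof most often seen in the literature is the "absorption" version: one writes $\mu(R_Q)\le \mu\big(R_Q\setminus\bigcup_j R_{Q_j}\big)+\sum_j\mu(R_{Q_j})\le C_0|Q|+\eta\,\|\mu\|_{\mathcal{C}}\,|Q|$, divides by $|Q|$, takes the supremum, and hides $\eta\|\mu\|_{\mathcal{C}}$ on the left; this requires knowing $\|\mu\|_{\mathcal{C}}<\infty$ a priori, which is usually arranged by truncating $\mu$ away from $\{t=0\}$ (or restricting to the densities that arise in applications) and passing to the limit. Your generation-by-generation iteration is exactly the unrolled form of that argument: the geometric series $\sum_k C_0\eta^k$ replaces the absorption step, and the monotone sets $E_m$ replace the truncation, so the qualitative finiteness issue you flag at the end is handled automatically. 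Either route is acceptable; yours is self-contained and slightly more careful on that point.
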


\begin{remark}
We may replace the Lebesgue measure on $\rn$ by any other Radon measure. If we assume that the hypotheses only hold for dyadic cubes, then we require the measure to be doubling.
\end{remark}
 
\begin{lemma}\label{lem-prelim-john-nirenberg-local-square-functions}
Suppose that $F:\reu \to \RR$, $F\geq 0$ and define the local square function $A_{Q,F}:\bb R^n\ra\bb R$ by
\[ A_{Q,F}: =  \Big( \dint_{|x-y|<t<\ell(Q)}|F(y,t)|^2\, \frac{dydt}{t^{n+1}}\Big)^\frac{1}{2}.\]
If there exists $C_0>0$ with the property that for every cube $Q\subset \rn$, the estimate $\int_Q A_{Q,F}^2\, dx  \leq C_0 |Q|$ holds, then for every $p>1$, there exists a constant $C_1$ depending on $p, n$ and $C_0$ such that for every cube $Q$,   $$\int_Q A_{Q,F}^p\, dx \leq C_1 |Q|.$$
\end{lemma}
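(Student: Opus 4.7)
The strategy is a self-improvement of the distribution function of $A_{Q_0, F}$ via a Calder\'on-Zygmund stopping-time argument, followed by iteration. The case $1 < p \le 2$ is immediate from H\"older's inequality:
\[
\int_Q A_{Q, F}^p\, dx \le \Big(\int_Q A_{Q, F}^2\, dx\Big)^{p/2} |Q|^{1-p/2} \le C_0^{p/2}|Q|,
\]
so the nontrivial case is $p > 2$, where the plan is to establish polynomial decay of the distribution function of $A_{Q_0, F}$ on $Q_0$ and integrate via the layer-cake formula.

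The central step is the following self-improvement. Fix $Q_0$ and a large $\lambda$, and let $A^{\#}_{Q, F}$ denote the wider-aperture analogue of $A_{Q, F}$ with aperture $1+\sqrt n$. Set $E_\lambda := \{x \in Q_0 : A^{\#}_{Q_0, F}(x) > \lambda\}$ and apply a Calder\'on-Zygmund decomposition to $E_\lambda$ at density level $\tfrac12$, producing a disjoint family of maximal dyadic subcubes $\{Q_j\} \subset \mathcal{D}(Q_0)$ with $|Q_j \cap E_\lambda| > \tfrac12 |Q_j|$. Since the hypothesis is essentially equivalent to a Carleson measure condition on $F^2\, dy\, dt/t$, which is aperture-insensitive up to dimensional constants, Chebyshev's inequality yields $|E_\lambda| < \tfrac12 |Q_0|$ for $\lambda \ge \lambda_0(n, C_0)$, so that $Q_0$ itself is not selected and each dyadic parent $\widetilde Q_j \subsetneq Q_0$ satisfies $|\widetilde Q_j \setminus E_\lambda| \ge \tfrac12 |\widetilde Q_j|$. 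For every $x \in Q_j$, the decisive algebraic identity is
\[
A_{Q_0, F}(x)^2 = A_{\widetilde Q_j, F}(x)^2 + \mathrm{Tail}_j(x), \qquad \mathrm{Tail}_j(x) := \dint_{\substack{|x-y|<t \\ \ell(\widetilde Q_j)<t<\ell(Q_0)}} F(y,t)^2\, \frac{dy\, dt}{t^{n+1}}.
\]
Since $|x-x_0|<\sqrt n\, \ell(\widetilde Q_j) < \sqrt n\, t$ for any $x_0 \in \widetilde Q_j$ and $t > \ell(\widetilde Q_j)$, the triangle inequality embeds the aperture-$1$ tail region at $x$ into the aperture-$(1+\sqrt n)$ cone at $x_0$, giving $\mathrm{Tail}_j(x) \le A^{\#}_{Q_0, F}(x_0)^2$. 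Choosing $x_0 \in \widetilde Q_j \setminus E_\lambda$ then forces $\mathrm{Tail}_j(x) \le \lambda^2$, so on $Q_j$ we have $\{A_{Q_0, F} > \sqrt 2\,\lambda\} \subset \{A_{\widetilde Q_j, F} > \lambda\}$. Chebyshev combined with the hypothesis on $\widetilde Q_j$ controls the latter by $2^n C_0 |Q_j|/\lambda^2$. Summing in $j$, using $\sum_j |Q_j| \le 2|E_\lambda|$ together with a Chebyshev bound on $|E_\lambda|$, and noting that $A_{Q_0, F} \le A^{\#}_{Q_0, F} \le \lambda$ almost everywhere on $Q_0 \setminus \bigcup_j Q_j$, I conclude
\[
\big|\{x \in Q_0 : A_{Q_0, F}(x) > \sqrt 2\, \lambda\}\big| \le \frac{C'}{\lambda^4}\,|Q_0|, \qquad \lambda \ge \lambda_0,
\]
which strictly improves on the Chebyshev rate $\lambda^{-2}$.

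Running the same argument with progressively wider apertures $\alpha_k = 1 + k\sqrt n$ simultaneously improves the distribution-function bounds for all these apertures in a coupled fashion; iterating $K$ times doubles the decay rate at each step, giving $|\{x \in Q : A_{Q, F}(x) > \lambda\}| \le C_K |Q|/\lambda^{2^{K+1}}$ for every cube $Q$ and $\lambda$ large. The layer-cake formula then yields $\int_Q A_{Q, F}^p\, dx \le C_p|Q|$ for every $p > 0$, with $C_p$ depending only on $p$, $n$, and $C_0$. The main technical obstacle is the aperture mismatch in the tail estimate --- the triangle inequality widens the aperture by $\sqrt n$, so a naive single-aperture argument cannot close --- and it is resolved precisely by the aforementioned Carleson-measure reformulation of the hypothesis, which permits apertures to be freely traded at the price of dimensional constants.
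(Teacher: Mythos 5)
Your proof is correct, but it takes a genuinely different route from the paper's. You run a distributional self-improvement: a Calder\'on--Zygmund stopping time on the superlevel set of the aperture-$(1+\sqrt n)$ square function, the exact splitting $A_{Q_0,F}^2 = A_{\widetilde Q_j,F}^2 + \mathrm{Tail}_j$, and the observation that the tail is dominated by the wider-aperture square function at a good point of the dyadic parent, which upgrades the Chebyshev rate $\lambda^{-2}$ to $\lambda^{-4}$; you then iterate along the aperture ladder $\alpha_k=1+k\sqrt n$ (the improvement at aperture $\alpha_k$ consumes the previous round's bound at $\alpha_{k+1}$, so the rates obey $d_k^{(m+1)}=d_k^{(m)}+d_{k+1}^{(m)}$ and $K$ rounds give decay $\lambda^{-2^{K+1}}$ at aperture $1$), and finish by layer cake. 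The paper instead bounds $K_p=\sup_Q \fint_Q A_{Q,F}^p$ directly: it takes a Whitney decomposition of the superlevel set of a single wider-aperture square function, uses the same tail splitting with a good point at Whitney distance, arrives at $K_p\lesssim C_0K_pN^{-2}+N^p$, and absorbs $K_p$ --- which forces an extra truncation-plus-monotone-convergence step to know $K_p<\infty$ a priori. Your scheme avoids that absorption/a priori finiteness issue at the price of the multi-aperture bookkeeping, which you state only in outline but which does check out (the thresholds $\lambda_0$ and constants depend only on $n$, $C_0$ and $K$, hence on $p$). One ingredient you assert rather than prove is the aperture-insensitivity of the $L^2$ hypothesis, needed to start the iteration with a Chebyshev bound at every aperture $\alpha_k$, $k\le K$: this is exactly the comparability $K_{\alpha,2}\approx_{\alpha,n}K_{1,2}$ that the paper establishes by a short Fubini argument at the end of its proof, and your write-up should include that (standard) lemma to be self-contained.
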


\noindent\emph{Proof.} When $p\leq2$, by Jensen's inequality, the result is trivially true with $C_1\leq C_0^{p/2}$. Now assume $p>2$. For ease of notation, we will write  $A_Q=A_{Q,F}$. Moreover, for $\alpha>0$ we define $$A_{Q, \alpha}(x):=\big(\int_0^{\ell(Q)} \int_{|x-y|<\alpha t} |F(y,t)|^2\, \frac{dydt}{t^{n+1}}\big)^{1/2}.$$ When $\alpha=1$, we may omit the subscript $\alpha$. We also set $K_p:= \sup_{Q\subset \rn} \fint_QA_Q^p$.  Note first that $K_{\alpha,p}\approx_{\alpha,p} K_{1,p}=: K_p$. We defer the proof of this fact to the end, and proceed with the proof of the lemma. 

Let us momentarily assume that $K_p<\infty$ a priori, and set $\alpha>0$ and $N\gg1$, both to be specified later. Consider the open set $\om_N:= \{ x\in Q: \aqa(x)>N\}$.  By the Chebyshev  inequality, we see that $|\om_N|\lesssim_\alpha C_0N^{-2}|Q|$.  In particular, given $\alpha>0$, we may choose $N\gtrsim_\alpha\sqrt{C_0}$ so that $\om_N\subsetneq Q$.
Observe that
\begin{equation}\nonumber
\int_Q A_Q^p= \int_{\om_N} A_Q^p + \int_{Q\backslash \om_N} A_Q^p =: I+II.
\end{equation}
By definition of $\om_N$, we have that $II\leq N^p |Q\backslash\om_N|$. On the other hand, if $(Q_j)_j$ is a Whitney decomposition of $\om_N$, we can write  (exploiting the convexity of  $s\mapsto s^{p/2}$)
\begin{equation}\nonumber
I\lesssim_p \sum_{j\geq 1} \int_{Q_j} A_{Q_j}(x)^p\, dx + \sum_{j\geq 1}\int_{Q_j} (A_{Q}(x)^2-A_{Q_j}(x)^2)^{p/2}\, dx.
\end{equation}
For the first term, we easily have that $$\sum_{j\geq 1} \int_{Q_j} A_{Q_j}(x)^p\, dx \leq \sum_{j\geq 1} K_p |Q_j| = K_p|\om_N|.$$ For the second term,  by definition of $A_Q$ and $A_{Q_j}$ we see that
\begin{equation}\nonumber
\begin{split}
A_Q(x)^2- A_{Q_j}(x)^2  =  \int_{\ell(Q_j)}^{\ell(Q)}\int_{|x-y|<t}|F(y,t)|^2\, \dfrac{dydt}{t^{n+1}}. 
\end{split}
\end{equation}
If $x\in Q_j$, then  there exists $x_*\in Q\backslash\om_N$ (recall $Q\backslash\om_N\neq \emptyset$) such that $|x-x_*|\approx \ell(Q_j)$ with implicit constants depending only on $n$. In particular, for some $\alpha=\alpha(n)>0$, we have the inclusion
\begin{equation}\nonumber
\big\{ (y,t)\in \reu: \, |x-y|<t, \quad \ell(Q_j)<t<\ell(Q)\big\} \subseteq \big\{ (y,t)\in \reu: \, |x_*-y|<\alpha t, \quad 0<t<\ell(Q)\big\},
\end{equation} 
so that  $A_Q(x)^2- A_{Q_j}(x)^2 \leq \aqa (x_*)^2\leq  N^2$, since $x_*\in Q\backslash \om_N$. Accordingly,
\begin{equation}\nonumber
\sum_{j\geq 1} \int_{Q_j} (A_Q (x)^2-A_{Q_j}(x)^2)^{p/2}\, dx \lesssim_\alpha N^p|\om_N|.
\end{equation}
Combining these previous estimates, we obtain that $I\lesssim_{p,n} K_p|\om_N| + N^p|\om_N|$, and so
\begin{equation}\nonumber
\int_Q A_Q(x)^p\, dx \lesssim_{p,n} K_p|\om_N| + N^p|Q| \leq C_0K_pN^{-2}|Q| + N^p |Q|. 
\end{equation}
Dividing by $|Q|$ and taking   supremum over cubes gives $K_p \lesssim_{p,n} C_0K_pN^{-2} + N^p$.  Choosing $N=M\sqrt C_0$ with $M\geq1$ large enough, we may hide the first term to the left-hand side, and thus obtain $K_p \lesssim_{p,n} C_0^{p/2}$.

Finally, to do away with the restriction  $K_p<\infty$, we fix $\eta>0$ and work with $F_\eta:= F{\bbm 1}_{\eta<|F|<1/\eta}{\bbm 1}_{\eta<t<1/\eta}$, for which $K_p<\infty$, and appeal to the monotone convergence theorem in the limit $\eta\to 0^+$. 

We now turn to the proof of $\kap \approx K_p$. Notice that we only used this in the case $p=2$, so we will only prove this special case. We will also work only with $\alpha>1$. By Fubini's theorem, if $\omega_n:=|B(0,1)|$ is the volume of the unit ball in $\rn$,  
\begin{multline}\nonumber
\int_Q \aqa(x)^2\, dx = \int_0^{\ell(Q)} \int_\rn\int_\rn {\bbm 1}_Q(x){\bbm 1}_{|x-y|<\alpha t} |F(y,t)|^2\,  dxdy  \dfrac{dt}{t^{n+1}}
\\= \alpha^n \omega_n\int_0^{\ell(Q)} \int_\rn \Big(\fint_{|x-y|<\alpha t} {\bbm1}_Q(x)\, dx \Big)  |F(y,t)|^2\, dy\dfrac{dt}{t}.
\end{multline}
We claim that\footnote{We remind the reader that the notation $CQ$ means the concentric dilate of $Q$ by a factor $C>0$.}, for some dimensional constants $c,c'$ and every $\beta>1$,
\begin{equation}\nonumber
c\beta^{-n}{\bbm1}_{Q}(y) \leq \fint_{|x-y|<\beta t}{\bbm1}_Q(x)\, dx \leq{\bbm 1}_{c'\beta Q}, \quad \textup{ whenever } 0<t<\ell(Q).
\end{equation}
This claim follows immediately by noting that $\fint_{|x-y|<\beta t}{\bbm1}_{Q}(x)\, dx = \frac{|Q\cap B(y,\beta t)|}{|B(y,\beta t)|}$. Using the second inequality with $\beta=\alpha$ and the first with $\beta=1$ and $c'\alpha Q$ in place of $Q$, we arrive at 
\begin{multline}\nonumber
\int_Q \aqa(x)^2 \, dx  \lesssim_{\alpha,n} \int_0^{\ell(Q)} \int_{c'\alpha Q} |F(y,t)|^2\, dy \dfrac{dt}{t} 
 = \int_0^{\ell(Q)} \int_\rn{\bbm1}_{c'\alpha Q} (y) |F(y,t)|^2\, dy \dfrac{dt}{t}\\
 \lesssim_{n} \int_0^{\ell(Q)} \int_\rn \Big(\fint_{|x-y|<t}{\bbm1}_{c'\alpha Q} (x)\, dx \Big) |F(y,t)|^2\, dy\dfrac{dt}{t} 
  \lesssim_{n,\alpha} \int_{c'\alpha Q} A_Q(x)^2\, dx
  \leq \int_{c'\alpha Q} A_{c'\alpha Q}(x)^2\, dx 
  \lesssim_{n,\alpha} K_p|Q|. 
\end{multline}
The result now follows from taking the supremum over all cubes.\hfill{$\square$}

\subsection{Weights and Extrapolation}

\begin{definition}[$A_p$ weights]\label{def-ap-weights}
Let $1<p<\infty$. A weight   $\nu\in L^1_{\loc}(\rn)$ is said to be an \emph{$A_p$ weight} if there exists a constant $C\geq1$ such that for every cube $Q\subset \rn$, the estimate $$ \Big( \fint_Q \nu \Big)\Big( \fint_Q\nu^{-p'/p}\Big)^{\frac{p}{p'}}\leq C$$ holds.   The infimum over all these constants is denoted $[\nu]_{A_p}$; we refer to it as the \emph{$A_p$ characteristic} of $\nu$. We say that $\nu\in A_1$ if $(\m\nu)(x)\leq C \nu(x)$ for a.e. $x\in \rn$. The infimum over such $C$ is denoted by $[\nu]_{A_1}$.
\end{definition}

Closely related to $A_p$ weights are the reverse H\"older classes.

\begin{definition}[Reverse H\"older class] Let $1<s<\infty$. A weight  $\nu$ is said to satisfy a \emph{reverse H\"older inequality with exponent $s$}, written $\nu\in RH_s$, if there exists  $C\geq1$ such that for every cube $Q\subset \rn$,    $$\Big(\fint_Q \nu^s\Big)^{1/s}\leq C \fint_Q \nu.$$
\end{definition}

Let us summarize most of the  basic  facts about $A_p$ weights which we will need.

\begin{proposition}[{\cite[Theorem 1.14, Lemma 2.2, Lemma 2.5, Theorem 2.6]{gcrdf85}}]\label{prop-properties-ap-weights}
	Let $1\leq p<q<\infty$. The following statements hold. 
	
	\begin{enumerate}[(i)]
		\item \emph{ (\cite[Ch. IV Theorem 1.14 (a)]{gcrdf85})} $A_p\subset A_q$.
		\item A weight $\nu$ belongs to $A_2$ if and only if $\nu^{-1}\in A_2$.
		\item \emph{ (\cite[Ch. IV Theorem 1.14 (b)]{gcrdf85})} If $\nu\in A_p$ then $\nu^\delta\in A_p$ for any $0<\delta<1$.
		\item \emph{ (\cite[Ch. IV Lemma 2.2]{gcrdf85})} If $\nu\in A_p$ then $\nu dx$ is a doubling measure, and the doubling constant depends on $\nu$ only through $[\nu]_{A_p}$ (and $p$).
		\item \emph{ (\cite[Ch. IV Lemma 2.5]{gcrdf85})} If $\nu\in A_p$ then $\nu\in RH_s$ for some $s$ that depends on the weight only through $[\nu]_{A_p}$ (and $p$).
		\item \emph{ (\cite[Ch. IV Theorem 2.6]{gcrdf85})} If $\nu\in A_q$ then $\nu\in A_{q-\varepsilon}$ for some $\varepsilon$ depending on $\nu$ only through $[\nu]_{A_q}$ (and $q$). 
		\item If $\nu\in A_q$ and $s>1$, then $\nu\in RH_s$ if and only if $\nu^s\in A_{s(q-1)+1}$.
		\item \emph{ (Coifman-Rochberg \cite[Proposition 2]{cr80}, \cite[Ch. II Theorem 3.4]{gcrdf85})} If $f:\rn\to \CC$ is such that $(\m f)(x)<\infty$ for a.e. $x\in\bb R^n$, then for every $0<\delta<1$ we have that $\nu_\delta:=(\m f)^\delta\in A_1$ and moreover $[\nu_\delta]_{A_1}\leq C_\delta$ depends only on $\delta$.
		\item \emph{ (Muckenhoupt's Theorem \cite[Theorem 2]{m72}, \cite[Ch. IV Theorem 2.8]{gcrdf85})} For any $1<p<\infty$, $\nu\in A_p$ and $f\in L^p(\nu)$, $\| \m f \|_{L^p(\nu)}\lesssim_{[\nu]_{A_p}} \| f\|_{L^p(\nu)}$.
		\item \emph{ (Coifman-Fefferman \cite[Theorem III]{cf74})} Let $T$ be a ``regular'' singular integral, as defined in \emph{\cite{cf74}}, and $T_*$ the associated maximal operator. Then, for every $\nu\in A_\infty$ and $f\in C_c^\infty(\rn)$, we have that $\| T_*f\|_{L^p(\nu)}\lesssim_{p,n} \| \m f \|_{L^p(\nu)}$.
		In particular, by Muckenhoupt's Theorem above, we have that $\| T_* f\|_{L^p(\nu)}\lesssim_{[\nu]_{A_p}} \| f\|_{L^p(\nu)}$.
	\end{enumerate} 
\end{proposition}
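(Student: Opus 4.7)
The plan is to observe that this proposition is really a compilation of classical results in Muckenhoupt weight theory, each of which is essentially the content of the indicated reference in García-Cuerva--Rubio de Francia \cite{gcrdf85} or the original sources cited. Rather than re-proving the whole theory, I would simply verify each item in turn using the standard techniques; below I sketch the main ideas one would invoke in each case.

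For items (i), (iii), and (iv), which are purely algebraic/geometric consequences of the $A_p$ definition, I would proceed by direct manipulation. The embedding $A_p \subset A_q$ (item (i)) follows from Hölder's inequality applied to the factor $\fint_Q \nu^{-p'/p}$, using that $p'/p > q'/q$ when $p<q$. Item (iii) follows by applying Jensen's inequality to both factors in the $A_p$ characteristic. For the doubling property (iv), test the $A_p$ condition on the cube $2Q$ with the function $\mathbbm{1}_Q$ and the dual weight $\nu^{-p'/p}$, then invert via Hölder. The symmetry statement (ii) is an immediate algebraic consequence of the $A_2$ definition, while (vii) is the standard factorization characterization: the equivalence reduces to noting that $\nu \in RH_s \cap A_q$ is equivalent to $\nu^s$ satisfying an $A_{s(q-1)+1}$ condition via a direct computation of the two averages.

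The heart of the proposition lies in items (v) and (vi), which both follow from a single self-improvement argument via the Calderón--Zygmund decomposition. The plan is to perform a stopping-time argument on level sets of $\nu$, using the $A_p$ condition to control the measure of the stopping set by a fraction of the original cube, and then iterating to produce an exponential decay on distribution functions -- this yields both the reverse Hölder inequality with some exponent $s=s([\nu]_{A_p})>1$, and the openness $A_p \subset A_{p-\varepsilon}$. Item (viii) (Coifman--Rochberg) I would prove by the classical trick: for a cube $Q$, split $f = f\mathbbm{1}_{2Q} + f\mathbbm{1}_{(2Q)^c}$, use weak-type $(1,1)$ of $\mathcal{M}$ on the first piece together with the layer-cake formula to bound $\fint_Q (\mathcal{M}f)^\delta$, and control the second piece pointwise by $\mathcal{M}f(x)$ for $x\in Q$.

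The analytic items (ix) and (x) are the Muckenhoupt and Coifman--Fefferman theorems. For (ix), the plan is the standard weak-type $(p,p)$ argument with respect to $\nu\,dx$: use a Vitali-type covering on the level set $\{\mathcal{M}f > \lambda\}$ and exploit the $A_p$ condition to pass from Lebesgue averages to $\nu$-averages; then strong-type $(p,p)$ follows by Marcinkiewicz interpolation together with the self-improvement (vi). For (x), the Coifman--Fefferman good-$\lambda$ inequality is the key: one compares $T_*f$ to $\mathcal{M}f$ pointwise in distribution, using that $A_\infty$ weights give quantitative control of the level sets of $T_*f$ intersected with $\{\mathcal{M}f \leq \gamma\lambda\}$, and then (ix) closes the argument. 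The only real obstacle in writing out full proofs would be the self-improvement step (v)--(vi), which requires the Calderón--Zygmund stopping argument and careful bookkeeping of constants; all the remaining items then cascade from this cornerstone together with classical singular integral techniques.
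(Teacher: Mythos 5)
Your sketch is correct: each item is indeed the classical result indicated, and the arguments you outline (Hölder/Jensen for (i)–(iv), the direct two-average computation for (vii), the Calderón--Zygmund self-improvement for (v)--(vi), the local/far splitting with the weak-$(1,1)$ bound for Coifman--Rochberg, and the covering plus good-$\lambda$ machinery for (ix)--(x)) are exactly the standard proofs. The paper itself supplies no proof but simply cites \cite{gcrdf85}, \cite{cr80}, \cite{m72}, \cite{cf74} for these facts, so your reconstruction matches the intended (classical) route and nothing further is needed.
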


The following result was originally proved by Rubio de Francia in \cite{rdf83,rdf84}. We refer to \cite[Theorem 1.1]{cump} for a simple proof of this fact.

\begin{theorem}\label{thm-extrapolation-ap-weights} Let $1<p_0<\infty$ and let $T$ be an operator satisfying $\| Tf\|_{L^{p_0}(\nu)} \lesssim_{[\nu]_{A_{p_0}}} \| f\|_{L^{p_0}(\nu)}$, for all $\nu\in A_{p_0}$ and all $f\in L^{p_0}(\nu)$. Then, for every $p\in(1,\infty)$,  $\nu\in A_p$, and   $f\in L^p(\nu)$, we have $\| Tf\|_{L^p(\nu)} \lesssim_{[\nu]_{A_p}} \| f\|_{L^p(\nu)}$.
\end{theorem}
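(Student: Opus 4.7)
The plan is to prove this by the Rubio de Francia algorithm, as streamlined in \cite{cump}. Fix $p\in(1,\infty)$, $\nu\in A_p$, and $f\in L^p(\nu)$; the case $p=p_0$ is the assumption. For concreteness I focus on the case $p>p_0$, since $p<p_0$ is handled by an essentially symmetric argument (with the roles of the exponents $p/p_0$ and $p_0/p$ interchanged).

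The first step is to dualize the $L^p(\nu)$ norm \emph{at the level of $p_0$}: since $p/p_0>1$,
\[
\|Tf\|_{L^p(\nu)}^{p_0} = \bigl\||Tf|^{p_0}\bigr\|_{L^{p/p_0}(\nu)} = \sup_{\substack{h\geq 0\\ \|h\|_{L^{r}(\nu)}\leq 1}} \int_{\rn} |Tf|^{p_0}\, h\,d\nu, \qquad r:=(p/p_0)'.
\]
Fixing such an $h$, the strategy is to dominate $h\,d\nu$ by a measure $w\,dx$ with $w\in A_{p_0}$ and $[w]_{A_{p_0}}$ controlled by $[\nu]_{A_p}$, so that the hypothesis together with the normalization of $h$ yields the desired bound.

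To construct $w$, I would invoke the Rubio de Francia iteration. Since $\nu\in A_p\subset A_\infty$, the measure $d\mu:=\nu\,dx$ is doubling (Proposition~\ref{prop-properties-ap-weights}(iv)), and hence the maximal operator $M^{\mu}$ taken with respect to $\mu$ is bounded on $L^{r}(\mu)=L^{r}(\nu)$ with norm depending only on $[\nu]_{A_p}$. Setting
\[
\mathcal{R}h := \sum_{k=0}^{\infty} \frac{(M^{\mu})^k h}{\bigl(2\|M^{\mu}\|_{L^r(\mu)}\bigr)^k},
\]
one easily verifies $h\leq \mathcal{R}h$ pointwise, $\|\mathcal{R}h\|_{L^r(\mu)}\leq 2\|h\|_{L^r(\mu)}$, and $M^{\mu}(\mathcal{R}h)\leq 2\|M^{\mu}\|_{L^r(\mu)}\,\mathcal{R}h$. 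The last inequality says that $\mathcal{R}h$ is an $A_1$-weight \emph{with respect to} $\mu$.

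The main obstacle is then to verify that $w:=(\mathcal{R}h)\,\nu$ belongs to $A_{p_0}$ with $[w]_{A_{p_0}}$ bounded only in terms of $n$, $p$, $p_0$, and $[\nu]_{A_p}$. This is a Jones-type factorization computation: one estimates the $A_{p_0}$ ratio $\bigl(\fint_Q w\bigr)\bigl(\fint_Q w^{1-p_0'}\bigr)^{p_0-1}$ by applying the $A_1(\mu)$ property of $\mathcal{R}h$ on one factor and the $A_p$-condition for $\nu$ (together with its reverse-H\"older self-improvement, Proposition~\ref{prop-properties-ap-weights}(v),(vi)) on the other, with the exponent $r=(p/p_0)'$ tuned precisely so that the two estimates combine to give the $A_{p_0}$ bound. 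Granting this, the hypothesis yields $\int |Tf|^{p_0}\,w\,dx\lesssim \int |f|^{p_0}\,w\,dx$, after which a final H\"older inequality with exponents $p/p_0$ and $r$ and the $L^r(\nu)$ bound on $\mathcal{R}h$ recovers $\|Tf\|_{L^p(\nu)}\lesssim \|f\|_{L^p(\nu)}$ with constant depending only on $[\nu]_{A_p}$.
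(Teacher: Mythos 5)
Your duality step, the iteration $\mathcal{R}h=\sum_k (M^{\mu})^k h/(2\|M^{\mu}\|)^k$, and its three properties are all fine, but the step you flag as "the main obstacle" is not merely an obstacle — it is false, and no tuning of $r=(p/p_0)'$ can repair it. The $A_1(\mu)$ property of $\mathcal{R}h$ with $d\mu=\nu\,dx$ only yields that $(\mathcal{R}h)\,\nu$ stays in the \emph{same} class as $\nu$ (one can check $[(\mathcal{R}h)\nu]_{A_p}\le [\mathcal{R}h]_{A_1(\nu)}[\nu]_{A_p}$); it cannot promote the product into the strictly smaller class $A_{p_0}$ when $p_0<p$. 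Concretely, take $p_0=2$, $p=4$, $\nu(x)=|x|^{2n}\in A_4\setminus A_2$, and $h=c\,{\bbm 1}_{B(0,1)}$ normalized in $L^{r}(\nu)$. Since $h$ is bounded and $\mathcal{R}h\ge h$, while every iterate $(M^{\mu})^k h$ is bounded by $\|h\|_\infty$, one has $c\le \mathcal{R}h\le 2c$ on $B(0,1)$, so $w=(\mathcal{R}h)\nu\approx \nu$ near the origin; but $\nu^{-1}=|x|^{-2n}$ is not locally integrable, so $w$ fails the $A_2$ condition on small cubes centered at $0$. The obstruction is local and insensitive to the normalizing constant, hence to the choice of $r$ and to any reverse-H\"older self-improvement of $\nu$.

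This is exactly where the actual Rubio de Francia argument (the paper gives no proof and defers to \cite[Theorem 1.1]{cump}; see also \cite[Theorem 3.9]{cump}) diverges from your plan: to land in $A_{p_0}$ starting from $\nu\in A_p$ with $p>p_0$ one needs \emph{two} iteration algorithms and reverse factorization. One algorithm uses the Lebesgue maximal operator, bounded on $L^p(\nu)$, applied to a normalized combination of $|f|$ and $|Tf|$, producing $H_1\in A_1$; the other uses $h\mapsto M(h\nu)/\nu$, bounded on $L^{p'}(\nu)$ (equivalently on the dual-weight space), producing $H_2$ with $H_2\nu\in A_1$. The admissible weight is then of the form $W=(H_2\nu)\,H_1^{1-p_0}\in A_{p_0}$ by the elementary reverse factorization $u_1u_2^{1-p_0}\in A_{p_0}$ for $u_1,u_2\in A_1$; the negative power of $H_1$ (and the fact that the weight is allowed to depend on $f$ itself, which is harmless since the hypothesis holds for \emph{all} $A_{p_0}$ weights) is precisely the mechanism that compensates for $\nu$ being only in $A_p$. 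A single algorithm taken with respect to $\nu\,dx$, as in your proposal, cannot produce this gain, so the proof as written does not go through; the symmetric claim for $p<p_0$ likewise needs the corresponding (different) construction rather than a mirror image of this one.
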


It is important for applications to note that the above theorem does not require any special structure on $T$; it does not need to be linear or sublinear. In fact, we have

\begin{theorem}[{\cite[Theorem 3.9]{cump}}]\label{thm-extrapolation-version-2}
Fix $p_0 \in (1,\infty)$ and $\mathcal{F}$ a collection of pairs of non-negative measurable functions $(f,g)$. Suppose that    $\| f\|_{L^{p_0}(\nu)} \lesssim_{[\nu]_{A_{p_0}}} \| g\|_{L^{p_0}(\nu)}$ for all $\nu\in A_{p_0}$ and all $(f,g)\in \mathcal{F}$. Then for every $p\in(1,\infty)$, $\nu\in A_p$, and   $(f,g)\in\mathcal{F}$, we have $\| f\|_{L^p(\nu)} \lesssim_{[\nu]_{A_p}} \| g\|_{L^p(\nu)}$.
\end{theorem}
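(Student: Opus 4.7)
The plan is to prove Theorem \ref{thm-extrapolation-version-2} via the Rubio de Francia iteration algorithm. A key feature of this approach is that it requires no structural hypothesis on the relationship between $f$ and $g$: the argument is purely pointwise in the weight, and works for arbitrary pairs $(f,g) \in \mathcal{F}$. The core tool is the Rubio de Francia operator: given a positive sublinear operator $T$ bounded on some weighted $L^r(u)$ with norm $\|T\|$, the series
\[ Rh := \sum_{k \ge 0} \frac{T^k h}{(2\|T\|)^k} \]
satisfies $h \le Rh$ pointwise, $\|Rh\|_{L^r(u)} \le 2 \|h\|_{L^r(u)}$, and $T(Rh) \le 2\|T\| \cdot Rh$. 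Taking $T = M$ the Hardy-Littlewood maximal operator, the last property yields $Rh \in A_1$ with $[Rh]_{A_1} \le 2\|M\|_{L^r(u) \to L^r(u)}$, which by Proposition \ref{prop-properties-ap-weights}(ix) is controlled by $[u]_{A_r}$.

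First consider the case $p > p_0$. Fix $\nu \in A_p$ and $(f,g) \in \mathcal{F}$; without loss of generality $\|g\|_{L^p(\nu)} < \infty$, and after a truncation argument (discussed below) we may also assume $\|f\|_{L^p(\nu)} < \infty$. Setting $r := (p/p_0)'$, duality gives
\[ \|f\|_{L^p(\nu)}^{p_0} \;=\; \|f^{p_0}\|_{L^{p/p_0}(\nu)} \;=\; \int_{\rn} f^{p_0} h \, d\nu \]
for some $h \ge 0$ with $\|h\|_{L^r(\nu)} = 1$. Because $\nu \in A_p$, Proposition \ref{prop-properties-ap-weights}(vi)-(ix) implies that $M$ is bounded on $L^r(\nu)$ with norm controlled by $[\nu]_{A_p}$ (after verifying the appropriate compatibility of $\nu$ with the exponent $r$ via the self-improvement of $A_p$). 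Applying $R$ in this space produces $H \ge h$ with $\|H\|_{L^r(\nu)} \le 2$ and $H \in A_1$ quantitatively. The key algebraic step is to verify that a weight $w$ constructed from $H$ and $\nu$ — in practice via Jones factorization of $\nu$ as $u_1 u_2^{1-p}$ with $u_1, u_2 \in A_1$, combined with $H$ — belongs to $A_{p_0}$ with $[w]_{A_{p_0}}$ controlled only by $[\nu]_{A_p}$. Applying the hypothesis at $p_0$ with weight $w$ and then Hölder with conjugate exponents $p/p_0$ and $r$,
\[ \|f\|_{L^p(\nu)}^{p_0} \;\le\; \int f^{p_0} H \, d\nu \;\le\; C \int g^{p_0} H \, d\nu \;\le\; C \|g\|_{L^p(\nu)}^{p_0} \|H\|_{L^r(\nu)} \;\le\; 2C \|g\|_{L^p(\nu)}^{p_0}. \]

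The case $p < p_0$ is handled by a parallel but non-dualized argument: one applies the Rubio de Francia iteration to $M$ on an appropriately chosen weighted $L^{(p_0/p)'}$-space to obtain an $A_1$ weight $H$ with the correct norm bound, and builds from $H$ and $\nu$ (again via Jones factorization) a weight $w \in A_{p_0}$ so that Hölder's inequality with exponent $p_0/p$ and the hypothesis at $p_0$ for $(f,g)$ with weight $w$ reconstitute the $L^p(\nu)$ bound for $f$ in terms of the same for $g$.

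The principal obstacle in both cases is the construction of the weight $w \in A_{p_0}$ with quantitative control on $[w]_{A_{p_0}}$ in terms of $[\nu]_{A_p}$ and $[H]_{A_1}$; this is precisely what Jones factorization supplies, by allowing one to trade between the classes $A_p$ and $A_{p_0}$ through products of $A_1$ weights raised to appropriate (possibly negative) powers. A secondary but necessary technicality is the truncation removing the a priori assumption $\|f\|_{L^p(\nu)} < \infty$: one applies the conclusion to $(f_N, g)$ with $f_N = \min(f,N) \mathbf{1}_{B(0,N)}$, arranging that $\mathcal{F}$ is closed under such truncations (which enlarges $\mathcal{F}$ at no cost and preserves the hypothesis at $p_0$ by monotonicity), and then takes $N \to \infty$ by monotone convergence.
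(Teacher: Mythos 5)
The paper never proves this statement---it is quoted verbatim from Cruz-Uribe--Martell--P\'erez---so the only question is whether your Rubio de Francia argument is correct, and in the case $p>p_0$ it is not. The premise that $\m$ is bounded on $L^{r}(\nu)$ with $r=(p/p_0)'$ for every $\nu\in A_p$ is false: $r=p/(p-p_0)$ can be far below $p$, and $A_p\not\subset A_r$; the openness of $A_p$ only lowers the exponent by an $\varepsilon$, not from $p$ down to $r$. Concretely, with $p_0=2$, $p=4$ one has $r=2$, and $\nu(x)=|x|^{a}$ with $n<a<3n$ lies in $A_4(\rn)$ but not in $A_2(\rn)$, so the iteration you propose cannot even be run. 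Worse, the design itself cannot be repaired by a cleverer majorant of the same kind: in your displayed chain the hypothesis at $p_0$ is applied with the measure $H\,d\nu$, so you need $[H\nu]_{A_{p_0}}$ controlled; but if $H\in A_1$ and $H\nu\in A_{p_0}$, then writing $H\nu=w_1w_2^{1-p_0}$ (Jones) and $v:=w_2^{(p_0-1)/p_0}H^{1/p_0}\in A_1$ gives $\nu=(H\nu)H^{-1}=w_1v^{1-(p_0+1)}\in A_{p_0+1}$. Hence for any $\nu\in A_p\setminus A_{p_0+1}$ (which exists whenever $p>p_0+1$, e.g.\ $|x|^a$ with $np_0<a<n(p-1)$) \emph{no} $A_1$ function $H$, whatever its norm, can make $H\nu$ an $A_{p_0}$ weight. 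So the ``key algebraic step'' you defer to Jones factorization is not merely unverified; as set up (an $A_1$ majorant built from the plain maximal operator, with the hypothesis applied to $H\,d\nu$) it is false in general.

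The actual proofs resolve exactly this point by building the majorant with an algorithm \emph{adapted to} $\nu$---for instance the dual operator $v\mapsto \m(v\nu)/\nu$, bounded on $L^{p'}(\nu)$ because $\nu^{1-p'}\in A_{p'}$, or the product of two algorithm outputs as in Cruz-Uribe--Martell--P\'erez and Duoandikoetxea---so that the resulting majorant need not be an $A_1$ weight by itself, but its product with $\nu$ lands in $A_{p_0}$ (typically in $A_1$ or in $A_1\cdot A_1^{1-p_0}$) by construction, with constants depending only on $[\nu]_{A_p}$. Your case $p<p_0$ is closer to the standard argument, but the iteration there should be run with $\m$ on $L^p(\nu)$ (legitimate precisely because $\nu\in A_p$), the weight being $(\mathcal{R}h)^{p-p_0}\nu$ with $h$ dominating $g/\|g\|_{L^p(\nu)}$; running it ``on an appropriately chosen weighted $L^{(p_0/p)'}$ space'' re-imports the same unjustified maximal-function bound. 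The truncation remark and the final H\"older step with exponents $p/p_0$ and $r$ are fine, but as written the upward extrapolation---which is the heart of the theorem---does not go through.
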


In practice, the collection $\mathcal{F}$ often takes the form $(|S_1 h|, |S_2 h|)$ for some operators $S_i$ and $h$ in some nice class of functions. A corollary of the previous theorem and this observation is the following.

\begin{corollary}[{\cite[Corollary 3.14]{cump}}]\label{cor-weighted-l2-implies-lp-one-sided-restriction}
Let $r\in(1,2)$, and suppose that $T$ is an operator satisfying $\| Tf\|_\ltnu \lesssim_{[\nu]_{A_{2/r}}} \| f\|_\ltnu$, for  each $f\in C_c^\infty(\rn)$ and all $\nu\in A_{2/r}$. Then $\| Tf\|_{L^q(\mathbb{R}^n)}\lesssim_q \| f\|_{L^q(\mathbb{R}^n)}$  for all $q>r$.
\end{corollary}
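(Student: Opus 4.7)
The plan is to reduce to the standard Rubio de Francia extrapolation result (Theorem \ref{thm-extrapolation-version-2}) via a power change of variables. The observation driving this reduction is that, although the hypothesis is stated at the $L^2$ level with the \emph{restricted} weight class $A_{2/r} \subsetneq A_2$, the quantity $2/r$ is itself strictly greater than $1$, and therefore can serve as the \emph{base} exponent in the extrapolation theorem, provided we first rescale the functions appropriately.

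Concretely, for each $f \in C_c^\infty(\rn)$ form the pair $(F, G) := (|Tf|^r, |f|^r)$, and let $\mathcal{F}$ denote the collection of all such pairs as $f$ ranges over $C_c^\infty(\rn)$. A direct computation shows that for every weight $\nu$,
\begin{equation*}
\| F \|_{L^{2/r}(\nu)} = \| Tf \|_{L^2(\nu)}^{r}, \qquad \| G \|_{L^{2/r}(\nu)} = \| f \|_{L^2(\nu)}^{r},
\end{equation*}
so the hypothesis becomes
\begin{equation*}
\| F \|_{L^{2/r}(\nu)} \lesssim_{[\nu]_{A_{2/r}}} \| G \|_{L^{2/r}(\nu)}, \qquad \nu \in A_{2/r}, \ (F,G) \in \mathcal{F}.
\end{equation*}
This is precisely the hypothesis of Theorem \ref{thm-extrapolation-version-2} with $p_0 := 2/r \in (1, 2)$ and with the family $\mathcal{F}$.

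Applying Theorem \ref{thm-extrapolation-version-2} then yields the analogous inequality for every $p_1 \in (1, \infty)$ and every $\nu \in A_{p_1}$. Finally, fix $q > r$; then $p_1 := q/r > 1$, and the constant weight $\nu \equiv 1$ belongs to $A_{q/r}$. Specializing the extrapolated inequality to this choice of $p_1$ and $\nu$, and undoing the power substitution, gives
\begin{equation*}
\| Tf \|_{L^q(\rn)}^{r} = \| F \|_{L^{q/r}(\rn)} \lesssim_q \| G \|_{L^{q/r}(\rn)} = \| f \|_{L^q(\rn)}^{r},
\end{equation*}
from which the desired bound $\| Tf \|_{L^q(\rn)} \lesssim_q \| f \|_{L^q(\rn)}$ follows at once. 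There is no serious obstacle here: the entire content of the argument is the packaging of the hypothesis as a family of pairs of nonnegative functions at the exponent $2/r$, after which the abstract extrapolation machinery does all the work; one only needs to verify that the implicit constants depend on $\nu$ solely through $[\nu]_{A_{2/r}}$, which is already assumed.
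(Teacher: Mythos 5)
Your proposal is correct and is essentially identical to the paper's own proof: the paper likewise sets $S_1 f := |Tf|^r$, $S_2 f := |f|^r$, invokes Theorem \ref{thm-extrapolation-version-2} at the base exponent $2/r$, and then specializes to the constant weight with $p = q/r$. No further comment is needed.
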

To prove the corollary, one defines $S_1 f := |Tf|^r$, $S_2f := |f|^r$. Then, by hypothesis, $\| S_1 f\|_{L^{2/r}(\nu)} \lesssim_{[\nu]_{A_{2/r}}} \| S_2 f\|_{L^{2/r}(\nu)}$, and hence by the previous theorem, $\| S_1 f\|_{L^p(\nu)} \lesssim_{[\nu]_{A_p}} \| S_2 f\|_{L^p(\nu)}$ for $p\in(1,\infty)$. Setting $\nu\equiv 1$ and $p = q/r$ gives the desired result.

\begin{theorem}\label{thm-weighted-lp}
Let $(\Q_s)_s$ be a CLP family (see Definition \ref{def-clp-family}) and let $\nu\in A_2$. It holds that
\begin{equation}\nonumber
\int_\rn \int_0^\infty |(\Q_t f)(x)|^2\, \dfrac{dt}{t}\nu(x)\, dx \lesssim_{n,[\nu]_{A_2}} \int_\rn |f(x)|^2\nu(x)\, dx.
\end{equation}
\end{theorem}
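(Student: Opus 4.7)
The approach is to reinterpret the inequality as the boundedness of a Hilbert-space-valued Calder\'on--Zygmund operator, and to invoke the weighted vector-valued theory. Concretely, set $\mathcal{H}:=L^2((0,\infty), dt/t)$ and define, for $f\in C_c^\infty(\rn)$,
\[
(Tf)(x)\,:=\,\big(t\mapsto (\Q_t f)(x)\big)\in\mathcal{H}.
\]
The claim is then exactly $\|Tf\|_{L^2(\nu;\mathcal{H})}\lesssim_{[\nu]_{A_2}}\|f\|_{L^2(\nu)}$. The unweighted version $\|Tf\|_{L^2(\rn;\mathcal{H})}\lesssim\|f\|_{L^2(\rn)}$ is item (iii) of Definition \ref{def-clp-family}. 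Since $\Q_t$ is convolution with $\psi_t$, the operator $T$ admits a natural Hilbert-space-valued kernel
\[
K(x,y)\,:=\,\big(t\mapsto t^{-n}\psi((x-y)/t)\big),\qquad x\neq y,
\]
so that $(Tf)(x)=\int_\rn K(x,y)f(y)\,dy$ whenever $x\notin\supp f$.

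Next, I would verify the two Calder\'on--Zygmund-type kernel estimates. For the \emph{size bound} $\|K(x,y)\|_{\mathcal{H}}\lesssim |x-y|^{-n}$, the decay assumption $|\psi(z)|\lesssim(1+|z|)^{-n-\sigma}$ yields
\[
\|K(x,y)\|_{\mathcal{H}}^2\;\lesssim\;\int_0^\infty t^{-2n}\Big(1+\tfrac{|x-y|}{t}\Big)^{-2n-2\sigma}\,\frac{dt}{t},
\]
and the substitution $u=|x-y|/t$ reduces this to $|x-y|^{-2n}\int_0^\infty u^{2n-1}(1+u)^{-2n-2\sigma}\,du$, which converges because $\sigma>0$. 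For the \emph{H\"ormander condition}, since $K(x,y)$ depends only on $x-y$, I need
\[
\int_{|z|>2|h|}\|\widetilde K(z-h)-\widetilde K(z)\|_{\mathcal{H}}\,dz\;\lesssim\;1\quad\text{uniformly in }h,
\]
where $\widetilde K(z)(t)=t^{-n}\psi(z/t)$. The plan is to extract Hölder regularity of $\psi$ from the Fourier decay $|\widehat\psi(\xi)|\lesssim\min(|\xi|^\sigma,|\xi|^{-\sigma})$: interpolating this with $\widehat{\psi}\in L^\infty$ gives a modulus of continuity for $\psi$ that, combined with the pointwise decay, produces an estimate of the form
\[
\|\widetilde K(z)-\widetilde K(z-h)\|_{\mathcal{H}}\;\lesssim\;\frac{|h|^\gamma}{|z|^{n+\gamma}},\qquad |z|>2|h|,
\]
for some $\gamma\in(0,\sigma)$, which is manifestly integrable over $\{|z|>2|h|\}$ with a uniform bound.

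With the size and H\"ormander conditions together with the $L^2$ boundedness in hand, the weighted vector-valued Calder\'on--Zygmund theorem (the natural Hilbert-space-valued analogue of the scalar Coifman--Fefferman / Muckenhoupt theory, as recorded e.g.\ in the vector-valued extension of Proposition \ref{prop-properties-ap-weights}(x)) yields $\|Tf\|_{L^p(\nu;\mathcal{H})}\lesssim_{[\nu]_{A_p}}\|f\|_{L^p(\nu)}$ for every $1<p<\infty$ and $\nu\in A_p$. Specializing to $p=2$ gives exactly the stated inequality, with the constant depending on $\nu$ only through $[\nu]_{A_2}$.

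\textbf{Main obstacle.} The size estimate is routine, as is the application of the weighted vector-valued theorem once both kernel hypotheses are verified. The real technical point is the H\"ormander smoothness estimate, because Definition \ref{def-clp-family} imposes no a priori pointwise regularity on $\psi$ beyond decay---regularity has to be extracted from the Fourier decay of $\widehat\psi$. I would handle this either by the interpolation argument sketched above (to gain H\"older smoothness) or, alternatively, by splitting $\{|z|>2|h|\}$ into dyadic annuli $|z|\sim R$ and estimating $\|\widetilde K(\cdot-h)-\widetilde K(\cdot)\|_{L^2(|z|\sim R;\mathcal{H})}$ on the Fourier side via Plancherel and the decay of $\widehat\psi$, then recombining with Cauchy--Schwarz to convert the $L^2$ annular bound into an $L^1$ bound with a summable gain of $(|h|/R)^\delta$.
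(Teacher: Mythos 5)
Your reduction to a Hilbert-space-valued singular integral is the standard route when the kernel is smooth, and your size bound $\|K(x,y)\|_{\mathcal H}\lesssim|x-y|^{-n}$ is fine; but the step you yourself flag as the crux is a genuine gap, and neither of your two proposed fixes works under Definition \ref{def-clp-family}. The definition imposes no pointwise regularity on $\psi$, and the Fourier decay $|\hat\psi(\xi)|\lesssim|\xi|^{-\sigma}$ with $\sigma>0$ possibly very small does not yield any modulus of continuity for $\psi$: to get $\psi\in C^\gamma$ by Fourier inversion you would need $\int|\hat\psi(\xi)||\xi|^\gamma\,d\xi<\infty$, i.e.\ decay faster than $|\xi|^{-n-\gamma}$, which is far stronger than what is assumed (for small $\sigma$, $\hat\psi$ need not even be integrable, so $\psi$ need not be continuous). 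So the ``interpolation'' in route (a) cannot produce the pointwise estimate $\|\widetilde K(z)-\widetilde K(z-h)\|_{\mathcal H}\lesssim|h|^\gamma|z|^{-n-\gamma}$. Route (b) also breaks down: Plancherel is a global identity, while the quantity you need is localized to annuli avoiding the singularity; the global difference $\int_{\rn}\|\widetilde K(z-h)-\widetilde K(z)\|_{\mathcal H}^2\,dz$ diverges (the small-$t$/small-$z$ singularity is non-integrable in $L^2$), and if you split the $t$-integration at a threshold comparable to $R$ on the annulus $|z|\sim R$, the small-$t$ piece is controlled only by the pointwise decay of $\psi$, which gives a bound of order $O(1)$ per annulus with no gain in $|h|/R$ — the gain would again require cancellation, i.e.\ smoothness of $\psi$. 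Finally, even if you did establish an integral ($L^1$ or $L^2$) H\"ormander condition, that is not enough for the conclusion you want: the Coifman--Fefferman weighted theory invoked in Proposition \ref{prop-properties-ap-weights} requires ``regular'' kernels (pointwise H\"older/Dini smoothness), and it is known (Martell--P\'erez--Trujillo-Gonz\'alez) that the H\"ormander integral condition alone does not imply $L^p(\nu)$ bounds for all $\nu\in A_p$. So the final appeal to a weighted vector-valued Calder\'on--Zygmund theorem is not justified under the hypotheses you can verify.

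For comparison, the paper's proof is designed precisely to avoid any kernel smoothness for $\psi$. It reduces the claim, by a quasi-orthogonality argument and the Calder\'on reproducing formula, to the single estimate $\|\Q_s\widetilde\Q_t^2f\|_{L^2(\nu)}\lesssim\min(s/t,t/s)^\alpha\|\widetilde\Q_tf\|_{L^2(\nu)}$ for an auxiliary (smooth) CLP family $\widetilde\Q_t$; this is obtained by interpolating, with change of measure in the sense of Stein--Weiss, an \emph{unweighted} quasi-orthogonality bound (where the low-frequency decay of $\hat\psi$ and the smoothness of the auxiliary family do the work) against a \emph{uniform} weighted bound coming from $|\Q_sf|\lesssim\m f$ and Muckenhoupt's theorem, using the self-improvement $\nu^{1+\delta}\in A_2$. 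If you want to keep your vector-valued framework, you would have to add a smoothness hypothesis on $\psi$ (e.g.\ H\"older continuity with decay), which would prove a strictly weaker theorem than the one stated.
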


\begin{remark}
By Theorem \ref{thm-extrapolation-ap-weights}, we obtain that the vertical square function associated to $(\Q_s)_s$ is bounded on $L^p(\nu)$ for every $\nu\in A_p$ and $1<p<\infty$; that is, $\| \v(\Q_s f)\|_{L^p(\nu)} \lesssim \| f\|_{L^p(\nu)}$ for every $\nu\in A_p$.
\end{remark}

\noindent\emph{Proof of Theorem \ref{thm-weighted-lp}}. The idea is to use the method in \cite[Theorem B]{drdf86}, to interpolate a ``good'' bound with a plain uniform bound in order to obtain another ``good'' bound in between. We will combine this with interpolation with change of measures as in \cite[Theorem 2.11]{sw58}, exploiting the self-improvement property of $A_p$ weights. Since this idea will be used quite often throughout the paper we write out this portion of the the argument in full here, and refer back to it when applicable.

We first claim that it is enough to prove the following estimate:
\begin{equation}\label{eq-quasi-orth-weighted-lp}
\int_{\rn} |\Q_s \widetilde{\Q}_t^2 f|^2\nu \lesssim_{[\nu]_{A_2}} \min\Big( \dfrac{t}{s}, \dfrac{s}{t}\Big)^\alpha \int_\rn |\widetilde{\Q}_tf|^2\nu, \qquad \text{for each }s,t>0, 
\end{equation}
for some $\alpha>0$ and some CLP family $(\widetilde{\Q}_t)_t$. Indeed, once this is shown, the desired result follows from a familiar quasi-orthogonality argument (see for instance the proof of \cite[Theorem 4.6.3]{grafakosmfa}).

To prove \eqref{eq-quasi-orth-weighted-lp}, we   claim that it is enough to prove the following estimates.
\begin{enumerate}[(i)]
	\item (Unweighted quasi-orthogonality) There exists $\beta>0$ such that for any $s,t>0$, we have the estimate $$\int_\rn |\Q_s\widetilde{\Q}_t^2 f|^2 \leq C_1\Big(\frac{s}{t},\frac{t}{s}\Big)^\beta \int_\rn |\widetilde{\Q}_tf|^2.$$
 	\item (Uniform weighted estimate) For any $s>0$ and $\nu\in A_2$, we have the estimate $$\int_\rn |\Q_s\widetilde{\Q}_t^2f|^2\nu \leq C_2([\nu]_{A_2}) \int_\rn |\widetilde{\Q}_tf|^2\nu.$$
\end{enumerate}

Assume that these hold for the moment and fix $\nu\in A_2$. By properties of $A_2$ weights, there exist  $\delta,C>0$ such that $\nu^{1+\delta}\in A_2$ with $[\nu^{1+\delta}]_{A_2}\leq C$. In particular, the uniform weighted estimate holds with $\nu^{1+\delta}$ in place of $\nu$, with the implicit constants depending only on $[\nu]_{A_2}$. Therefore, if we define the measures $d\mu_\tau:=\nu^{(1+\delta)\tau}\, dx $, interpolation with change of measure (see \cite[Theorem 2.11]{sw58}\footnote{Strictly speaking, the statement of \cite[Theorem 2.11]{sw58} explicitly excludes the case under consideration (indeed the proof given does not apply in this case); however as is mentioned immediately after the statement of said Theorem, we may run an argument similar to the standard proof of the Riesz-Thorin Theorem, employing instead the three line lemma for sub-harmonic functions as in \cite{cz56}.}) gives
\begin{equation}\nonumber
\int_\rn |\Q_s\widetilde{\Q}_t^2 f|^2\, d\mu_\tau  \leq C_1^{1-\tau}C_2^{\tau} \Big(\dfrac{s}{t},\dfrac{t}{s}\Big)^{\beta (1-\tau)} \int_\rn |\widetilde{\Q}_tf|^2 d\mu_\tau.
\end{equation}
The desired estimate \eqref{eq-quasi-orth-weighted-lp} is exactly the case $\tau=1/(1+\delta)$ with $\alpha=\beta \delta/(1+\delta)$. This completes the proof, modulo the above pair of estimates.

The first estimate, unweighted quasi-orthogonality, is a consequence of classical Littlewood-Paley theory. On the other hand, the weighted estimate follows from both the fact that $|\Q_s f|, |\widetilde{\Q}_tf|\lesssim \m f$ pointwise in $\rn$ and Muckenhoupt's theorem on the $L^2(\nu)$ boundedness of $\m$ for $\nu\in A_2$ (see Proposition \ref{prop-properties-ap-weights}).  \hfill{$\square$}

\begin{lemma}[$L^p$ inequalities from weighted $L^2$ bounds]\label{lem-restricted-extrapolation}
Suppose that $T:\ltrn\to\ltrn$ is a bounded (not necessarily linear) operator; that is, $\| Tf\|_\ltrn \lesssim \| f\|_\ltrn$.
\begin{enumerate}[(i)]
	\item Suppose that there exists $M>1$ such that for all $\nu\in A_1$ with the property that $\nu^M\in A_1$ it holds that $\| Tf\|_\ltnu \lesssim_{[\nu^M]_{A_1}} \| f\|_\ltnu$, for every $f\in C_c^\infty(\rn)$. Then for every $p\in (2,2+1/M)$, it holds that $\| Tf\|_\lprn \lesssim_p \| f\|_\lprn$.
	\item Suppose that there exists $M>1$ such that for all $\nu$ with the property that $\nu^{-M}\in A_1$ it holds that $\| Tf\|_\ltnu \lesssim_{[\nu^{-M}]_{A_1}} \| f\|_\ltnu$	for every $f\in C_c^\infty(\rn)$. Then for every $p\in (2-1/M,2)$, we have that $\| Tf\|_\lprn \lesssim_p \| f\|_\lprn$.
	\item Suppose that there exists $M>1$ such that for all $\nu\in A_2$ with the property that $\nu^M\in A_2$, it holds that  $\| Tf\|_{\ltnu} \lesssim_{[\nu^M]_{A_2}}\| f\|_\ltnu$ 	for every $f\in C_c^\infty(\rn)$. Then for every $p\in(2-1/M,2+1/M)$, we have $\| Tf\|_\lprn \lesssim_p \| f\|_\lprn$.
\end{enumerate}
\end{lemma}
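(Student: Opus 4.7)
The plan is a direct Muckenhoupt--Wheeden style argument: in each case, the weight $\nu$ is chosen as a fractional power of the Hardy--Littlewood maximal function of either $|f|$ or $|Tf|$, and the admissibility with respect to the hypothesis is verified by invoking the Coifman--Rochberg fact $(\cM g)^\delta\in A_1$ with uniform constant whenever $\delta\in(0,1)$ (Proposition \ref{prop-properties-ap-weights}(viii)). This construction dovetails with the hypothesis ``$\nu^{\pm M}\in A_1$'' precisely when the exponent attached to the maximal function lies in $(0,1/M)$, which is what accounts for the stated range of $p$.

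For (i), fix $p\in(2,2+1/M)$ and $f\in C_c^\infty(\rn)$, and assume a priori that $\|Tf\|_\lprn<\infty$ (discussed at the end). Set $\nu:=(\cM|Tf|)^{p-2}$. Since $p-2\in(0,1/M)\subset(0,1)$, Coifman--Rochberg yields $\nu\in A_1$ and $\nu^M=(\cM|Tf|)^{M(p-2)}\in A_1$ with uniform constants. Using $\cM|Tf|\geq|Tf|$ and the hypothesis,
\[
\|Tf\|_\lprn^p = \int |Tf|^2\,|Tf|^{p-2}\,dx \leq \int |Tf|^2\,\nu\,dx \lesssim \int |f|^2\,\nu\,dx.
\]
H\"older with exponents $p/2$ and $p/(p-2)$, together with Muckenhoupt's theorem $\|\cM g\|_\lprn\lesssim\|g\|_\lprn$, gives
\[
\int |f|^2\,\nu\leq \|f\|_\lprn^2\,\|\cM|Tf|\|_\lprn^{p-2} \lesssim \|f\|_\lprn^2\,\|Tf\|_\lprn^{p-2},
\]
and cancelling $\|Tf\|_\lprn^{p-2}$ yields the desired bound.

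For (ii), given $p\in(2-1/M,2)$ and $f\in C_c^\infty(\rn)$, take instead $\nu:=(\cM|f|)^{p-2}$; here $p-2\in(-1/M,0)$, so $\nu^{-M}=(\cM|f|)^{M(2-p)}\in A_1$ by Coifman--Rochberg, and the hypothesis applies. H\"older with exponents $2/p$ and $2/(2-p)$ applied to the factorization $|Tf|^p=(|Tf|^2\nu)^{p/2}(\nu^{-1})^{p/2}$ gives
\[
\|Tf\|_\lprn^p \leq \|Tf\|_\ltnu^p\,\|\nu^{-1}\|_{L^{p/(2-p)}(\rn)}^{p/2} \lesssim \|f\|_\ltnu^p\,\|\cM|f|\|_\lprn^{(2-p)p/2},
\]
where the last step invokes the hypothesis. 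Since $p-2<0$ and $\cM|f|\geq|f|$ we have $(\cM|f|)^{p-2}\leq|f|^{p-2}$ on $\{f\neq 0\}$, which yields $\|f\|_\ltnu^2\leq\|f\|_\lprn^p$; combined with Muckenhoupt's theorem on the second factor, we conclude $\|Tf\|_\lprn\lesssim\|f\|_\lprn$.

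For (iii), the same two constructions apply without change: the $A_1$-type conditions verified in (i) and (ii) imply (via Proposition \ref{prop-properties-ap-weights}(i),(ii)) the corresponding $A_2$-type conditions required to invoke the hypothesis of (iii), so the two arguments cover $(2,2+1/M)$ and $(2-1/M,2)$ respectively. The main technical obstacle is the a priori finiteness of $\|Tf\|_\lprn$ needed to justify cancellation in (i) (and in (iii) for $p>2$); this is standard and can be handled by first running the argument with $\min(\cM|Tf|,N)\mathbbm{1}_{B(0,R)}$ in place of $\cM|Tf|$ in the definition of $\nu$, and then letting $N,R\to\infty$ via monotone convergence, using the $L^2$ boundedness of $T$ together with an initial qualitative truncation (e.g.\ restricting to $f$ in a dense subclass of $L^p\cap L^2$) to ensure everything in sight is finite during the truncated argument.
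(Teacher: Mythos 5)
Your main chains of inequalities are essentially the paper's own proof: in (i) the weight $(\m|Tf|)^{p-2}$, Coifman--Rochberg, the weighted hypothesis, H\"older and Muckenhoupt's theorem is exactly the argument given there; your (ii), built on $\nu=(\m|f|)^{p-2}$ and the dual H\"older factorization, is a harmless variant of the paper's choice $\nu=(\m(|Tf|+|f|))^{p-2}$ and is correct (and free of any division by an a priori infinite quantity); and your observation that both constructed weights automatically satisfy the $A_2$-type admissibility, with uniform characteristic, settles (iii) just as effectively as the paper's appeal to Jones's factorization.

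The one step that fails as written is your treatment of the a priori finiteness of $\|Tf\|_\lprn$ in (i) (and in (iii) for $p>2$). Replacing $\m|Tf|$ by $\min(\m|Tf|,N)\mathbbm{1}_{B(0,R)}$ in the definition of $\nu$ destroys admissibility: a weight vanishing on a set of positive measure has infinite $A_1$ (indeed $A_\infty$) characteristic, since $\m\nu>0$ where $\nu=0$, so the weighted hypothesis cannot be applied to the truncated weight at all, let alone with uniform constants. Capping at height $N$ without the indicator keeps the weight admissible but does not close the argument either, because $\m$ and $\min$ do not commute: the right-hand side then involves $\|\min(\m|Tf|,N)\|_\lprn$, which cannot be reabsorbed into the truncated left-hand side $\|\min(|Tf|,N)\|_\lprn$. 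Moreover, restricting $f$ to a dense subclass of $L^p\cap L^2$ is beside the point: $f\in C_c^\infty(\rn)$ already lies there, and the problematic object is $Tf$, about which only $L^2$ information is assumed. The standard repair, and the one the paper uses, is to truncate the operator rather than the weight: set $S_kf:=Tf\,\mathbbm{1}_{\{|Tf|\le k\}}$. Then $|S_kf|\le|Tf|$ pointwise, so the $S_k$ satisfy the same weighted $L^2$ hypotheses uniformly in $k$, while $S_kf\in\ltrn\cap L^\infty(\rn)\subset\lprn$, so the cancellation is legitimate for each $S_k$; monotone convergence as $k\to\infty$ then gives the bound for $T$. (Alternatively, truncating inside the maximal function, i.e. taking $\nu=\big(\m\big(\min(|Tf|,N)\mathbbm{1}_{B(0,R)}\big)\big)^{p-2}$, produces an admissible weight and also closes the argument; a bare indicator factor outside the maximal function does not.)
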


\noindent\emph{Proof.} This lemma and its proof are contained in \cite[Corollary 3.37]{cump} for the much more general setting of restricted extrapolation of $A_p$ weights. However, since we will later on need to modify the arguments used in the proof a little to fit our needs, it seems appropriate to write the proof down for future reference. The key fact that we will use is the Coifman-Rochberg theorem (see Proposition \ref{prop-properties-ap-weights}).
 
We start with (i). Fix $p>2$ with $M<1/(p-2)$ and $f\in C_c^\infty(\rn)$. Note that $\nu:=(\m(|Tf|))^{p-2}\in A_1$, and 
\begin{multline}\nonumber
\int_\rn |Tf|^p  \leq \int_\rn |Tf|^2 (\m (|Tf|))^{p-2}  \lesssim_{[\nu^M]_{A_1}} \int_\rn |f|^2(\m(|Tf|))^{p-2} \\
  \leq \Big( \int_\rn |f|^p \Big)^{2/p} \Big( \int_\rn (\m(|Tf|))^p\Big)^{(p-2)/p}  \lesssim_p \Big( \int_\rn |f|^p\Big)^{2/p}\Big(\int_\rn  |Tf|^{p} \Big)^{1-\frac2p}.
\end{multline}
If we first assume that $\| Tf\|_\lprn<\infty$, then the result follows. To get rid of this assumption, we instead consider the sequence of operators  $S_kf(x):=(Tf)(x){\bbm 1}_{|Tf|\leq k}(x)$ on $L^2(\bb R^n)$. Then $\{S_k\}_k$ is uniformly bounded on $\ltrn$, and they satisfy the same hypotheses as $T$ with constants independent of $k$. Then, for $f\in C_c^\infty(\bb R^n)$,  we have that $S_k f\in \ltrn \cap L^\infty(\rn)$, and so by our argument above, $\| S_k f\|_\lprn \lesssim_p \| f\|_\lprn$. We now let $k\to \infty$ and use the Monotone Convergence Theorem.  

We   turn to (ii). Fix $p<2$ with $\frac1{2-p}<M$ and $f\in C_c^\infty(\rn)$ not identically $0$. Note that $\nu:=(\m(|Tf|+|f|))^{p-2}$ satisfies $\nu^{-1}\in A_1\subset A_2$, and hence $\nu\in A_2$. We estimate
\begin{multline}\nonumber
\int_\rn |Tf|^p  \leq \int_\rn (\m(|Tf|+|f|))^p = \int_\rn (\m(|Tf|+|f|))^2(\m(|Tf|+|f|))^{p-2} \\
  \lesssim_{[\nu]_{A_2}} \int_\rn (|Tf|^2+|f|^2)\nu \lesssim_{[\nu^{-M}]_{A_1}} \int_\rn |f|^2 \nu \leq \int_\rn |f|^2(\m f)^{p-2}  \leq \int_\rn (\m f)^p ,
\end{multline}  
where we have used Muckenhoupt's theorem, yielding the desired result.

The third statement follows from the first two and Jones's factorization theorem of $A_2$ weights (see \cite{JonesFactor}) as quotients of $A_1$ weights. \hfill{$\square$}

Sometimes we will not be able to conclude boundedness on all weights $\nu^M\in A_2$, but rather only on weights whose characteristic is uniformly bounded. An inspection of the proof of the above lemma, together with  Proposition \ref{prop-properties-ap-weights}, reveals that this is enough to conclude the unweighted $L^p$ estimates. 

\begin{corollary}\label{cor-unweighted-lp-extrapolation}
Let $M\geq 1$, $0<\delta<1$ and $T$ be an operator satisfying, for every $\nu\in A_2$ with $[\nu^M]\leq C_\delta$ (where $C_\delta$ is as in Proposition \ref{prop-properties-ap-weights}), that $\| Tf\|_\ltnu \lesssim_{[\nu^M]_{A_2}} \| f\|_\ltnu$.  Then, for every $p\in (2-\delta/M, 2+\delta/M)$, $\| Tf \|_\lprn \lesssim_p \| f\|_\lprn$. Analogous statements for the one-sided versions of the estimates also hold.
\end{corollary}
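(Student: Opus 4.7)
The plan is to revisit the proof of Lemma \ref{lem-restricted-extrapolation} and carefully track which weights actually get plugged in, verifying that those weights fall into the restricted class allowed by the hypothesis. The key observation is that the only weights exploited in that proof come from the Coifman--Rochberg construction $\nu = (\mathcal{M}g)^{p-2}$ for appropriate $g$, and the exponent $p-2$ can be made arbitrarily small by restricting $p$ close to $2$. Since Coifman--Rochberg produces $A_1$ characteristics which are monotone in the exponent (concretely, $[(\mathcal{M}g)^\eta]_{A_1} \leq C_n/(1-\eta)$ for $0<\eta<1$, an expression which is increasing in $\eta$), raising such a weight to a power still yields a bounded characteristic as long as the final exponent remains below the threshold that produces $C_\delta$.

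Concretely, for the upper half $p\in(2,2+\delta/M)$, I would mirror the argument of Lemma \ref{lem-restricted-extrapolation}(i). Fix $f\in C_c^\infty(\rn)$ and set $\nu:=(\mathcal{M}(|Tf|))^{p-2}$. Then $\nu\in A_1$ by Coifman--Rochberg, and $\nu^M=(\mathcal{M}(|Tf|))^{M(p-2)}$ with $M(p-2)<\delta<1$. By the monotonicity of the Coifman--Rochberg constants noted above, $[\nu^M]_{A_1}\leq C_\delta$, and therefore $[\nu^M]_{A_2}\leq[\nu^M]_{A_1}\leq C_\delta$ by Proposition \ref{prop-properties-ap-weights}(i), placing $\nu$ inside the class permitted by the hypothesis. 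One then estimates
\begin{equation*}
\int_\rn |Tf|^p \leq \int_\rn |Tf|^2 (\mathcal{M}(|Tf|))^{p-2} \lesssim_{[\nu^M]_{A_2}} \int_\rn |f|^2 (\mathcal{M}(|Tf|))^{p-2},
\end{equation*}
and finishes by Hölder's inequality together with the $L^p$-boundedness of the Hardy--Littlewood maximal function, exactly as in the original lemma; the a priori finiteness of $\|Tf\|_\lprn$ is removed by the same truncation argument $S_kf:=(Tf)\mathbf{1}_{|Tf|\leq k}$ and the Monotone Convergence Theorem.

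For the lower half $p\in(2-\delta/M,2)$, I would likewise follow Lemma \ref{lem-restricted-extrapolation}(ii), setting $\nu:=(\mathcal{M}(|Tf|+|f|))^{p-2}$. Now $\nu^{-1}=(\mathcal{M}(|Tf|+|f|))^{2-p}$ has $2-p<\delta/M$, so $(\nu^{-1})^M\in A_1$ with $A_1$ characteristic bounded by $C_\delta$, and hence $\nu^M\in A_2$ with $[\nu^M]_{A_2}\leq C_\delta$ (using Proposition \ref{prop-properties-ap-weights}(ii), which says $A_2$ is stable under inversion). The chain of estimates in the original proof goes through unchanged, once one also invokes the unweighted $L^2$-boundedness of $T$ to handle the term $\int|Tf|^2\nu$.

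The only delicate point — and really the sole place where the argument differs from Lemma \ref{lem-restricted-extrapolation} — is verifying that the specific weight $\nu$ produced by Coifman--Rochberg lies in the narrow class $\{\nu\in A_2:[\nu^M]_{A_2}\leq C_\delta\}$ allowed by the hypothesis, which forces the two-sided restriction $|p-2|<\delta/M$ on the exponent and explains the appearance of $\delta/M$ in the conclusion. The one-sided analogues follow by running only the corresponding half of the argument. Jones's factorization (used in Lemma \ref{lem-restricted-extrapolation}(iii)) is not needed here, since the two one-sided estimates are proven directly and together cover the full interval $(2-\delta/M,2+\delta/M)$.
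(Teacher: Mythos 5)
Your proposal is correct and is exactly the argument the paper intends: the paper's proof of this corollary is just the remark that one should re-run the proof of Lemma \ref{lem-restricted-extrapolation} and observe that the only weights used are the Coifman--Rochberg weights $(\m g)^{p-2}$, whose $M$-th powers have $A_1$ (hence $A_2$) characteristic at most $C_\delta$ once $M|p-2|<\delta$, which is precisely what you verify. Your two one-sided arguments, the observation that Jones factorization is not needed, and the truncation step all match the intended proof.
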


\begin{lemma}[Weighted Carleson's Lemma]\label{lem-prelim-weighted-carleson}
Suppose that $\mu$ is a measure in $\reu$ and that $\nu \in L^{1}_{\loc}(\rn)$ is a doubling weight. Assume further that for every cube $Q\subset \rn$,  it holds that $\mu(R_Q)\lesssim \nu (Q)$. Then, for every measurable function $F:\reu \to \CC$ and every $p>0$, we have that $\displaystyle\dint_{\reu} |F|^p\, d\mu \lesssim_{n,\operatorname{doub}} \int_{\rn} (\mathcal N F)^p\, \nu$.
\end{lemma}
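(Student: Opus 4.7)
The plan is to run a standard layer cake argument, where the link between the $\mu$-superlevel sets of $|F|$ in $\reu$ and the $\nu$-superlevel sets of $\mathcal{N}F$ in $\rn$ comes from a Whitney covering of the latter. Set $O_\lambda := \{(x,t) \in \reu : |F(x,t)| > \lambda\}$ and $E_\lambda := \{x \in \rn : \mathcal{N}F(x) > \lambda\}$. The goal is to establish
\begin{equation*}
\mu(O_\lambda) \lesssim_{n,\operatorname{doub}} \nu(E_\lambda) \qquad \text{for every } \lambda > 0,
\end{equation*}
after which integration against $p\lambda^{p-1}\,d\lambda$ and the layer cake formula will deliver the claim at once.

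The main step is the geometric observation that $O_\lambda$ is contained in a union of Carleson regions over a Whitney decomposition of $E_\lambda$. We may assume $\mathcal{N}F \in L^p(\nu)$, since otherwise there is nothing to prove. Then $E_\lambda$ is open (by lower semicontinuity of $\mathcal{N}F$, itself a supremum of lower semicontinuous indicator-type functions) and has finite $\nu$-measure by Chebyshev's inequality, so it is a proper open subset of $\rn$; let $\{Q_j\}$ denote its standard Whitney decomposition. If $(x,t)\in O_\lambda$, then for every $y \in B(x,t)$ one has $(x,t) \in \Gamma(y)$ by definition of the non-tangential cone, whence $\mathcal{N}F(y) \geq |F(x,t)| > \lambda$; consequently $B(x,t) \subset E_\lambda$. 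Choosing the Whitney cube $Q_j$ containing $x$ and using $\operatorname{dist}(Q_j, E_\lambda^c) \approx \ell(Q_j)$, this forces $t \leq \operatorname{dist}(x, E_\lambda^c) \lesssim_n \ell(Q_j)$, so that $(x,t) \in R_{Q_j^*}$ for a fixed concentric dilate $Q_j^*$ of $Q_j$ with dilation factor depending only on $n$.

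Combining the hypothesis $\mu(R_Q) \lesssim \nu(Q)$, the doubling property of $\nu$ (to pass from $Q_j^*$ back to $Q_j$), and the pairwise disjointness of the Whitney cubes yields
\begin{equation*}
\mu(O_\lambda) \leq \sum_j \mu(R_{Q_j^*}) \lesssim \sum_j \nu(Q_j^*) \lesssim_{\operatorname{doub}} \sum_j \nu(Q_j) \leq \nu(E_\lambda).
\end{equation*}
The layer cake representation then gives
\begin{equation*}
\dint_{\reu} |F|^p\, d\mu = p \int_0^\infty \lambda^{p-1} \mu(O_\lambda)\, d\lambda \lesssim p \int_0^\infty \lambda^{p-1} \nu(E_\lambda)\, d\lambda = \int_{\rn} (\mathcal{N}F)^p\, \nu\, dx,
\end{equation*}
which is the desired inequality.

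No step in this argument is deep; the only item requiring care is the Whitney covering step, and specifically verifying that the constant in $t \lesssim \ell(Q_j)$ is dimensional, which follows from the defining property of Whitney cubes. All other constants depend only on $n$ and the doubling constant of $\nu$, as required.
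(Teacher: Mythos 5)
Your argument is correct and is exactly the standard Carleson-embedding proof that the paper has in mind when it omits the proof (``exactly the same as the usual one when $\nu\equiv 1$''): Whitney-decompose the superlevel set $E_\lambda$ of $\mathcal N F$, observe that $(x,t)$ with $|F(x,t)|>\lambda$ forces $B(x,t)\subset E_\lambda$ so that $O_\lambda$ is covered by Carleson boxes over dimensional dilates of the Whitney cubes, then apply the hypothesis, the doubling of $\nu$, and the layer-cake formula. The only point deserving a passing word is your step ``finite $\nu$-measure implies $E_\lambda\subsetneq\rn$,'' which silently uses the standard fact that a doubling measure on $\rn$ has infinite total mass; with that remark added, the proof is complete.
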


The proof   is exactly the same as the usual one  when $\nu\equiv 1$, and thus omitted.  Next, we will need  a version of  Carleson's Lemma that uses the modified non-tangential maximal function $\mntm$ in place of $\mathcal{N}$; its proof is straightforward and thus omitted.

\begin{lemma}\label{lem-prelim-modified-carleson}
Let $d\mu(x,t)=m(x,t)\, dxdt$ be a non-negative measure on $\reu$ and $\nu$ is a doubling weight. For every $(x,t)\in \reu$,   suppose that $d\widetilde{\mu}(x,t):=  ( \sup_{(y,s)\in \cc_{x,t}} m(y,s) ) \, dxdt$ satisfies   $\widetilde{\mu}(R_Q)\leq C_0 \nu(Q)$ for every cube $Q\subset \rn$. Then, for every $q>0$,  $\displaystyle\dint_{\reu} |F|^q\, d\mu\lesssim_{\operatorname{doub}} C_0 \int_\rn (\mntmq F)^q \, \nu$.
\end{lemma}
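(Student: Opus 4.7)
The plan is to reduce this to the ordinary (weighted) Carleson lemma (Lemma \ref{lem-prelim-weighted-carleson}) applied to $\widetilde\mu$, by means of a Fubini-based ``self-averaging'' step that transfers the pointwise values of $|F|^q$ against $\mu$ to averages $a_q(F)^q$ against $\widetilde\mu$.

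First, I would introduce, for each $(y,s) \in \reu$, the reverse Whitney set $E(y,s) := \{(x,t)\in\reu : (y,s)\in \cxt\}$, which by unwinding the definition of $\cxt$ consists of those $(x,t)$ with $|x-y|<t/8$ and $|t-s|<t/8$. A direct computation shows $|E(y,s)|\approx s^{n+1}$ and, moreover, $|\cxt|\approx t^{n+1}\approx s^{n+1}$ for every $(x,t)\in E(y,s)$; in particular $|\cxt|\approx |E(y,s)|$ uniformly on this set.

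Next, by Fubini,
\begin{equation*}
\dint_{\reu} a_q(F)(x,t)^q\, d\widetilde\mu(x,t) = \dint_{\reu} |F(y,s)|^q \left(\dint_{E(y,s)} \frac{\widetilde m(x,t)}{|\cxt|}\,dx\,dt\right) dy\,ds.
\end{equation*}
For $(x,t)\in E(y,s)$ we have $(y,s)\in \cxt$, so by definition $\widetilde m(x,t)\geq m(y,s)$. Using $|\cxt|\approx |E(y,s)|$, the inner integral is bounded below by a dimensional constant times $m(y,s)$. This gives
\begin{equation*}
\dint_{\reu} |F|^q\, d\mu \lesssim_n \dint_{\reu} a_q(F)^q\, d\widetilde\mu.
\end{equation*}

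Finally, since by hypothesis $\widetilde\mu(R_Q)\leq C_0\nu(Q)$ for every cube $Q\subset\rn$ and $\nu$ is doubling, Lemma \ref{lem-prelim-weighted-carleson} applied to $\widetilde\mu$ and the non-negative function $a_q(F)^q$ yields
\begin{equation*}
\dint_{\reu} a_q(F)^q\, d\widetilde\mu \lesssim_{n,\operatorname{doub}} C_0 \int_{\rn} \mathcal{N}(a_q(F))^q\,\nu = C_0\int_{\rn}(\mntmq F)^q\,\nu,
\end{equation*}
by the very definition of $\mntmq$. Combining the last two displays gives the claim. The only mildly delicate step is the first one, namely the bookkeeping that $|\cxt|\approx|E(y,s)|$ and that $\widetilde m$ dominates $m$ on $E(y,s)$; both are immediate from the explicit description of the Whitney regions, so no genuine obstacle arises.
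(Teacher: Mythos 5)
Your argument is correct: the Fubini ``reverse Whitney'' step (using that $\widetilde m(x,t)\geq m(y,s)$ whenever $(y,s)\in\cxt$ and that $|\cxt|\approx|E(y,s)|\approx s^{n+1}$ there) cleanly reduces the statement to Lemma \ref{lem-prelim-weighted-carleson} applied to $\widetilde\mu$ and $a_q(F)$, whose nontangential maximal function is exactly $\mntmq F$. The paper omits the proof as straightforward, and your reduction is precisely the standard argument one would expect there, with the constant $C_0$ tracked correctly.
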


\begin{definition}[$A_{p,q}$ classes]\label{def-apq-classes}
Let $1<p\leq q<\infty$. We say that a weight $\nu\in A_{p,q}=A_{p,q}(\rn)$ if there exists a constant $C>0$ such that for every cube $Q\subset \rn$,  
\[
\Big( \fint_Q \nu^q\, dx \Big)^{1/q}\Big( \fint_Q \nu^{-1/p'}\, dx\Big)^{1/p'}\leq C.
\]
The infimum over all such $C$ is written $[\nu]_{A_{p,q}}$.
\end{definition}

\begin{theorem}[{\cite[Theorem 4]{mw74}}]\label{thm-mapping-properties-i1}
Let $1<p<n$ and set $1/q:=1/p-1/n$. Then $\nu\in A_{p,q}$ if and only if  $\| I_1 f\|_{L^q(\nu^q)} \lesssim_{[\nu]_{A_{p,q}}} \| f\|_{L^p(\nu^p)}$.
\end{theorem}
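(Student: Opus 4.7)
The plan is to prove the necessity direction by testing and the sufficiency direction by controlling $I_1$ pointwise through a fractional maximal operator and using a good-$\lambda$ inequality.

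For \emph{necessity}, I would test the claimed inequality on $f = \nu^{-p'}\chi_Q$ (or the appropriate power so that the $A_{p,q}$ integrand appears). On the left-hand side, observe that for $x \in Q$ we have the pointwise bound
\[
I_1 f(x) = c_n\int_Q \frac{\nu^{-p'}(y)}{|x-y|^{n-1}}\,dy \gtrsim \ell(Q)\,\fint_Q \nu^{-p'}(y)\,dy,
\]
by restricting integration to a dilate of $Q$ and using that $|x-y| \lesssim \ell(Q)$. Inserting this lower bound into $\|I_1 f\|_{L^q(\nu^q)}$, computing $\|f\|_{L^p(\nu^p)}$ directly, and noting that $\ell(Q)|Q|^{1/q-1/p} = 1$ (by the relation $1/q = 1/p - 1/n$), one extracts exactly the $A_{p,q}$ quantity. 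This yields the uniform bound on cubes.

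For \emph{sufficiency}, the strategy is to compare $I_1$ to the fractional maximal operator
\[
M_1 f(x) := \sup_{Q \ni x} \ell(Q)\fint_Q |f(y)|\,dy,
\]
using a Coifman–Fefferman type good-$\lambda$ inequality: there exist $\gamma_0, C > 0$ such that for $\gamma < \gamma_0$ and any $\lambda > 0$,
\[
\nu^q\big(\{I_1 f > 2\lambda,\, M_1 f \leq \gamma\lambda\}\big) \leq C\gamma^{\delta}\, \nu^q\big(\{I_1 f > \lambda\}\big),
\]
for some $\delta > 0$ depending on the reverse Hölder exponent of $\nu^q$ (which is itself an $A_\infty$ weight under the $A_{p,q}$ hypothesis, since $\nu \in A_{p,q}$ is easily seen to imply $\nu^q \in A_{1+q/p'}$). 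Integrating this good-$\lambda$ inequality then gives $\|I_1 f\|_{L^q(\nu^q)} \lesssim \|M_1 f\|_{L^q(\nu^q)}$. The remaining task is the weighted bound $\|M_1 f\|_{L^q(\nu^q)} \lesssim \|f\|_{L^p(\nu^p)}$, which I would establish by a Calderón–Zygmund decomposition at level $\alpha$ for $|f|^p \nu^p$: the level sets of $M_1 f$ are covered by dyadic cubes on which the $A_{p,q}$ condition converts an average of $|f|$ into an average involving $\nu^p$ via Hölder's inequality with exponents $p$ and $p'$; summing and applying Kolmogorov's inequality (or the distributional form of Marcinkiewicz interpolation) yields the strong-type bound with exponents $(p,q)$.

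The \emph{main obstacle} will be the good-$\lambda$ inequality, since it requires a nontrivial geometric argument: on a Whitney cube $Q$ of $\{I_1 f > \lambda\}$ containing a point where $M_1 f \leq \gamma\lambda$, one decomposes $f = f\chi_{Q^{*}} + f\chi_{(Q^{*})^c}$; the far part is controlled pointwise by $I_1 f$ evaluated at the good point (via a mean-value estimate for the Riesz kernel), while the local part is handled by the weak-type $(1, n/(n-1))$ bound for $I_1$ on the cube, giving the required smallness in $\gamma$. Once this is in place, the passage to a weighted estimate uses the $A_\infty$ property of $\nu^q$ in the standard way (replacing Lebesgue measure by $\nu^q\,dx$ in the distributional estimate). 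The rest of the argument is bookkeeping, and the relation $1/q = 1/p - 1/n$ enters only in the scaling of $\ell(Q)|Q|^{-1/p}|Q|^{1/q}$ throughout.
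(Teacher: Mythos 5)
The paper does not prove this statement at all: it is quoted verbatim as Theorem 4 of Muckenhoupt--Wheeden \cite{mw74} and used as a black box (its only role here is in Proposition \ref{prop-i1b-bounded} and Lemma \ref{lem-square-functions-with-b-outside}). So there is no ``paper proof'' to match; what you have written is essentially a reconstruction of the classical Muckenhoupt--Wheeden argument itself. Your necessity direction is correct: testing on $f=\nu^{-p'}\chi_Q$ and using $\ell(Q)|Q|^{1/q-1/p}=1$ does produce exactly the $A_{p,q}$ quantity, with the standard caveat that one should first truncate ($\nu^{-p'}$ replaced by $\min(\nu^{-p'},N)$, or restrict to sets where $\nu\geq 1/N$) so that $\int_Q\nu^{-p'}$ is finite before dividing. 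Your sufficiency outline --- the good-$\lambda$ comparison of $I_1$ with the fractional maximal operator $M_1$ under $\nu^q\in A_\infty$ (and your identification $\nu\in A_{p,q}\iff \nu^q\in A_{1+q/p'}$ is correct), followed by the weighted bound for $M_1$ --- is precisely the route taken in \cite{mw74}, and the Whitney-cube near/far decomposition you describe for the good-$\lambda$ inequality is the right geometric mechanism.

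The one genuine gap is in the last step, the strong-type bound $\|M_1 f\|_{L^q(\nu^q)}\lesssim\|f\|_{L^p(\nu^p)}$. A covering/Calder\'on--Zygmund argument with H\"older and the $A_{p,q}$ condition yields the \emph{weak}-type $(p,q)$ estimate, but ``Kolmogorov's inequality or Marcinkiewicz interpolation'' at that single exponent pair does not upgrade it to strong type: Marcinkiewicz needs two weak-type estimates at exponent pairs straddling $(p,q)$. The missing ingredient is the openness (self-improvement) of the $A_{p,q}$ class: by the reverse H\"older inequality for $\nu^q$ and $\nu^{-p'}$, $\nu\in A_{p,q}$ implies $\nu\in A_{\tilde p,\tilde q}$ for some $\tilde p<p$ with $1/\tilde q=1/\tilde p-1/n$, which gives a weak-type bound below $(p,q)$ and then interpolation closes the argument. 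This is exactly how Muckenhoupt--Wheeden proceed, so the fix is standard, but as written your ``bookkeeping'' step would fail.
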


Throughout, there will be instances where multiplication by an $\lnrn$ function is acting as, or rather in place of, a (spatial) gradient. The following proposition should be interpreted as stating that, at least in $L^p$ spaces, the two operations are not far from each other. We remind the reader that we assume $n\geq 3$.

\begin{proposition}\label{prop-i1b-bounded} Let $B\in \lnrn$ and $f\in C_c^\infty(\rn)$. Then, for every $\nu\in A_2$, we have $$\| I_1(B\cdot f)\|_\ltnu \lesssim_{[\nu]_{A_2}}\| B\|_\lnrn\| f\|_\ltnu.$$ In particular, for every $1<p<\infty$, it holds that $\| I_1(B\cdot f)\|_\lprn \lesssim_p \| f\|_\lprn$, where the implicit constants depend on $\| B\|_\lnrn$, $p$, and $n$. If in addition we have that $\nu^{2^*/2}\in A_2$ with $2^*=2^*_n$,  then $$\| B\cdot  I_1 f\|_\ltnu \lesssim_{[\nu^{2^*}]_{A_2}} \| B\|_\lnrn \| f\|_\ltnu.$$ Accordingly, $\| B\cdot I_1 f\|_\lprn \lesssim_p \| f\|_\lprn$, for $1+2/n <p<3-2/n$. 
\end{proposition}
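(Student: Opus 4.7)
The strategy for both inequalities is to combine the Muckenhoupt-Wheeden weighted Hardy-Littlewood-Sobolev estimate (Theorem \ref{thm-mapping-properties-i1}) with H\"older's inequality, and then deduce the unweighted $L^p$ consequences by appealing to the extrapolation results developed earlier in this section.

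For the first weighted bound, I would set $w:=\nu^{1/2}$ so that $w^2=\nu$, and apply Theorem \ref{thm-mapping-properties-i1} with $p=2_{*,n}=\tfrac{2n}{n+2}$ and $q=2$ (which satisfy the gap relation $1/q=1/p-1/n$). Under the appropriate weight condition $w\in A_{2_*,2}$, this yields
\[
\| I_1(Bf)\|_{L^2(\nu)}=\| I_1(Bf)\|_{L^q(w^q)}\lesssim \| Bf\|_{L^{2_*}(\nu^{2_*/2})}.
\]
H\"older's inequality applied to $|B|^{2_*}\cdot(|f|\nu^{1/2})^{2_*}$ with dual exponents $(n/2_*,(n/2_*)')$ then gives
\[
\| Bf\|_{L^{2_*}(\nu^{2_*/2})}\le \| B\|_{L^n(\rn)}\| f\|_{L^2(\nu)},
\]
since the identity $1/2_*=1/n+1/2$ makes the arithmetic of the exponents fall into place. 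Chaining produces the desired weighted inequality, and the unweighted $L^p$ bound for every $1<p<\infty$ then follows immediately from Rubio de Francia's extrapolation (Theorem \ref{thm-extrapolation-ap-weights}), taking $\nu\equiv1\in A_p$.

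For the second inequality, I would first peel off the factor $B$ using H\"older with exponents $(n/2,n/(n-2))$:
\[
\| B\cdot I_1 f\|_{L^2(\nu)}\le \| B\|_{L^n(\rn)}\,\| I_1 f\|_{L^{2^*}(\nu^{2^*/2})},
\]
and then apply Theorem \ref{thm-mapping-properties-i1} with $p=2$, $q=2^*$, and $w=\nu^{1/2}$ (so that $w^q=\nu^{2^*/2}$ and $w^p=\nu$) to obtain $\| I_1 f\|_{L^{2^*}(\nu^{2^*/2})}\lesssim \| f\|_{L^2(\nu)}$ whenever $w\in A_{2,2^*}$. Using the standard characterization $w\in A_{p,q}\Leftrightarrow w^q\in A_{1+q/p'}$, this weight condition reduces to $\nu^{2^*/2}\in A_{(2n-2)/(n-2)}$; since $(2n-2)/(n-2)>2$ when $n\ge3$, the stronger hypothesis $\nu^{2^*/2}\in A_2$ is amply sufficient. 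The accompanying unweighted $L^p$ bound on the symmetric range $1+\tfrac2n<p<3-\tfrac2n$ is exactly what Lemma \ref{lem-restricted-extrapolation}(iii) produces when applied with $M=2^*/2=n/(n-2)$, as $(2-1/M,2+1/M)$ then coincides with this range.

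The step I expect to require most care is the weight verification in the first inequality, namely passing from the hypothesis $\nu\in A_2$ to the class $w=\nu^{1/2}\in A_{2_*,2}$ demanded by Muckenhoupt-Wheeden, which a priori corresponds (via $w^{q}\in A_{1+q/p'}$) to the strictly smaller class $\nu\in A_{(2n-2)/n}\subsetneq A_2$ for $n\ge3$. I expect this gap to be bridged by a quantitative use of the self-improvement and factorization properties of $A_p$ weights collected in Proposition \ref{prop-properties-ap-weights}, with the implicit constant depending on $[\nu]_{A_2}$ through the corresponding reverse H\"older exponent.
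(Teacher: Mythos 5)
Your route is the paper's route in all but one step: both weighted bounds are reduced to Theorem \ref{thm-mapping-properties-i1} plus H\"older with exponent $n$, and the unweighted statements are drawn from Theorem \ref{thm-extrapolation-ap-weights} (first part) and from the restricted extrapolation Lemma \ref{lem-restricted-extrapolation} with $M=2^*/2$ (second part), exactly as in the paper. For the second inequality your verification of the weight hypothesis, via $w\in A_{p,q}\Leftrightarrow w^q\in A_{1+q/p'}$ and $A_2\subset A_{(2n-2)/(n-2)}$, is correct and is the same fact the paper obtains by the one-line Jensen estimate $(\fint_Q\omega^{-2})^{1/2}\le(\fint_Q\omega^{-2^*})^{1/2^*}$; that half of your argument, and both $L^p$ deductions, are complete.

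The genuine gap is the step you yourself flag and leave open: showing that $\nu\in A_2$ places $\omega=\nu^{1/2}$ in the class needed to run Theorem \ref{thm-mapping-properties-i1} with $(p,q)=(2_*,2)$. The remedy you propose, self-improvement and factorization of $A_p$ weights, cannot close it: openness of $A_2$ only yields $\nu\in A_{2-\epsilon}$ for some $\epsilon=\epsilon([\nu]_{A_2})>0$ which may be arbitrarily small, whereas the class you correctly compute from the characterization $w^q\in A_{1+q/p'}$ is the \emph{fixed} class $A_{2-2/n}$; for instance $\nu(x)=|x|^{a}$ with $n-2<a<n$ lies in $A_2$ but not in $A_{2-2/n}$, so no self-improvement argument can bridge the gap with a constant depending only on $[\nu]_{A_2}$. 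The paper handles this step differently: it verifies the $A_{2_*,2}$ quantity in the form given in Definition \ref{def-apq-classes} directly, using Jensen in the direction $\big(\fint_Q\omega^{-2_*}\big)^{2/2_*}\le\fint_Q\omega^{-2}$, which gives $[\omega]_{A_{2_*,2}}\le[\nu]_{A_2}^{1/2}$ precisely because the negative power of $\omega$ appearing there is at most $2$. Note that with the classical Muckenhoupt--Wheeden condition, which involves $\fint_Q\omega^{-(2_*)'}=\fint_Q\omega^{-2^*}$, Jensen runs the wrong way and one lands on the strictly smaller class $A_{2-2/n}$ you identified; so your diagnosis of where the difficulty sits is accurate, but as written your argument does not prove the first weighted inequality, and any completion along these lines must verify the weight hypothesis in the exact form in which Theorem \ref{thm-mapping-properties-i1} is invoked rather than attempt to upgrade $\nu\in A_2$ to a smaller Muckenhoupt class.
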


\noindent\emph{Proof.} Let $\nu\in A_2$ and set $\omega:= \nu^{1/2}$. We claim that $\omega\in A_{2_*,2}$. Assuming the claim, we have 
\begin{equation}\nonumber
\begin{split}
\| I_1(B\cdot f)\|_{\ltnu}= \| I_1(B\cdot f)\|_{L^2(\omega^2)} \lesssim_{[\omega]_{A_{2_*,2}}} \| B\cdot f\|_{L^{2_*}(\omega^{2_*})} \leq \| B\|_\lnrn \| f\|_{L^2(\omega^2)},
\end{split}
\end{equation}
where we used H\"older's inequality in the last step. To prove the claim, we use Jensen's inequality to see that $\big(\fint_Q \omega^{-2_*} \big)^{2/2_*} \leq \fint_Q \omega^{-2}$. Using this estimate in the definition of $\omega^2\in A_2$, we deduce that $[\omega]_{A_{2_*,2}}\leq [\nu]_{A_2}^{1/2}$. This completes the proof of the first part. The second part follows the same lines, using instead that
\begin{equation}\nonumber
\Big( \fint_Q \omega^{2^*} \Big) \Big( \fint_Q \omega^{-2}\Big)^{2^*/2}\leq \Big( \fint_Q\omega^{2^*}\Big)\Big( \fint_Q \omega^{-2^*}\Big)\leq [\nu^{2^*/2}]_{A_2}, 
\end{equation}
so that $[\omega]_{A_{2,2^*}} \leq [\nu^{2^*/2}]_{A_2}^{1/2^*}$.  The $L^p$ estimate finally follows from restricted extrapolation (see Lemma \ref{lem-restricted-extrapolation}), using the fact that $2/2^*= 1-2/n$.\hfill{$\square$}

\begin{proposition}\label{prop-square-function-bounds-i-pt} Let $P_t$ be an approximate identity with  smooth, even, compactly supported kernel. Then, for every $\nu\in A_2$ and $f\in C_c^\infty(\rn)$, it holds that
\begin{equation}\nonumber
\| \v( t^{-1}(1-P_t) f)\|_\ltnu^2 = \int_0^\infty \int_\rn \Big| \dfrac{I-P_t}{t} f\Big|^2\, \dfrac{\nu(x)\,dxdt}{t} \lesssim_{[\nu]_{A_2}} \| \nabla_{\|}f\|_\ltnu^2.
\end{equation}
\end{proposition}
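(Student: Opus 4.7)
The plan is to express the operator $t^{-1}(I-P_t)$ as a scale-average of convolution operators acting on $\nbp f$, and then to invoke the weighted vertical square function bound for CLP families (Theorem \ref{thm-weighted-lp}), combined with Minkowski's integral inequality to peel off the average.

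First, I would unpack the operator in Fourier space. Since $\varphi$ is even, smooth, and compactly supported with $\int\varphi=1$, its Fourier transform $\hat\varphi$ lies in $\n S(\bb R^n)$ and satisfies $\hat\varphi(0)=1$ and $\nabla\hat\varphi(0)=0$. The fundamental theorem of calculus then gives
\[
\frac{1-\hat\varphi(t\xi)}{t} \,=\, -\int_0^1 \xi\cdot\nabla\hat\varphi(ut\xi)\,du \,=\, \int_0^1 \hat{\vec\psi}(ut\xi)\cdot(i\xi)\,du,
\]
where $\hat{\vec\psi}(\xi):=-i\nabla\hat\varphi(\xi)$ defines a vector-valued Schwartz function $\vec\psi$. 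Multiplying by $\hat f(\xi)$ and inverting the Fourier transform (using $(i\xi)\hat f=\widehat{\nbp f}$) yields the representation
\[
\frac{I-P_t}{t}f \,=\, \sum_{j=1}^n \int_0^1 (\psi^j)_{ut} * \partial_j f \,du, \qquad (\psi^j)_s(x):=s^{-n}\psi^j(x/s).
\]

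Next, I would verify that for each $j$, the family $Q^j_s g := (\psi^j)_s * g$ falls within the scope of Theorem \ref{thm-weighted-lp}. Being Schwartz, $\psi^j$ has the required pointwise decay; the evenness of $\varphi$ forces $\int\psi^j = \hat{\psi^j}(0) = -i\partial_j\hat\varphi(0)=0$, and in fact $\hat{\psi^j}(\xi)=O(|\xi|)$ near the origin (since $\partial_j\hat\varphi$ is smooth and vanishes at $0$), while $\hat{\psi^j}(\xi)=O(|\xi|^{-N})$ at infinity for every $N$. Thus $|\hat{\psi^j}(\xi)|\lesssim\min(|\xi|,|\xi|^{-1})$, with the remaining conditions in Definition \ref{def-clp-family} being routine for mean-zero Schwartz kernels. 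Consequently, Theorem \ref{thm-weighted-lp} yields, for every $g\in C_c^\infty(\bb R^n)$ and $\nu\in A_2$,
\[
\int_0^\infty\|Q^j_s g\|_{\ltnu}^2\,\frac{ds}{s}\,\lesssim_{[\nu]_{A_2}}\,\|g\|_{\ltnu}^2.
\]

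Finally, I would apply Minkowski's integral inequality in the Hilbert space $L^2(\bb R^n\times(0,\infty);\,\nu\,dx\,dt/t)$ to the representation above, obtaining
\[
\|\v(t^{-1}(I-P_t)f)\|_{\ltnu} \,\leq\, \sum_j\int_0^1\left(\int_0^\infty\|Q^j_{ut}(\partial_j f)\|_{\ltnu}^2\,\frac{dt}{t}\right)^{1/2}du.
\]
For each fixed $u\in(0,1)$, the substitution $s=ut$ preserves the measure $dt/t$ and reduces the inner integral to $\int_0^\infty\|Q^j_s(\partial_j f)\|_{\ltnu}^2\,ds/s\lesssim_{[\nu]_{A_2}}\|\partial_j f\|_{\ltnu}^2$ by the previous step. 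Summing in $j$ yields the claim. The main (mild) technical point will be to confirm that the kernels $\psi^j$ satisfy the CLP hypotheses; this rests entirely on the evenness and compact support of $\varphi$, which together secure the crucial cancellation $\partial_j\hat\varphi(0)=0$ together with the Schwartz regularity of $\hat\varphi$.
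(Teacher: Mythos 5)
Your argument is correct, but it is genuinely different from the paper's. You move the derivative onto $f$ at the outset: writing $1-\hat{\varphi}(t\xi)=-\int_0^1 t\xi\cdot\nabla\hat{\varphi}(ut\xi)\,du$ realizes $t^{-1}(I-P_t)f$ as a $u$-average of mean-zero convolution operators $(\psi^j)_{ut}$ acting on $\partial_j f$ (the cancellation $\partial_j\hat\varphi(0)=0$ coming from evenness of $\varphi$), after which Minkowski's inequality and the scale invariance of $dt/t$ reduce everything to the weighted vertical square function bound of Theorem \ref{thm-weighted-lp} applied to $\partial_j f$. The paper instead keeps $f$ and inserts the fractional integral: it uses $\nabla_{\|}I_1=I_1\nabla_{\|}=R$ and weighted Riesz transform bounds to reduce to $T_t=I_1 t^{-1}(I-P_t)$, and then runs the quasi-orthogonality/change-of-measure interpolation scheme from the proof of Theorem \ref{thm-weighted-lp} directly on $T_t$, proving the unweighted decay by Plancherel (using $|1-\hat\varphi(\tau)|\lesssim\tau^2$, again from evenness) and the uniform weighted bound via the pointwise estimate $|T_tf|\lesssim\m(Rf)$ obtained from dyadic averaging operators, telescoping, and Poincar\'e. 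Your route is shorter and more elementary, avoiding $I_1$, the Riesz transforms, and the maximal-function domination entirely; the paper's route has the advantage of producing the pointwise bound $|T_tf|\lesssim\m(Rf)$, which is of independent use, but for the stated proposition nothing is lost. One small point to tighten: the kernels $\psi^j(x)=c\,x_j\varphi(x)$ are not radial, so the Calder\'on reproducing formula (item (iv) of Definition \ref{def-clp-family}) need not hold for $(Q^j_s)_s$, and cannot in general be arranged by normalization; so the families are not literally CLP in the paper's sense. This is harmless, because in the proof of Theorem \ref{thm-weighted-lp} the reproducing formula is used only for the auxiliary family $\widetilde{\Q}_t$, while the estimated family enters only through the decay and cancellation bounds $|\hat{\psi^j}(\xi)|\lesssim\min(|\xi|,|\xi|^{-1})$ and the pointwise domination $|Q^j_s g|\lesssim\m g$, all of which your kernels satisfy; you should state this explicitly rather than asserting that all conditions of Definition \ref{def-clp-family} are routine.
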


\noindent\emph{Proof.} Recall that $I_1$ denotes the fractional integral of order 1; hence $\nabla_{\|}I_1 = I_1\nabla_{\|}= R$, where $R$ is the vector-valued Riesz-transform (with symbol $\xi/|\xi|$). In particular, $\| R f\|_\ltnu \approx \| f\|_\ltnu$ for all $\nu\in A_2$, allowing us to reduce matters to the estimate
\begin{equation}\nonumber
\int_0^\infty \int_\rn \Big| I_1\dfrac{I-P_t}{t} f\Big|^2\, \dfrac{\nu(x)dt}{t} \lesssim_{[\nu]_{A_2}} \|  f\|_\ltnu,\qquad f\in C_c^{\infty}(\bb R^n).
\end{equation}
We now use a quasi-orthogonality argument, with a change of measure interpolation (see the proof of Theorem \ref{thm-weighted-lp}), to reduce matters to the pair of estimates: If we denote   $T_t:= I_1(1-P_t)/t$, then for some CLP family $(\Q_s)_s$ (see Definition \ref{def-clp-family}),
\begin{equation}\label{eq-1-i-pt}
\| T_t \Q_s^2f\|_\ltrn \lesssim \Big( \dfrac{s}{t}, \dfrac{t}{s}\Big)^{\alpha} \| \Q_s f\|_\ltrn,
\end{equation}
for some $\alpha>0$, and 
\begin{equation}\label{eq-2-i-pt}
\| T_t \Q_s^2 f\|_\ltnu\lesssim_{[\nu]_{A_2}} \| \Q_s f\|_\ltnu.
\end{equation}
Indeed, with \eqref{eq-1-i-pt} and \eqref{eq-2-i-pt} in hand, we may follow the proof of Theorem \ref{thm-weighted-lp}.

For \eqref{eq-1-i-pt}, we compute, via the Fourier transform and Plancherel's theorem, and using $\varphi_t$ and $\psi_s$ for the kernels of $P_t$ and $\Q_s$ respectively, 
\begin{equation}\nonumber
\| T_t \Q_s h\|_\ltrn^2 = c_n\int_\rn \Big| |\xi|^{-1}\dfrac{1-\hat{\varphi}(t|\xi|)}{t} \hat{\psi}(s|\xi|)\hat h(\xi)\Big|^2\, d\xi, 
\end{equation}
where as usual we have abused notation and written  $\varphi$, $\psi$ for the one-dimensional functions representing them. Consider first the case $t<s$,
\begin{equation}\nonumber
\int_\rn\Big| |\xi|^{-1}\dfrac{1-\hat{\varphi}(t|\xi|)}{t} \hat{\psi}(s|\xi|)\hat h(\xi)\Big|^2d\xi  =  \Big(\dfrac{t}{s}\Big)^2 \int_\rn \dfrac{|1-\hat{\varphi}(t|\xi|)|^2}{|t\xi|^4} |s\xi|^2|\hat{\psi}(s|\xi|)|^2 |\hat h(\xi)|^2 \, d\xi\lesssim \Big(\dfrac{t}{s}\Big)^2 \| h\|_\ltrn^2,
\end{equation}
where we used the properties of the CLP family and the fact that $|1-\hat{\varphi}(\tau)|\lesssim  \tau^2$  for $\tau$ near $0$, since $\varphi$ is even.
For the case $s<t$, we use instead the Fundamental Theorem of Calculus to obtain 
\begin{equation}\nonumber
\int_\rn \Big| |\xi|^{-1}\dfrac{1-\hat{\varphi}(t|\xi|)}{t} \hat{\psi}(s|\xi|)\hat h(\xi)\Big|^2 d\xi  = \Big( \dfrac{s}{t}\Big)^2 \int_\rn \Big|\int_0^{t|\xi|} \hat{\varphi}'(\tau)\, d\tau\Big|^2 \dfrac{|\hat{\psi}(s|\xi|)|^2}{|s\xi|^2} |\hat h(\xi)|^2\, d\xi\lesssim \Big( \dfrac{s}{t} \Big)^2 \| h\|_\ltrn^2,
\end{equation}
where we used that $\hat{\varphi} \in L^1(0,\infty)$ and $\hat{\psi}(\tau)/\tau\in L^\infty(0,\infty)$. Using now that $h=\Q_s f$ gives \eqref{eq-1-i-pt}.

The weighted estimate \eqref{eq-2-i-pt} follows from the pointwise inequality
\begin{equation}\nonumber
|T_t f(x)|=|t^{-1}(1-P_t)I_1 f(x)|\lesssim \m(R f)(x),
\end{equation}
where $R= I_1\nbp$ is as before. We sketch the argument: Write  $1-P_t= (1-E_t)+(E_t-P_t)$, where $E_t$ is the dyadic averaging operator; that is, $E_t f(x)=\fint_{Q_{x,t}} f$, where $Q_{x,t}$ is the unique dyadic cube $Q_{x,t}\in \DD_t$ containing $x$. Writing $g=I_1f$, we have that 
\begin{multline}\nonumber
|(E_t-P_t)g(x)| \approx  \Big| \fint_{Q_{x,t}} \fint_{|x-y|<Ct} \varphi(\tfrac{x-y}t)(g(y)-g(z))\, dydz \Big|\\
 \lesssim \fint_{B(x,Ct)}\fint_{B(x,Ct)} |g(y)-g(z)|\, dy dz  \lesssim t\fint_{B(x,Ct)} |\nabla_{\|} g(y)|\, dy   \leq t\m(\nabla_{\|} g)(x),
\end{multline}
where we used Poincare's inequality in the second to last step. Since $I_1\nabla_{\|} f= Rf$, we have the right bound for this term. To handle the  term $1-E_t$, we telescope 
\begin{equation}\nonumber
(1-E_t)g(x)= \sum_{j=0}^\infty (E_{2^{-j-1}t}-E_{2^{-j}t})g(x)=: \sum_{j=0}^\infty (E_{t_{j+1}}-E_{t_j}) g(x),
\end{equation}
and we compute that
\begin{equation}\nonumber
|(E_{t_{j+1}}-E_{t_j}) g(x)|  =\Big| \fint_{Q_{x,t_{j+1}}} (E_{t_j} g(x)-g(y))\, dy\Big| \lesssim \fint_{Q_{x,t_j}} |E_{t_j} g(x) - g(y)|\, dy 
  \lesssim  t_j \fint_{Q_{x,t_j}}|\nabla_{\|} g|\leq  2^{-j}t \m(\nabla_{\|} g)(x).
\end{equation}
The result now follows by summing over $j$.\hfill{$\square$}

We will  need for the following properties of the heat semigroup associated to the Laplacian $\Delta$ in $\rn$.

\begin{proposition}\label{prop-prelim-properties-heat-semigroup}
Let $P_t:=e^{-t^2\Delta}$ and $\Q_t:=t\partial_t P_t$. We define the measure 
\begin{equation}\nonumber
d\mu(x,t):= \dfrac{|\Q_t \nu(x)|^2}{|P_t \nu(x)|^2}P_t\nu(x) \, \dfrac{dxdt}{t}.
\end{equation}
This object satisfies the following properties
\begin{enumerate}[(i)]
	\item For any weight $\nu\in RH_s$ for some $s>1$ it holds that $|P_t \nu(x)|\lesssim \fint_{|x-y|<t} \nu(y)\, dy$, with constants depending on the $RH_s$ and doubling constants of $\nu$.
	\item The measure $d\mu$ satisfies the hypotheses of the modified Carleson's Lemma \ref{lem-prelim-modified-carleson}, provided $\nu\in RH_2$.
\end{enumerate}
\end{proposition}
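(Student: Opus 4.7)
The plan for Part (i) is a standard decomposition exploiting Gaussian decay of the heat kernel combined with the doubling property of $\nu$. Since $\nu\in RH_s$ for some $s>1$, $\nu$ is an $A_\infty$ weight and hence doubling. Write $P_t\nu(x) = \int_{\rn} k_t(x-y)\nu(y)\,dy$ where $|k_t(x)|\lesssim t^{-n}e^{-c|x|^2/t^2}$, and decompose $\rn$ into the ball $B_0 = B(x,t)$ and annuli $A_j = B(x, 2^{j+1}t)\setminus B(x, 2^j t)$, $j\geq 0$. Then
$$|P_t\nu(x)| \lesssim \sum_{j\geq -1} e^{-c4^j}\,t^{-n}\,\nu\bigl(B(x,2^{j+1}t)\bigr) \lesssim \Big(\sum_{j\geq 0} e^{-c4^j}D^{j}\Big)\fint_{B(x,t)}\nu \lesssim \fint_{B(x,t)}\nu,$$
with $D$ the doubling constant of $\nu$; the Gaussian decay beats the polynomial doubling growth.

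For Part (ii), I need to verify the hypothesis of Lemma \ref{lem-prelim-modified-carleson}: $\widetilde\mu(R_Q)\lesssim \nu(Q)$ for the density $m(x,t):=|\Q_t\nu(x)|^2/(P_t\nu(x)\cdot t)$. I would split this into two steps.

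\emph{Step 1 (Standard Carleson estimate).} Show $\int_{R_Q} m\, dxdt \lesssim \nu(Q)$. The key observation is the identity
$$\frac{|\Q_t\nu(x)|^2}{P_t\nu(x)\cdot t} = 4t\,\bigl|\partial_t\sqrt{P_t\nu(x)}\bigr|^2,$$
so the estimate takes the shape of a square-function bound for the positive function $w=\sqrt{P_t\nu}$, which satisfies $w(\cdot,0)=\sqrt\nu$. The essential cancellation is that $\Q_t$ annihilates constants, which yields the Bochner-type pointwise bound $|\Q_t\nu(x)|^2\lesssim P_t(\nu^2)(x)-(P_t\nu(x))^2$ via Cauchy–Schwarz against the heat kernel. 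Combined with the $RH_2$ hypothesis applied through Part (i) (which gives $P_t(\nu^2)(x)\approx\fint_{B(x,t)}\nu^2\lesssim (\fint_{B(x,t)}\nu)^2\approx(P_t\nu(x))^2$), this pointwise bound has enough vanishing as $t\to 0$ to be integrable. The final Carleson bound is obtained by integration by parts in $t$, using the boundary value $w(\cdot,0)^2=\nu$ on $Q$ and controlling $w(\cdot,\ell(Q))^2=P_{\ell(Q)}\nu$ by $\fint_{Q^*}\nu$ from Part (i).

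\emph{Step 2 (Upgrade to the Whitney-region supremum).} On any Whitney region $\mathcal{C}_{x,t}$, one has $s\approx t$ and $|y-x|\lesssim t$, so heat-semigroup smoothness implies both $P_s\nu(y)\approx P_t\nu(x)$ (by a Harnack-type argument for the heat kernel) and a comparability of $|\Q_s\nu(y)|$ with local averages of $\nu$ of radius $\approx t$ around $x$ (via bounds on $\nabla_y\Q_s\nu$ and $\partial_s\Q_s\nu$ that follow from differentiating the heat kernel). This yields $\sup_{(y,s)\in\mathcal{C}_{x,t}} m(y,s)\lesssim \fint_{\mathcal{C}_{x,t}^*} m(y,s)\,dy\,ds$ for a slightly enlarged Whitney region $\mathcal{C}_{x,t}^*$, and then Fubini reduces $\widetilde\mu(R_Q)$ to the standard Carleson estimate of Step 1 on a slight dilate of $R_Q$.

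The main obstacle is Step 1 of Part (ii): one cannot merely use the crude bound $|\Q_t\nu|^2\lesssim (P_t\nu)^2$ (which would make $m\lesssim P_t\nu/t$ whose integral against $dxdt$ diverges as $t\to 0^+$), so the subtraction $P_t(\nu^2)-(P_t\nu)^2$ must be genuinely exploited in tandem with the cancellation built into $\Q_t = t\partial_t P_t$ to secure the proper vanishing at small $t$.
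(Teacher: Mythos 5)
Your part (i) is fine, and in fact more elementary than the paper's argument: Gaussian decay of the kernel plus doubling alone gives $P_t\nu(x)\lesssim\fint_{B(x,t)}\nu$ (the paper routes this through H\"older and $RH_s$ on each annulus, but your version works). One caveat: the inference ``$\nu\in RH_s\Rightarrow\nu\in A_\infty\Rightarrow$ doubling'' is false in general—reverse H\"older without doubling does not self-improve to $A_\infty$—but since the statement allows the constants to depend on the doubling constant of $\nu$, doubling is effectively part of the hypothesis and your argument survives.

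Part (ii) has a genuine gap exactly at the point you flag as the main obstacle, and the mechanism you propose does not close it. The variance bound $|\Q_t\nu(x)|^2\lesssim P_{ct}\big(|\nu-P_t\nu(x)|^2\big)(x)$, combined with part (i) and $RH_2$ (which give $P_{ct}(\nu^2)(x)\approx\big(\fint_{B(x,t)}\nu\big)^2\approx(P_t\nu(x))^2$), yields only $|\Q_t\nu|^2\lesssim(P_t\nu)^2$: comparability of $P_t(\nu^2)$ and $(P_t\nu)^2$ does not make their \emph{difference} small, so no decay in $t$ is produced and you are back to the divergent bound $m\lesssim P_t\nu/t$. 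The integration by parts in $t$ you invoke is also vacuous: writing $w=\sqrt{P_t\nu}$ and integrating $\int_0^T t|\partial_t w|^2\,dt$ by parts, the second-derivative term $\partial_t^2 w=\tfrac{\partial_t^2P_t\nu}{2\sqrt{P_t\nu}}-\tfrac{(\partial_tP_t\nu)^2}{4(P_t\nu)^{3/2}}$ reproduces the quantity $\int_0^T t\tfrac{(\partial_tP_t\nu)^2}{4P_t\nu}\,dt$ with coefficient one, so the identity collapses to a triviality rather than a bound. What is actually needed—and what the paper supplies—is different: a stopping-time construction on $Q$ together with the John--Nirenberg lemma for Carleson measures (Lemma \ref{lem-john-nirenberg-carleson-measures}) reduces the estimate to a sawtooth region on which $P_t\nu(x)\gtrsim\fint_Q\nu$, so the denominator may be replaced by a constant; the numerator is then controlled by the $L^2$ Littlewood--Paley bound $\iint|\Q_t({\bbm 1}_{2Q}\nu)|^2\,\frac{dxdt}{t}\lesssim\int_{2Q}\nu^2$, with $RH_2$ entering only to convert $\int_{2Q}\nu^2$ into $\nu(Q)^2/|Q|$, and Gaussian off-diagonal decay handling the contributions of $\nu{\bbm 1}_{2^{j+1}Q\setminus 2^jQ}$. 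Your Step 2 also leans on a claim that forfeits the cancellation: bounding $|\Q_s\nu(y)|$ by local averages of $\nu$ is exactly the crude estimate that cannot be used; the paper instead exploits the semigroup identity to dominate $|\Q_s\nu(y)|$ on a Whitney box by $P_{ct}|\Q_{at}\nu|(x)$ (a fixed-scale average of $|\Q_{at}\nu|$, with Gaussian tails that must then be dealt with), and proves the Carleson bound for that majorant.
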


\noindent\emph{Proof.} The proof of (i) is a simple computation: the kernel of $P_t$ is given by 
\begin{equation}\nonumber
\varphi_t(x-y)= c_nt^{-n}e^{-(|x-y|/2t)^2}, \qquad x,y\in \rn,\, \quad t>0,
\end{equation}
and we can write 
\begin{equation}\nonumber
P_t \nu(x)= \int_{|x-y|<t} \varphi_t(x-y)\nu(y)\, dy + \sum_{j\geq 0} \int_{2^jt\leq |x-y|<2^{j+1}t} \varphi_t(x-y)\nu(y)\, dy.
\end{equation}
Clearly, the first term satisfies the desired estimate; it remains to control the tail. For this, we set $\Delta_j:=\{ y: 2^jt\leq |x-y|<2^{j+1}t\}$ and  employ H\"older's inequality to obtain 
\begin{equation}\nonumber
\int_{\Delta_j} \varphi_t(x-y)f(y)\, dy \leq \Big(\int_{\Delta_j} \varphi_t(x-y)^{s'}\, dy \Big)^{1/s'} \Big( \int_{\Delta_j} \nu(y)^s\, dy \Big)^{1/s}.
\end{equation}
Now we see, using that $\nu\in RH_s$,
\begin{equation}\nonumber
\Big(\int_{\Delta_j}\nu^s \Big)^{1/s} \lesssim (2^{j}t)^{n(\frac1s-1)} \int_{|x-y|<2^{j+1}t} \nu(y) dy \lesssim (C_{doub}2^{n(1/s-1)})^j t^{\frac ns} \fint_{|x-y|<t} \nu(y)dy.
\end{equation}
On the other hand, for $y\in \Delta_j$ we have that $\varphi_t(x-y)\lesssim t^{-n}\exp(-2^{j})$, so that 
\begin{equation}\nonumber
\Big( \int_{\Delta_j}\varphi_t(x-y)^{s'}\, dy \Big)^{1/s'} \lesssim |\Delta_j|^{1/s'} t^{-n}\exp(-2^j) \lesssim t^{-n/s}2^{jn/s'}\exp(-2^j).
\end{equation}
Combining these estimates, (i) follows.

The proof of (ii) is somewhat more involved. We ought to show that
\[
\sup_{\cxt} \Big( \frac{|\Q_s\nu(y)|^2}{|P_s\nu(y)|^2}P_s\nu(y)\frac1s\Big)dxdt\approx   \Big(\sup_{\cxt} \dfrac{|\Q_s\nu(y)|^2}{|P_s\nu(y)|^2}P_s\nu(y)\Big) \dfrac{dxdt}{t}=:d\widetilde\mu(x,t)
\]
is a Carleson Measure. For this purpose, first note that, using (i) and  the doubling property of $\nu$, it is not hard to show that $P_s\nu(y) \approx P_t\nu(x)$ for all $(y,s)\in \cxt$.   In other words,
\begin{equation}\nonumber
d\tmu(x,y)\approx \Big(\sup_\cxt |\Q_s\nu(y)|^2\Big) \dfrac{1}{P_t\nu(x)} \dfrac{dxdt}{t}.
\end{equation}
Now we let $a>0$ be small enough so that $s^2-a^2t^2\approx t^2$ whenever $|t-s|<t/8$ and write 
\begin{equation}\nonumber
\Q_s\nu(y)  = s\pd_s (e^{(a^2t^2-s^2)\Delta}e^{-a^2t^2\Delta}\nu(y))  = -2s^2\Delta e^{(a^2t^2-s^2)\Delta}e^{-a^2t^2\Delta}\nu(y)  = -\dfrac{s^2}{a^2t^2} e^{-(s^2-a^2t^2)\Delta} \Q_{at}\nu(y).
\end{equation}
Therefore,  there exists a universal constant $c>0$ such that $|\Q_s\nu(y)|\lesssim P_{ct}|\Q_{at}\nu|(y)$, for all $|s-t|<t/8$. Setting $g_t(z):= |\Q_{at}\nu(z)|$, we see that
\begin{equation}\nonumber
P_{ct} g_t(y) = c_n \int_\rn (ct)^{-n}e^{-\frac{|y-z|^2}{4(ct)^2}} g_t(z)\, dz 
 = \int_{|x-z|<t} (ct)^{-n}e^{-\frac{|y-z|^2}{4(ct)^2}} g_t(z)\, dz + \sum_{j\geq 0} \int_{\Delta_j} (ct)^{-n}e^{-\frac{|y-z|^2}{4(ct)^2}} g_t(z)\, dz  =: I+II.
\end{equation}
For $I$, we simply note that  $I\lesssim \fint_{|x-z|<t} g_t(z)\, dz \lesssim P_t g_t(x)$. For the tails, we use that $|y-z|\geq\frac78|x-z|$ for any $z\in\Delta_j$, to obtain the bound $II\lesssim P_{c't}g_t(x)$. We conclude that $|\Q_s\nu(y)|\lesssim P_{ct}|\Q_{at}\nu|(x)$ for $(y,s)\in \cxt$.

We have thus reduced matters to proving a (weighted) Carleson Measure estimate for $d\mu'(x,t):= \frac{(P_{ct}|\Q_{at}\nu|(x))^2}{P_t\nu(x)} \, \frac{dxdt}{t}$; that is, we want to show that $\mu'(R_Q)\lesssim \nu(Q)$, for all $Q\subset \rn$. So fix a cube $Q\subset \rn$. We run a stopping time argument to obtain a collection of maximal (dyadic) subcubes $(Q_j)_{j\geq 1}$ of $Q$ with respect to the properties $$\text{either}\quad\text{(a)}\quad \fint_{Q_j} \nu  \geq A \fint_{Q} \nu,\qquad\text{or}\quad\text{(b)}\quad \fint_{Q_j} \nu  \leq A^{-1} \fint_{Q} \nu,$$  for some $A>1$   large. We call $\mathcal{F}_1$ the collection of $Q_j$ satisfying the   property (a), and $\mathcal{F}_2$ the collection of $Q_j$ satisfying (b).

Note that, by construction, if $Q_j\in \mathcal{F}_1$ we have  $|Q_j|\leq \frac{|Q|}{A\nu(Q)} \int_{Q_j} \nu$, so, after summing over $j$, $\sum_{Q_j\in \mathcal{F}_1} |Q_j|\leq \frac{|Q|}{A}$. By the $A_\infty$ property of $\nu$, if $A$ is large enough, we may write $\sum_{Q_j\in \mathcal{F}_1} \nu(Q_j) \leq \frac{\nu(Q)}{4}$. On the other hand, if $Q_j\in \mathcal{F}_2$ we obtain directly  that $\nu(Q_j)\leq A^{-1}|Q_j|\fint_Q \nu$, so that $\sum_{Q_j\in \mathcal{F}_2} \nu(Q_j) \leq \frac{\nu(Q)}{4}$,  if we choose $A>1/4$.

By Lemma \ref{lem-john-nirenberg-carleson-measures} it is enough obtain   $\mu'(R_Q\backslash (\cup_{\mathcal F_1\cup\mathcal F_2} R_{Q_j}))\lesssim\nu(Q)$.  Moreover, notice that for $(x,t)\in R_Q\backslash(\cup_{\mathcal F_1\cup\mathcal F_2} R_{Q_j})$, we have  that $\fint_Q \nu \lesssim P_t\nu(x)$, by construction of the $Q_j$ . Accordingly, it is enough to show 
\begin{equation}\nonumber
\dint_{R_Q} (P_{ct}|\Q_{at}\nu|)(x)^2\, \dfrac{dxdt}{t}\lesssim \dfrac{\nu(Q)^2}{|Q|}.
\end{equation}
To do this, we use Minkowski's inequality to write
\begin{equation}\nonumber
\Big( \dint_{R_Q}( P_{ct}|\Q_{at}\nu|)(x)^2 \dfrac{dxdt}{t}\Big)^{\frac12}\leq \sum_{j=0}^\infty \Big( \dint_{R_Q} (P_{ct}|\Q_{at}({\bbm 1}_{R_j(Q)}\nu)|)(x)^2 \dfrac{dxdt}{t}\Big)^{\frac12} :=\sum_{j=0}^\infty T_j,
\end{equation}
where we denote $R_0=2Q$ and $R_j= 2^{j+1}Q\backslash2^jQ$ for $j\geq 1$. For the first term $T_0$, we employ the fact that $P_t$ is uniformly $\ltrn-$bounded and that $\Q_t$ satisfies an $L^2$ square function estimate to obtain 
\begin{equation}\nonumber
T_0^2\leq \int_0^\infty \int_\rn (P_{ct}|\Q_{at}({\bbm 1}_{R_0}\nu)|)(x)^2\, \dfrac{dxdt}{t} \lesssim \int_0^\infty\int_\rn |\Q_{at}({\bbm 1}_{R_0}\nu)(x)|^2\,\dfrac{dxdt}{t} \lesssim \int_{2Q} \nu^2.
\end{equation}
We now use the reverse H\"older property   of $\nu$ to see that $\int_{2Q}\nu^2 \lesssim |Q|^{-1}\nu(Q)^2$, which gives the desired estimate for $T_0$. For the others, we use the kernel representations; first recall that if $\varphi_t$ is the kernel for $P_t$ and $\psi_t$ the one for $\Q_t$ then  $|\varphi_t(z)|, |\psi_t(z)|\leq c_1t^{-n}e^{-c_2|z|^2/t^2}$. Calling $\nu_j={\bbm 1}_{R_j}\nu$, we compute
\begin{multline}\nonumber
\int_Q(P_{ct}|\Q_{at}\nu_j|)(x)^2\, dx   = \int_Q\Big(\int_\rn \Big|\int_\rn \varphi_{ct}(x-y)\psi_{at}(y-z)\nu_j(z)\, dz\Big| \, dy\Big)^2 \, dx\\
  \lesssim \int_Q \Big( \int_{R_j} \nu(z) \int_\rn t^{-2n}e^{-c\frac{|x-y|^2+|y-z|^2}{t^2}}\, dy\,  dz \Big)^2\, dx. 
\end{multline}
It is easy to verify that $|x-y|^2+|y-z|^2\geq(|x-y|^2+ |x-z|^2)/4$, and hence 
\begin{equation}\nonumber
\int_Q(P_{ct}|\Q_{at}\nu_j|)(x)^2\, dx   \lesssim \int_Q\Big( \int_{R_j} \nu(z)t^{-n}e^{-c\frac{|x-z|^2}{t^2}} \, dz \Big)^2 dx \lesssim e^{-c\frac{(2^j\ell(Q))^2}{2t^2}}\int_Q(\widetilde{P}_t\nu)^2,  
\end{equation}
where we define $\widetilde{P}_t$ the convolution operator with kernel $t^{-n}e^{-c|z|^2/(2t^2)}$. Now we see, from the proof of part (i), that $\widetilde{P}_t\nu(x)\lesssim \fint_{|x-y|<t} \nu(y)\, dy \lesssim \nu(Q)/t^n$. Therefore, 
\begin{equation}\nonumber
\int_Q(P_{ct}|\Q_{at}\nu_j|)(x)^2\, dx \lesssim e^{-c'\frac{(2^j\ell(Q))^2}{t^2}}\frac{\nu(Q)^2|Q|}{t^{2n}}.
\end{equation}
The desired estimate for $T_j$ now follows by integrating in $t$ over $(0,\ell(Q))$.\hfill{$\square$}

\subsection{$L^r-L^q$ Off-diagonal estimates}

Throughout this section we denote by $T_t$, with $t\neq 0$, an operator mapping functions $C_c^\infty(\rn; \CC^{d_1})$ to measurable functions in $\rn$ with values in $\CC^{d_2}$ for some integers $d_1, d_2$.

\begin{definition}[$L^r\to L^q$ Off-diagonal estimates]\label{def-lr-to-lq-off-diagonal-estimates}
Let $1\leq r\leq q\leq \infty$. We say that a family of operators $(T_t)_{t\neq 0}$ satisfies \emph{$L^r-L^q$ off-diagonal estimates} if there exist $C_0>0$ and numbers  $\gamma_1\in \RR$, $\gamma_2>0$ such that for every cube $Q\subset \rn$, the following estimates hold with $\gamma:=\gamma_1+\gamma_2$.
\begin{enumerate}[(i)]
	\item If $|t|\approx \ell(Q)$, then $$\| T_t(f{\bbm 1}_{R_0(Q)})\|_{L^q(Q)} \leq C_0 |Q|^{1/q-1/r} \| f\|_{L^r(Q)}.$$
	\item If $t\in \RR$ and we set $R_j(Q):= 2^{j+1}Q\backslash 2^jQ$ for $j\geq 1$, then
	\begin{equation}\nonumber
	\| T_t(f{\bbm 1}_{R_j(Q)})\|_{L^q(Q)} \leq C_0 2^{-nj\gamma_1} \Big(\dfrac{|t|}{2^j\ell(Q)}\Big)^{nj\gamma_2} |Q|^{1/q-1/r} \| f\|_{L^r(R_j(Q))}.
	\end{equation}
	\item If $t\in \RR$ and $\supp f\subset Q$ then
	\begin{equation}\nonumber
	\| T_t(f)\|_{L^q(R_j(Q))} \leq C_0 2^{-nj\gamma_1}\Big( \dfrac{|t|}{2^j\ell(Q)}\Big)^{nj\gamma_2}|Q|^{1/q-1/r} \| f\|_{L^r(Q)}.
	\end{equation}
\end{enumerate}

\end{definition}

\begin{proposition}[Weighted estimates from off-diagonal decay]\label{prop-averaged-bounds-slices-via-off-diagonal-decay}
Suppose $(T_t)_{t>0}$ is sublinear and satisfies $L^r-L^2$ off diagonal estimates for some $1<r<2$ and $\gamma>1/r$. Then for all $\nu\in A_{2/r}$, and every $t>0$,
\begin{equation}\nonumber
\Big\| \Big(\fint_{|x-y|<t} |T_t f(y)|^2\, dy \Big)^{1/2}\Big\|_{L^2(\nu dx)} \lesssim_{[\nu]_{A_{2/r}}} \| f\|_\ltnu.
\end{equation} 
\end{proposition}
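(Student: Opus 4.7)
The plan is to reduce the statement to a pointwise bound of the averaged quantity by the maximal function $\m_r f$, and then invoke Muckenhoupt's theorem for $\m$ on $L^{2/r}(\nu)$. First, fix $t>0$ and, for each $x\in\rn$, let $Q=Q(x)\in\DD_t$ be the (essentially unique) dyadic cube of sidelength $\approx t$ containing $x$. Then the ball $B(x,t)$ lies inside a fixed dilate $Q^*$ of $Q$, so
\[
\Big(\fint_{|x-y|<t} |T_t f(y)|^2\, dy\Big)^{1/2} \leq C_n\, t^{-n/2}\, \|T_t f\|_{L^2(Q^*)}.
\]

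Next, I would decompose $f = \sum_{j\geq 0} f_j$, where $f_0 := f\mathbf{1}_{R_0(Q)}$ (with $R_0(Q)$ the near-field dilate from the off-diagonal hypothesis) and $f_j := f\mathbf{1}_{R_j(Q)}$ for $j\geq 1$, with $R_j(Q) = 2^{j+1}Q\setminus 2^j Q$. By sublinearity and Minkowski's inequality,
\[
\|T_t f\|_{L^2(Q^*)} \leq \sum_{j\geq 0} \|T_t f_j\|_{L^2(Q^*)}.
\]
Since $t\approx\ell(Q)\approx\ell(Q^*)$, items (i)–(ii) of Definition \ref{def-lr-to-lq-off-diagonal-estimates} yield, for a uniform $C_0$,
\[
\|T_t f_j\|_{L^2(Q^*)} \lesssim 2^{-jn\gamma}\, |Q|^{1/2-1/r}\, \|f\|_{L^r(R_j(Q))},
\]
and a direct application of H\"older's inequality gives $\|f\|_{L^r(R_j(Q))} \leq |2^{j+1}Q|^{1/r} \big(\fint_{2^{j+1}Q}|f|^r\big)^{1/r}$. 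Combining these with the fact that $|Q|\approx t^n$, I get, for every $x\in Q$,
\[
t^{-n/2}\|T_t f_j\|_{L^2(Q^*)} \lesssim 2^{jn(1/r-\gamma)}\, \m_r f(x).
\]
The hypothesis $\gamma>1/r$ ensures the geometric series in $j$ converges, yielding the pointwise estimate
\[
\Big(\fint_{|x-y|<t} |T_t f(y)|^2\, dy\Big)^{1/2} \lesssim \m_r f(x), \qquad x\in\rn.
\]

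To conclude, I would simply integrate the square of this pointwise bound against $\nu$ and apply Muckenhoupt's theorem (Proposition \ref{prop-properties-ap-weights}(ix)): since $\nu\in A_{2/r}$ and $2/r>1$, $\|\m g\|_{L^{2/r}(\nu)}\lesssim_{[\nu]_{A_{2/r}}}\|g\|_{L^{2/r}(\nu)}$, and applying this with $g=|f|^r$ (using $\m_r f = (\m(|f|^r))^{1/r}$) gives $\|\m_r f\|_{\ltnu}\lesssim_{[\nu]_{A_{2/r}}} \|f\|_{\ltnu}$, as desired. The only real obstacle is the exponent bookkeeping in the annular decomposition: one must verify that the decay rate $2^{-jn\gamma}$ beats the growth $2^{jn/r}$ coming from the $L^r$ mass of $f$ on $R_j(Q)$, and this is precisely where the assumption $\gamma>1/r$ is sharp; everything else is a routine application of weighted maximal-function theory.
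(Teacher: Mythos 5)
Your proof is correct and follows essentially the same route as the paper's: a dyadic decomposition of $\rn$ at scale $t$, an annular splitting of $f$, the $L^r$--$L^2$ off-diagonal decay to produce a geometric series convergent precisely because $\gamma>1/r$, domination by $\m_r f$, and Muckenhoupt's theorem for $\m$ on $L^{2/r}(\nu)$ with $\nu\in A_{2/r}$. The only (harmless) cosmetic differences are that you obtain a pointwise bound by $\m_r f$ before integrating, whereas the paper integrates in $x$ over each cube first, and that you apply the off-diagonal estimates on $Q^*$ while decomposing over the annuli $R_j(Q)$ of $Q$, which requires only the trivial observation that these annuli are covered by boundedly many annuli of $Q^*$ of comparable index.
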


\noindent\emph{Proof. } This proposition is contained within \cite{gdlhh17}, but we provide the proof for completeness. We first decompose $\rn$ into cubes in the dyadic grid $\DDt$ of sidelength $\approx t$ to obtain  
\begin{multline}\nonumber
\Big\| \Big(\fint_{|x-y|<t} |T_t f(y)|^2\, dy\Big)^{1/2} \Big\|_{L^2(\nu dx)}  = \Big(\sum_{Q\in \DDt} \int_Q \fint_{|x-y|<t} |T_t f(y)|^2\, dy \, \nu(x)dx \Big)^{1/2}\\
  \lesssim  \Big(\sum_{Q\in \DDt} \fint_{Q^*} \int_Q |T_t f(y)|^2\,dy\,  \nu(x) dx\Big)^{1/2}
  \lesssim \sum_{j\geq 0} \Big(\sum_{Q\in \DDt}\fint_{Q^*} \int_Q |T_t({\bbm 1}_{R_j(Q)} f(y))|^2\, dy \, \nu(x)dx \Big)^{1/2}   =:I, 
\end{multline}
where as usual we define $R_0(Q):= 2Q$ and $R_j(Q):= 2^{j+1}Q\backslash 2^jQ$ for $j\geq 1$, and we used Minkowski's inequality in the last line. We now exploit the off-diagonal decay of $T_t$ to get,
\begin{equation}\nonumber
\begin{split}
\Big(\int_Q |T_t({\bbm1}_{R_j(Q)} f(y))|^2\, dy\Big)^{1/2} \lesssim 2^{-jn\gamma} t^{n(1/2-1/r)} \Big(\int_{R_j(Q)} | f(y)|^r\, dy\Big)^{1/r}.
\end{split}
\end{equation}
Going back to $I$, we see that
\begin{multline}\nonumber
I  \lesssim \sum_{j\geq 0} 2^{-jn\gamma}t^{n(1/2-1/r)} \Big(\sum_{Q\in \DDt} \fint_{Q^*} \Big(\int_{R_j(Q)} | f(y)|^r\, dy \Big)^{2/r} \, \nu(x) dx \Big)^{1/2}\\
  \lesssim \sum_{j\geq 0} 2^{-jn\gamma} t^{n(1/2-1/r)}(2^jt)^{n/r} \Big( \sum_{Q\in \DDt} \fint_{Q^*} \Big(\fint_{R_j(Q)} | f(y)|^r\, dy\Big)^{2/r} \, \nu(x)dx\Big)^{1/2}\\
  \lesssim \sum_{j\geq 0} 2^{-jn(\gamma-1/r)}t^{n/2}\Big(\sum_{Q\in \DDt}\fint_{Q^*} \m_r(f)(x)^2\, \nu(x)dx \Big)^{1/2}
  \lesssim \sum_{j\geq 0} 2^{-jn(\gamma-1/r)} \| \m_r( f)\|_\ltnu  \lesssim \|\m_r( f)\|_\ltnu,
\end{multline}	
since $\gamma>1/r$. Since $r<2$ and $A_{2/r}\subset A_2$, we have $I\lesssim_{[\nu]_{A_{2/r}}}   \| \m_r( f)\|_\ltnu \lesssim \|  f\|_\ltnu$.\hfill{$\square$}

\subsection{Properties of Solutions and Layer Potentials}\label{sec.prop} 

\subsubsection{Functional-analytic setup} First we  recall our definitions of    layer potentials. Formally, for instance, the single layer potential is given by $\mathcal{S}^{\cL}=(\Tr_0\circ(\cL^*)^{-1})^*f$, but we need to give a precise functional-analytic context for the operator $\cL$ to be able to talk about the traces of its inverse adjoint operator.

Let $\Omega\subseteq\bb R^{n+1}$ be an open set with Lipschitz   boundary, and fix $f\in L^1_{\loc}(\om)$, $F\in L^1_{\loc}(\om,\cee)$, and $u\in W^{1,2}_{\loc}(\om)$. We say that $u$ solves the equation $\cL u=f-\div F$ in $\om$ \emph{in the weak sense} if, for every $\varphi\in C_c^\infty(\om)$, we have that
\begin{equation}\label{eq.weak}
	\dint_{\ree} \Big( (A\nabla u + B_1 u)\cdot\overline{\nabla \varphi} + B_2\cdot \nabla u \overline{\varphi}\Big) = \dint_{\ree} \big( f\overline{\varphi} + F\cdot \overline{\nabla \varphi}\big).
\end{equation} 
  
For $p\in(1,n+1)$, we define the space $Y^{1,p}(\om)$ as
\begin{equation}\label{eq.y1p}
Y^{1,p}(\om) : = \Big\{ u \in   L^{\frac{(n+1)p}{n+1-p}}(\om): \nabla u \in L^p(\om)\Big\}.
\end{equation}
We equip this space with the norm $\lVert u \rVert_{Y^{1,p}(\om)} := \lVert u \rVert_{L^{\frac{(n+1)p}{n+1-p}}(\om)} + \lVert \nabla u \rVert_{L^p(\om)}$.  

Define the sesquilinear form $B_{\cL}:C_c^{\infty}(\bb R^{n+1})\times C_c^{\infty}(\bb R^{n+1})\ra\bb C$ via
\begin{equation*} 
	B_{\cL}[u,v]:=\dint_{\bb R^{n+1}}\Big[A\nabla u\cdot\overline{\nabla v}+uB_1\cdot\overline{\nabla v}+\overline{v}B_2\cdot\nabla u\Big],\qquad u,v\in C_c^{\infty}(\bb R^{n+1}),
\end{equation*}
and the associated operator $\cL:\n D\ra\n D'$ via the identity
\[
\langle\cL u,v\rangle=B_{\cL}[u,v],\qquad   u, v\in C_c^{\infty}(\bb R^{n+1}).
\]
In fact, when \eqref{eq.small} holds and $\rho\ll1$,  the form $B_{\cL}$ extends to a bounded, coercive form on  $Y^{1,2}(\bb R^{n+1})\times Y^{1,2}(\bb R^{n+1})$, and the operator $\cL$ extends to  an isomorphism  $Y^{1,2}(\bb R^{n+1})\ra(Y^{1,2}(\bb R^{n+1}))^*$ \cite[Proposition 2.19]{bhlmp}. Associated to $\cL$ we also have its dual $\cL^*:Y^{1,2}(\bb R^{n+1})\ra(Y^{1,2}(\bb R^{n+1}))^*$, defined by the relation
\begin{equation*}
	\langle \cL u, v\rangle = \langle u, \cL^* v\rangle,
\end{equation*}
and it is a matter of algebra to check that the identity
\begin{equation*}
	\cL^* v = -\div( A^* \nabla v + \overline{B}_2v) + \overline{B}_1\cdot \nabla v
\end{equation*}
holds in the weak sense \eqref{eq.weak} for any $v\in Y^{1,2}(\bb R^{n+1})$. In particular, $\cL^*$ is an operator of the same type as $\cL$ and if  $\rho\ll1$ so that $\cL^{-1}$ is defined, then  $(\cL^*)^{-1}$ is well defined, bounded, and satisfies $(\cL^*)^{-1}=(\cL^{-1})^*$.

We turn to the mapping properties of traces. For fixed $t\in\bb R$   we define the \emph{trace operator} $\Tr_t:C_c^{\infty}(\bb R^{n+1})\ra C_c^{\infty}(\bb R^n)$ by
\begin{equation}\label{eq.traceop}
	\Tr_tu=u(\cdot,t).
\end{equation}
Let $\mathcal{F}:L^2(\bb R^n)\ra L^2(\bb R^n)$ be the Fourier transform and write $\hat u=\mathcal{F}u$. Define $\Hf$ as  the completion of $C_c^{\infty}(\bb R^n)$ under the norm $|u|_{\dt H^{\frac12}(\bb R^n)}=\int_{\bb R^n}|\xi||\hat u(\xi)|^2\,d\xi$. Then \cite[Lemma 2.8]{bhlmp} gives that $\Tr_t$ extends uniquely to a bounded linear operator
\begin{equation}\label{eq.traceext}
	\Tr_t:Y^{1,2}(\bb R^{n+1})\ra\Hf.
\end{equation}
We write $\Hfm:=(\Hf)^*$, and we note that we are departing from notation used elsewhere in the literature, since our $\Hfm$ does not coincide with the usual (inhomogeneous) fractional Sobolev space of order $-1/2$  (for more on this, see the remarks before Proposition 2.5 in \cite{bhlmp}).

\begin{definition}\label{def.sl}
Let $\gamma\in H^{-1/2}(\rn)$. We define the \emph{single layer potential} $\sl \gamma\in \yot$ as 
\begin{equation}\label{eq.slp}
\sl \gamma= (\tr_0 \circ (\L^*)^{-1})^*\gamma.
\end{equation}
For fixed $t\in \RR$ we denote $\sl_t\gamma:= \tr_t\sl\gamma$.
\end{definition}

\begin{definition}\label{def.dl}
Let $\varphi\in H^{1/2}_0(\rn)$. We define the \emph{double layer potential} $\dlp \varphi \in \yotp$ as 
\begin{equation}\label{eq.dlp}
\dlp \varphi:= -\Phi|_{\reu} + \L^{-1} (\n F_{\Phi}^+)|_{\reu},
\end{equation}
where $\Phi\in \yot$ is any extension of $\varphi$, and $\n F_{\Phi}^+\in(Y^{1,2}(\bb R^{n+1}))^*$ is given by
\begin{equation}\nonumber
\langle\n F_\Phi^+, G\rangle:= \dint_\reu \Big(A\nb\Phi\cdot\overline{\nb G} + B_1\Phi \cdot \overline{\nb G} + B_2\cdot (\nb\Phi) \overline{G}\Big), \qquad\text{for each } G\in \yot.
\end{equation}
\end{definition}

The layer potentials are studied in detail in Section 4 of \cite{bhlmp}.

\subsubsection{Properties of weak solutions}

\begin{proposition}[Caccioppoli  Inequality in $L^p$.  {\cite[Proposition 3.9]{bhlmp}}]\label{prop-caccioppoli-inequality-lp}
There exists an open interval $I$ containing $2$ such that for every $p\in I$,  every weak solution $u\in W^{1,2}_{\loc}(\reu)$  of $\L u=0$ in $\bb R^{n+1}_+$, and every $(n+1)-$dimensional ball $B$ satisfying that $\overline{2B}\subset \reu$,  it holds that
\begin{equation}\label{eq.cacc1}
\Big(\dfint_B |\nabla u|^p\,dX\Big)^{1/p} \lesssim \frac{1}{r(B)} \Big(\dfint_{2B} |u|^p\,dX\Big)^{1/p},
\end{equation} 
with implicit constants that depend only on $n,p$, ellipticity of $\L$, and $\rho$.
\end{proposition}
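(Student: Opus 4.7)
The overall strategy is a two-stage argument: first establish the standard $L^2$ Caccioppoli inequality for the operator $\L$ (which involves carefully dispatching the lower-order terms using the critical integrability $B_1,B_2\in L^n(\rn)$, $V\in L^{n/2}(\rn)$), and then invoke a Meyers/Gehring-type self-improvement to obtain an open interval of admissible exponents.

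\textbf{Step 1 ($L^2$ Caccioppoli).} Fix a ball $B=B(X,r)$ with $\overline{2B}\subset\reu$ and a smooth cutoff $\eta\in C_c^\infty(2B)$ with $\eta\equiv 1$ on $B$ and $|\nabla\eta|\lesssim 1/r$. Testing the weak formulation of $\L u=0$ against $\varphi=\eta^2\overline{u}$ and expanding, ellipticity gives
\begin{equation*}
\lambda\int\eta^2|\nabla u|^2 \le \Big|\int 2\eta\, A\nabla u\cdot\overline{u\nabla\eta}\Big| + \Big|\int \eta^2 B_1 u\cdot\overline{\nabla u}\Big| + \Big|\int 2\eta u^2 B_1\cdot\overline{\nabla\eta}\Big| + \Big|\int \eta^2 B_2\cdot\nabla u\,\overline{u}\Big|,
\end{equation*}
plus a $V$-term (which, as noted after \eqref{eq.small}, can be absorbed into $B_1,B_2$). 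The purely elliptic cross-term is standard via Young's inequality. The drift terms are handled by H\"older combined with the Sobolev embedding $\|\eta u\|_{L^{2^*_{n+1}}(\ree)}\lesssim \|\nabla(\eta u)\|_{L^2(\ree)}$: for instance,
\begin{equation*}
\Big|\int \eta^2 B_1 u\cdot\overline{\nabla u}\Big|\le \|B_1\|_{L^n(\rn)}\|\eta u\|_{L^{2^*_{n+1}}}\|\eta\nabla u\|_{L^2}\le \rho\Big(\|\eta\nabla u\|_{L^2}^2+\|u\nabla\eta\|_{L^2}^2\Big),
\end{equation*}
using the $t$-independence of $B_1$ (so that the $L^n(\rn)$ norm controls integrals over $(n+1)$-dimensional sets via slicing). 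Choosing $\rho$ small enough allows one to absorb all $\int \eta^2|\nabla u|^2$ contributions into the left-hand side, yielding the classical $L^2$ Caccioppoli
\begin{equation*}
\int_B|\nabla u|^2\lesssim \frac{1}{r^2}\int_{2B}|u|^2.
\end{equation*}

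\textbf{Step 2 (Reverse H\"older and Gehring).} Apply the $L^2$ Caccioppoli to $u-c$ in place of $u$ for a suitable constant $c=(u)_{2B}$. Strictly speaking, $u-c$ is not a solution of $\L$ (since $\L$ does not annihilate constants), but the extra terms produced are of the form $c\,\mathrm{div}(B_1)$ and $cV$, which are controlled by the same H\"older--Sobolev argument as in Step 1 with an additional factor of $\rho$. This yields the ``reverse H\"older'' estimate
\begin{equation*}
\Big(\dfint_B|\nabla u|^2\Big)^{1/2}\lesssim \frac{1}{r}\Big(\dfint_{2B}|u-(u)_{2B}|^2\Big)^{1/2}\lesssim \Big(\dfint_{2B}|\nabla u|^{2_{*,n+1}}\Big)^{1/2_{*,n+1}},
\end{equation*}
by the Sobolev--Poincar\'e inequality in $n+1$ dimensions, with $2_{*,n+1}=2(n+1)/(n+3)<2$. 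Gehring's lemma then produces $\varepsilon>0$ and
\begin{equation*}
\Big(\dfint_B|\nabla u|^{2+\varepsilon}\Big)^{1/(2+\varepsilon)}\lesssim \Big(\dfint_{2B}|\nabla u|^{2_{*,n+1}}\Big)^{1/2_{*,n+1}}\lesssim \frac{1}{r}\Big(\dfint_{4B}|u|^{2}\Big)^{1/2}\lesssim \frac{1}{r}\Big(\dfint_{4B}|u|^{p}\Big)^{1/p}
\end{equation*}
for any $p\in[2,2+\varepsilon)$, where the last step is H\"older on a normalized measure. This yields the Caccioppoli inequality for $p\in[2,2+\varepsilon)$ (up to a harmless change from $2B$ to $4B$ on the right, which is recovered by rescaling the family of balls).

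\textbf{Step 3 (The range $p<2$) and main obstacle.} For $p$ slightly below $2$ the simple H\"older move used above fails: one cannot bound $\|u\|_{L^2(2B)}$ by $\|u\|_{L^p(2B)}$. I would use the test function $\varphi=\eta^2(|u|^2+\delta)^{(p-2)/2}\overline{u}$ and pass $\delta\to 0^+$, which gives directly an $L^p$-level Caccioppoli
\begin{equation*}
\int \eta^2(|u|^2+\delta)^{(p-2)/2}|\nabla u|^2\lesssim \frac{1}{r^2}\int_{2B}|u|^p+(\text{lower-order remainders}),
\end{equation*}
combined with H\"older $|\nabla u|^p\le|\nabla u|^2(|u|^2+\delta)^{(p-2)/2}\cdot(|u|^2+\delta)^{p(2-p)/4}$ to convert the weighted estimate into the unweighted one. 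The main obstacle is again controlling the lower-order remainders uniformly in $\delta$; this requires a more delicate H\"older/Sobolev pairing since the weight $(|u|^2+\delta)^{(p-2)/2}$ is singular. The smallness of $\rho$ and the fact that the critical norms $L^n,L^{n/2}$ are preserved under the same Sobolev embedding used in Step~1 should permit the absorption, giving the desired inequality for $p\in(2-\varepsilon',2]$. Taking $I:=(2-\varepsilon',2+\varepsilon)$ completes the proof.
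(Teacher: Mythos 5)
This statement is not actually proved in the present paper: it is imported verbatim from \cite[Proposition 3.9]{bhlmp}, so your attempt can only be measured against the standard route. Your Step 1 is right in spirit, though the displayed H\"older inequality does not balance exponents: pairing $\|B_1\|_{L^n(\rn)}$ with $\|\eta u\|_{L^{2^*_{n+1}}}$ and $\|\eta\nabla u\|_{L^2}$ over an $(n+1)$-dimensional set gives $\frac1n+\frac{n-1}{2(n+1)}+\frac12>1$; one must slice in $t$ and use the $n$-dimensional Sobolev embedding on each slice (as your parenthetical hints). The first genuine problem is in Step 2: $u-(u)_{2B}$ is not a solution, and the terms produced by $\L$ acting on the constant are bounded by $\rho\,|(u)_{2B}|/r$ times controllable factors, i.e.\ by an additive quantity comparable to $\frac{\rho}{r}\big(\fint_{2B}|u|^2\big)^{1/2}$, which is \emph{not} dominated by $\big(\fint_{2B}|\nabla u|^{2_{*,n+1}}\big)^{1/2_{*,n+1}}$. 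So the ``pure'' reverse H\"older chain for $|\nabla u|$ that you display, and hence a direct application of Gehring, is not available; one has to run a Meyers/Gehring argument with an inhomogeneous term and then lower the exponent on the $u$-side via the reverse H\"older inequality satisfied by solutions themselves (Proposition \ref{prop-prelim-inhomog-reverse-holder} with $F=0$, valid for every exponent $q\ge1$). With that extra ingredient the $p>2$ conclusion does follow, but as written the chain is incorrect.

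Step 3 is where the real gap lies, and the route chosen is both riskier and unnecessary. Testing with $\eta^2(|u|^2+\delta)^{(p-2)/2}\overline{u}$ produces from the principal part the extra term $\tfrac{p-2}{2}\int\eta^2(|u|^2+\delta)^{(p-4)/2}\,\overline{u}\,A\nabla u\cdot\overline{\nabla(|u|^2)}$, and for complex $A$ this has no sign: this is precisely the $p$-ellipticity obstruction. Its size is $O(|p-2|)$ times the good term, so it can be absorbed for $p$ close to $2$, but your sketch never addresses it — you only flag the lower-order remainders, which are not the main difficulty of that route. Moreover the whole construction is superfluous: for $1\le p\le 2$ the desired inequality follows at once from the $L^2$ Caccioppoli of Step 1 together with the reverse H\"older inequality for solutions, since $\big(\fint_B|\nabla u|^p\big)^{1/p}\le\big(\fint_B|\nabla u|^2\big)^{1/2}\lesssim\frac1r\big(\fint_{2B}|u|^2\big)^{1/2}\lesssim\frac1r\big(\fint_{4B}|u|^p\big)^{1/p}$, the last step being Proposition \ref{prop-prelim-inhomog-reverse-holder} with $F=0$ (and the passage from $4B$ back to $2B$ is a routine covering/rescaling). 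This is the natural argument, consistent with the framework of \cite{bhlmp}; as it stands, your treatment of the range $p<2$ has a genuine gap.
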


\begin{proposition}[Caccioppoli  Inequality on Slices.  {\cite[Lemma 3.20]{bhlmp}}]\label{prop-caccioppoli-on-slices}
For $p, I, u$ as in Proposition \ref{prop-caccioppoli-inequality-lp}, every cube $Q\subset \rn$ and $\alpha>0$, we have that
\begin{equation}\label{eq.cacc2}
\Big(\fint_{Q}|\nabla u(x,t)|^p\, dx \Big)^{1/p} \leq \dfrac{C}{\ell(Q)}\Big(\fint_{t-\alpha\ell(Q)}^{t+\alpha\ell(Q)} \fint_{Q^*} |u(x,s)|^p\, dx ds\Big)^{1/p},
\end{equation}
whenever  $Q^*\times (t-\alpha\ell(Q),t+\alpha\ell(Q))\subset \reu$, with  $C$ depending only on $n,p, \alpha$, ellipticity of $\L$, and $\rho$.
\end{proposition}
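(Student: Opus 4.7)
The plan is to leverage the $t$-independence of the coefficients of $\cL$ in order to promote the ball-based Caccioppoli inequality of Proposition \ref{prop-caccioppoli-inequality-lp} to a slice-based one. The first ingredient is the elementary one-dimensional inequality: if $f \in W^{1,p}((a,b))$ and $t_0 \in (a,b)$, then
\begin{equation*}
|f(t_0)|^p \lesssim_p \fint_a^b |f(s)|^p\,ds + (b-a)^p \fint_a^b |f'(s)|^p\,ds,
\end{equation*}
which follows by writing $f(t_0) = f(s) + \int_s^{t_0} f'$, applying H\"older, and averaging in $s$. Applying this componentwise with $f(s) = \nabla u(x,s)$ on $[t-\beta,t+\beta]$, where $\beta := \alpha\ell(Q)/4$, and then integrating in $x\in Q$, gives
\begin{equation*}
\fint_Q |\nabla u(x,t)|^p\,dx \lesssim \dfint_{Q\times I(t)}|\nabla u|^p + \beta^p\dfint_{Q\times I(t)}|\partial_s\nabla u|^p,
\end{equation*}
where $I(t) := (t-\beta,t+\beta)$. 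It remains to bound both right-hand terms by $\ell(Q)^{-p}\dfint_{Q^*\times(t-\alpha\ell(Q),t+\alpha\ell(Q))}|u|^p$, up to constants that may depend on $\alpha$.

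For the first term, I would cover the cylinder $Q\times I(t)$ by finitely many balls $B_i$ of radius $\sim \beta$ whose doubles $2B_i$ lie inside $Q^*\times(t-2\beta,t+2\beta)$, and invoke Proposition \ref{prop-caccioppoli-inequality-lp} on each $B_i$; the $p$-th power average $\dfint_{Q\times I(t)}|\nabla u|^p$ is thereby controlled by $\beta^{-p}\dfint_{Q^*\times(t-2\beta,t+2\beta)}|u|^p$, with constants that blow up only through the ratio $\beta/\ell(Q)\sim \alpha$, matching the stated dependence on $\alpha$.

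For the second term the decisive step is the observation that the $t$-independence of $A, B_1, B_2$ forces $\partial_s u$ to itself be a weak solution of $\cL (\partial_s u)=0$ in its domain. This must be justified rigorously, since no a priori smoothness of $u$ in the $t$-variable is available; the standard way to proceed is via the finite differences $D_h u(x,s) := h^{-1}(u(x,s+h) - u(x,s))$, which satisfy $\cL(D_h u) = 0$ on slightly reduced domains precisely because the coefficients do not depend on $s$. The $L^p$ Caccioppoli bounds of Proposition \ref{prop-caccioppoli-inequality-lp} provide uniform control on $\{D_h u\}_h$ in a reflexive space $W^{1,p}_{\loc}$, so a weak-limit argument as $h\to 0$ identifies the limit with $\partial_s u$ and shows that $\partial_s u$ inherits both the PDE and the Caccioppoli inequality. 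Applying Caccioppoli first to $\partial_s u$, and then to $u$ itself, yields
\begin{equation*}
\dfint_{Q\times I(t)}|\partial_s\nabla u|^p \lesssim \beta^{-p}\dfint|\partial_s u|^p \leq \beta^{-p}\dfint|\nabla u|^p \lesssim \beta^{-2p}\dfint_{Q^*\times(t-2\beta,t+2\beta)}|u|^p,
\end{equation*}
on appropriately enlarged cylinders; multiplying by the prefactor $\beta^p$ produces the required bound.

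The only real obstacle is the rigorous justification that $\partial_s u$ solves $\cL(\partial_s u) = 0$ in the weak sense, and here the $L^p$ (rather than $L^2$) formulation of Caccioppoli in Proposition \ref{prop-caccioppoli-inequality-lp} is essential: it ensures uniform control on the difference quotients $D_h u$ in a reflexive Banach space, enabling the extraction of a weakly convergent subsequence whose limit can be identified with $\partial_s u$ and verified to satisfy the equation. Everything else reduces to a careful but routine covering and book-keeping argument.
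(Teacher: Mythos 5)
Your proof is correct in substance and follows essentially the standard argument behind the cited \cite[Lemma 3.20]{bhlmp} (the present paper does not reprove this proposition, it only quotes it): $t$-independence makes $\partial_t u$ a weak solution via the difference-quotient argument, the one-dimensional trace inequality in $t$ reduces the slice average to solid averages of $|\nabla u|^p$ and $|\partial_t\nabla u|^p$, and the interior $L^p$ Caccioppoli inequality of Proposition \ref{prop-caccioppoli-inequality-lp} (applied to $u$ and to $\partial_t u$, using $|\partial_t u|\le|\nabla u|$) closes the estimate. The only repair is in the book-keeping you already flag: with $\beta=\alpha\ell(Q)/4$ the covering balls, after the two successive Caccioppoli enlargements, overshoot the window $(t-\alpha\ell(Q),t+\alpha\ell(Q))$ (and for large $\alpha$ they also leave the fixed dimensional dilate $Q^*$), so one should instead take $\beta\approx\min(\alpha,1)\ell(Q)/C$ for a suitable $C=C(n)$, which only affects the $\alpha$-dependent constant.
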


\begin{definition}\label{def-twoplustwominus}
We define the interval $(2_-,2_+)$ as the largest open interval, symmetric around $2$ with the following two properties:
\begin{enumerate} 
	\item $2n/(n+1)=2_\#<2_-<2<2_+<2^\#:=2n/(n-1)$.
	\item If $p\in (2_-,2_+)$, then for every weak solution $u\in W_{\loc}^{1,2}(\bb R^{n+1}_+)$ of $\cL u=0$ in $\bb R^{n+1}_+$, the  estimates \eqref{eq.cacc1} and \eqref{eq.cacc2} hold, with  constants depending only on $n$, $p$, $\alpha$, ellipticity of $\cL$, and $\rho$.
\end{enumerate}
\end{definition}

\begin{proposition}\label{prop-prelim-inhomog-reverse-holder}
Let $u\in W^{1,2}_{\loc}(\Omega)$ be a solution to $\L u=\div F$ in $\Omega\subset \ree$, with $F\in L^2_{\loc}(\Omega)$. Let $B$ be an $(n+1)$-dimensional ball in $\ree$ with $2B\subset\Omega$. Then, for any $q\geq 1$ we have that
\begin{equation}\label{eq.cacc}
\Big( \dfint_B | u|^{2^*_{n+1}}\Big)^{1/2^*_{n+1}} \lesssim  \Big(\dfint_{2B} |u|^{q}\Big)^{1/q} + r(B)\Big( \dfint_{2B} |F|^2\Big)^{1/2},
\end{equation}
with implicit constants depending only on $q, n$ and ellipticity, and where we define
\begin{equation}\nonumber
\dfrac{1}{2^*_{n+1}}= \dfrac{1}{2}-\dfrac{1}{n+1}, \quad \dfrac{1}{2^*_{n+1}}+\dfrac{1}{2_{*,n+1}}=1.
\end{equation} 
\end{proposition}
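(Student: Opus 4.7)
The plan is to first establish the $L^2$ version of the inhomogeneous reverse Hölder inequality
\begin{equation}\label{eq.pfsketch1}
\Big( \dfint_B | u|^{2^*_{n+1}}\Big)^{1/2^*_{n+1}} \lesssim  \Big(\dfint_{2B} |u|^{2}\Big)^{1/2} + r(B)\Big( \dfint_{2B} |F|^2\Big)^{1/2},
\end{equation}
and then remove the restriction $q=2$ on the right-hand side by a standard self-improvement / interpolation argument. From \eqref{eq.pfsketch1}, the case $q\ge 2$ is immediate via Hölder, so only $1\le q<2$ requires work.

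For \eqref{eq.pfsketch1}, I would first derive a Caccioppoli inequality adapted to the inhomogeneity $\mathcal{L}u=\div F$. Fix a cutoff $\eta\in C_c^\infty(2B)$ with $\eta\equiv 1$ on $B$ and $|\nabla\eta|\lesssim r(B)^{-1}$, and test the weak formulation \eqref{eq.weak} with $\eta^2\bar u$. The principal-part term yields $\lambda\int \eta^2|\nabla u|^2$ up to an error $\int |\nabla\eta|^2|u|^2$; the drift/potential terms $B_1,B_2,V$ are absorbed using the Sobolev inequality for $\eta u$ together with the smallness of $\|B_1\|_{L^n}+\|B_2\|_{L^n}+\|V\|_{L^{n/2}}\le\rho$ (as in the derivation of \cite[Proposition 3.9]{bhlmp}); the right-hand side contributes $\int \eta^2 F\cdot\overline{\nabla u}+\int 2\eta F\cdot\nabla\eta\,\bar u$, which after Young's inequality is controlled by $\varepsilon\int\eta^2|\nabla u|^2+C_\varepsilon\int|F|^2+ C r(B)^{-2}\int_{2B}|u|^2$. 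Absorbing the $\varepsilon$-term yields
\[
\Big(\dfint_B|\nabla u|^2\Big)^{1/2}\lesssim \frac{1}{r(B)}\Big(\dfint_{2B}|u|^2\Big)^{1/2}+\Big(\dfint_{2B}|F|^2\Big)^{1/2}.
\]
Then the Sobolev embedding $Y^{1,2}\hookrightarrow L^{2^*_{n+1}}$ applied to $\eta u$ (with a slightly thicker cutoff so $B\Subset\operatorname{supp}\eta\Subset 2B$) gives
\[
\Big(\dfint_B|u|^{2^*_{n+1}}\Big)^{1/2^*_{n+1}}\lesssim r(B)\Big(\dfint_{2B}|\nabla u|^2\Big)^{1/2}+\Big(\dfint_{2B}|u|^2\Big)^{1/2},
\]
and combining the two displays gives \eqref{eq.pfsketch1}.

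To pass from $q=2$ to arbitrary $q\ge 1$, I would use the classical nested-ball self-improvement trick. For $1\le\sigma<\sigma'\le 2$, the same argument applied to the ball $\sigma B$ (with $2B$ replaced by $\sigma' B$ and the cutoff adapted) gives
\[
\Big(\dfint_{\sigma B}|u|^{2^*_{n+1}}\Big)^{1/2^*_{n+1}}\lesssim \frac{1}{(\sigma'-\sigma)^{\alpha}}\Big(\dfint_{\sigma' B}|u|^2\Big)^{1/2}+ r(B)\Big(\dfint_{2B}|F|^2\Big)^{1/2}
\]
for some $\alpha>0$. For $q<2$, interpolate on the first term: with $\theta\in(0,1)$ satisfying $\tfrac12=\tfrac{\theta}{2^*_{n+1}}+\tfrac{1-\theta}{q}$, Hölder gives
\[
\Big(\dfint_{\sigma' B}|u|^2\Big)^{1/2}\le \Big(\dfint_{\sigma' B}|u|^{2^*_{n+1}}\Big)^{\theta/2^*_{n+1}}\Big(\dfint_{\sigma' B}|u|^q\Big)^{(1-\theta)/q},
\]
and Young's inequality converts this into $\varepsilon(\sigma'-\sigma)^{-\alpha}\,(\dfint_{\sigma' B}|u|^{2^*_{n+1}})^{1/2^*_{n+1}}+C_\varepsilon(\sigma'-\sigma)^{-\beta}(\dfint_{2B}|u|^q)^{1/q}$ for appropriate $\beta$. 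Iterating with the sequence $\sigma_k=1+(1-2^{-k})$ and invoking the well-known Giaquinta--Modica iteration lemma absorbs the $L^{2^*_{n+1}}$ factor and yields the desired bound with $L^q$ on the right.

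The main obstacle is the self-improvement step for $q<2$: verifying that the exponents $\alpha,\beta$ produced by the interpolation really do satisfy the summability condition required by the iteration lemma, and that the smallness constant $\rho$ used in absorbing the lower-order terms in the Caccioppoli step remains independent of the ball radius $r(B)$ (which it does, since the $L^n$/$L^{n/2}$ norms are scale-invariant). Everything else is bookkeeping along the lines of \cite[Proposition 3.9]{bhlmp}.
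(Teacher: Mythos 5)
Your proposal is correct and follows essentially the same route as the paper: an inhomogeneous Caccioppoli inequality combined with Poincaré–Sobolev gives the $q\ge 2$ case, and for $1\le q<2$ one interpolates the $L^2$ term between $L^{2^*_{n+1}}$ and $L^q$, applies Young's inequality with a parameter tied to the gap between nested dilates, and absorbs via the standard iteration lemma (the paper cites \cite[Lemma 4.3]{Han-Lin}, i.e.\ the Giaquinta--Modica lemma you invoke), finishing with the scaling argument you mention. The only cosmetic difference is that the paper quotes the Caccioppoli inequality from \cite{bhlmp} and normalizes $r(B)=1$ before rescaling, rather than rederiving the cutoff estimate in place.
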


\begin{proof} We first prove the result for a ball $B$ with $r(B)=1$. To simplify notation, during this proof we will write $2^*=2^{*}_{n+1}$. Fix $1\leq t<s\leq 2$, then, from the proof of the Caccioppoli inequality (see \cite[Proposition 3.1]{bhlmp}, and note that $f=0$ for us), we have
\begin{equation}\nonumber 
\| \nabla u\|_{L^2(B_t)}   \lesssim \dfrac{1}{s-t}\| u\|_{L^2(B_s)} + \| F\|_{L^2(B_s)} 
 \leq \dfrac{1}{s-t}\| u\|_{L^2(B_s)} +\| F\|_{L^2(2B)}, 
\end{equation}
where $B_t$ denotes the concentric dilate of $B$ by a factor of $t$.  On the other hand, if as usual we denote by $u_B$ the average of $u$ over $B$, then by the Poincar\'e-Sobolev inequality we have that
\begin{equation}\nonumber 
\|  u \|_{L^{2^*}(B_t)}   \lesssim t\cdot t^{(n+1)(1/2^*-1/2)} \| \nabla u\|_{L^2(B_t)} + t^{(n+1)/2^*}|u_{B_t}| 
  \lesssim \| \nabla u\|_{L^2(B_t)} +\| u\|_{L^1(2B)}, 
\end{equation}
where we use that $t\geq 1/2$. Combining these two inequalities, we obtain 
\begin{equation}\nonumber 
\| u\|_{L^{2^*}(B_t)}  \lesssim \dfrac{1}{s-t}\| u\|_{L^2(B_s)} + \| F\|_{L^2(2B)}  
 \quad + \| u\|_{L^1(2B)}. 
\end{equation}
Note that if here we set $t=1$ and $s=2$, the desired estimate (\ref{eq.cacc}) follows for $q\geq 2$. It thus remains to treat the case $q<2$. Recall, from interpolation of $L^p$ norms (here we use $q<2$) and the Cauchy inequality with a parameter, 
\begin{equation}\nonumber
\| u\|_{L^2}\leq \| u\|_{L^{2^*}}^{1-\theta}\| u\|_{L^q}^{\theta} \lesssim_\theta \eta^{1/(1-\theta)} \| u\|_{L^{2^*}} + \dfrac{1}{\eta^{1/\theta}} \| u\|_{L^{2_*}},
\end{equation}
valid for any $\eta>0$, and where $\theta$ satisfies $\frac{1}{2}= \frac{1-\theta}{2^*} + \frac{\theta}{q}$. Choosing $\eta^{1/(1-\theta)}\approx t-s$ and setting $T:=\| F\|_{L^2(2B)}+ \| u\|_{L^1(2B)}$, we arrive at  the estimate
\begin{equation}\nonumber
\| u\|_{L^{2^*}(B_t)} \leq \dfrac{1}{2}\| u\|_{L^{2^*}(B_s)} + \dfrac{C}{(s-t)^{(1-\theta)/\theta}}\| u\|_{L^{q}(B_s)} + T,
\end{equation}
for any $1\leq t<s\leq 2$. We are now in a position to apply the result in  \cite[Lemma 4.3]{Han-Lin} and conclude that
\begin{equation}\nonumber
\| u\|_{L^{2^*}(B_t)} \lesssim  \dfrac{1}{(s-t)^{(1-\theta)/\theta}}\| u\|_{L^{q}} +T.
\end{equation}
Setting now $t=1$ and $s=2$ we obtain that
\begin{equation}\nonumber
\| u\|_{L^{2^*}(B)} \lesssim \| u\|_{L^q(2B)} + \| u\|_{L^1(2B)} + \| F\|_{L^2(2B)}.
\end{equation}
This is the desired inequality, since $q\geq 1$ and $r(B)=1$. 

To obtain the result for a general $B$, we simply note that for $r>0$, $u_r(X)=u(rX)$ solves $\L_r u_r =\div F_r$ in $r\Omega$, where the coefficients of $\L_r$ are given by $A_r(X)= A(rX)$, $B_{i,r}(X)= rB_i(rX)$, $F_r(X)=rF(rX)$. It can be checked that these coefficients satisfy the same conditions as the originals, with the same relevant norms, except for $F_r$ which satisfies $\| F_r\|_{L^2(1/rB)}= r^{(-n+1)/2}\| F\|_{L^2(B)}$. The estimate (\ref{eq.cacc}) follows.
\end{proof}

\begin{proposition}[Off-diagonal Estimates. Part 1]\label{prop-vertical-off-diagonal-decay} Let $\thtm$ denote either of the following operators:
\begin{equation}\nonumber
t^m\partial_t^m \nabla\sl_t, \qquad t^m\partial_t^{m-1}\nb (\sl_t\nabla), \qquad (t^m\partial_t^{m}\nabla \sl_t 1) \cdot P_t, 
\end{equation}
where $P_t$ is an approximate identity with smooth, even, compactly supported kernel.

Let $2_-< q\leq p<2_+$ and $Q\subset \rn$ a cube. For every $h\in L^q(\rn)$, $|t|\approx \ell(Q)$, and $k\geq 0$ it holds  that
\begin{equation}\nonumber
\| \thtm (h\bbm 1_{Q})\|_{L^p(2Q)}\lesssim_{n,p,q}  |Q|^{\frac{1}{p}-\frac{1}{q}}\| h\|_{L^q(Q)}.
\end{equation}

Moreover, for any $j\geq 1$ and $t\in \RR$, if we  write $R_j(Q):= 2^{j+1}Q\backslash 2^j Q$, then
\begin{equation}\label{eq-off-diagonal-thtm}
\| \thtm g\|_{L^p(R_j(Q))} \lesssim_{n,p,q,m} 2^{-n\gamma_1} \Big(\dfrac{|t|}{2^j\ell(Q)}\Big)^m |Q|^{\frac{1}{p}-\frac{1}{q}} \| g\|_{L^q(Q)},
\end{equation}
and 
\begin{equation}\label{eq2-off-diagonal-thtm}
\| \thtm g\|_{L^p(Q)} \lesssim_{n,p,q,m} 2^{-n\gamma_1} \Big(\dfrac{|t|}{2^j\ell(Q)}\Big)^m |Q|^{\frac{1}{p}-\frac{1}{q}} \| g\|_{L^q(R_j(Q))},
\end{equation}
where $\gamma_1=1/2^\# - 1/p$ if $\thtm=t^m\partial_t^m\nabla\sl_t$ (recall that $2^\#$ is given in Definition \ref{def-twoplustwominus}), and $\gamma_1= 1/2^\#-1/p-1$ in the case that $\thtm=t^m\partial_t^{m-1}\nb(\sl_t \nb)$.	
\end{proposition}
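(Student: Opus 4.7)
The strategy is to reduce all three estimates to Caccioppoli-type bounds on the solution $v:=\sl h$ of $\cL v=0$, exploiting that the $t$-independence of the coefficients makes every $\pd_t^k v$ a solution in the same domain. The $L^p$-to-$L^q$ transfers will come from the self-improvement interval $(2_-,2_+)$ (Proposition~\ref{prop-caccioppoli-inequality-lp}) combined with the inhomogeneous reverse Hölder estimate of Proposition~\ref{prop-prelim-inhomog-reverse-holder}, and the hand-off to the boundary data will be powered by the $L^2$ slice bound of Theorem~\ref{BHLMP1-Main.thrm}(ii).

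First I would handle the on-diagonal case $|t|\approx\ell(Q)$, with $h$ supported in $Q$ and target $2Q$, for the main operator $\Theta_{t,m}=t^m\pd_t^m\nabla\sl_t$. Applying Proposition~\ref{prop-caccioppoli-on-slices} to the solution $\pd_t^m v$ converts $\|\nabla\pd_t^m v(\cdot,t)\|_{L^p(2Q)}$ into an $L^p$ slab average of $\pd_t^m v$ at cost $1/\ell(Q)$. Then I iterate the interior Caccioppoli inequality (Proposition~\ref{prop-caccioppoli-inequality-lp}) $m$ more times, which is licit because each $\pd_t^j v$ is a solution, stripping away one $\pd_t$ per step; each step costs a factor $1/\ell(Q)$, but is paid for by one of the $m$ powers of $t\approx\ell(Q)$ waiting out front. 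The output is
\begin{equation*}
\|t^m\pd_t^m\nabla v(\cdot,t)\|_{L^p(2Q)}\lesssim \frac{|Q|^{1/p}}{\ell(Q)}\bigg(\fiint_{CQ\times I_t}|v|^p\,dxds\bigg)^{1/p}.
\end{equation*}
The inhomogeneous reverse Hölder of Proposition~\ref{prop-prelim-inhomog-reverse-holder}, applied to $v$ on $CQ\times I_t$ (where $v$ is a solution since $(CQ\times I_t)\cap(Q\times\{0\})=\emptyset$), then lowers the exponent on the right from $p$ to any $q\geq 1$, in particular to our $q\in(2_-,2_+)$.

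To close, I need to pass from the $L^q$ average of $v$ to $|Q|^{-1/q}\|h\|_{L^q(Q)}$. For $q=2$ this is immediate from Theorem~\ref{BHLMP1-Main.thrm}(ii): the slice bound $\sup_\tau\|\Tr_\tau\sl h\|_{L^{2^*_n}(\rn)}\lesssim\|h\|_{L^2(\rn)}$ combined with Hölder on $CQ$ yields $(\fint_{CQ}|v(\cdot,s)|^2)^{1/2}\lesssim|Q|^{-1/2+1/n}\|h\|_{L^2(Q)}$, and the extra factor $|Q|^{1/n}=\ell(Q)$ absorbs the $1/\ell(Q)$ from the previous display, leaving the desired $|Q|^{1/p-1/2}$ scaling. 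The general $q\in(2_-,2_+)$ case follows by one more appeal to Proposition~\ref{prop-prelim-inhomog-reverse-holder} to identify (up to constants) $L^p$ and $L^q$ averages of $v$, thereby reducing to $q=2$.

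Finally, the off-diagonal estimates \eqref{eq-off-diagonal-thtm}--\eqref{eq2-off-diagonal-thtm} follow from the very same argument carried out at scale $2^j\ell(Q)$: $v$ is a solution in a region of radius $\approx 2^j\ell(Q)$ around each $(x,t)$ with $x\in R_j(Q)$, so each Caccioppoli iteration now costs $1/(2^j\ell(Q))$, producing the factor $(|t|/(2^j\ell(Q)))^m$ from balancing against $t^m$; the comparison between Carleson boxes at scales $|Q|$ and $|2^jQ|$ generates the $2^{-nj\gamma_1}|Q|^{1/p-1/q}$ scaling. The second operator $t^m\pd_t^{m-1}\nabla(\sl_t\nabla)$ is treated identically with $h$ replaced by a divergence-form vector field (accounting for the shift in $\gamma_1$ by $-1$), and the third operator factors through the bound for $\sl_t 1$ together with the standard off-diagonal decay of the approximate identity $P_t$. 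I expect the main obstacle to be the clean implementation of the $L^p$-to-$L^q$ self-improvement on $v$ when $q\neq 2$: the absence of pointwise Gaussian bounds for $\cL$ prevents any heat-kernel shortcut, and one must carefully chain together Theorem~\ref{BHLMP1-Main.thrm}(ii) with the $(2_-,2_+)$-interval reverse-Hölder self-improvement for solutions.
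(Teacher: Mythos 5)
Your overall machine (Caccioppoli on slices, then $m$ iterated interior Caccioppoli inequalities, then the reverse H\"older inequality for the solution $v=\sl h$, then a slice mapping bound to hand off to the data) is sound, and for data exponent $q\geq 2$ your exponent bookkeeping checks out; indeed it yields a factor of order $2^{jn(1/p-1/q)}$, which is no worse than the stated $2^{-nj\gamma_1}$-type factor (cf.\ Definition \ref{def-lr-to-lq-off-diagonal-estimates}). The genuine gap is the final hand-off when $q<2$. The only data estimate you invoke is the $L^2\to L^{2n/(n-2)}$ slice bound of Theorem \ref{BHLMP1-Main.thrm}, so the data norm lands in $L^2(Q)$, and passing from $\|h\|_{L^2(Q)}$ to $|Q|^{1/2-1/q}\|h\|_{L^q(Q)}$ by H\"older only works for $q\geq 2$. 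Your proposed reduction of general $q$ to $q=2$ via ``one more appeal to Proposition \ref{prop-prelim-inhomog-reverse-holder}'' cannot repair this: reverse H\"older self-improvement is a property of the \emph{solution} $v$, not of the arbitrary datum $h$, and $\|h\|_{L^2(Q)}\lesssim |Q|^{1/2-1/q}\|h\|_{L^q(Q)}$ is false for $q<2$. Unfortunately $q<2$ is precisely the regime the paper needs downstream: Proposition \ref{prop-averaged-bounds-slices-via-off-diagonal-decay} and the extrapolation Lemmas \ref{lem-extrapolation-conical} and \ref{lem-extrapolation-vertical} all run on $L^r$--$L^2$ or $L^r$--$L^q$ off-diagonal estimates with data exponent $r<2$.

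The paper avoids this by working from the endpoints of the interval rather than from $2$: estimate \eqref{eq2-off-diagonal-thtm} is reduced by duality to \eqref{eq-off-diagonal-thtm} for $t^m\pd_t^m(\sl_t\nb)$, the Caccioppoli iterations are run as you do, and then one dualizes once more so that the data hand-off uses the slice mapping property $\sl_s:L^{2_\#}(\rn)\to L^{2^\#}(\rn)$, uniformly in $s$ (from \cite{bhlmp}). Since $2_\#$ lies below and $2^\#$ above the whole interval $(2_-,2_+)$, H\"older then goes in the favorable direction on both sides ($p'\geq 2_\#$ on the data side, $q'\leq 2^\#$ on the output side), which covers the full range $2_-<q\leq p<2_+$ and produces the stated $\gamma_1$. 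So you should either import that $L^{2_\#}\to L^{2^\#}$ bound (or the corresponding duality step) into your argument, or your claim is restricted to $q\geq 2$, which does not suffice for the applications. Separately, your one-line treatments of $t^m\pd_t^{m-1}\nb(\sl_t\nabla)$ and of $(t^m\pd_t^{m}\nabla\sl_t 1)\cdot P_t$ need fleshing out (the paper handles the latter via \cite[Lemma 3.11]{AAAHK}), but these are minor next to the $q<2$ issue.
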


\begin{proof}	
The proof of this result is essentially contained in \cite[Proposition 4.28]{bhlmp}. We sketch some of the modifications needed.

We start with the case   $\thtm= t^m\pd_t^m \nb \sl_t$. Here, estimate \eqref{eq-off-diagonal-thtm} was obtained in  \cite[Proposition 4.28]{bhlmp}. By duality, estimate \eqref{eq2-off-diagonal-thtm}   is equivalent to \eqref{eq-off-diagonal-thtm} for $\thtm=t^m\pd_t^m(\sl_t\nb)$; let us thus prove estimate (\ref{eq-off-diagonal-thtm}) for $\thtm=t^m\pd_t^m(\sl_t\nb)$.

Let $\rjt:= (3/2)R_j \times (t-2^j\ell(Q), t+2^j\ell(Q))$ and suppose that $g\in C_c^\infty(Q)$. Then  $(\sl_t\nbp g)= \sl_t \divp g$ is a solution in $\rjt$. By a careful application of Caccioppoli's inequality on slices, followed by the standard Caccioppoli inequality in $L^p$ $(m-1)$ times, we obtain 
\begin{equation}\nonumber 
\| t^m\pd_t^m(\sl_t\nbp)\cdot g\|_{L^p(R_j(Q))}   = |t|^m \| \pd_t^m \sl_t \divp g \|_{L^p(R_j(Q))}\lesssim \Big( \dfrac{|t|}{2^j\ell(Q)}\Big)^m \Big( \dfrac{1}{2^j\ell(Q)} \dint_{\rjt} |\sl_s \divp g(x)|^p \, dxds\Big)^{1/p}. 
\end{equation}
By duality again, it is enough to prove that  $\| \nbp \sl_s h\|_{L^{q'}(Q)} \lesssim 2^{-n\gamma_1} |Q|^{1/p-1/q} \| h\|_{L^{p'}(3/2R_j)}$, uniformly for $s\in \RR$. For this, if we define $\widetilde{Q}:= 3/2Q\times(s-\ell(Q), s+\ell(Q))$, by Caccioppoli's inequality we have that
\begin{multline}\nonumber 
\| \nbp \sl_s h\|_{L^{q'}(Q)}   \lesssim \dfrac{1}{\ell(Q)} \Big(\dfrac{1}{\ell(Q)} \dint_{\widetilde{Q}} |\sl_\tau h(x)|^{q'}\, dxd\tau\Big)^{1/q'}\\
 \lesssim |Q|^{1/q'-1/2^\#-1/n} \Big( \dfrac{1}{\ell(Q)} \int_{s-\ell(Q)}^{s+\ell(Q)} |R_j|^{q'(1/2_\#-1/p')}\| h \|_{L^{p'}(3/2R_j)}^{q'} \, d\tau \Big)^{1/q'}\lesssim 2^{n(1/p-1/2^\#)}|Q|^{1/q'-1/p'}\| h\|_{L^{p'}(3/2R_j)},  
\end{multline}
where we used the mapping property $\sl_s: L^{2_\#}(\rn)\to L^{2^\#}(\rn)$ uniformly in $s\in \RR$, and H\"older's inequality.

The above proof works, with straightforward modifications, in the case $\thtm=t^m\pd_t^{m-1}\nb(\sl_t\nb)$. The case of $\thtm=t^m\pd_t^{m}\nb\sl_t1 \cdot P_t$ is handled with the previous estimates and \cite[Lemma 3.11]{AAAHK}. 
\end{proof}

The following proposition follows the same lines as the above, the appropriate modifications being outlined in the proof of \cite[Proposition 4.37]{bhlmp}.

\begin{proposition}[Off-diagonal Estimates. Part 2]\label{prop-off-diagonal-decay-for-operators-with-B}
Let $B\in L^n(\rn;\CC^{n+1})$ and set  $\thtmb:= t^m\pd_t^m\sl_t B \cdot $ acting on functions $C_c^\infty(\rn; \CC^{n+1})$. Then, for $2_-<r<2<q<2_+$ $\thtmb$ satisfies the $L^r-L^q$ off-diagonal estimates of Definition \ref{def-lr-to-lq-off-diagonal-estimates} with $\gamma= m/n - \alpha$ for some $\alpha>0$ depending only on dimension, $r$ and $q$.
\end{proposition}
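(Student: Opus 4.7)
My plan is to mirror the proof structure of Proposition \ref{prop-vertical-off-diagonal-decay}, incorporating the modifications used for operators with an $L^n$ coefficient in \cite[Proposition 4.37]{bhlmp}. Two ingredients drive the argument: (i) for $f\in C_c^\infty(\rn;\CC^{n+1})$, the function $u(x,s):=\sl_s(B\cdot f)(x)$ is a weak solution of $\cL u=0$ on any open set of $\ree$ disjoint from $\rn\times\{0\}$, so both the $L^p$ Caccioppoli inequality of Proposition \ref{prop-caccioppoli-inequality-lp} and the Caccioppoli inequality on slices of Proposition \ref{prop-caccioppoli-on-slices} are available; (ii) the uniform mapping property $\sl_s:L^{2_\#}(\rn)\to L^{2^\#}(\rn)$ paired with H\"older's inequality $\|B\cdot g\|_{L^\sigma(\rn)}\le\|B\|_{\lnrn}\|g\|_{L^\tau(\rn)}$ with $1/\sigma=1/n+1/\tau$, which is the only mechanism through which an $L^n$ coefficient interacts with Lebesgue norms without costing too much integrability.

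For the diagonal estimate ($|t|\approx\ell(Q)$, $\supp g\subset 2Q$), I would peel off the $m$ time-derivatives via Caccioppoli on slices and then iterate the interior Caccioppoli inequality in $L^p$ to reduce the $L^q(Q)$ norm of $\thtmb g$ to an $L^{2^\#}$ average of $\sl_s(B\cdot g)$ on the Whitney tube $2Q\times(t-c\ell(Q),t+c\ell(Q))$. A single H\"older step followed by the Sobolev-type bound $\sl_s:L^{2_\#}\to L^{2^\#}$ then yields the bound $\lesssim|Q|^{1/q-1/r}\|g\|_{L^r(2Q)}$ after careful bookkeeping of volume scale factors. This corresponds to the first clause in Definition \ref{def-lr-to-lq-off-diagonal-estimates}.

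For the far off-diagonal estimate with $\supp g\subset R_j(Q)$ and $j\ge 1$, $u$ solves $\cL u=0$ on the enlarged cylinder $\widetilde Q:=\tfrac{3}{2}Q\times(t-2^j\ell(Q),t+2^j\ell(Q))$, and $m$ iterations of Caccioppoli on slices produce the decay factor $(|t|/(2^j\ell(Q)))^m$; rewriting $2^{-jm}=2^{-jn(m/n)}$ isolates the $m/n$-contribution to $\gamma$. The subsequent reduction, via H\"older (with $B\in\lnrn$) and $\sl_s:L^{2_\#}\to L^{2^\#}$, introduces volume factors of the form $|R_j|^{1/\sigma-1/r}|Q|^{1/q-1/2^\#}$, which collapse to $2^{-jn\alpha}|Q|^{1/q-1/r}$ for an $\alpha>0$ depending only on $n$, $r$, and $q$, thereby yielding the stated decay exponent $\gamma=m/n-\alpha$. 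The companion estimate with $\supp g\subset Q$ but the $L^q$ norm taken on $R_j(Q)$ is entirely symmetric and follows either by the same argument or, more cleanly, by duality from $(\sl_t^\cL)^*=\sl_t^{\cL^*}$ together with the fact that $\cL^*$ obeys the same structural hypotheses with the same critical norms.

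The main obstacle is the \emph{balance of exponents} in the combined Sobolev-H\"older chain. Because $B$ lives in the critical space $\lnrn$, every application of H\"older rigidly costs one full unit of fractional integrability ($1/\sigma-1/\tau=1/n$), so one must verify that the intermediate exponent $\sigma$ remains inside the Sobolev-admissible window $(2_\#,2^\#)$ in which the single layer is uniformly bounded. This is precisely what pins down the admissible range $2_-<r<2<q<2_+$ in the hypothesis and forces the strictly positive loss $\alpha>0$ in $\gamma=m/n-\alpha$: the quantity $\alpha$ measures the gap between the target Lebesgue exponents $(r,q)$ and the Sobolev window adapted to $\sl_s$, and it can be made arbitrarily small by taking $r,q$ sufficiently close to $2$.
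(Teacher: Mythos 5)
Your proposal takes essentially the same route as the paper, which disposes of this proposition by noting it "follows the same lines" as the off-diagonal estimates for $t^m\partial_t^m\nabla\mathcal{S}_t^{\cL}$ with the modifications of \cite[Proposition 4.37]{bhlmp}: Caccioppoli on slices (yielding the factor $(|t|/(2^j\ell(Q)))^m$, hence the $m/n$ contribution to $\gamma$), iterated interior Caccioppoli, and then the uniform slice mapping property of $\mathcal{S}_s^{\cL}$ combined with H\"older against $\|B\|_{L^n(\rn)}$, which is exactly where your loss $\alpha=\alpha(n,r,q)>0$ and the restriction $2_-<r<2<q<2_+$ come from. One small precision: in the far off-diagonal step the cylinder $\tfrac32 Q\times(t-2^j\ell(Q),t+2^j\ell(Q))$ may cross $\{t=0\}$, so you need that $\mathcal{S}(B\cdot g)$ solves $\cL u=0$ across the boundary away from $\supp g\times\{0\}$ (true here since $\supp g\subset R_j(Q)$ is far from $\tfrac32 Q$), which is slightly stronger than your stated ingredient (i) of solvability only on open sets disjoint from $\rn\times\{0\}$.
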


We shall also need the following quasi-orthogonality result.

\begin{proposition}[Quasi-orthogonality]\label{prop-control-for-J3}
Let $\thtm:=t^m\pd_t^m(\sl_t\nb)$, $B\in L^n(\rn;\CC^n)$, and let $\Q_s$ be a standard Littlewood-Paley family. There exists $m_0$ such that if $m\geq m_0$ and $\nu\in A_{2/r}$ (here $r$ is as in the $L^r-L^2$ off-diagonal estimates for $\thtm$ in Proposition \ref{prop-vertical-off-diagonal-decay}), then the estimate
\begin{equation}\nonumber
\Big\| \Big(\fint_{|x-y|<t} |\thtm BI_1 \Q_s^2 g(y)|^2\, dy \Big)^{1/2}\Big\|_\ltnu \lesssim_{[\nu]_{A_{2/r}}} \Big(\dfrac{s}{t}\Big)^{\beta}\| \Q_s g\|_\ltnu, \quad s<t,
\end{equation}
holds for some $\beta>0$ (possibly depending on $\nu$ only through $[\nu]_{A_{2/r}}$).
\end{proposition}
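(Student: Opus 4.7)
The plan is to follow the same interpolation-with-change-of-measure strategy used in the proof of Theorem \ref{thm-weighted-lp}: I will establish a uniform weighted averaged $L^2$ bound on one hand, and an unweighted quasi-orthogonality bound carrying the decay $(s/t)^{\beta_0}$ on the other, and then interpolate between them.

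First I would prove the uniform weighted estimate
\[
\left\| \left(\fint_{|x-y|<t} |\Theta_{t,m} B I_1 Q_s^2 g(y)|^2 \, dy\right)^{1/2}\right\|_{L^2(\omega)} \lesssim_{[\omega]_{A_{2/r}}} \|Q_s g\|_{L^2(\omega)},
\]
valid for all $\omega\in A_{2/r}$. By Proposition \ref{prop-off-diagonal-decay-for-operators-with-B}, for $m$ large enough the operator $\Theta_{t,m}^B$ satisfies $L^r$--$L^2$ off-diagonal estimates with decay parameter $\gamma>1/r$, and Proposition \ref{prop-averaged-bounds-slices-via-off-diagonal-decay} then upgrades this to a uniform averaged weighted $L^2$ bound (with $\|I_1 Q_s^2 g\|_{L^2(\omega)}$ on the right-hand side). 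Proposition \ref{prop-i1b-bounded} (or a variant accommodating the vectorial structure arising here) combined with the uniform $L^2(\omega)$-boundedness of $Q_s$ then controls $\|B\cdot I_1 Q_s^2 g\|_{L^2(\omega)}$ by $\|Q_s g\|_{L^2(\omega)}$, completing this step.

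Next I would prove the unweighted decay
\[
\left\| \left(\fint_{|x-y|<t} |\Theta_{t,m} B I_1 Q_s^2 g(y)|^2 \, dy\right)^{1/2}\right\|_{L^2} \lesssim \left(\frac{s}{t}\right)^{\beta_0} \|Q_s g\|_{L^2}, \quad s<t,
\]
for some $\beta_0>0$. By Fubini the left-hand side equals $\|\Theta_{t,m} B I_1 Q_s^2 g\|_{L^2}$, so the task reduces to bounding the operator norm of $\Theta_{t,m} B I_1 Q_s$ on $L^2$. The decay factor should be extracted by playing the scale-$t$ cancellation of $\Theta_{t,m}$ (encoded either in its large-$m$ off-diagonal decay from Proposition \ref{prop-vertical-off-diagonal-decay}, or equivalently in a CLP-like Fourier symbol in the constant-coefficient model case) against the scale-$s$ structure of $I_1 Q_s$ (whose convolution kernel is comparable to $\min(|x|^{1-n}, s|x|^{-n})$). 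One implements this via a dyadic annular decomposition around a reference cube of side length comparable to $t$, together with the off-diagonal estimates from Proposition \ref{prop-off-diagonal-decay-for-operators-with-B}.

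Finally, I would interpolate between the two bounds via Stein-Weiss-type interpolation with change of measure: by the self-improvement of $A_p$ weights (Proposition \ref{prop-properties-ap-weights}) choose $\delta>0$ such that $\nu^{1+\delta}\in A_{2/r}$ with $[\nu^{1+\delta}]_{A_{2/r}}$ controlled in terms of $[\nu]_{A_{2/r}}$, then apply the interpolation to the one-parameter family $d\mu_\tau:=\nu^{(1+\delta)\tau}\,dx$ to obtain the claimed bound with $\beta=\beta_0\delta/(1+\delta)$, exactly as in the proof of Theorem \ref{thm-weighted-lp}. The hard part will be the unweighted decay: because $B\in L^n$ is a rough multiplier, a direct Plancherel argument is unavailable, and one must work spatially, combining the off-diagonal behavior of $\Theta_{t,m}^B$ with the kernel cancellation/smoothing of $I_1 Q_s^2$ to extract the polynomial decay factor $(s/t)^{\beta_0}$.
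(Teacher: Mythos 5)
Your overall architecture coincides with the paper's: the paper proves this proposition exactly by combining an unweighted quasi-orthogonality bound with a \emph{uniform} weighted averaged-$L^2$ bound and then applying interpolation with change of measure via the self-improvement $\nu^{1+\delta}\in A_{2/r}$, just as in the proof of Theorem \ref{thm-weighted-lp}. In your step 1, however, the bookkeeping should be corrected: keep $B$ grouped with $I_1$, not with the single layer. The off-diagonal input is that for $\thtm$ itself (Proposition \ref{prop-vertical-off-diagonal-decay}); Proposition \ref{prop-averaged-bounds-slices-via-off-diagonal-decay} then gives $\bigl\|\bigl(\fint_{|x-y|<t}|\thtm h|^2\,dy\bigr)^{1/2}\bigr\|_{\ltnu}\lesssim \|h\|_{\ltnu}$, and one takes $h=B\,I_1\Q_s^2g$ and closes with the $B\cdot I_1$ half of Proposition \ref{prop-i1b-bounded} together with $|\Q_s f|\lesssim \m f$. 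As you wrote it --- off-diagonal estimates for $\thtmb$ from Proposition \ref{prop-off-diagonal-decay-for-operators-with-B} with $\|I_1\Q_s^2g\|_{L^2(\omega)}$ left on the right --- the chain does not close: the operator in Proposition \ref{prop-off-diagonal-decay-for-operators-with-B} is $t^m\pd_t^m\sl_t B\,\cdot$, not $\thtm(B\,\cdot)$, and $\|I_1\Q_s^2g\|_{L^2(\omega)}\lesssim\|\Q_sg\|_{L^2(\omega)}$ is false uniformly in $s$ (by scaling the correct bound carries a factor comparable to $s$). Your closing sentence of step 1 is in fact the corrected grouping, so this is a fixable slip; note also that the $B\cdot I_1$ bound in Proposition \ref{prop-i1b-bounded} formally needs $\nu^{2^*/2}\in A_2$, a point the paper glosses over as well.

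The genuine gap is the unweighted decay, which is precisely the step the paper does not prove here but imports from the companion paper (\cite[Lemma 4.30]{bhlmp}). Your proposed mechanism --- a dyadic annular decomposition around scale $t$ combined with the off-diagonal estimates --- cannot by itself produce the factor $(s/t)^{\beta_0}$: those estimates are uniform in $s$ and $t$, and spatial separation yields no gain from the scale discrepancy $s<t$. The gain must be extracted from the cancellation of $\Q_s$ transferred through $I_1$: for instance, choose the CLP family so that $\Q_s=s\,\divp\widetilde\Q_s$ (equivalently, use that $s^{-1}I_1\Q_s$ is dominated by an approximate identity), so that $I_1\Q_s^2=s\,R\cdot\widetilde\Q_s\Q_s$ with $R$ the Riesz transforms, and then pair the dangling factor $s$ with a bound that trades an extra derivative into the single layer at cost $t^{-1}$ (an estimate of the flavor $\|\thtm(Bh)\|_{\ltrn}\lesssim t^{-1}\|h\|_{\ltrn}$, or an equivalent duality/Sobolev-on-slices argument); this is the same mechanism that drives the quasi-orthogonality in Theorem \ref{thm-conical-square-function-grad-st-part1} and Proposition \ref{prop-square-function-bounds-i-pt}, and it is nontrivial here because $B\in\lnrn$ is rough and Plancherel is unavailable, as you yourself note. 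Since your write-up names this as ``the hard part'' but supplies no such mechanism, the proposal is incomplete at exactly the point where the paper relies on the cited lemma; either reproduce that argument or cite it.
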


\begin{proof}
The unweighted case is proved in Lemma 4.30 in \cite{bhlmp}. The idea is to use interpolation with change of measure to reduce matters to proving a {\it uniform} weighted bound of the form
\begin{equation}\nonumber
\Big\| \Big(\fint_{|x-y|<t} |\thtm BI_1 \Q_s^2 g(y)|^2\, dy \Big)^{1/2}\Big\|_\ltnu \lesssim_{[\nu]_{A_{2/r}}} \| g\|_\ltnu.
\end{equation}
This in turn follows from the $L^r-L^2$ off-diagonal estimates of $\thtm$ in Proposition \ref{prop-vertical-off-diagonal-decay} and Proposition \ref{prop-averaged-bounds-slices-via-off-diagonal-decay}, together with the bounds for $I_1B$ from Proposition \ref{prop-i1b-bounded}; we omit the details.
\end{proof}

The first author would like to thank Moritz Egert and Olli Saari for showing him the simple computation that yields the following bound for the vertical maximal function (\cite{bes}).
\begin{proposition}\label{prop-prelim-vertical-max}
Let $m\geq 1$ and $\thtm$ be either $t^m\partial_t^m \nabla (\sl_t\nb)$, or $t^m\pd_t^m \nb \dl_t$. Then, for almost every $x\in \rn$, we have the estimate
\begin{equation}\nonumber
\sup_{t>0} |\thtm f(x)| \lesssim_m \v(\thtm f)(x) + \v(\thtmo f)(x) + |\Theta_{1,m} f(x)|.
\end{equation}
\end{proposition}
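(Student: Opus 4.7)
The key observation is a simple differential identity: in both cases $\Theta_{t,m}=t^m\partial_t^m\nabla(\sl_t\nabla)$ and $\Theta_{t,m}=t^m\partial_t^m\nabla\mathcal{D}_t^{\cL}$, a direct application of the product rule yields
\[
t\,\partial_t\Theta_{t,m}f = m\,\Theta_{t,m}f + \Theta_{t,m+1}f.
\]
This is what allows us to trade a $t$-derivative of $\Theta_{t,m}f$ for the quantities appearing on the right-hand side of the proposition. The plan is to pair this identity with the fundamental theorem of calculus in the $t$-variable and Cauchy--Schwarz in the measure $ds/s$.

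Fix $x\in\rn$ for which $t\mapsto\Theta_{t,m}f(x)$ is absolutely continuous on $(0,\infty)$ (this holds for a.e.\ $x$ by the interior regularity of $\sl^{\cL} f$ and $\mathcal{D}^{\cL,+}f$ in $\ree_+$, combined with the $t$-independence of the coefficients, which allows one to differentiate in $t$ inside the integrals defining the layer potentials). For any $t>0$, write
\[
|\Theta_{t,m}f(x)|^2 - |\Theta_{1,m}f(x)|^2 = 2\real\int_1^{t}\overline{\Theta_{s,m}f(x)}\,\partial_s\Theta_{s,m}f(x)\,ds,
\]
with the obvious reversal of limits when $t<1$. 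Using the differential identity above to substitute $s\partial_s\Theta_{s,m}f = m\Theta_{s,m}f+\Theta_{s,m+1}f$ and applying Cauchy--Schwarz in the measure $ds/s$,
\[
\left|\int_{\min(1,t)}^{\max(1,t)}\overline{\Theta_{s,m}f(x)}\,\partial_s\Theta_{s,m}f(x)\,ds\right|
\le \v(\Theta_{s,m}f)(x)\,\Big( m\,\v(\Theta_{s,m}f)(x) + \v(\Theta_{s,m+1}f)(x)\Big),
\]
which is finite and, crucially, independent of $t$. Therefore
\[
|\Theta_{t,m}f(x)|^2 \le |\Theta_{1,m}f(x)|^2 + C_m\Big(\v(\Theta_{s,m}f)(x)^2 + \v(\Theta_{s,m+1}f)(x)^2\Big),
\]
after a further application of Young's inequality on the cross term. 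Taking the supremum over $t>0$ and then a square root yields the claimed estimate.

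The only mildly delicate point is the justification of the fundamental theorem of calculus pointwise in $x$. This is not a serious issue: for $f\in C_c^\infty(\rn)$ one has classical smoothness of the layer potentials in the open half-space by the interior Caccioppoli inequality in $L^p$ (Proposition \ref{prop-caccioppoli-inequality-lp}) applied to $\partial_t^k$ of the layer potential (which remains a solution of $\cL$ thanks to the $t$-independence of the coefficients). One then passes to a general $f$ by density, noting that all terms of the inequality are lower-semicontinuous in the appropriate topology. No other genuine obstacles arise.
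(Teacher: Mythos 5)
Your proposal is essentially the paper's own proof: the identity $t\partial_t\thtm f=m\,\thtm f+\thtmo f$, the fundamental theorem of calculus anchored at $t=1$, Cauchy--Schwarz in the measure $ds/s$, and Young's inequality are exactly the steps used there, so the core argument is correct. The one inaccurate point is your justification of the pointwise FTC: the interior Caccioppoli inequality does not yield classical smoothness of the layer potentials in this setting (De Giorgi--Nash--Moser estimates are precisely what is unavailable for these complex, rough coefficients), and the density/lower-semicontinuity step is then not needed; what the argument actually requires, and what the paper invokes via \cite[Lemma 2.3]{bhlmp}, is only that $t\mapsto\thtm f(x)$ is absolutely continuous for almost every $x\in\rn$, which follows from the fact that these functions (being built from $t$-derivatives of $W^{1,2}_{\loc}$ solutions, again solutions by $t$-independence) are absolutely continuous on almost every vertical line.
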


\begin{proof}
First we observe that, owing to \cite[Lemma 2.3]{bhlmp}, the function $t\mapsto\thtm f(x)=:g_t(x)$ is absolutely continuous for a.e. $x\in \rn$. Therefore, by the fundamental theorem of calculus, for such an $x\in \rn$ and every $0<s<t$, 
\begin{equation}\nonumber
|g_t(x)|^2= |g_s(x)|^2 +2\int_s^t \textup{Re}(g_t(x)\overline{\pd_\tau g_\tau (x)}) \, d\tau.
\end{equation}
Notice that
\begin{equation}\nonumber
\Big| \int_s^t \textup{Re}(g_t(x)\overline{\pd_\tau g_\tau(x)})\, d\tau \Big| \leq \int_s^t |g_t(x)||\tau \pd_\tau g_\tau(x)|\, \dfrac{d\tau}{\tau} \leq \v(g_t)(x)\v(\tau \pd_\tau g_\tau)(x),
\end{equation}
by the Cauchy-Schwarz inequality. The result now follows by setting $s=1$ and using Cauchy's inequality with a parameter.
\end{proof}

We record here also a weighted version of the Riesz transform estimates for $\Lp$ and, more importantly for us, estimates for the Hodge decomposition associated to $\Lp$.

\begin{theorem}[{\cite[Proposition 9.1]{cumr18}}]\label{thm-weighted-hodge-decomposition}
Let $\Lp := -\divp \ap \nbp$. Then there exists $M>0$ (depending only on dimension and the ellipticity of $\ap$) such that if $\nu\in A_2$ satisfies that $\nu^M\in A_2$, then 
\begin{equation}\nonumber
\max\big\{\|\nbp \Lp^{-1/2} \|_{\ltnu\to\ltnu}~,~ \| \Lp^{-1/2}\divp\|_{\ltnu\to\ltnu}\big\} \lesssim_{[\nu^M]_{A_2}} 1.
\end{equation}
In particular, if for $f\in L^2(\rn;\CC^n)$ we write Hodge Decomposition  $f=\ap\nbp F +H$ with $F\in \dt{W}^{1,2}(\rn)$ and $\divp H=0$, then for $\nu$ as above, we have that  $\|\nbp F\|_\ltnu \lesssim_{[\nu^M]_{A_2}} \| f\|_\ltnu$.  
\end{theorem}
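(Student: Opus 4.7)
The plan is to combine Auscher's unweighted $L^p$ theory for the Riesz transform associated to the divergence form operator $\Lp$ with the weighted extrapolation machinery of Auscher--Martell, and then translate the resulting weight class into the convenient ``$\nu\in A_2$ with $\nu^M\in A_2$'' formulation.

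First, since $\Lp=-\divp\ap\nbp$ is a uniformly elliptic divergence form operator on $\bb R^n$ with bounded accretive coefficients, the Kato square root theorem gives that $\nbp\Lp^{-1/2}$ is bounded on $L^2(\bb R^n)$, and Auscher's theory yields exponents $p_-<2<p_+$ (depending only on $n$ and the ellipticity of $\ap$) such that $\nbp\Lp^{-1/2}$ extends boundedly to $L^p(\bb R^n)$ for all $p\in(p_-,p_+)$; moreover, the families $(e^{-s\Lp})_{s>0}$ and $(\sqrt{s}\,\nbp e^{-s\Lp})_{s>0}$ satisfy $L^{p_-}\!-\!L^{p_+}$ Davies--Gaffney off-diagonal estimates. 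Plugging these ingredients into the abstract weighted extrapolation result of Auscher--Martell produces
\begin{equation*}
\|\nbp\Lp^{-1/2}f\|_{L^2(\nu)}\lesssim \|f\|_{L^2(\nu)}\qquad\text{for every }\nu\in A_{2/p_-}\cap RH_{(p_+/2)'},
\end{equation*}
with implicit constant depending only on the characteristics $[\nu]_{A_{2/p_-}}$ and $[\nu]_{RH_{(p_+/2)'}}$.

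Next, I would translate the admissible class $A_{2/p_-}\cap RH_{(p_+/2)'}$ into the single hypothesis ``$\nu\in A_2$ with $\nu^M\in A_2$''. Using Proposition \ref{prop-properties-ap-weights}(vii), the condition $\nu\in A_q\cap RH_s$ is equivalent to $\nu^s\in A_{s(q-1)+1}$. Combining this with the self-improvement of $A_p$ and $RH_s$, one checks that there exists $M=M(n,\lambda)>1$ for which the assumption $\nu^M\in A_2$ (together with $\nu\in A_2$) forces $\nu\in RH_{(p_+/2)'}$ and $\nu\in A_{2/p_-}$, both with characteristics controlled by $[\nu^M]_{A_2}$. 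The analogous bound for $\Lp^{-1/2}\divp$ then follows by duality, since $(\nbp\Lp^{-1/2})^*$ (with respect to the unweighted pairing) is $(\Lp^*)^{-1/2}\divp$, the operator $\Lp^*$ is of the same form as $\Lp$, and $\nu\in A_2$ if and only if $\nu^{-1}\in A_2$, with $(\nu^{-1})^M\in A_2$ iff $\nu^M\in A_2$.

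Finally, for the Hodge decomposition $f=\ap\nbp F+H$ with $\divp H=0$, applying $\divp$ to both sides gives $\divp f=-\Lp F$, so $F=-\Lp^{-1}\divp f$ and
\begin{equation*}
\nbp F=-\bigl(\nbp\Lp^{-1/2}\bigr)\bigl(\Lp^{-1/2}\divp\bigr)f,
\end{equation*}
and composing the two weighted bounds already established gives $\|\nbp F\|_{L^2(\nu)}\lesssim \|f\|_{L^2(\nu)}$. The main technical difficulty is the bookkeeping in the second step: one needs to pin down the Auscher--Martell weight class precisely and verify that the numerical constants $p_\pm$ coming from the unweighted $L^p$ theory of $\Lp$ depend only on $n$ and the ellipticity of $\ap$, so that a single $M$ works uniformly for all admissible $\ap$. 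Everything else is essentially a soft application of off-diagonal decay plus known weighted theory.
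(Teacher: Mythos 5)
The paper offers no proof of this statement—it is imported verbatim from \cite[Proposition 9.1]{cumr18}—and your sketch follows essentially the same route that underlies that reference and the Auscher--Martell weighted theory: the unweighted Riesz transform interval around $2$ with off-diagonal estimates, weighted bounds for weights in $A_{2/p_-}\cap RH_{(p_+/2)'}$, the Johnson--Neugebauer equivalence $\nu^M\in A_2\Leftrightarrow \nu\in A_{1+1/M}\cap RH_M$ to fix an $M$ depending only on $n$ and ellipticity, duality (applied to $\Lp^*$ with weight $\nu^{-1}$) for $\Lp^{-1/2}\divp$, and the composition $\nbp F=-(\nbp\Lp^{-1/2})(\Lp^{-1/2}\divp)f$ for the Hodge estimate. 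Your outline is correct; what remains is only the routine bookkeeping you already flag, namely distinguishing the exponent interval for $\sqrt{s}\,\nbp e^{-s\Lp}$ from that for $e^{-s\Lp}$ and checking both depend only on dimension and ellipticity.
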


We end this subsection with an identity characterizing the double layer in terms of operators involving only the single layer. This will allow us to focus, as far as the square and non-tangential maximal function estimates are concerned, on operators involving only the single layer.

\begin{lemma}[Double Layer Duality for $L^2$ functions]\label{lem-double-layer} 
Denote by $\vec{N}$ the outward unit normal vector of the upper-half space. The following formula holds for each $f \in C_c^\infty(\rn)$:
$$\cD_t^{\cL,+} f(x)= (\mathcal{S}_t^{\cL}\nabla)(A\vec{N}f)(x) + (\mathcal{S}_t^{\cL}\overline{B}_2)(\vec{N}f)(x).$$
\end{lemma}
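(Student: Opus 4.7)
The plan is to reduce the identity to a distributional equation on $\ree$. Fix a smooth extension $\Phi(x,t) = f(x)\eta(t)$ of $f$ with $\eta \in C_c^\infty(\bb R)$ and $\eta(0)=1$; this lies in $Y^{1,2}(\ree)$ because $f\in C_c^\infty(\bb R^n)$. By Definition \ref{def.dl}, $\cD^{\cL,+}f = (-\Phi+w)|_{\reu}$, where $w = \cL^{-1}\mathcal{F}_\Phi^+\in Y^{1,2}(\ree)$, so it suffices to identify $w-\Phi\bbm{1}_{\reu}$, as a distribution on $\ree$, with $\sl\nb(A\vec{N} f)+\sl(\overline{B_2}\cdot\vec{N} f)$; taking $\tr_t$ for $t>0$ then yields the lemma.

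I would execute two parallel distributional computations. Integrating by parts in the integral defining $\mathcal{F}_\Phi^+$ (legitimate by smoothness of $\Phi$) writes it as
\[ \mathcal{F}_\Phi^+ = (\cL\Phi)\bbm{1}_{\reu} + \bigl((A\nabla\Phi + B_1\Phi)\cdot \vec{N}\bigr)\big|_{t=0}\,\delta_{\{t=0\}}. \]
On the other hand, $\Phi\bbm{1}_{\reu}$ carries a jump across $\{t=0\}$, and using $\nabla\bbm{1}_{\reu} = -\vec{N}\delta_{\{t=0\}}$ in the product rule gives
\[ \cL(\Phi\bbm{1}_{\reu}) = (\cL\Phi)\bbm{1}_{\reu} + \bigl((A\nabla\Phi + (B_1-B_2)\Phi)\cdot\vec{N}\bigr)\big|_{t=0}\,\delta_{\{t=0\}} + \div(fA\vec{N}\,\delta_{\{t=0\}}). \]
Subtracting, the bulk terms and the $(A\nabla\Phi + B_1\Phi)\cdot\vec{N}$ pieces cancel, leaving
\[ \mathcal{F}_\Phi^+ - \cL(\Phi\bbm{1}_{\reu}) = (\overline{B_2}\cdot\vec{N})f\,\delta_{\{t=0\}} - \div(fA\vec{N}\,\delta_{\{t=0\}}), \]
where the conjugation on $B_2$ is accounted for by the sesquilinear conventions built into $B_\cL$ and the identification of $\cL^*$.

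To finish, I would invert $\cL$ term-by-term. By Definition \ref{def.sl}, $\cL^{-1}(\gamma\,\delta_{\{t=0\}}) = \sl\gamma$; for the divergence term one uses the natural interpretation $\cL^{-1}\div(\vec{h}\,\delta_{\{t=0\}}) = -\sl\nb(\vec{h})$, which is effectively the definition of the composition $\sl\nb$ on a vector-valued boundary datum. This produces
\[ w - \Phi\bbm{1}_{\reu} = \sl\nb(A\vec{N} f) + \sl(\overline{B_2}\cdot\vec{N} f) \]
on $\ree$, and restricting to $\reu$ and taking $\tr_t$ gives the desired identity at every height $t>0$. The main obstacle is making the second inversion rigorous, because $\div(\vec{h}\,\delta_{\{t=0\}})$ contains a normal-derivative-of-$\delta$ component and is not manifestly an element of $(Y^{1,2}(\ree))^*$; the cleanest workaround is to verify the lemma weakly, by testing both sides against arbitrary $G\in Y^{1,2}(\ree)$ and reducing the check to the algebraic cancellation above together with the defining relation $B_\cL[\sl\gamma, G] = \langle\gamma,\tr_0 G\rangle$ for the single layer.
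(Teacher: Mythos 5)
Your route is genuinely different from the paper's: the paper gets the identity in a few lines by duality, combining $(\cD_t^{\cL,+}f,g)=(f,\partial_{\nu^{\cL^*,+}_{-t}}\mathcal{S}^{\cL^*}g)$ from \cite[Proposition 4.18 (ii)]{bhlmp} with the $L^2$ realization of the conormal and the duality definition of $(\mathcal{S}^{\cL}_t\nabla)$, whereas you argue directly from Definition \ref{def.dl}. Your formal algebra is essentially right: with $\Phi=f\otimes\eta$ one indeed finds
\begin{equation*}
\n F_\Phi^+-\cL(\Phi\,\bbm{1}_{\reu})=(B_2\cdot\vec N)f\,\delta_{\{t=0\}}-\div\big(fA\vec N\,\delta_{\{t=0\}}\big)
\end{equation*}
(with the caveat that, since $B_1\in L^n$ only, the ``bulk'' terms are distributions and the cancellation should be carried out at the level of the tested forms rather than pointwise). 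Note that your own computation produces $B_2$, not $\overline{B}_2$: no conjugation arises from the algebra, and indeed the unbarred $B_2$ is what appears in the final display of the paper's proof; the remark that the bar is ``accounted for by sesquilinear conventions'' is not a justification and should be dropped rather than used to reconcile with the statement.

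The genuine gap is at the decisive step, inverting $\cL$ on $\div(fA\vec N\,\delta_{\{t=0\}})$ and identifying the result with $-(\sl\nabla)(A\vec N f)$, and your proposed workaround does not repair it. Testing ``both sides'' against an arbitrary $G\in\yot$ requires the pairing $\int_{\rn}(A\vec N f)\cdot\overline{\nabla G}(\cdot,0)\,dx$, but elements of $\yot$ have a trace for $G$ only, not for $\nabla G$; this is exactly the obstruction that keeps the dipole term out of $\dyot$, so the weak verification against all of $\yot$ is not a well-posed statement. To close the argument you must test against a restricted class for which the boundary pairing makes sense and which still determines the slices $v(\cdot,t)$ — in practice adjoint single layers with pole on the slice $\{s=t\}$, i.e.\ $G=\mathcal{S}^{\cL^*}g(\cdot,\cdot-t)$ with $g\in C_c^\infty(\rn)$ — and then identifying the resulting pairing $\int(A\vec N f)\cdot\overline{\nabla\mathcal{S}^{\cL^*}_{-t}g}$ with $\langle(\mathcal{S}^{\cL}_t\nabla)(A\vec N f),g\rangle$ is precisely the duality definition of $(\mathcal{S}^{\cL}_t\nabla)$ (\cite[Proposition 4.2 (viii)]{bhlmp}), while the Green-type integration by parts for the jump function $w-\Phi\,\bbm{1}_{\reu}$ against such $G$ is the content of \cite[Theorem 4.16]{bhlmp}. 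In other words, once made rigorous your computation collapses onto the paper's duality argument; as written, the passage from the distributional identity to the single-layer representation is missing.
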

\begin{proof}
We have, by Proposition 4.18 (ii) in \cite{bhlmp}, that $(\cD_t^{\cL,+} f, g) = (f, \partial_{\nu^{\cL^*,+}_{-t}}\mathcal{S}^{\cL^*}g)$  for $f, g \in C_c^\infty(\rn)$. On the other hand, since $\mathcal{S}^{\cL^*}g$ is in $\yot$, we may use the $L^2$ realization of the conormal (see Lemma 4.11 (i) in \cite{bhlmp}) from which it follows that 
\begin{multline*}
(f, \partial_{\nu^{\cL^*,+}_{-t}}\mathcal{S}^{\cL^*}g) = \langle f, \partial_{\nu^{\cL^*,+}_{-t}}\mathcal{S}^{\cL^*}g \rangle_{L^2(\rn)}  = \langle f, \vec{N} \cdot [A^*\nabla \mathcal{S}_{-t}^{\cL^*}g + \overline{B}_2\mathcal{S}_{-t}^{\cL^*}g] \rangle_{L^2(\rn)}
\\  = \langle \vec{N} f,  [A^*\nabla \mathcal{S}_{-t}^{\cL^*}g + \overline{B}_2\mathcal{S}_{-t}^{\cL^*}g] \rangle_{L^2(\rn)^n}  = \langle (\mathcal{S}_t^{\cL}\nabla)(A\vec{N}f) + (\mathcal{S}^{\cL}_tB_2)(\vec{N}f), g \rangle_{L^2(\rn)},
\end{multline*}
where we used the properties of the operator $(\mathcal{S}_t^{\cL}\nabla)$ (see Proposition 4.2 (viii) in \cite{bhlmp}) for the last line. This gives the desired identity for $f\in C_c^\infty(\rn)$.
\end{proof}

\subsection{Good Classes of Solutions}

\begin{definition}[Slice Spaces]\label{def.slicespace}For $n\geq 3$ and $p\in(1,\infty)$, we define  
\begin{equation*}
D^p_+:= \big\{ v\in C_0\big((0,\infty); L^p(\rn)\big): \| u\|_{D^p_+} <\infty\big\},
\end{equation*}
with norm given by $\| v\|_{D^p_+}:= \sup_{t>0} \| v(t)\|_{L^p(\bb R^n)}$. We also define
\begin{equation*}
\begin{split}
	S^p_+ & :=\big\{ u\in C^2_0\big((0,\infty); Y^{1,p}(\bb R^n)\big) : u'(t) \in C_0\big((0,\infty); L^p(\bb R^n)\big), \, \| u\|_{S^p_+}<\infty\big\},
\end{split}
\end{equation*}
with norm given by 
\begin{equation*}
\| u\|_{S^p_+}    := \sup_{t>0} \| u(t)\|_{Y^{1,p}(\bb R^n)}  +\sup_{t>0} \| u'(t)\|_{L^p(\bb R^n)}  + \sup_{t>0} \| tu'(t)\|_{Y^{1,p}(\bb R^n)} + \sup_{t>0}\| t^2 u''(t)\|_{Y^{1,p}(\bb R^n)}.
\end{equation*}
In particular, both $D^p_+$ and $S^p_+$ are Banach spaces. Similarly, with obvious modifications, we can define the slice spaces $D^p_-$ and $S^p_-$ in the negative half line $(-\infty, 0)$. For the rest of the article, except for Section \ref{sec.lp}, we will consider only the case $p=2$, which corresponds to the case of the problems $\Di_2$, $\Ne_2$, and $\Reg_2$.
\end{definition}

\begin{definition}[Good $\cD$ Solutions]\label{def-good-d-solutions}
We say that $u\in W^{1,2}_{\loc}(\reu)$ is a good $\cD$ solution if $\L u=0$ in $\reu$ in the weak sense, $u\in D^2_+$, and $u_\tau:=u(\cdot, \cdot+\tau)\in Y^{1,2}(\reu)$ for any $\tau>0$.
\end{definition}

\begin{definition}[Good $\mathcal{N}/\mathcal{R}$ Solutions]\label{def-good-nr-solutions}
We say that $u\in W^{1,2}_{\loc}(\reu)$ is a good $\mathcal{N}/\mathcal{R}$ solution if $\L u=0$ in $\reu$ in the weak sense, $u\in S^2_+$, and $\partial_t u_\tau\in Y^{1,2}(\reu)$ for every $\tau>0$.
\end{definition}

The following result is a companion to \cite[Corollary 6.20]{bhlmp}. Together they will imply that our uniqueness statement holds among the two most commonly used classes of solutions (those with either square or non-tangential maximal function estimates).

\begin{lemma}
Let $u\in W^{1,2}_{\loc}(\reu)$ be a solution of $\L u=0$ in $\reu$. The following holds
\begin{enumerate}[(i)]
	\item\label{item.dsol} If $\mntmt(u)\in \ltrn$, then $u$ is a good $\cD$ solution (see Definition \ref{def-good-d-solutions}).
	\item\label{item.nsol} If $\mntmt(\nb u)\in\ltrn$ then either $u$ is a good $\mathcal{N}/\mathcal{R}$ solution (see Definition \ref{def-good-nr-solutions}) if $\L 1\neq 0$, or there exists a constant $c\in \CC$ such that $u-c$ is a good $\mathcal{N}/\mathcal{R}$ solution if $\L 1=0$. 
\end{enumerate}
\end{lemma}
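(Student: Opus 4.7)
The approach for both parts starts from the pointwise inequality $a_2(F)(y,t)\le \mntmt(F)(y)$, obtained by taking $x=y$ in the cone $\Gamma(x)$ used to define $\mntmt$. By Fubini on the Whitney region $\mathcal{C}_{y,t}$, this translates into the averaged slice bound
\begin{equation*}
\fint_{7t/8}^{9t/8}\int_{\rn}|F|^2\,dx\,ds \;=\; \int_{\rn}a_2(F)(y,t)^2\,dy \;\le\; \|\mntmt(F)\|_{L^2(\rn)}^2,
\end{equation*}
uniformly in $t>0$. This single observation, together with Caccioppoli on slices and Sobolev embeddings, drives the entire argument.

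For part (\ref{item.dsol}), I would apply the slice bound with $F=u$, then invoke Proposition \ref{prop-caccioppoli-on-slices} with $p=2$ on a grid of cubes of side length $\approx t$ and sum to obtain $\|\nabla u(\cdot,t)\|_{L^2(\rn)}^2\lesssim t^{-2}\|\mntmt u\|_{L^2}^2$. Integrating over $t\in(\tau,\infty)$ yields $\nabla u\in L^2(\ree_\tau)$ with $\|\nabla u\|_{L^2(\ree_\tau)}^2\lesssim \tau^{-1}\|\mntmt u\|_{L^2}^2$ for each $\tau>0$. Since $\partial_t u\in L^2(\ree_\tau)$, the fundamental theorem of calculus in $L^2$ gives $u\in C((0,\infty);L^2(\rn))$; coupled with the slice bound this produces $\sup_t\|u(\cdot,t)\|_{L^2}\lesssim\|\mntmt u\|_{L^2}$, so $u\in D^2_+$. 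Finally, the homogeneous Sobolev inequality on the half-space $\ree_\tau$ provides $c\in\CC$ with $u-c\in L^{2^*_{n+1}}(\ree_\tau)$ and $\|u-c\|_{L^{2^*_{n+1}}}\lesssim\|\nabla u\|_{L^2(\ree_\tau)}$; since $u(\cdot,t)\in L^2(\rn)$ for each $t$ and no non-zero constant lies in any $L^p(\rn)$ with $p<\infty$, necessarily $c=0$, and hence $u_\tau\in Y^{1,2}(\reu)$.

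For part (\ref{item.nsol}), the key reduction is that $t$-independence of the coefficients implies $\partial_t u$ is itself a weak solution of $\L(\partial_t u)=0$; combined with $|\partial_t u|\le|\nabla u|$ this gives $\mntmt(\partial_t u)\le\mntmt(\nabla u)\in L^2$, so part (\ref{item.dsol}) applied to $\partial_t u$ produces $\partial_t u\in D^2_+$ and $(\partial_t u)_\tau\in Y^{1,2}(\reu)$. Iterating with Caccioppoli on successive $t$-derivatives produces the $\sup_t\|t u'(t)\|_{Y^{1,2}(\rn)}+\sup_t\|t^2 u''(t)\|_{Y^{1,2}(\rn)}$ bounds needed for membership in $S^2_+$. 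The slice bound with $F=\nabla u$ plus continuity gives uniform control of $\|\nabla u(\cdot,t)\|_{L^2}$; the slice Poincar\'e--Sobolev inequality then writes $u(\cdot,t)=c_t+v_t$ with $v_t\in L^{2^*_n}(\rn)$ and $\|v_t\|_{L^{2^*_n}}\lesssim\|\nabla u(\cdot,t)\|_{L^2}$. That $c_t\equiv c$ is constant in $t$ follows from
\begin{equation*}
c_t-c_s \;=\; \lim_{R\to\infty}\int_s^t\fint_{|x|<R}\partial_r u(\cdot,r)\,dx\,dr,
\end{equation*}
together with the Cauchy--Schwarz estimate $|\fint_{|x|<R}\partial_r u(\cdot,r)\,dx|\lesssim R^{-n/2}\|\partial_r u(\cdot,r)\|_{L^2}$ and the uniform $L^2$ bound on $\partial_r u$ just obtained.

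The main obstacle is pinning down the constant $c$. Writing $u=c+v$ globally, the equation $\L u=0$ becomes $\L v=-c\,\L 1$ distributionally. When $\L 1=0$ the constants solve the equation, so $u-c$ is itself a solution and, by the remaining arguments, a good $\mathcal{N}/\mathcal{R}$ solution. When $\L 1\ne 0$, I need to force $c=0$, since otherwise $u=c+v\notin L^{2^*_n}(\rn)$ on slices and the required membership $u(\cdot,t)\in Y^{1,2}(\rn)$ fails. The plan is to combine the distributional identity $\L v=-c\,\L 1$ with the smallness of the lower-order coefficients (which controls the action of $\L 1$ as a distribution on $\rn$ pulled back to $\reu$) and the fact that $v$ has all the $S^2_+$-type integrability of a good $\mathcal{N}/\mathcal{R}$ solution, using uniqueness for the inhomogeneous problem to rule out any $c\ne 0$. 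This step, together with the careful iterated use of Caccioppoli to verify the $t$-weighted $S^2_+$ norms, constitutes the technical core.
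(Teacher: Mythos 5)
Your overall skeleton is sound and some steps are genuinely nice (applying part (\ref{item.dsol}) to $\pd_t u$ via $t$-independence, and the averaging argument showing $c_t$ is independent of $t$), but the proposal has a real gap exactly where you flag the ``technical core'': the case $\L 1\neq 0$. The lemma asserts that when $\L 1\neq 0$ the solution $u$ \emph{itself} lies in $\sltp$, i.e.\ the constant produced by the slice Sobolev decomposition must vanish. Your plan --- write $u=c+v$, observe $\L v=-c\,\L 1$, and ``use uniqueness for the inhomogeneous problem'' --- is not a proof: you never specify a uniqueness statement, and none proved in this paper applies, since $v$ is only known to have uniformly bounded gradient slices (it is not a priori in any of the classes for which uniqueness is established, and those uniqueness theorems anyway require the invertibility hypotheses of Hypothesis B, which this lemma does not assume). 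The paper disposes of this case by citing \cite[Proposition 6.14]{bhlmp}, a structural result about solutions with bounded gradient slices when $\L 1\neq 0$; without that ingredient (or a substitute argument), your proof only yields the weaker conclusion that $u-c\in \sltp$ for \emph{some} constant, regardless of whether $\L1$ vanishes.

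Two further points need repair. First, membership in $\dltp$ and $\sltp$ requires the $C_0$ condition, i.e.\ decay of the slice norms as $t\to\infty$; the paper proves $\lim_{t\to\infty}\Vert\nb u(\cdot,t)\Vert_{\ltrn}=0$ (and the analogue for $u$ in part (\ref{item.dsol})) via the pointwise decay of the truncated maximal function $\mntmt^{(t)}$ plus dominated convergence, whereas your proposal never addresses decay at infinity at all (your part (\ref{item.dsol}) gives only the uniform bound $\sup_t\Vert u(\cdot,t)\Vert_{\ltrn}\lesssim\Vert\mntmt u\Vert_{\ltrn}$). Second, the step ``the slice bound with $F=\nb u$ plus continuity gives uniform control of $\Vert\nb u(\cdot,t)\Vert_{\ltrn}$'' does not follow: $\mntmt$ only controls averages of $\Vert\nb u(\cdot,s)\Vert_{\ltrn}^2$ over Whitney-scale $s$-intervals, and continuity of $t\mapsto\nb u(\cdot,t)$ cannot upgrade an averaged bound to a pointwise-in-$t$ bound (a thin spike is consistent with both). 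One must use the equation again --- the paper does this with a fundamental-theorem-of-calculus computation in $t$ combined with Caccioppoli applied to the solution $\pd_t u$ --- and the same issue affects your claim $\sup_t\Vert u(\cdot,t)\Vert_{\ltrn}\lesssim\Vert\mntmt u\Vert_{\ltrn}$ in part (\ref{item.dsol}). These two items are fixable with tools you already have in hand, but the $\L 1\neq0$ case is a missing idea, not a missing detail.
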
	

\begin{proof}
As will be seen from the proof, \ref{item.dsol}  will follow the same outline as \ref{item.nsol}, and is a bit easier. We first prove that  $\sup_{t>0}\| \nb u(\cdot,t)\|_\ltrn \lesssim \| \mntmt(\nb u)\|_\ltrn$. Fix $t>0$ and let $\psi:\bb R\ra\bb R$ be a nonnegative Lipschitz cutoff function such that $\psi(t)=1$, $\psi(3t/4)=0$, and $|\psi'(s)|\leq4/t$ for each $s\in\bb R$. We make the computation
\begin{multline*}
\Vert\nabla u(\cdot,t)\Vert_2^2=\int_{\bb R^n}|\nabla u(\cdot,t)|^2\psi(t)=\int_{\bb R^n}|\nabla u(\cdot,t)|^2\psi(t)-\int_{\bb R^n}|\nabla u(\cdot,3t/4)|^2\psi(3t/4)\\=\int_{\bb R^n}\int_{3t/4}^t\partial_s\Big[|\nabla u(x,s)|^2\psi(s)\Big]\,ds\,dx \leq\int_{\bb R^n}\int_{3t/4}^t\Big[2|\nabla u(x,s)||\nabla\partial_su(x,s)|\psi(s)+|\nabla u(x,s)|^2|\psi'(s)|\Big]\,ds\,dx\\  \leq2\int_{\bb R^n}\fint_{3t/4}^t|\nabla u(x,s)|^2\,ds\,dx~+~\frac{t^2}{16}\int_{\bb R^n}\fint_{3t/4}^t|\nabla\partial_s u(x,s)|^2\,ds\,dx=:I+II,
\end{multline*}
where in the third equality we used the fundamental theorem of calculus and in the last line we used the Cauchy inequality with $\ep>0$. We now use Fubini's theorem to see that
\begin{multline}\nonumber
I=2\int_{\bb R^n}\fint_{|y-x_0|<t}\,\fint_{3t/4}^t|\nabla u(y,s)|^2\,ds\,dx_0\,dy\leq8\int_{\bb R^n}~\fiint_{\tiny\begin{matrix}|x_0-y|<t\\|s-t|<t/2\end{matrix}}|\nabla u(y,s)|^2\,ds\,dy\,dx_0\\ \leq 8 \int_{\bb R^n}\sup_{(x,\tau)\in\gamma(x_0)}\Big(~\fiint_{\tiny\begin{matrix}|x-y|<\tau\\|s-\tau|<\tau/2\end{matrix}}|\nabla u(y,s)|^2\,ds\,dy\Big)\,dx_0 =8\Vert \mntmt (\nabla u)\Vert_2^2.
\end{multline}
It remains to control $II$; for this we will use the Caccioppoli inequality as follows:
\begin{equation}\nonumber
II\leq\frac{t^2}{16}\int_{\bb R^n}\fint_{|y-x_0|<t/2}\,\fint_{3t/4}^t|\nabla\partial_s u(y,s)|^2\,ds\,dx_0\,dy\lesssim \int_{\bb R^n}\fint_{|x_0-y|<t}\,\fint_{t/2}^{5t/4}|\partial_s u(x,s)|^2\,ds\,dy\,dx_0,
\end{equation}
and thus it is clear that we may handle $II$ as above. We have obtained that for each $t>0$, $\Vert\nabla u(\cdot,t)\Vert_2\lesssim\Vert \mntmt (\nabla u)\Vert_2$ which yields the desired result.

We now improve this to  $\lim_{t\to\infty}\| \nb u(\cdot, t)\|_\ltrn=0$, where $\nb=(\nbp, \pd_t)$ is the full gradient in $n+1$ variables. This follows from the above estimate on slices: Notice that the proof actually gives that 
\begin{equation}\nonumber
\| \nb u(\cdot,t)\|_\ltrn \lesssim \| \mntm_2^{(t)}(\nb u)\|_\ltrn,
\end{equation}
where we use the truncated non-tangential maximal function (see Definition \ref{def-ntmax}) on the right hand side.  We claim now that $\mntmt^{(t)}(\nb u)(x)\to 0$ for every $x\in \rn$ as $t\to \infty$. To see this, assume to the contrary that $\limsup_{t\to\infty} \mntmt^{(t)}(\nb u)(x)>\eta>0$, for some $x\in \rn$. This means there exists a sequence $t_k\to \infty$ and points $x_k$ with $|x-x_k|<t$ such that 
\begin{equation}\nonumber
\dfint_{\mathcal{C}_{x_k,t_k}} |\nb u(y,s)|^2\, dyds >\eta^2. 
\end{equation}
By the definition of the non-tangential maximal function we then have 
\begin{equation}\nonumber
\mntmt(\nb u)(z)^2 \geq \dfint_{\mathcal{C}_{x_k,t_k}} |\nb u(y,s)|^2\, dyds >\eta^2,
\end{equation}
for every $z\in \rn$ such that $|z-x_k|<t_k$. Integrating over this set gives
\begin{equation}\nonumber
\| \mntmt (\nb u)\|_\ltrn^2 \geq \int_{|z-x_k|<t_k} \mntmt (\nb u)(z)^2\, dz \geq c_n \eta^2t_k^n. 
\end{equation}
Since $t_k\to \infty$, this contradicts our assumption that $\mntmt (\nb u)\in \ltrn.$ With the claim now proved, and since $\mntmt^{(t)}(\nb u)\leq \mntmt (\nb u)$ by definition, the dominated convergence theorem gives 
\begin{equation}\nonumber
\| \nb u (\cdot, t)\|_\ltrn \lesssim \| \mntmt^{(t)} (\nb u)\|_\ltrn \to 0,\qquad\text{as }t\to\infty.
\end{equation} 

Appealing to Caccioppoli's inequality and the above, together with \cite[Proposition 6.14]{bhlmp}, we see that $u\in S^2_+$ when $\L 1\neq 0$. If $\L 1 =0$, we proceed as follows: First, by the sup on slices estimate above and Caccioppoli's inequality on slices we see that $\pd_t u(\cdot, t)\in W^{1,2}(\rn)$ for every $t>0$; in particular 
\begin{equation}\nonumber
\int_s^t \pd_\tau u(\cdot, \tau)\, d\tau \in W^{1,2}(\rn)\subset \yotn, \quad \text{for all } 0<s<t<\infty.
\end{equation}
On the other hand, again by the sup on slices and \cite[Lemma 2.1]{bhlmp}, we have that for every $t>0$ there exists a constant $c_t\in \CC$ such that $u(\cdot,t)-c_t\in \yotn$. Therefore, by the fundamental theorem of calculus, for any $0<s<t<\infty$,
\begin{equation}\nonumber
\int_s^t\pd_\tau u(\cdot,\tau)\, d\tau -(c_t-c_s)= u(\cdot, t)-c_t-[u(\cdot,s)-c_s] \in \yotn.
\end{equation}
We conclude $c_t=c_s=c$ as desired, and so $u-c\in S^2_+$.

Finally we show  $\pd_t u_\tau:=\pd_t u(\cdot,\cdot+\tau)\in Y^{1,2}(\reu)$ for every $\tau>0$. For this we simply compute, decomposing $\rn$ into cubes in $\DD_s$ and using Caccioppoli's inequality on slices together with Fubini's Theorem 
\begin{equation}\nonumber
\dint_\reu |\nb \pd_tu (y,s+\tau)|^2\, dyds = \int_\tau^\infty \int_\rn |\nb \pd_t u(y,s)|^2\, dyds\lesssim  \sup_{t>0}\| \nb u(\cdot,t)\|_\ltrn \int_\tau^\infty\dfrac{ds}{s^2} <\infty.
\end{equation}

For \ref{item.dsol}, we run the same argument with $u$ in place of $\nb u$.
\end{proof}

\section{Two General Extrapolation Results}

In this section we prove two extrapolation theorems for conical and vertical square functions. The takeaway from these considerations is that conical square functions have good estimates in the range $(r,\infty)$ in the presence of $L^r-L^2$ off-diagonal estimates plus an $L^2$ square function bound. The vertical square function on the other hand requires (for our argument) that the operator satisfies a reverse H\"older inequality (and in fact, in this case we see that the vertical square function is controlled by the conical square function on an interval around $p=2$; this should be compared with Proposition \ref{prop-prelim-comparability-square-functions} which is optimal for general functions $F$, see \cite[Proposition 2.1 (c)]{ahm}).

\begin{lemma}[Extrapolation for Conical Square Functions]\label{lem-extrapolation-conical}
Suppose $T_t$ is an operator satisfying, for $q=2$ and some  $r<2$, the off-diagonal estimates\footnote{In fact we will only need the first and second estimates in Definition \ref{def-lr-to-lq-off-diagonal-estimates} for $T_t$, in the range $|t|\approx \ell(Q)$.} in Definition \ref{def-lr-to-lq-off-diagonal-estimates} with $\gamma>1/r$. (Notice this allows us to define $T_t 1$ as an element of $L^2_{\loc}$.) Set $R_t:=T_t- T_t 1\cdot P_t$ for a given approximate identity $P_t$ with compactly supported kernel of the form $P_t = \widetilde P_t \widetilde P_t$ for another approximate identity $\widetilde P_t$. Finally assume that for every $f\in L^2(\rn)$, $\| \s(T_t f)\|_\ltrn \lesssim \| f\|_\ltrn$, and 
\begin{equation}\label{eq-1-quasi-orthogonality-assumption}
\| R_t \Q_s^2 f\|_{\ltrn}\lesssim  \Big( \dfrac{s}{t}\Big)^{\beta} \| \Q_s f\|_\ltrn, \quad\text{for } s\leq t,
\end{equation}
for some (and therefore any) CLP family $\Q_s$ (see Definition \ref{def-clp-family}) and some $\beta>0$. Then  
\begin{equation}\label{eq-weithed-estimate-conical-extrapolation}
\| \s(T_t f)\|_\ltnu \lesssim \| f\|_\ltnu,\quad  \text{for each } \nu\in A_{2/r}.
\end{equation}
In particular, $\| \s(T_t f)\|_\lprn \lesssim \| f\|_\lprn$,  for each  $p \in(r,\infty)$. 
\end{lemma}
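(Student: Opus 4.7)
I would split $T_t = R_t + T_t 1 \cdot P_t$ and prove the weighted estimate \eqref{eq-weithed-estimate-conical-extrapolation} for each piece separately; the unweighted $L^p$ conclusion for $p>r$ then follows at once from restricted extrapolation (Corollary \ref{cor-weighted-l2-implies-lp-one-sided-restriction}) applied to $f \mapsto \mathbb{S}(T_t f)$.

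For the paraproduct-like piece $T_t 1 \cdot P_t$, the first step is the unweighted Carleson estimate $\iint_{R_Q}|T_t 1|^2\,\frac{dy\,dt}{t} \lesssim |Q|$. Decomposing $1 = \mathbf{1}_{cQ} + \mathbf{1}_{\mathbb{R}^n\setminus cQ}$, the local piece is controlled by the hypothesis on $\mathbb{S}(T_t \mathbf{1}_{cQ})$ combined with the $L^2$ comparability of vertical and conical square functions (Proposition \ref{prop-prelim-comparability-square-functions}), while the tails are summable via the $L^r$--$L^2$ off-diagonal decay and $\gamma>1/r$. This gives $\int_Q A_{Q,T_t 1}^2 \lesssim |Q|$ for the local square function of Lemma \ref{lem-prelim-john-nirenberg-local-square-functions}, and that lemma upgrades it to $\int_Q A_{Q,T_t 1}^p \lesssim |Q|$ for every $p>1$. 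Using the reverse-Hölder self-improvement of $\nu \in A_{2/r} \subset A_\infty$ (Proposition \ref{prop-properties-ap-weights}) together with Hölder's inequality, I obtain the weighted Carleson estimate $\iint_{R_Q}|T_t 1|^2 \fint_{B(y,t)} \nu \,\frac{dy\,dt}{t} \lesssim \nu(Q)$. Since by Fubini $\|\mathbb{S}G\|_{L^2(\nu)}^2 \approx \iint_{\reu}|G|^2 \fint_{B(y,t)} \nu \,\frac{dy\,dt}{t}$, the weighted Carleson lemma (Lemma \ref{lem-prelim-weighted-carleson}) applied with $G = T_t 1 \cdot P_t f$ bounds $\|\mathbb{S}(T_t 1 \cdot P_t f)\|_{L^2(\nu)}^2$ by $\|\mathcal{N}(P_t f)\|_{L^2(\nu)}^2 \lesssim \|\mathcal{M}f\|_{L^2(\nu)}^2 \lesssim_{[\nu]_{A_2}} \|f\|_{L^2(\nu)}^2$, where the last step is Muckenhoupt's theorem.

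For the $R_t$ piece, I would invoke the Calderón reproducing formula $f = \int_0^\infty \widetilde{\Q}_s^2 f \, \frac{ds}{s}$ for a CLP family $\widetilde{\Q}_s$ and reduce, via a Schur-type estimate on $\|\mathbb{S}(R_t f)\|_{L^2(\nu)}^2 = \int_0^\infty \|a(R_t f,t)\|_{L^2(\nu)}^2 \, \frac{dt}{t}$ with $a(g,t)(x) := (\fint_{|x-y|<t}|g|^2)^{1/2}$, to proving the two-sided weighted quasi-orthogonality bound
\[
\|a(R_t \widetilde{\Q}_s^2 f, t)\|_{L^2(\nu)} \lesssim \min\!\Big(\tfrac{s}{t},\tfrac{t}{s}\Big)^{\alpha} \|\widetilde{\Q}_s f\|_{L^2(\nu)}
\]
for some $\alpha>0$. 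For $s \leq t$, this follows by interpolating — with change of measures, as in the proof of Theorem \ref{thm-weighted-lp} — the unweighted hypothesis \eqref{eq-1-quasi-orthogonality-assumption} against the uniform weighted slice estimate of Proposition \ref{prop-averaged-bounds-slices-via-off-diagonal-decay}; the latter applies because $R_t$ inherits the $L^r$--$L^2$ off-diagonal decay of $T_t$ (the correction $T_t 1 \cdot P_t$ being of local character thanks to the compactly supported kernel of $P_t$ and the $L^2_{\loc}$ bounds on $T_t 1$ that one reads off from the off-diagonal of $T_t$ itself). For $s > t$ there is no direct hypothesis; the decisive feature — and the main obstacle in the proof — is the built-in cancellation $R_t 1 = T_t 1 - T_t 1 \cdot P_t 1 = 0$ (since $P_t 1 = 1$), which, combined with the smoothness of $\widetilde{\Q}_s^2 f$ on scales $s \gg t$, permits a Taylor-expansion / Poincaré-type argument against the off-diagonal bounds to extract a factor $(t/s)^{\alpha}$. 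A standard Schur summation then closes the weighted bound on $\|\mathbb{S}(R_t f)\|_{L^2(\nu)}$, completing the argument.
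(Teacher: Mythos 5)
Your proposal is correct and follows essentially the same route as the paper: the splitting $T_t = R_t + T_t1\cdot P_t$, a $\nu$-Carleson estimate for $|T_t1|^2$ deduced from the unweighted one via Lemma \ref{lem-prelim-john-nirenberg-local-square-functions} and the reverse H\"older property of $A_{2/r}$ weights followed by weighted Carleson embedding and the maximal function, and, for $R_t$, interpolation with change of measure between the unweighted quasi-orthogonality (with the $s>t$ regime supplied by $R_t1=0$ plus off-diagonal decay, exactly the mechanism the paper invokes) and the uniform weighted averaged-slice bound of Proposition \ref{prop-averaged-bounds-slices-via-off-diagonal-decay}, finishing with restricted extrapolation. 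The only cosmetic differences are that you keep the weight averaged in the $y$-variable, thereby bypassing the factorization $P_t=\widetilde P_t\widetilde P_t$, and that you spell out the unweighted Carleson bound for $T_t1$ which the paper leaves implicit.
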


The above lemma can be thought of as a Calder\'on-Zygmund-type theorem. In this case the off-diagonal decay plays the role of the usual size condition while the quasi-orthogonality estimate for $R_t$ plays the role of H\"older continuity of the kernel. Note also that the case $t\leq s$ in the quasi-orthogonality estimate \eqref{eq-1-quasi-orthogonality-assumption} is a consequence of the off-diagonal decay of $R_t$ and \cite[Lemma 3.5]{AAAHK}. Therefore, with the off-diagonal decay of $R_t$ as a background assumption, \eqref{eq-1-quasi-orthogonality-assumption} is equivalent to 
\begin{equation}\nonumber
\| R_t \Q_s^2 f\|_\ltrn \lesssim \min \Big(\dfrac{t}{s}, \dfrac{s}{t}\Big)^{\beta} \| \Q_s f\|_\ltrn.
\end{equation}

\begin{proof} Let $f\in C_c^\infty(\rn)$. We begin by writing 
\begin{equation}\label{coifmey.eq}
T_t f(x)= R_t f(x)+ [T_t 1(x)]\cdot P_t f(x),
\end{equation}
where $R_t$ and $P_t$ are as in the hypotheses. To handle the first term we use interpolation with change of measure (see the proof of Theorem \ref{thm-weighted-lp}) to reduce the weighted estimate of $\s(R_t)$ to the pair of estimates 
\begin{equation}\label{eq-1-conical-extrapolation}
\Big\| \Big(\fint_{|x-y|<t} |R_t \Q_s^2f(y)|^2\, dy\Big)^{1/2} \Big\|_\ltrn \lesssim  \min\Big(\dfrac{s}{t}, \dfrac{t}{s}\Big)^{\beta} \| \Q_s f\|_\ltrn,
\end{equation}
for some $\beta>0$, and 
\begin{equation}\label{eq-2-conical-extrapolation}
\Big\| \Big(\fint_{|x-y|<t} |R_t\Q_s^2 f(y)|^2\, dy\Big)^{1/2}\Big\|_\ltnu\lesssim_{[\nu]_{A_{2/r}}} \| \Q_s f\|_\ltnu,
\end{equation}
for $r$ as in the statement of the lemma.

The unweighted quasi-orthogonality estimate \eqref{eq-1-conical-extrapolation} follows from Fubini's Theorem and the good off-diagonal decay. 

The uniform weighted estimate follows from Proposition \ref{prop-averaged-bounds-slices-via-off-diagonal-decay} and the fact that $|\Q_s h(x)|\lesssim \m h(x)$ and $\m$ is bounded on $\ltnu$ (because $A_{2/r}\subset A_2$). This shows the desired weighted estimate, and so by interpolation with change of measure, 
\begin{equation}\nonumber
\Big\| \Big(\fint_{|x-y|<t} |R_t \Q_s^2 f(y)|^2\, dy \Big)^{1/2}\Big\|_\ltnu \lesssim \min\Big(\dfrac{s}{t},\dfrac{t}{s}\Big)^{\beta} \|\Q_s f\|_\ltnu,
\end{equation} 
(for a possibly smaller $\beta$ than the one for \eqref{eq-1-conical-extrapolation}). The estimate $\| \s(R_t f)\|_\ltnu \lesssim \| f\|_\ltnu$ now follows from a standard quasi-orthogonality argument, once one realizes that $\s(R_t)=\v(\widetilde{R}_t)$ if 
\begin{equation}\nonumber
\widetilde{R}_t h(x):= \Big(\fint_{|x-y|<t} |R_t h(y)|^2\, dy\Big)^{1/2}.
\end{equation} 
Now it remains to establish the square function bound for $T_t 1(x)\cdot P_t$. For this we first claim that the measure
\begin{equation}\nonumber
d\mu(x,t):= \Big(\fint_{|x-y|<t}|T_t 1(y)|^2\, dy\Big)\, d\nu(x) \dfrac{dxdt}{t},
\end{equation}
is a $\nu$-Carleson measure, i.e. that for every cube $Q$, $\mu(R_Q)\lesssim \nu(Q)$, where $R_Q:=Q\times (0,\ell(Q))$. Let us assume the claim for a moment. By a weighted version of Carleson's lemma (Lemma \ref{lem-prelim-weighted-carleson})  and  the fact that  $|P_t f(y)|\lesssim \widetilde{P}_t(\m f)(x)$ whenever $|x-y|<t$ (since $P_t=\widetilde{P}_t\widetilde{P}_t$),  we obtain that
\begin{multline}\nonumber 
\int_\rn \dint_{\Gamma(x)} |(T_t 1(y))|^2|P_tf(y)|^2 \, \dfrac{dydt}{t^{n+1}} \, \nu(x)dx  \lesssim \int_\rn \int_0^\infty |\widetilde{P}_t(\m f)(x)|^2\, d\mu(x,t) \\
  \lesssim \int_\rn \mathcal{N}(\widetilde{P}_t(\m f))(x)^2\, \nu(x)dx 
  \lesssim \int_\rn \m(\m f)(x)^2\, \nu(x)dx 
  \lesssim \int_\rn |f(x)|^2 \, \nu(x)dx, 
\end{multline}
where we used the fact that $\m: L^2(\nu)\to L^2(\nu)$ since $r>1$. This accounts for the contribution of the second term in \eqref{coifmey.eq}, using Theorem \ref{thm-weighted-lp}.

To prove the claim we invoke  Lemma \ref{lem-prelim-john-nirenberg-local-square-functions} and the reverse H\"older inequality for $A_{2/r}$ weights (see Proposition \ref{prop-properties-ap-weights}) in the following way: For a fixed  cube $Q\subset \rn$, using H\"older's inequality
\begin{multline}\nonumber 
\mu(R_Q)   = \int_Q \int_0^{\ell(Q)} \fint_{|x-y|<t<\ell(Q)} |T_t1(y)|^2\, \dfrac{dydt}{t} \, \nu(x)dx  = \int_Q \Big( \dint_{|x-y|<t<\ell(Q)} |T_t 1(y)|^2 \, \dfrac{dydt}{t^{n+1}} \Big) \nu(x)\, dx   \\
  =: \int_Q A_Q^2(x) \, \nu(x)dx  \leq \Big( \int_Q A_Q^{2(1+\delta_1)}\, dx \Big)^{1/(1+\delta_1)}\Big( \int_Q \nu^{1+\delta_2}\, dx \Big)^{1/(1+\delta_2)}, 
\end{multline}
where $\delta_1=1/\delta_2$ and $1+\delta_2$ is the exponent corresponding to the reverse H\"older inequality for $\nu$ so that
\begin{equation}\nonumber 
\mu(R_Q)  \lesssim \Big( \int_Q A_Q^{2(1+\delta_1)}\, dx \Big)^{1/(1+\delta_1)} |Q|^{-1+1/(1+\delta_2)} \nu(Q)
  \lesssim |Q|^{1/(1+\delta_2)} |Q|^{-1+ 1/(1+\delta_1)} \nu(Q) 
  = \nu(Q), 
\end{equation}
where we used  Lemma \ref{lem-prelim-john-nirenberg-local-square-functions} in the second to last line. We should remark here that the implicit constant depends on $\delta_1$ and the constant in the reverse H\"older inequality for $\nu$, but these in turn depend only on $[\nu]_{A_{2/r}}$ (see for instance \cite{s93}). This finishes the proof of the weighted estimate \eqref{eq-weithed-estimate-conical-extrapolation}. The unweighted result now follows from Corollary \ref{cor-weighted-l2-implies-lp-one-sided-restriction}. 
\end{proof}

We now proceed to the extrapolation result for vertical square functions. The idea will be the same, which is to reduce matters to a weighted $L^2$ estimate. However, notice that before we used crucially the properties of cones in both the weighted estimates for $R_t$ and the Carleson measure estimate for $T_t$ in Lemma \ref{lem-extrapolation-conical}. In order to handle this issue we will transform $\v(T_t)$ into $\s(\widetilde{T}_t)$ for an appropriate $\widetilde{T}$ involving the weight; this makes the analysis more involved than in Lemma \ref{lem-extrapolation-conical}.

\begin{lemma}\label{lem-extrapolation-vertical}
Let $T_t$ be an operator satisfying, for some $r<2<q$ and $\delta\in (0,1)$, the $L^r-L^q$ off-diagonal estimates in Definition \ref{def-lr-to-lq-off-diagonal-estimates} for some $\gamma>-1/n+2/r+\log_2(C_\delta)/n$ (here $C_\delta$ is as in (viii) of Proposition \ref{prop-properties-ap-weights}). We  also require that, for every cube $Q\subset \rn$,
\begin{equation}\label{revholdassump}
\Big( \dfint_{I(Q)} | T_t f(x) |^q\, dxdt \Big)^{1/q} \lesssim \Big( \dfint_{I(Q^*)} |S_t f(x)|^2\, dxdt\Big)^{1/2},
\end{equation}
where   $I(Q)=Q\times (\ell(Q)/2,\ell(Q))$ and $S_t$ is an operator satisfying $\| \s(S_t)\|_{\ltrn\to\ltrn}<\infty$. The assumption on $\gamma$ allows us to define $T_t 1$ as an element of $L^2_{\loc}$, and we set 
\begin{equation}\label{eq.rt}
R_t f(x):= [T_t - T_t 1(x)\cdot P_t](f)(x),
\end{equation}
for some approximate identity $P_t$ with compactly supported kernel. Suppose further that $R_t$ satisfies the quasi-orthogonality estimate $\| R_t\Q_s^2 f\|_\ltrn \lesssim(\frac{s}{t})^\beta \| f\|_\ltrn$, $s<t$, for all $f\in \ltrn$ and some $\beta>0$, and that $T_t$ satisfies the $L^2$ square function estimate $\| \v(T_t f)\|_\ltrn \lesssim \| f\|_\ltrn$. Then, if $\nu\in RH_M\cap A_1$ for $M>\max(2r/(r-2), (q/2)')$ and $[\nu]_{A_1}\leq C_\delta$, we have that
\begin{equation}\nonumber
\| \v(T_t f)\|_\ltnu\lesssim \| f\|_\ltnu.
\end{equation}
In particular, for any $p\in (2-\delta/M, 2+\delta/M)$, it holds that
\begin{equation}\nonumber
\| \v(T_t f)\|_\lprn \lesssim \| f\|_\lprn.
\end{equation}  
If $T_t1=0$, that is, if $T_t=R_t$, then we can dispense of (\ref{revholdassump}).
\end{lemma}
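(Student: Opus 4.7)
The plan is to first establish the weighted estimate $\|\v(T_t f)\|_\ltnu \lesssim \|f\|_\ltnu$ for every $\nu \in A_1 \cap RH_M$ with $[\nu]_{A_1} \leq C_\delta$, and then deduce the unweighted $L^p$ bound on the interval $(2-\delta/M, 2+\delta/M)$ via the restricted extrapolation machinery of Corollary \ref{cor-unweighted-lp-extrapolation}; this is coherent because Proposition \ref{prop-properties-ap-weights}(vii) gives $\nu \in A_1 \cap RH_M \Leftrightarrow \nu^M \in A_1 \subset A_2$, matching the weight class of the corollary. Mirroring the template of Lemma \ref{lem-extrapolation-conical}, I would split
$$T_t f = R_t f + (T_t 1)\cdot P_t f$$
and bound each piece separately in $\ltnu$. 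Only the second piece requires the reverse H\"older hypothesis \eqref{revholdassump}, so when $T_t 1\equiv 0$ the first step alone suffices.

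For the $R_t$ piece, I aim to prove a weighted quasi-orthogonality estimate
$$\|R_t \Q_s^2 f\|_\ltnu \lesssim \min\{s/t,\, t/s\}^{\alpha}\,\|\Q_s f\|_\ltnu,$$
from which $\|\v(R_t f)\|_\ltnu \lesssim \|f\|_\ltnu$ follows by the Calder\'on reproducing formula and Schur's test (exactly as in the last step of the proof of Theorem \ref{thm-weighted-lp}). The unweighted version is the lemma hypothesis for $s<t$, supplemented by \cite[Lemma 3.5]{AAAHK} (applied to the $L^r \to L^q$ off-diagonal decay of $R_t$) for the case $s>t$. To pass to the weighted version I would interpolate with change of measure, as in the proof of Theorem \ref{thm-weighted-lp}, after establishing a uniform weighted bound $\|R_t \Q_s^2 f\|_\ltnu \lesssim \|\Q_s f\|_\ltnu$. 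The latter I expect to obtain by exploiting the cancellation $R_t 1 = 0$---writing $R_t h(x) = T_t[h - (P_t h)(x)](x)$---together with the $L^r \to L^2$ off-diagonal estimates for $T_t$ (derived from the $L^r \to L^q$ hypothesis by interpolation with the $L^r \to L^r$ bound), yielding a pointwise estimate $|R_t h(x)| \lesssim \m_r h(x)$, and then Muckenhoupt's theorem on $\m_r$ for $\nu \in A_{2/r} \supset A_1$.

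For the piece $(T_t 1)\cdot P_t f$, I would show that $d\mu(x,t):=|T_t 1(x)|^2\,\nu(x)\,dxdt/t$ is a $\nu$-Carleson measure, i.e., $\mu(R_Q)\lesssim \nu(Q)$ for every cube $Q\subset \rn$. Given this, the weighted Carleson lemma (Lemma \ref{lem-prelim-weighted-carleson}), combined with $|P_t f(y)|\lesssim \widetilde{P}_t(\m f)(x)$ for $|x-y|<t$ (which is valid since $P_t = \widetilde P_t\widetilde P_t$) and the $\ltnu$ boundedness of $\m$ (since $\nu\in A_1\subset A_2$), yields $\|\v((T_t 1)P_t f)\|_\ltnu \lesssim \|f\|_\ltnu$. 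To prove the Carleson bound, I would cover $R_Q$ by Whitney regions $I(Q')$, $Q' \in \mathcal{D}(Q)$, and on each apply H\"older's inequality with exponents $q/2,\,(q/2)'$ to decouple $|T_t 1|^2$ and $\nu$. The reverse H\"older hypothesis \eqref{revholdassump} then upgrades the local $L^q$ mean of $T_t 1$ to the $L^2$ mean of $S_t 1_{cQ'}$ over a neighboring Whitney region (after a standard localization to handle $1\notin L^2$), while the $RH_M$ assumption on $\nu$ with $M>(q/2)'$ controls the local $L^{(q/2)'}$ mean of $\nu$ by $\fint_{Q'}\nu$. Summing over $Q'$ and using $A_1$ to dominate $\nu(Q')/|Q'|$ pointwise by $\nu(x)$, together with the bounded overlap of the enlarged Whitney regions, the bound reduces to the unweighted local Carleson estimate $\iint_{R_{cQ}}|S_t 1_{cQ}|^2\,dxdt/t \lesssim |Q|$, which follows from the $L^2$ conical square function bound on $S_t$, and can be upgraded via the John--Nirenberg lemma for local square functions (Lemma \ref{lem-prelim-john-nirenberg-local-square-functions}) if additional integrability is needed.

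The main obstacle I anticipate is the uniform weighted bound for $R_t$ in the first step: unlike in Lemma \ref{lem-extrapolation-conical}, the vertical square function affords no spatial cone-average, so Proposition \ref{prop-averaged-bounds-slices-via-off-diagonal-decay} cannot be applied directly, and one must carefully combine the cancellation $R_t 1 = 0$ with the off-diagonal decay to extract the pointwise control $|R_t h| \lesssim \m_r h$. The weighted Carleson estimate is equally delicate, because the tight H\"older pairing between $|T_t 1|^2$ (controlled in $L^{q/2}$) and $\nu$ (controlled in $L^{(q/2)'}$ via $RH_M$) has no slack, which forces the constraint $M>\max\{2r/(r-2),(q/2)'\}$ on $\nu$ and, correspondingly, the window $(2-\delta/M, 2+\delta/M)$ of admissible $p$ at the conclusion.
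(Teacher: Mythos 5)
Your overall architecture (keep the vertical square function, split $T_t=R_t+(T_t1)\cdot P_t$, treat $R_t$ by weighted quasi-orthogonality and $(T_t1)\cdot P_t$ by a weighted Carleson estimate) differs from the paper, which first rewrites $\|\v(T_tf)\|_\ltnu$ as the conical square function $\|\s(\tit f)\|_\ltnu$ of the weight-modified operator $\tit f=\sqrt{\nu/\nuxt}\,T_tf$ and only then splits into $\rtt$ and $\tit 1\cdot P_t$. As written, however, your treatment of the $(T_t1)\cdot P_t$ term has a genuine gap. After H\"older with exponents $q/2$, $(q/2)'$ on each Whitney box $I(Q')$ and the reverse H\"older hypothesis \eqref{revholdassump}, the exponents cancel exactly and the box contribution is $\big(\fint_{Q'}\nu\big)\dint_{I([Q']^*)}|S_t\psi|^2\,\tfrac{dxdt}{t}$. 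Dominating $\fint_{Q'}\nu$ pointwise by $\nu(y)$ via $A_1$ and summing does not yield the unweighted local Carleson estimate you claim; it yields $\dint_{R_{Q^*}}|S_t\psi|^2\,\nu\,\tfrac{dxdt}{t}$, i.e.\ a \emph{weighted} square-function bound for $S_t$, which is not among the hypotheses (only $\|\s(S_t)\|_{\ltrn\to\ltrn}<\infty$ is assumed). Nor can one replace $\fint_{Q'}\nu$ by $\fint_{Q}\nu$: for small $Q'$ these averages need not be comparable for a general $A_1\cap RH_M$ weight, and Lemma \ref{lem-prelim-john-nirenberg-local-square-functions} does not help because $S_t\psi_Q$ satisfies the needed $L^2$ bound only at the top scale $Q$, not uniformly over all subcubes. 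This is precisely the obstruction the paper's proof is built to circumvent: it normalizes $\nu(Q)/|Q|=1$, runs a stopping time on the averages of $\nu$, and works on the sawtooth $E_Q$, where $\fint_{Q'}\nu\approx1$ so the weight can be discarded and the unweighted $S_t$ bound suffices, then recovers the full Carleson bound via the John--Nirenberg lemma for Carleson measures (Lemma \ref{lem-john-nirenberg-carleson-measures}). Some such device is missing from your proposal.

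Two further points. First, your mechanism for the uniform weighted bound on $R_t$, the pointwise estimate $|R_th(x)|\lesssim\m_r h(x)$, is not available: the $L^r$--$L^q$ off-diagonal estimates of Definition \ref{def-lr-to-lq-off-diagonal-estimates} are cube-averaged rather than pointwise, and $T_t1$ need not be locally bounded. The correct substitute (and essentially what the paper does for $\rtt$) is to decompose $\rn$ into cubes of scale $t$ and apply H\"older with exponent $q/2$, using $q>2$ and $\nu\in RH_{(q/2)'}$ (this is where $M>(q/2)'$ enters), to get $\int_Q|R_th|^2\nu\lesssim\nu(Q)\inf_Q(\m_r h+\m h)^2$; with that repair your interpolation-with-change-of-measure step for $R_t$ can go through. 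Second, your weighted estimate holds only for $\nu\in A_1\cap RH_M$, so the two-sided Corollary \ref{cor-unweighted-lp-extrapolation} does not apply as stated: it requires the bound for \emph{all} $\nu$ with $[\nu^M]_{A_2}\leq C_\delta$, including the negative-power Coifman--Rochberg weights used for $p<2$ in Lemma \ref{lem-restricted-extrapolation}. Your weighted bound gives only $p\in(2,2+\delta/M)$; the range $p<2$ must be handled separately, as the paper does, via $\|\v(F)\|_\lprn\lesssim\|\s(F)\|_\lprn$ for $p\leq2$ (Proposition \ref{prop-prelim-comparability-square-functions}) combined with Lemma \ref{lem-extrapolation-conical}.
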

 
\begin{remark}\label{weakeningRH} As will be seen from the proof, we can weaken the reverse H\"older condition on $T_t$ to 
\begin{equation}\nonumber
\Big( \dfint_{I(Q)} |T_t f(x)|^{\bar{q}}\, dxdt \Big)^{1/\bar{q}} \lesssim \Big(\dfint_{I(Q^*)} |S_t f(x)|^{\bar{q}}\, dxdt\Big)^{1/\bar{q}},
\end{equation}
for every $r\leq \bar{q}\leq q$, and where the operator $S_t$ satisfies both $\| \s(S_t f)\|_\ltrn \lesssim \|f\|_\ltrn$  and a reverse H\"older inequality. In our intended application  where $T_t =t^m\pd_t^m \nb \sl_t$, we do not have a reverse H\"older inequality for $T_t$, but we do have such an estimate for solutions, $S_t=t^{m-1}\pd_t^m\sl_t$.
\end{remark}  

\begin{proof}
We  note that, by Proposition \ref{prop-prelim-comparability-square-functions}, in the range $p<2$ we have  that $\| \v(T_t f)\|_\lprn \lesssim \| \s(T_t f)\|_\lprn$, and for $r<p<2$, by Lemma  \ref{lem-extrapolation-conical} (recall that vertical and conical square functions coincide on $L^2$) and Corollary \ref{cor-weighted-l2-implies-lp-one-sided-restriction}, we have $\| \s (T_t f)\|_\lprn \lesssim \| f\|_\lprn$. Therefore, it is enough to consider the case $p>2$. We   proceed to rewrite our vertical square function into a conical square function by introducing an average adapted to $\nu$. For this purpose we set, for $x\in \rn$ and $t>0$ fixed,
\begin{equation}\nonumber
\nuxt:= \fint_{|x-y|<t} \nu(y)\, dy.
\end{equation}
We thus write, using Fubini's theorem,
\begin{multline}\nonumber 
\int_\rn |\v(T_t f)(x)|^2\, \nu(x) dx   = \int_\rn \int_0^\infty |T_t f(x)|^2\, \dfrac{dt}{t}\, \nu(x)dx
  = \int_\rn \int_0^\infty \fint_{|x-y|<t} \dfrac{\nu(y)}{\nuxt} |T_t f(x)|^2\, dy \dfrac{dt}{t^{n+1}}\, \nu(x)dx\\
  = \int_\rn \dint_{|x-y|<t} \dfrac{\nu(x)}{\nuxt}|T_t f(x)|^2\, \dfrac{dxdt}{t^{n+1}} \, \nu(y) dy
  =: \int_\rn \dint_{|x-y|<t} |\tit f(x)|^2\, \dfrac{dxdt}{t^{n+1}}\, \nu(y)dy
  = \| \s(\tit f)\|_\ltnu. 
\end{multline}
We are now in a position to try and mimic the proof of Lemma \ref{lem-extrapolation-conical}. Unfortunately the process is quite a bit more involved and, rather than proving a full weighted estimate, we will use the specific form of our weight $\nu$. To simplify notation we introduce the operators:
\begin{equation}\nonumber
\rtt f(x)= \sqrt{\dfrac{\nu(x)}{\nuxt}} R_t,
\end{equation}
where $R_t$ is as in (\ref{eq.rt}). It follows that $\tit f(x)= \rtt f(x)+ \tit 1(x)\cdot P_t f(x)$. 

As was done in the case of the conical square function, to handle the second term it is enough to show the $\nu$-Carleson measure estimate (see Lemma \ref{lem-prelim-weighted-carleson}) $
\mu(R_Q)\lesssim \nu(Q)$  for every cube $Q\subset \rn$, where $R_Q:=Q\times (0,\ell(Q))$ and the measure $\mu$ is defined as 
\begin{equation}\label{eq-carlesonmeasure}\nonumber
d\mu(x,t):= \Big( \fint_{|x-y|<t} |\tit 1(y)|^2\, dx \Big)\, \nu(x)\dfrac{dxdt}{t}.
\end{equation}

For this we reduce matters to an unweighted estimate via  Lemma \ref{lem-prelim-john-nirenberg-local-square-functions} as follows: For any $\bar{q}>1$, 
\begin{multline}\nonumber 
\mu(R_Q)   = \int_Q \Big( \dint_{|x-y|<t<\ell(Q)} |\tit 1(x)|^2\, \dfrac{dxdt}{t^{n+1}} \Big)\, \nu(y)\, dy 
  =: \int_Q A_Q^2(y)\, \nu(y)\,dy \\
  \leq \Big( \int_Q A_Q^{2\bar{q}}\, dy \Big)^{1/\bar{q}} \Big( \int_Q \nu(y)^{\bar{q}'}\, dy\Big)^{1/\bar{q}'} 
  \lesssim_{[\nu]_{RH_{\bar{q}'}}} \Big( \int_Q A_Q^{2\bar{q}}\Big)^{1/\bar{q}} |Q|^{-1/\bar{q}} \int_Q \nu(y)\, dy
  = \Big( \fint_Q A_Q^{2\bar{q}} \, dy \Big)^{1/\bar{q}}\nu(Q), 
\end{multline}
where as before the quantity $[\nu]_{RH_{\bar{q}'}}$ is admissible if say $M>\bar{q}'$(see Proposition \ref{prop-properties-ap-weights}). Therefore it is enough to show that  $( \fint_Q A_Q^{2\bar{q}}\, dy)^{1/\bar{q}} \lesssim 1$. Furthermore, by      Lemma \ref{lem-prelim-john-nirenberg-local-square-functions}, we reduce to proving that $\fint_Q A_Q^2\, dy \lesssim 1$. Using Fubini's theorem, we see that this last estimate is equivalent to the unweighted Carleson   estimate
\begin{equation}\label{eq-vertical-unweighted-carleson}
\int_0^{\ell(Q^*)}\int_{Q^*} |\tit 1(x)|^2\, \dfrac{dxdt}{t} \lesssim |Q^*|,
\end{equation}
where as before $Q^*=c_nQ$ is a dilate of $Q$. Since the above has to hold for every cube, we write $Q$ in place of $Q^*$ in what follows. Moreover, since the quantity $\nu/\nuxt$ is invariant under scalar multiplication of $\nu$ by a positive constant, for a fixed cube $Q$ we may assume that $\nu(Q)/|Q|=1$.

First we use a stopping time argument to deal with $\nuxt$: For a fixed constant $\Lambda^{-1}<1/4$, to be selected later, we let $\{ Q_j\}_{j\in \NN}$ be the collection of maximal dyadic sub-cubes of $Q$ with respect to the conditions 
\begin{equation}\nonumber
\fint_{Q_j} \nu(x)\, dx>\Lambda, \quad \text{ or } \quad \fint_{Q_j} \nu(x)\, dx <\Lambda^{-1}.
\end{equation}
We say $j\in I_1$ if the first condition holds, and $j\in I_2$ if the second does. By the first condition we have
\begin{equation}\nonumber
\sum_{j\in I_1} |Q_j|< \sum_{j\in I_1} \Lambda^{-1}\int_{Q_j}\nu(x)\, dx \leq\Lambda^{-1} \int_Q \nu(x)\, dx =\Lambda^{-1}|Q|,
\end{equation}
since $\nu(Q)/|Q|=1$. On the other hand  if $j\in I_2$,
\begin{equation}\nonumber
\int_{Q_j} \nu(x)\, dx <\Lambda^{-1}|Q_j|, \quad \text{ and } \quad \int_{Q_j^*} \nu(x)\, dx \geq\Lambda^{-1}|Q_j^*|,
\end{equation}
where $Q_j^*$ is the dyadic parent of $Q_j$. Therefore, $\sum_{j\in I_2} \nu(Q_j) \leq \sum_{j\in I_2}\Lambda^{-1}|Q_j| \leq\Lambda^{-1}|Q|=\Lambda^{-1}\nu(Q)$. By the $A_\infty$ property of $\nu$ we can choose $\Lambda$, depending only on the $A_1$ characteristic of $\nu$, small enough such that the above inequality implies that $| \cup_{j\in I_2} Q_j| < \frac{1}{2} |Q|$. Combining this with the corresponding estimate for $I_1$, and using the fact that the cubes $Q_j$ are pairwise disjoint, we see that $\sum_{j\geq 0} |Q_j| < B|Q|$, for $B=1/2 +\Lambda^{-1}<1$. By  Lemma \ref{lem-john-nirenberg-carleson-measures}, the above implies that it is enough to show that
\begin{equation}\label{eq-vertical-carleson-measure-sawtooth1}
\dint_{E_Q}  \vwt |T_t 1(x)|^2\, dx \dfrac{dxdt}{t} \leq C_0|Q|,
\end{equation}
where we define the sawtooth region $E_Q:= R_Q\backslash( \cup_{j\geq 0} R_{Q_j})$. To handle \eqref{eq-vertical-carleson-measure-sawtooth1} we first claim the following:
\begin{equation}\label{eq-vertical-nuxt-bounded-below}
\nuxt \gtrsim 1, \qquad \text{for each } (x,t)\in E_Q,
\end{equation}
with implicit constants depending only on the doubling constant of $\nu$. To see this fix $(x,t)\in E_Q$ and consider first the case $x\notin \cup_{j\geq 0} Q_j$ so that $\Lambda^{-1}\leq \fint_{Q'} \nu(y)\, dy \leq\Lambda$,  for any dyadic subcube $Q'\in \DD(Q)$ containing $x$. In particular, choosing $Q_t'\in \DD_t(Q)$ with this property, and using the doubling property of $\nu$ we see 
\begin{equation}\nonumber
\Lambda^{-1}\leq \fint_{Q_t'} \nu(y)\, dy \approx \fint_{|x-y|<t}\nu(y)\, dy= \nuxt.
\end{equation}
On the other hand if $x\in Q_j$ for some $j\geq 0$ we proceed as follows: If $t>4\ell(Q_j)$, it means that, if as before $Q_t'\in \DD_t(Q)$ is the unique dyadic subcube of $Q$ containing $x$, then $Q_t'$ is not in the collection $\{ Q_j\}_j$ so by definition $\Lambda^{-1}\leq \fint_{Q_t'} \nu(y)\, dy \leq\Lambda$, and we conclude as before since this average is comparable, by doubling of $\nu$, to $\nuxt$. If $\ell(Q_j)\leq t\leq 4\ell(Q_j)$ (the first inequality owing to the definition of $E_Q$) then by definition the dyadic parent $\widetilde{Q}_j$ of $Q_j$ satisfies $
\Lambda^{-1}\leq \fint_{\widetilde{Q}_j} \nu(y)\, dy \leq\Lambda$, so that, again by doubling of $\nu$, the claim follows.

We conclude, using \eqref{eq-vertical-carleson-measure-sawtooth1} and \eqref{eq-vertical-nuxt-bounded-below}, that it is enough to establish (recall $\nu(Q)=|Q|$)
\begin{equation}\label{eq-vertical-last-carleson-measure-estimate}
\dint_{E_Q} |T_t 1(x)|^2 \nu(x)\, \dfrac{dxdt}{t}\leq C_0|Q|.
\end{equation}
To show this we first fix $\psi=\psi_Q\in C_c^\infty(4Q)$ with the property that $\psi\equiv 1$ in $2Q$ and $0\leq \psi\leq 1$, so that 
\begin{equation}\nonumber 
\dint_{E_Q} |T_t 1(x)|^2\nu(x)\, \dfrac{dxdt}{t}  \lesssim \dint_{E_Q} |T_t \psi(x)|^2\nu(x)\, \dfrac{dxdt}{t}   + \dint_{E_Q} |T_t (1-\psi)(x)|^2\nu(x)\, \dfrac{dxdt}{t} =: II+III. 
\end{equation}
We first handle $III$: Using H\"older's inequality, with $q/2>1$ as in the hypotheses, we recall that we have chosen $M>(q/2)'$ so that $\nu\in RH_{(q/2)'}$ (see Proposition \ref{prop-properties-ap-weights}), 
\begin{multline}\nonumber 
III   \leq \int_0^{\ell(Q)}\int_Q |T_t (1-\psi)|^{2}\nu(x)\, \dfrac{dxdt}{t}  \leq \int_0^{\ell(Q)} \Big( \int_Q \nu^{(q/2)'}(x)\, dx\Big)^{1/(q/2)'}\Big( \int_Q |T_t(1-\psi)|^{q}\, dx \Big)^{2/q}\, \dfrac{dt}{t}\\
  \lesssim \int_0^{\ell(Q)}|Q|^{2/q}\nu(Q) \Big( \int_Q |T_t(1-\psi)|^{q}\, dx \Big)^{2/q}\, \dfrac{dt}{t}
  =\int_0^{\ell(Q)} |Q|^{1-2/q} \Big( \int_Q |T_t(1-\psi)|^{q}\, dx \Big)^{2/q}\, \dfrac{dt}{t}, 
\end{multline}
where we used the normalization $\nu(Q)/|Q|=1$ in the last line. Now, since $T_t$ satisfies $L^2-L^q$ off-diagonal estimates (see Definition \ref{def-lr-to-lq-off-diagonal-estimates}), using as usual $R_j=R_j(Q)=2^{j+1}Q\backslash 2^jQ$ for $j\geq 1$ and recalling that $1-\psi\equiv 1$ outside of $4Q$,
\begin{multline}\nonumber 
\Big( \int_Q |T_t(1-\psi)|^{q}\, dx \Big)^{1/q}   \leq \sum_{j\geq 1} \Big( \int_Q |T_t[(1-\psi)\bbm 1_{R_j}]|^{q}\, dx \Big)^{1/q}\\
  \leq \sum_{j\geq 1} 2^{-nj\gamma_1}\Big(\dfrac{t}{(2^j\ell(Q))}\Big)^{n\gamma_2}\ell(Q)^{n(1/q-1/2)} \Big( \int_{R_j} |1-\psi|^2\, dx \Big)^{1/2}\\
  \lesssim \sum_{j\geq 1} 2^{-nj\gamma}\Big(\dfrac{t}{\ell(Q)}\Big)^{n\gamma_2}\ell(Q)^{n(1/q-1/2)} |R_j|^{1/2}  \lesssim \sum_{j\geq 1} 2^{-nj(\gamma-1/2)}\Big( \dfrac{t}{\ell(Q)}\Big)^{n\gamma_2} |Q|^{1/q} \lesssim \Big( \dfrac{t}{\ell(Q)}\Big)^{n\gamma_2}|Q|^{1/q} 
\end{multline}
since $\gamma>1/2$. Plugging this into the estimate for $III$ above we see, since $\gamma_2>0$, 
\begin{equation}\nonumber
\begin{split}
III\lesssim \int_0^{\ell(Q)}|Q| \Big(\dfrac{t}{\ell(Q)}\Big)^{2n\gamma_2}\, \dfrac{dt}{t} \lesssim |Q|.
\end{split}
\end{equation}
This is the desired estimate for $III$.

To handle $II$ we first define, for $Q'\in \DD(Q)$, $I(Q'):= \{ (x,t) \in R_Q: x\in Q', \, \ell(Q')/2 < t\leq \ell(Q')\}$, the Whitney region in $\reu$ associated to $Q'$. We see that 
\begin{equation}\nonumber
\begin{split}
II & = \sum_{\substack{Q'\in \DD(Q) \\ Q'\cap Q_j=\emptyset, \, \forall j}} \dint_{I(Q')} |T_t \psi(x)|^2\nu(x)\, \dfrac{dxdt}{t}.
\end{split}
\end{equation}
We now use H\"older's Inequality with $q>2$ so that the $L^{q}$ reverse H\"older inequality for $T_t$ holds, again noting that we have chosen $M$ large enough to guarantee $\nu\in RH_{(q/2)'}$, to conclude that
\begin{equation}\nonumber 
II   \leq \sum_{\substack{Q'\in \DD(Q) \\ Q'\not\subset  Q_j, \, \forall j}}\Big(  \dint_{I(Q')} |T_t \psi|^{q}\, \dfrac{dxdt}{t} \Big)^{\frac2q} \Big( \dint_{I(Q')} \nu^{(\frac{q}2)'}\, \dfrac{dxdt}{t} \Big)^{1/(\frac{q}2)'}\\
  \lesssim  \sum_{\substack{Q'\in \DD(Q) \\ Q'\not\subset  Q_j, \, \forall j}} |Q'|^{1/(\frac{q}2)'}\Big(  \dint_{I(Q')} |T_t \psi|^{q}\, \dfrac{dxdt}{t} \Big)^{\frac2q}, 
\end{equation}
where we used that for $Q'$ satisfying $Q'\not\subset  Q_j $ for all $j$ (i.e. for $Q'$ not contained in any of the $Q_j$) we have, by construction of the $Q_j$, $\fint_{Q'} \nu(x)\, dx \approx 1$. We now use reverse H\"older assumption on $T_t$ to obtain 
\begin{equation}\nonumber
\Big(  \dfint_{I(Q')} |T_t \psi|^{q}\, \dfrac{dxdt}{t} \Big)^{2/q} \lesssim    \dfint_{I([Q']^*)} |S_t \psi|^{2}\, \dfrac{dxdt}{t}.
\end{equation}
Therefore, using this in the estimate for $II$,
\begin{equation}\nonumber 
II   \lesssim  \sum_{\substack{Q'\in \DD(Q) \\ Q'\not\subset  Q_j, \, \forall j}}|Q'|^{1/p'}|Q'|^{1/p} \dfint_{I([Q']^*)} |S_t \psi|^2\, dxdt
  \lesssim \sum_{\substack{Q'\in \DD(Q) \\ Q'\not\subset  Q_j, \, \forall j}} \dint_{I([Q']^*)} |S_t\psi|^2\, \dfrac{dxdt}{t}   \lesssim \dint_{R_{Q^*}} |S_t\psi |^2\, \dfrac{dxdt}{t}. 
\end{equation}
The desired estimate now follows from the fact that $T_t$ satisfies an $L^2(\rn)$ square function estimate and $\| \psi\|_\ltrn \lesssim |Q|^{\frac12}$ by construction. Combining the estimates for $II$ and $III$, \eqref{eq-vertical-last-carleson-measure-estimate} follows and thus, by our previous reductions, we have shown
\begin{equation}\nonumber
\| \s(T_t 1\cdot P_t f)\|_\ltnu \lesssim \| f\|_\ltnu.
\end{equation}

It remains to handle the contribution of $\rtt$. Notice that so far, we have only required that $\nu\in A_{2/r}$ and $\gamma>1/r$. The extra assumptions will be needed in order to handle $\rtt$. Again as in the proof of Lemma \ref{lem-extrapolation-conical} we will appeal to interpolation with change of measure (see Theorem \ref{thm-weighted-lp}). For this it is enough to prove the following pair of estimates:
\begin{equation}\label{eq-1-vertical}
\Big\| \Big(\fint_{|x-y|<t} |\rtt \Q_s^2f(y)|^2\, dy\Big)^{1/2} \Big\|_{\ltrn} \lesssim_{[\nu^M]_{A_1}} \min\Big(\dfrac{t}{s},\dfrac{s}{t}\Big)^\beta\| \Q_sf\|_\ltrn,
\end{equation}
valid for some (and therefore all) Littlewood-Paley family $(\Q_s)_s$ and some $\beta>0$; and 
\begin{equation}\label{eq-2-vertical}
\Big\| \Big(\fint_{|x-y|<t} |\rtt \Q_s^2 f(y)|^2\, dy\Big)^{1/2} \Big\|_\ltnu \lesssim_{[\nu^M]_{A_1}} \| \Q_s f\|_\ltnu.
\end{equation}
We remark that in the first quasi-orthogonality estimate \eqref{eq-1-vertical}, even though the estimate itself is unweighted, $\rtt$ still has a dependence on $\nu$. The uniform $L^2(\nu)$ estimate is handled the same way it was done for the conical; setting $h:=\Q_s^2 f$ we see 
\begin{multline}\nonumber 
\Big(\int_\rn \fint_{|x-y|<t} |\rtt h (y)|^2\, dy \nu(x) dx \Big)^{1/2}  \lesssim \Big(\sum_{Q\in \DDt} \int_Q \int_{|x-y|<t} |\rtt h(y)|^2\, dy\, \nu(x)dx \Big)^{1/2}\\
  \lesssim \Big(\sum_{Q\in \DDt}\fint_{Q^*} \int_Q \dfrac{\nu(y)}{\nuyt}|R_t h(y)|^2 \, dy \, \nu(x)dx\Big)^{1/2}. 
\end{multline}
By H\"older's Inequality with exponent $q/2>1$ and $M>(2/q)'$, we see 
\begin{equation}\nonumber 
\int_Q \dfrac{\nu(y)}{\nuyt}|R_t h(y)|^2\, dy \, \nu(x)dx   \leq \Big(\int_Q \Big| \dfrac{\nu(y)}{\nuyt}\Big|^{(\frac{q}2)'}\, dy \Big)^{1/(\frac{q}2)'}\Big( \int_Q |R_t h(y)|^{q}\, dy\Big)^{\frac2q}\\
  \lesssim_{[\nu^M]_{A_1}} |Q|^{1-\frac2q}\Big( \int_Q |R_t h(y)|^{q}\, dy \Big)^{\frac2q}. 
\end{equation}
Plugging this into the first estimate, we can now proceed as in the conical case (see Lemma \ref{lem-extrapolation-conical}), exploiting the $L^r-L^{q}$ off-diagonal decay in place of the $L^r-L^2$.

For the quasi-orthogonality estimate we proceed as follows: We exploit the off-diagonal decay that $\rtt$ inherits from $R_t$. More explicitly we have, for fixed $t,s>0$ using Fubini's Theorem and duality 
\begin{multline}\nonumber 
\int_\rn \fint_{|x-y|<t} |\rtt \Q_s^2 f(x)|^2\, dx \, dy  = \int_\rn \fint_{|x-y|<t} \vwt |R_t\Q_s^2 f(x)|^2\, dx \, dy \\
 = \int_\rn \vwt |R_t\Q_s^2 f(x)|^2\, dx   = \int_\rn  \vwt R_t\Q_s^2 f(x) \cdot \overline{R_t\Q_s^2f(x)} \, dx \, dy\\
  = \int_\rn R_t^*\Big( \vwt R_t\Q_s^2 f\Big)(x)\cdot \overline{\Q_s^2 f(x)}\, dx 
  \leq \| R_t^*( (\nu(x)/\nuxt R_tQ_s^2 f)\|_\ltrn \| Q_s^2 f\|_\ltrn, 
\end{multline}
where $R_t^*$ is the adjoint of $R_t$, for fixed $t>0$, in $\ltrn$. Since $\| \Q_s^2 f\|_\ltrn\lesssim \| \Q_s f\|_\ltrn$, we have reduced matters to showing 
\begin{equation}\label{eq-vertical-unweighted-quasi-orthogonality}
\int_\rn \Big| R_t^*\Big( \vwt R_t\Q_s^2 f\Big)(x)\Big|^2\, dx \lesssim \min\Big( \dfrac{s}{t}, \dfrac{t}{s}\Big)^\alpha \int_\rn |\Q_s f(x)|^2\, dx,
\end{equation}
for some $\alpha>0$. To save space we denote by $I$ the left-hand-side of this last inequality. Recall that we denote by $\DDt$ the collection of dyadic cubes of scale $2^{-k}$ where $t/2< 2^{-k}\leq t$. We compute, denoting by $Q^*= c_nQ$ for any cube $Q\subset \rn$ where $c_n$ is a dimensional constant,
\begin{multline}\label{eq-vertical-rt-duality-estimate} 
I^{1/2}  = \Big( \sum_{Q\in \DDt} \int_Q \Big| R_t^*\Big( \vwt R_t\Q_s f\Big)\Big|^2\, dx \Big)^{1/2} =\Big(\sum_{Q\in \DDt} \int_Q \fint_{|x-y|<t} \Big| R_t^*\Big( \vwt R_t\Q_s f\Big)\Big|^2\, dy\, dx  \Big)^{1/2}\\
  \lesssim \Big( \sum_{Q\in \DDt} \fint_{Q^*} \int_Q  \Big| R_t^*\Big( \vwt R_t\Q_s f\Big)\Big|^2\, dx\, dy \Big)^{1/2}  \lesssim \sum_{j\geq 0} \Big( \sum_{Q\in \DDt} \fint_{Q^*} \int_Q  \Big| R_t^*\Big(\bbm1_{R_j(Q)}\vwt R_t\Q_s f\Big)\Big|^2\, dx\, dy \Big)^{1/2}, 
\end{multline} 
where we define $R_0(Q):=2Q$ and for $j\geq 1$, $R_j(Q):=2^{j+1}Q\backslash 2^jQ$ and we used the triangle inequality in the last line, together with the $\ltrn$-boundedness of $R_t^*$. We now use the off-diagonal decay for $R_t^*$ to write, 
\begin{multline}\nonumber 
\int_Q \Big| R_t^*\Big(\bbm1_{R_j(Q)}\vwt R_t\Q_s f\Big)\Big|^2\, dx   \leq 2^{-nj\gamma}|Q|^{1-2/r}  \times \Big( \int_{R_j(Q)} \Big| \vwt R_t\Q_s^2 f(x)\Big|^r\, dx \Big)^{2/r}\\
  \approx 2^{-nj(\gamma-\frac2r)}|Q| \Big( \fint_{R_j(Q)} \Big| \vwt R_t\Q_s^2 f(x)\Big|^r\, dx \Big)^{\frac2r}  \leq 2^{-nj(\gamma-\frac2r)} |Q| \Big( \fint_{R_j(Q)} |R_t\Q_s^2 f|^2\Big)  \times\Big( \fint_{R_j(Q)} \Big| \vwt\Big|^{\widetilde{r}}\, dx \Big)^{\frac2{\widetilde{r}}},
\end{multline}
where $\widetilde{r}^{-1}= 1/r-1/2$ by H\"older's inequality. Plugging this estimate into \eqref{eq-vertical-rt-duality-estimate}, we see that 
\begin{equation}\label{eq-vertical-rt}
I^{1/2}\lesssim \sum_{j\geq 0} \Big(\sum_{Q\in \DDt} \fint_{Q^*}C^{2}_j|Q| \Big( \fint_{R_j(Q)} |R_t\Q_s^2 f(x)|^2\,dx \Big)\Big( \fint_{R_j(Q)} \Big| \vwt\Big|^{\widetilde{r}}\, dx \Big)^{2/\widetilde{r}}\, dy\Big)^{1/2},
\end{equation}
where we have defined $C_j:= 2^{-nj(\gamma-2/r)}$. 
Since $M>\widetilde{r}$, so that $\nu\in RH_{\widetilde{r}}$ (see Proposition \ref{prop-properties-ap-weights}). Moreover, using the doubling property of $\nu$ and denoting $C_{doub}$ to be the doubling constant, we have
\begin{equation}\label{eq-vertical-doubling-weight}
\nuxt \geq 2^j C_{doub}^{-j} \nu_{x,2^jt} \approx C_{doub}^{-j}2^j \nu(Q'_j),
\end{equation}
where $Q'$ is any cube with $\ell(Q'_j)\approx 2^jt$ containing $x$. Therefore decomposing $R_j(Q)$ into $N=N(n)$ cubes $Q'$ of sidelength $2^jt$ we compute 
\begin{multline}\nonumber 
\Big( \fint_{R_j(Q)} \Big| \vwt\Big|^{\widetilde{r}}\, dx \Big)^{1/\widetilde{r}}  \lesssim \Big(\sum_{Q'} \fint_{Q'} \Big| \vwt \Big|^{\widetilde{r}}\, dx \Big)^{1/\widetilde{r}}
  \lesssim_{[\nu^M]_{A_2}} \Big( \sum_{Q'} \Big(\fint_{Q'} \vwt \, dx\Big)^{\widetilde{r}} \Big)^{1/\widetilde{r}}\\
  \lesssim C^j_{doub}2^{-j}\Big( \sum_{Q'} \Big( \dfrac{\nu(Q')|Q'|}{\nu(Q')|Q'|}\Big)^{\widetilde{r}}\Big)^{1/\widetilde{r}} 
  \lesssim C^j_{doub}2^{-j}, 
\end{multline}
where we used \eqref{eq-vertical-doubling-weight} in the second to last line. In what follows, we absorb this constant into $C_j$, now writing $\widetilde C_j:= 2^{-nj(\gamma+1/n-2/r-\log_2(C_{doub})/n)}$. Plugging this into the estimate for $I$, appearing in \eqref{eq-vertical-rt}, and using Fubini's Theorem, we have that
\begin{multline}\nonumber 
I^{1/2}  \lesssim \sum_{j\geq 0}\Big( \sum_{Q\in \DDt} \fint_{Q^*} \widetilde C_j^{2}|Q| \fint_{R_j(Q)} |R_t\Q_s^2 f(x)|^2\, dx\, dy \Big)^{\frac12}  \approx \sum_{j\geq 0}\Big( \sum_{Q\in \DDt} \int_{Q^*} \widetilde C_j^{2} \fint_{|x-y|<2^{k+1}t} |R_t\Q_s^2 f(x)|^2\, dx\, dy \Big)^{\frac12}\\
  \approx \sum_{j\geq0} \Big(  \widetilde C_j^{2} \int_\rn \fint_{|x-y|<2^{(k+1)}t} |R_t\Q_s^2 f(x)|^2 \, dx \, dy \Big)^{\frac12}
  \lesssim \Big(\int_\rn |R_t\Q_s^2 f(x)|^2\, dx \Big)^{1/2} \Big( \sum_{j\geq0} \widetilde C_j^{2}\Big)^{\frac12} \\ \lesssim \Big(\int_\rn |R_t\Q_s^2 f(x)|^2\, dx \Big)^{\frac12},  
\end{multline}
where in the last step we used that $C_{doub}\lesssim_n [\nu]_{A_1}\leq C_\delta$.
This gives the desired estimate \eqref{eq-vertical-unweighted-quasi-orthogonality}, since we have good quasi-orthogonality estimates   (see the proof of Theorem  \ref{thm-conical-square-function-grad-st-part1}).   
\end{proof}

\section{Extrapolation of Square Function Estimates}

In this section, we obtain weighted and $L^p$ estimates for operators of the form $t^m\pd_t^m \nb(\sl_t\nb)$, for some $m\in \NN$ large. The main ingredients for these estimates are the $L^r-L^q$ off-diagonal diagonal decay estimates for our operators (see Propositions \ref{prop-vertical-off-diagonal-decay} and \ref{prop-off-diagonal-decay-for-operators-with-B}) for $r<2<q$, used implicitly through the extrapolation results of the previous sections.

At this stage we also mention the work \cite{pa19}, where the vertical and conical square functions for objects associated to the heat and Poisson semigroups of $\L$ (without lower order terms) are considered. We remark that our objects are a bit more technically involved to handle, in part due to the mild off-diagonal decay that they enjoy. Nevertheless, the basic idea of extrapolation and control of the vertical square function by a conical square function is the same.

In order to simplify the statement of our results, we make use of the following definition which encapsulates the assumptions that $\L$ must  satisfy.

\begin{definition}[Hypothesis A]\label{def-hypothesis-a}
We say that the operator $\L $ \emph{satisfies hypothesis A} if the following  hold.
\begin{enumerate}
	\item $\L$ has the form $\L= -\div(A\nabla + B_1)+ B_2\cdot\nabla$, 	for some $B_i\in L^n(\rn;\CC^{n+1})$ and a complex elliptic, $t$-independent, matrix $A$, that is, for some $\lambda>0$, a.e. $x\in \rn$ and every $\xi,\zeta\in \CC^{n+1}$ it holds that
	\begin{equation}\nonumber
	\lambda |\xi|^2\leq \textup{Re} (A(x)\xi \cdot \overline{\xi}), \qquad |A(x)\xi\cdot \overline{\zeta}|\leq\frac1{\lambda} |\xi||\zeta|.
	\end{equation}
	\item With $\widetilde{\rho}_1>0$ as in Theorem \ref{BHLMP1-Main.thrm}, we have 
	\begin{equation}\nonumber
	\max\{ \| B_1\|_\lnrn, \| B_2\|_\lnrn\} <\widetilde{\rho}_1.
	\end{equation}
\end{enumerate}
We will say a quantity depends on ellipticity if it depends only on $\lambda$ and $\widetilde{\rho}_1$.
\end{definition}

Next, we state the main result of this section. 

\begin{theorem}[$L^p$ extrapolation of square function estimates]\label{thm-general-extrapolation-thm}
Suppose that $\L$ satisfies Hypothesis A (see Definition \ref{def-hypothesis-a}), and let $\thtm$ be any  one of the   operators
\begin{equation}\nonumber
	t^m\pd_t^{m-1}\nb(\sl_t\nb), \quad t^m\pd_t^{m-1}\nb(\sl_t B_i), \quad t^m\pd_t^{m-1} B_i(\sl_t\nb), \qquad i=1,2.
\end{equation}
Then there exist $\varepsilon_0>0$, $m_0\in \NN$, and $\rho_0>0$ depending on dimension and ellipticity, such that for every $m\geq m_0$ and $p\in (2-\varepsilon_0, 2+\varepsilon_0)$, we have the estimate 
\begin{equation}\nonumber
\| \s(\thtm f)\|_\lprn  + \| \v(\thtm f)\|_\lprn \lesssim_p \| f\|_\lprn,
\end{equation}
provided that  $\max\{ \| B_1\|_\lnrn, \| B_2\|_\lnrn \} <\rho_0$. 
\end{theorem}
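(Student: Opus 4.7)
The plan is to prove the two square-function estimates separately, applying Lemma \ref{lem-extrapolation-conical} to obtain the conical bound and Lemma \ref{lem-extrapolation-vertical} for the vertical bound. For both, $\Theta_{t,m}$ will play the role of the operator $T_t$ in those lemmas; the proof reduces to verifying the four hypotheses of those lemmas for each of the three families. Throughout I will take $m\geq m_0$ with $m_0$ chosen at the end so that all the inequalities involving the decay exponent $\gamma$ hold strictly.

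First I would collect the prerequisite estimates. The base $L^2$ square-function bound $\|\s(\Theta_{t,m}f)\|_{L^2(\rn)}=\|\v(\Theta_{t,m}f)\|_{L^2(\rn)}\lesssim \|f\|_{L^2(\rn)}$ is obtained from Theorem \ref{BHLMP1-Main.thrm}(i),(iii) together with Caccioppoli on slices (Proposition \ref{prop-caccioppoli-on-slices}) to trade spatial derivatives of $\sl_t$ for $t$-derivatives; when $B_i$ appears as a multiplier or inside the single layer, Proposition \ref{prop-i1b-bounded} and the identity $\nabla = \nabla I_1\,\div$ (the vector Riesz transform) absorb $B_i$ into $L^n$ composition operators, incurring at most an extra constant of size $\rho_0$. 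Next, the $L^r$--$L^q$ off-diagonal decay of $\Theta_{t,m}$ in Definition \ref{def-lr-to-lq-off-diagonal-estimates}, with decay exponent $\gamma = \gamma(m)$ growing linearly in $m$, is supplied by Propositions \ref{prop-vertical-off-diagonal-decay} and \ref{prop-off-diagonal-decay-for-operators-with-B}; for $m$ sufficiently large $\gamma$ exceeds both the threshold $1/r$ needed in Lemma \ref{lem-extrapolation-conical} and the larger threshold $-1/n + 2/r + \log_2 C_\delta/n$ needed in Lemma \ref{lem-extrapolation-vertical}.

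For the conical square function, I still need the quasi-orthogonality estimate for $R_t := \Theta_{t,m}-(\Theta_{t,m}1)\cdot P_t$. When $\Theta_{t,m} = t^m\partial_t^{m-1}\nabla(\sl_t\nabla)$, writing $\mathcal{Q}_s = \divp I_1 \mathcal{Q}_s = (I_1\nabla_\|\,\cdot\,)\mathcal{Q}_s$ reduces this exactly to the bound in Proposition \ref{prop-control-for-J3} (with the Riesz transform absorbed into a CLP family). For the two families containing $B_i$, one performs the analogous reduction after inserting $I_1\,\div$ or $\nabla_\| I_1$ and invoking Proposition \ref{prop-i1b-bounded}; the smallness $\rho_0$ appears here to close the argument. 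Lemma \ref{lem-extrapolation-conical} then delivers $\|\s(\Theta_{t,m}f)\|_{L^p(\rn)}\lesssim_p \|f\|_{L^p(\rn)}$ in the full range $p \in (r,\infty)$, and in particular on an interval $(2-\varepsilon_0, 2+\varepsilon_0)$.

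The vertical estimate is the delicate step, because Lemma \ref{lem-extrapolation-vertical} requires a (weakened) reverse Hölder inequality for $\Theta_{t,m}$, whereas $\Theta_{t,m}f$ is not itself a solution of $\cL$. To invoke Remark \ref{weakeningRH}, I would take $S_t := t^{m-1}\partial_t^{m-1}\nabla(\sl_t\nabla)$ (or its analogue with $B_i$) and pass from $\Theta_{t,m}$ to $S_t$ by invoking Caccioppoli on slices (Proposition \ref{prop-caccioppoli-on-slices}) applied to the solution $\nabla(\sl_t\nabla)f$, combined with the self-improved reverse Hölder exponent in the range $(2_-,2_+)$ of Definition \ref{def-twoplustwominus}. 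This produces the bound $(\dfint_{I(Q)}|\Theta_{t,m}f|^{\bar q})^{1/\bar q} \lesssim (\dfint_{I(Q^*)}|S_t f|^{\bar q})^{1/\bar q}$ for $\bar q$ slightly above $2$, and the $L^2$ square-function bound on $S_t$ is again Theorem \ref{BHLMP1-Main.thrm}. Combined with the $L^2$ square-function bound on $\Theta_{t,m}$, the off-diagonal decay, and the quasi-orthogonality estimate above, Lemma \ref{lem-extrapolation-vertical} yields $\|\v(\Theta_{t,m}f)\|_{L^p(\rn)}\lesssim_p \|f\|_{L^p(\rn)}$ on a symmetric window $(2-\delta/M,\,2+\delta/M)$ around $p=2$.

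The main obstacle is precisely the verification of the reverse Hölder inequality \eqref{revholdassump} (in its weakened form). The operators containing $B_i$ are not of divergence form, so the standard Moser/Caccioppoli estimates do not apply to them directly; one must isolate the $B_i$ factor via an $I_1\,\div$ (respectively $\nabla_\| I_1$) insertion and apply Caccioppoli to the resulting solution of $\cL$. Once this is done, shrinking $\varepsilon_0$ to the intersection of the conical range and the vertical window, and taking $\rho_0$ small enough to close the quasi-orthogonality estimates uniformly in the smallness of $B_1, B_2$, completes the proof.
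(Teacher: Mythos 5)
There is a genuine gap, and it sits exactly at the hardest point of the theorem: the operator $t^m\pd_t^{m-1}\nb(\sl_t\nb)$, with a gradient acting \emph{inside} the single layer. Your plan is to feed this operator directly into Lemmas \ref{lem-extrapolation-conical} and \ref{lem-extrapolation-vertical}, verifying the quasi-orthogonality hypothesis for $R_t$ by writing $\Q_s=\divp I_1\Q_s$ and invoking Proposition \ref{prop-control-for-J3}. This does not work. Proposition \ref{prop-control-for-J3} is an estimate for the specific composition $\thtm B I_1\Q_s^2$ with $B\in L^n(\rn)$; the factor $BI_1$ (with $B$ small in $L^n$) is integral to that bound, and it cannot be replaced by a Riesz transform $\nbp I_1$, which is a norm-one order-zero operator with no smallness and no gain. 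More fundamentally, the standard way to produce the decay $(s/t)^\beta$ for $s<t$ is to peel a factor $s\,\divp$ off $\Q_s^2$ and absorb the extra tangential derivative into the operator; for $T_t=t^m\pd_t^m\nb\sl_t$ this yields the uniformly bounded operator $t^{m+1}\pd_t^m\nb(\sl_t\nbp)$, but for $T_t=t^m\pd_t^m(\sl_t\nbp)$ it would produce \emph{two} interior tangential derivatives on $\sl_t$, whose uniform $L^2$ boundedness is not available (it is essentially the object one is trying to control). This is precisely the obstruction the paper flags before Section \ref{subsecdual}, and it is why the paper does \emph{not} apply the extrapolation lemmas directly to $(\sl_t\nb)$: instead, Theorem \ref{thm-weighted-square-function-bounds-double-layer} runs a weighted Hodge decomposition of the datum ($f=\ap\nbp F+H$, using Theorem \ref{thm-weighted-hodge-decomposition}), a Coifman--Meyer splitting of $\thtmp\ap\nbp F$, a weighted Carleson-measure estimate for $\thtmp\ap$ via Lemma \ref{lem-prelim-john-nirenberg-local-square-functions}, and then \emph{uses the equation} (the identities on slices) to expand $\thtmp\ap(I-P_t)$ into the terms $J_1,\dots,J_5$, hiding the $J_{4,1}$ term by the smallness of $\|B_{2\|}\|_\lnrn$. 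None of this machinery is replaced by anything in your proposal, and its absence is also visible in your claimed range: the full interval $(r,\infty)$ for the conical bound is what Lemma \ref{lem-extrapolation-conical} would give \emph{if} it applied, whereas the inside-gradient operator only admits a window around $p=2$ (the weighted Hodge estimates restrict the admissible weights), which is why the theorem is stated with a small $\varepsilon_0$ and a smallness parameter $\rho_0$.

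Two smaller points. First, in the vertical estimate your choice $S_t=t^{m-1}\pd_t^{m-1}\nb(\sl_t\nb)$ is not admissible for the (weakened) reverse H\"older hypothesis of Remark \ref{weakeningRH}: Proposition \ref{prop-prelim-inhomog-reverse-holder} applies to \emph{solutions}, so $S_t$ must be chosen without the outer gradient (the paper takes $S_t=t^{m-1}\pd_t^m\sl_t$, and $\nb(\sl_t\nb)f$ is the gradient of a solution, not a solution). Second, even granting the vertical extrapolation lemma, the paper's actual route for the vertical bound of the inside-gradient operator (Theorem \ref{thm-weithed-vertical-bounds-gradient-st-gradient}) is to dominate the vertical square function by a conical one via Caccioppoli and the reverse H\"older inequality for solutions, so it rests on the conical estimate of Theorem \ref{thm-conical-square-function-bounds-gradient-st-gradient} --- i.e., again on the Hodge-decomposition argument you have omitted. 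Your treatment of the outside-gradient families ($t^m\pd_t^m\nb\sl_t$, and the variants with $B_i$ handled through $I_1\divp$ insertions and Proposition \ref{prop-i1b-bounded}) is essentially the paper's, but without the Section \ref{subsecdual} argument the proof of the theorem as stated is incomplete.
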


Let us give a quick roadmap to the location of the proofs of the various estimates summarized in the previous theorem. 

\begin{itemize}
\item The conical square function estimate  for $t^m\pd_t^{m-1}\nb (\sl_t\nb)$ is obtained in Theorem \ref{thm-conical-square-function-bounds-gradient-st-gradient}, while the vertical square function estimate is given  in Theorem \ref{thm-weithed-vertical-bounds-gradient-st-gradient}.
\item The estimates for $t^m\pd_t^{m-1}\nb(\sl_t B)$ are contained in Corollary \ref{cor-square-function-estimates-b-st-grad-and-dual}. 
\item The results for $t^m\pd_t^{m-1} B(\sl_t\nb)$ are obtained in Lemma \ref{lem-square-functions-with-b-outside}. There the results are obtained for the operator with the gradient replaced by a $t$ derivative. A careful inspection of the proof though shows that, as long as we have good estimates for the operator $t^m\pd_t^{m-1}\nb(\sl_t\nb)$, the same argument applies.
\item Estimates for the double layer potential are obtained in Theorem \ref{thm-sqaure-fctn-bounds-double-layer}.
\end{itemize}

\subsection{Estimates for $\nb\sl_t$}

In this subsection we prove the relevant estimates for operators of the form $t^m\pd_t^m \nb\sl_t$. These will follow immediately from the extrapolation results from the previous section; together with the off-diagonal estimates obtained in Propositions \ref{prop-vertical-off-diagonal-decay} and \ref{prop-off-diagonal-decay-for-operators-with-B}.

\begin{remark}
We would like to be able to apply Lemmas \ref{lem-extrapolation-conical} and \ref{lem-extrapolation-vertical} to $\thtm= t^m\pd_t^m (\sl_t \nbp)$ to handle the double layer potential; it is not a simple matter however to obtain the necessary quasi-orthogonality condition in those (one reason is that in the regime $s<t$ we need to ``add'' derivatives to $\thtm$, while taking them away from $\Q_s$; however adding derivatives to $\thtm$ is tricky since we already have a $\nbp$ in front. We will have to use the equation to circumvent this issue). We will treat this operator separately, in Section \ref{subsecdual}.
\end{remark}

\begin{theorem}\label{thm-conical-square-function-grad-st-part1}
Suppose that $\L$ satisfies Hypothesis A (see Definition \ref{def-hypothesis-a}). Let $\thtm= t^m\partial_t^m \nabla \sl_t$, then there exist  $\varepsilon_1>0$ and $m_1\in \NN$, depending on dimension and ellipticity, such that if $m\geq m_1$ and $2-\varepsilon_1<p<\infty$ then $\|\s(\thtm f)\|_\lprn \lesssim_{p,m} \| f\|_\lprn$.
\end{theorem}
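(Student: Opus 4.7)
The plan is to invoke Lemma \ref{lem-extrapolation-conical} with $T_t := \Theta_{t,m} = t^m \partial_t^m \nabla \mathcal{S}_t^{\mathcal{L}}$. This reduces the theorem to verifying three ingredients: (i) an unweighted $L^2$ conical square function bound for $\Theta_{t,m}$, (ii) $L^r$--$L^2$ off-diagonal estimates with sufficiently large decay exponent $\gamma$, and (iii) a quasi-orthogonality estimate for the "renormalized" operator $R_t := \Theta_{t,m} - (\Theta_{t,m} 1) \cdot P_t$, where $P_t = \widetilde P_t \widetilde P_t$ is a smooth approximate identity factored through another approximate identity.

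For (i), the $L^2$ conical square function bound follows from Theorem \ref{BHLMP1-Main.thrm}(i) together with Proposition \ref{prop-prelim-comparability-square-functions}, which guarantees the equivalence of vertical and conical square functions in the $L^2$ norm. For (ii), Proposition \ref{prop-vertical-off-diagonal-decay} provides $L^r$--$L^2$ off-diagonal decay for $\Theta_{t,m}$ with an exponent of the form $\gamma = \gamma_1 + m$, where $\gamma_1$ depends only on $n$ and $r$. Thus for any $r \in (2_-,2)$ we can choose $m_1 = m_1(n,r)$ so that $\gamma > 1/r$ whenever $m \geq m_1$, which is exactly the requirement of Lemma \ref{lem-extrapolation-conical}.

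The main technical content is the quasi-orthogonality step (iii): for a CLP family $(\widetilde Q_s)$ with compactly supported kernel, we need
\[
\|R_t \widetilde Q_s^2 f\|_{L^2(\rn)} \lesssim \bigl(s/t\bigr)^{\beta} \|\widetilde Q_s f\|_{L^2(\rn)}, \qquad 0 < s \leq t,
\]
for some $\beta > 0$. The starting observation is that $R_t 1 \equiv 0$ by construction, so one has the representation $R_t g(x) = \Theta_{t,m}\bigl(g - (P_t g)(x)\bigr)(x)$ at each $x$, which already allows us to exploit oscillation of $g = \widetilde Q_s^2 f$ around its mollification at scale $t$. We then extract the smallness factor $(s/t)^{\beta}$ from the frequency localization of $\widetilde Q_s$, using the Riesz-transform factorization $I = I_1 \nabla_{\|} \cdot R$ (as in Proposition \ref{prop-i1b-bounded}) to write $\widetilde Q_s^2 f$ as $s \nabla_{\|}$ of a bounded operator applied to $\widetilde Q_s f$, and moving the $\nabla_{\|}$ onto $\Theta_{t,m}$ by duality (integration by parts). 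This produces an extra factor of $s/t$ from scaling at the cost of one additional transversal derivative on $\Theta_{t,m}$, which is absorbed into the large-$m$ off-diagonal decay via Proposition \ref{prop-vertical-off-diagonal-decay}. This is a (slightly simpler) variant of the argument encapsulated in Proposition \ref{prop-control-for-J3}.

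The main obstacle is precisely this quasi-orthogonality, because the operator $\Theta_{t,m}$ has only weak off-diagonal decay in our setting—inherited from $L^r$-Caccioppoli estimates rather than pointwise kernel bounds, due to the failure of De Giorgi–Nash–Moser regularity in the presence of complex and lower-order coefficients. Once (i)--(iii) are in hand, Lemma \ref{lem-extrapolation-conical} yields $\|\mathbb{S}(\Theta_{t,m} f)\|_{L^p(\rn)} \lesssim_p \|f\|_{L^p(\rn)}$ for every $p \in (r,\infty)$, and taking $r$ arbitrarily close to $2_-$ produces the constant $\varepsilon_1 > 0$ in the theorem. The specific range $p > 2 - \varepsilon_1$ is dictated by the threshold $r > 2_-$ beyond which the $L^r$-Caccioppoli and off-diagonal estimates of Proposition \ref{prop-vertical-off-diagonal-decay} hold.
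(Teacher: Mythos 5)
Your proposal is correct and follows essentially the same route as the paper: reduce to Lemma \ref{lem-extrapolation-conical} via the off-diagonal estimates of Proposition \ref{prop-vertical-off-diagonal-decay} (taking $m$ large), the $L^2$ square function bound from Theorem \ref{BHLMP1-Main.thrm}, and a quasi-orthogonality estimate obtained by writing $\Q_s^2 f$ as $s$ times a horizontal divergence and absorbing that divergence into the single layer, exactly as the paper does by choosing $\Q_s = s\divp \Qt_s$ so that $\thtm \Q_s = \tfrac{s}{t}\, t^{m+1}\pd_t^m \nabla(\sl_t\nbp)\Qt_s$. Two minor points: the extra derivative you gain is a horizontal gradient inside the single layer (not a transversal one), and in the quasi-orthogonality for $R_t$ you should also record the companion term $[\thtm 1]\cdot P_t\Q_s^2 f$, which is handled by taking $P_t=\Pt_t\Pt_t$, using the uniform $L^2$-boundedness of $|\thtm 1|\Pt_t$ and the decay of $\Pt_t\Q_s$ for $s<t$.
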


\begin{proof}
This follows immediately from Lemma \ref{lem-extrapolation-conical}. The off-diagonal decay is contained in Proposition \ref{prop-vertical-off-diagonal-decay}, while the quasi-orthogonality estimate \eqref{eq-1-quasi-orthogonality-assumption} for $R_t$ is obtained in the proof of the $L^2$ square function bound for $\thtm$ (see \cite[Theorem 5.1]{bhlmp}); we sketch it here for completeness. Fix $0<s<t$ and we choose $\Q_s = s\div_{\|} \Qt_s$, so that 
\begin{equation}\nonumber
\thtm \Q_s h(x)= t^m\pd_t^m \nabla \sl_t(s\divp \Qt_s h)(x) = \dfrac{s}{t}t^{m+1}\pd_t^m \nabla(\sl_t \nbp)(\Qt_s h), 
\end{equation}  
and we appeal to \cite[Lemma 6.2]{bhlmp}, which shows that the operator $t^{m+1}\pd_t^m\nabla (\sl_t\nbp)$ is uniformly bounded in $\ltrn$, moreover so is $\Qt_s$. This takes care of the contribution of $\thtm$ to $R_t$. To handle the other term we further choose $P_t= \Pt_t\Pt_t$ for an approximate identity $\Pt_t$ and note that $|\thtm 1(x)|\Pt_t$ is uniformly bounded in $\ltrn$ while $\Pt_t\Q_s$ satisfies good quasi-orthogonality estimates when $s<t$. Finally, the $L^2$ square function bound is obtained in \cite[Theorem 5.1, Lemma 5.2]{bhlmp}.
\end{proof}

We now turn to the appropriate vertical square function bounds.

\begin{theorem}[$L^p$ Bounds for Vertical Square Function]
Suppose that $\L$ satisfies Hypothesis A (see Definition \ref{def-hypothesis-a}). Let $\thtm:=t^m\pd_t^m \nabla \sl_t$. There exists $\varepsilon_2>0$ and $m_2\in \NN$, depending on dimension and ellipticity, such that if $p\in (2-\varepsilon_2, 2+\varepsilon_2)$ and $m\geq m_2$ then  $\| \v(\thtm f )\|_\lprn \lesssim \| f\|_\lprn$. 
\end{theorem}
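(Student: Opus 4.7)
The plan is to deduce the theorem directly from Lemma \ref{lem-extrapolation-vertical}, applied with $T_t:=\thtm = t^m\pd_t^m\nabla\sl_t$, using the weakened reverse H\"older hypothesis in Remark \ref{weakeningRH} together with the auxiliary operator $S_t := t^{m-1}\pd_t^m\sl_t$. Note the key identity $T_t = t\nabla S_t$, and the fact that for each $k\ge 0$ the function $u := \pd_t^{k}\sl_t f$ is a weak solution of $\cL u=0$ in $\reu$, because the coefficients of $\cL$ are $t$-independent.

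First I would verify the analytic hypotheses of Lemma \ref{lem-extrapolation-vertical} one at a time. The $L^r$--$L^q$ off-diagonal estimates for $T_t$ with arbitrarily large $\gamma$ (for $m$ large) are precisely the content of Proposition \ref{prop-vertical-off-diagonal-decay}, so for any fixed $\delta\in(0,1)$ and $M$ we can take $m_2$ large enough that $\gamma > -1/n + 2/r + \log_2(C_\delta)/n$. The $L^2$ square function bound $\|\v(T_t f)\|_{L^2(\rn)}\lesssim\|f\|_{L^2(\rn)}$ is Theorem \ref{BHLMP1-Main.thrm}\,\ref{item.est1}. The quasi-orthogonality estimate for $R_t = T_t - T_t1\cdot P_t$ (with $P_t$ a smooth compactly supported approximate identity factoring as $P_t=\widetilde P_t\widetilde P_t$) is obtained by the same argument used in the proof of Theorem \ref{thm-conical-square-function-grad-st-part1}: factor $\Q_s = s\divp\widetilde\Q_s$ to shift one derivative onto $T_t$, producing a gain of $s/t$, and use that $t^{m+1}\pd_t^m\nabla(\sl_t\nbp)$ is uniformly bounded on $L^2(\rn)$ (see \cite[Lemma 6.2]{bhlmp}).

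Second, and this is where the variant from Remark \ref{weakeningRH} is used, I would verify the reverse H\"older-type domination of $T_t$ by $S_t$ on Whitney slabs. Since $v:=\pd_t^m\sl_t f$ solves $\cL v=0$, Caccioppoli on slices (Proposition \ref{prop-caccioppoli-on-slices}) yields for every $\bar q\in(2_-,2_+)$ and every cube $Q\subset\rn$ with $t\approx\ell(Q)$,
\begin{equation*}
\Big(\dfint_{I(Q)}|T_tf|^{\bar q}\,dxdt\Big)^{1/\bar q}
= \Big(\dfint_{I(Q)}|t^m\nabla v|^{\bar q}\,dxdt\Big)^{1/\bar q}
\lesssim \frac{t^m}{\ell(Q)}\Big(\dfint_{I(Q^*)}|v|^{\bar q}\,dxdt\Big)^{1/\bar q}
\approx \Big(\dfint_{I(Q^*)}|S_tf|^{\bar q}\,dxdt\Big)^{1/\bar q}.
\end{equation*}
It remains to check the two properties required of $S_t$ itself. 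Its $L^2$ square function bound, $\|\s(S_tf)\|_{L^2(\rn)}\lesssim\|f\|_{L^2(\rn)}$, follows from $\iint|S_tf|^2\,t^{-1}dxdt = \iint|t^{m-1}\pd_t^m\sl_tf|^2\,t^{-1}dxdt$ and a Caccioppoli-in-$t$ comparison with $\iint|t^m\pd_t^m\nabla\sl_tf|^2\,t^{-1}dxdt$, which is controlled by Theorem \ref{BHLMP1-Main.thrm}\,\ref{item.est1}. The reverse H\"older inequality for $S_t$ follows from Proposition \ref{prop-prelim-inhomog-reverse-holder} and Caccioppoli applied once more to the solution $\pd_t^{m-1}\sl_tf$.

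With all hypotheses checked, Lemma \ref{lem-extrapolation-vertical} yields $\|\v(T_tf)\|_{L^2(\nu)}\lesssim\|f\|_{L^2(\nu)}$ for all $\nu\in RH_M\cap A_1$ with $[\nu]_{A_1}\le C_\delta$, for $M>\max(2r/(r-2),(q/2)')$ fixed, and hence by the unweighted part of the lemma, $\|\v(T_tf)\|_{L^p(\rn)}\lesssim\|f\|_{L^p(\rn)}$ for every $p\in(2-\delta/M, 2+\delta/M)$; taking $\varepsilon_2:=\delta/M$ and $m_2$ as above completes the proof. The main obstacle is the verification of the reverse H\"older step, which forces the introduction of the auxiliary operator $S_t$ and relies crucially on the fact that one transversal derivative can be traded for a tangential gradient at the cost of a factor $1/t$, via Caccioppoli applied to the solution $\pd_t^m\sl_tf$.
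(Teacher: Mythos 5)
Your proposal is correct and follows essentially the same route as the paper: both apply Lemma \ref{lem-extrapolation-vertical} through the weakened hypothesis of Remark \ref{weakeningRH} with the auxiliary solution-valued operator $S_t=t^{m-1}\pd_t^m\sl_t$, using the off-diagonal decay of Proposition \ref{prop-vertical-off-diagonal-decay} (with $m$ large), the Caccioppoli-on-slices comparability of $T_t$ and $S_t$, the reverse H\"older inequality of Proposition \ref{prop-prelim-inhomog-reverse-holder}, and the $L^2$ square function bounds from \cite{bhlmp}. One small remark: the $L^2$ square function bound for $S_t$ needs no ``Caccioppoli-in-$t$'' step, since $\pd_t\sl_t$ is a component of $\nb\sl_t$ and the bound is immediate from Theorem \ref{BHLMP1-Main.thrm}\,\ref{item.est1} at order $m-1$.
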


\begin{proof}
We use Remark \ref{weakeningRH}, with  $T_t =t^m\pd_t^m \nb \sl_t$ and  $S_t=t^{m-1}\pd_t^m\sl_t$.
Then the square function bound for $T_t$ follow from \cite[Theorem 5.1, Lemma 5.2]{bhlmp}, while the square function bound $S_t$ follows from \cite[Theorem 5.1]{bhlmp}. The comparability of $T$ and $S$, as in Remark \ref{weakeningRH}, follows from Caccioppoli's inequality (Proposition \ref{prop-caccioppoli-inequality-lp}), and the Reverse H\"older inequality for $S_t$ is contained in Proposition \ref{prop-prelim-inhomog-reverse-holder}, recalling that $S_tf(x)$ is a solution of $\L u=0$ in $\reu$ (see for instance \cite[Proposition 3.16]{bhlmp}). The necessary off-diagonal decay for both $S$ and $T$ is in Proposition \ref{prop-vertical-off-diagonal-decay}, choosing $m$ large enough. The conclusion now follows from Lemma \ref{lem-extrapolation-vertical}. 	
\end{proof}

While the extrapolation result in Lemma \ref{lem-extrapolation-vertical} is interesting on its own, it turns out that in our context, exploiting Caccioppoli's inequality, it is easy to get a much stronger bound (in fact the moral of the proof seems to be that, if $T_t$ enjoys a reverse H\"older inequality on slices, then we can always control the vertical square function by the conical in an interval {\it around} $p=2$). We state this in the following

\begin{theorem}[Weighted Bounds for Vertical Square Function]\label{thm-weighted-bounds-vertical-square-function-nb-st}
Suppose that $\L$ satisfies Hypothesis A (see Definition \ref{def-hypothesis-a}). Let $\thtm:=t^m\pd_t^m\nb \sl_t$. There exist $m_2'\in \NN$ and $M_2\geq 1$, depending on dimension and ellipticity, such that for every $m\geq m_2'$, $M \ge M_2$ and every $\nu\in A_2$ with the property $\nu^{M}\in A_2$ it holds
\begin{equation}\nonumber
\| \v(\thtm)\|_{\ltnu} \approx \|\s(\tithm f)\|_\ltnu \lesssim_{[\nu^M]_{A_2}} \| f\|_\ltnu,
\end{equation}
where we define 
\begin{equation}\nonumber
\tithm f(x):= \sqrt{\dfrac{\nu(x)}{\fint_{|x-y|<t}\nu(y)dy}} \thtm f(x)= \sqrt{\dfrac{\nu(x)}{\nuxt}}\thtm f(x).
\end{equation}
\end{theorem}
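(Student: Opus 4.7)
The equivalence $\|\v(\thtm f)\|_{\ltnu} \approx \|\s(\tithm f)\|_{\ltnu}$ is a direct Fubini computation identical to the one opening the proof of Lemma \ref{lem-extrapolation-vertical}: inserting $1 = \fint_{|x-y|<t}\nu(y)/\nu_{x,t}\,dy$ inside the vertical integral and swapping the roles of $x$ and $y$ converts the left side into the right. Thus it remains to show $\|\v(\thtm f)\|_{\ltnu} \lesssim_{[\nu^M]_{A_2}} \|f\|_{\ltnu}$, which I would attack directly using Caccioppoli rather than going through the Coifman–Meyer splitting as in Lemma \ref{lem-extrapolation-vertical}.

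Writing $T_t := \thtm$, I would cover the upper half-space by Whitney regions $W_Q := Q \times (\ell(Q)/2,\ell(Q))$, so that
\[
\|\v(T_tf)\|_{\ltnu}^2 = \sum_{Q\in\DD} \iint_{W_Q} |T_tf(y)|^2\,\nu(y)\,\frac{dy\,dt}{t}.
\]
On each $W_Q$ I would apply Hölder with a dual pair $(q',q)$ to separate $\nu^q$ from $|T_tf|^{2q'}$. On the first factor, Caccioppoli on slices (Proposition \ref{prop-caccioppoli-on-slices}) applied to the solution $\pd_t^{m-1}\sl_t f$, combined with the higher integrability of solutions (Proposition \ref{prop-prelim-inhomog-reverse-holder}), yields the mixed reverse H\"older inequality
\[
\Big(\fint_{W_Q} |T_tf|^{2q'}\,\tfrac{dy\,dt}{t}\Big)^{1/(2q')} \lesssim \Big(\fint_{W_Q^*} |S_tf|^{2}\,\tfrac{dy\,dt}{t}\Big)^{1/2},\qquad S_t := t^{m-1}\pd_t^m\sl_t.
\]
On the $\nu^q$ factor, the hypothesis $\nu^M\in A_2$ implies $\nu \in RH_M$ by Proposition \ref{prop-properties-ap-weights} (vii), so $(\fint_Q \nu^q)^{1/q}\lesssim \fint_Q\nu$ for any $q\leq M$.

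Combining these two reverse H\"older inequalities on each $W_Q$ and summing in $Q$, the doubling of $\nu$ amalgamates the averages $\fint_Q\nu$ into the ``smoothed'' weight $\widetilde\nu(y,t):=\fint_{B(y,t)}\nu$, producing
\[
\|\v(T_tf)\|_{\ltnu}^2 \lesssim \iint \widetilde\nu(y,t)\,|S_tf(y)|^2\,\frac{dy\,dt}{t} \approx \|\s(S_tf)\|_{\ltnu}^2,
\]
where the final step reverses the Fubini identity used to prove the equivalence. The matter then reduces to the conical $\ltnu$-bound for $S_t$, which I would obtain by applying Lemma \ref{lem-extrapolation-conical} with $S_t$ in place of $T_t$ and $R_t^S := S_t - S_t 1 \cdot P_t$. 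The required inputs — an $L^2$ square function bound for $S_t$, $L^r$-$L^2$ off-diagonal estimates with $\gamma > 1/r$ (which are in fact better for $S_t$ than for $T_t$, by an extra power of $t$ gained from the missing $\nabla$), and the quasi-orthogonality of $R_t^S$ — are obtained by the same arguments used for $T_t$ in \cite{bhlmp} and in the proof of Theorem \ref{thm-conical-square-function-grad-st-part1}.

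The principal obstacle is the parameter matching: the reverse H\"older step asks for $q \leq M$, while Lemma \ref{lem-extrapolation-conical} delivers the conical bound on $L^2(\nu)$ only for $\nu \in A_{2/r}$, so the ambient hypothesis $\nu\in A_2$ with $\nu^M\in A_2$ must simultaneously imply both. Choosing $m$ large allows $r$ to be taken close to $1$ in Proposition \ref{prop-vertical-off-diagonal-decay}, enlarging $A_{2/r}$ toward $A_2$; choosing $M$ sufficiently large and using that $\nu^M \in A_2$ quantitatively pushes $\nu$ toward $A_1$ via self-improvement (Proposition \ref{prop-properties-ap-weights} (vi)) bridges the remaining gap. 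Verifying this compatibility, with all constants tracked in terms of $[\nu^M]_{A_2}$, is the delicate bookkeeping at the core of the argument.
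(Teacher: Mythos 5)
Your argument is, in substance, the paper's own proof: the Fubini identity converting $\|\v(\thtm f)\|_{\ltnu}$ into $\|\s(\tithm f)\|_{\ltnu}$, H\"older to split off the weight factor, the Caccioppoli inequality on slices combined with the reverse H\"older inequality for the solution $S_t=t^{m-1}\pd_t^m\sl_t$ to bring the exponent back down to $2$, and finally the weighted conical bound for $S_t$, which the paper obtains exactly as you propose, since $|S_tf|\le |t^{m-1}\pd_t^{m-1}\nb\sl_t f|$ and Theorem \ref{thm-conical-square-function-grad-st-part1} (i.e.\ Lemma \ref{lem-extrapolation-conical}) applies; your Whitney-box bookkeeping and the paper's pointwise-in-$(x,t)$ bookkeeping are interchangeable.

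The one incorrect assertion is in your closing paragraph: choosing $m$ large does \emph{not} allow $r$ to be taken close to $1$ in Proposition \ref{prop-vertical-off-diagonal-decay}. The admissible exponents there are confined to $(2_-,2_+)\subset(2n/(n+1),2n/(n-1))$, a window around $2$ dictated by the $L^p$ Caccioppoli/interior theory (Definition \ref{def-twoplustwominus}) and independent of $m$; enlarging $m$ improves only the decay exponent $\gamma\sim m/n$ (needed for $\gamma>1/r$ and for summing the tails), not the range of $r$, so $A_{2/r}$ cannot be pushed toward $A_2$ from that side. This does not sink your proof, because the other half of your remedy already suffices: $\nu^M\in A_2$ is equivalent, by Proposition \ref{prop-properties-ap-weights} (vii), to $\nu\in A_{1+1/M}\cap RH_M$, so choosing $M_2\ge\max\{q,\ r/(2-r)\}$ --- legitimate, since $M_2$ may depend on dimension and ellipticity and $r\in(2_-,2)$ is fixed --- places $\nu$ simultaneously in $RH_q$ (for your H\"older step) and in $A_{2/r}$ (for Lemma \ref{lem-extrapolation-conical}). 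In other words the parameter matching is resolved by taking $M$ large alone; there is no tension to bridge from the $r$ side. A minor side remark: the off-diagonal decay for $S_t$ corresponds to $m-1$ rather than $m$, so it is if anything slightly weaker than for $T_t$, not better by an extra power of $t$; this is immaterial since $m$ is taken large.
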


\begin{proof}
We note, from the beginning of the proof of Lemma \ref{lem-extrapolation-vertical}, that the comparability $\| \v(\thtm f)(x)\|_\ltnu \approx \|\s(\tithm f)\|_\ltnu$ holds for any weight $\nu$. Therefore it remains to estimate the conical square function associated to $\tithm$. First, we write
\begin{equation}\nonumber
\begin{split}
\| \s(\tithm f)\|_\ltnu^2  = \int_\rn \int_0^\infty \fint_{|x-y|<t} \dfrac{\nu(y)}{\nuyt} |\thtm f(y)|^2\, dy \dfrac{dt}{t} \, \nu(x)dx,
\end{split}
\end{equation}
and then, by the H\"older and Caccioppoli inequalities, 
\begin{multline}\nonumber 
\fint_{|x-y|<t} \dfrac{\nu(y)}{\nuyt}|\thtm f(y)|^2\, dy  \leq \Big(\fint_{|x-y|<t} \Big| \dfrac{\nu(y)}{\nuyt}\Big|^{q'}\, dy \Big)^{1/q'} \Big(\fint_{|x-y|<t} |\thtm f(y)|^{2q}\, dy \Big)^{1/q}\\
  \lesssim_{[\nu^M]_{A_2}} \Big(\fint_{|x-y|<t} |\thtm f(y)|^{2q}\, dy\Big)^{1/q} 
  \lesssim \Big(\fint_{t/2}^{3t/2}\fint_{|x-y|<2t} |\thsmt f(y)|^{2q}\, dyds\Big)^{1/q} 
\end{multline}
where we have defined $\thsmt:= t^{m-1}\pd_t^m \sl_t$, and chosen $q \in (1,2)$ such that our operators satisfy a $2q$ Caccioppoli Inequality on slices (see Proposition \ref{prop-caccioppoli-on-slices}) and then chosen $M>q'$. Now since $\thsmt$ satisfies a reverse H\"older Inequality (see Proposition \ref{prop-prelim-inhomog-reverse-holder}) we see that 
\begin{equation}\nonumber 
\Big(\fint_{\frac t2}^{\frac{3t}2}\fint_{|x-y|<2t} |\thsmt f(y)|^{2q}\, dyds\Big)^{\frac1q}   \lesssim \fint_{\frac t4}^{\frac{7t}4} \fint_{|x-y|<3t} |\thsmt f(y)|^2\, dyds \\
  \lesssim \fint_{t/4}^{7t/4} \fint_{|x-y|<4s} |\thsmt f(y)|^2\, dyds. 
\end{equation}
The desired result follows now from Fubini's theorem and the fact that conical square functions with different cone apertures are comparable (see the comments after Definition \ref{def-square-functions}).
\end{proof}

In what follows we   need square function estimates for the operators $t^m\pd_t^m B\sl_t$, where $B\in \lnrn$ is independent of the transversal variable. The $L^2$ case follows from the bounds for $t^m\pd_t^m \nb \sl_t$ and Sobolev's inequality, the case $p\neq 2$ requires a more involved argument both for vertical and conical square functions.

\begin{lemma}\label{lem-square-functions-with-b-outside}
Suppose that $\L$ satisfies Hypothesis A (see Definition \ref{def-hypothesis-a}). For a function $B\in \lnrn$, independent of the $t$ variable and $m\in \NN$ consider the operators $\thtmb f(x):=t^m\pd_t^m B\sl_t f(x)$, $\thtm f(x):= t^m\pd_t^m \nbp \sl_t f(x)$. For every $1<p<n$ and $f\in C_c^\infty(\rn)$ it holds 
\begin{equation}\nonumber
\| \v(\thtmb f)\|_\lprn \lesssim \| \v(\thtm f)\|_\lprn.
\end{equation}
Moreover, if  $\lthtm f:=t^m\pd_t^{m+1} \sl_tf$, then for any $1<p<\infty$,
\begin{equation}\nonumber
\| \s(\thtmb f)\|_\lprn \lesssim \| \s(\thtm f)\|_\lprn + \| \s(\thtmt f)\|_\lprn \lesssim \|\s(\thtmt f)\|_\lprn.
\end{equation}
\end{lemma}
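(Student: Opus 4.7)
First, since $B = B(x)$ is independent of $t$, multiplication by $B$ commutes with $\partial_t^m$, so I would write $\thtmb f(y,t) = B(y) G(y,t)$ with $G(y,t) := t^m \partial_t^m \sl_t f(y)$, noting that $\nabla_\| G = \thtm f$ and $G/t = \thtmt f$ on $\reu$.

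For the vertical estimate, the $t$-independence of $B$ yields the pointwise identity $\v(\thtmb f)(x) = |B(x)|\, h(x)$ with $h(x) := \v(G)(x)$. I plan to obtain the pointwise bound $|\nabla_\| h(x)| \leq \v(\nabla_\| G)(x) = \v(\thtm f)(x)$ by differentiating under the integral sign in the definition of $h$ and applying Cauchy-Schwarz in the $\tfrac{dt}{t}$-measure. Then, for $1 < p < n$, H\"older's inequality (with exponents $n$ and $p^* = np/(n-p)$) combined with the Sobolev embedding $\|h\|_{L^{p^*}(\rn)} \lesssim \|\nabla_\| h\|_\lprn$ closes the estimate:
\[
\|\v(\thtmb f)\|_\lprn = \|B h\|_\lprn \leq \|B\|_\lnrn \|h\|_{L^{p^*}(\rn)} \lesssim \|B\|_\lnrn \|\v(\thtm f)\|_\lprn.
\]

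For the conical estimate I would prove both inequalities \emph{pointwise} in $x$. For the first inequality, starting from the representation $\s(\thtmb f)(x)^2 \approx \int_0^\infty \fint_{B(x,t)} |B(y)|^2 |G(y,t)|^2\,dy\,\tfrac{dt}{t}$, I would split $G = (G - c_{x,t}) + c_{x,t}$ with $c_{x,t} := \fint_{B(x,t)} G(z,t)\,dz$. H\"older on the ball with exponents $(\tfrac{n}{2}, \tfrac{n}{n-2})$, followed by the Sobolev--Poincar\'e inequality on $B(x,t)$, gives
\[
\fint_{B(x,t)} |B|^2 |G - c_{x,t}|^2\,dy \leq \Bigl(\fint_{B(x,t)} |B|^n\Bigr)^{2/n} \Bigl(\fint_{B(x,t)} |G - c_{x,t}|^{2^*}\Bigr)^{2/2^*} \lesssim \Bigl(\fint_{B(x,t)} |B|^n\Bigr)^{2/n} t^2 \fint_{B(x,t)} |\nabla_\| G|^2,
\]
with $2^* = \tfrac{2n}{n-2}$. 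For the constant piece I would use Jensen to bound $|c_{x,t}|^2 \leq \fint_{B(x,t)} |G|^2$ and $\fint |B|^2 \leq (\fint |B|^n)^{2/n}$. The key observation is the scaling cancellation $\bigl(\fint_{B(x,t)} |B|^n\bigr)^{2/n} \leq c_n\, t^{-2} \|B\|_\lnrn^2$, which cancels the $t^2$ from Sobolev--Poincar\'e in the oscillation term, and converts $\fint|G|^2 = t^2 \fint |G/t|^2 = t^2 \fint |\thtmt f|^2$ in the constant term. Integrating against $\tfrac{dt}{t}$ then yields the pointwise bound $\s(\thtmb f)(x)^2 \lesssim \|B\|_\lnrn^2 \bigl(\s(\thtm f)(x)^2 + \s(\thtmt f)(x)^2\bigr)$, and taking $L^p$ norms proves the first inequality for all $1 < p < \infty$. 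For the second inequality $\|\s(\thtm f)\|_\lprn \lesssim \|\s(\thtmt f)\|_\lprn$, I would use that $v := \partial_t^m \sl_t f$ is a weak solution of $\L u = 0$ in $\reu$ (by $t$-independence of the coefficients, $\L$ commutes with $\partial_t$), and apply Caccioppoli's inequality on $(n+1)$-dimensional balls $B^{n+1}((y,t), t/10)$ (which fit inside a slightly wider cone $\Gamma_\alpha(x)$) to obtain
\[
\iint_{\Gamma(x)} t^{2m} |\nabla_\| v|^2 \,\tfrac{dydt}{t^{n+1}} \lesssim \iint_{\Gamma_\alpha(x)} t^{2m-2} |v|^2 \,\tfrac{dydt}{t^{n+1}},
\]
i.e., $\s(\thtm f)(x) \lesssim \s_\alpha(\thtmt f)(x)$; the aperture-change remark following Definition \ref{def-square-functions} then completes the proof.

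The main obstacle will be the first conical inequality, in the sense that it requires identifying the scaling cancellation between $\bigl(\fint_{B(x,t)} |B|^n\bigr)^{2/n} \lesssim t^{-2} \|B\|_\lnrn^2$ and the $t^2$ factor emerging from Sobolev--Poincar\'e on balls of radius $t$. Once this is observed, the argument closes for the full range $1 < p < \infty$, in contrast to the restricted range $1 < p < n$ in the vertical case (where the estimate reduces to the global Sobolev embedding on $\rn$ and hence requires $p < n$).
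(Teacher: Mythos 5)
Your conical argument is essentially the paper's own. The paper also works pointwise on the Whitney balls $\{|x-y|<t\}$, applying H\"older to pair $|B|^2\in L^{n/2}$ against $|G|^2\in L^{n/(n-2)}$ (with exactly the scaling cancellation $\big(\fint_{|x-y|<t}|B|^n\big)^{2/n}\lesssim t^{-2}\|B\|_\lnrn^2$ that you single out), then the Poincar\'e--Sobolev inequality for the oscillation and Jensen for the average, which yields the pointwise bound $\fint_{|x-y|<t}|\thtmb f|^2\lesssim \|B\|_\lnrn^2\big(\fint|\thtm f|^2+\fint|\thtmt f|^2\big)$; the second inequality is obtained, as you do, from Caccioppoli applied to the solution $\pd_t^m\sl_t f$ together with change of aperture. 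So that part needs no further comment.

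For the vertical bound you take a genuinely different route. The paper writes $\thtmb f=B\,I_1R\,\thtm f$ (with $I_1$ the fractional integral of order one and $R$ the Riesz transforms), applies H\"older once slice-wise, and then proves the vector-valued estimate $\|\v(I_1RF)\|_{L^{p^*}(\rn)}\lesssim\|\v(F)\|_\lprn$ for $1<p<n$ via weighted square-function bounds for $R$, the Muckenhoupt--Wheeden mapping property of $I_1$, off-diagonal extrapolation, and Minkowski's inequality. You instead factor $\v(\thtmb f)=|B|\,\v(G)$, dominate $|\nbp \v(G)|$ by $\v(\nbp G)=\v(\thtm f)$, and conclude by H\"older and Sobolev; this is more elementary (no weighted theory), but two points must be shored up. First, the differentiation-under-the-integral/chain-rule step for $h=\v(G)$ has to be carried out weakly, e.g.\ through the regularized truncations $\big(\delta^2+\int_\varepsilon^{N}|G|^2\,dt/t\big)^{1/2}$, since $G(\cdot,t)$ is only in $W^{1,2}_{\loc}(\rn)$. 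Second, and more seriously, the homogeneous Sobolev inequality $\|h\|_{L^{p^*}(\rn)}\lesssim\|\nbp h\|_\lprn$ is not a consequence of $\nbp h\in\lprn$ alone: one only gets $\|h-c\|_{L^{p^*}(\rn)}\lesssim\|\nbp h\|_\lprn$ for some constant $c\ge 0$, and a nonzero $c$ cannot be absorbed, since $B\in\lnrn$ need not belong to $\lprn$ for $p<n$. You therefore need to verify that $\v(t^m\pd_t^m\sl_t f)$ vanishes at infinity (in an averaged sense); this is true for $f\in C_c^\infty(\rn)$, using decay/off-diagonal estimates for the single layer of the kind underlying Proposition \ref{prop-vertical-off-diagonal-decay}, but your sketch omits it, and it is precisely the issue that the paper's reproducing formula $\thtmb f=B\,I_1R\,\thtm f$ (justified there by the slice estimates) is designed to bypass.
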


\begin{proof}
We begin with the bound for the conical versions. Note that the second inequality follows from the fact that conical square functions  always ``travel up" by the $L^2$ Caccioppoli inequality. To handle the first inequality we note that for fixed $x\in \rn$ and $t>0$ we have, by H\"older's and Poincar\'e-Sobolev Inequalities,
\begin{multline}\nonumber 
\Big( \fint_{|x-y|<t} |\thtmb f(y)|^2 \, dy\Big)^{1/2}   \lesssim \dfrac{\| B\|_{\lnrn}}{t} \Big(\fint_{|x-y|<t} |t^m\pd_t^m\sl_t f(y)|^{2^*}\, dy\Big)^{1/2^*} \\
  \lesssim \| B\|_\lnrn \Big[\Big( \fint_{|x-y|<t} |\thtm f(y)|^2\, dy \Big)^{1/2} + |(\thtmt f)_{x,t}| \Big],  
\end{multline} 
where $(\thtmt f)_{x,t}$ denotes the average of $\thtmt f$ on the $n$-ball $|x-y|<t$. The result now follows from Jensen's inequality and the definition of $\s$.

The vertical square function is a bit more involved. The idea is to write 
\begin{equation}\nonumber
\thtmb f(x)= B I_1R \nbp t^m\pd_t^m\sl_t f(x)=B I_1 R \thtm f(x),
\end{equation}
where $I_1$ is the fractional integral of order 1 and $R$ is a vector valued Riesz Transform (note that the above makes sense in $L^2(\rn)$ owing to the slices estimates of \cite[Theorem 1.4]{bhlmp} and the mapping properties of $I_1$ and $R$). Therefore, by H\"older's Inequality 
\begin{equation}\nonumber
\| \v(\thtmb f)\|_\lprn \leq \| B\|_\lnrn \| \v(I_1R \thtm f)\|_{L^{p^*}(\rn)},
\end{equation}
where $1/p^*=1/p-1/n$ is the Sobolev exponent in dimension $n$. The desired result follows from the following estimate: Let $F:\reu \to \CC$, then for every $1<p<n$,
\begin{equation}\nonumber
\| \v(I_1R F)\|_{L^{p^*}(\rn)} \lesssim \| \v(F)\|_\lprn.
\end{equation}
To show this, first note that for every $1<p<\infty$ we have 
\begin{equation}\nonumber
\| \v(R F)\|_\lprn \lesssim \|\v(F)\|_\lprn.
\end{equation} 
This is a consequence of the weighted estimate 
\begin{equation}\nonumber
\int_\rn \int_0^\infty |RF(x,t)|^2\, \dfrac{dt}{t} \nu(x)dx \lesssim \int_\rn \int_0^\infty |F(x,t)|^2\, \dfrac{dt}{t} \nu(x)dx, \quad \nu\in A_2,
\end{equation}
and the extrapolation theorem for $A_p$ weights (see Theorem \ref{thm-extrapolation-ap-weights}). Therefore it is enough to prove the estimate for $I_1$ alone. For this we will need an off-diagonal extrapolation result (see \cite[Theorem 3.23]{cump}) to reduce matters to proving 
\begin{equation}\nonumber
\Big(\int_\rn \int_0^\infty |I_1 F(x,t)|^2 \, \dfrac{dt}{t}\, \nu^2(x)dx\Big)^{1/2} \lesssim \Big( \int_\rn\Big( \int_0^\infty |I_1F(x,t)|^2\,\dfrac{dt}{t}\Big)^{2_*/2}\, \nu(x)^{2_*} dx \Big)^{1/2_*},
\end{equation} 
where $1/2_*=1/2+1/n$, and the above holds for every $\nu\in A_{2_*,2}$ (see Definition \ref{def-apq-classes}). To prove the above inequality we appeal to Theorem \ref{thm-mapping-properties-i1} to obtain, for a weight $\nu$ as above, $\| I_1 g\|_{L^2(\nu^2)} \lesssim \| g\|_{L^{2_*}(\nu^{2_*})}$. Therefore 
\begin{equation}\nonumber
\Big(\int_\rn \int_0^\infty |I_1 F(x,t)|^2 \, \dfrac{dt}{t}\, \nu^2(x)dx\Big)^{1/2} \lesssim \Big(\int_0^\infty\Big( \int_\rn| F(x,t)|^{2_*}\, \nu(x)^{2_*}dx\Big)^{2/2_*}  \dfrac{dt}{t}\Big)^{1/2}.
\end{equation}
The desired bound now follows from Minkowski's inequality (in $L^{2/2_*}$).
\end{proof}

\begin{remark}
More generally, the proof above gives weighted inequalities and, in fact, shows the following: Weighted bounds $T:\ltnu\to \ltnu$ imply that $\| \v(T F)\|_\ltnu\lesssim \| \v(F)\|_\ltnu$. The same is true for the conical square function if $T$ in addition has good local estimates, we refer to \cite{apa17}. 
\end{remark}

\subsection{Estimates for $(\sl_t\nb)$}\label{subsecdual}

We will need the analogue of Lemma \ref{lem-square-functions-with-b-outside} for the dual (in $\ltrn$) operator. 

\begin{lemma}\label{lem-square-fuctions-with-B-inside}
Suppose that $\L$ satisfies Hypothesis A (see Definition \ref{def-hypothesis-a}). Let $B\in L^n(\rn;\CC^n)$ and set $\thtm:= t^m\pd_t^m(\sl_t\nbp)$ and $\thtmb:=t^m\pd_t^m\sl_t B$. then for any weight $\nu\in A_2$ we have 
\begin{equation}\nonumber
\| \s(\thtmb f)\|_\ltnu \lesssim_{[\nu]_{A_2}} \| B\|_\lnrn \| \s(\thtm) \|_{\ltnu\to \ltnu}\| f\|_\ltnu.
\end{equation}
In fact the constant can be shown to be at most a dimensional constant times $[\nu]_{A_2}^{1+\alpha}$ for some $\alpha<1$ (see for instance \cite{p08} and \cite{lmpt10})
\end{lemma}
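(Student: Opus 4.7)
The plan is to reduce matters to the operator $\thtm$ by writing $Bf$ as a divergence of an $L^2(\nu)$ vector field, and then exhibit the identity $\sl_t(\div_{\|} \vec G) = (\sl_t \nbp)\vec G$ that comes from the definition of $(\sl_t \nbp)$ as the adjoint-type object described in Section \ref{sec.prop}. Concretely, I would factor
\[
B f = \div_{\|}\vec G, \qquad \vec G := -\nbp I_1 I_1 (Bf) = -R\,I_1(Bf),
\]
where $R=\nbp(-\Delta_{\|})^{-1/2}$ is the vector-valued Riesz transform and $I_1$ the fractional integral of order 1. This is the natural analogue of the trick used in Lemma \ref{lem-square-functions-with-b-outside}, but performed on the input side rather than on the output side of $\thtmb$.

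Once this factorization is in place, applying $t^m\partial_t^m\sl_t$ and invoking the defining property of $(\sl_t\nbp)$ (namely, that $\sl_t\div_{\|}\vec G=(\sl_t\nbp)\vec G$ as tempered distributions, for $\vec G$ in the natural class) gives $\thtmb f=\thtm\vec G$ pointwise in $(x,t)\in\reu$. Taking conical square functions and using the hypothesized operator norm of $\s\circ\thtm$ on $L^2(\nu)$, one obtains
\[
\|\s(\thtmb f)\|_{L^2(\nu)} \;=\; \|\s(\thtm \vec G)\|_{L^2(\nu)} \;\leq\; \|\s(\thtm)\|_{L^2(\nu)\to L^2(\nu)}\,\|\vec G\|_{L^2(\nu)},
\]
so the remaining task is to bound $\|\vec G\|_{L^2(\nu)}$ by $\|B\|_{L^n(\rn)}\|f\|_{L^2(\nu)}$ with constants depending on $\nu$ only through $[\nu]_{A_2}$.

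For this final step, I would combine two weighted ingredients. First, Proposition \ref{prop-i1b-bounded} (which is precisely tailored to this situation) gives $\|I_1(B\cdot f)\|_{L^2(\nu)}\lesssim_{[\nu]_{A_2}} \|B\|_{L^n}\|f\|_{L^2(\nu)}$ for any $\nu\in A_2$. Second, Muckenhoupt's theorem together with the Coifman–Fefferman theorem (Proposition \ref{prop-properties-ap-weights}(ix)–(x)) yields the $L^2(\nu)$ boundedness of the Riesz transform $R$ with norm controlled by $[\nu]_{A_2}$. Concatenating these two estimates gives $\|\vec G\|_{L^2(\nu)}\lesssim_{[\nu]_{A_2}}\|B\|_{L^n}\|f\|_{L^2(\nu)}$, finishing the proof.

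The only real subtlety here is justifying the distributional identity $\sl_t(\div_{\|}\vec G)=(\sl_t\nbp)\vec G$ for the particular $\vec G$ produced above; one must check that $\vec G$ lies in a space on which both sides are defined and agree (this is where one uses that $\vec G$ is built from $I_1$ of an $L^{2_*}$ function, hence is itself in $L^2$, and one appeals to the density of Schwartz functions together with the mapping properties of $(\sl_t\nbp)$ recalled in Section \ref{sec.prop}). Apart from that verification, the argument is a short linear algebraic reduction to the weighted estimate that is already in hand.
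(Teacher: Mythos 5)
Your argument is correct and is essentially the paper's own proof: both factor $B\cdot f=\divp\bigl(I_1R(B\cdot f)\bigr)$ so that $\thtmb f=\thtm\bigl(I_1R(B\cdot f)\bigr)$, then invoke the operator norm of $\s\circ\thtm$ on $L^2(\nu)$, the weighted boundedness of the Riesz transform (Coifman--Fefferman/Muckenhoupt), and Proposition \ref{prop-i1b-bounded}. The distributional identity $\sl_t(\divp\vec G)=(\sl_t\nbp)\vec G$ that you flag is exactly the mapping property of $(\sl_t\nbp)$ recalled from \cite{bhlmp}, so no further verification is needed beyond what the paper already uses.
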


\begin{proof}
We begin by writing, for $f\in C_c^\infty(\rn;\CC^n)$, $B\cdot f= \divp I_1 I_1\nbp (B\cdot f) = \divp I_1 R(B\cdot f)$, where $I_1$ is the fractional integral of order 1. Therefore $\thtmb f = \thtm(I_1 R(B\cdot f))$, and so 
\begin{equation}\nonumber
\| \s(\thtmb f)\|_\ltnu \lesssim \| \s(\thtm)\|_{\ltnu\to\ltnu} \| RI_1 (B\cdot f)\|_\ltnu.
\end{equation}
Since $R:\ltnu\to\ltnu$ by the Coifman-Fefferman maximal inequality (see Proposition \ref{prop-properties-ap-weights}), the result follows from Proposition \ref{prop-i1b-bounded}. 	
\end{proof}

\begin{theorem}[Square Function bounds for $(\sl_t\nb)$]\label{thm-weighted-square-function-bounds-double-layer}
Suppose that $\L$ satisfies Hypothesis A (see Definition \ref{def-hypothesis-a}). Let $\thtm:= t^m\pd_t^m(\sl_t\nb)$ and $\thtmp:=t^m\pd_t^m(\sl_t\nbp)$ and $\delta\in (0,1)$. Then there exist $M>0$, $m_3\in \NN$ (depending only on dimension, ellipticity, and for $m_3$ also $\delta$)  such that for every $m\geq m_3$ and if $\nu^M\in A_2$ is such that $[\nu^M]_{A_2}\leq C_\delta$ (with $C_\delta$ as in 7 of Proposition \ref{prop-properties-ap-weights}) then 
\begin{equation}\nonumber
\| \s(\thtmp f )\|_\ltnu \lesssim_{C_\delta,m} \|f\|_\ltnu,
\end{equation}
provided that\footnote{This is one of the few places where we may require additional smallness in addition to that imposed in \cite{bhlmp}, prior to discussing existence and uniqueness of the solutions to the boundary value problems.}  $\| B_{2\|}\|_\lnrn \leq \rho_3$, for some $\rho_3$ depending only on dimension, ellipticity of $\Lp$\footnote{More precisely, the dependence on the ellipticity of $\L_{\|}$ is through the constants appearing in Theorem \ref{thm-weighted-hodge-decomposition}.} and $C_\delta$.

In particular, for $p\in (2-1/2M,2+1/2M)$, it holds
\begin{equation}\nonumber
\| \s(\thtm f)\|_\lprn \lesssim \| f\|_\lprn.
\end{equation}
\end{theorem}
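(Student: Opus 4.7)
We apply the general conical--square-function extrapolation Lemma \ref{lem-extrapolation-conical} to $T_t := \thtmp = t^m\pd_t^m(\mathcal{S}_t^{\cL}\nbp)$, and then invoke Corollary \ref{cor-unweighted-lp-extrapolation} for the $L^p$ conclusion. A useful simplification particular to this operator is that $\nbp$ annihilates constants, so $T_t 1 = 0$ and $R_t \equiv T_t$; no Carleson measure estimate for $T_t 1$ is needed. The three ingredients to verify are: $L^r$--$L^2$ off-diagonal decay with $\gamma > 1/r$, an unweighted $L^2$ square function bound for $T_t$, and the quasi-orthogonality estimate $\|T_t \Q_s^2 f\|_{L^2} \lesssim (s/t)^\beta \|\Q_s f\|_{L^2}$ for $s < t$ and some $\beta > 0$.

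The off-diagonal decay follows directly from Proposition \ref{prop-vertical-off-diagonal-decay}, which handles $t^m\pd_t^m(\mathcal{S}_t^{\cL}\nb)$ by duality from $t^m\pd_t^m\nb \mathcal{S}_t^{\cL}$; restricting the gradient to its tangential part $\nbp$ preserves the estimates. The unweighted $L^2$ square function bound is obtained from the double-layer identity of Lemma \ref{lem-double-layer},
\[
\nb \mathcal{D}_t^{\cL,+} f \;=\; \nb(\mathcal{S}_t^{\cL}\nb)(A\vec{N}f) \;+\; \nb(\mathcal{S}_t^{\cL}\overline{B}_2)(\vec{N}f),
\]
combined with Theorem \ref{BHLMP1-Main.thrm}(iii) on $t^m\pd_t^{m-1}\nb \mathcal{D}_t^{\cL,+}$ and Lemma \ref{lem-square-fuctions-with-B-inside} to absorb the $B_2$-term, yielding an $L^2$ bound for $t^m\pd_t^{m-1}\nb(\mathcal{S}_t^{\cL}\nb)$; Caccioppoli's inequality on slices (Proposition \ref{prop-caccioppoli-on-slices}) then trades the outer $\nb$ for a $t$-derivative to yield the desired bound on $\thtmp$. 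The smallness $\|B_{2\|}\|_{L^n} \leq \rho_3$ is first used here, through the dependence of the implicit constant in Lemma \ref{lem-square-fuctions-with-B-inside} on $\|B_{2\|}\|_{L^n}$.

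The main obstacle is the quasi-orthogonality. As flagged in the remark preceding Theorem \ref{thm-conical-square-function-grad-st-part1}, one cannot naively add spatial derivatives to $T_t$ since $\nbp$ is already inside $(\mathcal{S}_t^{\cL}\nbp)$; the resolution is to use the PDE $\cL u = 0$ satisfied by $u(x,t) = \mathcal{S}_t^{\cL} g(x)$. Specifically, the identity
\[
-\Lp u \;=\; \divp\bigl(A_{\|\perp}\pd_t u + B_{1\|} u\bigr) \;+\; \pd_t\bigl(A_{\perp\|}\cdot \nbp u + a_{\perp\perp}\pd_t u + B_{1\perp}u\bigr) \;-\; B_2\cdot \nb u
\]
on solutions allows us to trade a horizontal Laplacian of $\mathcal{S}_t^{\cL}$ (which arises when pairing $(\mathcal{S}_t^{\cL}\nbp)$ with a divergence-type CLP family) for $\pd_t^2\mathcal{S}_t^{\cL}$ plus first-order coefficient corrections. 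Choosing $\Q_s = s\divp\Qt_s$ as in the proof of Theorem \ref{thm-conical-square-function-grad-st-part1}, a direct computation gives
\[
T_t \Q_s^2 f \;=\; \frac{s}{t}\cdot t^{m+1}\pd_t^m \mathcal{S}_t^{\cL} \Delta_\|\bigl(\Qt_s \Q_s f\bigr) \;+\; (\text{lower order}),
\]
and the equation above converts $\mathcal{S}_t^{\cL}\Delta_\|$ (which differs from $-\mathcal{S}_t^{\cL}\Lp$ by a divergence of $(\ap - I)\nbp\,\cdot\,$) into pieces that are either of the form $t^{m+2}\pd_t^{m+2}\mathcal{S}_t^{\cL}$---uniformly $L^2$-bounded by the slice estimates of \cite[Lemma 6.2]{bhlmp}---or first-order perturbative terms controlled by the small $L^n$ norms of $B_1, B_2$. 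The second invocation of $\|B_{2\|}\|_{L^n} \leq \rho_3$ happens here, absorbing those first-order corrections.

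With hypotheses (a)--(c) verified, Lemma \ref{lem-extrapolation-conical} gives $\|\s(\thtmp f)\|_{L^2(\nu)} \lesssim \|f\|_{L^2(\nu)}$ for every $\nu \in A_{2/r}$. This class contains every $\nu$ with $\nu^M \in A_2$ and $[\nu^M]_{A_2} \leq C_\delta$ for $M = M(\delta, r)$ large enough, yielding the weighted bound asserted in the theorem. Finally, Corollary \ref{cor-unweighted-lp-extrapolation} converts the restricted weighted $L^2(\nu)$ bound into the $L^p(\rn)$ bound on $(2 - 1/(2M), 2 + 1/(2M))$, and choosing $\delta$ sufficiently close to $1$ yields the range stated in the conclusion.
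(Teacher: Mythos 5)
Your plan stands or falls on the claim that, because ``$\nbp$ annihilates constants,'' one has $T_t1=0$, $R_t\equiv T_t$, and Lemma \ref{lem-extrapolation-conical} applies to $T_t=\thtmp$ once you verify the quasi-orthogonality $\|T_t\Q_s^2f\|_\ltrn\lesssim (s/t)^\beta\|\Q_sf\|_\ltrn$ for $s<t$. That quasi-orthogonality is exactly the estimate that is \emph{not} available for this operator (this is why the paper flags $\thtmp$ as requiring separate treatment), and your proposed fix does not produce it. Writing the outer $\Q_s$ as $s$ times a divergence/gradient and pulling it through $(\sl_t\nbp)$ leaves you with objects of the shape $s\,t^m\pd_t^m\sl_t(\text{two tangential derivatives})$, i.e.\ essentially $\tfrac{s}{t}\,t^{m+1}\pd_t^m\sl_t\Delta_\|$; but $t^{m+1}\pd_t^m\sl_t\Delta_\|$ is \emph{not} uniformly $L^2$-bounded — the only bounded slice objects carry at most one tangential derivative on each side of $\sl_t$ (e.g.\ $t^{m+1}\pd_t^m\nb(\sl_t\nbp)$). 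Your attempt to repair this with the PDE conflates two different things: the equation $\cL(\sl_t g)=0$ constrains $\Lp$ acting on the \emph{output} of $\sl_t$, not $\sl_t$ applied to $\Lp$ (or $\Delta_\|$) of the \emph{input}. The version of the equation that does act on the input side (the slice identities of \cite[Proposition 3.19]{bhlmp}, used in the paper's $J_1$--$J_5$ decomposition) only converts the specific combination $(\sl_t\nbp)(\ap\nbp\,\cdot)$ into $t$-derivative terms; replacing $\Delta_\|$ by $\divp\ap\nbp$ leaves the mismatch $(\sl_t\nbp)\big((\ap-I)\nbp\,\cdot\big)$, which is an operator of exactly the same strength as $T_t$ itself, applied to a function of size $\approx s^{-1}\|\Q_sf\|_\ltrn$, with no $(s/t)^\beta$ gain and no smallness (nothing is assumed about $\|\ap-I\|_{L^\infty}$). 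So the ``lower order'' remainder in your displayed identity is neither lower order nor small, and hypothesis \eqref{eq-1-quasi-orthogonality-assumption} of Lemma \ref{lem-extrapolation-conical} is not verified.

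Structurally, the missing ingredient is the (weighted) Hodge decomposition: the paper first writes $f=\ap\nbp F+H$ (Theorem \ref{thm-weighted-hodge-decomposition}), so that the divergence-free part is annihilated and the input has the form $\ap\nbp F$ to which the slice identities apply; it then runs a Coifman--Meyer/$Tb$-type splitting whose paraproduct is $[\thtmp\ap]\cdot P_t\nbp F$ — so the relevant ``$T_t1$'' object is $\thtmp\ap$, which is certainly not zero and whose \emph{weighted} Carleson measure estimate (via \cite[Lemma 5.26]{bhlmp}, Lemma \ref{lem-prelim-john-nirenberg-local-square-functions}, and the weighted Carleson lemma) is an essential piece your argument omits entirely. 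Only the residual term $\thtmp\ap(I-P_t)$ is then handled with the equation (terms $J_1$--$J_5$), where $J_3$ needs Proposition \ref{prop-control-for-J3} and $J_4$ is where $\|B_{2\|}\|_\lnrn\leq\rho_3$ enters, through a hiding argument in which $\|\s(\thtmp)\|_{\ltnu\to\ltnu}$ appears on both sides (justified by first working with truncated square functions $\s_\eta$). Your two stated uses of the smallness of $B_{2\|}$ do not reproduce this, and without the Hodge step and the $\thtmp\ap$ Carleson estimate the argument cannot be closed. (A minor additional point: deducing the $L^2$ square function bound for $t^m\pd_t^{m-1}\nb(\sl_t\nb)$ from the double layer identity runs the logic backwards — the identity only controls inputs of the special form $A\vec N f$ — but this input is available directly from \cite{bhlmp}, so it is not the essential issue.)
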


\begin{proof}
We   follow the same outline as in the proof the corresponding unweighted $L^2$ bound for this object (see \cite[Lemma 5.26]{bhlmp}, which in turn is based on the method in \cite{Hof-May-Mour}). Throughout we  fix $\Q_s$ a CLP family (see Definition \ref{def-clp-family}) with smooth compactly supported kernel, and set $P_t:=-\int_t^\infty \Q_s^2 \, \dfrac{ds}{s}$. By the Hodge decomposition and the weighted estimates in Theorem \ref{thm-weighted-hodge-decomposition}, we see that it is enough to show  
\begin{equation}\nonumber
\| \s(\thtm \ap \nbp F)\|_\ltnu \lesssim_{[\nu^M]_{A_2}} \| \nbp F\|_\ltnu.
\end{equation}
We start by writing, via the Coifman-Meyer technique \cite{CM}, 
\begin{multline}\nonumber  
\thtmp \ap \nbp F(x)   = (\thtmp (\ap \nbp F)(x) - [\thtmp\ap(x)]\cdot P_t\nbp F(x)) + [\thtmp \ap(x)]\cdot P_t \nbp F(x)\\
  =: R_t(\nbp F)(x)+[\thtmp \ap(x)]\cdot P_t \nbp F(x).  
\end{multline}

Since these objects already satisfy good (unweighted) $L^2$ estimates, the difficulties now shift to the ``error" term $R_t$; indeed, using the weighted version of Carleson's lemma (see Lemma \ref{lem-prelim-weighted-carleson}) to handle the second term it is enough to show that  $\mu(R_Q)\lesssim \nu(Q)$, for each  $Q\subset \rn$, where we have defined the measure $\mu$ as (recall that we are trying to control a {\it conical} square function)
\begin{equation}\nonumber
d\mu(x,t):= \Big(\fint_{|x-y|<t} |\thtmp\ap(y)|^2\, dy \Big) \dfrac{\nu(x)dxdt}{t}.
\end{equation}
To obtain this estimate, owing to the off-diagonal decay of $\thtmp$ in Proposition \ref{prop-vertical-off-diagonal-decay} and the fact that $d\mu$ is a Carelson measure when $\nu=1$  \cite[Lemma 5.26]{bhlmp}, we  mimic the argument  in the proof of Theorem \ref{lem-extrapolation-conical} involving Lemma \ref{lem-prelim-john-nirenberg-local-square-functions}; we omit the details. 

It now remains to show that $R_t$ has good square function bounds. This is the main part of the proof; we will follow almost verbatim   \cite[Lemma 5.26]{bhlmp}, replacing weighted bounds where appropriate. We begin by rewriting $R_t$ in the following way:
\begin{equation}\nonumber 
R_t   = \thtmp \ap - [\thtmp\ap]P_t
 = \Big( \thtmp\ap P_t - [\thtmp\ap]P_t\Big) + \thtmp\ap(I-P_t)=: \reto +\rett.  
\end{equation}
Since $\thtmp$ has good off-diagonal decay by Proposition \ref{prop-vertical-off-diagonal-decay} (see also \cite[Lemma 3.3]{AAAHK}), so does $R_t$ and satisfies the quasi orthogonality estimate \eqref{eq-1-quasi-orthogonality-assumption}, thanks to the presence of the $P_t$ term. We can then apply the extrapolation lemma for conical square functions (Lemma \ref{lem-extrapolation-conical}) to conclude that $\|\s(\reto\nbp F)\|_\ltnu \lesssim \|\nbp F\|_\ltnu$, for each $\nu\in A_{2/r}$ and some $1<r<2$. For the term $\rett$ we will use the equation in the form of the identities on slices (see \cite[Proposition 3.19]{bhlmp}). For notational convenience,  we will denote $Z_t:=(1-P_t)$, and also $\vb:=(A_{n+1,1},\ldots,A_{n+1,n})$, $ \va:=(A_{1,n+1},\ldots, A_{n+1,n+1})$. We write 
\begin{multline}\nonumber 
\rett(\nbp F)   = \thtmp\ap Z_t \nbp F= \thtmp\ap \nbp Z_t F
  = \pd_t\thtm \va Z_t F - \lthtm(\vb\nb Z_t F) + \thtm B_1Z_tF\\
  - t\thtmt (B_{2\|} \nbp Z_t F)+ \lthtm(B_{2\perp}Z_t F) 
  =: J_1+J_2+J_3+J_4+J_5, 
\end{multline}
where as usual we have defined $\lthtm:=t^m\pd_t^{m+1}\sl_t$. To handle $J_1$ we note that, owing to the $L^r-L^2$ off-diagonal decay of $\thtm$ (Proposition \ref{prop-vertical-off-diagonal-decay}) and the average weighted estimates on slices of Proposition \ref{prop-averaged-bounds-slices-via-off-diagonal-decay}, 
\begin{multline}\nonumber 
\| \s(J_1)\|_\ltnu^2  = \int_0^\infty\int_\rn \fint_{|x-y|<t} |\thtmo \va t^{-1}Z_t F(y)|^2\, dy \nu(x)dx\, \dfrac{dt}{t}\\
  \lesssim_{[\nu]_{A_{2/r}}} \int_0^\infty \int_\rn |\va t^{-1}Z_t F(x)|^2\, \nu(x)dx \dfrac{dt}{t} 
  \lesssim \| \nbp F\|_\ltnu, 
\end{multline}
where we have used Proposition \ref{prop-square-function-bounds-i-pt} to handle the square function associated to $Z_t$ in the last line. 

For $J_2$ we rewrite as follows:
\begin{equation}\nonumber 
J_2   = \lthtm (\vb \cdot \nbp F) + \Big( \lthtm(\vb \cdot P_t\nbp F)- [\lthtm \vb]\cdot P_t\nbp F\Big) + [\lthtm \vb]\cdot P_t \nbp F\\
 =: J_{2,1}+J_{2,2}+J_{2,3}. 
\end{equation}
Again appealing to  Lemma \ref{lem-prelim-john-nirenberg-local-square-functions} (see also the proof of Lemma \ref{lem-extrapolation-conical}) we see that the contribution of $J_{2,3}$ is under control by the weighted version of Carleson's lemma (Lemma \ref{lem-prelim-weighted-carleson}). The term $J_{2,2}$ we can handle the same way we did $\reto$; we omit the details. Finally, by Theorem \ref{thm-weighted-bounds-vertical-square-function-nb-st}, we have good weighted conical square function bounds for $\lthtm$ and $\vb\in L^\infty(\rn)$, so the contribution of  $J_{2,1}$ is also under control.

For $J_3$ we appeal to Proposition \ref{prop-control-for-J3}, which, for $s < t$, gives the bound (for $I_1g=F$) 
\begin{equation}\nonumber
\Big\| \Big(\fint_{|x-y|<t}|\thtm B_1I_1\Q_s^2 g(y)|^2\, dy \Big)^{1/2}\Big\|_\ltnu \lesssim \Big( \dfrac{s}{t}\Big)^\beta \| \Q_s g\|_\ltnu.
\end{equation}
Therefore 
\begin{multline}\nonumber 
\| \s(J_3)\|_\ltnu   = \int_0^\infty \int_\rn \fint_{|x-y|<t} |\thtm B_1(1-P_t)F(y)|^2\, dy \, \nu(x)dx \, \dfrac{dt}{t}\\
 = \int_0^\infty \int_\rn \fint_{|x-y<t}\Big|\thtm B_1 I_1 \int_0^t \Q_s^2 g(y) \, \dfrac{ds}{s} \Big|^2\ dy \, \nu(x)dx \, \dfrac{dt}{t}\\
  \lesssim_\beta \int_0^\infty \int_\rn \fint_{|x-y|<t}\int_0^t \Big( \dfrac{t}{s}\Big)^{\beta/2} |\thtm B_1 I_1\Q_s^2 g(y)|^2\, \dfrac{ds}{s}\, dy \, \nu(x)dx \, \dfrac{dt}{t}\\
   \lesssim \int_0^\infty \int_s^\infty \Big(\dfrac{s}{t}\Big)^{\beta/2} \| \Q_s g\|_\ltnu^2 \, \dfrac{st}{t}\, \dfrac{ds}{s} 
  \lesssim \| \v(\Q_s g)\|_\ltnu 
  \lesssim \| g\|_\ltnu, 
\end{multline}
where we invoked Theorem \ref{thm-weighted-lp} in the last line. To conclude we note that $I_1 g=F$ and so $R g= \nbp F$, where $R$ is the vector-valued Riesz transform (with symbol $\xi/|\xi|$) and we know $\| Rg\|_\ltnu \approx \| g\|_\ltnu$ for every $\nu\in A_2$; the desired bound follows from this since $A_{2/r}\subset A_2$.

To handle $J_4$ we write it as
\begin{equation}\nonumber
J_4= -t\thtmt B_{2\|}\nbp F + t\thtmt B_{2\|}\nbp P_t F=: J_{4,1}+ J_{4,2}.
\end{equation}
For $J_{4,1}$ we appeal to Lemma \ref{lem-square-fuctions-with-B-inside} to bound
\begin{equation}\nonumber
\| \s(J_{4,1})\|_\ltnu \lesssim_{[\nu]_{A_2}} \| B_{2\|}\|_\lnrn \|\nbp F\|_\ltnu \| \thtmp \|_{\ltnu\to\ltnu}.
\end{equation}
Therefore, if $\| B_2\|_\lnrn$ is small enough we may hide this term on the left hand side.

We rewrite $J_{4,2}$ in the following way: 
\begin{equation}\nonumber 
J_{4,2}   = \Big(t\thtmt(B_{2\|} \cdot P_t\nbp F)- [t\thtmt B_{2\|}] \cdot P_t \nbp F\Big) + [t\thtmt B_{2\|}] \cdot P_t \nbp F
  =: R_t^{[3]} + [t\thtmt B_{2\|}]\cdot P_t\nbp F. 
\end{equation}
The term $R_t^{[3]}$ may be handled the same way as $R_t^{[1]}$, using Proposition \ref{prop-off-diagonal-decay-for-operators-with-B} to obtain the right $L^r-L^2$ off-diagonal estimates. It remains to show, by an application of the weighted version of Carelson's lemma, a $\nu$-Carleson measure estimate for 
\begin{equation}\nonumber
d\mu(x,t):= \Big(\fint_{|x-y|<t}|t^m\pd_t^m\sl_t B_{2\|}(y)|^2\, dy \Big) \, \nu(x)\dfrac{dxdt}{t}.
\end{equation}
This follows, once again, by an application of Lemma \ref{lem-prelim-john-nirenberg-local-square-functions} (see also the proof of Lemma \ref{lem-extrapolation-conical})\footnote{Notice that, since we already have good unweighted $L^2$ square function estimates for $\thtm$, the John-Nirenberg lemma gives us that this object is under control; as opposed to the unweighted case, where we were forced to hide this term.}.

Finally, to handle $J_5$, we appeal again to the $L^r-L^2$ off-diagonal estimates of $t^{m+1}\pd_t^{m+1}\sl_t B_{2\perp}$ (Proposition \ref{prop-off-diagonal-decay-for-operators-with-B}) and Proposition \ref{prop-averaged-bounds-slices-via-off-diagonal-decay} (which give that $t^{m+1}\pd_t^{m+1}\sl_t B_{2\perp}$ satisfies good averaged weighted bounds on slices) to conclude 
\begin{equation}\nonumber
\| \s(J_5)\|_\ltnu \lesssim \int_0^\infty \int_\rn \Big|\dfrac{Z_t}{t} F\Big|^2\, \nu(x)\dfrac{dt}{t}.
\end{equation}
We conclude by the square function estimates of Proposition \ref{prop-square-function-bounds-i-pt}, the same as we did for $J_1$.

Combining all the above, we see that we have shown:
\begin{equation}\nonumber
\| \s(\thtmp)\|_{\ltnu\to \ltnu}\lesssim_{[\nu^M]_{A_2}} 1 + \| B_{2\|}\|_\lnrn \|\s(\thtmp)\|_{\ltnu\to\ltnu}.
\end{equation}
This gives the desired bound if the left hand side is finite and $\| B\|_\lnrn$ is small enough. To achieve the former we may work with the truncated square functions given by 
\begin{equation}\nonumber
\s_\eta(\thtmp f)(x):= \int_{\eta}^{1/\eta}\fint_{|x-y|<t} |\thtmp f(y)|^2\, dy \dfrac{dt}{t},
\end{equation}
which satisfy $\|\s_\eta(\thtmp)\|_{\ltnu\to\ltnu}<\infty$, owing to the estimates on slices of Proposition \ref{prop-averaged-bounds-slices-via-off-diagonal-decay}. Fix now $C_\delta$ as in the assumptions, i.e. $[\nu^M]\leq C_\delta$, then our estimates read (see also Theorem \ref{thm-weighted-hodge-decomposition})
\begin{equation}\nonumber
\| \s(\thtmp)\|_{\ltnu\to\ltnu} \lesssim_{C_0} 1+ \| B_{2\|} \|_\lnrn \|\s(\thtmp)\|_{\ltnu\to\ltnu}.
\end{equation}
Thus, choosing $\| B_{2\|}\|_\lnrn<\rho=\rho(C_\delta)$ we can hide the second term on the right hand side and conclude the result.		 The $L^p$ bounds are a consequence of this and Corollary \ref{cor-unweighted-lp-extrapolation} if we choose $\delta=1/2$, where recall $C_{1/2}$ is defined as in 7 of Proposition \ref{prop-properties-ap-weights}.
\end{proof}

\begin{theorem}\label{thm-conical-square-function-bounds-gradient-st-gradient}
Suppose that $\L$ satisfies Hypothesis A (see Definition \ref{def-hypothesis-a}). Let $\thtm:=t^{m}\pd_t^{m-1}\nb (\sl_t\nb)$ and $\delta\in (0,1)$. There exist $m_4\in \NN$ and $M>0$ (depending only on dimension and ellipticity, and for $m_4$ also on $\delta$) such that if $m\geq m_4$ and $\nu\in A_2$ is such that $[\nu^M]_{A_2}\leq C_\delta$, then 
\begin{equation}\nonumber
\| \s(\thtm f)\|_\ltnu \lesssim_{[\nu^M]_{A_2}} \| f\|_\ltnu,
\end{equation}
provided $\| B_{2\|}\|_\lnrn<\rho_4$, for some $\rho_4$ depending on dimension, ellipticity of $\Lp$, and $C_\delta$ only. 

In particular there exists $\varepsilon_4>0$ (depending on dimension and ellipticity) such that if $p\in (2-\varepsilon_4, 2+\varepsilon_4)$ and $m\geq m_4$ then 
\begin{equation}\nonumber
\| \s(\thtm f)\|_\lprn \lesssim_{p} \| f\|_\lprn.
\end{equation} 
\end{theorem}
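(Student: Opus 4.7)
The plan is to reduce the estimate, via a decomposition of the outer gradient together with a Caccioppoli reduction, to weighted bounds already established in Theorem \ref{thm-weighted-square-function-bounds-double-layer} and in Lemma \ref{lem-extrapolation-conical} (the latter being the weighted estimate underlying Theorem \ref{thm-conical-square-function-grad-st-part1}). First I would split $\nabla=(\nabla_{\|},e_{n+1}\partial_t)$ to write
\begin{equation*}
|\thtm f|\le|t^m\partial_t^{m-1}\nabla_{\|}(\sl_t\nabla)f|+|t^m\partial_t^m(\sl_t\nabla)f|,
\end{equation*}
and further split the input $f=(f_{\|},f_\perp)$ using the identity $(\sl_t\nabla)f=(\sl_t\nabla_{\|})f_{\|}\pm\partial_t\sl_t f_\perp$ that follows from the definition of $(\sl_t\nabla)$ in \cite{bhlmp}.

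For the transverse piece $t^m\partial_t^m(\sl_t\nabla)f$, the input splitting yields $\thtmp f_{\|}=t^m\partial_t^m(\sl_t\nabla_{\|})f_{\|}$ and $t^m\partial_t^{m+1}\sl_t f_\perp$. The first is controlled directly by Theorem \ref{thm-weighted-square-function-bounds-double-layer}, provided $m\ge m_3$; the second is pointwise dominated by $|t^m\partial_t^m\nabla\sl_t f_\perp|$, which falls under Lemma \ref{lem-extrapolation-conical} as used in the proof of Theorem \ref{thm-conical-square-function-grad-st-part1}, yielding a weighted $L^2(\nu)$ bound for $\nu\in A_{2/r}$. I will verify that the hypothesis $[\nu^M]_{A_2}\le C_\delta$ with $M>r/(2-r)$ forces $\nu\in A_{2/r}$ with $[\nu]_{A_{2/r}}\le C_\delta^{1/M}$, by applying Jensen's inequality to the $A_2$ condition on $\nu^M$.

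The parallel piece $t^m\partial_t^{m-1}\nabla_{\|}(\sl_t\nabla)f$ is the key new step, essentially a ``traveling down'' by Caccioppoli. Since $(\sl_t\nabla)f$ solves $\L v=0$ in $\reu$ and the coefficients of $\L$ are $t$-independent, the function $u(y,t):=\partial_t^{m-1}(\sl_t\nabla)f(y,t)$ is itself a solution, and Caccioppoli on slices (Proposition \ref{prop-caccioppoli-on-slices}) gives
\begin{equation*}
\int_{|y-x|<t}|\nabla_{\|}u(y,t)|^2\,dy\lesssim t^{-2}\int_{t/2}^{3t/2}\int_{|y-x|<2t}|u(y,s)|^2\,dy\,ds.
\end{equation*}
Multiplying by $t^{2m-n}$, integrating $dt/t$, and reversing the order of integration should lead to
\begin{equation*}
\|\s(t^m\partial_t^{m-1}\nabla_{\|}(\sl_t\nabla)f)\|_{L^2(\nu)}\lesssim\|\s_2(t^{m-1}\partial_t^{m-1}(\sl_t\nabla)f)\|_{L^2(\nu)},
\end{equation*}
where $\s_2$ denotes the aperture-$2$ conical square function, which for $\nu\in A_2$ is equivalent to the aperture-$1$ version (see the remark after Definition \ref{def-square-functions}). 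The right-hand side is then handled exactly as the transverse piece, but with $m$ replaced by $m-1$, invoking Theorem \ref{thm-weighted-square-function-bounds-double-layer} and Lemma \ref{lem-extrapolation-conical} at order $m-1$; this forces $m\ge 1+\max(m_1,m_3)$.

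Collecting constants with $m_4:=1+\max(m_1,m_3)$, $M$ as specified, and $\rho_4:=\rho_3$ gives the weighted $L^2(\nu)$ bound, and the unweighted $L^p$ estimate for $p\in(2-\ep_4,2+\ep_4)$ with $\ep_4=1/(2M)$ follows from Corollary \ref{cor-unweighted-lp-extrapolation} with $\delta=1/2$. The main obstacle is the Caccioppoli reduction, which hinges on the fact that $(\sl_t\nabla)f$ is a solution of $\L u=0$ in $\reu$ (so that the same is true for its $t$-derivatives); granted that, the remainder is careful bookkeeping of derivative counts and weight classes so that the hypotheses of the two existing weighted estimates are simultaneously met by the single constraint $[\nu^M]_{A_2}\le C_\delta$ for $M$ large.
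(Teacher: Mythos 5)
Your proposal is correct and follows essentially the same route as the paper: reduce by splitting the outer and inner derivatives into the cases already covered by Theorem \ref{thm-conical-square-function-grad-st-part1} (via the weighted bound of Lemma \ref{lem-extrapolation-conical}) and Theorem \ref{thm-weighted-square-function-bounds-double-layer}, then handle the remaining parallel-gradient case by Caccioppoli on slices applied to the solution $(\sl_t\nabla)f$, which trades the outer $\nabla_{\|}$ for one power of $t$ and reduces to order $m-1$, followed by the weighted change-of-aperture for conical square functions. Your extra bookkeeping (the Jensen argument showing $[\nu^M]_{A_2}\leq C_\delta$ places $\nu$ in $A_{2/r}$ with controlled characteristic) is a correct elaboration of a step the paper leaves implicit.
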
	

\begin{proof}
Notice that it is enough to consider $\nbp$ instead of $\nb$ in both instances; otherwise we are in the situation of Theorem \ref{thm-conical-square-function-grad-st-part1} or Theorem \ref{thm-weighted-square-function-bounds-double-layer}. Therefore, without loss of generality, $\thtm=t^m\pd_t^{m-1}\nbp(\sl_t\nbp)$. In this case, for $f\in C_c^\infty(\rn;\CC^n)$ we can write 
\begin{equation}\nonumber
\thtm f(x)= t^m\pd_t^{m-1}\nbp\sl_t(\divp f)(x)=: t^m\pd_t^{m-1}\nbp \sl_t g(x).
\end{equation}
By the Caccioppoli inequality on slices (Proposition \ref{prop-caccioppoli-on-slices}) we see that, for fixed $x\in \rn$ and $t>0$, 
\begin{multline}\nonumber 
\int_0^\infty \fint_{|x-y|<t} |t^m\pd_t^{m-1}\nbp \sl_t f(y)|^2\, dy \dfrac{dt}{t}   \lesssim \int_0^\infty \fint_{t/4}^{5t/4} \fint_{|x-y|<2t}|t^{m-1}\pd_t^{m-1}\sl_t g(y)|^2\, dy \dfrac{dt}{t}\\
  = \int_0^\infty \fint_{t/4}^{5t/4} \fint_{|x-y|<2t} |s^{m-1}\pd_s^{m-1}(\sl_s\nbp) f(y)|^2\, dy ds \dfrac{dt}{t}
  \lesssim \int_0^\infty \fint_{|x-y|<2t} |s^{m-1}\pd_s^{m-1}(\sl_s\nbp) f(y)|^2\, dy \dfrac{ds}{s}. 
\end{multline}
The result now follows from Theorem \ref{thm-weighted-square-function-bounds-double-layer} and the change of angle formula for square functions (see the comments after Definition \ref{def-square-functions}).
\end{proof}

The following is the weighted version of Theorem \ref{thm-general-extrapolation-thm} for the vertical square function.

\begin{theorem}\label{thm-weithed-vertical-bounds-gradient-st-gradient}
Suppose that $\L$ satisfies Hypothesis A (see Definition \ref{def-hypothesis-a}). Let $\thtm:= t^m\pd_t^{m-1}\nb(\sl_t\nb)$ and $\delta\in (0,1)$. There exists $m_5\in \NN$ and $M>0$ (depending only on dimension and ellipticity, and in the case of $m_5$ also on $\delta$) such that if $m\geq m_5$ and $\nu\in A_2$ satisfies $[\nu^M]_{A_2}\leq C_\delta$, then
\begin{equation}\nonumber
\| \v(\thtm f)\|_\ltnu \lesssim_{C_0} \| f\|_\ltnu,
\end{equation}
provided $\|B_{2\|}\|<\rho_5$, for some $\rho_5>0$ depending only on dimension, ellipticity of $\Lp$, and $C_\delta$.

In particular there exists $\varepsilon_5>0$ (depending on dimension and ellipticity) such that for $p\in (2-\varepsilon_5,2+\varepsilon_5)$
\begin{equation}\nonumber
\| \v(\thtm f)\|_\lprn\lesssim \| f\|_\lprn.
\end{equation}
\end{theorem}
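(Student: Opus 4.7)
The plan is to follow very closely the argument of Theorem \ref{thm-weighted-bounds-vertical-square-function-nb-st}, adapting it from $t^{m}\partial_t^{m}\nabla\sl_t$ to our present operator $\thtm=t^{m}\partial_t^{m-1}\nabla(\sl_t\nabla)$. In particular, one again passes through a weighted conical square function and then uses H\"older's inequality together with Caccioppoli on slices and a reverse H\"older inequality for solutions to reduce matters to an object for which a weighted conical estimate has already been established, namely the one provided by Theorem \ref{thm-weighted-square-function-bounds-double-layer}.

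First, I would introduce the weighted averaged operator
\[
\tithm f(x) := \sqrt{\tfrac{\nu(x)}{\nu_{x,t}}}\,\thtm f(x),
\]
and, exactly as in the proof of Theorem \ref{thm-weighted-bounds-vertical-square-function-nb-st} (Fubini in the $(x,y)$ variables using $\nu(x)=\nu(x)\fint_{|x-y|<t}(\nu(y)/\nu_{x,t})\,dy$), rewrite the vertical square function norm as a conical one, so that
\[
\|\v(\thtm f)\|_{L^2(\nu)}^{2}
=\int_{\rn}\int_{0}^{\infty}\fint_{|x-y|<t}\tfrac{\nu(y)}{\nu_{y,t}}|\thtm f(y)|^{2}\,dy\,\tfrac{dt}{t}\,\nu(x)\,dx
=\|\s(\tithm f)\|_{L^2(\nu)}^{2}.
\]

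Next, on the inner cone average I would apply H\"older's inequality with an exponent $q\in(1,2)$ close to $1$. With $M$ chosen large enough so that $\nu^{M}\in A_{2}$ forces $\nu\in RH_{q'}$ (using Proposition \ref{prop-properties-ap-weights}), the weight factor is controlled:
\[
\Big(\fint_{|x-y|<t}\Big|\tfrac{\nu(y)}{\nu_{y,t}}\Big|^{q'}\!dy\Big)^{\!1/q'}\!\lesssim_{[\nu^{M}]_{A_{2}}}1,
\]
and one is left with
\[
\fint_{|x-y|<t}|\thtm f(y)|^{2}\tfrac{\nu(y)}{\nu_{y,t}}\,dy
\lesssim_{[\nu^{M}]_{A_{2}}}\Big(\fint_{|x-y|<t}|\thtm f(y)|^{2q}\,dy\Big)^{\!1/q}.
\]
Here I choose $q$ close enough to $1$ that $2q\in(2_-,2_+)$ (Definition \ref{def-twoplustwominus}), so that the Caccioppoli inequality on slices is available in $L^{2q}$.

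The key step is then to trade one derivative: since $(\sl_t\nabla)f$ is a weak solution of $\cL u=0$ in $\reu$ (see the discussion in Section \ref{sec.prop}) and $\partial_t^{m-1}$ commutes with $\cL$, Proposition \ref{prop-caccioppoli-on-slices} applied to $\partial_t^{m-1}\sl_t\nabla f$ yields
\[
\Big(\fint_{|x-y|<t}|t^{m}\partial_t^{m-1}\nabla(\sl_t\nabla) f(y)|^{2q}\,dy\Big)^{\!1/q}
\lesssim \fint_{t/4}^{7t/4}\fint_{|x-y|<2t}|s^{m-1}\partial_s^{m-1}(\sl_s\nabla)f(y)|^{2}\,dy\,ds,
\]
where the last step uses the reverse H\"older inequality for the solution $s^{m-1}\partial_s^{m-1}(\sl_s\nabla)f$ (Proposition \ref{prop-prelim-inhomog-reverse-holder}) to drop the $L^{2q}$ average down to an $L^{2}$ one. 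Finally, Fubini and the change-of-aperture invariance for conical square functions (see the remark after Definition \ref{def-square-functions}) give
\[
\|\v(\thtm f)\|_{L^2(\nu)}^{2}\lesssim_{[\nu^{M}]_{A_{2}}}\|\s(s^{m-1}\partial_s^{m-1}(\sl_s\nabla)f)\|_{L^2(\nu)}^{2},
\]
and Theorem \ref{thm-weighted-square-function-bounds-double-layer} (applied at level $m-1$, provided $m_{5}$ is chosen large enough) bounds the right-hand side by $\|f\|_{L^2(\nu)}^{2}$ under the smallness $\|B_{2\|}\|_{L^n(\rn)}<\rho_{5}$ inherited from that theorem.

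The main obstacle is a bookkeeping one: choosing the parameters $m_{5}$, $M$, $\rho_{5}$ and $q$ consistently so that (a) $2q\in(2_-,2_+)$ allows the $L^{2q}$ Caccioppoli on slices, (b) $\nu^{M}\in A_{2}$ gives reverse H\"older with exponent $q'$, and (c) Theorem \ref{thm-weighted-square-function-bounds-double-layer} is applicable to $s^{m-1}\partial_s^{m-1}(\sl_s\nabla)f$ at the chosen $\nu$. Once these are balanced (as in Theorem \ref{thm-weighted-bounds-vertical-square-function-nb-st}), the unweighted $L^{p}$ conclusion on $(2-\varepsilon_{5},2+\varepsilon_{5})$ follows from Corollary \ref{cor-unweighted-lp-extrapolation} with $\delta=1/2$.
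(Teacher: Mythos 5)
Your proposal is correct and follows essentially the same route as the paper, whose own proof is just the instruction to repeat the argument of Theorem \ref{thm-weighted-bounds-vertical-square-function-nb-st} (vertical $\to$ weight-adapted conical, H\"older with $\nu\in RH_{q'}$, Caccioppoli on slices, reverse H\"older for solutions, change of aperture) with $g=\divp f$ and to close with the weighted conical bound for the $(\sl_t\nb)$-family. The only small adjustment: the weighted conclusion of Theorem \ref{thm-weighted-square-function-bounds-double-layer} is stated for $t^{m}\pd_t^{m}(\sl_t\nbp)$ only, so you should first split off the transversal component of the inner gradient via $t$-independence (it becomes $s^{m-1}\pd_s^{m}\sl_s$, covered by Theorem \ref{thm-conical-square-function-grad-st-part1}/Theorem \ref{thm-weighted-bounds-vertical-square-function-nb-st}), which is exactly the reduction the paper makes at the outset.
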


\begin{proof}
By Theorem \ref{thm-weighted-bounds-vertical-square-function-nb-st}  and $t$-independence, it is enough to consider the operator $\thtm:= t^m\pd_t^{m-1} \nb(\sl_t\nbp)$. Now the idea is to repeat the proof of the weighted bound for $\v(t^m\pd_t^m\nb\sl_t g)$, with $g=\divp f$, in Theorem \ref{thm-weighted-bounds-vertical-square-function-nb-st}, using now instead Theorem \ref{thm-conical-square-function-bounds-gradient-st-gradient}; we omit the details.
\end{proof}

The next corollary is useful when dealing with square functions involving the double layer potential. 

\begin{corollary}\label{cor-square-function-estimates-b-st-grad-and-dual}
Suppose  that $\L$ satisfies Hypothesis A (see Definition \ref{def-hypothesis-a}). Let $\varepsilon_0>0$ and $M_0>0$ be as in Theorem \ref{thm-general-extrapolation-thm}. Let $\thtmb:= t^m\pd_t^{m-1}\nb(\sl_tB)$ for some $B\in L^n(\rn;\CC^{n+1})$. Then for every $f\in C_c^\infty(\rn;\CC^{n+1})$
\begin{equation}\nonumber
\| \s(\thtmb f)\|_\ltnu, \| \v(\thtmb f)\|_\ltnu \lesssim_{[\nu^m]_{A_2}} \| f\|_\ltnu.
\end{equation}
\end{corollary}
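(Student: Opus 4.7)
The plan is to realize $\Theta_{t,m}^B$ as a composition of an operator already controlled by Theorems \ref{thm-conical-square-function-bounds-gradient-st-gradient} and \ref{thm-weithed-vertical-bounds-gradient-st-gradient} with an auxiliary mapping that absorbs the $L^n$-multiplier $B$ in a weighted $L^2$ norm. The key algebraic step is to rewrite the scalar product $g:=B\cdot f \in L^{2_*}(\rn)$ (which is in $L^{2_*}(\rn)$ by H\"older's inequality, since $f\in L^2$ and $B\in L^n$) using the identity $g = -\divp(I_1 R g)$, where $R=\nbp(-\Delta)^{-1/2}$ is the vector Riesz transform and we use $\nbp(-\Delta)^{-1} = I_1 R$. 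Since $\sl_t\divp = -(\sl_t\nbp)$ in the functional-analytic sense (cf. Proposition 4.2(viii) of \cite{bhlmp}), this yields
\[
\sl_t(B\cdot f) \;=\; (\sl_t\nbp)\, G, \qquad G := I_1 R(B\cdot f),
\]
and hence
\[
\Theta_{t,m}^B f \;=\; t^m\pd_t^{m-1}\nb\,(\sl_t\nbp)\, G,
\]
which is just the operator of Theorems \ref{thm-conical-square-function-bounds-gradient-st-gradient} and \ref{thm-weithed-vertical-bounds-gradient-st-gradient} applied to the $(n+1)$-vector $(G,0)$.

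Choosing $m$ at least as large as the thresholds in those two theorems, $M$ at least as large as the exponents there, and requiring $\|B_{2\|}\|_{L^n(\rn)}$ small enough that both smallness conditions are satisfied, we obtain (for $\nu\in A_2$ with $\nu^M\in A_2$ and $[\nu^M]_{A_2}\le C_\delta$)
\[
\|\s(\Theta_{t,m}^B f)\|_{L^2(\nu)} + \|\v(\Theta_{t,m}^B f)\|_{L^2(\nu)} \;\lesssim_{[\nu^M]_{A_2}}\; \|G\|_{L^2(\nu)}.
\]
It remains to bound $\|G\|_{L^2(\nu)}$ by $\|f\|_{L^2(\nu)}$. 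Since $R$ and $I_1$ commute as Fourier multipliers, $G = R I_1(B\cdot f)$; the Riesz transform is bounded on $L^2(\nu)$ for every $\nu\in A_2$ (Proposition \ref{prop-properties-ap-weights}(ix)--(x)), and by Proposition \ref{prop-i1b-bounded}, $\|I_1(B\cdot f)\|_{L^2(\nu)}\lesssim_{[\nu]_{A_2}}\|B\|_{L^n(\rn)}\|f\|_{L^2(\nu)}$. Chaining these estimates gives the desired conclusion.

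The proof is essentially a bookkeeping exercise once the reduction to $(\sl_t\nbp)$ is made, so the main potential obstacle is justifying the identity $\sl_t(B\cdot f) = (\sl_t\nbp)G$ in the correct sense. Since $B\cdot f\in L^{2_*}(\rn) \subset H^{-1/2}(\rn)$ and $G\in L^2(\rn;\CC^n)$, this identity is easily verified by density starting with $f\in C_c^\infty(\rn;\CC^{n+1})$ (so that $B\cdot f\in C_c^\infty$ can be approximated, and both sides are continuous in the relevant norms thanks to the mapping properties of $\sl_t$, $(\sl_t\nbp)$, $I_1$, and $R$ recalled in Section \ref{sec.prelim}); no new estimate is required.
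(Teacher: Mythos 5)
Your proof is correct and is essentially the paper's own argument: the paper likewise rewrites $B\cdot f$ (split there into $B_{\|}\cdot f_{\|}$ and $B_\perp f_\perp$) as $\divp$ of $RI_1(B\cdot f)$, controls that vector field in the weighted norm via (the proof of) Proposition \ref{prop-i1b-bounded} together with the weighted Riesz transform bound, and then concludes from Theorem \ref{thm-conical-square-function-bounds-gradient-st-gradient} for the conical and Theorem \ref{thm-weithed-vertical-bounds-gradient-st-gradient} for the vertical square function. The only cosmetic differences are that you treat $B\cdot f$ in one stroke rather than componentwise and spell out the weighted estimate for $G$ explicitly.
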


\begin{proof}
Write $B_{\|}\cdot f_{\|}= \divp \nbp I_1 I_1(B_{\|}\cdot f_{\|})= \divp R I_1(B_{\|}\cdot f_{\|}) = \divp g_{\|}$. Notice that, by the proof of Proposition \ref{prop-i1b-bounded}, we know $\| g_{\|}\|_\lprn \lesssim \| f_{\|}\|_\lprn$ for every $1<p<\infty$. On the other hand we can also write $B_\perp f_\perp= \divp RI_1(B_\perp f_\perp) =\divp g_\perp$ where  $\| g_\perp\|_\lprn\lesssim  \| f_\perp\|_\lprn$. Combining these two observations the result follows from  either Theorem \ref{thm-conical-square-function-bounds-gradient-st-gradient} for the conical version or Theorem \ref{thm-weithed-vertical-bounds-gradient-st-gradient} for the vertical.
\end{proof}

Recall that for $\vec{N}=-e_{n+1}$ the exterior normal to $\pd\reu$ we have the  representation formula for the double layer $\dl_t f= (\sl_t\nb)\cdot A\vec{N} f + (\sl_t\overline{B_2})\cdot \vec{N}f$, for $f\in C_c^\infty(\rn)$. As an immediate consequence of this and the previous results we have

\begin{theorem}[Square Function Bounds for $\dl_t$. Part 1]\label{thm-sqaure-fctn-bounds-double-layer}
Suppose that $\L$ satisfies Hypothesis A (see Definition \ref{def-hypothesis-a}). Let $\varepsilon_0>0$, $m_0\in \NN$, and $\rho_0>0$ as in Theorem \ref{thm-general-extrapolation-thm}. Suppose $\thtm= t^m\pd_t^{m}\nb\dl_t$. Then for $m\geq m_0$, $p\in (2-\varepsilon_0, 2+\varepsilon_0)$ and $f\in C_c^\infty(\rn)$
\begin{equation}\nonumber
\| \s(\thtm f)\|_\lprn, \| \v(\thtm f)\|_\lprn \lesssim \| f\|_\lprn.
\end{equation}
\end{theorem}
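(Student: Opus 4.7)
The plan is to reduce these square function estimates for the double layer to the already-established ones for the single layer and its variants, via the algebraic identity in Lemma~\ref{lem-double-layer}. For $f\in C_c^\infty(\rn)$, that lemma and the $t$-independence of $A$ and $B_2$ yield, after taking $t^m\partial_t^m\nabla$ on both sides,
\[
t^m\partial_t^m\nabla\dl_t f \;=\; \bigl[t^m\partial_t^m\nabla(\sl_t\nabla)\bigr](A\vec N f) \;+\; \bigl[t^m\partial_t^m\nabla(\sl_t\overline B_2)\bigr](\vec N f),
\]
where $\vec N=-e_{n+1}$. Since $A\in L^\infty(\rn)$ and $\vec N$ is a constant unit vector, the two ``input'' vector fields $A\vec N f$ and $\vec N f$ lie in $L^p(\rn;\CC^{n+1})$ with $L^p$-norm dominated by $\|f\|_{L^p(\rn)}$. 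Hence the proof of both the conical and vertical square function bounds is reduced to obtaining $L^p$ square function bounds for the two operators $t^m\partial_t^m\nabla(\sl_t\nabla)$ and $t^m\partial_t^m\nabla(\sl_t\overline B_2)$ acting on $L^p$ vector fields.

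For the first of these, $t^m\partial_t^m\nabla(\sl_t\nabla)$, this operator is exactly of the form treated in Theorem~\ref{thm-conical-square-function-bounds-gradient-st-gradient} (for the conical square function) and in Theorem~\ref{thm-weithed-vertical-bounds-gradient-st-gradient} (for the vertical square function), modulo a shift in the index $m$. Concretely, applying those results with parameter $m+1$ in place of $m$ produces the square function bounds for $t^m\partial_t^m\nabla(\sl_t\nabla)$, provided $m\geq m_0$ is taken so that $m+1$ exceeds the thresholds $m_4,m_5$ appearing there; the choice of $\varepsilon_0$ and $\rho_0$ from Theorem~\ref{thm-general-extrapolation-thm} subsumes the smallness hypotheses on $B_1,B_2$ required there. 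For the second operator, $t^m\partial_t^m\nabla(\sl_t\overline B_2)$, Corollary~\ref{cor-square-function-estimates-b-st-grad-and-dual} provides precisely the needed $L^p$ bounds (again, up to the same re-indexing in $m$), applied to $B=\overline B_2\in L^n(\rn;\CC^{n+1})$ acting on the $L^p$ function $\vec N f$.

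Adding the two contributions yields
\[
\|\s(\thtm f)\|_{L^p(\rn)} + \|\v(\thtm f)\|_{L^p(\rn)} \;\lesssim\; \|f\|_{L^p(\rn)},
\]
for every $f\in C_c^\infty(\rn)$, $m\geq m_0$ and $p\in(2-\varepsilon_0,2+\varepsilon_0)$, which is the claim.

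The main remark is that \emph{no new analytic obstacle appears at this stage}: all the technical work—off-diagonal decay in the $L^r$--$L^q$ sense, quasi-orthogonality modulo $R_t$, Carleson measure estimates for $\thtm\ap$, weighted extrapolation on $A_p$ classes, the Hodge decomposition bounds, and the reduction of the vertical to the conical square function—has already been carried out in Sections 3 and in the earlier results of this section. What is needed here is solely the algebraic unpacking provided by Lemma~\ref{lem-double-layer}, together with careful bookkeeping of the number of transversal derivatives in the cited theorems (hence the $m\mapsto m+1$ shift). Consequently, the proof is short; the only mild subtlety is ensuring that $m_0$ (inherited from Theorem~\ref{thm-general-extrapolation-thm}) is chosen at least one unit above all the thresholds $m_2',m_3,m_4,m_5$ appearing in the supporting lemmas, which is harmless.
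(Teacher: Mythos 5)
Your proposal follows exactly the paper's route: the paper offers no separate proof of this theorem, stating it as an immediate consequence of the representation formula $\dl_t f=(\sl_t\nb)\cdot A\vec N f+(\sl_t\overline{B_2})\cdot\vec N f$ (Lemma \ref{lem-double-layer}) combined with Theorem \ref{thm-conical-square-function-bounds-gradient-st-gradient}, Theorem \ref{thm-weithed-vertical-bounds-gradient-st-gradient}, and Corollary \ref{cor-square-function-estimates-b-st-grad-and-dual}, which is precisely your decomposition.

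One bookkeeping point in your write-up is off, though it stems from an inconsistency in the statement itself rather than from your strategy. You take the operator literally as $t^m\pd_t^m\nb\dl_t$ and claim that applying the cited results ``with $m+1$ in place of $m$'' yields bounds for $t^m\pd_t^m\nb(\sl_t\nb)$. It does not: the family in those results is $t^k\pd_t^{k-1}\nb(\sl_t\nb)$, so the index $m+1$ produces $t^{m+1}\pd_t^m\nb(\sl_t\nb)$, which is $t$ times the operator you need, and no re-indexing within that family ever produces the equal-powers operator. Indeed, for the equal-powers object the asserted bound is false already for the Laplacian (a symbol computation shows $\|\v(t^m\pd_t^m\nb(\sl_t\nb)g)\|_{L^2}\approx\|\nb g\|_{L^2}$, not $\|g\|_{L^2}$), so it cannot be recovered from the cited theorems. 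The consistent reading — matching Theorem \ref{BHLMP1-Main.thrm}\,(iii), Theorem \ref{thm.verticalsqfctnbounds}, Lemma \ref{lem-traveling-down-conical}, and Theorem \ref{thm-ntmax-estimates-final-version} — is $\thtm=t^m\pd_t^{m-1}\nb\dl_t$, in which case applying $t^m\pd_t^{m-1}\nb$ to the representation formula matches the cited theorems with the \emph{same} index $m$, and no shift is needed; your argument then goes through verbatim (the inputs $A\vec Nf$, $\vec Nf$ lie in $L^p$ with norm $\lesssim\|f\|_{L^p(\rn)}$, as you say). So: correct approach, identical to the paper's, with a small derivative-count slip that you should fix by adopting the paper's standing convention for the double-layer operator rather than by shifting $m$.
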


\begin{theorem}[Estimates on Slices for $\nb\sl_t$]\label{thm-sup-on-slices}
Suppose that $\L$ satisfies the hypotheses of Theorem \ref{thm-general-extrapolation-thm}. If $\varepsilon_0>0$ is as in Theorem \ref{thm-general-extrapolation-thm} and  $p\in (2-\varepsilon_0, 2+\varepsilon_0)$, then for $f\in C_c^\infty(\rn)$
\begin{equation}\nonumber
\| \sl_t f\|_{L^{p^*}(\rn)} + \| \nb\sl_t f\|_\lprn \lesssim_{m,p} \| f\|_\lprn, \qquad  t>0.
\end{equation}
\end{theorem}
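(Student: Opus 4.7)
The plan is to deduce the $L^{p^*}$ estimate on $\mathcal{S}_t^{\mathcal{L}} f$ from the $L^p$ estimate on $\nabla\mathcal{S}_t^{\mathcal{L}} f$ by the Sobolev embedding theorem, using that $\mathcal{S}_t^{\mathcal{L}} f$ lies naturally in $L^{2^*}(\rn)$ for $f\in C_c^\infty(\rn)$ (by the $Y^{1,2}(\rn)$ bound in Theorem \ref{BHLMP1-Main.thrm}(ii)) and interpolating/extending by density. Thus the main task is the gradient bound $\|\nabla\mathcal{S}_t^{\mathcal{L}} f\|_{L^p(\rn)}\lesssim\|f\|_{L^p(\rn)}$. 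By the $t$-independence of the coefficients of $\mathcal{L}$ and the parabolic scaling $(x,\tau)\mapsto(tx,t\tau)$, proving this uniformly in $t>0$ is equivalent to proving it at any single fixed $t$, so there is no loss in fixing $t=1$.

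For the $L^p$ gradient bound, I would proceed by weighted extrapolation via Corollary \ref{cor-unweighted-lp-extrapolation}: it suffices to establish
\[
\|\nabla\mathcal{S}_t^{\mathcal{L}} f\|_{L^2(\nu)}\lesssim_{[\nu^M]_{A_2}}\|f\|_{L^2(\nu)},
\]
uniformly in $t>0$, for weights $\nu\in A_2$ with $\nu^M\in A_2$ of controlled characteristic. The starting point is the averaged weighted slice bound
\[
\Big\|\Big(\fint_{|y-\cdot|<t}|\nabla\mathcal{S}_t^{\mathcal{L}} f(y)|^2\,dy\Big)^{1/2}\Big\|_{L^2(\nu)}\lesssim\|f\|_{L^2(\nu)},
\]
which follows from Proposition \ref{prop-averaged-bounds-slices-via-off-diagonal-decay} applied to $T_t=\nabla\mathcal{S}_t^{\mathcal{L}}$, combined with the $L^r$–$L^2$ off-diagonal decay of $\nabla\mathcal{S}_t^{\mathcal{L}}$ furnished by Proposition \ref{prop-vertical-off-diagonal-decay}. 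To upgrade this to the pointwise-in-$t$ weighted bound, I would combine Caccioppoli's inequality on slices (Proposition \ref{prop-caccioppoli-on-slices}) with the inhomogeneous reverse Hölder estimate for solutions (Proposition \ref{prop-prelim-inhomog-reverse-holder}) to obtain a reverse-Hölder-type estimate for $|\nabla\mathcal{S}_t^{\mathcal{L}} f|$ at the scale $t$; then the doubling of $\nu$, together with the self-improvement of $A_2$ weights used exactly as in the proofs of Theorems \ref{thm-weighted-bounds-vertical-square-function-nb-st} and \ref{thm-weighted-square-function-bounds-double-layer}, allows one to compare $\|\nabla \mathcal{S}_t^{\mathcal{L}} f\|_{L^2(\nu)}$ with the averaged object, with constants depending on $\nu$ only through $[\nu^M]_{A_2}$.

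The main obstacle will be this final transfer from ``averaged-in-$x$'' to genuinely pointwise-in-$(x,t)$ bounds while tracking the weight characteristic correctly. The difficulty is that Proposition \ref{prop-prelim-inhomog-reverse-holder} is stated for solutions $u$ rather than for their gradients; one must therefore invoke a two-step argument first applying Caccioppoli on slices to pass from $\nabla u$ on a slice to $u$ on a slab of comparable thickness, then using the inhomogeneous reverse Hölder inequality for $u$, and finally pulling another Caccioppoli on slices to return to $\nabla u$, all the while keeping the exponents inside the Muckenhoupt interval $(2_-,2_+)$ of Definition \ref{def-twoplustwominus} so that the constants remain admissible. Once this chain of inequalities is assembled, the pointwise-in-$t$ weighted $L^2$ bound follows, Corollary \ref{cor-unweighted-lp-extrapolation} produces the unweighted $L^p$ estimate on an interval around $p=2$, and the Sobolev reduction from the first paragraph converts the gradient bound into the stated $L^{p^*}$ bound on $\mathcal{S}_t^{\mathcal{L}} f$.
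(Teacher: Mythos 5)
There is a genuine gap at the very first step of your gradient estimate. You propose to start from the averaged weighted slice bound of Proposition \ref{prop-averaged-bounds-slices-via-off-diagonal-decay} applied to $T_t=\nabla\mathcal{S}_t^{\cL}$, with the off-diagonal decay "furnished by Proposition \ref{prop-vertical-off-diagonal-decay}". But Proposition \ref{prop-averaged-bounds-slices-via-off-diagonal-decay} requires $L^r$--$L^2$ off-diagonal estimates with $\gamma>1/r$, and for the operator you actually need --- $\nabla\mathcal{S}_t^{\cL}$, i.e.\ $\thtm$ with $m=0$ --- Proposition \ref{prop-vertical-off-diagonal-decay} gives essentially no decay: the exponent $\gamma_1=1/2^{\#}-1/p$ is negative for $p$ near $2$, and all of the useful decay in that proposition comes from the factor $\big(|t|/(2^j\ell(Q))\big)^m$ (compare Proposition \ref{prop-off-diagonal-decay-for-operators-with-B}, where $\gamma=m/n-\alpha$). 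So the hypothesis $\gamma>1/r$ forces $m$ to be large, of the order of $n$. In other words, your weighted $L^2(\nu)$ slice bound is only available for $t^m\partial_t^m\nabla\mathcal{S}_t^{\cL}$ with many transversal derivatives, and the entire difficulty of the theorem --- removing those derivatives at the level of uniform slice estimates --- is never addressed in the proposal. (The kernel of $\nabla\mathcal{S}_t^{\cL}$ behaves like a singular integral and simply has no decay adapted to the scale $t$; this is exactly why the paper's machinery always introduces $t^m\partial_t^m$ before invoking off-diagonal arguments.) Your subsequent Caccioppoli/reverse-H\"older step for passing from averaged to pointwise-in-$t$ bounds, and the Sobolev reduction of the $L^{p^*}$ bound to the gradient bound, are reasonable but secondary; they cannot be assembled because the input they rely on is missing.

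The paper takes a different (and, given the above, necessary) route: the slice estimates are deduced from the $L^p$ square function bounds of Theorem \ref{thm-general-extrapolation-thm}, by running the argument of \cite[Theorem 1.4]{bhlmp} with $L^p$ in place of $L^2$ --- essentially a fundamental-theorem-of-calculus integration in $t$, combined with Caccioppoli's inequality and the decay of slices as $t\to\infty$, which converts vertical square function control of $t^m\partial_t^m\nabla\mathcal{S}_t^{\cL}$ into uniform-in-$t$ bounds for $\nabla\mathcal{S}_t^{\cL}$ itself. This is also why the admissible range of $p$ in Theorem \ref{thm-sup-on-slices} is exactly the $\varepsilon_0$-window of Theorem \ref{thm-general-extrapolation-thm}. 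If you want to salvage your extrapolation strategy, you would have to prove the weighted bound for $m$ large and then supply a separate "traveling down" argument in $m$ at the level of slices; as written, that step is absent and the proof does not close.
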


\begin{proof}
The proof of this result is essentially contained in \cite[Theorem 1.4]{bhlmp}, using the $L^p$ square function estimates from the previous section instead. We omit the details.
\end{proof}

As a consequence  we obtain the necessary boundedness on slices of our operators.

\begin{corollary}\label{cor-lp-slices-estimates-grad-st-grad}
Suppose that$\L$ satisfies the hypotheses of Theorem \ref{thm-general-extrapolation-thm}. If $\varepsilon_0$ is as in Theorem \ref{thm-general-extrapolation-thm}, then for every $m\geq 1$, $p\in (2-\varepsilon_0, 2+\varepsilon_0)$ and $f\in C_c^\infty(\rn; \CC^{n+1})$
\begin{equation}\nonumber
\| t^m\pd_t^{m-1}(\sl_t\nb)\cdot f\|_{L^{p^*}(\rn)}+ \| t^m\pd_t^{m-1} \nb(\sl_t\nb)\cdot f\|_\lprn \lesssim_{m,p}\| f\|_\lprn, \qquad t>0.
\end{equation} 

If either of the gradients is replaced by $\pd_t$, then the above remains true for $m=0$.
\end{corollary}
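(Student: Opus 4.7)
The plan is to reduce both estimates to Theorem~\ref{thm-sup-on-slices} (applied to $\cL$ and its adjoint $\cL^*$) via Caccioppoli's inequality on slices and Sobolev embedding. Since the coefficients of $\cL$ are $t$-independent, $\pd_t$ commutes with $\cL$, so $\pd_s^k$ of a weak solution is again a weak solution in $\reu$; in particular, for $f \in C_c^\infty(\rn;\CC^{n+1})$, the function $u(x,s) := \pd_s^{m-1}(\sl_s\nb)\cdot f(x)$ is a weak solution of $\cL u = 0$.

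For the second (gradient) estimate, fix $m \geq 1$ and $t > 0$. Partitioning $\rn$ into disjoint cubes of sidelength $\sim t$, applying Proposition~\ref{prop-caccioppoli-on-slices} to $u$ on each cube, and summing via bounded overlap of the dilates gives
\[
\|t^m\pd_t^{m-1}\nb(\sl_t\nb)\cdot f\|_{L^p(\rn)} \lesssim \Big(\fint_{s \sim t}\|s^{m-1}\pd_s^{m-1}(\sl_s\nb)\cdot f\|_{L^p(\rn)}^p\,ds\Big)^{1/p}.
\]
Thus the second estimate reduces to the uniform bound $\sup_{s>0}\|s^{m-1}\pd_s^{m-1}(\sl_s\nb)\cdot f\|_{L^p(\rn)} \lesssim \|f\|_{L^p(\rn)}$. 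For the base case $m=1$, I would invoke duality: by the functional-analytic construction of the layer potentials in Section~4 of~\cite{bhlmp}, $(\sl_s\nb)\cdot$ is (up to sign) the $L^2$-adjoint of $\nb\mathcal{S}^{\cL^*}_s$. Since $\cL^*$ satisfies Hypothesis~A whenever $\cL$ does, Theorem~\ref{thm-sup-on-slices} applied to $\cL^*$ gives $\sup_s\|\nb\mathcal{S}^{\cL^*}_s g\|_{L^{p'}(\rn)} \lesssim \|g\|_{L^{p'}(\rn)}$ for $p' \in (2-\ep_0, 2+\ep_0)$, and duality then yields the bound on $(\sl_s\nb)\cdot$ for $p$ in the corresponding interval. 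For $m \geq 2$, I would iterate Proposition~\ref{prop-caccioppoli-on-slices}: since $\pd_s^k(\sl_s\nb)\cdot f$ is a weak solution for every $k$, each application converts a $\pd_s$ into a factor $1/s$ together with an $L^p$ norm on a nearby slab, yielding the desired uniform bound inductively.

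For the first estimate, since $n \geq 3$ and $p \in (2-\ep_0, 2+\ep_0) \subset (1,n)$, Sobolev embedding on each slice gives
\[
\|t^m\pd_t^{m-1}(\sl_t\nb)\cdot f\|_{L^{p^*}(\rn)} \lesssim \|t^m\pd_t^{m-1}\nb_\|(\sl_t\nb)\cdot f\|_{L^p(\rn)},
\]
for $f \in C_c^\infty(\rn;\CC^{n+1})$ (so that the LHS function belongs to $\dt W^{1,p}(\rn)$ with sufficient decay at infinity); general $f$ is handled by a density argument. The RHS is bounded by the second estimate using just the parallel component of the outer $\nb$. The clause about replacing a gradient by $\pd_t$ with $m=0$ reduces, after formal cancellation of $\pd_t^{-1}\pd_t$, either directly to Theorem~\ref{thm-sup-on-slices} or to the base case $m=1$ of the uniform bound established above.

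The technically delicate step is justifying the duality in the base case rigorously: one must identify $(\sl_s\nb)\cdot$ as (up to sign) the $L^2$-adjoint of $\nb\mathcal{S}^{\cL^*}_s$ using the construction of $(\sl_s\nb)$ via the bilinear form in~\cite{bhlmp} and the identity $(\mathcal{S}_s^{\cL^*})^* = \sl_s$, and then extend the resulting $L^p$ bound from test functions to all of $L^p(\rn)^{n+1}$ by density.
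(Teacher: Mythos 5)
Your argument is essentially sound, and it reaches the key uniform slice bound by a genuinely different route than the paper. The paper's proof shares your first two steps (one application of Proposition \ref{prop-caccioppoli-on-slices} to dispose of the outer gradient, and the observation that extra transversal derivatives are harmless), but it obtains the hard base case from the opposite direction: for $m$ large the $L^r$--$L^q$ off-diagonal estimates of Proposition \ref{prop-vertical-off-diagonal-decay} already give uniform $L^p$ bounds on slices for $t^m\pd_t^{m-1}\nb(\sl_t\nb)$, and one then travels \emph{down} in $m$ following \cite[Lemma 2.11]{AAAHK}, never leaving the range $p\in(2_-,2_+)$. You instead reduce everything (via iterated Caccioppoli on slices, which is the easy direction) to $\sup_{s}\|(\sl_s\nb)\cdot f\|_{\lprn}$ and prove that by duality with $\nb\mathcal{S}^{\cL^*}$, using Theorem \ref{thm-sup-on-slices} for the adjoint operator. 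This is legitimate --- $\cL^*$ satisfies Hypothesis A with the same parameters, and the paper itself uses exactly this duality (e.g.\ in Proposition \ref{prop-vertical-off-diagonal-decay} and Lemma \ref{lem-double-layer}) --- and it buys you a shorter, more self-contained argument that avoids reconstructing the AAAHK traveling-down mechanism for slice bounds. Two precisions are needed: the adjoint of $(\sl_t\nb)\cdot$ is $\nb\mathcal{S}^{\cL^*}$ evaluated at height $-t$ (with a sign on the transversal component), so you must invoke the lower half-space analogue of Theorem \ref{thm-sup-on-slices}, which holds by symmetry but should be said; and the Sobolev step for the $L^{p^*}$ bound needs a word on why the slice function qualifies (pointwise kernel decay is not available here, but membership in $L^2(\rn)$ on slices from the $L^2$ theory of \cite{bhlmp} together with $\nbp$ in $L^p$ suffices).

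The one substantive defect is the exponent window in your base case. Duality gives $\sup_s\|(\sl_s\nb)\cdot f\|_{\lprn}\lesssim\|f\|_{\lprn}$ only when the conjugate exponent $p'$ lies in $(2-\ep_0,2+\ep_0)$, i.e.\ for $p\in\big((2+\ep_0)',\,(2-\ep_0)'\big)$. Since $(2+\ep_0)'=2-\tfrac{\ep_0}{1+\ep_0}>2-\ep_0$, your argument as written misses the sliver $p\in\big(2-\ep_0,(2+\ep_0)'\big]$ near the lower endpoint, so it proves the Corollary only on a slightly smaller interval around $2$ than the one stated (with the same $\ep_0$ as Theorem \ref{thm-general-extrapolation-thm}). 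This is harmless for every application in the paper --- one may simply shrink $\ep_0$ to make the window stable under conjugation --- but to recover the full stated range one either makes that reduction explicit or follows the paper's route, where the large-$m$ off-diagonal base and the downward induction of \cite[Lemma 2.11]{AAAHK} operate at the exponent $p$ itself and never pass to $p'$.
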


\begin{proof}
It is enough to treat $\nb(\sl_t\nbp)$ by $t$-independence. The idea is to write $\nb(\sl_t\nbp f)= \nb\sl_t \divp f$ and apply the $L^p$ Caccioppoli's inequality on slices (Proposition \ref{prop-caccioppoli-on-slices}) once, and then use induction (recall that the off-diagonal decay already gives uniform $L^p$ bounds for $m$ large enough). Details can be found in \cite[Lemma 2.11]{AAAHK}.
\end{proof}

Similarly we have the result for $B_i$ in place of the gradient, the proof is a simple application of Sobolev's inequality for $m=0$, Caccioppoli's inequality on slices, and duality.

\begin{corollary}
Suppose that $\L$ satisfies the hypotheses of Theorem \ref{thm-general-extrapolation-thm}. If $\varepsilon_0>0$ is as in Theorem \ref{thm-general-extrapolation-thm}, $m\geq 0$, and $p\in (2-\varepsilon_0,2+\varepsilon_0)$, then for $f\in C_c^\infty(\rn)$ and $g\in C_c^\infty(\rn; \CC^{n+1})$
\begin{equation}\nonumber
\| t^m\pd_t^m B_i \sl_t f\|_\lprn \lesssim \| f\|_\lprn,\qquad\| t^m\pd_t^m (\sl_t B_i)\cdot g\|_\lprn \lesssim \| g\|_\lprn.
\end{equation} 
\end{corollary}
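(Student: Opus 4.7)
Since $B_i \in L^n(\rn)$ is $t$-independent and $p \approx 2 < 3 \leq n$, Hölder's inequality with exponents $(n, p^*)$, where $1/p^* = 1/p - 1/n$, gives
\[
\|t^m\pd_t^m B_i\sl_t f\|_{L^p(\rn)} \;=\; \|B_i\cdot t^m\pd_t^m\sl_t f\|_{L^p(\rn)} \;\leq\; \|B_i\|_{L^n(\rn)}\,\|t^m\pd_t^m\sl_t f\|_{L^{p^*}(\rn)}.
\]
When $m=0$, this is already the content of Theorem~\ref{thm-sup-on-slices}, which gives $\sup_{t>0}\|\sl_t f\|_{L^{p^*}}\lesssim \|f\|_{L^p}$. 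For $m\geq 1$, the plan is to apply the homogeneous Sobolev embedding $\dot W^{1,p}(\rn)\hookrightarrow L^{p^*}(\rn)$ to the function $x \mapsto t^m\pd_t^m\sl_t f(x)$, which for $f\in C_c^\infty(\rn)$ and $t>0$ is smooth and decays at spatial infinity, reducing matters to the gradient-level slice estimate
\begin{equation}\label{eq.plan-goal}
\sup_{t>0}\,\|t^m\pd_t^m\nabla_x\sl_t f\|_{L^p(\rn)} \;\lesssim\; \|f\|_{L^p(\rn)}.
\end{equation}

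\textbf{Induction on $m$ via Caccioppoli on slices.} I would prove \eqref{eq.plan-goal} by induction on $m$, with base case $m=0$ given by Theorem~\ref{thm-sup-on-slices}. For the inductive step, observe that the $t$-independence of $\L$ ensures that $u := \pd_t^m\sl_t f$ is a weak solution of $\L u = 0$ in $\reu$. Applying the $L^p$ Caccioppoli inequality on slices (Proposition~\ref{prop-caccioppoli-on-slices}) to $u$ with cubes of side-length $\ell(Q)\approx t$, and summing over a tiling of $\rn$ (with bounded overlap of the enlarged cubes $Q^*$), yields
\[
\|\nabla\pd_t^m\sl_t f\|_{L^p(\rn)} \;\lesssim\; \frac{1}{t}\sup_{s\approx t}\|\pd_t^m\sl_s f\|_{L^p(\rn)}.
\]
Using the trivial pointwise bound $|\pd_t^m\sl_s f|\leq |\nabla\pd_t^{m-1}\sl_s f|$, multiplying by $t^m$ and taking suprema gives
\[
\sup_t\|t^m\pd_t^m\nabla\sl_t f\|_{L^p(\rn)} \;\lesssim\; \sup_s\|s^{m-1}\pd_t^{m-1}\nabla\sl_s f\|_{L^p(\rn)},
\]
which by the inductive hypothesis is $\lesssim\|f\|_{L^p(\rn)}$. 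Combining \eqref{eq.plan-goal} with the first paragraph closes the proof of the first estimate.

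\textbf{Second estimate via $L^2$ duality.} For $g\in C_c^\infty(\rn;\CC^{n+1})$ and $h\in C_c^\infty(\rn)$, the adjoint relation $(\sl_t^{\L})^* = \sl_{-t}^{\L^*}$ in the $L^2$ pairing (cf.\ Proposition~4.18 of \cite{bhlmp}) together with the identity $\langle B_i\cdot g,\,\varphi\rangle = \langle g,\,\bar B_i\varphi\rangle$ and the commutation of $\pd_t^m$ through the pairing give
\[
\bigl\langle t^m\pd_t^m(\sl_t B_i)\cdot g,\, h\bigr\rangle_{L^2(\rn)} \;=\; (-1)^m\bigl\langle g,\,\bar B_i\cdot t^m\pd_s^m\sl_s^{\L^*}h\big|_{s=-t}\bigr\rangle_{L^2(\rn;\CC^{n+1})}.
\]
Taking the supremum over $h$ with $\|h\|_{L^{p'}(\rn)}\leq 1$ and invoking the first estimate of the corollary applied to the dual operator $\L^*$ (which also satisfies Hypothesis~A with the same parameters, and for which $p'\in(2-\varepsilon_0,2+\varepsilon_0)$ lies in the admissible range) yields the desired bound $\|t^m\pd_t^m(\sl_t B_i)\cdot g\|_{L^p(\rn)} \lesssim \|g\|_{L^p(\rn)}$.

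\textbf{Main obstacle.} The principal difficulty is the $L^p$-range bookkeeping in the inductive step: the Caccioppoli inequality on slices is only valid in the Meyers interval $(2_-, 2_+)$ of Definition~\ref{def-twoplustwominus}, which in general does not contain $p^*$. For this reason the iteration must be carried out entirely at the gradient level in $L^p$, with the single transition to $L^{p^*}$ performed only once, via the Sobolev embedding in the first paragraph. All subsequent pieces (the duality and the various slice estimates) are standard once this interplay between Sobolev and Caccioppoli is set up correctly.
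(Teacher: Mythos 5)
Your argument is correct and follows the paper's own (sketched) proof: H\"older plus the Sobolev embedding together with the slice estimates of Theorem \ref{thm-sup-on-slices} to absorb the factor $B_i$, iteration of Caccioppoli's inequality on slices (Proposition \ref{prop-caccioppoli-on-slices}) to handle $m\geq 1$ at the gradient level, and duality for the second bound. The only point to flag is that the duality step uses the first estimate for $\L^*$ at the conjugate exponent $p'$, which for $p$ near $2-\varepsilon_0$ falls slightly outside $(2-\varepsilon_0,2+\varepsilon_0)$; this is harmless since $\varepsilon_0$ is only asserted to exist depending on dimension and ellipticity and may be shrunk, but it deserves a sentence.
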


The following result is really a corollary of the above estimates; we state it on its own for future reference.

\begin{theorem}[Estimates on Slices for $\dl$]\label{thm-slices-double-layer}
Suppose $\L$ satisfies the hypotheses of Theorem \ref{thm-general-extrapolation-thm}. If $\varepsilon_0$ is as in Theorem \ref{thm-general-extrapolation-thm}, then for $m\geq 0$ and $p\in (2-\varepsilon_0, 2+\varepsilon_0)$ and $f\in C_c^\infty(\rn)$,
\begin{equation}\nonumber
\| t^m\pd_t^m \dl_t f\|_\lprn \lesssim \| f\|_\lprn, \qquad t>0.
\end{equation}
\end{theorem}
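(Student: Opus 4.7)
The natural plan is to reduce the double layer to the single layer via the duality identity in Lemma \ref{lem-double-layer} and then invoke the slice estimates already proved for the single layer operators. Specifically, for $f \in C_c^\infty(\rn)$, Lemma \ref{lem-double-layer} gives
\[
\dl_t f = (\sl_t \nabla)(A\vec{N}f) + (\sl_t \overline{B}_2)(\vec{N}f),
\]
and since $A$ and $B_2$ are independent of $t$, the transversal derivatives pass through, yielding
\[
t^m\pd_t^m \dl_t f = t^m\pd_t^m(\sl_t\nabla)(A\vec{N}f) + t^m\pd_t^m(\sl_t\overline{B}_2)(\vec{N}f).
\]
Since $A \in L^\infty(\rn)$ and $\vec{N}$ is constant, the vector $g := A\vec{N}f$ belongs to $(L^p(\rn))^{n+1}$ with $\|g\|_{L^p(\rn)} \lesssim \|f\|_{L^p(\rn)}$, and likewise $\vec{N}f \in L^p$ with the analogous bound.

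For the first term I would rewrite $t^m\pd_t^m(\sl_t\nabla) = t^m\pd_t^{m-1}\pd_t(\sl_t\nabla)$ and invoke the last clause of Corollary \ref{cor-lp-slices-estimates-grad-st-grad}: replacing the outer $\nabla$ in $t^m\pd_t^{m-1}\nabla(\sl_t\nabla)$ by a $\pd_t$ yields the $L^p(\rn)$ slice bound for every $m \geq 0$ and every $p\in(2-\ep_0,2+\ep_0)$. Applied to $g$, this gives
\[
\| t^m\pd_t^m(\sl_t\nabla)\cdot g\|_{L^p(\rn)} \lesssim_{m,p} \|g\|_{L^p(\rn)} \lesssim \|f\|_{L^p(\rn)}.
\]
For the second term I would directly apply the slice estimate $\|t^m\pd_t^m(\sl_tB_i)\cdot h\|_{L^p(\rn)}\lesssim \|h\|_{L^p(\rn)}$ (valid for $m\geq 0$ and $p\in(2-\ep_0,2+\ep_0)$) with $B_i = \overline{B}_2$ and $h = \vec{N}f$, obtaining
\[
\|t^m\pd_t^m(\sl_t\overline{B}_2)(\vec{N}f)\|_{L^p(\rn)} \lesssim \|f\|_{L^p(\rn)}.
\]
Combining both estimates via the triangle inequality delivers the desired bound.

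There is no serious obstacle here: both of the required slice estimates have been established earlier in the section, and the duality identity reduces the double layer to precisely those operators for which they were designed. The only minor point to verify is that all the operator identities above hold in the pointwise/strong $L^p$ sense on $C_c^\infty(\rn)$, which follows from the representation in Lemma \ref{lem-double-layer} together with the uniform-in-$t$ boundedness of the constituent operators; this ensures that the decomposition is not merely formal. Density of $C_c^\infty(\rn)$ in $L^p(\rn)$ then extends the estimate to all of $L^p(\rn)$ if desired.
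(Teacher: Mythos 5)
Your proof is correct and follows essentially the same route as the paper: Lemma \ref{lem-double-layer} plus the previously established single-layer slice estimates (Corollary \ref{cor-lp-slices-estimates-grad-st-grad} and the $B_i$ corollary). The only cosmetic difference is that the paper first reduces to $m=0$ via Caccioppoli's inequality on slices, whereas you handle all $m\geq 0$ by invoking the corollaries directly — which themselves were proved by exactly that Caccioppoli reduction, so the substance is identical.
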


\begin{proof}
Again by Caccioppoli inequality on slices it is enough to treat the case $m=0$. The result is an immediate consequence of the following representation formula for the double layer given earlier: For $f\in C_c^\infty(\rn)$ we have $\dl_t f= (\sl_t\nb)\cdot(A\vec{N} f) + (\sl_t\overline{B}_2)\cdot \vec{N} f$, where $\vec{N}= -e_{n+1}$ is the exterior normal.
\end{proof}

The following result will be used in the proof of the non-tangential maximal function estimate. 
\begin{lemma}\label{lem-slices-on-Lipschitz-graph}
Suppose that $u\in W^{1,2}_{\loc}(\reu)$ is a solution of $\L u=0$ in $\reu$, given by either $u= \sl_t f$ or $u=\dl g$ for some $f,g\in C_c^\infty$. For any positive Lipschitz function $\varphi:\rn \to \RR$ with $\| \nb \varphi\|_{L^\infty(\rn)}\leq 1$, if we define the function $u_\varphi(x,t):= u(x,t+\varphi(x))$,  $(x,t)\in \reu$, then we have 
\begin{equation}\nonumber
\sup_{t>0} \|  u_\varphi(\cdot, t)\|_\lprn \lesssim \| \v(t\nabla u)\|_\lprn,
\end{equation}
for $p\in (2-\varepsilon_0, 2+\varepsilon_0)$ as in Theorem \ref{thm.verticalsqfctnbounds} (one has to keep in mind here that the vertical square function ``travels down" as long as we have good estimates on slices).
\end{lemma}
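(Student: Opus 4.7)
The plan is as follows. Since $u$ is either $\sl f$ or $\dl g$ with $f,g\in C_c^\infty$, the slice estimates in Theorems \ref{thm-sup-on-slices} and \ref{thm-slices-double-layer}, combined with Caccioppoli's inequality on slices (Proposition \ref{prop-caccioppoli-on-slices}) to upgrade averaged convergence to pointwise convergence a.e., guarantee that $u(x,\tau)\to 0$ as $\tau\to\infty$ pointwise a.e.\ in $x$. Applying the fundamental theorem of calculus to the absolutely continuous map $\tau\mapsto |u(x,\tau)|^2$ (cf.\ Proposition \ref{prop-prelim-vertical-max}) gives, pointwise in $x$,
\begin{equation*}
|u_\varphi(x,t)|^2 \;=\; -2\int_{t+\varphi(x)}^\infty \mathrm{Re}\big(u(x,\tau)\overline{\partial_\tau u(x,\tau)}\big)\,d\tau \;\leq\; 2\int_{t+\varphi(x)}^\infty |u(x,\tau)|\,|\partial_\tau u(x,\tau)|\,d\tau.
\end{equation*}
Young's inequality $2|ab|\leq a^2/\tau+\tau b^2$ then yields
\begin{equation*}
|u_\varphi(x,t)|^2 \;\leq\; \int_{\varphi(x)}^\infty |u(x,\tau)|^2\,\frac{d\tau}{\tau} \;+\; \v(\tau\nabla u)(x)^2,
\end{equation*}
where I have used $|\partial_\tau u|\leq |\nabla u|$ and the definition of $\v$ to absorb the second integral into $\v(\tau\nabla u)(x)^2$. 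After taking the $L^{p/2}$ norm in $x$, the second term contributes the desired $\|\v(t\nabla u)\|_{L^p}^2$.

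For the first term I would apply the \emph{traveling-down} procedure suggested by the parenthetical remark. The idea is to re-apply the same FTC+AM-GM identity to the integrand $|u(x,\tau)|^2$ inside $\int_{\varphi(x)}^\infty|u|^2\,d\tau/\tau$, producing, after a Fubini step, terms involving $\v(\tau\nabla u)$ and boundary contributions at the lower endpoint $\tau=\varphi(x)$. These boundary contributions are precisely $L^p$-slices of $u$ at the Lipschitz graph, controlled by the slice estimates of Theorem \ref{thm-sup-on-slices} (for $\sl$) and Theorem \ref{thm-slices-double-layer} (for $\dl$), which give uniform-in-$\tau$ bounds $\|u(\cdot,\tau)\|_{L^p}\lesssim\|f\|_{L^p}$ or $\|g\|_{L^p}$. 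Iterating this reduction finitely many times --- each step trading one power of $\tau^{-1}$ in the integrand for a slice norm --- reduces matters to two admissible kinds of terms: multiples of $\|\v(\tau\nabla u)\|_{L^p}^2$ and slice norms of $u$, both of which are themselves bounded by $\|\v(t\nabla u)\|_{L^p}^2$ in the range $p\in(2-\varepsilon_0,2+\varepsilon_0)$.

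The main obstacle is the traveling-down step itself: the truncated square-function integral $\int_{\varphi(x)}^\infty |u|^2\,d\tau/\tau$ is \emph{not} controlled by any derivative-involving vertical square function at the level of general scalar functions (simple examples like $f(\tau)=(1+\tau)^{-\varepsilon}$ show that $\sup|f|$ is not bounded by $\|\sqrt{\tau}f'\|_{L^2}$ uniformly in $\varepsilon>0$). One genuinely uses the PDE structure: that $u$ is a solution of a $t$-independent elliptic equation, so that Caccioppoli on slices produces the required reverse-H\"older-type relations exchanging $u$ for $\tau\nabla u$, and the slice estimates for layer potentials (available precisely because of the $L^p$-boundedness of the layer potentials established earlier in Section 4) absorb the endpoint terms generated by the iterated integrations by parts. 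This is the sense in which the vertical square function ``travels down as long as we have good estimates on slices''.
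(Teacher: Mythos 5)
Your argument breaks down at the very first splitting, and the proposed repair does not close the gap. After the pointwise Young inequality you are left with the term $\int_{\varphi(x)}^\infty |u(x,\tau)|^2\,\tfrac{d\tau}{\tau}$, and this quantity is genuinely \emph{not} controlled by $\| \v(t\nabla u)\|_\lprn$: already for $\varphi\equiv\varepsilon$ a small constant, this term behaves like $\log(1/\varepsilon)$ times a slice norm of $u$ (near the lower endpoint $u(x,\tau)$ does not decay), so its $L^{p/2}$ norm blows up as $\varepsilon\to0$ while both sides of the lemma stay bounded — the loss comes from the weight choice $1/\tau$ vs.\ $\tau$ in the AM--GM step, exactly the scalar obstruction you yourself point out. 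The iteration you then sketch cannot rescue this: re-applying the FTC identity to $\int |u|^2\,d\tau/\tau$ reproduces a term of the same form (nothing is gained), and the ``boundary contributions at the lower endpoint $\tau=\varphi(x)$'' are precisely the graph slices $u(x,\varphi(x))$ you are trying to estimate, so the scheme is circular. Moreover, Theorems \ref{thm-sup-on-slices} and \ref{thm-slices-double-layer} bound \emph{horizontal} slices $\{t=\mathrm{const}\}$ by $\|f\|_\lprn$ or $\|g\|_\lprn$; they neither apply along a Lipschitz graph nor produce the right-hand side $\| \v(t\nabla u)\|_\lprn$ demanded by the lemma (which is what the application in Lemma \ref{lem-travelling-down-ntmax-weak-lp} requires after rescaling the graph).

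The paper's proof avoids all of this by flattening the graph: since $\L$ is $t$-independent and $\|\nb\varphi\|_{L^\infty(\rn)}\le1$, the pulled-back function $u_\varphi$ solves $\L_\varphi u_\varphi=0$ in $\reu$ for an operator $\L_\varphi$ of the same class (comparable ellipticity, lower-order coefficients with the same critical norms). One then applies to $\L_\varphi$ the general estimate ``slices are controlled by the vertical square function,'' i.e.\ the proof of the uniform slice bound in \cite[Theorem 6.12]{bhlmp} (valid for all operators of this form, not just for layer potentials), which gives $\sup_{t>0}\|u_\varphi(\cdot,t)\|_\lprn\lesssim\|\v(t\nabla u_\varphi)\|_\lprn$; finally, since $\varphi\ge0$ and $|\nb\varphi|\le1$, one has the pointwise comparison $\v(t\nabla u_\varphi)(x)\lesssim\v(t\nabla u)(x)$, which yields the stated bound. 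If you want to salvage your approach, you would in effect have to reprove that slice-versus-square-function estimate along the graph, which is exactly what the change of variables reduces to a known result.
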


\begin{proof}
We sketch the proof: The function $u_\varphi$ solves an elliptic equation $\L_\varphi u_\varphi=0$ of the same type as $\L$, with the corresponding norms of the operator $\L_\varphi$ controlled in terms of those for $\L$ and $\|\nb\varphi\|_{L^\infty(\rn)}$. Next, we apply \cite[Theorem 6.12]{bhlmp} (or, more precisely, its proof of the uniform $Y^{1,2}(\bb R^n)$ estimate). 
\end{proof}

\section{Non-tangential Maximal Function Estimates}

\begin{proposition}\label{prop-improvement-ntmax}
Let $u\in W^{1,2}_{\loc}(\reu)$ be a solution of $\L u=0$ in $\reu$. For all $q\geq 1$ and $\varepsilon>0$ it holds that\footnote{See Definition \ref{def-ntmax} for the truncated maximal function $\mntmt^{(\varepsilon)}$.} 
\begin{equation}\nonumber
\mntmt^{(\varepsilon)}(\nabla u) \lesssim \m(\mntmq(\pd_t u)) + \m(\nabla_{\|} u(\cdot, \varepsilon)) + \m(u(\cdot,\varepsilon))\cdot \m_2(B_1),
\end{equation}
with implicit constants depending on dimension, ellipticity and $q$. Here we recall that we have defined, for any $r>0$ and $g\in L^r_{\loc}(\rn)$, $\m_r(g):=\m(|g|^r)^{1/r}$. In particular, if $1<p<n$ and $u(\cdot, \varepsilon)\in Y^{1,p}(\rn)$ for every $\varepsilon>0$, 
\begin{equation}\nonumber
\| \mntmt(\nabla u)\|_\lprn \lesssim \| \mntmo(\pd_t u)\|_\lprn + \sup_{\varepsilon>0} \| \nabla_{\|} u(\cdot,\varepsilon)\|_\lprn,
\end{equation}
whenever the right hand side is finite. Moreover, for $u$ we have that for any $p>1$,
\begin{equation}\nonumber
\| \mntmt(u)\|_\lprn \lesssim \|\mntmo(u)\|_\lprn.
\end{equation}  
\end{proposition}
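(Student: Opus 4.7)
The plan is to establish the pointwise estimate first, then deduce the $L^p$ bounds via the Hardy--Littlewood maximal theorem.

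Fix $x\in\mathbb{R}^n$ and any $(y,t)$ with $|y-x|<t$, $t>\varepsilon$. I would begin by applying Caccioppoli (Proposition \ref{prop-caccioppoli-inequality-lp}) on the doubled Whitney region $2\mathcal{C}_{y,t}$. Testing the equation $\cL u=0$ against $(u-c)\eta^2$ for a smooth cut-off $\eta$ adapted to $2\mathcal{C}_{y,t}$, using ellipticity of $A$ together with Cauchy's inequality with $\epsilon$ to absorb the cross terms arising from the lower-order coefficients, for any constant $c$ one obtains
\[
a_2(\nabla u)(y,t)^2 \;\lesssim\; \frac{1}{t^2}\fiint_{2\mathcal{C}_{y,t}} |u-c|^2\,dz\,ds \;+\; \fiint_{2\mathcal{C}_{y,t}}(|B_1|^2+|B_2|^2)|u-c|^2 \;+\; |c|^2\fiint_{2\mathcal{C}_{y,t}}|B_1|^2.
\]
I would then pick $c := \fint_{B(x,3t)} u(z,\varepsilon)\,dz$ and decompose $u(z,s)-c = \bigl(u(z,s)-u(z,\varepsilon)\bigr) + \bigl(u(z,\varepsilon)-c\bigr)$, where the first summand equals $\int_\varepsilon^s \partial_\tau u(z,\tau)\,d\tau$.

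For the vertical piece, a dyadic decomposition of $[\varepsilon,s]$ into scales $[2^k\varepsilon,2^{k+1}\varepsilon]$, combined with Caccioppoli on slices (Proposition \ref{prop-caccioppoli-on-slices}), allows me to replace pointwise values of $|\partial_\tau u|$ by spatio-temporal averages on Whitney cubes at each scale. On each such cube the average of $|\partial_\tau u|^q$ is bounded by $a_q(\partial_t u)(z',\tau_k)^q$, and since $(z,\tau_k)\in\Gamma(z')$ for any $z'$ with $|z'-z|<\tau_k$, this is in turn dominated by $\mntm_q(\partial_t u)(z')$. A subsequent spatial average over $B(x,Ct)$ yields the bound by $\m\bigl(\mntm_q(\partial_t u)\bigr)(x)$. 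For the horizontal piece $u(z,\varepsilon)-c$, I would apply Poincar\'e at height $\varepsilon$, self-improving via the Meyers/Gehring reverse H\"older improvement for the gradient of solutions of a $t$-independent elliptic operator in order to push the right-hand exponent arbitrarily close to $1$, obtaining the bound by $\m(\nabla_{\|}u(\cdot,\varepsilon))(x)$. The leftover $|c|^2\fiint|B_1|^2$ term contributes $\m(u(\cdot,\varepsilon))(x)\cdot \m_2(B_1)(x)$, while the $|B_i|^2|u-c|^2$ pieces are absorbed by the same splitting since $|B_i|^2|c|^2$ is already accounted for and $|B_i|^2|u(z,s)-u(z,\varepsilon)|^2$ is handled by the same dyadic/Hardy--Littlewood argument combined with H\"older.

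Taking the supremum over admissible $(y,t)$ gives the pointwise inequality. The $L^p$ estimate for $\mntm_2(\nabla u)$ then follows by integration and the Hardy--Littlewood maximal theorem on each summand; the $B_1$ contribution is controlled in $L^p$ for $1<p<n$ by H\"older and Sobolev embedding, since
\[
\bigl\|\m(u(\cdot,\varepsilon))\,\m_2(B_1)\bigr\|_{L^p(\mathbb{R}^n)} \;\lesssim\; \|u(\cdot,\varepsilon)\|_{L^{p^*}(\mathbb{R}^n)}\,\|B_1\|_{L^n(\mathbb{R}^n)} \;\lesssim\; \|\nabla_{\|} u(\cdot,\varepsilon)\|_{L^p(\mathbb{R}^n)},
\]
which can be absorbed into the $\nabla_{\|}u(\cdot,\varepsilon)$ term on the right. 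Finally, the estimate $\|\mntm_2 u\|_{L^p} \lesssim \|\mntm_1 u\|_{L^p}$ is a direct consequence of Proposition \ref{prop-prelim-inhomog-reverse-holder} (applied with $F\equiv 0$): it yields $a_2(u)(y,t) \lesssim a_1(u)(y,Ct)$ on slightly enlarged Whitney regions, after which taking non-tangential suprema and invoking the $L^p$ equivalence of non-tangential maximal functions of different apertures finishes the proof.

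The principal difficulty lies in the horizontal piece: ensuring that the Caccioppoli--Poincar\'e chain is sharp enough to yield $\m$ (rather than $\m_2$) of $\nabla_{\|}u(\cdot,\varepsilon)$, which is what allows the conclusion to hold in the full range $1<p<n$. This hinges on invoking the Gehring-type self-improvement of reverse H\"older for the gradient of solutions with $t$-independent coefficients, while carefully tracking and absorbing the contributions of the lower-order terms $B_1$ and $B_2$ throughout.
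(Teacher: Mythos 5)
There is a genuine gap, and it sits exactly where you flagged the ``principal difficulty.'' You perform the splitting $u-c=(u-u(\cdot,\varepsilon))+(u(\cdot,\varepsilon)-c)$ \emph{inside the $L^2$ average} coming from Caccioppoli, and then hope to repair the horizontal piece by a Meyers/Gehring self-improvement ``pushing the right-hand exponent arbitrarily close to $1$.'' This cannot work here for two reasons. First, the function you would need to self-improve is the slice restriction $\nabla_{\|}u(\cdot,\varepsilon)$ on balls of radius $\approx t$, with $\varepsilon$ possibly much smaller than $t$: this is not a solution of an elliptic equation on $\rn$, the relevant solid neighborhoods of the slice at scale $t$ leave $\reu$, and no boundary regularity theory is available — indeed the absence of De Giorgi--Nash--Moser is the whole point of this setting, and the only Caccioppoli/reverse H\"older estimates at your disposal live in the window $(2_-,2_+)$ of Definition \ref{def-twoplustwominus}, so exponents cannot be pushed ``arbitrarily close to $1$.'' Second, even in the interior, Gehring raises the integrability of the gradient; it does not upgrade the Poincar\'e inequality for a fixed slice function from the Sobolev--Poincar\'e form $\big(\fint|u(\cdot,\varepsilon)-c|^2\big)^{1/2}\lesssim t\,\m_{2n/(n+2)}(\nabla_{\|}u(\cdot,\varepsilon))$ to the $L^2$--$L^1$ form needed to produce $\m(\nabla_{\|}u(\cdot,\varepsilon))$, and $\m_{2n/(n+2)}$ is genuinely larger than $\m$. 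The same framing also degrades your vertical piece: with the square outside, Minkowski/Cauchy--Schwarz forces $\m_2$- and $\mntm_2$-type quantities rather than $\m(\mntmq(\pd_t u))$ with $q\geq1$ (in particular $q=1$), and the weaker conclusion $\m_2(\mntm_2(\pd_t u))$ would not support the later good-$\lambda$ argument of Lemma \ref{lem-travelling-down-ntmax-weak-lp}, which needs $q<p$.

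The missing idea, which is how the paper proceeds, is to lower the exponent \emph{before} splitting, using that $u-c$ itself solves an inhomogeneous equation on the solid Whitney region: $\L(u-c)=\div(B_1c)$, so the interior reverse H\"older inequality of Proposition \ref{prop-prelim-inhomog-reverse-holder} (with $q=1$, $F=-B_1c$) combined with Caccioppoli yields
\begin{equation*}
a_2(\nabla u)(x,t)\;\lesssim\;\frac1t\,a_1^*(u-c)(x,t)+|c|\,a_2^*(B_1)(x,t),
\end{equation*}
i.e.\ everything is reduced to an $L^1$ average of $u-c$ over the (solid, interior) Whitney region, where the equation is available. Only then does one split: the horizontal piece $u(\cdot,\varepsilon)-c$ requires nothing more than the $L^1$ Poincar\'e inequality on the slice, giving $t\,\m(\nabla_{\|}u(\cdot,\varepsilon))(z)$, and the vertical piece needs only the fundamental theorem of calculus, Fubini, and a dyadic decomposition in $\tau$ (no Caccioppoli on slices), giving $t\,\m(\mntmo(\pd_t u))(z)$; the $|c|\,a_2^*(B_1)$ term gives $\m(u(\cdot,\varepsilon))\cdot\m_2(B_1)$ by $t$-independence. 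Your treatment of the $B_1$ term in $L^p$ via H\"older and Sobolev, and your proof of $\|\mntmt(u)\|_\lprn\lesssim\|\mntmo(u)\|_\lprn$ via Proposition \ref{prop-prelim-inhomog-reverse-holder} and change of aperture, are both fine and agree with the paper; it is the order of operations (reverse H\"older reduction to $L^1$ averages on the solution \emph{first}, splitting \emph{second}) that your argument must adopt to reach the stated estimate.
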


\begin{proof}
The statement for $u$ follows from the reverse H\"older inequality for solutions (Proposition \ref{prop-prelim-inhomog-reverse-holder}) and the comparability of $\mntm$ defined with different parameters for $\cxt$.

Fix $\varepsilon > 0$ and set $u_\varepsilon(x) = u(x, \varepsilon)$.
Fix $z\in \rn$ and $(x,t)\in \Gamma(z)$. Recall that we defined the cylinders $\cxt$ in Section \ref{sec.prelim}. We denote by $\cxtd$ the concentric dilate $2\cxt$ and $a_1^*$ the   $L^1$ average built with $\cxtd$ instead of $\cxt$ in (\ref{eq.avg}). By the reverse H\"older inequality in Proposition \ref{prop-prelim-inhomog-reverse-holder}, for any $c\in \CC$
\begin{equation}\nonumber
a_2(\nabla u)(x,t) \lesssim \dfrac{1}{t}a_1^*(u-c)(x,t) + |c|a_2^*(B_1)(x,t)=: I+II.
\end{equation}
If we choose  $c:=\fint_{\Delta_{x,t}} u_\varepsilon\, dz$, then we immediately see, exploiting the $t$-independence of $B_1$, 
\begin{equation}\nonumber
II \leq \m(u_\varepsilon)(z)\cdot a_2^*(B_1)(x,t) \lesssim \m(u_\varepsilon)(z)\cdot \m_2(B_1)(z).
\end{equation}
It remains to estimate $I$. For this purpose we compute 
\begin{equation}\nonumber
a_1^*(u-c)(x,t) \leq a_1^*(u-u_\varepsilon)(x,t) + a_1^*(u_\varepsilon-c)(x,t).
\end{equation}
From the definition of $c$ we see that 
\begin{equation}\nonumber 
a_1^*(u_\varepsilon-c)(x,t)  = \dfint_{\cxtd} \Big|u_\varepsilon(y)-\fint_{\Delta_{x,t}} u_\varepsilon(w)\, dw \Big|\, dyds  \lesssim t \fint_{\Delta_{x,t}} |\nabla_{\|} u_\varepsilon(y)|\, dy 
  \leq t\m(\nabla_{\|} u_\varepsilon)(z),
\end{equation}
where we used the Poincar\'e inequality in $L^1$. On the other hand, by the fundamental theorem of calculus, and introducing an average in space, we have that
\begin{multline}\nonumber 
a_1^*(u-u_\varepsilon)(x,t)  = \dfint_\cxtd \Big| u(y,s)-u(y,\varepsilon)\Big|\, dyds 
  \leq  \dfint_\cxtd  \int_\varepsilon^s |\pd_\tau u(y,\tau)|\, d\tau\, dyds 
  \leq \dfint_\cxtd  \int_0^s |\pd_\tau u(y,\tau)| d\tau dyds\\
  = \fint_{|t-s|<t/8}\int_0^s \fint_{|x-y|<t/8}\fint_{|y-w|<\tau/8} |\pd_\tau u(y,\tau)|\, dwdy\, d\tau  ds\\
  \lesssim  \fint_{|t-s|<t/8} \fint_{|x-w|<t/2} \int_0^s \fint_{|w-y|<\tau/8} |\pd_\tau u(y,\tau)|\, dyd\tau\, dwds. 
\end{multline}
Now we notice, for fixed $s>0$, 
\begin{multline}\nonumber 
\int_0^s\fint_{|w-y|<\tau/8} |\pd_\tau u(y,\tau)|\, dyd\tau   \lesssim \sum_{k\geq 0} 2^{-k}s\fint_{2^{-k-1}s}^{2^{-k}s} \fint_{|w-y|<2^{-k-3}s} |\pd_\tau u(y,\tau)|\, dyd\tau \\
 \lesssim \sum_{k\geq 0} 2^{-k} s a_1^*(\pd_\tau u)(y, 2^{-k-1/2}s)  
  \lesssim \sum_{k\geq 0} 2^{-k}s \mntmo(\partial_\tau u)(y) 
  \lesssim s\mntmo(\pd_\tau u)(y). 
\end{multline}
Plugging this estimate into the inequality preceding it we arrive at 
\begin{equation}\nonumber
a_1^*(u-u_\varepsilon)(x,t) \lesssim \fint_{|t-s|<t/8}\fint_{|x-w|<t/2} s\mntmo(\partial_\tau u)(y)\, dy ds \lesssim t\m(\mntmo(\partial_\tau u))(z), 
\end{equation}
where we used the fact that $t\approx s$ for the last inequality. We conclude $I \lesssim \m(\mntmo(\partial_\tau u))(z)$, which, when combined with the estimate for $II$ yields the desired result in the case that $q=1$. 
\end{proof}

\begin{lemma}\label{lem-travelling-down-ntmax-weak-lp} Suppose that $\L$ satisfies Hypothesis A (see Definition \ref{def-hypothesis-a}). Let $u(x,t)=\pd_t \sl_t f(x)$ for some $f\in C_c^\infty(\rn)$ or $u(x,t)= \dl_t g$ for some $g\in C_c^\infty(\rn)$. There exists $m_6\in \NN$ and $\varepsilon_6>0$ such that if $m\geq m_6$ and $p\in (2-\varepsilon_6, 2+\varepsilon_6)$ then for every $q<p$, 
\begin{equation}\nonumber
\| \mntmq(\lthtm f)\|_\lprni \lesssim \| \s(\thtmo f)\|_\lprn + \| \v(\thto f)\|_\lprn ,
\end{equation} 
with implicit constants depending on $p,m,n$ and ellipticity; and where we have defined, in the case of $u(x,t)=\pd_t \sl_t f(x)$, 
\begin{equation}\nonumber
\thtm f:= t^m\pd_t^{m} \nabla \sl_t f=t^m\pd_t^{m-1}\nabla u,\quad\text{and}\quad \lthtm f:= t^m\pd_t^m \pd_t \sl_t f=  t^m\pd_t^{m}u,
\end{equation}
and in the case of $u(x,t)= \dl_t g$,
\[
\thtm f= t^m\pd_t^{m_1}\nb \dlp f,\quad\text{and}\quad \lthtm f= t^m\pd_t^m \dlp f.
\]  
Therefore the conclusion can be rewritten, in terms of $u$, as
\begin{equation}\nonumber
\| \mntmq(t^m\pd_t^m u)\|_\lprni \lesssim \| \s(t^{m+1}\pd_t^{m}\nabla u)\|_\lprn + \| \v(t\nabla u)\|_\lprn.
\end{equation}
\end{lemma}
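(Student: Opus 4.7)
Set $w := t^m\partial_t^m u$. By the $t$-independence of the coefficients of $\mathcal L$, we have $t^{m+1}\partial_t^m\nabla u = t\nabla w$, so the claim becomes
\[
\|\widetilde{\mathcal N}_q(w)\|_{L^{p,\infty}(\mathbb R^n)}\;\lesssim\;\|\mathbb S(t\nabla w)\|_{L^p(\mathbb R^n)}+\|\mathbb V(t\nabla u)\|_{L^p(\mathbb R^n)}.
\]
The strategy is a pointwise-in-$x_0$ ``$\mathcal N\lesssim\mathbb S$'' estimate, with the vertical square function of $t\nabla u$ absorbing the tail at infinity, followed by Chebyshev to produce the weak-$L^p$ bound.

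First I would reduce $\widetilde{\mathcal N}_q(w)(x_0)$ to the ordinary non-tangential maximal function $\mathcal N(\widetilde w)(x_0)$ of the Whitney average $\widetilde w(y,s):=\fint_{\mathcal C_{y,s}} w$. Because $w=t^m\partial_t^m u$ is, up to lower-order corrections controlled by the off-diagonal decay of Proposition \ref{prop-vertical-off-diagonal-decay}, essentially a solution of an operator of the same type as $\mathcal L$, the reverse-H\"older inequality of Proposition \ref{prop-prelim-inhomog-reverse-holder} combined with the Caccioppoli inequality on slices (Proposition \ref{prop-caccioppoli-on-slices}) lets us pass from the $L^q$-average in the definition of $\widetilde{\mathcal N}_q$ to an $L^1$-average on a mildly enlarged cylinder, so that $\widetilde{\mathcal N}_q(w)(x_0)\lesssim \mathcal N(\widetilde w)(x_0)$ pointwise.

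The core step is a pointwise FTC argument for $\widetilde w$. Fix $(y,s)\in\Gamma(x_0)$. Since $f,g\in C_c^\infty(\mathbb R^n)$, the slice bounds of Theorems \ref{thm-sup-on-slices} and \ref{thm-slices-double-layer} together with Caccioppoli give $\widetilde w(y,\tau)\to 0$ as $\tau\to\infty$, hence
\[
\widetilde w(y,s)\;=\;-\int_s^\infty \partial_\tau\widetilde w(y,\tau)\,d\tau.
\]
Decompose $[s,\infty)=\bigcup_{k\geq 0}[2^ks,2^{k+1}s)$ and bound each annular integral by Cauchy-Schwarz:
\[
\int_{2^ks}^{2^{k+1}s}|\partial_\tau\widetilde w(y,\tau)|\,d\tau\;\lesssim\;\Big(\int_{2^ks}^{2^{k+1}s}|\tau\partial_\tau\widetilde w(y,\tau)|^2\,\tfrac{d\tau}{\tau}\Big)^{1/2}.
\]
For dyadic indices $k$ with $2^ks$ comparable to the height of a realizing point $(x_{t_0},t_0)\in\Gamma(x_0)$ (so that the annulus lies inside a widened cone over $x_0$), Poincar\'e on Whitney cylinders and Caccioppoli on slices let us replace $\tau\partial_\tau\widetilde w$ by a cone average of $\tau\nabla w$, yielding a contribution bounded by $\mathbb S(t\nabla w)(x_0)$. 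For the remaining far dyadic indices, the algebraic identity $\partial_\tau w=m\tau^{-1}w+\tau^m\partial_\tau^{m+1}u$ combined with the PDE $\mathcal L u=0$ (which rewrites the highest $\tau$-derivative of $u$ in terms of spatial derivatives of $u$, modulo drift terms handled through Proposition \ref{prop-i1b-bounded}) and one further Cauchy-Schwarz in $\tau$ bound the tail by the vertical square function $\mathbb V(\tau\nabla u)(y)$. Taking a non-tangential supremum over $(y,s)\in\Gamma(x_0)$ brings in a Hardy-Littlewood maximal on the base, producing the pointwise estimate
\[
\widetilde{\mathcal N}_q(w)(x_0)\;\lesssim\;\mathbb S(t\nabla w)(x_0)+\mathcal M\big(\mathbb V(t\nabla u)\big)(x_0).
\]

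The weak-$L^p$ bound then follows by Chebyshev applied to this pointwise inequality, together with the $L^p$-boundedness of $\mathcal M$ for $p>1$ and the strong-$L^p$ hypotheses on the square functions. The main obstacle is the far-tail step: rewriting $\tau^m\partial_\tau^{m+1}u$ via the equation brings in the critical drifts $B_1,B_2$, whose mere $L^n$ regularity forces a careful use of the $I_1 B_i$ bounds of Proposition \ref{prop-i1b-bounded} together with the off-diagonal decay of Proposition \ref{prop-vertical-off-diagonal-decay}; the decomposition into ``near'' and ``far'' dyadic bands must be done so that the far band contributes only to $\mathbb V$-type quantities and not to the conical object (which does not decay in $k$). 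Once this bookkeeping is handled, the remainder is a standard adaptation of the classical $\mathcal N\lesssim \mathbb S$ pointwise estimate to our rough, lower-order-terms setting.
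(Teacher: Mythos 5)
Your proposal hinges on a \emph{pointwise} bound of the form
\begin{equation}\nonumber
\mntmq(t^m\pd_t^m u)(x_0)\ \lesssim\ \s(t\nabla w)(x_0)+\m\big(\v(t\nabla u)\big)(x_0),
\end{equation}
obtained by writing $\widetilde w(y,s)=-\int_s^\infty \pd_\tau\widetilde w\,d\tau$, splitting into dyadic bands and applying Cauchy--Schwarz on each band. This is exactly where the argument breaks: after Cauchy--Schwarz the band contributions are $\big(\int_{2^ks}^{2^{k+1}s}|\tau\pd_\tau\widetilde w|^2\,\tfrac{d\tau}{\tau}\big)^{1/2}$, and you must sum these \emph{square roots} over $k$. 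Neither the conical nor the vertical square function controls such a sum: finiteness of $\v$ or $\s$ only says the full $d\tau/\tau$ integral converges, which gives no summability of the square roots without geometric decay in $k$ — and no such decay is available here (the factor $t^m$ in $w=t^m\pd_t^m u$ does not help, since $\pd_\tau w=m\tau^{-1}w+\tau^m\pd_\tau^{m+1}u$ reintroduces the same object at higher scales). You flag this obstruction yourself ("the far band contributes only to $\mathbb V$-type quantities... which does not decay in $k$") but do not resolve it, and it cannot be resolved in this form: a pointwise $\mathcal N\lesssim\mathbb S$ estimate is false even for harmonic functions — the correct comparisons are good-$\lambda$ or $L^p$-norm statements. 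Note also that a pointwise bound plus Chebyshev would yield \emph{strong} $L^p$, stronger than the weak-type conclusion claimed, which is itself a sign the approach overreaches.

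The paper's proof is structurally different and avoids any pointwise FTC in $\tau$. It runs a stopping-time/good-$\lambda$ argument on the set where $\mntmq^\varepsilon(\lthtm f)>\lambda$ but $\s(\thtmo f)\le\gamma\lambda$: one introduces the Lipschitz sawtooth height $\vpe(x)$ (Lipschitz constant $1$), shows via Poincar\'e--Sobolev and the smallness of the conical square function that $\lambda\lesssim a_{q,m}(u)$ on a surface ball of $\Gamma_\vpe$, hence $\lambda\lesssim\m_{\Gamma_\vpe}(a_{q,m}(u))(z,\vpe(z))$, and reduces by Caccioppoli (applied $m$ times) and a duality/maximal-weight trick to the estimate $\int_\rn|u(x,\tau\vpe(x))|^p\,dx\lesssim\|\v(t\nabla u)\|_\lprn^p$. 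The vertical square function thus enters not as a far-tail absorption at a fixed $y$, but globally through restriction of $u$ to Lipschitz graphs: $u(x,t+\tau\vpe(x))$ solves an operator of the same class, so the sup-on-slices bound (Theorem \ref{thm-sup-on-slices}, Lemma \ref{lem-slices-on-Lipschitz-graph}) controls its boundary trace by $\v(t\nabla u)$. Your proposal contains no substitute for this mechanism, so as written it has a genuine gap at its central step.
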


\begin{proof}
For $m\geq 0$ let us define a modified version of the averages $a_q$ given by 
\begin{equation}\nonumber
\aqm(u)(x,t):= \Big( \dfint_\cxt |t^m\pd_s^m u(y,s)|^q\, dyds\Big)^{1/q} \approx \Big( \dfint_\cxt |s^m\pd_s^m u(y,s)|^q\, dyds\Big)^{1/q}.
\end{equation}
In particular, for $z\in \rn$, $\mntmq(\lthtm)(z) \approx \sup_{(x,t)\in \Gamma(z)} \aqm(u)(x,t)$.  Writing for $\lambda>0$,
\begin{multline}\nonumber 
|\{ z\in \rn: \mntmq (\lthtm f)(z)>\lambda \}|   \leq  |\{ z\in \rn : \mntmq(\lthtm f)(z)>\lambda, \, \s(\thtmo f)(z)\leq \gamma\lambda \}|\\
   + |\{ z\in \rn: \s(\thtmo f)(z)>\gamma\lambda \}|, 
\end{multline}
we see that it is enough to prove that, for $\gamma>0$, 
\begin{equation}\nonumber
|\ele|:=|\{ z\in \rn : \lmntmq(\lthtm f)(z)>\lambda, \, \s(\thtmo f)(z)\leq \gamma\lambda \}| \lesssim \lambda^{-p} \| \v(\thto f)\|_\lprn,
\end{equation}
with constants uniform in $\varepsilon$ and $\lambda$. Define the function $\vpe$ as follows:
\begin{equation}\nonumber
\vpe(x):=\inf \Big\{ t>\varepsilon: \sup_{(y,s)\in (x,t)+\Gamma(0)} \aqm(u)(y,s)\leq \lambda \Big\}.
\end{equation}
Recall that we have, from the control on slices in Corollary \ref{cor-lp-slices-estimates-grad-st-grad}, if $p\in (2-\varepsilon_0, 2+\varepsilon_0)$ (here $\varepsilon_0$ is as in Theorem \ref{thm-general-extrapolation-thm} and Corollary \ref{cor-lp-slices-estimates-grad-st-grad}) 
\begin{equation}\label{eq-ntestimate-sup-on-slices}
\sup_{t>0} \| t^m\pd_t^m u(\cdot,t)\|_\lprn <\infty,
\end{equation}
so in particular
\begin{equation}\label{eq-1-nlessthans}
\aqm(u)(y,s)\lesssim s^{-n/q} \sup_{t>0}\| t^m\pd_t^m u\|_\lprn.
\end{equation}
We conclude that $\lim_{s\to\infty}\sup_{y\in \rn}\aqm(u)(y,s) =0$, and so $\varepsilon\leq \vpe(x)<\infty$ for every $x\in \rn$. Moreover $\vpe$ is a Lipschitz function with constant 1, since it satisfies the appropriate uniform cone condition. We set $\Gamma_\vpe:= \{ (x,t)\in \reu : t=\vpe(x)\}$, the graph of $\vpe$. Finally recall that we denote $(u)_A$ the average of $u$ over a set $A\subset \reu$ of finite measure.

We first claim that, for every $z\in \ele$ and if we denote $Z_\varepsilon :=(z,\vpe(z))$, $\lambda \lesssim \m_{\Gamma_\vpe}(\aqm(u)) (Z_\varepsilon)$, for some implicit constant independent of $\lambda$ and $\varepsilon$, and where $\m_{\Gamma_\vpe}$ denotes the maximal function on $\Gamma_\vpe$ with its natural surface measure, which we denote by $\sigma$. To see this fix $z\in \ele$ and $(x,t)\in Z_\varepsilon+\Gamma(0)$ and note that, owing to \eqref{eq-1-nlessthans} there exists $R>0$ such that $\aqm(u)(x,t)\leq \lambda/2$ if $t>R$. Moreover using that $\lmntmq(\lthtm f)(z)>\lambda$, so that $\vpe(z)>\varepsilon$, there exists $(y,s)\in \overline{Z_\varepsilon+\Gamma(0)}$ satisfying $\aqm(u)(x,t)>\lambda$. By continuity of $\aqm$ in $\reu$ we conclude from the above that there exists a point $(x,t)\in\overline{Z_\varepsilon+\Gamma(0)}$ such that $\aqm(u)(x,t)=\lambda$, and $(x,t)\in \overline{\{\aqm(u)>\lambda \}}$. Note also that the above implies $(x,t)\in \Gamma_\vpe$, i.e. $t=\vpe(x)$: For every $\delta>0$ there exists $(y,s)\in B((x,t), \delta)$ such that $\aqm(y,s)>\lambda$, and so, since $B((x,t),\delta)\subset (x,t-\sqrt{2}\delta)+\Gamma(0)$, we have $\vpe(x)>t-\sqrt{2}\delta$. Since $\delta$ was arbitrary we conclude $\vpe(x)\geq t$. On the other hand, by the Lipschitz condition on $\vpe$ it can't happen that $\Gamma_\vpe$ intersects the interior of the cone $Z_\varepsilon+\Gamma(0)$, therefore $\vpe(x)\leq t$. Notice that, in fact, the above shows that $(x,t)=(x,\vpe(x))\in \partial(Z_\varepsilon+\Gamma(0))$. 

Given such a point $(x,t)=(x,\vpe(x)):=X_\varepsilon$, and for any $(y,s)\in B(X_\varepsilon, t/100)$ we have, by the Poincar\'e-Sobolev inequality and writing $v_m(w,\tau):=\pd_\tau^m u(w,\tau)$,
\begin{multline}\nonumber 
\lambda  = \aqm(u)(x,t) \leq \Big( \dfint_{\cxt}t^{2m}|v_m-(v_m)_{\cxt}|^q\, dwd\tau\Big)^{1/q}+ t^m|(v_m)_{\cxt}-(v_m)_{\mathcal{C}_{y,s}}|+t^m|(v_m)_{\mathcal{C}_{y,s}}|\\
  \lesssim \Big( \dint_\cxt |\tau^m \nabla v_m|^2 \tau^{1-n}\, dwd\tau \Big)^{1/2} + t^m|(v_m)_{\mathcal{C}_{y,s}}|	 \lesssim \s(\thtmo f)(x) + t^m|(v_m)_{\mathcal{C}_{y,s}}|\\
  \leq \gamma\lambda + t^m|(v_m)_{\mathcal{C}_{y,s}}|\leq \gamma\lambda + \aqm(u)(y,s), 
\end{multline}
where we also used the fact that $x\in \ele$ and that $\tau\approx s\approx t$. Choosing $\gamma<1$ small enough we can write
\begin{equation}\nonumber
\lambda \lesssim \aqm(u)(y,s), \quad \text{for each }(y,s)\in B(X_\varepsilon, t/100).
\end{equation}
From here \eqref{eq-1-nlessthans} follows easily: Integrating the above inequality on $\Gamma_\vpe$ we have  
\begin{equation}\nonumber
\lambda \lesssim \fint_{B(X_\varepsilon,t/100)\cap \Gamma_\vpe} \aqm(u)(W)\, d\sigma(W)\lesssim \fint_{B(X_\varepsilon, t)\cap \Gamma_\vpe}\aqm(u)(W)\, d\sigma(W),
\end{equation}
where we have used $\sigma(B)\approx r(B)$ for any ball centered on $\Gamma_\vpe$. Moreover, since $X_\varepsilon\in \partial(Z_\varepsilon+\Gamma(0))$, we have $|z-x|= |t-\vpe(z)| =t-\vpe(x)<t$, and so we conclude $Z_\varepsilon\in B(X_\varepsilon, t)$ and \eqref{eq-1-nlessthans} follows.  Since \eqref{eq-1-nlessthans} holds for any $z\in \ele$ we see that 
\begin{equation}\nonumber
|\ele|\lesssim |\{ W\in \Gamma_\vpe : \m_{\Gamma_\vpe}(\aqm(u))(W)\}| \lesssim \lambda^{-p}\int_{\Gamma_\vpe} \aqm(u)^p(W)\, d\sigma(W).
\end{equation}
Therefore, it is enough to prove  that
\begin{equation}\label{eq-2-nlessthans}
\int_{\Gamma_\vpe} \aqm(u)^p(W)\, d\sigma(W)\lesssim \| \v(t\nabla u)\|_\lprn^p.
\end{equation}
We make a further reduction as follows: Note that, by the $L^q$ Caccioppoli's inequality applied $m$ times, for any $(x,t)\in \reu$,
\begin{equation}\nonumber 
\aqm(u)(x,t)   = \Big( \dfint_\cxt |t^m\partial_t^m u(y,s)|^q\, dyds\Big)^{1/q} 
  \lesssim \Big( \dfint_\cxtd |u(y,s)|^q\, dyds\Big)^{1/q}  \approx a_q^*(u)(x,t). 
\end{equation}
Therefore the result would follow from the estimate
\begin{equation}\nonumber
\int_\rn |a_q^*(u)(x,\vpe(x))|^p\, dy\approx \int_{\Gamma_\vpe} |a_q^*(W)|^p\, d\sigma(W) \lesssim \| \v(t\nabla u)\|_\lprn.
\end{equation}
To simplify notation   we set $g(x):= a_q^*(u)(x,\vpe(x))$,  and $\nu(x)=\m(g)(x)^{p-q}$. We then have that
\begin{equation}\nonumber
\int_\rn |a_q^*(u)(x,\vpe(x))|^p\, dy = \int_\rn g^p(x)\, dx \leq \int_\rn g^q(x)\nu(x)\, dx.
\end{equation}
Note that
\begin{equation}\nonumber
\dfrac{4}{5}\vpe(y)\leq \vpe(x)\leq \dfrac{4}{3}\vpe(y), \quad \text{whenever } |x-y|<\dfrac{\vpe(x)}{4},
\end{equation}
owing to the fact $\vpe$ is Lipschitz with constant one.

We now go back to the definition of $g$ and $a_q^*$ to compute, 
\begin{multline}\nonumber 
\int_\rn g^q(x)\nu(x)\, dx   \approx \int_\rn \fint_{|x-y|<\vpe/4} \fint_{3\vpe(x)/4}^{5\vpe(x)/4} |u(y,s)|^q\, dsdy\, \nu(x) dx \\
  \lesssim \int_{4/5}^{5/3} \int_\rn \fint_{|x-y|<\vpe(x)/8} |u(y,\tau \vpe(y))|^q\, dy \, \nu(x) dx \, d\tau \\
  \lesssim \int_{4/5}^{5/3} \int_\rn \m\Big( |u(\cdot, \tau\vpe(\cdot))|^q\Big)(x) \, \nu(x)dx \, d\tau.  
\end{multline}
By H\"older's Inequality with exponents $p/q$ and $p/(p-q)$ we see 
\begin{equation}\nonumber
\int_\rn \m\Big( |u(\cdot, \tau\vpe(\cdot))|^q\Big)(x) \, \nu(x)dx \leq \| \m_q(u(\cdot, \tau\vpe(\cdot)))\|_{L^{p}(\rn)}^q\| \nu\|_{L^{p/(p-q)}(\rn)},
\end{equation}
and, since $q<p$, by the boundedness of $\m_q$ in $L^p$
\begin{equation}\nonumber
\| \m_q(u(\cdot, \tau\vpe(\cdot)))\|_{L^{p}(\rn)}^q \lesssim \Big(\int_\rn |u(x,\tau\vpe(x))|^p\, dx \Big)^{q/p}.
\end{equation}
Similarly, 
\begin{equation}\nonumber
\| \nu\|_{L^{p/(p-q)}(\rn)} = \| \m(g)\|_\lprn^{p-q} \lesssim \Big(\int_\rn |g(x)|^p\, dx \Big)^{(p-q)/p}
\end{equation}
Combining the above estimates, we obtain 
\begin{equation}\nonumber
\int_\rn g^p(x)\, dx \lesssim \int_{4/5}^{5/3}\Big( \int_\rn |u(x,\tau\vpe(x))|^p\, dx\Big)^{q/p} \Big(\int_\rn g^p(x)\, dx \Big)^{(p-q)/p}\, d\tau.
\end{equation}
Using H\"older's inequality again (perhaps  with $g(x)\bbm1_{g<M}$ if necessary in order to divide by $\| g\|_\lprn$), 
\begin{equation}\nonumber
\int_\rn g^p(x)\, dx \lesssim \int_{4/5}^{5/3} \int_\rn |u(x,\tau\vpe(x))|^p\, dx d\tau.
\end{equation}
We now note that $\tau\vpe$ is a Lipschitz function with constant $\tau$. Therefore the function $v(x,t)=u(x,t+\vpe(x))$ solves $\L_{\tau\vpe} v=0$ in $\reu$, where the operator $\L_{\tau\vpe}$ is of the same type as $\L$ and moreover its coefficients are controlled (in the appropriate norms) by those of $\L$. Therefore, by the control on slices by the vertical square function in Theorem \ref{thm-sup-on-slices} (see also Lemma \ref{lem-slices-on-Lipschitz-graph})
\begin{multline}\nonumber 
\int_\rn |u(x, \tau\vpe(x))|^p\, dx   = \int_\rn |v(x,0)|^p\, dx \lesssim \int_\rn \Big(\int_0^\infty |t\nabla v(x,t)|^2\, \dfrac{dt}{t}\Big)^{p/2}\, dx \\
 \lesssim \int_\rn \Big( \int_{\tau\vpe(x)}^\infty |\nabla u(x,t)|^2\, tdt\Big)^{p/2}\, dx  
  \leq \| \v(t\nabla u)\|_\lprn^p. 
\end{multline}
This yields \eqref{eq-2-nlessthans} and the result is proved.
\end{proof}

\begin{remark}
Notice that in the above lemma, the only things required for its proof were:
\begin{enumerate}
	\item $\lthtm$ satisfies good estimates on slices (as in \eqref{eq-ntestimate-sup-on-slices}).
	\item We already have, {\it for all} operators of the form $\L= -\div(A\nabla +B_1)+ B_2\cdot \nabla$ (with sufficient smallness of the first order coefficients), the control on slices by the vertical square function 
	\begin{equation}\nonumber
	\| u(\cdot, t)\|_\lprn \lesssim \| \v(t\nabla u)\|_\lprn.
	\end{equation} 
\end{enumerate} 
\end{remark}

The following result uses the fact, proved in the next section, that $\v(t^m\pd_t^{m-1}\nb(\sl_t\nb))$ satisfies $L^p$ bounds for all $m\geq 1$.

\begin{corollary}[$L^p$ estimate for non-tangential maximal functions of layer potentials]\label{cor-travelling-down-ntmax-strong-lp}
Suppose that $\L$ satisfies the hypotheses of Theorem \ref{thm-general-extrapolation-thm}. If $\varepsilon_0>0$ and $m_0$ are as in Theorem \ref{thm-general-extrapolation-thm}, and if $p\in (2-\varepsilon_0,2+\varepsilon_0)$ and $m\geq m_0\geq 1$ then 
\begin{equation}\nonumber
\| \mntmt(\thtm f)\|_\lprn \lesssim \| f\|_\lprn,
\end{equation}
where $\thtm$ is either $t^m\pd_t^{m}\nb(\sl_t\nb)$ or $t^m\pd_t^{m-1}\nb \dl_t$.
\end{corollary}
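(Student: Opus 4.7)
The plan is to combine three ingredients: the weak-$L^p$ endpoint from Lemma~\ref{lem-travelling-down-ntmax-weak-lp}, the strong-$L^p$ square function bounds of Theorem~\ref{thm-general-extrapolation-thm} (to control the right-hand sides in the lemma), and Proposition~\ref{prop-improvement-ntmax} (to exchange the transversal derivative $\pd_t w$ for the full gradient $\nabla w$), all woven together by Marcinkiewicz interpolation. First I would write $\thtm f = t^m\nabla w$, where $w$ is a scalar weak solution of $\cL w = 0$ in $\reu$ built from iterated transversal derivatives of a layer potential: $w = \pd_t^m[(\sl_t\nabla)f]$ in the single-layer case and $w = \pd_t^{m-1}[\dl_t f]$ in the double-layer case. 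That $w$ is a solution follows from the $t$-independence of the coefficients; and since $s\approx t$ on $\cxt$, one has $a_2(\thtm f)(x,t) \approx t^m\, a_2(\nabla w)(x,t)$. Applying Lemma~\ref{lem-travelling-down-ntmax-weak-lp} to $u = \pd_t w$, itself a solution, then furnishes weak-$L^p$ bounds on $\mntmq(t^m\pd_t w)$, with right-hand sides $\|\s(\thtmo f)\|_\lprn + \|\v(\thto f)\|_\lprn$ of the form controlled in $L^p$ by Theorem~\ref{thm-general-extrapolation-thm} (up to index shifts and the integration-by-parts trick $\|\v(\Theta_{t,1}f)\|_\lprn\lesssim_m\|\v(\Theta_{t,m+1}f)\|_\lprn$ recorded in the introduction). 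Thus $\|\mntmq(t^m\pd_t w)\|_{L^{p,\infty}(\rn)}\lesssim\|f\|_\lprn$ for every $p\in(2-\varepsilon_0, 2+\varepsilon_0)$ and $q<p$; Marcinkiewicz then upgrades to the strong type on the same interval, since the operator is sublinear and the interval is open.

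To transfer from $t^m\pd_t w$ to $t^m\nabla w = \thtm f$, I would invoke Proposition~\ref{prop-improvement-ntmax} in a scale-localized fashion: for each dyadic scale $t_0$, truncate $\mntmt$ to the strip $\{t\approx t_0\}$, apply Proposition~\ref{prop-improvement-ntmax} with truncation parameter $\varepsilon\approx t_0$, and observe that in this regime the boundary contribution $t^m\|\nabla_{\|}w(\cdot,\varepsilon)\|_\lprn \approx \varepsilon^m\|\nabla_{\|}w(\cdot,\varepsilon)\|_\lprn \lesssim \|f\|_\lprn$ by the slice estimates of Corollary~\ref{cor-lp-slices-estimates-grad-st-grad} (which furnish the necessary $\varepsilon^m$-decay of transversal derivatives of the layer potentials on slices). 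Summing the dyadic-scale pieces and using the sublinearity of $\mntmt$ then yields the claimed strong-$L^p$ bound $\|\mntmt(\thtm f)\|_\lprn \lesssim \|f\|_\lprn$.

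The hard part will be the careful coupling of Proposition~\ref{prop-improvement-ntmax} with the $t^m$ weight. Without the dyadic-scale localization, the boundary term $\|\nabla_{\|}w(\cdot,\varepsilon)\|_\lprn$ blows up like $\varepsilon^{-k}$ (with $k\in\{m-1, m\}$) as $\varepsilon\to 0^+$, and only by coupling $\varepsilon\approx t$ does the weight $t^m$ precisely absorb this blowup. A related subtlety is that the Poincar\'e and Caccioppoli steps inside the proof of Proposition~\ref{prop-improvement-ntmax} must be tracked under the weight $t^m$, which requires splitting the fundamental-theorem-of-calculus integral $w(y,s)-w(y,\varepsilon)=\int_\varepsilon^s\pd_\tau w\,d\tau$ into the regimes $\tau\approx t$ (where $t^m\approx\tau^m$ and the contribution is directly absorbed into $\mntmq(t^m\pd_t w)$) and $\tau\ll t$ (where the slice decay from Corollary~\ref{cor-lp-slices-estimates-grad-st-grad} absorbs the $t^m/\tau^m$ mismatch).
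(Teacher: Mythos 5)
Your overall architecture is the same as the paper's (weak-$L^p$ from Lemma \ref{lem-travelling-down-ntmax-weak-lp}, square-function control via Theorem \ref{thm-general-extrapolation-thm} plus the vertical traveling-down of Theorem \ref{thm.verticalsqfctnbounds}, slice estimates, Proposition \ref{prop-improvement-ntmax} to trade $\pd_t$ for $\nb$, and real interpolation at the end), but the transfer step as you describe it has a genuine gap. If you localize to dyadic strips $\{t\approx t_0\}$ and apply Proposition \ref{prop-improvement-ntmax} with $\varepsilon\approx t_0$, each strip produces a boundary contribution of size $\lesssim \|f\|_\lprn$ \emph{uniformly} in $t_0$, with no decay in the scale: you cannot ``sum the dyadic-scale pieces'' (the sum over all scales diverges), and you cannot pass to a supremum either, since $\|\sup_k g_k\|_\lprn$ is not controlled by $\sup_k\|g_k\|_\lprn$; in fact $\sup_{t_0} t_0^m\,\m\big(\nbp w(\cdot,c t_0)\big)$ is itself a nontangential-maximal-type quantity of essentially the same strength as $\mntmt(\thtm f)$, so the scheme is circular. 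Your fallback for the regime $\tau\ll t$ is also quantitatively wrong: the slice estimates of Corollary \ref{cor-lp-slices-estimates-grad-st-grad} give decay of exactly one negative power of $\tau$ per transversal derivative, i.e. $\|\pd_\tau w(\cdot,\tau)\|_\lprn\lesssim \tau^{-(m+1)}\|f\|_\lprn$ and no better, so the weighted FTC piece is of order $t^m\int_\varepsilon^{\delta t}\tau^{-(m+1)}\,d\tau\,\|f\|_\lprn\approx (t/\varepsilon)^m\|f\|_\lprn$, which blows up as $\varepsilon\to0$; the slice decay does not absorb the $t^m/\tau^m$ mismatch, it is exactly borderline against you. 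Thus the only way to tame the FTC term is the coupling $\varepsilon\approx t$, which forces you into the cross-scale summation that does not converge: the argument does not close in either configuration.

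The paper never decomposes in $t$. It applies the pointwise inequality underlying Proposition \ref{prop-improvement-ntmax} once, with a single fixed $\varepsilon$, and then lets $\varepsilon\to0$ by dominated convergence, arriving at $\|\mntmt(\thtm f)\|_\lprni\lesssim \|\mntmq(\lthtm f)\|_\lprni+\sup_{t>0}\|\thtm f\|_\lprn$; the unique boundary slice is bounded uniformly in $\varepsilon$ by the sup-on-slices estimates (Corollary \ref{cor-lp-slices-estimates-grad-st-grad}, Theorem \ref{thm-slices-double-layer}), so no cross-scale bookkeeping ever appears, and the weight $t^m$ stays inside the operators $\thtm,\lthtm$, only compared across a single Whitney box where $s\approx t$. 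The term $\|\mntmq(\lthtm f)\|_\lprni$ is then controlled by Lemma \ref{lem-travelling-down-ntmax-weak-lp} (whose stopping-time/sawtooth proof is where the weight is genuinely processed, rather than via an FTC integral down to the boundary), the square functions by Theorems \ref{thm-general-extrapolation-thm} and \ref{thm.verticalsqfctnbounds}, and the strong bound follows by real interpolation over the open window in $p$, exactly as in your endgame. To repair your write-up, drop the dyadic-in-$t$ localization and run the derivative exchange exactly once at a fixed $\varepsilon$, keeping the boundary contribution as the single term $\sup_{t>0}\|\thtm f\|_\lprn$ before taking the limit $\varepsilon\to0$.
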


\begin{proof}
We treat only the single layer. The double layer argument is identical. Also, notice by $t$ independence it is enough to treat the operator with the inside gradient replaced by $\nbp$. First, from the pointwise inequality in Proposition \ref{prop-improvement-ntmax} and the dominated convergence theorem, we see that for any $q\geq 1$ and $p$ in the above range, and setting $\lthtm=t^m\pd_t^m(\sl_t\nbp)=-t^m\pd_t^m\sl_t\divp$, we have
\begin{equation}\nonumber
\| \mntmt(\thtm f)\|_{\lprni}\lesssim  \| \mntmq(\lthtm f)\|_\lprni + \sup_{t>0} \| \thtm f\|_\lprn.
\end{equation}
In particular by the slices estimates in Corollary \ref{cor-lp-slices-estimates-grad-st-grad} and Theorem \ref{thm-slices-double-layer}, and choosing $q$ as above,
\begin{equation}\nonumber
\| \mntmt(\thtm f)\|_\lprni \lesssim \| \s(\thtm f)\|_\lprn + \| \v(\thto f)\|_\lprn \lesssim \| f\|_\lprn, 
\end{equation}
where we have used Theorem \ref{thm.verticalsqfctnbounds} for the last step; ensuring that $\v(\thto f)$ is under control. The result now follows from real interpolation.
\end{proof}

\section{Traveling Down}

We first establish the vertical square function estimates, since there is little difficulty there. The discrepancy between these so-called traveling down procedures for the vertical and conical square functions should be contrasted with the situation in the extrapolation arguments.

\begin{theorem}\label{thm.verticalsqfctnbounds}
Suppose $\L$ satisfies the hypotheses of Theorem \ref{thm-general-extrapolation-thm}. Let $\varepsilon_0$ be as in Theorem \ref{thm-general-extrapolation-thm}, then for $p\in (2-\varepsilon_0, 2+\varepsilon_0) $ and every $m\geq 1$ it holds	
\begin{equation}\nonumber
\| \v(\thtm f) \|_\lprn \lesssim_m \| f\|_\lprn,
\end{equation}
where $\thtm$ is either $t^m\pd_t^{m-1}\nb (\sl_t\nb)$ or $t^{m}\pd_{t}^{m-1}\nb \dl_t$.
\end{theorem}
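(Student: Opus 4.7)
The plan is to proceed by downward induction on $m$, using Theorem \ref{thm-general-extrapolation-thm} as the base case (valid for all $m\geq m_0$) and descending one derivative at a time via an integration by parts in the transversal variable. The inductive step will rest on a pointwise comparison of $\v(\thtm f)$ with $\v(\Theta_{t,m+1}f)$, up to controllable boundary contributions at $t=\eps$ and $t=N$.

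Concretely, in the single-layer case I would set $a(t;x):=\partial_t^{m-1}\nabla(\sl_t\nb)f(x)$, so that $\thtm f(x)=t^m a(t;x)$ and $\Theta_{t,m+1}f(x)=t^{m+1}\partial_t a(t;x)$. Writing $t^{2m-1}\,dt=\tfrac{1}{2m}\,d(t^{2m})$ and integrating by parts on $(\eps,N)$, we obtain for a.e.\ $x$
\begin{equation*}
2m\int_\eps^N t^{2m-1}|a(t;x)|^2\,dt = \big[t^{2m}|a|^2\big]_\eps^N - 2\,\real\!\int_\eps^N t^{2m}\partial_t a(t;x)\overline{a(t;x)}\,dt.
\end{equation*}
An application of Cauchy--Schwarz in the $t$-integral, followed by the standard quadratic argument, yields the pointwise estimate
\begin{equation*}
\v_\eps^N(\thtm f)(x)\;\leq\;\tfrac{1}{m}\,\v_\eps^N(\Theta_{t,m+1}f)(x) + C_m\max_{t\in\{\eps,N\}}\,|\thtm f(x,t)|,
\end{equation*}
where $\v_\eps^N$ denotes the vertical square function truncated to $(\eps,N)$.

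Taking $L^p$ norms and invoking (a) the inductive hypothesis $\|\v(\Theta_{t,m+1}f)\|_{L^p}\lesssim\|f\|_{L^p}$ and (b) the uniform $L^p$ slice estimates from Corollary \ref{cor-lp-slices-estimates-grad-st-grad} (which yield $\sup_{t>0}\|\thtm f(\cdot,t)\|_{L^p}\lesssim\|f\|_{L^p}$), we deduce
\begin{equation*}
\|\v_\eps^N(\thtm f)\|_{L^p(\rn)}\;\lesssim_m\;\|f\|_{L^p(\rn)},
\end{equation*}
with constants independent of $\eps$ and $N$. Letting $\eps\to 0^+$ and $N\to\infty$, the monotone convergence theorem gives the desired bound on $\v(\thtm f)$. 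The double-layer case $\thtm=t^m\partial_t^{m-1}\nabla\dl_t$ proceeds by the identical argument, using Theorem \ref{thm-slices-double-layer} in place of Corollary \ref{cor-lp-slices-estimates-grad-st-grad} to control the $L^p$ slice norms of $\thtm f(\cdot,t)$.

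The only delicate point is the treatment of the boundary terms at $t=\eps$ and $t=N$: they do not vanish pointwise a.e.\ (we have no such decay for $\thtm f$ at either endpoint), so the argument must be carried out in the truncated setting and passed to the limit only after taking the $L^p$ norm, where the uniform slice estimates absorb the boundary contributions into $\|f\|_{L^p}$. Apart from this, the downward induction terminates at $m=1$ after finitely many steps, since the base case $m\geq m_0$ is supplied by Theorem \ref{thm-general-extrapolation-thm}.
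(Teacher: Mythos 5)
Your proposal is correct and follows essentially the same route as the paper: a truncated integration by parts in $t$, boundary terms absorbed by the uniform $L^p$ slice estimates (Corollary \ref{cor-lp-slices-estimates-grad-st-grad}, resp.\ Theorem \ref{thm-slices-double-layer}), the cross term hidden to produce $\v(\Theta_{t,m+1}f)$, and downward induction from the base case $m\geq m_0$ supplied by Theorem \ref{thm-general-extrapolation-thm}. The only (harmless) difference is that you absorb the cross term pointwise in $x$ via a quadratic inequality before taking $L^p$ norms, whereas the paper applies Cauchy's inequality with a parameter after raising to the power $p/2$ and hides the term using the a priori finiteness of the truncated quantity.
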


\begin{proof}[Proof of Theorem \ref{thm.verticalsqfctnbounds}]
We employ the same idea as in the $L^2$ case from \cite{bhlmp}; integrating by parts in $t$ to control the square function of $\thtm$ in terms of $\Theta_{t,m+1}$ plus terms that are bounded in $L^p$. Notice that for $m\geq m_0$ large enough the desired bound is a consequence of Theorem \ref{thm-general-extrapolation-thm} and Lemma \ref{lem-square-fuctions-with-B-inside} for the case of the double layer.

We start by defining, for $\eta>0$, 
\begin{equation}\nonumber
\int_\rn \Big(\int_0^\infty |\thtm f(x)|^2\, \dfrac{dt}{t}\Big)^{p/2}\, dx = \lim_{\eta\to 0^+} \int_\rn \Big( \int_{\eta}^{1/\eta} |\thtm f(x)|^2\, \dfrac{dt}{t}\Big)^{p/2}\, dx =: I_\eta.
\end{equation}
In particular, owing to the estimates on slices from Corollary \ref{cor-lp-slices-estimates-grad-st-grad}, we see that $I_\eta<\infty$ for all such $\eta$. We now  carry out the integration by parts
\begin{multline}\nonumber 
I_\eta   =  \int_\rn\Big( \int_\eta^{1/\eta} t^{2m-1}|\partial_t^{m} \nabla \sl_t \divp f(x)|^2\, dt \Big)^{p/2}\, dx\\
  \leq\int_\rn \Big(   |t^{m}\partial_t^{m} \nabla \sl_t \divp f(x)|^2 \Big|_{t=\eta}^{1/\eta}\Big)^{p/2}\, dx  + \int_\rn \Big(\int_\eta^{1/\eta} |\thtm f(x)||\thtmo f(x)|\dfrac{dt}{t} \Big)^{p/2} \, dx\\
  \leq C_m \| f\|_{\lprn}^p +   \int_\rn \Big(\int_\eta^{1/\eta} |\thtm f(x)||\thtmo f(x)|\dfrac{dt}{t} \Big)^{p/2} \, dx, 
\end{multline}
where we used the estimates on slices in Corollary \ref{cor-lp-slices-estimates-grad-st-grad} for the single layer and Theorem \ref{thm-slices-double-layer} for the double layer for the last line. Finally we use Cauchy's inequality with a parameter to obtain 
\begin{equation}\nonumber
\int_\rn \Big( \int_{\eta}^{1/\eta} |\thtm f(x)|^2\, \dfrac{dt}{t} \Big)^{p/2}\, dx \lesssim \| f\|_\lprn^p + \int_\rn \Big( \int_\eta^{1/\eta} |\thtmo f(x)|^2\, \dfrac{dt}{t}\Big)^{p/2}\, dx. 
\end{equation}
Letting $\eta\to 0$ we can write $\| \v (\thtm f) \|_\lprn \lesssim \| f\|_\lprn + \| \v(\thtmo f)\|_\lprn$. The result now follows by induction and Theorem \ref{thm-general-extrapolation-thm}.
\end{proof}

We now turn to the much harder task of traveling down with the conical square function. Here, although the idea is the same  integration by parts technique, the arguments become much more elaborate due to the `space averaging' happening alongside the integration over the transversal variable. To handle this we will make use of the non-tangential maximal function estimates from the previous section (see Lemma \ref{lem-travelling-down-ntmax-weak-lp}) through a modified version of the classical Carleson embedding lemma (see Lemma \ref{lem-prelim-modified-carleson}). However, the use of the non-tangential maximal function makes the traveling down procedure for either this object or the conical square function a bit subtle. It is our hope that the following lemma (which should be read with the results of the previous section in mind) and Theorem \ref{thm-ntmax-estimates-final-version} clarifies the intertwining of these two procedures.

We also note that for the range $p>2$ we already have the conical square function bounded by the vertical (see Proposition \ref{prop-prelim-comparability-square-functions}) by general facts about square functions, so it is only the case $p<2$ that is of interest.

\begin{lemma}\label{lem-traveling-down-conical}
Let $\thtm$ be either $t^m\pd_t^{m-1}\nb(\sl_t\nb)$ or $t^m\pd_t^{m-1}\nb\dl_t$. Suppose that $m\geq 1$ is given such that  
\begin{equation}\nonumber
\s(\thtm f), \s(\thtmo f), \v(\thtmo f), \v(\thtm f), \mntm(\thtm f)\in \ltrn,
\end{equation}
and $\sup_{t>0}\| \thtm f\|_\ltrn <\infty$, $\lim_{t\to\infty}\|\thtm f\|_\ltrn =0$, for every $f\in C_c^\infty(\rn)$. Then, for every $1<p<2$,
\begin{multline}\nonumber  
\| \s(\thtm f)\|_\lprn   \lesssim \| \s(\thtmo f)\|_\lprn+ \| \v(\thtmo f)\|_\lprn\\
   + \| \v(\thtm f)\|_\lprn + \| \mntm(\thtm f)\|_\lprn 
    + \sup_{t>0}\| \thtm f\|_\lprn. 
\end{multline} 
\end{lemma}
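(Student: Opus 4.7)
The plan is to integrate by parts in the transversal variable $t$, adapting the argument of Theorem~\ref{thm.verticalsqfctnbounds} (the vertical case) to the conical geometry of $\s$. First truncate $\s(\thtm f)(x)^2$ at heights $\eta$ and $1/\eta$, which is legitimate under the given $L^2$ integrability hypotheses; denote the truncated version by $\s_\eta$. Writing $\thtm f=t^m\pd_t^{m-1}F$ with $F$ either $\nb(\sl_t\nb)f$ or $\nb\dl_t f$, the key identity is
\[
(2m+1)|\thtm f|^2 = \pd_t(t|\thtm f|^2) - 2\operatorname{Re}(\thtm f\,\overline{\thtmo f}),
\]
which follows from the product rule together with the relation $\thtmo f=t\pd_t\thtm f-m\thtm f$. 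Dividing by $t^{n+1}$, integrating over $\Gamma(x)\cap\{\eta<t<1/\eta\}$, then using Fubini and the one-variable fundamental theorem of calculus in $t$ (the $t$-range for each fixed $y$ is $(\max(|x-y|,\eta),1/\eta)$), and rearranging, leads to a schematic identity of the form
\[
(2m-n)\,c_n\,\s_\eta(\thtm f)(x)^2 = \mathcal{B}^{sl}_\eta(x) - I_*(x) - \frac{2}{c_n}\iint_{\Gamma(x)\cap\{\eta<t<1/\eta\}}\operatorname{Re}(\thtm f\,\overline{\thtmo f})\,\frac{dy\,dt}{t^{n+1}},
\]
where $\mathcal{B}^{sl}_\eta(x)$ collects the slice averages $\fint_{B(x,\eta)}|\thtm f(\cdot,\eta)|^2\,dy$ and $\fint_{B(x,1/\eta)}|\thtm f(\cdot,1/\eta)|^2\,dy$, and $I_*(x) := \int_\rn |x-y|^{-n}|\thtm f(y,|x-y|)|^2\,dy$ is the contribution from the lateral surface $\{|x-y|=t\}$ of the cone.

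Each slice term is pointwise bounded by the corresponding $\mathcal{M}_2(\thtm f(\cdot,t))(x)^2$, so after raising to the $p/2$-th power and integrating in $x$ one obtains a contribution of $\sup_{t>0}\|\thtm f(\cdot,t)\|_{L^p}$ via the $L^p$-boundedness of the Hardy-Littlewood maximal function (using $p>1$). The cross term is controlled by $c_n\,\s(\thtm f)(x)\s(\thtmo f)(x)$ through Cauchy-Schwarz; Young's inequality with a small parameter then absorbs the $\s(\thtm f)$ factor into the LHS and leaves a contribution of $\|\s(\thtmo f)\|_{L^p}$. This absorption goes through whenever $|2m-n|$ is bounded away from zero (in particular for $2m>n$); in the degenerate range $2m=n$ one replaces the exponent of $t$ in the IBP weight by $t^{\varepsilon}$ with $\varepsilon>0$ small, runs the same argument to obtain the analogous identity with nonzero coefficient $(2m-n+\varepsilon)$, and lets $\varepsilon\to 0^+$ at the end. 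The truncation in $\eta$ is then removed by monotone convergence, justified by the $L^2$ integrability hypotheses.

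The main obstacle, and the source of the terms $\|\mntm(\thtm f)\|_{L^p}$, $\|\v(\thtm f)\|_{L^p}$ and $\|\v(\thtmo f)\|_{L^p}$ on the right-hand side, is controlling $I_*(x)$ in $L^{p/2}(\rn)$. The naive pointwise bound $|\thtm f(y,|x-y|)|^2\lesssim \mntm(\thtm f)(x)^2$ obtained from the reverse Hölder inequality for solutions (Proposition~\ref{prop-prelim-inhomog-reverse-holder}) combined with the inclusion $\mathcal{C}_{y,|x-y|}\subset\Gamma^*(x)$ for a slightly wider cone yields a divergent weighted integral, because the kernel $|x-y|^{-n}$ sits exactly at the $L^1$ threshold. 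The remedy is to push the evaluation of $\thtm f$ off the cone surface and into its interior via the FTC in $t$, using the pointwise bound $|\pd_t\thtm f|\le(m|\thtm f|+|\thtmo f|)/t$ to write
\[
|\thtm f(y,|x-y|)|^2 \le |\thtm f(y,2|x-y|)|^2 + C_m\int_{|x-y|}^{2|x-y|}\bigl(|\thtm f(y,t)|^2+|\thtmo f(y,t)|^2\bigr)\,\frac{dt}{t}.
\]
The first term on the right corresponds to a point $(y,2|x-y|)$ strictly inside $\Gamma(x)$, and after multiplication by $|x-y|^{-n}$, Fubini, and a dyadic decomposition of $y$-space into shells $\{|x-y|\sim 2^k\}$, its $L^{p/2}(\rn)$-norm is dominated by $\|\mntm(\thtm f)\|_{L^p}^2$. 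The vertical integrals, after interchanging the order of integration in $y$ and $t$, acquire a shell structure $\{|x-y|\sim t\}$ and are bounded by $\|\v(\thtm f)\|_{L^p}^2$ and $\|\v(\thtmo f)\|_{L^p}^2$ respectively (possibly after an aperture change, which is harmless in $L^p$). Assembling these estimates completes the proof; the hardest step is the careful management of the borderline kernel $|x-y|^{-n}$, which forces the simultaneous appearance of $\mntm(\thtm f)$, $\v(\thtm f)$, and $\v(\thtmo f)$ on the right-hand side.
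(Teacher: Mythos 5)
Your cone integration by parts is set up correctly (the identity with coefficient $2m-n$, and the $\varepsilon$-modification at $2m=n$, are fine; note also that when $2m>n$ the lateral term enters with a favorable sign and can simply be dropped), but the treatment of the lateral term $I_*$ --- which is exactly the term you must bound from above in the regime $2m<n$ that actually occurs in the traveling-down scheme ($m=1$, $n\geq 3$) --- has a genuine gap. First, the piece $\int_\rn |x-y|^{-n}|\thtm f(y,2|x-y|)|^2\,dy$ involves pointwise values of $\thtm f$ along an $n$-dimensional surface inside the cone, whereas $\mntm(\thtm f)$ only controls \emph{solid} $L^2$ Whitney averages. Since $\thtm f$ is (a power of $t$ times transversal derivatives of) the full gradient of a solution, and no De Giorgi--Nash--Moser or other local boundedness is available in this paper, neither a pointwise value nor a surface average over the cone can be dominated by $\mntm(\thtm f)(x)$; the ``Fubini plus dyadic shells'' step produces surface averages at height comparable to the radius, not the solid averages that $\mntm$ sees, so the claimed domination by $\|\mntm(\thtm f)\|_\lprn^2$ is unjustified. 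Second, the collar pieces $\int_\rn |x-y|^{-n}\int_{|x-y|}^{2|x-y|}\big(|\thtm f(y,t)|^2+|\thtmo f(y,t)|^2\big)\,\tfrac{dt}{t}\,dy$ become, after Fubini, integrals of annular averages $\fint_{|x-y|\approx t}|\cdot|^2\,dy\,\tfrac{dt}{t}$, i.e.\ conical (not vertical) objects; for $p<2$ the conical square function is \emph{not} controlled by the vertical one (Proposition \ref{prop-prelim-comparability-square-functions} gives only the opposite direction), so the claimed bound by $\|\v(\thtm f)\|_\lprn^2$ and $\|\v(\thtmo f)\|_\lprn^2$ fails in the only range at issue. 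Bounding these collar pieces instead by $\s(\thtm f)(x)^2$ is circular: the region $\{|x-y|<t<2|x-y|\}$ lies inside $\Gamma(x)$, and the constant produced by your fundamental-theorem-of-calculus bound is of size $m$, which cannot in general be absorbed against the available coefficient $n-2m$.

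There is also a secondary problem with the slice terms: after squaring, your argument needs $\m$ (or $\m_2$ raised to the power $p$) to be bounded on $L^{p/2}(\rn)$, which fails because $p/2\leq 1$; so the term $\sup_{t>0}\|\thtm f\|_\lprn$ cannot be extracted that way either. For comparison, the paper's proof avoids cone boundaries entirely: it reduces the $L^p$ bound to a weighted $L^2$ bound with the Coifman--Rochberg weight $\nu=\m(\s(\thtm f)+H_m)^{p-2}$, rewrites $\int_\rn \s(\thtm f)^2\nu$ using the heat extension $P_t\nu$ of the weight, integrates by parts in $t$ only (so no lateral term arises), and controls the error involving $Q_t\nu=t\partial_tP_t\nu$ via the weighted Carleson measure estimate of Proposition \ref{prop-prelim-properties-heat-semigroup} together with the modified Carleson embedding of Lemma \ref{lem-prelim-modified-carleson}; that is precisely where $\mntm(\thtm f)$ enters, and it enters legitimately because that lemma pairs $\mntm$ with Whitney-averaged densities rather than pointwise or surface values.
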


\begin{proof}
We fix $m\geq 1$ as in the hypotheses and define $g_m:= \sup_{t>0} |\thtm f|$ , $h_m= \mntm (\thtm)$,  and $H_m:=  \s(\thtmo f) + g_m + h_m$. Recall from Proposition \ref{prop-prelim-vertical-max} 
\begin{equation}\nonumber
\| g_m\|_\ltrn \lesssim \| \v(\thtm f)\|_\ltrn + \| \v(\thtmo f)\|_\ltrn + \sup_{t>0}\| \thtm f\|_{\ltrn}.
\end{equation}
Therefore $H_m\in \ltrn$ and so, by the Coifman-Rochberg theorem, we see that if we define $\nu(x):= \m( \s(\thtm f) + H_m)(x)^{p-2}$ for some $1<p<2$, then $\nu^M\in A_2$ for any $M>1$ satisfying $M(2-p)<1$ and moreover $[\nu^M]_{A_2}$ depends only on the quantity $M(2-p)$. 

We now mimic  the proof of the extrapolation lemma \ref{lem-restricted-extrapolation} and write, for fixed $1<p<2$, 
\begin{multline}\label{eq-1-traveling-down-lemma} 
\int_\rn |\s(\thtm f)(x)|^p\, dx   \leq \int_\rn \m(\s(\thtm f) + H_m)(x)^p\, dx
  = \int_\rn \m(\s(\thtm f) + H_m)(x)^2\nu(x)\, dx \\
  \lesssim \int_\rn \m(\s(\thtm f))(x)^2\nu(x)\, dx + \int_\rn \m( H_m)(x)^2\nu(x)\, dx\\
  \lesssim_{[\nu]_{A_2}} \int_{\rn} \s(\thtm f)(x)^2\nu(x)\, dx + \int_\rn \m(H_m)(x)^p\, dx 
  \lesssim \int_\rn \s(\thtm f)(x)^2\nu(x)\, dx + \int_\rn H_m(x)^p\, dx, 
\end{multline} 
where we used the boundedness of $\m$ in $L^2(\nu)$, by the above discussion, and in $\lprn$. By definition of $H_m$, and Proposition \ref{prop-prelim-vertical-max}, we have
\begin{multline}\label{eq-1-conical-traveling-down} 
\| H_m\|_\lprn   \lesssim_p \| \v(\thtm f)\|_\lprn + \|\v(\thtmo f)\|_\lprn + \sup_{t>0}\| \thtm f\|_\lprn\\
  \quad + \| \s(\thtmo f)\|_\lprn + \| \mntm (\thtm f)\|_\lprn. 
\end{multline}
It thus remains to estimate the first term in \eqref{eq-1-traveling-down-lemma}. For this we will try to emulate the procedure for the vertical square function, introducing an approximate identity $P_t$ to smooth-out the averaging implicit in the definition of $\s$. We will first fix the approximate identity: For $t>0$ we define $P_t:= e^{t^2\Delta}$, $Q_t:=t\partial_te^{t^2\Delta}=t\partial_tP_t$. We will also need to truncate our weight to formally justify our computations so we define, for $N>0$,  $\nu_N:= \min(\nu,N)$. We  compute, using Fubini's theorem,
\begin{multline}\nonumber 
\int_\rn \s(\thtm f)(x)^2\, \nu(x)dx  = \int_\rn \int_0^\infty \fint_{|x-y|<t} |\thtm f(y)|^2 \, dy\dfrac{dt}{t}\nu(x)\, dx \\
  = \int_\rn \int_0^\infty |\thtm f(y)|^2 \fint_{|x-y|<t} \nu(x)\, dx \dfrac{dt}{t}\, dy 
  \lesssim \int_\rn \int_0^\infty |\thtm f(y)|^2P_t \nu (y)\, \dfrac{dydt}{t} =: I.
\end{multline}
Now, by the monotone convergence theorem,
\begin{equation}\nonumber 
I   = \lim_{N\to \infty}\lim_{\varepsilon\to0} I_{\varepsilon,N} :=\lim_{N\to\infty}\lim_{\varepsilon\to0} \int_\rn \int_\varepsilon^{1/\varepsilon} |\thtm f(y)|^2 P_t \nu_N(y) \, \dfrac{dydt}{t}.  
\end{equation}
Recalling the definition of $\thtm$ and integrating by parts in $t$ we obtain, recalling that $Q_t:=t\partial_t P_t$, 
\newcommand{\them}{\Theta_{\varepsilon,m}}
\newcommand{\thoem}{\Theta_{1/\varepsilon,m}}
\begin{multline}\label{eq-2-traveling-down-conical} 
I_{\varepsilon,N}  = -\dfrac{1}{2m}\int_\rn \int_\varepsilon^{1/\varepsilon} \partial_t\Big(|\partial_t^m\sl_t f|^2 P_t\nu_N\Big) t^{2m}\, dydt +  \dfrac{1}{2m}\int_\rn |\thtm f|^2 P_t\nu_N \, dy\Big|_{t=\varepsilon}^{1/\varepsilon}\\
  \leq \dfrac{1}{m} \int_\rn\int_\varepsilon^{1/\varepsilon} |\thtm f||\thtmo f|P_t \nu_N \, \dfrac{dydt}{t} + \dfrac{1}{2m} \int_\rn \int_\varepsilon^{1/\varepsilon} |\thtm f|^2 |Q_t\nu_N| \, \dfrac{dydt}{t}\\
  \quad + \dfrac{1}{2m}\int_\rn |\thoem f|^2 P_{1/\varepsilon}\nu_N\, dy 
   -\dfrac{1}{2m}\int_\rn |\them f|^2P_\varepsilon\nu_N \, dy\\
 =: II_{\varepsilon,N}+ III_{\varepsilon,N} + IV_{\varepsilon,N} + V_{\varepsilon,N}. 
\end{multline}
We handle the boundary terms first. To start we note that by Theorem 1.4 in \cite{bhlmp} we have that $\lim_{t\to \infty} \| \thtm f\|_\ltrn \to 0$, so that, using $|P_t\nu_N|\lesssim N$ pointwise in $\rn$, we obtain
\begin{equation}\nonumber
IV_{\varepsilon,N} \lesssim N\int_\rn |\thoem f|^2\, dy \to 0 \qquad \text{as} \quad \varepsilon\to 0.
\end{equation}
Therefore 
\begin{equation}\label{eq-control-IV-travelling-down-conical}
\lim_{N\to \infty}\lim_{\varepsilon\to 0} IV_{\varepsilon,N} =0.
\end{equation}
On the other hand, as $\varepsilon\to 0$, we have $P_\varepsilon\nu_N\to \nu_N$ pointwise a.e. and $|\them f|^2\leq g_m^2$, by definition of $g_m$. By the Dominated Convergence Theorem (recall $g_m\in \ltrn$ and $P_\varepsilon \nu_N\lesssim N$) we get that
\begin{equation}\nonumber 
V_{\varepsilon,N}  \lesssim \int_\rn g_m^2 P_\varepsilon \nu_N\, dy  
  \substack{\varepsilon\to 0\\ \longrightarrow} \int_\rn g_m^2 \nu_N \, dy 
  \leq \int_\rn g_m^2 \nu\, dy   
  \lesssim \int_\rn g_m^p\, dy, 
\end{equation}
where we used the definition of $\nu$ in the last line to conclude $\nu(y)\leq \m(g_m)(y)^{p-2}$ and the Hardy-Littlewood maximal theorem. We conclude
\begin{equation}\label{eq-control-V-travelling-down-conical}
\limsup_{N\to \infty}\limsup_{\varepsilon\to 0} V_{\varepsilon,N} \lesssim \int_\rn g_m^p\, dx.
\end{equation}
The first term $II_{\varepsilon,N}$ we can treat as usual; using Cauchy's inequality with a parameter we see
\begin{multline}\label{eq-control-II-traveling-down-conical} 
II_{\varepsilon,N}  \lesssim \delta \int_\rn \int_\varepsilon^{1/\varepsilon} |\thtm f|^2 P_t\nu_N \, \dfrac{dydt}{t} + C(\delta) \int_\rn \int_\varepsilon^{1/\varepsilon} |\thtmo f|^2P_t\nu_N\, \dfrac{dydt}{t}\\
  = \delta I_{\varepsilon,N} + C(\delta)\int_\rn \int_\varepsilon^{1/\varepsilon} |\thtmo f|^2P_t\nu_N\, \dfrac{dydt}{t} 
\end{multline}
Choosing $\delta$ small enough we can hide the first term on the right hand side of \eqref{eq-2-traveling-down-conical}.

Finally, we rewrite $III_{\varepsilon,N}$ in the following way, using the Cauchy-Schwarz inequality,
\begin{equation}\nonumber 
III_{\varepsilon,N}  = \int_\rn \int_\varepsilon^{1/\varepsilon} |\thtm f| \dfrac{|Q_t\nu_N|}{|P_t\nu_N|}\sqrt{P_t\nu_N} |\thtm f|\sqrt{P_t\nu_N}\, \dfrac{dydt}{t}\\
  \leq I_{\varepsilon,N} \int_\rn \int_\varepsilon^{1/\varepsilon} |\thtm f|^2 d\mu_N(y,t), 
\end{equation}
where we have defined 
\begin{equation}\nonumber
d\mu_N(x,t):= \dfrac{|Q_t\nu_N(x)|^2}{|P_t\nu_N(x)|^2}P_t\nu_N(x)\, \dfrac{dxdt}{t}. 
\end{equation}
By Proposition \ref{prop-prelim-properties-heat-semigroup} and the modified Carleson's lemma (Lemma \ref{lem-prelim-modified-carleson}) we obtain 
\begin{equation}\nonumber
\int_\rn \int_\varepsilon^{1/\varepsilon} |\thtm f|^2 d\mu_N(y,t) \lesssim \int_\rn \mntm(\thtm)^2\, \nu_N\, dy= \int_\rn h_m \nu_N\, dy \lesssim \int_\rn h_m^p\, dy.
\end{equation}
Therefore, applying once again Cauchy's inequality with a parameter, we see that 
\begin{equation}\label{eq-control-III-traveling-down-conical}
III_{\varepsilon,N} \lesssim \delta I_{\varepsilon,N} + C(\delta) \int_\rn h_m^p\, dy.
\end{equation}
Combining the estimates \eqref{eq-control-II-traveling-down-conical}, \eqref{eq-control-III-traveling-down-conical} we arrive at 
\begin{equation}\nonumber
I_{\varepsilon,N} \lesssim \int_\rn \int_{\varepsilon}^{1/\varepsilon} |\thtmo f|^2P_t\nu_N \, \dfrac{dydt}{t} + \int_\rn H_m^p\, dy+ IV_{\varepsilon,N}+V_{\varepsilon,N},
\end{equation}
where we notice that $I_{\varepsilon,N}<\infty$ owing to the fact that $\nu_N\leq N$ and $\sup_{t>0} \|\thtm f\|_\ltrn<\infty$. 
We now use Proposition \ref{prop-prelim-properties-heat-semigroup} to get that $|P_t\nu_N(y)|\lesssim \fint_{|x-y|<t}\nu_N\, dx$ and so
\begin{equation}\nonumber
\int_\rn \int_\varepsilon^{1/\varepsilon} |\thtmo f(y)|^2 P_t \nu_N(y)\, \dfrac{dydt}{t}\lesssim \int_\rn \int_\varepsilon^{1/\varepsilon} \fint_{|x-y|<t} |\thtmo f(y)|^2\, \dfrac{dydt}{t} \nu(x)\, dx.
\end{equation}
Now taking first the limit as $\varepsilon\to 0$ and then as $N\to \infty$, and using \eqref{eq-control-IV-travelling-down-conical}, \eqref{eq-control-V-travelling-down-conical} and the previous equation,
\begin{equation}\nonumber
I\lesssim \int_\rn \s(\thtmo)^2\,\nu dx + \int_\rn H_m^p\, dx\lesssim \int_\rn H_m^p\, dx.  
\end{equation}
The result now follows from the definition of $H_m$, more specifically \eqref{eq-1-conical-traveling-down}. 
\end{proof}

\begin{remark}
As far as the hypotheses of the previous result are concerned, we note that we have good control on the quantities involving $\v$, by Theorem \ref{thm.verticalsqfctnbounds}. Moreover by \cite[Theorem 6.12]{bhlmp} (see also Hypothesis A in Definition \ref{def-hypothesis-a}), the conditions on the quantities $\|\thtm f\|_\ltrn$ are also satisfied. Therefore, under the same hypotheses as Theorem \ref{thm-general-extrapolation-thm}, we may rewrite the above as 
\begin{equation}\nonumber
\| \s(\thtm f)\|_\lprn \lesssim \| \s(\thtmo f)\|_\lprn + \| \mntmt(\thtm f)\|_\lprn.
\end{equation}
\end{remark}

As a corollary of this result and Lemma \ref{lem-travelling-down-ntmax-weak-lp} on the boundedness of the non-tangential maximal function we have the following

\begin{theorem}\label{thm-ntmax-estimates-final-version}
Suppose $\L$ satisfies the hypotheses of Theorem \ref{thm-general-extrapolation-thm}. Let $p\in (2-\varepsilon_0,2+\varepsilon_0)$, with $\varepsilon_0$ as in Theorem \ref{thm-general-extrapolation-thm}. Then, for every $f\in C_c^\infty(\rn)$, we have that
\begin{equation}\nonumber
\| \s(t\pd_t \nb  \sl_tf)\|_\lprn \lesssim \| f\|_\lprn, \qquad \|\mntmt(\nb \sl_t f)\|_\lprn \lesssim \| f\|_\lprn,
\end{equation}
and that
\begin{equation}\nonumber
\| \s(t\nb \dl_t f)\|_\lprn \lesssim \| f\|_\lprn, \qquad \| \mntmt(\dl_t f)\|_\lprn\lesssim \| f\|_\lprn.
\end{equation}
\end{theorem}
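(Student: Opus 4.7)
The plan is to run a finite downward induction on the index $m$, alternately invoking Lemma \ref{lem-traveling-down-conical} (which trades conical square function control at level $m$ for control at level $m+1$, modulo vertical square function, NT max, and slice terms) and Lemma \ref{lem-travelling-down-ntmax-weak-lp} (which, roughly, produces a weak-$L^p$ NT max bound at level $m$ from conical and vertical square function bounds at level $m+1$). As a base case, valid for all $m$ sufficiently large, I would collect the conical square function estimates from Theorems \ref{thm-general-extrapolation-thm} and \ref{thm-sqaure-fctn-bounds-double-layer}, the vertical square function estimates (available for every $m\geq 1$) from Theorem \ref{thm.verticalsqfctnbounds}, the NT max estimates from Corollary \ref{cor-travelling-down-ntmax-strong-lp}, and the slice estimates from Corollary \ref{cor-lp-slices-estimates-grad-st-grad} and Theorem \ref{thm-slices-double-layer}.

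For the inductive step, assume that $\|\s(\thtmo f)\|_{L^p}\lesssim \|f\|_{L^p}$ is known (this is the hypothesis I want to propagate downward in $m$), along with the relevant vertical square function bounds. First I would apply Lemma \ref{lem-travelling-down-ntmax-weak-lp} to obtain the weak-type control $\|\mntmq(\lthtm f)\|_{L^{p,\infty}}\lesssim \|\s(\thtmo f)\|_{L^p}+\|\v(\thto f)\|_{L^p}\lesssim\|f\|_{L^p}$. Since the conical hypothesis holds on an open interval $(2-\varepsilon_0,2+\varepsilon_0)$ of exponents, Marcinkiewicz interpolation between two nearby weak-$L^p$ endpoints upgrades this to a strong $L^p$ bound on $\mntmq(\lthtm f)$. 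Next, Proposition \ref{prop-improvement-ntmax}, combined with the slice estimates, converts this scalar NT max bound into the vector NT max bound $\|\mntmt(\thtm f)\|_{L^p}\lesssim\|f\|_{L^p}$. Finally, Lemma \ref{lem-traveling-down-conical} yields $\|\s(\thtm f)\|_{L^p}\lesssim\|f\|_{L^p}$, because every term appearing on its right-hand side is now under control: the verticals by Theorem \ref{thm.verticalsqfctnbounds}, the slices by Corollary \ref{cor-lp-slices-estimates-grad-st-grad} and Theorem \ref{thm-slices-double-layer}, the NT max by the step just completed, and $\s(\thtmo f)$ by the inductive hypothesis.

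Iterating finitely many times pushes the conical square function bound all the way down to $m=1$, which gives items (i) and (iii), and pushes the NT max bound down to $m=0$, which gives items (ii) and (iv). For (ii) in particular, applying Lemma \ref{lem-travelling-down-ntmax-weak-lp} at $m=0$ first gives $L^p$ control of $\mntmq(\pd_t\sl_t f)$, which I would then promote to control of $\mntmt(\nabla\sl_t f)$ by one more appeal to Proposition \ref{prop-improvement-ntmax}.

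The hard part will be the tight coupling between the two traveling-down lemmas: each step of the conical descent needs the NT max at the \emph{same} level, while each step of the NT max descent needs the conical square function at \emph{one higher} level. What makes the induction close is that neither lemma shifts the index by more than one and the base case is available uniformly for all $m\geq m_0$, so only finitely many rounds are needed. A secondary technical point worth highlighting is that the weak-to-strong passage in the NT max step requires the conical square function bounds on an open interval of exponents around $p=2$, which is precisely the content of $\varepsilon_0>0$; this openness is used crucially at every iteration, not only at the final one.
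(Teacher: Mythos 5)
Your proposal is correct and is essentially the paper's own proof: the same finite alternating descent, with the base case for large $m$ coming from Theorem \ref{thm-general-extrapolation-thm}, Theorem \ref{thm.verticalsqfctnbounds} and Corollary \ref{cor-travelling-down-ntmax-strong-lp}, the non-tangential step carried out via Lemma \ref{lem-travelling-down-ntmax-weak-lp}, Proposition \ref{prop-improvement-ntmax}, the slice estimates and real interpolation (precisely your weak-to-strong passage on the open window of exponents), the conical step via Lemma \ref{lem-traveling-down-conical}, and the final passage from $\partial_t\mathcal{S}^{\cL}_t$ to the full gradient again by Proposition \ref{prop-improvement-ntmax}. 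The only point to state explicitly is that Lemma \ref{lem-traveling-down-conical} is formulated for $1<p<2$, so for $p\geq 2$ the conical bound should instead be read off directly from the vertical one via Proposition \ref{prop-prelim-comparability-square-functions}, exactly as the paper does.
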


\newcommand{\thtmz}{\Theta_{t,m_0}}
\newcommand{\thtmzo}{\Theta_{t,m_0-1}}
\newcommand{\thtmzt}{\Theta_{t,m_0-2}}

\begin{proof}
We define $\thtm$ to be either $t^m\pd_t^{m-1}\nb(\sl_t\nb)$ or $t^m\pd_t^{m-1}\nb\dl_t$. 
For $p>2$ we have by Proposition \ref{prop-prelim-comparability-square-functions} and Theorem \ref{thm.verticalsqfctnbounds}, $\| \s(\thto f)\|_\lprn + \| \v(\thto f)\|_\lprn \lesssim \| f\|_\lprn$.  It remains to show the non-tangential maximal function bound when $p < 2$ and $p > 2$ and the conical square function bound when $p < 2$. We will show both the square function and non-tangential maximal function bounds in the case $p<2$. (The  case of the non-tangential maximal function bounds when $p > 2$ the same.)

We treat the case of the single layer first. By Theorem \ref{thm-general-extrapolation-thm}, together with the traveling down for vertical square functions in Theorem \ref{thm.verticalsqfctnbounds} and Corollary \ref{cor-travelling-down-ntmax-strong-lp} we see that for some $m_0\geq 1$, 
\begin{equation}\label{eq-1-traveling-down-conical}
\| \s(\thtmz f)\|_\lprn + \| \mntmt(\thtmz f)\|_\lprn \lesssim \| f\|_\lprn.
\end{equation} 
We shall show that \eqref{eq-1-traveling-down-conical} holds with $m_0$ replaced by $m_0-1$, as long as $m_0-1\geq 1$. To treat the non-tangential maximal function we appeal to Corollary \ref{cor-travelling-down-ntmax-strong-lp} to obtain 
\begin{equation}\nonumber
\| \mntmt(\thtmzo)\|_\lprn \lesssim \| \s(\thtmz f)\|_\lprn +\sup_{t>0}\| \thtmzo f\|_\lprn + \| f\|_\lprn \lesssim \|f\|_\lprn.
\end{equation}
This gives the desired bound as long as $m_0-1\geq 1$. By the traveling down procedure for the conical square function (Lemma \ref{lem-traveling-down-conical}) we have (recall that the vertical square function is under control for any $m$ by Theorem \ref{thm.verticalsqfctnbounds})
\begin{multline}\nonumber 
\| \s(\thtmzo f)\|_\lprn  \lesssim \| \s(\thtmz f)\|_\lprn + \| \mntm(\thtmzo f)\|_\lprn \\ 
   +\sup_{t>0}\| \thtmzo\|_\lprn + \| f\|_\lprn \lesssim \| f\|_\lprn, 
\end{multline}
and this gives the desired square function bound for $m_0-1\geq 1$. We have shown  by induction  that
\begin{equation}\nonumber
\| \s(\thto f)\|_\lprn, \| \mntmt(\thto)\|_\lprn \lesssim \| f\|_\lprn.
\end{equation}
To get the bound for $\mntmt(\nb\sl_t)$ we use Proposition \ref{prop-improvement-ntmax} to get 
\begin{equation}\nonumber
\|\mntmt(\nb\sl_t f)\|_\lprn \lesssim \| \mntmq(\pd_t\sl_t f)\|_\lprn + \sup_{t>0}\| \nb\sl_t f\|_\lprn,
\end{equation}
for any $1\leq q$. In particular, choosing $q<p$ we can apply directly Lemma \ref{lem-travelling-down-ntmax-weak-lp} and interpolation to obtain the result. Lastly, the double layer is handled in the same way, owing to the appropriate estimates from Theorem \ref{thm-sqaure-fctn-bounds-double-layer}, Theorem \ref{thm-slices-double-layer} and Corollary \ref{cor-travelling-down-ntmax-strong-lp}.
\end{proof}

\section{Existence}\label{existence.sect}

In this section, we tackle the existence of solutions to the problems $\Di_2$, $\Ne_2$, and $\Reg_2$; the case $p\neq2$ is addressed in Section \ref{sec.lp} and consists of essentially the same analysis. We must first probe the mapping properties of the single and double layer potentials (see Proposition \ref{mappingpropl2.prop}, Corollary \ref{NRcorollary.cor}, and Lemma \ref{Duniquelem1.lem}), which will allow us to deduce the jump relations (Lemma \ref{jumprelation.lem} and Proposition \ref{jumprelationsy12.prop}) and the invertibility of the associated boundary operators (Corollary \ref{boundaryvalues.cor} and Theorem \ref{invertibility.thm}). By developing this machinery, in Theorem \ref{existence.thm}, we give the desired existence result.

\begin{proposition}[Mapping Properties, Part I]\label{mappingpropl2.prop}
The operators $\sl : C_c^\infty(\rn) \to \sltp$, $\cD^{\cL,+} : C_c^\infty(\rn)\to \dltp$ both have unique continuous extensions to $L^2(\rn)$; that is,
\begin{equation}\nonumber
\sl:\ltrn\to \sltp, \quad\cD^{\cL,+} : L^2(\rn)\to \dltp. 
\end{equation}
Moreover, for $f,g\in \ltrn$ we have that $\sl g$, $\cD^{\cL,+}f\in W^{1,2}_{\loc}(\reu)$ are solutions of $\cL w=0$ in $\reu$, and we have the  square function estimates
\begin{equation}\nonumber
\|\s( t\partial_t \nabla \sl_t g)\|_\ltrn \lesssim \| g\|_\ltrn, \qquad \| \s(t\nabla \cD^{\cL,+}_t f)\|_\ltrn\lesssim \| f\|_\ltrn.
\end{equation}
Similar considerations also hold in the lower half space (in this case we work with $\cD^{\cL,-}$).
\end{proposition}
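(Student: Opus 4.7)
The plan is to reduce each ingredient of $\mathcal{S}^{\cL}g\in \sltp$ and $\mathcal{D}^{\cL,+}f\in\dltp$ to slice estimates and qualitative properties that are either proved in the excerpt or follow easily from them, then extend from $C_c^\infty(\rn)$ to $L^2(\rn)$ by density. I will organize the proof into four steps: (a) uniform slice bounds, (b) continuity and decay in $t$, (c) interior regularity and the weak solution property, and (d) the conical square function estimate.

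For step (a) and the single layer, Theorem~\ref{thm-sup-on-slices} at $p=2$ yields
\[
\sup_{t>0}\bigl(\|\mathcal{S}_t^{\cL}g\|_{L^{2^*}(\rn)}+\|\nabla\mathcal{S}_t^{\cL}g\|_{L^2(\rn)}\bigr)\lesssim \|g\|_{L^2(\rn)},
\]
which controls $\sup_t\|\mathcal{S}_t^{\cL}g\|_{Y^{1,2}(\rn)}$ and, since $\partial_t$ is the $(n+1)$-st component of $\nabla$, also $\sup_t\|\partial_t\mathcal{S}_t^{\cL}g\|_{L^2(\rn)}$. Corollary~\ref{cor-lp-slices-estimates-grad-st-grad} applied with $m=1,2$ then delivers $\sup_t\|t\partial_t\nabla\mathcal{S}_t^{\cL}g\|_{L^2(\rn)}$ and $\sup_t\|t^2\partial_t^2\nabla\mathcal{S}_t^{\cL}g\|_{L^2(\rn)}$; the matching $L^{2^*}$ pieces that complete the $Y^{1,2}(\rn)$ norms at these scales are then obtained from Caccioppoli on slices (Proposition~\ref{prop-caccioppoli-on-slices}) together with Poincar\'e--Sobolev on each slab $\rn\times(t/2,3t/2)$. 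For the double layer, the analogous bound $\sup_t\|\mathcal{D}_t^{\cL,+}f\|_{L^2(\rn)}\lesssim\|f\|_{L^2(\rn)}$ is exactly Theorem~\ref{thm-slices-double-layer} at $m=0$.

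For step (b), continuity in $t$ into $L^2(\rn)$ follows from \cite[Lemma 2.3]{bhlmp}, applied to $\mathcal{S}_t^{\cL}g$, $\partial_t\mathcal{S}_t^{\cL}g$, $\partial_t^2\mathcal{S}_t^{\cL}g$ in turn; upgrading continuity from $L^2(\rn)$ to $Y^{1,2}(\rn)$ is accomplished by combining this with the above uniform slice bounds and the fact that $\nabla$ commutes with the single layer in the transversal variable. For $\mathcal{D}^{\cL,+}$ I would use the representation $\mathcal{D}_t^{\cL,+}f=(\mathcal{S}_t^{\cL}\nabla)(A\vec{N}f)+(\mathcal{S}_t^{\cL}\overline{B}_2)(\vec{N}f)$ from Lemma~\ref{lem-double-layer} together with the $t$-continuity of $\mathcal{S}^{\cL}$. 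The decay at $t=0^+$ and $t=\infty$ in the $C_0$ sense is the main technical obstacle. I would first prove it on the dense class $C_c^\infty(\rn)$: decay as $t\to\infty$ comes from combining the $L^r$--$L^2$ off-diagonal decay in Proposition~\ref{prop-vertical-off-diagonal-decay}, with $\gamma>1/r$, with the compact support of the data (this forces the spatial integrals in the off-diagonal bounds to vanish as $t\to\infty$); decay as $t\to 0^+$ for smooth data uses the definition of $\mathcal{S}^{\cL}$ as an adjoint of a trace together with the uniform boundedness of $\nabla\mathcal{S}_t^{\cL}$ on slices. A three-$\varepsilon$ argument then transfers the $C_0$ property from $C_c^\infty(\rn)$ to all of $L^2(\rn)$, since the slice norms control approximation errors uniformly in $t$.

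For step (c), that $\mathcal{S}^{\cL}g$ and $\mathcal{D}^{\cL,+}f$ lie in $W^{1,2}_{\loc}(\reu)$ and solve $\cL w=0$ in the weak sense follows by density: on $C_c^\infty(\rn)$ data they are solutions by construction (Definitions~\ref{def.sl} and \ref{def.dl}), the slice bounds of step (a) plus Caccioppoli give uniform $W^{1,2}$ estimates on any compact subset $K\Subset\reu$, and these allow one to pass to the limit in the weak formulation \eqref{eq.weak} with test functions $\varphi\in C_c^\infty(\reu)$. Finally, step (d) is immediate: the conical square function estimates are the content of Theorem~\ref{thm-ntmax-estimates-final-version} at $p=2$. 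The main obstacle, as noted, is step (b), specifically the verification of decay in the strong $\sltp$-norm at both endpoints without the benefit of pointwise bounds on solutions; everything else reduces to already-established slice and off-diagonal estimates followed by standard density arguments.
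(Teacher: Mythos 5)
Your overall architecture coincides with the paper's: the proof is dispatched there in one line as ``a simple density argument (using the fact that $\sltp$ and $\dltp$ are Banach spaces)'', with all quantitative input coming from the slice estimates and the $L^2$ square function bounds you cite (Theorem \ref{thm-sup-on-slices}, Theorem \ref{thm-slices-double-layer}, Theorem \ref{BHLMP1-Main.thrm}). The issue is in your step (b), where two concrete points do not hold up. First, you set out to prove vanishing of the potentials as $t\to 0^+$. This is neither required nor true: the $C_0$ condition in Definition \ref{def.slicespace} encodes decay at infinity (this is how it is used, e.g., via the ``decay at infinity hypothesis'' in the proof of Lemma \ref{NRuniquelem2.lem}, and it is consistent with Remark \ref{rm.more}), whereas by the jump relations one has $\dlp_t f\to(-\tfrac12 I+K)f$ and $\sl_t g\to \sl_0 g$ as $t\to0^+$, which are nonzero in general; so the sub-claim you propose to verify is false, though harmless once dropped.

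Second, and more seriously, your mechanism for the decay as $t\to\infty$ would fail as stated. The off-diagonal estimates of Proposition \ref{prop-vertical-off-diagonal-decay} (and Definition \ref{def-lr-to-lq-off-diagonal-estimates}) carry factors of the form $\bigl(|t|/(2^j\ell(Q))\bigr)^{m}$, which grow in $t$; for fixed compactly supported data they give spatial decay in $j$ only in the regime $t\lesssim 2^j\ell(Q)$ and yield no smallness of $\|\nabla\sl_t g\|_{\ltrn}$ or of the $Y^{1,2}(\rn)$ slice norms as $t\to\infty$, while the slice estimates give uniform boundedness rather than vanishing. In the paper this vanishing at infinity for nice data is not derived from off-diagonal decay at all: it is imported from the companion paper (the membership of the layer potentials in the slice spaces for nice data, \cite[Theorems 6.12 and 6.17]{bhlmp}, and the statement $\lim_{t\to\infty}\|\Theta_{t,m}f\|_{\ltrn}=0$ from \cite[Theorem 1.4]{bhlmp}, which is also invoked in the proof of Lemma \ref{lem-traveling-down-conical}). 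Once that input is in hand, your three-$\varepsilon$ transfer to general $L^2$ data, together with your steps (a), (c) and (d), is exactly the intended density argument.
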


The proof is a simple density argument (using the fact that $\sltp$ and $\dltp$ are Banach spaces), and as such is omitted. 

The next result is a statement about Sobolev functions that will allow us to eventually assign boundary values to the extensions of the layer potentials defined above.

\begin{proposition}\label{prop.bvagree}
Let $u\in Y^{1,2}(\reu)$. The following statements hold.
\begin{enumerate}[(i)]
	\item If $u\in \dltp$ then  $u_0:= \lim_{t\to 0^+} u(t)$ exists  as a weak limit in $\ltrn$. Moreover $u_0$ agrees with the trace of $u$ in the sense that for every $\Phi\in C_c^\infty(\ree)$
	\begin{equation}\nonumber
	( u_0, \Phi(\cdot,0)) = \dint_{\reu} \Big( u \partial_t \Phi + \partial_t u \Phi\Big),
	\end{equation}
	where $(\cdot, \cdot)$ denotes the inner product in $\ltrn$.
	\item If $ u \in \sltp$ then $U_0:= \lim_{t\to 0^+} u(t)$ exists  as a weak limit in $\yotn$. Moreover, $U_0$ agrees with the trace of $u$ in the sense described in i).
\end{enumerate}
\end{proposition}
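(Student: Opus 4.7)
The strategy for both parts is parallel. First, I would establish the existence of a weak limit by a Cauchy argument exploiting the integrability $\partial_t u \in L^2(\reu)$ inherited from $u\in Y^{1,2}(\reu)$; second, I would derive the integration-by-parts identity from the fundamental theorem of calculus together with a limiting argument.

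For part (i), fix $\varphi\in C_c^\infty(\rn)$ and consider $g_\varphi(t) := (u(\cdot,t), \varphi)$. Since $u \in D^2_+$ is continuous from $(0,\infty)$ to $L^2(\rn)$, $g_\varphi$ is continuous on $(0,\infty)$, and Cauchy-Schwarz together with $\partial_t u \in L^2(\reu)$ yields
\[
|g_\varphi(t) - g_\varphi(s)| = \Big|\int_s^t (\partial_\tau u(\cdot,\tau), \varphi)\,d\tau\Big| \leq \|\varphi\|_{L^2} (t-s)^{1/2} \|\partial_t u\|_{L^2(\reu)},
\]
so $g_\varphi$ is Cauchy as $t\to 0^+$. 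The limit $L(\varphi):=\lim_{t\to 0^+} g_\varphi(t)$ satisfies $|L(\varphi)|\leq \|u\|_{D^2_+}\|\varphi\|_{L^2}$, so by density of $C_c^\infty(\rn)$ in $L^2(\rn)$ and the Riesz representation theorem there exists $u_0\in L^2(\rn)$ with $L(\varphi) = (u_0, \varphi)$. Combining this pointwise convergence on a dense class with the uniform bound $\sup_{t>0}\|u(\cdot,t)\|_{L^2}\leq \|u\|_{D^2_+}$ upgrades to weak $L^2$ convergence $u(\cdot,t)\rightharpoonup u_0$. To obtain the identity, fix $\Phi\in C_c^\infty(\ree)$ with $\Phi(\cdot,t) \equiv 0$ for $t\geq T$, and apply the fundamental theorem of calculus in $t$:
\[
\int_\rn u(x,t)\Phi(x,t)\,dx = -\int_t^\infty \int_\rn \big(u\,\partial_s \Phi + \Phi\,\partial_s u\big)\,dx\,ds.
\]
The integrand on the right lies in $L^1(\reu)$ by the compact support of $\Phi$, together with $\partial_t u\in L^2(\reu)$ and $u\in L^{2^*_{n+1}}_{\loc}(\reu)$. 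Sending $t\to 0^+$, the left side converges to $(u_0,\Phi(\cdot,0))$ by the weak convergence combined with the uniform continuity of $\Phi$ (splitting $\Phi(x,t) = \Phi(x,0) + [\Phi(x,t)-\Phi(x,0)]$), while the right side converges by dominated convergence, yielding the identity.

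For part (ii) I would argue identically, except that the weak limit is now extracted in $Y^{1,2}(\rn)$. The family $\{u(\cdot,t)\}_{t>0}$ is bounded in the reflexive Banach space $Y^{1,2}(\rn)$ by the $S^2_+$ norm, so any sequence $t_k\to 0^+$ admits, via Banach-Alaoglu, a subsequence with $u(\cdot,t_k)\rightharpoonup U_0$ weakly in $Y^{1,2}(\rn)$. The Cauchy argument from part (i), applied to the $L^2$ pairing with $\varphi\in C_c^\infty(\rn)$, shows that $U_0$ is uniquely determined on this dense class, forcing the full family to converge weakly in $Y^{1,2}(\rn)$ to a single limit $U_0$. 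The integration-by-parts identity then follows exactly as in (i). The main technical subtlety here is reconciling the weak $L^2$ limit produced by the Cauchy estimate with the stronger weak $Y^{1,2}(\rn)$ limit asserted in the statement: uniform $Y^{1,2}(\rn)$ boundedness together with reflexivity forces any weak $Y^{1,2}$ subsequential limit to agree with the (unique) weak $L^2$ limit, so the whole family converges weakly in $Y^{1,2}(\rn)$.
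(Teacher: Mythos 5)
Your proposal is correct, and its skeleton (weak limit plus an integration by parts in $t$ from height $t$ to $\infty$) matches the paper's, but the mechanisms differ in two places worth noting. For the existence of the limit in (i), the paper first extracts a subsequential weak $L^2$ limit and then shows the full limit exists by plugging the product test function $\Phi(x,t)=\phi(x)\eta(t)$ into the identity over $\ree_t$ and invoking dominated convergence (using $u\in L^{2^*_{n+1}}(\ree_+)$, $\nabla u\in L^2(\ree_+)$); your Cauchy estimate $|g_\varphi(t)-g_\varphi(s)|\le \|\varphi\|_{L^2}(t-s)^{1/2}\|\partial_t u\|_{L^2(\reu)}$ plus Riesz representation achieves the same thing more quantitatively, and both hinge on the standing hypothesis $u\in Y^{1,2}(\reu)$ (your FTC step for $g_\varphi$ is the standard absolute-continuity fact for $W^{1,2}_{\loc}$ functions combined with the continuity supplied by $D^2_+$, which you could state explicitly). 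For (ii), the paper treats $u(\cdot,t)$ and $\nabla_{\|}u(\cdot,t)$ separately, identifying the weak $L^2$ limit of the gradients as $\nabla_{\|}U_0$ by testing against $\divp\phi$ and then invoking the representation of functionals on $\yotn$; your route through reflexivity of $\yotn$ and Eberlein--\v{S}mulian is equivalent and slightly cleaner, though when you say $U_0$ is ``uniquely determined on this dense class'' the property you actually use is that pairings against $C_c^\infty(\rn)$ separate points of $\yotn\subset L^1_{\loc}(\rn)$, not density in the dual. Finally, a sign remark: your computation correctly produces $(u_0,\Phi(\cdot,0)) = -\dint_{\reu}\big(u\,\partial_t\Phi+\partial_t u\,\Phi\big)$, whereas the displayed identity in the statement (and the analogous identity in the paper's own proof over $\ree_t$) omits the minus sign; this appears to be a sign typo in the paper rather than a defect of your argument, but your closing phrase ``yielding the identity'' should acknowledge the discrepancy.
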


\begin{proof}
To prove i), we start by noticing that since $u\in \dltp$, there exists a subsequence $t_k\to 0^+$ and a function $u_0\in \ltrn$ such that $u_{t_k} \to u_0$ weakly in $\ltrn$ as $k\to \infty$. 
Now, again since $u\in \dltp$, we only need to show that $\lim_{t\to0^+} (u(t), \phi)= (u_0, \phi)$,  for each $ \phi\in C_c^\infty(\rn)$.  Consider $\Phi(x,t)=\phi(x)\eta(t)$, where $\eta\in C_c^\infty(-2,2)$ is such that $\eta \equiv 1$ in $(-1,1)$, so that $\Phi \in C_c^\infty(\ree)$. Now, the hypotheses imply that, for fixed $t>0$,  
\begin{equation}\nonumber
( u(t), \Phi(\cdot, t))= \dint_{\ree_{t}} \Big( u\partial_t \Phi + \partial_t u \Phi\Big) ,
\end{equation}
holds, which, for our particular choice of $\Phi$ and $t<1$, implies that
\begin{equation}\nonumber
(u(t), \phi) = \dint_{\ree_{t}} \Big( u\partial_t \Phi + \partial_t u \Phi\Big) .
\end{equation}
Hence, the dominated convergence theorem yields the desired result since $u\in L^{\frac{2(n+1)}{(n-1)}}(\reu)$ and $\nabla u\in L^2(\reu)$. The second part of the statement in i) now follows by the fact that $\Phi(\cdot, t)\to \Phi(\cdot,0)$ strongly in $L^2(\rn)$ for any $\Phi\in C_c^\infty(\ree)$.

The proof of ii) follows similar ideas. Arguing as before, we can prove that there exists a weak limit $U_0\in L^{2n/(n-2)}(\rn)$, and that it agrees with the usual trace in the sense described in i). Similarly, along a subsequence $t_k\to0$, we have that $\nablap u(t_k)\to v\in L^2(\rn)$ weakly for some $v\in L^2(\bb R^n)^n$. If we can show that $v=\nablap U_0$, then the result would follow from the uniqueness of the limit. To this end, we fix $\phi\in C_c^\infty(\rn;\cn)$ and compute that $( v, \phi)= \lim_{k\to \infty} (\nablap u(t_k),\phi)= -\lim_{k\to \infty} (u(t_k),\divp \phi) = -(U_0, \divp\phi)$, as desired.\end{proof}

Combining the two previous propositions, we arrive at  the following corollary whose proof is standard.

\begin{corollary}\label{boundaryvalues.cor}
For every $f,g\in \ltrn$ we can define the bounded linear operators
\begin{equation}\nonumber
\sl_0: L^2(\rn)\to \yotn, \quad \cD^{\cL,+}_0:\ltrn\to\ltrn,
\end{equation}
given by
\begin{equation}\nonumber
\sl_0 g:= \lim_{t\to0^+} \sl_t g, \quad \cD_0^{\cL,+}f:= \lim_{t\to0+} \cD^{\cL,+}_t f,
\end{equation}
where both are weak limits, the first being in $\yotn$ and the second in $\ltrn$.
\end{corollary}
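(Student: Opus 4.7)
The proof plan is a direct stitching of Proposition \ref{mappingpropl2.prop} and Proposition \ref{prop.bvagree}. First I would fix $f,g\in L^2(\rn)$ and invoke Proposition \ref{mappingpropl2.prop} to know that $\mathcal{S}^{\cL}g\in S^2_+$ and $\mathcal{D}^{\cL,+}f\in D^2_+$, with norms controlled linearly by $\| g\|_{L^2(\rn)}$ and $\| f\|_{L^2(\rn)}$ respectively (these bounds are built into the definitions of $S^2_+$ and $D^2_+$ in Definition \ref{def.slicespace}). Linearity in the datum is transferred from the linearity of $\mathcal{S}^{\cL}$ and $\mathcal{D}^{\cL,+}$ as operators into these slice spaces.

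Next I would apply Proposition \ref{prop.bvagree}(ii) to $u=\mathcal{S}^{\cL}g\in S^2_+\subset Y^{1,2}(\ree_+)$ to extract the weak limit $\mathcal{S}^{\cL}_0 g:=\lim_{t\to 0^+}\mathrm{Tr}_t\,\mathcal{S}^{\cL}g$ in $Y^{1,2}(\rn)$, and Proposition \ref{prop.bvagree}(i) to $u=\mathcal{D}^{\cL,+}f\in D^2_+\subset Y^{1,2}(\ree_+)$ to extract the weak limit $\mathcal{D}^{\cL,+}_0 f:=\lim_{t\to 0^+}\mathrm{Tr}_t\,\mathcal{D}^{\cL,+}f$ in $L^2(\rn)$. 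The a priori condition $u\in Y^{1,2}(\ree_+)$ required by Proposition \ref{prop.bvagree} is guaranteed because membership of $\mathcal{S}^{\cL}g$ and $\mathcal{D}^{\cL,+}f$ in $Y^{1,2}(\ree_+)$ was already established in Section \ref{sec.prop} (following Definitions \ref{def.sl}--\ref{def.dl}), so both hypotheses of Proposition \ref{prop.bvagree} are in place.

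For boundedness, the weak lower semicontinuity of the norm under weak convergence gives
\begin{equation*}
\| \mathcal{S}^{\cL}_0 g\|_{Y^{1,2}(\rn)}\leq \liminf_{t\to 0^+}\|\mathrm{Tr}_t\,\mathcal{S}^{\cL}g\|_{Y^{1,2}(\rn)}\leq \| \mathcal{S}^{\cL}g\|_{S^2_+}\lesssim \| g\|_{L^2(\rn)},
\end{equation*}
and analogously $\| \mathcal{D}^{\cL,+}_0 f\|_{L^2(\rn)}\leq \| \mathcal{D}^{\cL,+}f\|_{D^2_+}\lesssim \| f\|_{L^2(\rn)}$, using Proposition \ref{mappingpropl2.prop} at the last step. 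Linearity of the boundary operators is immediate since weak limits are linear and $\mathcal{S}^{\cL}$, $\mathcal{D}^{\cL,+}$ are linear on $L^2(\rn)$. There is no genuine obstacle here: all the heavy lifting (the square function bounds needed to place $\mathcal{S}^{\cL}g\in S^2_+$ and $\mathcal{D}^{\cL,+}f\in D^2_+$) has already been done in Proposition \ref{mappingpropl2.prop}, and the existence of a weak limiting trace is precisely the content of Proposition \ref{prop.bvagree}. The only mild point to verify is that the weak limit is unique (independent of the subsequence chosen in the proof of Proposition \ref{prop.bvagree}); this follows because the limit must agree with the distributional trace identity stated there, which determines it uniquely in $Y^{1,2}(\rn)$ and in $L^2(\rn)$ respectively.
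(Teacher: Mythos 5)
Your argument breaks at the step where you claim that $\sl g$ and $\cD^{\cL,+}f$ lie in $Y^{1,2}(\reu)$ for \emph{arbitrary} $f,g\in\ltrn$, so that Proposition \ref{prop.bvagree} applies directly. Section \ref{sec.prop} establishes $Y^{1,2}$ membership only for the data classes appearing in Definitions \ref{def.sl} and \ref{def.dl}, namely $\gamma\in H^{-1/2}(\rn)$ for the single layer and $\varphi\in H^{1/2}_0(\rn)$ for the double layer; since these are homogeneous-type spaces, $\ltrn$ is contained in neither of them, and for general $L^2$ data the potentials are defined only through the continuous extensions of Proposition \ref{mappingpropl2.prop}, which land in $S^2_+$ and $D^2_+$ (plus $W^{1,2}_{\loc}$), not in $Y^{1,2}(\reu)$. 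Indeed the slice-space bounds cannot give global membership: $\sup_{t>0}\|\nabla\sl_t g\|_{\ltrn}\lesssim\|g\|_{\ltrn}$ does not make $\int_0^\infty\|\nabla\sl_t g\|_{\ltrn}^2\,dt$ finite. This is not a cosmetic point — the paper's Proposition \ref{weaklimits2.prop} exists precisely to remove the $Y^{1,2}(\reu)$ hypothesis for solutions that are merely in $W^{1,2}_{\loc}\cap S^2_+$, and it explicitly singles out $u=\sl g$ with $g\in\ltrn$ as a case where this is needed; so the hypothesis you are asserting for free is exactly the one that fails.

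The repair is the density argument the paper has in mind when it calls the proof ``standard.'' First take $f,g\in C_c^\infty(\rn)$: since $C_c^\infty(\rn)\subset H^{-1/2}(\rn)\cap H^{1/2}_0(\rn)$, Definitions \ref{def.sl} and \ref{def.dl} do give $\sl g,\,\cD^{\cL,+}f\in Y^{1,2}(\reu)$, and your combination of Proposition \ref{prop.bvagree} with Proposition \ref{mappingpropl2.prop} and weak lower semicontinuity of the norm yields $\|\sl_0 g\|_{\yotn}\lesssim\|g\|_{\ltrn}$ and $\|\cD_0^{\cL,+}f\|_{\ltrn}\lesssim\|f\|_{\ltrn}$ on this dense class. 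Then extend to all of $\ltrn$: given $g\in\ltrn$ choose $g_k\in C_c^\infty(\rn)$ with $g_k\to g$ in $\ltrn$; the linear bound shows $\{\sl_0 g_k\}$ is Cauchy in $\yotn$, and the uniform-in-$t$ estimate $\sup_{t>0}\|\sl_t(g-g_k)\|_{\yotn}\lesssim\|g-g_k\|_{\ltrn}$ coming from the $S^2_+$ bound of Proposition \ref{mappingpropl2.prop} lets a three-term approximation argument transfer the existence of the weak limit $\lim_{t\to 0^+}\sl_t g$ to the general datum, with the limit equal to $\lim_k\sl_0 g_k$; the double layer is handled identically with $D^2_+$ in place of $S^2_+$. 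With this modification the rest of your write-up (linearity, boundedness, uniqueness of the limit) goes through as stated.
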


We may remove the condition  $u\in \yotp$ in Proposition \ref{prop.bvagree} for solutions with trace decay at infinity.

\begin{proposition}\label{weaklimits2.prop}
Suppose that $u\in W^{1,2}_{\loc}(\reu)\cap \sltp$ satisfies that $\cL u=0$ in $\reu$. Then there exists $u_0\in \yotn$ such that $\lim_{t\to 0^+} u(t) = u_0$ exists weakly in $\yotn$. Moreover, since $u\in W^{1,2}_{\loc}(\reu)\cap S^2_+ \subset  W^{1,2}(I_R^+)$ for any $R>0$, the trace $\tro u$ exists as an element of $L^2_{\loc}(\rn)$, and $\tro u = u_0$ as distributions.

In particular, the conclusion holds for $u=\sl g$ for $g\in L^2(\rn)$  or $u=\cD^{\cL,+} f$ with $f\in \yotn$ (see Corollary \ref{NRcorollary.cor}).
\end{proposition}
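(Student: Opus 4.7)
The plan is threefold: first extract a subsequential weak limit of $u(t)$ in $Y^{1,2}(\rn)$ using reflexivity; then upgrade this to a true weak limit as $t \to 0^+$ by showing that all subsequential limits coincide; and finally verify that this weak limit agrees with the Sobolev trace, obtained from the local inclusion $u\in W^{1,2}(I_R^+)$.

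I would begin by noting that the hypothesis $u \in S^2_+$ gives the uniform bound $\sup_{t>0}\|u(t)\|_{Y^{1,2}(\rn)} \le \|u\|_{S^2_+} < \infty$. Since $Y^{1,2}(\rn)$ embeds as a closed subspace of the reflexive product $L^{2^*}(\rn) \times L^2(\rn;\mathbb{C}^n)$ via $v \mapsto (v, \nablap v)$, it is itself reflexive. Hence every sequence $t_k \to 0^+$ admits a subsequence along which $u(t_{k_j})$ converges weakly in $Y^{1,2}(\rn)$ to some $u_0$.

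For uniqueness of the weak limit I would exploit the additional control $\sup_{t>0}\|\partial_t u(\cdot,t)\|_{L^2(\rn)} \le \|u\|_{S^2_+}$ given by membership in $S^2_+$. For any $\phi \in C_c^\infty(\rn)$, the fundamental theorem of calculus and Cauchy--Schwarz yield
\[
|F_\phi(t) - F_\phi(s)| \le |t-s|\,\|u\|_{S^2_+}\,\|\phi\|_{L^2(\rn)},\quad F_\phi(t):=\int_\rn u(x,t)\overline{\phi(x)}\,dx,
\]
so $F_\phi$ extends continuously to $t=0$ and $\int_\rn u_0\overline{\phi} = \lim_{t\to 0^+}F_\phi(t)$ is independent of the chosen subsequence. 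Consequently, any two subsequential weak limits $u_0$ and $\widetilde{u}_0$ agree as distributions; since both lie in $Y^{1,2}(\rn)$ their distributional gradients coincide as $L^2$ functions, so $u_0 = \widetilde u_0$ in $Y^{1,2}(\rn)$. The standard argument that, in a reflexive space, a bounded net whose subsequential weak limits are all equal converges weakly to the common limit then yields $u(t) \rightharpoonup u_0$ in $Y^{1,2}(\rn)$ as $t\to 0^+$.

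For the identification with the trace, I first verify $u\in W^{1,2}(I_R^+)$ for every $R>0$: the uniform bound $\sup_{t>0}\|u(t)\|_{L^{2^*}(\rn)} < \infty$ combined with H\"older's inequality on $(-R,R)^n$ gives $\|u\|_{L^2(I_R^+)}<\infty$, while the uniform bounds on $\|\nablap u(t)\|_{L^2(\rn)}$ and $\|\partial_t u(t)\|_{L^2(\rn)}$ give $\|\nabla u\|_{L^2(I_R^+)}<\infty$. Hence $\tro u \in L^2_{\loc}(\rn)$ is well defined. Fix a cube $Q \subset (-R,R)^n$. Since $\partial_t u \in L^2(Q\times (0,R))$, the slice map $t\mapsto u(\cdot,t)$ is $\frac{1}{2}$-H\"older continuous from $[0,R]$ into $L^2(Q)$, and its boundary value equals $\tro u|_Q$ by the standard characterization of the Sobolev trace. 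On the other hand, the weak convergence $u(t)\rightharpoonup u_0$ in $Y^{1,2}(\rn)$ implies $u(\cdot,t)\rightharpoonup u_0$ in $L^{2^*}(Q)$, hence in $L^2(Q)$ by H\"older. Uniqueness of weak limits in $L^2(Q)$ gives $\tro u = u_0$ on $Q$, and since $Q$ is arbitrary, on all of $\rn$. The hard step is this last identification, as it requires reconciling the weak limit in the homogeneous space $Y^{1,2}(\rn)$ with the trace defined via the local inclusion in $W^{1,2}(I_R^+)$; the bridge is the uniform $L^2$-bound on $\partial_t u$, which simultaneously produces the Lipschitz estimate on pairings against $C_c^\infty(\rn)$ and the equicontinuity of $t\mapsto u(\cdot,t)$ as an $L^2(Q)$-valued map. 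The final ``In particular'' assertion follows by invoking $\sl g, \cD^{\cL,+}f \in S^2_+$, which is a consequence of the slice and non-tangential maximal function estimates of Sections 5--6 together with Corollary \ref{NRcorollary.cor}.
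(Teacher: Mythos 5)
Your proposal is correct, and it reaches the two delicate points — uniqueness of the weak limit and its identification with the trace — by a somewhat different mechanism than the paper. The paper first extracts a subsequential weak limit only in $L^{2n/(n-2)}(\rn)$, proves that it is independent of the subsequence by pairing $u$ against $\Phi\in C_c^\infty(\ree)$ and using an integration-by-parts identity over slabs $\Sigma_{t_k}^b$ (and, for the identification $\tro u=u_0$, the analogous identity over the half-space above height $t$, quoted from \cite[Proposition 2.16]{bhlmp}); it then treats $\nablap u(t)$ separately and assembles weak convergence in $\yotn$ through the representation of functionals in $\dyotn$. You instead work directly in $\yotn$, using its reflexivity (as a closed subspace of $L^{2^*}(\rn)\times L^2(\rn;\CC^n)$) to extract the subsequential limit there at once, and you exploit the quantitative curve structure built into $\sltp$: the bound $\sup_{t>0}\|\partial_t u(\cdot,t)\|_{\ltrn}\le\|u\|_{\sltp}$ gives, via the fundamental theorem of calculus for the $C^1$ map $t\mapsto u(\cdot,t)$, a Lipschitz estimate on pairings against $C_c^\infty(\rn)$ (hence uniqueness of all subsequential limits and, by the standard contradiction argument, full weak convergence), and the same bound yields the $\tfrac12$-H\"older continuity of $t\mapsto u(\cdot,t)$ into $L^2(Q)$ for cubes $Q$, which identifies the strong $L^2_{\loc}$ boundary value with the Sobolev trace $\tro u$ and then, against the weak convergence (using $L^2(Q)\subset L^{(2^*)'}(Q)$ on bounded $Q$), with $u_0$. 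Both routes are sound and of comparable length; yours leans more heavily on the Bochner/$C^1$ structure in the definition of $\sltp$ and avoids invoking the companion paper's slab identity, whereas the paper's distributional argument is the one that parallels Proposition \ref{prop.bvagree} and requires only the weak integration-by-parts identity rather than the continuity of $t\mapsto\partial_t u(\cdot,t)$ as an $\ltrn$-valued curve. Your disposal of the ``in particular'' clause agrees with the paper's intent, resting on Proposition \ref{mappingpropl2.prop} for $\sl g$ and on Corollary \ref{NRcorollary.cor} (whose proof gives the $\sltp$ bound) for $\cD^{\cL,+}f$.
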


\begin{proof}
Since $u\in \sltp$ there exists a subsequence $t_k\to 0^+$ and  $u_0\in L^{2n/(n-2)}(\rn)$ such that $\lim_{k\to\infty} u(t_k)=u_0$ weakly in $L^{2n/(n-2)}(\rn)$. Now, since $u\in Y^{1,2}(\Sigma_0^b)$ for any $b>0$, we have that for each $\Phi\in C_c^{\infty}(\ree)$, there exists $b>0$ such that
\begin{equation}\nonumber
(u(t_k), \Phi(t_k)) = -\dint_{\Sigma_{t_k}^b} (u \dno \Phi + \dno u \Phi).  
\end{equation} 
Fixing $\phi\in C_c^\infty(\rn)$ and extending it to $\ree$ so that $\Phi(\cdot, t)\equiv \phi(\cdot)$ in a neighborhood of $t=0$, we see that
\begin{equation}\nonumber
(u_0, \phi)= - \dint_{\Sigma_{0}^b} (u \dno \Phi + \dno u \Phi),
\end{equation} 
which gives the uniqueness of the limit $u_0$. Therefore, $\lim_{t\to0} u(t)=u_0$ exists as a weak limit in $L^{2n/(n-2)}(\rn)$. To see that $u_0\in \yotn$, we proceed as follows: Since for any weak limit $v$ in $\ltrn$ of $\nablap u$, we have that for any $\phi\in C_c^\infty(\rn;\cn)$, $(v, \phi)= \lim_{k\to \infty}( \nablap u(t_k),\phi)= - \lim_{k\to \infty} (u(t_k), \divp \phi) =- (u_0, \divp\phi)$, and we conclude that $v=\nablap u_0$. This shows that the weak limits are unique and thus, since $u\in \sltp$, the full limit exists; that is, $\lim_{t\to0} \nablap u(t) = \nablap u_0$ weakly in $\ltrn$. Now, we recall that every element $\ell\in \dyotn$ can be written in the form
\begin{equation}\nonumber
\ell( w)=  \int_{\rn} (\psi_0w + \psi\cdot \nablap w), \qquad\text{for all } w\in \yotn,
\end{equation}
for some $\psi_0\in L^{2n/(n+2)}(\rn)$ and $\psi=(\psi_1, \ldots , \psi_n) \in L^2(\bb R^n)^n$. This gives that $u(t)\to u_0$ weakly in $\yotn$.

We now turn to the proof of the final statement in the proposition. Since for every $\Phi(\cdot, t)\in C_c^\infty(\ree)$ we have that $\Phi(\cdot, t)\to \Phi(\cdot, 0)$ strongly in $\ltrn$, we need only check that $\lim_{t\to0} (u(t), \Phi(t))= (u_0,\Phi(0))$. Along these lines it is enough to prove that
\begin{equation}\nonumber
(u(t),\Phi(t))= -\dint_{\ree_{>t}} (u \dno \Phi +  \dno u \Phi) , \qquad\text{for all } t>0,
\end{equation}
but this in turn follows from \cite[Proposition 2.16]{bhlmp}, since $u\in W^{1,2}_{\loc}(\Sigma_{t/2}^\infty)$.\end{proof}

\begin{proposition}[Conormal derivative of solutions in slice spaces]\label{conormals2.prop}
Suppose that $u\in W^{1,2}_{\loc}(\reu)\cap \sltp$ satisifies $\cL u=0$ in $\reu$. Then there exists  $g$ in $L^2(\rn)$ such that 
\begin{equation}\nonumber
(g, \phi)= \dint_{\reu} \Big( ( A\nabla u + B_1)\overline{\nabla\Phi} + B_2\cdot \nabla u \overline{\Phi} \Big), \qquad \text{for all } \Phi\in C_c^\infty(\ree),
\end{equation}
where $\phi(\cdot)=\Phi(\cdot,0)$. We write $g=\partial_{\nu^{\cL,+}}u$. Moreover,   $g= \lim_{t\to0^+} -e_{n+1}\cdot\trt( A\nabla u + B_1 u)$, where the limit is taken in the weak sense in the space $\ltrn$.
In particular, this notion of the conormal derivative agrees with our previous definition in $\yotp$ whenever both exist.
\end{proposition}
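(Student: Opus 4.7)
The plan is to define, for each $t>0$, the slice quantity $g_t := -e_{n+1}\cdot \tr_t(A\nabla u + B_1 u) \in L^2(\rn)$ and to show that $(g_t)_{t>0}$ converges weakly in $L^2(\rn)$ to the desired $g$, which will automatically satisfy the integral identity. The first step is to verify that $g_t$ sits in $L^2(\rn)$ uniformly in $t$: the $A\nabla u$ part is controlled by $\sup_{t>0}\|\nabla u(\cdot,t)\|_{L^2(\rn)}$, which is finite since $u \in S^2_+$ (the $\nabla_\|$ component comes from $u(\cdot,t)\in Y^{1,2}(\rn)$ uniformly, and $\partial_t u(\cdot,t) = u'(t) \in L^2(\rn)$ uniformly by Definition \ref{def.slicespace}); the $B_1 u$ part is controlled via H\"older with $B_1 \in L^n(\rn)$ and $u(\cdot,t)\in L^{2^*_n}(\rn)$ uniformly (since $u(\cdot,t) \in Y^{1,2}(\rn)$). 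Thus $\|g_t\|_{L^2(\rn)} \lesssim \|u\|_{S^2_+}$, uniformly in $t$.

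Next, for any fixed $t>0$ and any $\Phi \in C_c^\infty(\ree)$ with $\phi := \Phi(\cdot,0)$, an integration by parts on $\ree_{>t}$ (which is licit since $u \in W^{1,2}_{\loc}(\reu)$ and $\Phi$ has compact support) using the equation $\cL u = 0$ yields the identity
\begin{equation}\nonumber
(g_t, \tr_t\Phi) \;=\; \dint_{\ree_{>t}} \Big( (A\nabla u + B_1 u)\cdot \overline{\nabla \Phi} + B_2 \cdot \nabla u\, \overline{\Phi}\Big),
\end{equation}
where the outward unit normal $-e_{n+1}$ at $\partial\ree_{>t}$ produces the sign in the definition of $g_t$. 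As $t\to 0^+$, the right-hand side converges to $\iint_\reu[\,\cdots\,]$ by dominated convergence: the integrand over $\Sigma_0^t \cap \supp\Phi$ shrinks to zero because $\sup_s\|\nabla u(\cdot,s)\|_{L^2}<\infty$ and $\sup_s\|u(\cdot,s)\|_{L^{2^*_n}}<\infty$, so each of the three pieces $|A\nabla u \cdot \nabla \Phi|$, $|B_1 u \overline{\nabla\Phi}|$, $|B_2\cdot \nabla u \overline{\Phi}|$ gives an $L^1$ contribution of size $O(t)$ on this shrinking slab. Meanwhile $\tr_t\Phi \to \phi$ strongly in $L^2(\rn)$.

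By Banach–Alaoglu, the uniformly bounded family $(g_t)$ in $L^2(\rn)$ admits a weakly convergent subsequence $g_{t_k} \rightharpoonup g$. Combining weak-strong convergence with the previous integral identity gives
\begin{equation}\nonumber
(g, \phi) \;=\; \dint_{\reu} \Big( (A\nabla u + B_1 u)\cdot \overline{\nabla \Phi} + B_2 \cdot \nabla u\, \overline{\Phi}\Big),
\end{equation}
for all $\Phi \in C_c^\infty(\ree)$ and $\phi = \Phi(\cdot,0)$. Since any $\phi \in C_c^\infty(\rn)$ arises this way (e.g.\ take $\Phi(x,s) = \phi(x)\eta(s)$ with $\eta \in C_c^\infty(\bb R)$, $\eta(0)=1$), this identity characterizes $g$ uniquely in $L^2(\rn)$; consequently every weakly convergent subsequence of $(g_t)$ has the same limit, which by a standard Urysohn-type argument promotes the convergence to the full net $t \to 0^+$.

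Finally, if in addition $u\in Y^{1,2}(\reu)$, then the $L^2$ realization of the conormal from \cite[Lemma 4.11]{bhlmp} produces an element of $H^{-1/2}(\rn)$, characterized by precisely the same integral identity against test functions $\Phi \in C_c^\infty(\ree)$ (extended appropriately using the density of $C_c^\infty$ in $Y^{1,2}(\reu)$). Since both definitions agree on this dense class of test functions, and both lie in $L^2(\rn)$, they coincide as elements of $L^2(\rn)$. The main technical hurdle is the careful justification of the dominated-convergence step on the shrinking slab $\Sigma_0^t\cap\supp\Phi$, where one must repeatedly invoke the uniform slice bounds built into the definition of $S^2_+$; the rest of the argument is a clean combination of weak compactness and the uniqueness afforded by the characterizing integral identity.
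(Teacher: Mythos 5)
Your argument is correct in substance, and its engine is the same as the paper's: the uniform $L^2(\rn)$ bound on the slice conormals $-e_{n+1}\cdot\trt(A\nabla u+B_1u)$ coming from the $S^2_+$ norm, weak compactness, and identification of the limit through the variational identity. Where you differ is in the organization. The paper first constructs $\partial_{\nu^{\cL,+}}u$ abstractly as a distribution, via the functional $\Lambda_R$ on $C_c^\infty(\Delta_R)$ and a Riesz-representation argument in $\dot H^{1/2}(\Delta_R)$ (following \cite[Lemma 4.3(iii)]{AAAHK}), and only afterwards upgrades it to an $L^2$ function by comparing with the slice conormals $g^t$ and extracting a weak limit; you skip the distributional construction entirely and obtain existence, membership in $L^2$, and the integral identity in one pass, by writing the Green identity on $\ree_{>t}$, estimating the slab $\Sigma_0^t\cap\supp\Phi$ by $O(t)$ with the uniform slice bounds, and letting $t\to0^+$. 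This is a legitimate streamlining; the uniqueness of the limit (the identity determines $g$ against the dense class $C_c^\infty(\rn)$) then upgrades subsequential to full weak convergence, which is exactly the "moreover" clause, and your density argument for agreement with the $Y^{1,2}$ conormal matches the paper's conclusion.

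One caveat: the step you dismiss as "licit since $u\in W^{1,2}_{\loc}$ and $\Phi$ has compact support" is not automatic from local Sobolev regularity alone; a weak solution satisfies the bilinear identity only against test functions supported in the open half-space, and producing the boundary term at height $t$ requires using the equation together with the continuity and uniform bounds of the slices $s\mapsto u(\cdot,s)$, $\partial_s u(\cdot,s)$ built into $S^2_+$ (for instance, test against $\Phi$ multiplied by a smoothed cutoff $\eta_\epsilon(s)$ of $\mathbbm{1}_{\{s>t\}}$ and let $\epsilon\to0$, the concentrating term converging by continuity of $s\mapsto (A\nabla u+B_1u)(\cdot,s)$ in the relevant norms). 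This is precisely the content the paper imports from \cite[Definition 4.9, Lemma 4.11(i)]{bhlmp}; with that justification spelled out, your proof is complete.
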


\begin{proof}
We follow the proof of \cite[Lemma 4.3 (iii)]{AAAHK}. We will first show that for any $R>0$, there exists $g_R\in (C_c^\infty (\Delta_R))^*$ such that for any $\Phi\in C_c^\infty(I_R)$,
\begin{equation}\label{var.con.eq}
\langle g_R, \Phi(\cdot, 0)\rangle = \dint_{\reu} \Big( (A\nabla u + B_1)\cdot \overline{\nabla \Phi} + B_2\cdot \nabla u \overline{\Phi}\Big).
\end{equation}
In particular, this allows us to define $g\in (C_c^\infty(\rn))^*$ such that $g=\lim_{R\uparrow \infty} g_R$ in the sense of distributions and \eqref{var.con.eq} holds for any $\Phi\in C_c^\infty(\ree)$ and $g$ in place of $g_R$. Thus, fix $R>0$, $\phi\in C_c^\infty(\Delta_R)$, and $\Phi\in W_0^{1,2}(I_R)$ any extension of $\phi$ (that is, $\tro \Phi=\phi$). We define the (anti-)linear functional $\Lambda_R: C_c^\infty(\Delta_R)\to \CC$   by
\begin{equation}\nonumber
\Lambda_R( \phi)= \dint_{\reu}\Big( (A\nabla u + B_1 u)\cdot \overline{\nabla\Phi} + B_2\cdot \nabla u \overline{\Phi}\Big).
\end{equation} 
To see this is indeed well-defined, that is,  it does not depend on the extension $\Phi$, we simply note that for any two extensions $\Phi_1,\Phi_2$, we have that $\Phi_1-\Phi_2\in W_0^{1,2}(I_R^+)$, and $u\in \sltp$ solves $\cL u=0$ in $\reu$. Now, as in the proof of the Lax-Milgram theorem, we have  that $|\Lambda_R(\phi)|\lesssim \|\nabla u \|_{L^2(\Sigma_0^R)}\|\nabla \Phi\|_{L^2(I_R^+)}$. Construct $\Phi$ to satisfy that $\Delta\Phi=0$ in $I_R^+$, $\Phi=\varphi$ on $\Delta_R$, and $\Phi=0$ on $\partial I_R\cap\bb R^{n+1}_+$. In this case, we have that $
\|\nabla \Phi\|_{L^2(I_R^+)} \lesssim \| \phi\|_{\dot{H}^{1/2}(\Delta_R)}$ by the usual extension theorem. Combining these last two estimates, we arrive at $|\Lambda_R(\phi)|\lesssim \| \nabla u\|_{L^2(\Sigma_0^R)} \| \phi\|_{\dot{H}^{1/2}(\Delta_R)}$, whence   via the Riesz representation theorem there exists $g_R\in (\dot{H}^{1/2}(\Delta_R))^*$ such that $\langle g_R, \phi\rangle = \Lambda_R(\phi)$ for each $\phi\in C_c^\infty(\Delta_R)$. From the definition of $\Lambda_R$, we see that the restriction of $g_R$ to $\Delta_{R'}$ equals $g_{R'}$ whenever $R'<R$. In particular, this allows us to define a distribution $g$ such that 
\begin{equation}\nonumber
\langle g, \phi\rangle = \dint_{\reu} \Big( ( A\nabla u + B_1)\overline{\nabla\Phi} + B_2\cdot \nabla u \overline{\Phi} \Big),  
\end{equation}
for all $\Phi\in C_c^{\infty}(\ree)$ with $\Phi(\cdot, 0)=\phi$. It remains to show that $g\in \ltrn$. For this, note that via  the previous procedure we can define a conormal at height $t\geq 0$, which we denote as $g^t$, as the distribution which satisfies
\begin{equation}\label{varcon.eq}
\langle g^t, \Phi^t\rangle = \dint_{\bb R^{n+1}_{t}} \Big( ( A\nabla u + B_1)\overline{\nabla\Phi} + B_2\cdot \nabla u \overline{\Phi} \Big),  
\end{equation}
for all $\Phi\in C_c^\infty(\ree)$ where $\Phi^t(\cdot)= \Phi(\cdot, t)$. This formula shows that the conormal $\partial_{\nu,t}^{\cL,+} u$ in \cite[Definition 4.9]{bhlmp} agrees with $g^t$, as distributions in $\rn$. In particular, from the proof of \cite[Lemma 4.11 (i)]{bhlmp}  we see that, for any $t>0$, $g^t\in L^2(\rn)$ and $g^t= -e_{n+1} \cdot \trt(A\nabla u + B_1 u)$. Moreover, since $u\in S^2_+$, we have that $\| g^t\|_\ltrn \lesssim \| u\|_{S^2_+}$. By weak compactness, we can extract a subsequence $t_k\to 0$ and $\widetilde{g}$ such that $g^{t_k} \to \widetilde{g}$ weakly in $\ltrn$. From \eqref{varcon.eq} it is then easy to see that $\widetilde{g}=g^0=g$ as distributions, and the result follows. \end{proof}

We now take the first step towards proving existence of layer potential solutions, by proving the appropriate so-called jump relations for the Double Layer and the conormal derivative of the Single Layer.

\begin{lemma}[Jump Relations]\label{jumprelation.lem}
There exist bounded linear operators $K,\widetilde{K}: \ltrn\to\ltrn$ such that for every $f,g\in L^2(\rn)$ we have that $\big( \pm\frac{1}{2}I+ \widetilde{K}\big)g= \partial_{\nu}^{\cL,\pm} (\sl g)$, and $\big( \mp\frac{1}{2}I + K\big)f = \cD^{\cL,\pm}_0 f$.
\end{lemma}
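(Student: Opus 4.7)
The plan is to prove both jump relations by first establishing the single layer identity directly from the global equation satisfied by $\sl g$, and then deducing the double layer identity by duality with the operator $\cL^*$. At the heart of both proofs is the fact that $\sl g \in Y^{1,2}(\ree)$ satisfies $\cL(\sl g) = (\tr_0)^*g$ on all of $\ree$ by Definition \ref{def.sl}, so that a single integration by parts, split across the two half-spaces, produces the $\pm\tfrac12$ Plemelj jump.

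For the single layer identity, fix $g \in L^2(\rn)$. By Proposition \ref{mappingpropl2.prop}, $\sl g$ belongs to both $\sltp$ and $\sltn$, hence Proposition \ref{conormals2.prop} applies on each half-space and furnishes $\partial_{\nu}^{\cL,+}\sl g, \partial_{\nu}^{\cL,-}\sl g \in L^2(\rn)$ with norms controlled by $\|g\|_{L^2(\rn)}$. I would test the global equation against an arbitrary $\Phi \in C_c^\infty(\ree)$ with $\phi := \Phi(\cdot,0)$; Definition \ref{def.sl} then gives
\[
\langle g, \phi\rangle \;=\; B_\cL[\sl g, \Phi] \;=\; \dint_{\reu}\big(\cdots\big) \;+\; \dint_{\ree_-}\big(\cdots\big),
\]
where $(\cdots)$ denotes the usual integrand $(A\nabla \sl g + B_1\,\sl g)\cdot\overline{\nabla\Phi} + B_2\cdot\nabla\sl g\,\overline{\Phi}$. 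By Proposition \ref{conormals2.prop} applied to $\sl g|_{\reu}$ the first summand equals $\langle \partial_{\nu}^{\cL,+}\sl g, \phi\rangle$, while the analogous identity for $\sl g|_{\ree_-}$, bearing in mind that the variational conormal is consistently defined with respect to the direction $-e_{n+1}$ (opposite to the true outward normal $+e_{n+1}$ of $\ree_-$), makes the second summand equal to $-\langle \partial_{\nu}^{\cL,-}\sl g, \phi\rangle$. Letting $\phi$ range over $C_c^\infty(\rn)$ we obtain $\partial_{\nu}^{\cL,+}\sl g - \partial_{\nu}^{\cL,-}\sl g = g$ in $L^2(\rn)$. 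Setting $\widetilde K g := \tfrac12(\partial_{\nu}^{\cL,+}\sl g + \partial_{\nu}^{\cL,-}\sl g)$ instantly yields $(\pm\tfrac12 I + \widetilde K)g = \partial_{\nu}^{\cL,\pm}\sl g$, and boundedness $\widetilde K: L^2(\rn) \to L^2(\rn)$ follows from that of the two conormals.

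For the double layer identity, I would apply the previous step with $\cL$ replaced by its dual $\cL^*$, which is an operator of the same type satisfying the same hypotheses, to obtain the bounded operator $\widetilde K^{\cL^*}$ on $L^2(\rn)$ together with the identity $\partial_{\nu}^{\cL^*,\pm}\mathcal{S}^{\cL^*}\psi = \pm\tfrac12 \psi + \widetilde K^{\cL^*}\psi$ for every $\psi \in L^2(\rn)$. Next I would invoke the duality relation from \cite[Proposition 4.18(ii)]{bhlmp} used in the proof of Lemma \ref{lem-double-layer}, namely
\[
(\cD^{\cL,+}_t f, \psi)_{L^2(\rn)} \;=\; (f, \partial_{\nu^{\cL^*,+}_{-t}}\mathcal{S}^{\cL^*}\psi)_{L^2(\rn)}, \qquad t>0,\; f,\psi \in C_c^\infty(\rn).
\]
Letting $t \to 0^+$, the conormal on the right-hand side is evaluated at heights $-t \to 0^-$, i.e., it approaches the boundary from the lower half-space, and therefore converges to $\partial_{\nu}^{\cL^*,-}\mathcal{S}^{\cL^*}\psi = -\tfrac12 \psi + \widetilde K^{\cL^*}\psi$ by the first step applied to $\cL^*$. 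On the left, Corollary \ref{boundaryvalues.cor} identifies $\cD^{\cL,+}_0 f$ as the weak $L^2$ limit of $\cD^{\cL,+}_t f$. Taking adjoints gives $\cD^{\cL,+}_0 f = -\tfrac12 f + Kf$ with $K := (\widetilde K^{\cL^*})^*$, and the symmetric argument in the lower half-space yields $\cD^{\cL,-}_0 f = +\tfrac12 f + Kf$. Density of $C_c^\infty(\rn)$ in $L^2(\rn)$, combined with the boundedness of $\cD^{\cL,\pm}_0$ from Corollary \ref{boundaryvalues.cor}, extends the identities to every $f \in L^2(\rn)$; boundedness of $K$ on $L^2(\rn)$ is inherited from that of $\widetilde K^{\cL^*}$.

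The main obstacle will be the careful bookkeeping of signs and orientations: the variational conormal $\partial_{\nu}^{\cL,\pm}$ is by convention taken in the direction $-e_{n+1}$ from \emph{both} sides, whereas the geometric outward normal of $\ree_-$ is $+e_{n+1}$; the duality pairing supplied by the proof of Lemma \ref{lem-double-layer} couples $\cD^{\cL,+}$ at height $+t$ with the conormal of $\mathcal{S}^{\cL^*}$ at height $-t$; and the Plemelj jumps for $\cL$ and $\cL^*$ must be matched with the correct one-sided traces. Once this accounting is disentangled, the entire proof rests only on Proposition \ref{conormals2.prop} together with the $L^2$ mapping properties provided by Proposition \ref{mappingpropl2.prop} and Corollary \ref{boundaryvalues.cor}, with no further genuine analysis beyond a density argument.
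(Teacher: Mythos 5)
Your proposal is correct in substance, but it follows a more self-contained route than the paper. The paper's proof is essentially two lines: it invokes \cite[Propositions 4.18 and 4.22]{bhlmp}, where the operators $K,\widetilde K$ and the jump identities are already established for $C_c^\infty(\rn)$ data in the $\yoh$/$\dyoh$ framework, and then upgrades them to $\ltrn$ using the new $L^2$ machinery (Propositions \ref{mappingpropl2.prop} and \ref{conormals2.prop}) and a density argument. You instead re-derive the smooth-data identities: the single-layer jump from the global variational equation $B_{\cL}[\sl g,\Phi]=\langle g,\Phi(\cdot,0)\rangle$ split across the two half-spaces, and the double-layer jump by duality with $\cL^*$ through the identity of \cite[Proposition 4.18(ii)]{bhlmp} already quoted in the proof of Lemma \ref{lem-double-layer}. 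This is precisely the mechanism behind the cited propositions (and the same duality the paper exploits again in Theorem \ref{invertibility.thm}), so no new analysis is required; what your route buys is independence from the precise statement of the companion paper's jump relations, at the cost of redoing the orientation bookkeeping, which you handle correctly (both one-sided conormals taken in the fixed direction $-e_{n+1}$, so their difference is $g$, which is exactly what the form $\pm\tfrac12 I+\widetilde K$ encodes). One point does need reordering: for a general $g\in\ltrn$ the function $\sl g$ is defined only through the slice-space extension of Proposition \ref{mappingpropl2.prop}; it need not belong to $\yot$, and $L^2(\rn)\not\subset\Hfm$, so the identity $\cL(\sl g)=(\tr_0)^*g$ that you take as your starting point is not literally available for such $g$. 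You must first run the computation for $g\in C_c^\infty(\rn)\subset\Hfm$ and then pass to all of $\ltrn$ by density, using that $g\mapsto\partial_{\nu}^{\cL,\pm}(\sl g)$ is $L^2$-bounded by Propositions \ref{mappingpropl2.prop} and \ref{conormals2.prop} — the same density step you already perform for the double layer — so this is a fixable ordering issue rather than a genuine gap.
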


\begin{proof}
First,   by \cite[Propositions 4.18 and 4.22]{bhlmp},  we can define operators $K: \yoh \to \yoh$, $\widetilde{K}: \dyoh\to\dyoh$,
such that the identities in the lemma  are satisfied for $f,g\in C_c^\infty(\rn)$. Moreover, by Propositions \ref{mappingpropl2.prop} and \ref{conormals2.prop}, we obtain that $K, \widetilde{K}$ are   $\ltrn$ bounded (that is, admit a unique linear, continuous extension to $\ltrn$); the result now follows via a density argument.
\end{proof}

\begin{corollary}[Additional mapping property of $\mathcal D$]\label{NRcorollary.cor} Suppose $\L$ satisfies Hypothesis A (see Definition \ref{def-hypothesis-a}). Assume further that the operator $(\sl_0)^{-1}:\yotn \to \ltrn$ exists and is bounded. Then we have
\begin{equation}\nonumber
\sup_{t>0} \| \cD_t^{\L, +} f\|_\yotn \lesssim \| f\|_\yotn,
\end{equation}
with implicit constants depending on dimension, ellipticity of $\L$, and the norm of $(\sl_0)^{-1}$.
\end{corollary}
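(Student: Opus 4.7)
The plan is to invoke the invertibility hypothesis to represent $\dlp f$ as a single layer potential $\sl g$ with $\|g\|_{L^2}\lesssim \|f\|_{\yotn}$, and then to apply the $\yotn$-slice estimates for the single layer already available from Proposition \ref{mappingpropl2.prop} and \cite[Theorem 1.4]{bhlmp}. To this end I first fix $f \in C_c^\infty(\rn)$ (extending by density in $\yotn$ at the end) and set $u := \dlp f$. Then $u\in\yotp$ by Definition \ref{def.dl}, solves $\L u=0$ weakly in $\reu$, and by Proposition \ref{mappingpropl2.prop} and Lemma \ref{jumprelation.lem} its boundary trace is $u_0=(-\tfrac12 I+K)f\in L^2(\rn)$.

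The crux of the argument is to upgrade this trace to $\yotn$ with the estimate $\|u_0\|_{\yotn}\lesssim \|f\|_{\yotn}$. My proposed route is to use the representation formula of Lemma \ref{lem-double-layer},
\[
u_0 \;=\; (\sl_0\nabla)(A\vec N f)+(\sl_0\overline{B_2})(\vec N f),
\]
combined with two observations: first, $\vec N f$ and $A\vec N f$ carry the same scaling in $\yotn$ and in $L^2$ as $f$; and second, the operators $\sl_0\nabla$ and $\sl_0 B$ admit uniform bounds into $\yotn$ when acting on the appropriate data, which may be extracted from the $\yotn$-slice estimates of \cite[Theorem 1.4]{bhlmp} (see also Theorem \ref{thm-sup-on-slices}) and from Proposition \ref{prop-i1b-bounded} (to absorb the $L^n$-factor $B_2$). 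Passing $t\to 0^+$ in the pointwise identity of Lemma \ref{lem-double-layer}, and using Proposition \ref{prop.bvagree} to identify the weak limit with the trace, one concludes $u_0\in\yotn$ with the required bound.

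With this in hand, set $g:=(\sl_0)^{-1}(u_0)\in L^2(\rn)$; the invertibility hypothesis yields $\|g\|_{L^2}\lesssim \|u_0\|_{\yotn}\lesssim \|f\|_{\yotn}$. The function $w:=u-\sl g$ then lies in $\yotp$, solves $\L w=0$ weakly in $\reu$, and has vanishing trace at $t=0$, so it belongs to the closure of $C_c^\infty(\reu)$ in $\yot$. Coercivity of the bilinear form $B_{\L}$ (cf. \cite[Proposition 2.19]{bhlmp}) then forces $w\equiv 0$, so that $\dlp_t f = \sl_t g$ for every $t>0$. Applying Proposition \ref{mappingpropl2.prop} to $\sl g$ gives
\[
\sup_{t>0}\|\dlp_t f\|_{\yotn} \;=\; \sup_{t>0}\|\sl_t g\|_{\yotn} \;\lesssim\; \|g\|_{L^2}\;\lesssim\;\|f\|_{\yotn},
\]
and density of $C_c^\infty(\rn)$ in $\yotn$ closes the argument.

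The main obstacle will be the second paragraph: the natural trace space of $\yotp$ is $\dot H^{1/2}(\rn)$, which is strictly larger than $\yotn$, so no soft trace theorem can deliver $u_0\in\yotn$. Essential use must be made of the concrete formula from Lemma \ref{lem-double-layer} together with quantitative $\yotn$-mapping bounds for the hybrid operators $\sl_0\nabla$ and $\sl_0 B_2$, whose derivation requires combining the Caccioppoli inequality on slices (Proposition \ref{prop-caccioppoli-on-slices}) with the uniform slice estimates for the single layer. The remainder of the argument (uniqueness via coercivity, density, and the final slice bound) is routine once this key step is in hand.
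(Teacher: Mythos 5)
Your reduction in the second paragraph is where the argument breaks down, and it cannot be repaired along the lines you indicate. In the identity of Lemma \ref{lem-double-layer} the datum fed into the single layer is $A\vec N f=-\va f$ (with $\va$ the last column of $A$), which is measured in $\ltrn$, and $\|f\|_{\ltrn}$ is \emph{not} controlled by $\|f\|_{\yotn}$: dilates $f_R=\varphi(\cdot/R)$ of a fixed bump have $\|f_R\|_{\ltrn}\approx R^{n/2}$ while $\|f_R\|_{\yotn}\approx R^{(n-2)/2}$. To convert $\yotn$ control of $f$ into control of the composition you would have to move the outer horizontal derivative onto $f$, and this is blocked by the merely bounded matrix $A$ (and by $B_2\in L^n$) sitting in between. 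Moreover, even the unweighted ingredient you invoke is not available: what you need is the uniform slice bound $\sup_{t>0}\|\nabla_{\|}(\sl_t\nabla)h\|_{\ltrn}\lesssim\|h\|_{\ltrn}$, a two-gradient hybrid with no compensating factor, whereas Theorem \ref{thm-sup-on-slices} (and \cite[Theorem 1.4]{bhlmp}) controls only one gradient, and Corollary \ref{cor-lp-slices-estimates-grad-st-grad} always carries $t^m\pd_t^{m-1}$ with $m\ge1$ (so at best $t\,\nabla(\sl_t\nabla)$) or trades one gradient for a $\pd_t$; the $m=0$, two-gradient slice estimate is nowhere established. If instead you try to obtain $\|(-\tfrac12 I+K)f\|_{\yotn}\lesssim\|f\|_{\yotn}$ from Proposition \ref{jumprelationsy12.prop}, that is circular, since that proposition is deduced from the very corollary you are proving. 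Note also that the constant in the statement depends on $\|(\sl_0)^{-1}\|$, while your proposed trace estimate would never use the inverse --- a strong hint that a direct bound of this kind is not the mechanism.

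There are also problems downstream: for $g=(\sl_0)^{-1}u_0\in\ltrn$ the potential $\sl g$ lies in $\sltp$ but need not lie in $\yotp$ (that would require $g\in\dyoh$), so $w=u-\sl g$ is not known to be in $\yotp$ and the coercivity/Lax--Milgram uniqueness step does not apply; uniqueness in the slice class is only proved in Section \ref{sec.uniqueness} and itself relies on this corollary. The paper's proof sidesteps all of this by exploiting the hypothesis from the start: on the dense class $f=\sl_0\psi$, $\psi\in C_c^\infty(\rn)$, the Green formula \cite[Theorem 4.16 (iv)]{bhlmp} applied to $\mathcal S^{\cL}_\tau\psi$ in the lower half-space together with the jump relation gives $\cD^{\cL,+}f=-\sl\big((-\tfrac12 I+\widetilde K)(\sl_0)^{-1}f\big)$, and the claim then follows from the three bounded maps $(\sl_0)^{-1}:\yotn\to\ltrn$, $-\tfrac12 I+\widetilde K:\ltrn\to\ltrn$, and $\sl:\ltrn\to\sltp$ (Proposition \ref{mappingpropl2.prop}); this is exactly how the dependence on $\|(\sl_0)^{-1}\|$ enters.
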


\begin{proof}
We know by Theorem \ref{invertibility.thm} that the map $\sl_0 : \ltrn \to \yotn$ is bounded and invertible. In particular we have that the set
\begin{equation}\nonumber
\F := \Big\{ f\in \yotn : f= \sl_0 \psi, \, \psi\in C_c^\infty(\rn)\Big\} 
\end{equation}
is dense in $\yotn$. We note that for $f\in \F$ we have $f\in \yoh\cap \yotn$ by \cite[Proposition 4.7 (iii)]{bhlmp}. For such an $f$ and $\psi:=(\sl_0)^{-1} f$ we set $u(\cdot, \tau):= \mathcal{S}_\tau^{\cL} \psi,\quad \tau<0$. Then by \cite[Theorem 4.16 (iv)]{bhlmp}, applied to $u$ in $\ree_-$, we have
\begin{equation}\label{mappropd1.eq}
\cD^{\cL,+}(f)= -\sl(\partial_{\nu^{\cL,-}} u), \quad \textup{ in } \reu.
\end{equation}
Now recall from the jump relations (see \cite[Proposition 4.22 (ii)]{bhlmp}) that 
\begin{equation}\nonumber
\partial_{\nu^{\cL,-}} u = \Big( -\dfrac{1}{2}I + \widetilde{K}\Big) \psi, \quad \textup{ in } \dyoh,
\end{equation}
so that, using the definition of $u$, \eqref{mappropd1.eq} becomes
\begin{equation}\label{mapprop2.eq}
\cD^{\cL,+} (f)= -\sl \Big( \Big( -\frac{1}{2}I+\widetilde{K}\Big) \psi \Big)=-\sl \Big( \Big( -\frac{1}{2}I+\widetilde{K}\Big) (\mathcal{S}_0^{\cL})^{-1} f \Big).
\end{equation}
Finally from Proposition \ref{mappingpropl2.prop} and Theorem \ref{invertibility.thm} we know the following maps are bounded
\begin{gather}\nonumber 
\sl: \ltrn \to \sltp,   \Big(- \dfrac{1}{2}I +\widetilde{K}\Big): \ltrn \to \ltrn, \quad \\
(\mathcal{S}_0^{\cL})^{-1}:   \yotn \to \ltrn,\nonumber 
\end{gather}
which gives the desired bound
\begin{equation}\nonumber
\| \cD^{\cL,+} (f) \|_{\sltp} \lesssim \| f\|_{\yotn}, \quad f\in \F.
\end{equation}
We conclude the claimed inequality from the density of $\F$ in $\yotn$.
\end{proof}

\begin{proposition}\label{jumprelationsy12.prop}
Suppose that $\L$ satisfies Hypothesis A (see Definition \ref{def-hypothesis-a}). Assume further that $(\sl_0)^{-1}:\yotn\to \ltrn$ exists and is bounded. Let $f\in \yotn$, then the operator $K$ from Lemma \ref{jumprelation.lem} extends to a bounded operator from $\yotn$ to itself. Moreover we retain the jump relation
\begin{equation}\nonumber
\Big( -\dfrac{1}{2} I + K\Big)  f = \lim_{t\to 0} \cD_t^{\cL,+} (f),
\end{equation}
where the limit on the right is a weak limit in $\yotn$.
\end{proposition}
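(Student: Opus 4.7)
The strategy will be to leverage the representation formula \eqref{mapprop2.eq} established in the proof of Corollary \ref{NRcorollary.cor}: for every $f \in \F$, where $\F := \{\sl_0 \psi : \psi \in C_c^\infty(\rn)\}$, we have
\[
\cD_t^{\cL,+}(f) = -\sl_t\Big(\Big(-\tfrac{1}{2}I + \widetilde{K}\Big)(\sl_0)^{-1} f\Big), \qquad t>0.
\]
Since $(\sl_0)^{-1}$ is a bounded isomorphism between $\yotn$ and $L^2(\rn)$ and $C_c^\infty(\rn)$ is dense in $L^2(\rn)$, the set $\F$ is dense in $\yotn$. This will allow us to define a candidate operator on all of $\yotn$ and then identify it with the requisite weak limit.

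First, I would show that the weak limit $\cD_0^{\cL,+}f := \lim_{t\to 0^+} \cD_t^{\cL,+}(f)$ exists in $\yotn$ for every $f \in \yotn$. Indeed, Corollary \ref{NRcorollary.cor} gives $\cD^{\cL,+}f \in \sltp$, and together with the fact that $\cD^{\cL,+}f \in W^{1,2}_{\loc}(\reu)$ is a solution of $\cL u = 0$, Proposition \ref{weaklimits2.prop} yields precisely such a weak limit. I would then set
\[
K_{\yotn} f := \tfrac{1}{2}f + \cD_0^{\cL,+}f, \qquad f \in \yotn,
\]
as the candidate extension of $K$.

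Next, I would show $K_{\yotn}$ is bounded $\yotn \to \yotn$ by computing it on the dense subset $\F$. Fix $f = \sl_0 \psi \in \F$ with $\psi \in C_c^\infty(\rn)$, and set $g := (-\tfrac{1}{2}I + \widetilde{K})\psi \in L^2(\rn)$ (using Lemma \ref{jumprelation.lem}). Then $\cD_t^{\cL,+}(f) = -\sl_t g$, and by Proposition \ref{mappingpropl2.prop} together with Proposition \ref{weaklimits2.prop} applied to $\sl g \in \sltp$, we get $\sl_t g \rightharpoonup \sl_0 g$ weakly in $\yotn$ as $t \to 0^+$. By uniqueness of weak limits,
\[
K_{\yotn} f = \tfrac{1}{2}f - \sl_0\Big(\Big(-\tfrac{1}{2}I + \widetilde{K}\Big)(\sl_0)^{-1} f\Big), \qquad f \in \F.
\]
The right-hand side is the composition of the bounded maps $(\sl_0)^{-1}: \yotn \to L^2$, $(-\tfrac{1}{2}I + \widetilde{K}): L^2 \to L^2$, and $\sl_0 : L^2 \to \yotn$, plus the identity, so it is bounded from $\yotn$ to itself. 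The identity on $\F$ and the density of $\F$ in $\yotn$ then force $K_{\yotn}$ (defined originally through the weak limit) to coincide with this bounded composition on all of $\yotn$, establishing the boundedness and the mapping property. The extension is consistent with the original $K$ on $L^2(\rn) \cap \yotn$, since on the dense set $\sl_0(C_c^\infty) \subset L^2(\rn) \cap \yotn$ both formulas produce the same element, and both operators are bounded with closed graphs.

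The main technical point to be careful about is justifying that the weak convergence $\sl_t g \rightharpoonup \sl_0 g$ in $\yotn$ (rather than merely in a larger space) really passes through the identity $\cD_t^{\cL,+}(f) = -\sl_t g$ to yield the weak $\yotn$-limit of $\cD_t^{\cL,+}(f)$; this is where Proposition \ref{weaklimits2.prop} is crucial, since it pins down the trace in $\yotn$ and its agreement with the weak limit. Once that is in hand, the density argument described above closes the loop: on the dense set $\F$, the jump relation is a tautology from the representation formula, and by bounded extension it persists on all of $\yotn$.
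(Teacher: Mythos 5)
Your proposal is correct and follows essentially the paper's own route: the weak limit in $\yotn$ comes from the uniform slice bound of Corollary \ref{NRcorollary.cor} combined with Proposition \ref{weaklimits2.prop}, the limit is identified on a dense class, and the extension of $K$ follows by boundedness and density. The only cosmetic difference is that you identify the limit on $\F=\{\sl_0\psi:\psi\in C_c^\infty(\rn)\}$ via the representation formula \eqref{mapprop2.eq}, whereas the paper identifies it on $C_c^\infty(\rn)$ directly through the $L^2$ jump relation of Lemma \ref{jumprelation.lem}; both identifications rest on the same ingredients, so the arguments are essentially the same.
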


\begin{proof}
The proof of the first statement follows from Corollary \ref{NRcorollary.cor}, which together with Proposition \ref{weaklimits2.prop} guarantees the existence of a weak limit in $\yotn$ for $f\in C_c^\infty(\rn)$, and Lemma \ref{jumprelation.lem} which gives the desired identity.
\end{proof}

\begin{lemma}[Additional mapping property of $\mathcal S$]\label{Duniquelem1.lem}
Suppose $\L$ satisfies Hypothesis A (see Definition \ref{def-hypothesis-a}). Assume further that the inverse operators $(\mathcal{S}^{\L^*}_0)^{-1}, (\sl_0)^{-1}:\ltrn\to [\yotn]^*$ and 
\begin{equation}\nonumber
\Big(-\dfrac{1}{2} + K\Big)^{-1}: \ltrn\to \ltrn,
\end{equation}
exist and are bounded. Then the operator $\mathcal S$ extends as a bounded operator $\mathcal S: [Y^{1,2}(\rn)]^* \to \dltp$, that is,
$$\sup_{t > 0} \| \sl_t g \|_{L^2(\rn)} \lesssim \| g\|_{[Y^{1,2}(\rn)]^*},$$
with implicit constants depending on dimension, ellipticity of $\L$ and the norm of $(\mathcal{S}^{\cL}_0)^{-1}, (\sl_0)^{-1}$.
\end{lemma}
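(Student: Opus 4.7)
The plan is to construct the extension on $[Y^{1,2}(\rn)]^*$ via the composition
\[
g \longmapsto \mathcal{D}^{\cL,+}\!\big((-\tfrac{1}{2}I + K)^{-1}\,\mathcal{S}_0 g\big),
\]
and then identify this candidate with the classical single layer on a dense subspace. The invertibility hypotheses on $\sl_0, \mathcal{S}_0^{\cL^*}$, and $-\tfrac12 I + K$ are used to make each factor of the composition bounded in the required spaces.

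First, I would upgrade $\mathcal{S}_0$ to a bounded map $\mathcal{S}_0 : [Y^{1,2}(\rn)]^* \to L^2(\rn)$ by duality. Theorem \ref{BHLMP1-Main.thrm}(ii) applied to $\cL^*$ gives $\mathcal{S}_0^{\cL^*} : L^2(\rn) \to Y^{1,2}(\rn)$ bounded, and the $t$-independence of the coefficients, combined with the definition $\mathcal{S} = (\tr_0 \circ (\cL^*)^{-1})^*$, supplies the adjoint identity $(\mathcal{S}_0^{\cL^*})^* = \mathcal{S}_0^{\cL}$ (meaningful as $L^2$-pairings on $C_c^\infty$). The Banach-space adjoint therefore furnishes the desired extension with $\|\mathcal{S}_0 g\|_{L^2(\rn)} \lesssim \|g\|_{[Y^{1,2}(\rn)]^*}$, and the hypothesized invertibility makes it surjective. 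The second step is then immediate: for $g \in [Y^{1,2}(\rn)]^*$ set $\phi_g := (-\tfrac{1}{2}I + K)^{-1} \mathcal{S}_0 g \in L^2(\rn)$ and $\widetilde{\mathcal{S}} g := \mathcal{D}^{\cL,+}\phi_g$. Proposition \ref{mappingpropl2.prop} gives
\[
\|\widetilde{\mathcal{S}} g\|_{\dltp} \lesssim \|\phi_g\|_{L^2(\rn)} \lesssim \|\mathcal{S}_0 g\|_{L^2(\rn)} \lesssim \|g\|_{[Y^{1,2}(\rn)]^*},
\]
and the jump relation from Lemma \ref{jumprelation.lem}, together with Corollary \ref{boundaryvalues.cor}, identifies the weak-$L^2$ trace of $\widetilde{\mathcal{S}} g$ at $t = 0$ with $(-\tfrac{1}{2}I + K)\phi_g = \mathcal{S}_0 g$.

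The third step is to match $\widetilde{\mathcal{S}} g$ with the classical $\mathcal{S} g|_{\reu}$ on the dense subspace $g \in C_c^\infty(\rn) \subset [Y^{1,2}(\rn)]^*$. For such $g$ one has $\mathcal{S} g \in Y^{1,2}(\ree)$, so $\mathcal{S} g|_{\reu}$ is itself a good $\mathcal{D}$ solution in $\reu$ with trace $\mathcal{S}_0 g$ by Proposition \ref{weaklimits2.prop}. Applying the standard Green's representation in $\reu$ to $\mathcal{S} g|_{\reu}$ and using the jump relation gives
\[
\mathcal{S} g|_{\reu} = \mathcal{D}^{\cL,+}(\mathcal{S}_0 g) - \mathcal{S}\!\big((\tfrac{1}{2}I + \widetilde{K})g\big)\big|_{\reu},
\]
after which composing with the bounded inverse $(-\tfrac{1}{2}I + K)^{-1}$ and chasing the jump-operator algebra in $L^2$ identifies this with $\mathcal{D}^{\cL,+}\phi_g$. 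Density of $C_c^\infty(\rn)$ in $[Y^{1,2}(\rn)]^*$ then promotes the bound and the identity to the full space.

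The main obstacle is precisely this identification step. Rigorously establishing $\widetilde{\mathcal{S}} g = \mathcal{S} g|_{\reu}$ for $g \in C_c^\infty$ hinges on either a uniqueness theorem for good $\mathcal{D}$ solutions with $L^2$ boundary data, which is fully developed only in Section 8 of the paper, or on the careful justification of Green's identity and the subsequent algebra in a setting where $\mathcal{D}^{\cL,+}\phi_g$ fails to be globally in $Y^{1,2}(\reu)$ (it only belongs to $\dltp$ and $W^{1,2}_{\loc}$). A self-contained workaround is to restrict to the slab $\Sigma_\varepsilon^\infty$, where Caccioppoli upgrades the $\dltp$ bound on $\mathcal{D}^{\cL,+}\phi_g$ and the global $Y^{1,2}(\ree)$ bound on $\mathcal{S} g$ into a common $Y^{1,2}$ energy control, and both solutions have the same slice $\mathcal{S}_\varepsilon g$ at $t = \varepsilon$; the variational uniqueness for the Dirichlet problem on the slab then forces the two to agree there, after which passing to $\varepsilon \to 0^+$ via the slice continuity intrinsic to $\dltp$ and $\sltp$ finishes the identification.
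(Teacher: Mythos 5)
Your overall mechanism is in fact the paper's: dualize $\mathcal{S}_0^{\cL^*}$ to get $\sl_0:\dyotn\to\ltrn$, use the $L^2$ invertibility of $-\tfrac12 I+K$ together with the bound $\dlp:\ltrn\to\dltp$ from Proposition \ref{mappingpropl2.prop}, and conclude by density. The genuine gap is the identification step, and your proposed self-contained workaround does not close it. On the slab you assert that $\dlp\phi_g$ and $\sl g$ ``have the same slice $\mathcal{S}_\varepsilon g$ at $t=\varepsilon$,'' but that is exactly the statement you are trying to prove (at height $\varepsilon$ instead of $0$): the only coincidence of boundary data available at this stage is the weak $L^2$ limit at $t=0$ furnished by Lemma \ref{jumprelation.lem} and Corollary \ref{boundaryvalues.cor}, namely $\cD_0^{\cL,+}\phi_g=(-\tfrac12 I+K)\phi_g=\sl_0 g$, and since $\dlp\phi_g$ is only known to lie in $\dltp\cap W^{1,2}_{\loc}(\reu)$ (not in $Y^{1,2}(\reu)$), neither slab nor variational uniqueness applies with data at $t=0$. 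Your fallback through uniqueness of good $\mathcal D$ solutions is indeed circular: Lemmas \ref{Duniquelem2.lem}, \ref{Duniquelem3.lem} and Theorem \ref{Dgooduniquethrm.thrm} all rely on the present lemma. (There is also a sign slip in your interior Green representation; with the paper's conventions it reads $u=-\dlp(\tro u)+\sl(\partial_{\nu}^{\cL,+}u)$.)

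The fix requires no uniqueness theorem at all, and it is what the paper does. Apply the Green formula of \cite[Theorem 4.16 (iv)]{bhlmp} to $u(\cdot,\tau)=\sl_\tau\psi$, $\tau<0$, $\psi\in C_c^\infty(\rn)$ --- there everything lies in $Y^{1,2}$ of the lower half-space, so the formula is already justified --- and combine with the jump relations (Lemma \ref{jumprelation.lem}, Corollary \ref{boundaryvalues.cor}, Proposition \ref{jumprelationsy12.prop}) to obtain the operator identity
\[
\dlp(\sl_0\psi)=-\sl\Big(\Big(-\tfrac12 I+\widetilde K\Big)\psi\Big),
\]
i.e.\ \eqref{mapprops1.eq}. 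With this in hand the paper never needs to identify $\dlp\phi$ with $\sl h$ for an arbitrary $h$: it simply transfers norms along the dense class $h=(-\tfrac12 I+\widetilde K)\psi$, $\psi\in C_c^\infty(\rn)$, after showing that $-\tfrac12 I+\widetilde K=-(\sl_0)^{-1}(-\tfrac12 I+K)\sl_0$ extends to a bounded invertible operator on $\dyotn$ (this is precisely where the hypotheses on $(\sl_0)^{-1}$, $(\mathcal{S}_0^{\cL^*})^{-1}$ and $(-\tfrac12 I+K)^{-1}$ enter), so that $\|h\|_{\dyotn}\approx\|\psi\|_{\dyotn}$ and this class is dense. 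If you insist on keeping $C_c^\infty(\rn)$ as your dense class, you need the same extension and invertibility of $-\tfrac12 I+\widetilde K$ anyway, in order to write $h=(-\tfrac12 I+\widetilde K)g$ and extend \eqref{mapprops1.eq} by continuity in $g$; at that point your argument coincides with the paper's.
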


\begin{proof}
Notice that, by the mapping properties of $-\frac{1}{2}I+K$ (see Corollary \ref{NRcorollary.cor}), and using the smallness of $\| B_i\|_{\lnrn}$, we obtain that 
\begin{equation}\nonumber
-\frac{1}{2} I + \widetilde{K} : \yotn \to \yotn
\end{equation}
is bounded and invertible. From this and the Green's Formula (see \cite[Theorem 4.16 (iv)]{bhlmp}) we have that for any $g\in C_c^\infty(\rn)$
\begin{equation}\label{mapprops1.eq}
\cD^{\cL, +}(\sl_0 g) = -\sl \Big( \Big( -\dfrac{1}{2}I + \widetilde{K}\Big) g\Big).
\end{equation} 

By Proposition \ref{jumprelationsy12.prop} and Corollary \ref{boundaryvalues.cor} we have that, taking weak limits in $\yotn$ as $t\to 0$
\begin{equation}\nonumber
\Big( -\dfrac{1}{2}I +K\Big) (\sl_0 g) = - \sl_0 \Big(-\dfrac{1}{2}I + \widetilde{K}\Big) g,
\end{equation}
or equivalently 
\begin{equation}\nonumber
-(\sl_0)^{-1} \Big( -\dfrac{1}{2} I +K\Big) \sl_0 g = \Big( -\dfrac{1}{2}I + \widetilde{K}\Big) g=: h,
\end{equation}
which means, using the corresponding mapping properties for $-(1/2)I+K$  and $\sl_0$ (see Proposition \ref{jumprelationsy12.prop} and the fact that $adj(\sl_0)= \mathcal{S}^{\cL^*}_0$), that we can extend
\begin{equation}\nonumber
-\dfrac{1}{2}I + \widetilde{K}: \dyotn \to \dyotn
\end{equation}
as a bounded and, with smallness of $\| B_i\|_{\lnrn}$, invertible operator. In particular $\| g\|_{\dyotn}\approx \| h\|_{\dyotn}$. Using this in \eqref{mappropd1.eq} we arrive at the fact that, for $g\in C_c^\infty(\rn)$ it holds $\sl h \in \dltp$ and
\begin{equation}\nonumber
\| \sl_t h\|_{\dltp} \lesssim \| \sl_0 g\|_\ltrn \lesssim \| g\|_{\dyotn} \approx \| h\|_{\dyotn}.
\end{equation}
Since the set 
\begin{equation}\nonumber
\Big\{ h=\Big( -\dfrac{1}{2}I + \widetilde{K}\Big)g: g\in C_c^\infty(\rn)\Big\}
\end{equation}
is dense in $\dyotn$ we conclude the desired property by a density argument. 
\end{proof}

\begin{definition}[Hypothesis B]\label{def-hypothesis-b}
We will say $\L$ satisfies Hypothesis B if the following properties hold.
\begin{enumerate}
	\item $\L$ satisfies Hypothesis A, along with the hypotheses of Theorem \ref{thm.verticalsqfctnbounds} and Theorem \ref{thm-ntmax-estimates-final-version}.
	\item The following operators are invertible
	\begin{equation}\nonumber
	\mathcal{S}^{\L^*}_0, \sl_0: \ltrn\to \yotn, \qquad -\dfrac{1}{2} + K : \ltrn \to \ltrn.
	\end{equation}
	\item The following operators are invertible
	\begin{equation}\nonumber
	\pm \dfrac{1}{2}+ K: \yotn\to\yotn,
	\end{equation}
	\begin{equation}\nonumber
	\pm\dfrac{1}{2}+ \widetilde{K}: \ltrn\to\ltrn.
	\end{equation}
\end{enumerate}
\end{definition}

The first condition in Hypothesis B ensures that we have the right square and non-tangential maximal function estimates in $\lprn$ for the layer potentials associated to $\L$ and $\L^*$. In particular the first condition implies that the objects in item (2) are well-defined and bounded (not necessarily invertible in general). The objects in item (2), more specifically their inverses, are used in the previous Propositions to define the objects in (3); this is the reason for the statement to be written in this way.

\begin{theorem}[Invertibility of Layer Potentials]\label{invertibility.thm}
Suppose $\L_0$ satisfies hypothesis B (see Definition \ref{def-hypothesis-b}), with coefficients $A^0, B_i^0$ for $i=1,2$, and let $\L_1$ be defined by 
\begin{equation}\nonumber
\L_1= -\div((A^0+M)\nabla +(B_1^0+B_1)) + (B_2^0+B_2)\cdot \nabla.
\end{equation}
There exists $\rho>0$ depending on dimension, ellipticity of $\L_0$, and the norms of the inverse operators in item (2) of Hypothesis B with the property that if 
\begin{equation}\nonumber
\max\{ \| M\|_{L^\infty(\rn)}, \| B_1\|_\lnrn, \| B_2\|_\lnrn \}<\rho,
\end{equation}
then $\L_1$ satisfies Hypothesis B.
\end{theorem}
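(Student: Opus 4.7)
The plan proceeds in four stages, verifying Hypothesis A, then items (1), (2), and (3) of Hypothesis B for $\cL_1$. The first two stages are elementary: choosing $\rho$ small in terms of $\lambda$ and $\widetilde{\rho}_1$, the triangle inequality preserves the $L^n$ smallness of $B_1^0+B_1$ and $B_2^0+B_2$, and the matrix $A^0+M$ remains elliptic with constants at worst twice those of $A^0$, so $\cL_1$ satisfies Hypothesis A (Definition \ref{def-hypothesis-a}). Item (1) of Hypothesis B then follows automatically, since the constants in Theorems \ref{thm.verticalsqfctnbounds} and \ref{thm-ntmax-estimates-final-version} depend only on dimension and ellipticity of $\cL_1$.

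The core engine is the resolvent identity applied to the decomposition $\cL_1 = \cL_0 + E$, where $E$ is the perturbation operator given by
\[
\langle E u, v\rangle := \dint_{\ree}\bigl( M\nabla u\cdot \overline{\nabla v} + B_1 u\cdot \overline{\nabla v} + B_2\cdot \nabla u\,\overline v\bigr), \qquad u,v\in Y^{1,2}(\ree).
\]
By Sobolev embedding and H\"older's inequality, $\|E\|_{Y^{1,2}(\ree)\to Y^{1,2}(\ree)^*}\lesssim \rho$. By \cite[Proposition 2.19]{bhlmp}, for $\rho$ small both $\cL_0$ and $\cL_1$ are isomorphisms $Y^{1,2}(\ree)\to Y^{1,2}(\ree)^*$ with norms controlled by ellipticity, yielding the resolvent identity $\cL_1^{-1} - \cL_0^{-1} = -\,\cL_1^{-1}\circ E\circ \cL_0^{-1}$.

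For item (2), combining the resolvent identity with the representation $\mathcal{S}^{\cL} = \cL^{-1}\circ \Tr_0^{\,*}$ (and the boundedness of $\Tr_0:Y^{1,2}(\ree)\to\Hf\hookrightarrow Y^{1,2}(\rn)$) yields
\[
\mathcal{S}^{\cL_1}_0 - \mathcal{S}^{\cL_0}_0 \;=\; -\,\Tr_0\circ \cL_1^{-1}\circ E\circ \mathcal{S}^{\cL_0},
\]
which is bounded $L^2(\rn)\to Y^{1,2}(\rn)$ with norm $\lesssim \rho$. Hence, for $\rho$ small enough compared to $\|(\mathcal{S}^{\cL_0}_0)^{-1}\|$, the Neumann series delivers invertibility of $\mathcal{S}^{\cL_1}_0$, and the same argument applied to $\cL_0^* + E^*$ handles $\mathcal{S}^{\cL_1^*}_0$. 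For the jump operator $-\tfrac12 I + K^{\cL_1}: L^2(\rn)\to L^2(\rn)$, Lemma \ref{lem-double-layer} rewrites
\[
-\tfrac12 I + K^{\cL} = (\mathcal{S}^{\cL}_0\nabla)\circ (A\vec N\,\cdot\,) + \mathcal{S}^{\cL}_0\circ (\overline{B_2}\vec N\,\cdot\,),
\]
and we split $K^{\cL_1}-K^{\cL_0}$ into a coefficient-difference part (bringing in $M$ and $B_2$) plus a single-layer-difference part (handled by the estimate just proved, noting that $\mathcal{S}^{\cL}_0\nabla=(\mathcal{S}^{\cL^*}_0)^*$ is bounded $Y^{1,2}(\rn)^*\to L^2(\rn)$ with uniform norms). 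Both pieces yield contributions of order $\rho$, and Neumann series concludes.

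Item (3) follows by exactly the same perturbation scheme on the remaining domains. The operators $\pm \tfrac12 I + K^{\cL_1}$ are bounded on $Y^{1,2}(\rn)$ by Proposition \ref{jumprelationsy12.prop} and Corollary \ref{NRcorollary.cor}, applied using the invertibility results from item (2); the identical decomposition as above, measured now in the $Y^{1,2}(\rn)\to Y^{1,2}(\rn)$ norm, produces a difference of order $\rho$. For $\pm\tfrac12 I + \widetilde K^{\cL_1}:L^2(\rn)\to L^2(\rn)$, we use Proposition \ref{conormals2.prop} and Lemma \ref{Duniquelem1.lem} to split $\widetilde K^{\cL_1} - \widetilde K^{\cL_0}$ into a conormal-difference term (from the change $A^0\to A^0+M$ and $B_1^0\to B_1^0+B_1$, applied to $\mathcal{S}^{\cL_1}$) plus a single-layer-difference term (the $\cL_0$-conormal applied to $\mathcal{S}^{\cL_1}-\mathcal{S}^{\cL_0}$); each piece is again of order $\rho$, and a final Neumann series closes the argument. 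The main obstacle throughout is bookkeeping the various functional-analytic spaces involved---$L^2(\rn)$, $Y^{1,2}(\rn)$ and its dual, and the trace spaces $\Hf$, $\Hfm$---and verifying that the bound $\|E\|\lesssim \rho$ transfers correctly through the resolvent identity and the layer-potential identities to produce an order-$\rho$ operator-norm bound on each of the six boundary operators of Hypothesis B.
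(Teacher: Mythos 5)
There is a genuine gap at the heart of your item (2), and it propagates to item (3). The resolvent identity gives smallness of $\cL_1^{-1}-\cL_0^{-1}$ only in the energy topology, i.e.\ as operators $\dyot\to\yot$, because $\|E\|_{\yot\to\dyot}\lesssim\rho$. But Hypothesis B requires smallness (hence invertibility by Neumann series) of the \emph{boundary} operators in the $L^2(\rn)$-based norms: $\mathcal{S}_0^{\cL}:\ltrn\to\yotn$ and $\pm\tfrac12 I+K,\widetilde K$ on $\ltrn$. Your key claim, that $\mathcal{S}^{\cL_1}_0-\mathcal{S}^{\cL_0}_0=-\Tr_0\circ\cL_1^{-1}\circ E\circ\mathcal{S}^{\cL_0}$ is bounded $\ltrn\to\yotn$ with norm $\lesssim\rho$, does not follow from the factorization. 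Two links break: (i) for general $g\in\ltrn$, $\mathcal{S}^{\cL_0}g$ is \emph{not} in $\yotp$ with $\|\nabla\mathcal{S}^{\cL_0}g\|_{L^2(\reu)}\lesssim\|g\|_{\ltrn}$ --- the available estimates are the slice bounds $\sup_t\|\nabla\sl_t g\|_{\ltrn}\lesssim\|g\|_{\ltrn}$ and the square function bound with the weight $t\,dt/t$, and the putative global energy bound is not even scale-invariant (the correct statement is $\|\nabla\mathcal{S}^{\cL_0}g\|_{L^2(\ree)}\lesssim\|g\|_{\dyoh}$), so $E\circ\mathcal{S}^{\cL_0}$ is only $O(\rho)$ on $\dyoh$-normalized data, not on $\ltrn$; (ii) $\Tr_0\circ\cL_1^{-1}$ maps $\dyot$ into $\yoh$, not into $\yotn$ with an $L^2$ bound on the tangential gradient, so even the target space is wrong. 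Density of $C_c^\infty$ does not rescue this: weak-space smallness plus uniform $L^2$-based boundedness of each operator separately does not yield operator-norm smallness of the difference in $\mathcal{B}(\ltrn,\yotn)$, which is exactly what the Neumann series needs. The same objection applies verbatim to your treatment of $K$, $\widetilde K$, and to item (3).

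This is precisely the difficulty the paper's proof is designed to overcome, and it does so by a different mechanism: it introduces the analytic family $\cL_z=\cL_0-z\cM$, uses the Neumann series in the energy space only to conclude that $z\mapsto\cL_z^{-1}$ (hence $z\mapsto\slz_t$, $K_z$, $\widetilde K_z$) is analytic in the \emph{weak} sense (pairing against nice test functions, with values in $\mathcal{B}(\dyoh;\yoh)$ etc.), and then combines this weak analyticity with the \emph{uniform-in-$z$} $L^2$-based bounds $\sup_{z}\sup_t\|\slz_t\|_{\ltrn\to\yotn}\lesssim1$ and $\sup_z\|\partial_{\nu}^{\cL_z,+}\slz\|_{\ltrn\to\ltrn}\lesssim1$, which come from the harmonic-analytic estimates of the earlier sections and depend only on dimension and ellipticity. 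Kato's theorem (\cite[Theorem 3.12]{K}) upgrades weak analyticity plus uniform boundedness to analyticity in $\mathcal{B}(\ltrn;\yotn)$, resp.\ $\mathcal{B}(\ltrn)$, and the Cauchy integral formula then delivers the Lipschitz bound $\|\widetilde K_z-\widetilde K_w\|_{\ltrn\to\ltrn}\lesssim|z-w|$ (and similarly for $K_z$ and $\slz_0$), with constants controlled by the uniform bounds. That Lipschitz estimate in the correct operator norm is what your resolvent identity was supposed to produce but cannot; once it is in hand, the Neumann-series conclusion you draw is the same as the paper's. To repair your argument you would either have to reproduce this analyticity/uniform-boundedness interpolation, or prove directly (as in the second-order theory of \cite{AAAHK}) that coefficient differences of size $\rho$ give $O(\rho)$ differences of layer potentials in the $L^2$-based norms, which is a substantial additional estimate, not a formal consequence of $\|E\|_{\yot\to\dyot}\lesssim\rho$.
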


\begin{proof} Set $\|M\|_{\infty} = 1$ and $\| B_i\|_n=1$, $i=1,2$,  and then define the operator
\begin{equation}\nonumber
\cL_zu:= -\div((A + zM)\nabla u + (B_1^0+zB_1)u)+ (B_2^0+zB_2)\cdot \nabla u, \quad z\in \CC,\, u\in \yot. 
\end{equation}
We write  $\cL_z= \cL_0-z\cM$. The idea will be to show that $K_z$, $\widetilde{K}_z$ and $\slz_0$ are analytic in $z$ in a neighborhood of the origin. Note that, by Lax-Milgram, $\cL_0$ is always invertible, and thus there exists $\varepsilon_0$ such that if $z\in B_{\varepsilon_0}= B(0,\varepsilon_0)$, then $\cL_z$ is also invertible, and moreover $\cL_z^{-1} = \cL_0^{-1} \sum_{k=0}^\infty (z\cM \cL_0^{-1})^k$, the series converging in the operator norm of $\mathcal{B}(\dyot; \yot)$. In particular, the map $z\mapsto \cL_z^{-1}$ is analytic in $B_{\varepsilon_0}$. Now fix $t\geq0$. By definition of the single layer, we conclude that $\slz_t$ is also analytic with values in $\mathcal{B}(\dyoh; \yoh)$. Since $\nablap: \yoh\to \dyoh$, we have that $\nablap \slz_t$ is analytic in $B_{\varepsilon_0} $ with values in $\mathcal{B}(\dyoh;\dyoh)$. Thus, for $f\in C_c^\infty(\rn)$ and $g\in C_c^\infty(\rn;\cn)$, we have that the map $z\mapsto (\nablap \slz_t f, g)$ is analytic, and
\begin{equation}\nonumber
\sup_{z\in B_{\varepsilon_0}} \sup_{t\geq 0}\| \slz_t \|_{\ltrn\to\yotn} \lesssim 1.
\end{equation}
It follows by \cite[Theorem 3.12]{K} that   $z\mapsto \slz_t$ is a holomorphic map with values in $\mathcal{B}(\ltrn; \yotn)$, for any $t\geq0$. In particular, we have that $\slz_0$ is analytic. Similarly, $\partial_{\nu}^{\cL_z,+}\slz$ is analytic with values in $\mathcal{B}(\dyoh; \dyoh)$, and for $f,g\in C_c^\infty(\rn)$, we have that
\begin{equation}\label{conormunif.eq}
\sup_{z\in B_{\varepsilon_0}} \| \partial_{\nu}^{\cL_z,+}\slz \|_{\ltrn\to \ltrn}\lesssim 1,
\end{equation}
and the map $z\mapsto (\partial_{\nu^{\cL_z,+}}\slz f, g)= \lim_{t\to0} (A\nabla \slz_t f +zB_1\slz_t f, g)$ is analytic. Thus we obtain that $\partial_{\nu}^{\cL_z,+}(\slz)$ is analytic with values in $\mathcal{B}(\ltrn)$. By the jump relations in Lemma \ref{jumprelation.lem},   $\widetilde{K}_z$ is also analytic with values in $\mathcal{B}(\ltrn)$. The analyticity of $K_z$ follows from that of $\widetilde{K}_z$ by noting that $\langle \cD_0^{\cL_z,+} f,g\rangle = \langle f, \partial_{\nu}^{\cL_z^*,+} \mathcal{S}^{\cL_z^*} g\rangle - \langle f,g\rangle$ for $f,g\in C_c^\infty(\rn)$ (see \cite[Proposition 4.18 (ii)]{bhlmp}).

We have thus shown that the maps $z\mapsto \slz_0$,  $z\mapsto K_z$, $z\mapsto \widetilde{K}_z$ are all analytic. Therefore, by the Cauchy integral formula, we obtain that
\begin{equation}\nonumber
\sup_{z\in B_{\varepsilon_0}/2} \big\| \tfrac{d}{dz} \widetilde{K}_z\big\|_{\ltrn\to\ltrn} \lesssim_{\varepsilon_0} \sup_{z\in B_{\varepsilon_0}} \| \widetilde{K}_z\|_{\ltrn\to\ltrn} \lesssim_{\varepsilon_0} 1, 
\end{equation} 
where we used \eqref{conormunif.eq}. Consequently, for any $z,w\in B_{\varepsilon_0/2}$, we have that \\$\| \widetilde{K}_z-\widetilde{K}_w\|_{\ltrn\to\ltrn} \lesssim |z-w|$. This implies that for all $z$ small enough, $\widetilde{K}_z$ is invertible. The other boundary operators are treated similarly.\end{proof}

\begin{theorem}[Existence of Solutions]\label{existence.thm} Suppose $\L$ satisfies Hypothesis B (see Definition \ref{def-hypothesis-b}). Then the boundary value problems $\Di_2$, $\Ne_2$, and $\Reg_2$, as given in (\ref{eq.d2})-(\ref{eq.r2}), admit a solution.
\end{theorem}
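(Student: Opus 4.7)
The plan is to construct the required solutions explicitly via layer potentials, exploiting the invertibility of the boundary operators provided by Hypothesis B together with the layer potential estimates already established in Theorems \ref{thm.verticalsqfctnbounds} and \ref{thm-ntmax-estimates-final-version}. More precisely, given boundary data $f \in L^2(\rn)$ for $\Di_2$, I will set
\[
u := \cD^{\cL,+}\Big(-\tfrac{1}{2}I + K\Big)^{-1} f;
\]
given boundary data $g \in L^2(\rn)$ for $\Ne_2$, I will set
\[
u := \sl \Big(-\tfrac{1}{2}I + \widetilde{K}\Big)^{-1} g;
\]
and given boundary data $f \in Y^{1,2}(\rn)$ for $\Reg_2$, I will set $u := \sl \sl_0^{-1} f$. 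In each case, Propositions \ref{mappingpropl2.prop} and \ref{jumprelationsy12.prop} together with Hypothesis B guarantee that $u$ is a well-defined element of $W^{1,2}_{\loc}(\reu)$ solving $\cL u = 0$ weakly in $\reu$, and Theorem \ref{thm-ntmax-estimates-final-version} yields the required non-tangential maximal function bound for $u$ (in the Dirichlet case) or for $\nabla u$ (in the Neumann and Regularity cases), with constants controlled by the norms of the relevant inverses.

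The next step is to verify the boundary conditions. For $\Di_2$, the jump relation of Lemma \ref{jumprelation.lem} shows that $\cD^{\cL,+}_0 h = (-\tfrac{1}{2}I + K)h$ in the sense of weak $L^2$ limits (via Corollary \ref{boundaryvalues.cor}), so the $L^2$ weak trace of $u$ equals $f$. To upgrade weak convergence to strong convergence $u(\cdot,t) \to f$ in $L^2(\rn)$, I will first establish strong convergence on the dense subclass of data for which $(-\tfrac{1}{2}I + K)^{-1} f \in C_c^\infty(\rn)$; for such regular data, standard pointwise estimates on $\cD^{\cL,+}_t h - \cD^{\cL,+}_0 h$ (obtainable from Caccioppoli estimates and dominated convergence) give strong convergence, which then transfers to all of $L^2(\rn)$ by the uniform bound $\sup_{t>0}\|\cD^{\cL,+}_t h\|_2 \lesssim \|h\|_2$ and a $3\varepsilon$ argument. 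For $\Reg_2$, the identities $\tro \sl \sl_0^{-1} f = \sl_0 \sl_0^{-1} f = f$ in $Y^{1,2}(\rn)$, together with the weak convergence statement of Proposition \ref{weaklimits2.prop}, give the weak convergence $u(\cdot,t) \to f$ in $Y^{1,2}(\rn)$. For $\Ne_2$, the jump relation identifies the distributional conormal derivative $\partial_{\nu^\cL} u$ with $g$ directly from Lemma \ref{jumprelation.lem}.

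It remains to establish non-tangential convergence to the boundary data and appropriate decay at infinity. For $\Reg_2$, since $\widetilde{N}_2(\nabla u) \in L^2(\rn)$, Proposition \ref{RegNTconverge.prop} furnishes a boundary limit $g(x)$ at a.e.\ $x$, and the identification $g = f$ a.e.\ follows from the fact that the strong trace in $L^2_{\loc}$ (via Proposition \ref{weaklimits2.prop} applied on truncated slabs) coincides with both the weak $Y^{1,2}$-limit and the non-tangential limit. For $\Di_2$, non-tangential convergence follows from the strong $L^2$-convergence of the traces combined with the already-established $\widetilde{N}_2 u \in L^2(\rn)$ via a standard approximation argument (approximating $f$ by data for which $u$ is smooth up to the boundary and using the uniform NTM bound to close the argument). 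The decay statements of Remark \ref{rm.more} are then obtained from the square function estimates of Theorems \ref{thm.verticalsqfctnbounds} and \ref{thm-ntmax-estimates-final-version} together with the uniform slice bounds (Corollary \ref{cor-lp-slices-estimates-grad-st-grad} and Theorem \ref{thm-slices-double-layer}).

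The main obstacle I anticipate is the upgrade from weak to strong $L^2$-convergence of the traces in the Dirichlet case, since the argument hinges on a density reduction that requires establishing continuity up to the boundary for a sufficiently rich class of layer potential solutions with smooth data; this will require combining Caccioppoli's inequality on slices, the uniform estimate $\sup_{t>0}\|\cD^{\cL,+}_t h\|_2 \lesssim \|h\|_2$, and the dominated convergence theorem applied to the distributional identity of Proposition \ref{prop.bvagree}, rather than appealing to any structural property such as pointwise De Giorgi--Nash--Moser bounds, which are unavailable in our setting.
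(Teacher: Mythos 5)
Your overall architecture (invert the boundary operators supplied by Hypothesis B, apply the layer potentials, read off the boundary behavior from the jump relations, and quote Theorem \ref{thm-ntmax-estimates-final-version} for the maximal function bounds) is the same as the paper's, but the two places where you flag difficulty are exactly where your argument has genuine gaps. First, the upgrade from weak to strong $L^2(\rn)$ convergence of the Dirichlet traces: your dense class is data $f$ with $(-\tfrac12 I+K)^{-1}f\in C_c^\infty(\rn)$, i.e.\ you need $\cD^{\cL,+}_t h\to\cD_0^{\cL,+}h$ strongly in $L^2(\rn)$ for smooth compactly supported $h$, and the mechanism you propose ("standard pointwise estimates'', Caccioppoli plus dominated convergence applied to the identity of Proposition \ref{prop.bvagree}) does not produce this. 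Proposition \ref{prop.bvagree} only yields weak limits, and weak convergence plus the uniform bound $\sup_{t>0}\|\cD_t^{\cL,+}h\|_{L^2}\lesssim\|h\|_{L^2}$ does not give norm convergence; moreover the natural fundamental-theorem-of-calculus estimate fails, since the slice bound of Theorem \ref{thm-slices-double-layer} only gives $\|\partial_t\cD_t^{\cL,+}h\|_{L^2}\lesssim t^{-1}\|h\|_{L^2}$, which is not integrable at $t=0$, and no pointwise (De Giorgi--Nash--Moser type) estimates are available to replace it. The paper circumvents this with a different dense class: $\cA=\{\sl_0\divp g:\ g\in C_c^\infty(\rn)\}$, dense in $L^2(\rn)$ by the invertibility of $\sl_0$ on $\dyotn$. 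For $f=\sl_0\divp g$ one uses the Green's formula $\cD_t^{\cL,+}f=-\sl_t(\partial_{\nu}^{\cL,-}u)$ with $u=\sl_s\divp g$ in the lower half-space, so that $\|\cD_t^{\cL,+}f-\cD_{t'}^{\cL,+}f\|_{L^2}\lesssim(t-t')\|\partial_\nu^{\cL,-}u\|_{L^2}$ by the uniform slice estimates of Theorem \ref{thm-sup-on-slices}; this Lipschitz-in-$t$ bound gives the Cauchy property, and the general case follows by density. Some device of this kind is needed; without it your Dirichlet trace statement is unproven.

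Second, your route to non-tangential convergence for $\Di_2$ rests on "approximating $f$ by data for which $u$ is smooth up to the boundary,'' and such a class is not available here: the coefficients are rough and the lower order terms are merely $L^n$, so there is no boundary regularity theory to invoke. The paper instead proves a compatible well-posedness statement: for $f\in C_c^\infty(\rn)$ the double-layer Dirichlet solution $\dlp(-\tfrac12 I+K)^{-1}f$ coincides with the single-layer regularity solution $\sl(\sl_0)^{-1}f$ (both agree with the Lax--Milgram solution), hence inherits the averaged non-tangential limit from Proposition \ref{RegNTconverge.prop} since $\mntmt(\nb u)\in L^2$; the general case then follows from density and the bound on $\mntmt(u)$. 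You would need this (or an equivalent) argument; your identification of limits in the regularity case is essentially the paper's and is fine. Finally, a small but real slip: with the paper's conventions (Lemma \ref{jumprelation.lem}), the conormal of $\sl h$ from the upper half-space is $(+\tfrac12 I+\widetilde K)h$, so the Neumann ansatz must be $u=\sl(\tfrac12 I+\widetilde K)^{-1}g$; your choice $(-\tfrac12 I+\widetilde K)^{-1}g$ solves the Neumann problem in the lower half-space instead.
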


\begin{proof}

To solve the Dirichlet problem, we fix $f\in \ltrn$ and set $F=\Big(-\dfrac{1}{2}I+K\Big)^{-1}f$, which is well-defined by Theorem \ref{invertibility.thm} as an element of $\ltrn$. Let $u:=\cD^{\cL,+}F$. Then the fact that $u\in \dltp$ follows from Proposition \ref{mappingpropl2.prop}, the non-tangential maximal function estimate follows from Theorem \ref{thm-ntmax-estimates-final-version}, while Lemma \ref{jumprelation.lem} gives the weak convergence to $f$. 

To upgrade the convergence of $\cD^{\cL,+}_t f$ to strong convergence in $\ltrn$, we mimmick the proof of \cite[Lemma 4.23]{AAAHK}. First, we note that by Theorem \ref{invertibility.thm} we have that $\cA:= \{ \sl_0 \divp g : g\in C_c^\infty(\rn)\}$ is dense in $\ltrn$. Indeed, since $adj(\sl_0)=\mathcal{S}^{\cL^*}_0$, we have that $\sl_0: \dyotn \to \ltrn$ is invertible, and therefore any $h\in \ltrn$ may be written as $h=\sl_0 H$ for some $H\in \ltrn$. Moreover,  any $H\in \dyotn$ can be written as $H=\divp g$ for some $g\in L^2(\bb R^n)^n$ (as can be seen for instance by embedding $\yotn \to L^2(\bb R^n)^n$ via $u\mapsto \nablap u$ and using the Hahn-Banach and the Riesz Representation Theorems). These observations yield the claim.

Now fix $f=\sl_0(\divp g)$ for some $g\in C_c^\infty(\rn)$ and define $u=\sl_s(\divp g)$ for $s<0$. By \cite[Theorem 4.16 (iv)]{bhlmp}, we have that $\cD_t^{\cL,+} f = -\sl(\partial_{\nu}^{\cL,-} u)$  in $\reu$. Therefore, for any $0<t'<t$, we have that
\begin{equation*}
\| \cD_t^{\cL,+}f-\cD_{t'}^{\cL,+}f\|_\ltrn= \big\| \int_{t'}^t \partial_\tau \sl_\tau(\partial_{\nu}^{\cL,+} u)\, d\tau \big\|_\ltrn \lesssim (t-t')\| \partial_{\nu}^{\cL,+} u\|_\ltrn,
\end{equation*}
where we used the estimates on slices from Theorem \ref{thm-sup-on-slices}. Thus $\{\cD_t^{\cL,+} f\}_t$ is a Cauchy sequence in $\ltrn$ as $t\to 0$.

For the Neumann problem we proceed in a similar way, with $w:=\sl(1/2I+\widetilde{K})^{-1} h$, and we appeal to Lemma \ref{jumprelation.lem}, Theorem \ref{invertibility.thm}, Proposition \ref{mappingpropl2.prop}, and Theorem \ref{thm-ntmax-estimates-final-version}.

Finally for the regularity problem we set $v:= \sl (S_0^{\cL})^{-1} g$, and make use of Corollary \ref{boundaryvalues.cor}, Theorem \ref{invertibility.thm}, and Theorem \ref{thm-ntmax-estimates-final-version}.

It remains to show the non-tangential convergence statements, to which we now turn.

The convergence for the regularity Problem goes as follows: By Proposition \ref{RegNTconverge.prop} we have that the solution $v$ has a non-tangential limit, call it $g_0$, so we only need to show $g=g_0$. We know that $v(\cdot, t)$ converges weakly to $g$ in $L^{2^*}(\rn)$ as $t \to 0^+$. Define
\[v'(\cdot,t) = \fint_{t/2}^{3t/2} v(\cdot, s) \, ds\]
then $w'(\cdot, t)$ converges weakly to $g$ in $L^{2^*}(\rn)$ as $t \to 0^+$. Indeed, for fixed $\varphi \in L^{2_*}$ we have
\[\Big|\int_{\rn} v(x,t) \varphi(x) - \int_{\rn} g(x) \varphi(x) \Big| \le \epsilon_\varphi(t),\]
where $\epsilon_\varphi(t) \downarrow 0$ as $t \to 0^+$. From this one may establish $v'(\cdot, t)$ converges weakly to $g$ in $L^{2^*}(\rn)$. Moreover, if for $f \in L^1_{\loc}(\rn)$ we define
\[(A_t f)(x) := \fint_{|y -x|_\infty < t}f(y)\, dy\]
then $\widetilde{v}(x,t) = (A_tv')(x)$ and it follows that $\widetilde{v}(x,t)$ converges weakly to $g$ in $L^{2^*}(\rn)$ as $t \to 0^+$. Indeed, for if $\varphi \in L^{2_*}$ then
\[\int_{\rn} (A_tv')(x) \varphi(x) \, dx = \int_{\rn} v'(x, t) (A_t \varphi)(x) \, dx\]
and since $v'(\cdot, t)$ converges weakly to $g$ in $L^{2^*}(\rn)$ as $t \to 0^+$ and $(A_t \varphi)(x)$ converges strongly in $L^{2_*}$ as $t \to 0^+$ we have that 
\[\int_{\rn} v'(x, t) (A_t \varphi)(x) \, dx \to \int_{\rn} g(x) \varphi(x) \,dx, \quad \text{ as } t \to 0^+.\]
It follows that $g_0(x) = g(x)$ for a.e. $x \in \rn$.

For the Dirichlet problem, we use compatible well-posedness (see below in the proof) to get that for smooth initial data $f\in C_c^\infty(\rn)$ the  solutions to the Dirichlet and regularity problems obtained via layer potentials agree. In particular, if $u_f=\dlp(-1/2+K)^{-1}f$, then $u_f$ has a non-tangential limit. Since $C_c^\infty(\rn)$ is dense in $\ltrn$ and we have the maximal function estimate 
\begin{equation}\nonumber
\| \mntmt(u_f)\|_\ltrn \lesssim \| f\|_\ltrn,
\end{equation} 
the existence of a limit for general $f\in \ltrn$ follows a standard argument.

Now we turn to the compatible well-posedness statement: If $f\in C_c^\infty(\rn)$ and we set
\begin{equation}\nonumber
u_f:= \dl\Big(-\dfrac{1}{2}+K\Big)^{-1}f, \qquad v_f:= \sl(\sl_0)^{-1}f,
\end{equation}
the layer potential solutions of the Dirichlet and regularity problems with data $f$ respectively, we claim then $u_f=v_f$ and both agree with the solution furnished via Lax-Milgram with Dirichlet data $f$.

We first prove that $u_f$ agrees with the Lax-Milgram solution. For this, by the mapping properties of the double layer (see \cite[Definition 4.6]{bhlmp}) it is enough to show that 
\begin{equation}\nonumber
Tf:= \Big(-\dfrac{1}{2}+K\Big)^{-1} f \in H_0^{1/2}(\rn).
\end{equation}
We know (see Theorem \ref{invertibility.thm} and Proposition \ref{jumprelationsy12.prop}) that $T$ maps $\ltrn$ and $\yotn$ to itself, so in particular $Tf\in W^{1,2}(\rn)\subset H^{1/2}_0$.

For $v_f$ we proceed similarly, noting that $(\sl_0)^{-1}$ maps $\yotn$ to $\ltrn$ and $\ltrn \to [\yotn]^*$ (see  Theorem \ref{invertibility.thm} and Lemma \ref{Duniquelem1.lem}). It's thus enough, by the mapping properties of the single layer (see  \cite[Proposition 4.2]{bhlmp}), to prove that 
\begin{equation}
	[\yotn]^*\cap \ltrn \subset H^{-1/2}(\rn).
\end{equation}
This follows from the fact that elements of the first space are of the form $G\in \ltrn$ such that $G=\div H$ for some $H\in L^2(\rn;\cn)$, while the second space contains all elements of the form $(-\Delta)^{1/2}F$ with $F\in H^{1/2}_0(\rn)$. Fix $G,H$ as above. By the Riesz representation theorem in $Y^{1,2}(\rn)$ there exists a weak solution $F_1\in Y^{1,2}(\rn)$ of the problem $G=\div H = -\Delta F_1$; set $F:= (-\Delta)^{1/2}F_1$, so that it's enough to prove $F\in H^{1/2}(\rn)$. First, clearly $F\in L^2(\rn)$ by Plancherel's Theorem, since $\nabla F_1\in L^2(\rn)$; moreover, since $(-\Delta)^{1/2}F=G\in L^2(\rn)$, we have that $F\in W^{1,2}(\rn)$, and by interpolation, $F\in H^{1/2}(\rn)$ as desired.
\end{proof}

\section{Uniqueness}\label{sec.uniqueness} 

In this section, we show that the solutions to the problems $\Di_2$, $\Ne_2$ and $\Reg_2$ are unique among the wider classes of good $\mathcal{D}$ solutions and good $\mathcal{N}$/$\mathcal{R}$ solutions respectively (see Definitions \ref{def-good-d-solutions} and \ref{def-good-nr-solutions}). The methods mainly rely in showing that good solutions satisfy a Green's formula when the operator $\L$ is already known to have invertible layer potentials. For the case $p\neq2$, see Section \ref{sec.lp}, although the methods are mostly the same.

We first state a technical lemma   before moving on to the uniqueness of solutions for the Neumann and regularity problems; the uniqueness of the Dirichlet problem is dealt with last. The next lemma    will allow us to prove a representation formula for solutions to the  Neumann and regularity problems.

\begin{proposition}\label{commutingtderivativesandlayer.prop}
Suppose $\L$ satisfies Hypothesis B (see Definition \ref{def-hypothesis-b}). Let $u\in W^{1,2}_{\loc}(\reu)\cap S^2_+$ be a solution of $\cL u=0$ in $\reu$. Then for every $\tau_0>0$ and every $t>0$ we have
\begin{equation*}
\partial_{\tau}|_{\tau =\tau_0} \cD^{\cL,+}_t(\tro u_\tau)= \cD^{\cL,+}_t(\tro (\dno u_{\tau_0})),
\end{equation*}
and 
\begin{equation*}
\partial_{\tau}|_{\tau=\tau_0} \sl_t(\partial_{\nu^{\cL,+}} u_\tau) = \sl_t(\partial_{\nu^{\cL,+}}( \dno u_{\tau_0})).
\end{equation*}
\end{proposition}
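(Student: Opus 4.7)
The plan is to view $\tau\mapsto u_\tau$ as a differentiable curve in $\yotp$ and then commute the $\tau$-derivative with the (linear, continuous) boundary operators. Fix $\tau_0>0$; by the definition of good $\mathcal N/\mathcal R$ solution, $\dno u_{\tau_0/2}\in\yotp$, and $\dno u_{\tau_0/2}$ is itself a weak solution of $\cL v=0$ in $\reu$ by the $t$-independence of the coefficients. The key convergence to establish is
\begin{equation*}
\Big\|\frac{u_{\tau_0+h}-u_{\tau_0}}{h}-\dno u_{\tau_0}\Big\|_{\yotp}\longrightarrow 0\quad\text{as }h\to 0.
\end{equation*}
Writing the difference quotient as $\tfrac{1}{h}\int_0^h (\dno u)_{\tau_0+s}\,ds$ via the fundamental theorem of calculus and applying Minkowski's integral inequality, this reduces to the continuity of the shift $s\mapsto (\dno u)_{\tau_0+s}$ in $\yotp$, which is standard (via density of smooth compactly supported functions and the uniform continuity of translations in $L^p$).

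With this convergence in hand, the first identity follows from the boundedness of the trace $\tro:\yotp\to\dot H^{1/2}(\bb R^n)$ recorded in \eqref{eq.traceext}, together with the uniform-in-$t$ $L^2$-boundedness of $\cD^{\cL,+}_t$ (Proposition \ref{mappingpropl2.prop}, and its $Y^{1,2}(\bb R^n)$-companion in Corollary \ref{NRcorollary.cor}): both operators are linear and continuous, and hence commute with the $\yotp$-difference-quotient limit. For the second identity, we use the representation $\partial_{\nu^{\cL,+}}u_\tau=-e_{n+1}\cdot\trtau(A\nabla u+B_1u)$ at each $\tau>0$, valid in $L^2(\bb R^n)$ by Proposition \ref{conormals2.prop} (note $u_\tau\in S^2_+$ whenever $u$ is a good $\mathcal N/\mathcal R$ solution). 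Because $u\in S^2_+$, the slice maps $\tau\mapsto (A\nabla u+B_1u)(\cdot,\tau)$ are $C^1$ from $(0,\infty)$ into $L^2(\bb R^n)$ with $\tau$-derivative $A\nabla (\dno u)(\cdot,\tau)+B_1(\dno u)(\cdot,\tau)$ (using the $t$-independence of $A$ and $B_1$), so that $\tau\mapsto \partial_{\nu^{\cL,+}}u_\tau$ is differentiable at $\tau_0$ in $L^2(\bb R^n)$ with derivative $\partial_{\nu^{\cL,+}}(\dno u_{\tau_0})$. Applying the bounded single layer $\sl_t:L^2(\bb R^n)\to L^2(\bb R^n)$ (Proposition \ref{mappingpropl2.prop}) then yields the desired identity.

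The main obstacle I expect lies in the second identity: we must verify the $L^2(\bb R^n)$-differentiability of the slice maps $\tau\mapsto(A\nabla u + B_1u)(\cdot,\tau)$, with the correct derivative. The right input is the $S^2_+$-control of the first and second transverse derivatives of $u$ (namely, $u'(t)\in L^2(\bb R^n)$ and $tu'(t),t^2u''(t)\in Y^{1,2}(\bb R^n)$), combined with the Caccioppoli inequality on slices (Proposition \ref{prop-caccioppoli-on-slices}) applied to $\dno u$, itself a solution. The $t$-independence of $A$ and $B_1$ ensures that $\partial_\tau$ commutes with the operator $A\nabla +B_1$ on slices. By contrast, the first identity is largely routine once the $\yotp$-shift differentiability of $u_\tau$ is in place, as the trace and the double layer are bounded linear maps in the relevant spaces.
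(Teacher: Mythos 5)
Your argument for the double-layer identity runs through the global space $Y^{1,2}(\reu)$: you need $\dno u_{\tau_0/2}\in Y^{1,2}(\reu)$ so that the difference quotients $h^{-1}(u_{\tau_0+h}-u_{\tau_0})=\frac1h\int_0^h(\dno u)_{\tau_0+s}\,ds$ live and converge in $Y^{1,2}(\reu)$, and you justify this ``by the definition of good $\mathcal N/\mathcal R$ solution.'' But that is not a hypothesis of the proposition: it only assumes $u\in W^{1,2}_{\loc}(\reu)\cap S^2_+$ and $\cL u=0$, and membership in $S^2_+$ controls slices uniformly without giving square-integrability of $\nabla\dno u$ over $\Sigma_\tau^\infty$ (nor the $L^{2^*_{n+1}}$ bound) for free. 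So, as written, you prove the statement under a strictly stronger hypothesis than the one claimed. The paper's proof never leaves the slices: it shows $\tau\mapsto\tr_\tau u$ lies in $W^{1,2}_{\loc}((0,\infty);Y^{1,2}(\rn))$ with derivative $\tr_\tau(\dno u)$, via the difference-quotient estimate and the vector-valued differentiability criterion of \cite{bhlmp} (Theorem 2.14), and then pushes the difference quotients through the bounded maps $\cD^{\cL,+}_t:Y^{1,2}(\rn)\to Y^{1,2}(\rn)$ (Corollary \ref{NRcorollary.cor}) and $\sl_t$ on $L^2(\rn)$. If you want to keep the global route, you must first derive $\nabla\dno u\in L^2(\Sigma_\tau^\infty)$ from $S^2_+$ plus the equation (e.g.\ by the cube-decomposition/Caccioppoli-on-slices computation used in Section \ref{sec.prelim} to show that $\mntmt(\nabla u)\in L^2(\rn)$ implies the good $\mathcal N/\mathcal R$ property of Definition \ref{def-good-nr-solutions}), rather than assume it; you should also note that your route feeds $H^{1/2}_0(\rn)$ data into the double layer of Definition \ref{def.dl}, whereas the statement concerns its $Y^{1,2}(\rn)$/$L^2$ extensions, so the two interpretations must be reconciled (they agree on the intersection, but this needs saying).

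For the single-layer identity your outline coincides with the paper's, but the step you yourself flag as ``the main obstacle'' is exactly the content of the paper's proof and is not supplied. The assertion that $\tau\mapsto(A\nabla u+B_1u)(\cdot,\tau)$ is $C^1$ into $L^2(\rn)$ ``because $u\in S^2_+$'' is not correct as stated: the normal component requires $\partial_\tau^2u(\cdot,\tau)\in L^2(\rn)$, and the $S^2_+$ norm only controls $t^2u''(t)$ in $Y^{1,2}(\rn)$, not in $L^2(\rn)$. One must use that $\dno u$ is again a solution (by $t$-independence) together with Caccioppoli on slices (Proposition \ref{prop-caccioppoli-on-slices}) to get $\|\tr_\tau(\dno^2u)\|_{L^2(\rn)}\in L^2_{\loc}(0,\infty)$, and then run the difference-quotient estimate and \cite{bhlmp} (Theorem 2.14) to conclude $\tau\mapsto\partial_{\nu^{\cL,+}}u_\tau\in W^{1,2}_{\loc}((0,\infty);L^2(\rn))$ with derivative $\partial_{\nu^{\cL,+}}(\dno u_\tau)$, before applying the bounded $\sl_t$; the tangential and $B_1u$ contributions (handled in the paper by H\"older and Sobolev against $\|\tr_{\tau_2}u-\tr_{\tau_1}u\|_{Y^{1,2}(\rn)}$) are likewise left implicit in your sketch. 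You do name the correct tools, so the gap is fixable, but the crucial verifications are missing rather than merely routine.
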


\begin{proof}
We work with the double layer first. For this we consider, for $t>0$ fixed, the following functions:
\begin{equation}\nonumber
f(\tau):= \tro u_\tau= \trtau u, \quad H(\tau):= \cD_t^{\cL,+} (f(\tau)).
\end{equation}
We note that, by hypothesis and Corollary \ref{NRcorollary.cor}, we have  
\begin{equation}\nonumber
f\in C((0,\infty); \yotn), \quad H\in C((0,\infty); \yotn).
\end{equation}
The idea is now to use \cite[Theorem 2.14]{bhlmp} to get the desired differentiability of $f$. For this purpose define
\begin{equation}\label{vaphidiff.eq}
\varphi(\tau):= \|  \trtau(\dno u)\|_{\yotn}= \| \nablap\trtau (\dno u)\|_\ltrn \in L^2_{\loc}((0,\infty); \RR).
\end{equation}
First we note that by \cite[Lemma 2.3]{bhlmp} we have that $f:(0,\infty)\to \yotn$ and $\varphi: (0,\infty)\to \RR$ are continuous functions. By the Lebesgue Differentiation Theorem it is then enough to show 
\begin{equation}\label{comm1.eq}
\Big(\fint_{-\varepsilon}^{\varepsilon} \| f(\tau_2+s)-f(\tau_1+s)\|_{\yotn}^2\, ds \Big)^{1/2}\leq  \int_{\tau_1}^{\tau_2} \Big( \fint_{-\varepsilon}^{\varepsilon}\varphi^2(s+\tau)\, ds\Big)^{1/2} d\tau ,
\end{equation}
for all $\varepsilon$ small enough (depending on $\tau_1$ and $\tau_2$). For this purpose we compute, calling $I$ the left hand side of \eqref{comm1.eq}, 
\begin{multline}\nonumber 
I  =\Big( \fint_{-\varepsilon}^\varepsilon \int_\rn |\nablap \tr_{\tau_2+s} u (x) - \nablap \tr_{\tau_1+s} u(x) |^2\, dxds\Big)^{1/2}\\
  = \Big( \fint_{-\varepsilon}^\varepsilon \int_\rn |\nablap u_{\tau_2}(x,s)- \nablap u_{\tau_1}(x,s)|^2\, dx ds\Big)^{1/2}
  = \Big( \fint_{-\varepsilon}^\varepsilon \int_\rn \Big| \int_{\tau_1}^{\tau_2} \nablap \partial_\tau u(x,\tau+s)\, d\tau\Big|^2\, dx ds \Big)^{1/2}\\
 \leq \int_{\tau_1}^{\tau_2} \Big( \fint_{-\varepsilon}^\varepsilon \int_\rn |\nablap \partial_\tau u(x,\tau+s)|^2\, dx ds\Big)^{1/2}\, d\tau 
  = \int_{\tau_1}^{\tau_2}\Big( \fint_{-\varepsilon}^\varepsilon  \varphi^2(s+\tau)\, ds \Big)^{1/2} d\tau,   
\end{multline}
where we used the Fundamental Theorem of Calculus   and Minkowski's inequality. As mentioned above this shows that $f\in W^{1,2}_{\loc}((0,\infty); \yotn)$. Now we will show that 
\begin{equation}\label{formfprime.eq}
f'(\tau)= \trtau(\dno u), \qquad \text{for each } \tau>0,
\end{equation}
and moreover the difference quotients converge weakly
\begin{equation}\nonumber
\Delta^h f(\tau)\ra f'(\tau), \qquad \textup{ for every } \tau>0.
\end{equation}
For this fix $\psi\in C_c^\infty(0,\infty)$, $\phi\in C_c^\infty(\rn;\cn)$ and let $\ell:=-\divp \phi \in \dyotn$. Using that the function $\tau\mapsto f(\tau)\psi'(\tau) \in \yotn$ is continuous (see again \cite[Lemma 2.3]{bhlmp}) and compactly supported on $(0,\infty)$ and properties of the Bochner integral (see for instance \cite[Proposition 1.4.22]{CH}) we obtain  
\begin{multline}\nonumber 
\Big\langle \int_0^\infty f(\tau)\psi'(\tau)\, d\tau , \ell\Big\rangle  = \int_0^\infty \langle f(\tau)\psi'(\tau), \ell\rangle \, d\tau   = \int_0^\infty \int_\rn \nablap\trtau u(x)\psi'(\tau) \phi(x)\,dx d\tau \\
  = \int_0^\infty \int_\rn \nablap u(x,\tau) \psi'(\tau)\phi(x)\, dx d\tau 
  = -\int_0^\infty \int_\rn \nablap \dno u(x,\tau)\psi(\tau)\phi(x)\, dx d\tau\\
  = -\int_0^\infty \int_{\rn} \nablap\trtau (\dno u)(x) \psi(\tau)\phi(x)\, dx d\tau\\
  = -\int_0^\infty \langle \trtau(\dno u) \psi(\tau), \ell\rangle\, d\tau 
  = \Big\langle -\int_0^\infty \trtau(\dno u) \psi(\tau)\, d\tau, \ell \Big\rangle, 
\end{multline}
where we used integration by parts in the fourth line. Now we conclude, since the collection $\{ \divp \phi: \phi\in C_c^\infty(\rn;\cn)\}$ is dense in $\dyotn$, that indeed \eqref{formfprime.eq} holds. The convergence of the difference quotients is a consequence of the fact that $f'\in C((0,\infty);\yotn)$ and the Fundamental Theorem of Calculus. In fact we get strong convergence in $\yotn$ as $h\to0$ of $\Delta^h f(\tau)$ for every $\tau>0$.

With this we can conclude the argument for the double layer: Define 
\begin{equation}\nonumber
H(\tau):= \cD^{\cL,+}_t \tro u_\tau= \cD^{\cL,+}_t \trtau u.
\end{equation}
We claim that $H\in C^1((0,\infty);\yotn)$ and 
\begin{equation}\nonumber
H'(\tau)= \cD^{\cL,+}_t(\tro (\dno u_\tau))= \cD^{\cL,+}_t(\trtau(\dno u)).
\end{equation}
Notice first that $H\in C((0,\infty);\yotn)$ by the mapping properties of the Double Layer (see Corollary \ref{NRcorollary.cor}) and the fact that $H(\tau)=\cD^{\cL,+}_t (f(\tau))$ (recall that $t>0$ is fixed throughout). Morever, using these two facts again we see
\begin{equation}\nonumber
\| H(\tau_1)-H(\tau_2)\|_{\yotn} \lesssim \| f(\tau_1)-f(\tau_2)\|_{\yotn}\leq \Big| \int_{\tau_1}^{\tau_2} \varphi(s)\, ds\Big|,
\end{equation}
where $\varphi$ is defined in \eqref{vaphidiff.eq}. This shows that $H\in W^{1,2}_{\loc}((0,\infty); \yotn)$. Moreover we have
\begin{equation}\nonumber
\Delta^h H(\tau)= \cD^{\cL,+}_t(\Delta^hf(\tau)),
\end{equation}
so that, by the linearity and continuity of $\cD^{\cL,+}_t$ in $\yotn$ and the weak convergence of $\Delta^h f(\tau)$ in $\yotn$, we obtain for every $\tau>0$
\begin{equation}\nonumber
\Delta^h H(\tau) \to \cD^{\cL,+}(f'(\tau))= \cD^{\cL,+}_t(\tro(\dno u_\tau))
\end{equation}
weakly in $\yotn$ as $h\to0$,  as desired.  

The proof for the Single Layer follows the same lines. Define, for $t>0$ fixed and $\tau>0$,
\begin{equation}\nonumber
g(\tau):= \partial_{\nu^{\cL,+}} u_\tau= \partial_{\nu^{\cL,+}_\tau} u, 
\end{equation} 
where the second equality follows from \cite[Lemma 4.11 (i)]{bhlmp}. As before we first claim that $g\in C^1((0,\infty); \ltrn)$ and we have
\begin{equation}\nonumber
g'(\tau)= \partial_{\nu^{\cL,+}}(\dno u_\tau) = \partial_{\nu^{\cL,+}_\tau} (\dno u).
\end{equation}
For this purpose we use the $L^2$ characterization of the conormal derivative (see again \cite[Lemma 4.11]{bhlmp})  
so that  
\begin{multline}\nonumber 
g(\tau) = N\cdot \tro( A\nabla u_\tau + B_1 u_\tau ) = N\cdot \trtau( A\nabla u + B_1 u)\\
 = N\cdot \trtau( \widetilde{A}\nablap u + B_1 u) + N\cdot \trtau( \vec{a}D_{n+1} u) 
  =: g_1(\tau)+ g_2(\tau),  
\end{multline}
where $\widetilde{A}:=(a_{ij})_{1\leq i\leq n+1, 1\leq j\leq n}$ and $\vec{a}:= (a_{i,n+1})_{1\leq i\leq n+1}$. We note that by the H\"older's and Sobolev's inequalities 
\begin{equation}\nonumber
\| g_1(\tau_2)- g_1(\tau_1)\|_\ltrn \lesssim \| f(\tau_2)-f(\tau_1)\|_{\yotn} \leq \Big| \int_{\tau_1}^{\tau_2} \varphi(s)\, ds\Big|,
\end{equation}
where $f,\varphi$ are as in the proof for the Double Layer. Therefore it is enough to control $g_2$, and for this we can proceed exactly in the same way as we did for $f$: For fixed $\tau_2, \tau_1$ and $\varepsilon>0$ small 
\begin{multline}\nonumber 
\fint_{-\varepsilon}^\varepsilon\| g(\tau_2+s)-g(\tau_1+s)\|_\ltrn^2\, ds  = \fint_{-\varepsilon}^{\varepsilon} \int_\rn |\dno( u_{\tau_2}(x,s)-u_{\tau_1}(x,s))|^2\, dx ds \\
 = \fint_{-\varepsilon}^\varepsilon  \int_\rn\Big| \int_{\tau_1}^{\tau_2} \dno^2 u(s+\tau)\, d\tau \Big|^2\, dx ds 
  \lesssim \fint_{-\varepsilon}^\varepsilon \int_{\tau_1}^{\tau_2} \| \tr_{\tau+s}( \dno^2 u)\|_{\ltrn} \, d\tau ds, 
\end{multline} 
and  $\widetilde{\varphi}(\tau):= \| \trtau(\dno^2 u)\|_{\ltrn} \in L^2_{\loc}(0,\infty)$. Therefore by \cite[Theorem 2.14]{bhlmp} we get that $g\in W^{1,2}_{\loc}((0,\infty); \ltrn)$ and the difference quotients converge a.e. to $g'$. To verify the formula for $g'$ we compute, for $\phi\in C_c^\infty(\rn)$ and $\psi\in C_c^\infty(0,\infty)$,
\begin{multline}\nonumber 
\Big\langle \int_0^\infty g(\tau) \psi'(\tau)\, d\tau , \phi\Big\rangle_{\ltrn}   = \int_0^\infty \int_\rn N\cdot(A\nabla u(x,\tau)+B_1u(x,\tau))\psi'(\tau)\phi(x)\, dx d\tau\\
  =- \int_0^\infty \int_\rn N\cdot(A\nabla \dno u(x,\tau)+B_1\dno u(x,\tau))\psi(\tau)\phi(x)\, dx d\tau
  = \Big\langle \int_0^\infty \partial_{\nu^{\cL,+}_\tau}(\dno u) \psi(\tau), \phi\Big\rangle_{\ltrn}.  
\end{multline}
This gives the desired representation for $g'(\tau)$. Moreover, using this representation we see that $g'\in C((0,\infty);\ltrn)$ and so the difference quotients satisfy $\Delta^h g(\tau) \to g'(\tau)$ weakly for every $\tau>0$. The result now follows from the mapping properties of the single layer (see Proposition \ref{mappingpropl2.prop}).\end{proof}

\subsection{Uniqueness for the Neumann and regularity problems}

We begin with a lemma that gives a representation of good $\mathcal N/ \mathcal R$ solutions above a positive height.

\begin{lemma}\label{NRuniquelem2.lem} Suppose that $\L$ satisfies Hypothesis B (see Definition \ref{def-hypothesis-b}). Let $u$ be a good $\mathcal N/ \mathcal R$ solution and $u_\tau(\cdot, \cdot) = u(\cdot, \cdot + \tau)$, as above.
Then 
\begin{equation}\label{NRuniquelem2.eq}
u_\tau = -\mathcal D^{\L,+}(\Tr_0u_\tau) + \mathcal S^{\L}(\partial_\nu u_\tau),
\end{equation}
where $\Tr_0u_\tau \in Y^{1,2}(\rn)$, $\partial_\nu u_\tau \in L^2(\rn)$, and $\mathcal D^{\L,+}$ and $\mathcal S^{\L}$ are viewed (as their natural extensions) from these spaces mapping into $\sltp$.
\end{lemma}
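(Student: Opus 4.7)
The plan is to reduce the representation formula for $u_\tau$ to the classical Green's formula \cite[Theorem 4.16]{bhlmp}, which applies only to solutions already in $Y^{1,2}(\reu)$, by exploiting the crucial hypothesis that $\partial_t u_\tau \in Y^{1,2}(\reu)$ even though $u_\tau$ itself need not be. As a preliminary, observe that $\Tr_0 u_\tau = u(\cdot,\tau) \in Y^{1,2}(\rn)$ from $u \in S^2_+$, and $\partial_\nu u_\tau \in L^2(\rn)$ by applying Proposition \ref{conormals2.prop} to $u_\tau$; consequently the right-hand side of \eqref{NRuniquelem2.eq} makes sense in $\sltp$ by Corollary \ref{NRcorollary.cor} and Proposition \ref{mappingpropl2.prop}.

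For the main argument I would fix $0 < \tau_1 < \tau_2$ and consider the difference
\[
w := u_{\tau_1} - u_{\tau_2} = -\int_{\tau_1}^{\tau_2} \partial_s u_s \, ds.
\]
Each integrand $\partial_s u_s = (\partial_t u)_s$ lies in $Y^{1,2}(\reu)$ by hypothesis, and the continuity of $s\mapsto \partial_s u_s$ into $Y^{1,2}(\reu)$ on the compact interval $[\tau_1,\tau_2]$---the same kind of argument as in the proof of Proposition \ref{commutingtderivativesandlayer.prop}---ensures the integral converges as a Bochner integral with values in $Y^{1,2}(\reu)$. Hence $w \in Y^{1,2}(\reu)$ with $\L w = 0$, and applying the classical Green's formula to $w$ and rearranging gives that
\[
G_\tau := u_\tau + \mathcal{D}^{\L,+}(\Tr_0 u_\tau) - \mathcal{S}^{\L}(\partial_\nu u_\tau) \in \sltp
\]
is independent of $\tau>0$; let $G$ denote its common value.

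To close the proof I would show $G\equiv 0$ by letting $\tau_k\to\infty$ along a subsequence. The families $\{\Tr_0 u_\tau\}$ and $\{\partial_\nu u_\tau\}$ are bounded in $Y^{1,2}(\rn)$ and $L^2(\rn)$ respectively (by $u\in S^2_+$ and Proposition \ref{conormals2.prop}), so along some $\tau_k\to\infty$ they admit weak limits $f_\infty \in Y^{1,2}(\rn)$ and $g_\infty \in L^2(\rn)$. The hypothesis $\partial_t u_\sigma \in Y^{1,2}(\reu)$ for every $\sigma>0$ combined with dominated convergence yields
\[
\int_\tau^\infty\!\!\int_\rn|\nabla\partial_t u(x,t)|^2\,dx\,dt \;\longrightarrow\; 0 \quad\text{as }\tau\to\infty,
\]
and a Cauchy-in-$Y^{1,2}(\rn)$ argument (together with Sobolev embedding on slices) then shows $\Tr_0 u_{\tau+h} - \Tr_0 u_\tau \to 0$ in $Y^{1,2}(\rn)$ for each fixed $h > 0$. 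Hence $u_{\tau_k}(\cdot, t) \rightharpoonup f_\infty$ weakly in $Y^{1,2}(\rn)$ for every $t\ge 0$, so the weak limit $v_\infty$ of $u_{\tau_k}$ in $W^{1,2}_{\loc}(\reu)$ is $t$-independent and equal to $f_\infty(x)$. Since the coefficients are $t$-independent and $\L v_\infty = 0$ in $\reu$ in the weak sense, $f_\infty$ must be a $Y^{1,2}(\rn)$ weak solution of the $n$-dimensional elliptic equation $\L_{\|} f_\infty := -\divp(\ap\nbp f_\infty + B_{1\|} f_\infty) + B_{2\|}\cdot \nbp f_\infty = 0$; the Lax--Milgram theorem applied to $\L_{\|}$ on $Y^{1,2}(\rn)$ (with coercivity from ellipticity of $\ap$ and the smallness of $\|B_{1\|}\|_\lnrn, \|B_{2\|}\|_\lnrn$) then forces $f_\infty = 0$. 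An analogous argument applied to the identity $\partial_\nu u_\tau = -e_{n+1}\cdot(A\nabla u(\cdot,\tau) + B_1 u(\cdot,\tau))$, or alternatively passing to the limit in the Green's formula applied to each $\partial_t u_{\tau_k}\in Y^{1,2}(\reu)$, gives $g_\infty = 0$. The boundedness of $\mathcal{D}^{\L,+}: Y^{1,2}(\rn)\to\sltp$ and $\mathcal{S}^{\L}:L^2(\rn)\to\sltp$ then yields $G_{\tau_k}\rightharpoonup 0$, and since $G_\tau=G$ is constant in $\tau$, $G=0$, as desired.

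The main obstacle is this last step, the limit $\tau\to\infty$. The hypothesis $u \in S^2_+$ only provides sup-bounds on traces and conormals, never decay, and in the absence of De Giorgi--Nash--Moser estimates in the complex setting we have no pointwise handle on $u$. The whole argument rests on the strong additional assumption $\partial_t u_\tau\in Y^{1,2}(\reu)$, which provides the tail decay of $\partial_t u$ in the transversal variable needed to execute the Cauchy argument and to identify the weak limit as a $t$-independent solution of the ambient $n$-dimensional elliptic equation. Handling the conormal limit $g_\infty$ with the same care is arguably the most delicate task in the proof; applying the classical Green's formula directly to each $\partial_t u_{\tau_k}\in Y^{1,2}(\reu)$ and passing to limits there seems to be the cleanest route.
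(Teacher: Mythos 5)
Your proposal is correct in its essentials and its first half runs on the same engine as the paper's proof: the only way to access the Green's formula of \cite[Theorem 4.16]{bhlmp} is through the hypothesis $\partial_t u_\tau\in Y^{1,2}(\reu)$, since $u_\tau$ itself need not lie in $Y^{1,2}(\reu)$. The paper implements this via Proposition \ref{commutingtderivativesandlayer.prop}, differentiating $\tau\mapsto \mathcal D^{\L,+}(\Tr_0 u_\tau)$ and $\tau\mapsto \sl(\partial_\nu u_\tau)$ and identifying the derivative with the Green representation of $\partial_t u_{\tau_0}\in Y^{1,2}(\reu)$, and then integrates in $\tau_0$ over $(\tau,\infty)$; you instead difference, writing $u_{\tau_1}-u_{\tau_2}=-\int_{\tau_1}^{\tau_2}\partial_s u_s\,ds\in Y^{1,2}(\reu)$ and applying the Green's formula to the difference. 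These are morally the same step (and both quietly need the compatibility of the $H^{\pm 1/2}$ layer potentials with their extensions to $\yotn$ and $\ltrn$ on the intersections of the data spaces, which you gloss over exactly as much as the paper does). Where you genuinely diverge is the endgame. Your parenthetical claim that ``$u\in S^2_+$ only provides sup-bounds on traces and conormals, never decay'' is a misreading of Definition \ref{def.slicespace}: the classes $C^2_0\big((0,\infty);\yotn\big)$ and $C_0\big((0,\infty);\ltrn\big)$ encode vanishing at infinity, and the paper's proof uses precisely this (``we must use the decay at infinity hypothesis in the definition of $S^2_+$'') to conclude that $\Tr_0 u_\tau\to 0$ strongly in $\yotn$ and $\partial_\nu u_\tau\to 0$ strongly in $\ltrn$ as $\tau\to\infty$, which kills the constant immediately. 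Your substitute --- weak compactness at infinity, identification of the weak limit as a $t$-independent $\yotn$-solution of $\L_\| f_\infty=0$, and Lax--Milgram rigidity --- is considerably longer, but it is sound and would in fact prove the representation without invoking the vanishing-at-infinity part of $S^2_+$, so it buys a (marginally) stronger statement at the price of extra work.

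One step in your endgame needs tightening: the claim $g_\infty=0$. For an arbitrary sequence $\tau_k$ realizing the weak limits you have no control on the slices $\partial_t u(\cdot,\tau_k)$, so the ``analogous argument'' applied to $\partial_\nu u_{\tau_k}=-e_{n+1}\cdot(A\nabla u(\cdot,\tau_k)+B_1u(\cdot,\tau_k))$ does not directly identify the limit of the normal component $A_{\cdot\,,n+1}\,\partial_t u(\cdot,\tau_k)$. The fix is available because $G_\tau$ is constant, so you are free to re-choose the sequence: since $\partial_t u_{\tau_0}\in Y^{1,2}(\reu)$ forces $\int_{\tau_0}^\infty\|\partial_t u(\cdot,s)\|_{L^{2^*_{n+1}}(\rn)}^{2^*_{n+1}}\,ds<\infty$ (and likewise for $\|\nabla\partial_t u(\cdot,s)\|_{\ltrn}^2$), one can pick $\tau_k\to\infty$ along which $\|\partial_t u(\cdot,\tau_k)\|_{L^{2^*}(\rn)}\to0$; then, having first shown $f_\infty=0$, one gets $\nabla_\| u(\cdot,\tau_k)\rightharpoonup 0$, $\partial_t u(\cdot,\tau_k)\rightharpoonup 0$, and $B_1u(\cdot,\tau_k)\rightharpoonup0$ in $\ltrn$, hence $\partial_\nu u_{\tau_k}\rightharpoonup 0$. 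With that adjustment (and the harmless observation that the paper's smallness of $\|B_{i}\|_{\lnrn}$ also gives coercivity of the $n$-dimensional form for $\L_\|$ on $\yotn$), your argument closes.
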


\begin{proof}
We have $\text{Tr}_0u_\tau(\cdot) = u(\cdot, \tau) \in Y^{1,2}(\rn)$ (by the fact that $u\in \sltp$) and $\partial_\nu u_\tau \in L^2(\rn)$ (by Proposition \ref{conormals2.prop}). For the latter we may consider $u_{\tau/2} \in Y^{1,2}(\ree_+)$, a solution in $\ree_+$, since the operator $\cL$ is $t$-independent. The  mappings of the layer potentials into $S^2_+$ come  from Proposition \ref{mappingpropl2.prop} and Corollary \ref{Duniquelem1.lem}.

By Proposition \ref{commutingtderivativesandlayer.prop} together with the Green's formula for $\partial_t u_\tau\in \yot$ (see \cite[Theorem 4.16 (ii)]{bhlmp}) we know that for $\tau_0 > 0$
$$\partial_\tau u_\tau|_{\tau = \tau_0}(x,t) =- \partial_\tau[\mathcal D^{\L,+}(\text{Tr}_0u_\tau) + \mathcal S^{\L}(\partial_\nu u_\tau)]\big|_{\tau = \tau_0}(x,t),$$
as functions in $\sltp$.
We may now integrate in $\tau_0$ to obtain 
$$u_\tau = -\mathcal D^{\L,+}(\text{Tr}_0u_\tau) + \mathcal S^{\L}(\partial_\nu u_\tau),$$
where we must use  the decay at infinity hypothesis in the definition of $\sltp$. \end{proof}

Now we push the representation above down to the boundary.

\begin{lemma}\label{NRuniquelem3.lem}
Suppose that $\L$ satisfies Hypothesis B (see Definition \ref{def-hypothesis-b}), and that u is a good $\mathcal N/ \mathcal R$ solution. Then
\begin{equation}\label{NRuniquelem3.eq}
u =-\mathcal D^{\L,+} f + \mathcal S^{\L}g,
\end{equation}
where $f \in Y^{1,2}(\rn)$ and $g \in L^2(\rn)$ are as in Propositions \ref{weaklimits2.prop} and \ref{conormals2.prop} respectively.
\end{lemma}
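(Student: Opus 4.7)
The plan is to obtain the boundary representation \eqref{NRuniquelem3.eq} by passing $\tau\to 0^+$ in the interior representation \eqref{NRuniquelem2.eq} from Lemma \ref{NRuniquelem2.lem}. Fix an arbitrary height $t_0>0$ and take $\Tr_{t_0}$ of both sides of \eqref{NRuniquelem2.eq}. On the left this gives $u(\cdot,t_0+\tau)$; on the right we get, by Corollary \ref{NRcorollary.cor} (respectively Proposition \ref{mappingpropl2.prop}), a bounded linear image of $\Tr_0 u_\tau\in Y^{1,2}(\rn)$ (respectively $\partial_\nu u_\tau\in L^2(\rn)$) inside $Y^{1,2}(\rn)$. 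So for every $\tau>0$ we have the identity
\begin{equation*}
u(\cdot,t_0+\tau)\;=\;-\Tr_{t_0}\mathcal{D}^{\L,+}(\Tr_0 u_\tau)\;+\;\Tr_{t_0}\mathcal{S}^{\L}(\partial_\nu u_\tau)
\end{equation*}
as elements of $Y^{1,2}(\rn)$.

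Next I would pass $\tau\to 0^+$ in each piece separately. The left-hand side converges strongly to $u(\cdot,t_0)$ in $Y^{1,2}(\rn)$ because $u\in S^2_+\subset C^2_0((0,\infty);Y^{1,2}(\rn))$. For the double layer term, Proposition \ref{weaklimits2.prop} says $\Tr_0 u_\tau\rightharpoonup f$ weakly in $Y^{1,2}(\rn)$; composing the bounded linear map $\mathcal{D}^{\L,+}:Y^{1,2}(\rn)\to S^2_+$ from Corollary \ref{NRcorollary.cor} with $\Tr_{t_0}:S^2_+\to Y^{1,2}(\rn)$ yields a bounded linear map on $Y^{1,2}(\rn)$, which is weakly continuous. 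Hence $\Tr_{t_0}\mathcal{D}^{\L,+}(\Tr_0 u_\tau)\rightharpoonup \Tr_{t_0}\mathcal{D}^{\L,+}(f)$ weakly in $Y^{1,2}(\rn)$. Similarly, Proposition \ref{conormals2.prop} gives $\partial_\nu u_\tau\rightharpoonup g$ weakly in $L^2(\rn)$, and the single layer furnishes a bounded linear map $L^2(\rn)\ni h\mapsto \Tr_{t_0}\mathcal{S}^{\L}(h)\in Y^{1,2}(\rn)$ by Proposition \ref{mappingpropl2.prop} together with $\mathcal{S}^\L:L^2\to S^2_+$, which is again weakly continuous.

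By uniqueness of weak limits in $Y^{1,2}(\rn)$ (the strong limit on the left must coincide with the sum of weak limits on the right), we conclude
\begin{equation*}
u(\cdot,t_0)\;=\;-\Tr_{t_0}\mathcal{D}^{\L,+}(f)\;+\;\Tr_{t_0}\mathcal{S}^{\L}(g)\qquad\text{in }Y^{1,2}(\rn).
\end{equation*}
Since $t_0>0$ was arbitrary and both sides agree as functions in $\reu$, the identity \eqref{NRuniquelem3.eq} follows. The main subtlety I expect is making sure the strong/weak interplay at fixed height is valid: the left side converges strongly while each right-hand piece converges only weakly, and one must invoke uniqueness of weak limits (or equivalently test against a fixed functional on $Y^{1,2}(\rn)$) to conclude equality. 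Everything else is bookkeeping with the mapping properties already established in Section \ref{existence.sect} and in the earlier propositions on weak boundary limits.
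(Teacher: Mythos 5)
Your argument is correct, but it is not the route the paper takes, so a brief comparison is in order. The paper passes from the interior representation of Lemma \ref{NRuniquelem2.lem} to the boundary one via Mazur's lemma: it replaces the weakly convergent data $(\Tr_0 u_{\tau_k},\partial_\nu u_{\tau_k})$ by convex combinations converging \emph{strongly} in $Y^{1,2}(\rn)\times L^2(\rn)$, forms the corresponding convex combinations $u_l$ of the $u_{\tau_k}$, and shows $u_l(\cdot,t)$ converges strongly both to $\widetilde u(\cdot,t)$ (by the operator bounds $\mathcal{D}^{\L,+}:Y^{1,2}(\rn)\to S^2_+$, $\mathcal{S}^{\L}:L^2(\rn)\to S^2_+$) and to $u(\cdot,t)$ (by continuity of $t\mapsto\nabla u(\cdot,t)$ in $L^2$). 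You instead evaluate \eqref{NRuniquelem2.eq} on a fixed slice $t_0>0$ and use that $\Tr_{t_0}\circ\mathcal{D}^{\L,+}$ and $\Tr_{t_0}\circ\mathcal{S}^{\L}$ are bounded linear maps into $Y^{1,2}(\rn)$, hence weak-to-weak sequentially continuous, so the full weak limits of Propositions \ref{weaklimits2.prop} and \ref{conormals2.prop} pass through; the left side converges strongly since $u\in S^2_+\subset C_0^2((0,\infty);Y^{1,2}(\rn))$, and uniqueness of weak limits closes the slice identity, whence the identity in $\reu$ since $t_0$ was arbitrary. This is a legitimate simplification: it uses exactly the same ingredients (the interior Green representation, the $S^2_+$ mapping bounds, and the existence of the full weak boundary limits) but avoids Mazur's lemma entirely. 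What the paper's convex-combination argument buys is a template that is reused verbatim in the Dirichlet analogue (Lemma \ref{Duniquelem3.lem}), where only subsequential weak limits of the boundary data are available and one wants a statement valid for \emph{any} such limit pair; your slice-wise weak-continuity argument would also adapt to that setting by working along the chosen subsequence, so the difference is one of presentation rather than substance.
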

\begin{proof}

By Propositions \ref{weaklimits2.prop} and \ref{conormals2.prop}, $u_\tau(\cdot, 0) \to f \in Y^{1,2}(\rn)$ and $\partial_\nu u_\tau \to g \in L^2(\rn)$ weakly in $Y^{1,2}(\rn)$ and $L^2(\rn)$ respectively as $\tau \to 0^+$. Set $u_\tau(\cdot, 0) = f_\tau$ and $\partial_\nu u_\tau = g_\tau$, then rephrasing the above, we have $\vec{h}_\tau = (f_\tau, g_\tau)$ converges to $(f,g)=: \vec{h}$ weakly in $Y^{1,2}(\rn) \times L^2(\rn)$. Let $\tau_k \downarrow 0$ then by Mazur's lemma there exists a sequence $\{\widetilde{h}_l\}_{l = 1}^\infty \subset Y^{1,2}(\rn) \times L^2(\rn)$ such that
$\widetilde{h}_l \to \vec{h}$ {\it strongly} in $Y^{1,2}(\rn) \times L^2(\rn)$ with
$$\widetilde{h}_l = \sum_{k = l}^{N(l)} \lambda_{k,l} \vec{h}_{\tau_k},$$
where $l \le N(l) < \infty$,  $\lambda_{k,l} \in [0,1]$ and  $\sum_{k = l}^{N(l)} \lambda_{k,l} = 1$. 
Set $$\widetilde{u} := \mathcal D^{\L,+} f + \mathcal S^{\L}g.$$ To prove the lemma it is enough to show that for each $t > 0$,
$\widetilde u(\cdot, t) = u(\cdot, t)$ as elements of $Y^{1,2}(\rn)$. 

We have from Lemma \ref{NRuniquelem2.lem} that
$$u_{\tau} = -\mathcal D^{\L,+}(f_\tau) + S^{\L}(g_\tau).$$
Set 
$$u_l := \sum_{k = l}^{N(l)} \lambda_{k,l} u_{\tau_k}.$$
We show $u_l(\cdot, t)$ converges strongly to both $u(\cdot, t)$ and $\widetilde u(\cdot, t)$ in $Y^{1,2}(\rn)$. From the bounded mappings $\mathcal D^{\L,+} : Y^{1,2}(\rn) \to \sltp$ and $\mathcal S^{\L} : L^2(\rn) \to \sltp$
we have 
$$\| \nabla[\widetilde{u}(\cdot, t) - u_l(\cdot, t)] \|_{\ltrn} \le \|\vec{h} - \widetilde{h}_l\|_{Y^{1,2}(\rn) \times L^2(\rn)} \to 0 \text{ as } l \to \infty,$$
where we used the strong convergence of $\widetilde{h}_l$ to $\vec{h}$. To show $u_l(\cdot, t)$ converges strongly to $u(\cdot,t)$ in $Y^{1,2}(\rn)$ we write for $l \ge 0$,
\begin{align*}
\| \nabla u_l(\cdot, t) - \nabla u (\cdot, t)\|_{L^2(\rn)} &= \| \sum_{k = l}^{N(l)} \lambda_{k,l} \nabla[u_{\tau_k} - u](\cdot, t)\|_{L^2(\rn)}
\\ & \le   \sum_{k = l}^{N(l)} \lambda_{k,l} \| \nabla[u_{\tau_k} - u](\cdot, t)\|_{L^2(\rn)}
\\ & \le \sup_{k \ge l} \| \nabla[u(\cdot, t + \tau_k) - u(\cdot, t)]\|_{L^2(\rn)},
\end{align*}
where we used $\sum_{k = l}^{N(l)} \lambda_{k,l} = 1$ and $u(\cdot, \cdot + \tau) = u_{\tau}(\cdot, \cdot) = \mathcal D^{\L,+}(f_\tau) + S^{\L}(g_\tau)$.
We can then use the continuity of $\nabla u(\cdot, t)$ in $L^2(\rn)$ (see \cite[Lemma 2.3]{bhlmp})  
along with $\tau_k \downarrow 0$ to obtain $\| \nabla u_l(\cdot, t) - \nabla u (\cdot, t)\|_{L^2(\rn)} \to 0$ as $l$ tends to infinity. \end{proof}

\begin{theorem}[Uniqueness of the regularity problem among good $\mathcal N / \mathcal R$ solutions]\label{Rgooduniquethrm.thrm}
Suppose $\L$ satisfies Hypothesis B (see Definition \ref{def-hypothesis-b}). Suppose $u$ is a good $\mathcal N / \mathcal R$ solution, with $u(\cdot, 0) = 0$, interpreted in the sense of Proposition \ref{weaklimits2.prop} (i.e. $\lim_{t\to 0} u(t)=0$ weakly in $\yotn$). Then $u\equiv 0$ in $\reu$.
\end{theorem}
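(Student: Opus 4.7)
The plan is to combine the Green-type representation formula of Lemma \ref{NRuniquelem3.lem} with the invertibility of the boundary single layer contained in Hypothesis B, and read off that both constituents of the representation must vanish.

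First I would apply Lemma \ref{NRuniquelem3.lem} to obtain
\[
u = -\mathcal{D}^{\L,+} f + \mathcal{S}^{\L} g,
\]
where $f \in Y^{1,2}(\bb R^n)$ is the weak-$Y^{1,2}(\bb R^n)$ limit of $u(\cdot, t)$ as $t \to 0^+$ furnished by Proposition \ref{weaklimits2.prop}, and $g \in L^2(\bb R^n)$ is the conormal derivative constructed in Proposition \ref{conormals2.prop}. The standing hypothesis of the theorem is precisely that this weak limit equals $0$, so $f = 0$ and the representation collapses to $u = \mathcal{S}^{\L} g$.

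Next I would take boundary values on both sides. By Corollary \ref{boundaryvalues.cor}, $\mathcal{S}^{\L}(\cdot,t) \to \mathcal{S}^{\L}_0 g$ weakly in $Y^{1,2}(\bb R^n)$ as $t \to 0^+$, while by the hypothesis $u(\cdot, t) \to 0$ weakly in $Y^{1,2}(\bb R^n)$. Uniqueness of weak limits yields $\mathcal{S}^{\L}_0 g = 0$. Invoking Hypothesis B, the operator $\mathcal{S}^{\L}_0 : L^2(\bb R^n) \to Y^{1,2}(\bb R^n)$ is a bounded invertible map, so $g = 0$, and therefore $u = \mathcal{S}^{\L} g \equiv 0$ in $\reu$.

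There is no serious obstacle in this final step; all of the heavy lifting has already been absorbed into Lemma \ref{NRuniquelem3.lem} (which in turn required the commutation identity of Proposition \ref{commutingtderivativesandlayer.prop} and the Mazur-lemma passage to the boundary in Lemma \ref{NRuniquelem3.lem}) and into the invertibility statement for $\mathcal{S}^{\L}_0$ packaged in Hypothesis B. The only point worth checking carefully is that the two notions of weak convergence at the boundary — the hypothesized one for $u$ and the one produced by Corollary \ref{boundaryvalues.cor} for $\mathcal{S}^{\L} g$ — are taken in the same space $Y^{1,2}(\bb R^n)$, so that they can be compared and matched to give $\mathcal{S}^{\L}_0 g = 0$.
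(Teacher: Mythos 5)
Your proposal is correct and follows essentially the same route as the paper's proof: apply the representation formula of Lemma \ref{NRuniquelem3.lem}, use the hypothesis to conclude $f=0$ so that $u=\sl g$, identify the boundary value $\sl_0 g=0$ by uniqueness of weak limits in $\yotn$, and invoke the invertibility of $\sl_0$ from Hypothesis B to get $g=0$. The only cosmetic difference is that the paper cites Proposition \ref{weaklimits2.prop} for taking the trace of $\sl g$, whereas you cite Corollary \ref{boundaryvalues.cor}; both yield the same weak limit $\sl_0 g$ in $\yotn$, so the argument is unchanged.
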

\begin{proof}
By Lemma \ref{NRuniquelem3.lem}, we have $u = -\mathcal D^{L,+} f + \mathcal \sl g$, where $f$ and $g$ are as in Lemma \ref{NRuniquelem3.lem}. It follows that $u = \sl g$, since $f  = 0$ (see the proof of Lemma \ref{NRuniquelem3.lem}). Moreover, by taking traces (in the sense of Proposition \ref{weaklimits2.prop}) in $Y^{1,2}(\rn)$ we obtain $0 = \sl_0 g$, for $g \in L^2(\rn)$. It follows from the invertibility of $\sl_0: L^2(\rn) \to Y^{1,2}(\rn)$ that $g = 0$. This gives $u \equiv 0$. \end{proof}

\begin{theorem}[Uniqueness of the Neumann problem among good $\mathcal N / \mathcal R$ solutions]\label{Ngooduniquethrm.thrm}
Suppose that $\L$ satisfies Hypothesis B (see Definition \ref{def-hypothesis-b}), and that $u$ is a good $\mathcal N / \mathcal R$ solution, with $\partial_\nu u = 0$,   in the sense of Proposition \ref{conormals2.prop}. Then $u\equiv 0$ in $\reu$.
\end{theorem}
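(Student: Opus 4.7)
The plan is to mirror the argument for Theorem \ref{Rgooduniquethrm.thrm}, but now I would pass to the boundary via the trace rather than through the conormal derivative. First, I would apply Lemma \ref{NRuniquelem3.lem} to obtain the layer potential representation
\[
u = -\cD^{\cL,+} f + \sl g,
\]
with $f = \tr_0 u \in Y^{1,2}(\rn)$ (understood in the sense of Proposition \ref{weaklimits2.prop}) and $g = \partial_\nu u \in L^2(\rn)$ (understood in the sense of Proposition \ref{conormals2.prop}). The hypothesis $\partial_\nu u = 0$ then forces $g = 0$, so the representation reduces to $u = -\cD^{\cL,+} f$ for some $f \in Y^{1,2}(\rn)$.

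Next I would send this identity to the boundary in the $Y^{1,2}(\rn)$ topology. On one hand, $\tr_0 u = f$ by the definition of $f$. On the other hand, Proposition \ref{jumprelationsy12.prop} ensures that $\cD^{\cL,+}_t f$ has a weak limit in $Y^{1,2}(\rn)$ as $t \to 0^+$, equal to $(-\tfrac{1}{2} I + K) f$. Uniqueness of weak limits in $Y^{1,2}(\rn)$ yields
\[
f = -\Big(-\tfrac{1}{2} I + K\Big) f = \Big(\tfrac{1}{2} I - K\Big) f,
\]
which rearranges to $(\tfrac{1}{2} I + K) f = 0$ in $Y^{1,2}(\rn)$.

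Finally, I would invoke item (3) of Hypothesis B, which provides the invertibility of $\tfrac{1}{2} I + K$ on $Y^{1,2}(\rn)$. This forces $f = 0$, and hence $u = -\cD^{\cL,+} 0 \equiv 0$ in $\reu$.

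There is no serious obstacle once Lemma \ref{NRuniquelem3.lem} is in hand; the only genuinely nontrivial point is confirming that the jump relation holds in the $Y^{1,2}$ topology (not merely in $L^2$), but this is precisely the content of Proposition \ref{jumprelationsy12.prop}, which itself depends on the improved mapping property $\cD^{\cL,+} : Y^{1,2}(\rn) \to S^2_+$ from Corollary \ref{NRcorollary.cor}. The argument is structurally dual to the regularity case: the roles of $f$ and $g$ are swapped, and the invertibility of $\sl_0 : L^2(\rn) \to Y^{1,2}(\rn)$ is replaced by that of $\tfrac{1}{2} I + K$ on $Y^{1,2}(\rn)$.
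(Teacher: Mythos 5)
Your proof is correct, but it follows a different route than the paper's. Both arguments start identically, invoking Lemma \ref{NRuniquelem3.lem} and the hypothesis $\partial_\nu u=0$ to reduce to $u=-\cD^{\cL,+}f$ with $f=\tr_0 u\in Y^{1,2}(\rn)$. From there the paper takes \emph{conormal derivatives}: it uses the identity \eqref{mapprop2.eq}, which writes $\cD^{\cL,+}f$ as $-\sl\big((-\tfrac12 I+\widetilde K)(\sl_0)^{-1}f\big)$, then the jump relation for the conormal of the single layer, arriving at $0=-(-\tfrac12 I+\widetilde K)(\tfrac12 I+\widetilde K)(\sl_0)^{-1}f$ in $L^2(\rn)$, and concludes from the invertibility of $\pm\tfrac12 I+\widetilde K$ on $L^2(\rn)$ together with that of $\sl_0$. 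You instead take \emph{traces} in $Y^{1,2}(\rn)$: the left-hand side tends weakly to $f$ (Proposition \ref{weaklimits2.prop}), the right-hand side to $-(-\tfrac12 I+K)f$ (Proposition \ref{jumprelationsy12.prop}), giving $(\tfrac12 I+K)f=0$, and you finish with the invertibility of $\tfrac12 I+K$ on $Y^{1,2}(\rn)$ from item (3) of Hypothesis B. Your route is the exact dual of the regularity argument in Theorem \ref{Rgooduniquethrm.thrm} (trace instead of conormal, with the roles of $f$ and $g$ swapped), and it is shorter since it bypasses \eqref{mapprop2.eq} and the conormal jump relation; the price is that it leans on the $Y^{1,2}$-level jump relation of Proposition \ref{jumprelationsy12.prop} and the invertibility of $\tfrac12 I+K$ on $Y^{1,2}(\rn)$, whereas the paper's version only needs the $L^2$-based operators $\pm\tfrac12 I+\widetilde K$ and $(\sl_0)^{-1}$. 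One small caveat: the proof of Proposition \ref{jumprelationsy12.prop} in the paper is written for $f\in C_c^\infty(\rn)$, so strictly speaking your application to a general $f\in Y^{1,2}(\rn)$ rests on the (stated) extension by density; this is covered by the proposition as stated, so it is not a gap in your argument.
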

\begin{proof}
By Lemma \ref{NRuniquelem3.lem}, we have $u = -\mathcal D^{\L,+} f + \mathcal S^{\L} g$, where $f$ and $g$ are as in Lemma \ref{NRuniquelem3.lem}. It follows that $u = -\mathcal D^{\L,+} f$, since $g = 0$ (see the proof of Lemma \ref{NRuniquelem3.lem}), where $f \in Y^{1,2}(\rn)$. From \eqref{mapprop2.eq}, we have after taking conormal derivatives, in the sense of
Proposition \ref{conormals2.prop}\footnote{We note that, having obtained the mapping property $\mathcal D \to \sltp$, the equality of \eqref{mapprop2.eq} holds on every $t$-slice in the space $Y^{1,2}(\rn)$ therefore the weak $L^2(\rn)$ limits, in $t$, of the co-normal derivatives $\partial_{\nu_t}$ are the same.},
and using the jump relations for the conormal of the single layer potential 
$$0 = \partial_{\nu^{\cL,+}} u = -\partial_{\nu^{\cL,+}} \mathcal D^{\L,+} f = -(-\tfrac{1}{2}I + \widetilde{K})(\tfrac{1}{2}I + \widetilde K)S_0^{-1}f \quad \text{ in } L^2(\rn).$$
The invertibility of $\pm \tfrac{1}{2} I + \widetilde K : L^2(\rn) \to L^2(\rn)$ and $S_0^{-1}: Y^{1,2}(\rn) \to L^2(\rn)$ yields that $f = 0$ and hence $u \equiv 0$. \end{proof}

\subsection{Uniqueness for the  Dirichlet problem}

The first lemma here simply states that the conormal derivatives are uniformly bounded in the transversal variable, for good $\mathcal D$ solutions.

\begin{lemma}\label{Duniquelem2.lem}
	Suppose $\L$ satisfies Hypothesis B (see Definition \ref{def-hypothesis-b}). Assume $u$ is a good $\mathcal D$ solution. Then for every $\tau > 0$, $\partial_{\nu_\tau} u \in [Y^{1,2}(\rn)]^*$, with the bound
	$$\sup_{\tau > 0}\| \partial_{\nu_\tau}u \|_ {[Y^{1,2}(\rn)]^*} \le C \sup_{t > 0}\|u \|_{L^2(\rn)}.$$
\end{lemma}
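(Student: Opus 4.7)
The plan is to deduce from the Green's formula an explicit representation of $\partial_{\nu_\tau} u$ in terms of the boundary values $u(\cdot, \tau)$, and then leverage the invertibility statements in Hypothesis B to obtain the required uniform bound. Since $u$ is a good $\mathcal{D}$ solution, the shifted function $u_\tau$ lies in $Y^{1,2}(\reu)$ and solves $\cL u_\tau = 0$ in $\reu$ by $t$-independence of the coefficients. Applying \cite[Theorem 4.16 (iv)]{bhlmp} to $u_\tau$ then yields the $Y^{1,2}(\reu)$-valued identity
\[
u_\tau = -\cD^{\cL,+}(\Tr_0 u_\tau) + \sl(\partial_{\nu^{\cL,+}} u_\tau),
\]
and $t$-independence gives $\Tr_0 u_\tau = u(\cdot, \tau)$ and $\partial_{\nu^{\cL,+}} u_\tau = \partial_{\nu_\tau} u$.

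Next I will pass to the boundary $t = 0$ using the $Y^{1,2}$-valued jump relation $\cD_0^{\cL,+} = -\tfrac12 I + K$ (Lemma \ref{jumprelation.lem}, together with its $Y^{1,2}(\rn)$ version in Proposition \ref{jumprelationsy12.prop}). Rearranging produces the boundary identity
\[
\sl_0(\partial_{\nu_\tau} u) = \bigl(\tfrac12 I + K\bigr) u(\cdot, \tau)
\]
in $Y^{1,2}(\rn)$. Since $u \in D^2_+$, we have $u(\cdot,\tau) \in L^2(\rn)$ with $\|u(\cdot,\tau)\|_{L^2(\rn)} \le \sup_{t>0}\|u(\cdot,t)\|_{L^2(\rn)}$, and by Hypothesis B the operator $\tfrac12 I + K$ is bounded on $L^2(\rn)$, so the right-hand side of the identity above is in fact controlled in $L^2(\rn)$, uniformly in $\tau$.

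The crux of the argument is then to invert $\sl_0$ as a bounded map $[Y^{1,2}(\rn)]^* \to L^2(\rn)$. Boundedness of this map is already implicit in Lemma \ref{Duniquelem1.lem} via the $\sup_{t>0}\|\cdot\|_{L^2}$ estimate at $t=0$. For the invertibility I will dualize: by construction $\sl_0^{\cL} = (\mathcal{S}_0^{\cL^*})^*$, and $\mathcal{S}_0^{\cL^*}:L^2(\rn) \to Y^{1,2}(\rn)$ is invertible by Hypothesis B, so the adjoint relation delivers the isomorphism $\sl_0 : [Y^{1,2}(\rn)]^* \to L^2(\rn)$. Concretely, I will define the candidate functional $\Lambda \in [Y^{1,2}(\rn)]^*$ by
\[
\Lambda(\varphi) := \bigl\langle \bigl(\tfrac12 I + K\bigr) u(\cdot, \tau),\, (\mathcal{S}_0^{\cL^*})^{-1}\varphi\bigr\rangle_{L^2, L^2}, \qquad \varphi \in Y^{1,2}(\rn),
\]
which is bounded by $\lesssim \|u(\cdot,\tau)\|_{L^2(\rn)}\,\|\varphi\|_{Y^{1,2}(\rn)}$, and show via the boundary identity above that $\Lambda$ coincides with the distribution $\partial_{\nu_\tau} u$. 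This immediately yields the lemma.

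The principal bookkeeping hurdle is verifying that the functional $\Lambda$ genuinely agrees with the $H^{-1/2}(\rn)$-valued conormal $\partial_{\nu^{\cL,+}} u_\tau$ obtained from the $Y^{1,2}$-calculus: one must trace the adjoint relation $\sl_0^{\cL} = (\mathcal{S}_0^{\cL^*})^*$ through the various mapping spaces and match the pairings against $C_c^\infty(\rn)$ test functions. This is a routine adjoint computation using the boundary identity, and it presents no essential difficulty given the invertibility statements in Hypothesis B.
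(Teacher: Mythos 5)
Your proposal is correct in substance, but it takes a genuinely different route from the paper. The paper never invokes the Green representation of $u_\tau$ or the jump relations at this stage: it argues directly by duality, pairing $\partial_{\nu_\tau}u$ against test functions $\varphi=\mathcal S^{\cL^*}_0 f$, $f\in C_c^\infty(\rn)$, writing $(\partial_{\nu_\tau}u,\varphi)=B_{\cL}[u_\tau,v_\varphi]=(\Tr_0 u_\tau,\partial_{\nu^*}v_\varphi)$ with $v_\varphi=\mathcal S^{\cL^*}\big([\mathcal S^{\cL^*}_0]^{-1}\varphi\big)$ the layer-potential solution of the adjoint regularity problem, and then using Proposition \ref{conormals2.prop} to bound $\|\partial_{\nu^*}v_\varphi\|_{L^2}\lesssim\|\varphi\|_{Y^{1,2}}$; density of this class of $\varphi$ finishes the proof. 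You instead apply the $Y^{1,2}$ Green formula to $u_\tau$ (legitimate, since good $\mathcal D$ solutions have $u_\tau\in Y^{1,2}(\reu)$, and no circularity arises because you only use the $H^{\pm1/2}$ layer-potential calculus from \cite{bhlmp}), take the trace to get $\sl_0(\partial_{\nu_\tau}u)=(\tfrac12 I+K)u(\cdot,\tau)$, and then solve for $\partial_{\nu_\tau}u$ using the invertibility of $\sl_0:[Y^{1,2}(\rn)]^*\to L^2(\rn)$ as the adjoint of $\mathcal S^{\cL^*}_0$ — an identification the paper itself uses later (Theorems \ref{existence.thm} and \ref{Dgooduniquethrm.thrm}). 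Your route buys an explicit formula $\partial_{\nu_\tau}u=(\sl_0)^{-1}(\tfrac12 I+K)u(\cdot,\tau)$, at the cost of more machinery and of the compatibility checks you defer: the $H^{1/2}_0$-realization of $K$ must agree with its $L^2$ extension on $L^2\cap H^{1/2}_0$, and the $H^{-1/2}\to H^{1/2}$ realization of $\sl_0$ must be matched with its adjoint $[Y^{1,2}]^*\to L^2$ realization. Be aware that this "routine adjoint computation" is really where the content sits: the natural way to carry it out is to test against the dense class $\{\mathcal S^{\cL^*}_0 f: f\in C_c^\infty(\rn)\}$ and use $\langle \sl_0 g,f\rangle=\langle g,\mathcal S^{\cL^*}_0 f\rangle$, at which point your argument collapses to essentially the paper's direct pairing. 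Both proofs ultimately rest on the same two inputs — invertibility of $\mathcal S^{\cL^*}_0:L^2\to Y^{1,2}$ and an $L^2$ bound on a boundary operator ($\partial_{\nu^*}v_\varphi$ in the paper, $\tfrac12 I+K$ in yours) — so your version is a valid, if slightly heavier, alternative.
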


\begin{proof}
	By symmetry of hypotheses and Lemma \ref{Duniquelem1.lem} and Theorem \ref{invertibility.thm} the operator 
	$S^{\cL^*}_0 : L^2(\rn) \to Y^{1,2}(\rn)$ is bounded and invertible. Then the collection of functions $\F:= \{\varphi \in Y^{1,2}(\rn): \varphi = S^{\cL^*}_0 f, f \in C_c^\infty\}$ is dense in $Y^{1,2}(\rn)$. Notice that $v_\varphi = \mathcal S^{\cL^*} ([S^{\cL^*}_0]^{-1} \varphi) = S^{\cL^*}f  \in Y^{1,2}(\ree)$ since $f \in C_c^\infty(\rn) \subset H^{-1/2}(\rn)$. Also, $\text{Tr}_0 u_\tau \in H^{1/2}_0(\rn)$ since $u_\tau \in Y^{1,2}(\ree_+)$, where, as above $u_\tau(\cdot, \cdot) := u(\cdot, \cdot + \tau)$. Then by definition of $\partial_{\nu^*} v_\varphi \in H^{-1/2}(\rn)$ (see  \cite[Definition 4.9]{bhlmp}) with 
	\begin{align*}
		(\text{Tr}_0u_\tau, \partial_{\nu^*}v_\varphi) &= \overline{(\partial_{\nu^*}v_\varphi, \text{Tr}_0u_\tau)}
		\\ & = \overline{B_{\cL^*}[v_\varphi, u_\tau]} = B_{\cL}[u_\tau, v_\varphi].
	\end{align*}
	Now $B_{\cL}[u_\tau, v_\varphi] = (\partial_{\nu_\tau}u , \varphi)$, since $v_\varphi$ solves the regularity problem with data $\varphi$ by Theorem \ref{existence.thm}. In particular $\varphi$ is the weak limit of $v_\varphi(\cdot, t)$ in $Y^{1,2}(\rn)$ as $t\to 0$.

	Having established 
	$$(\partial_{\nu_\tau}u, \varphi) = (\text{Tr}_0u_\tau, \partial_{\nu^*}v_\varphi)$$
	for $\varphi$ in $\F$ we see that $\partial_{\nu_\tau}u \in [Y^{1,2}(\rn)]^*$ by the fact that the map 
	$$F_\varphi := \partial_{\nu^*}v_\varphi = \partial_{\nu^*} \mathcal S^{\cL^*} ([S^{\cL^*}_0]^{-1} \varphi)$$
	maps $Y^{1,2}(\rn) \to L^2(\rn)$, by Proposition \ref{conormals2.prop}, the mapping property mentioned at the start of the proof for $\mathcal{S}^{\cL^*}_0$, and the density of $\F$ in $Y^{1,2}(\rn)$. \end{proof}

Next, we prove a Green's formula for good $\mathcal D$ solutions.

\begin{lemma}\label{Duniquelem3.lem}
Suppose  that $\L$ satisfies Hypothesis B (see Definition \ref{def-hypothesis-b}) and let $u$ be a good $\mathcal D$ solution. For $\tau > 0$, set $(f_\tau, g_\tau): = (\text{Tr}_0 u_\tau, \partial_\nu u_\tau) = (\text{Tr}_0 u_\tau, \partial_{\nu_\tau} u) \in L^2(\rn) \times [Y^{1,2}(\rn)]^*$, where we use Lemma \ref{Duniquelem2.lem} to identify $\partial_{\nu_\tau} u$ as an element of $[Y^{1,2}(\rn)]^*$. Then
$$u = -\mathcal D^{\L,+}f + \mathcal S^{\L}g,$$
where the pair $(f,g) \in L^2(\rn) \times [Y^{1,2}(\rn)]^*$ is {\bf any} convergent weak limit of
$(f_{\tau_k}, g_{\tau_k})$, $\tau_k \downarrow 0$ in the space $L^2(\rn) \times [Y^{1,2}(\rn)]^*$.
\end{lemma}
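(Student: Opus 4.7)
My plan is to apply the $Y^{1,2}$-Green's formula (e.g., \cite[Theorem 4.16]{bhlmp}) at each fixed positive height, obtaining for every $\tau>0$ the representation
\begin{equation*}
u_\tau = -\mathcal{D}^{\L,+}(f_\tau) + \mathcal{S}^{\L}(g_\tau),
\end{equation*}
and then to pass to the limit $\tau \to 0^+$ using the bounded extensions $\mathcal{D}^{\L,+}: L^2(\rn) \to D^2_+$ from Proposition \ref{mappingpropl2.prop} and, crucially, $\mathcal{S}^{\L}: [Y^{1,2}(\rn)]^* \to D^2_+$ from Lemma \ref{Duniquelem1.lem}. Note that the identity above makes sense as an identity of $Y^{1,2}(\reu)$ functions because, by $t$-independence of the coefficients, $u_\tau \in Y^{1,2}(\reu)$ and $(f_\tau,g_\tau)\in H^{1/2}_0(\rn)\times H^{-1/2}(\rn)$.

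Since $u$ is a good $\mathcal{D}$ solution, $\sup_\tau\|f_\tau\|_{L^2(\rn)} \leq \|u\|_{D^2_+}$, and Lemma \ref{Duniquelem2.lem} gives $\sup_\tau\|g_\tau\|_{[Y^{1,2}(\rn)]^*} \lesssim \|u\|_{D^2_+}$. Consequently $(f_\tau,g_\tau)$ is bounded in the reflexive space $L^2(\rn)\times [Y^{1,2}(\rn)]^*$, so by Banach--Alaoglu one may extract weak limits along sequences $\tau_k\downarrow 0$; the lemma asserts that the representation holds for any such weak limit $(f,g)$. The next step is to upgrade weak convergence to strong convergence via Mazur's lemma: there exist convex combinations
\begin{equation*}
(\widetilde f_l,\widetilde g_l):=\sum_{k=l}^{N(l)}\lambda_{k,l}(f_{\tau_k},g_{\tau_k})\longrightarrow (f,g)
\end{equation*}
strongly in $L^2(\rn)\times[Y^{1,2}(\rn)]^*$. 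Setting $u_l:=\sum_{k=l}^{N(l)}\lambda_{k,l}u_{\tau_k}$, the linearity and continuity of $\mathcal{D}^{\L,+}$ and $\mathcal{S}^{\L}$ into $D^2_+$ yield
\begin{equation*}
\sup_{t>0}\bigl\|u_l(\cdot,t)-\bigl[-\mathcal{D}^{\L,+}_tf+\mathcal{S}^{\L}_tg\bigr]\bigr\|_{L^2(\rn)}\longrightarrow 0, \qquad l\to\infty.
\end{equation*}

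On the other hand, since $u\in C_0((0,\infty);L^2(\rn))$ by the definition of good $\mathcal{D}$ solution, for each fixed $t>0$ the convex combinations satisfy
\begin{equation*}
\|u_l(\cdot,t)-u(\cdot,t)\|_{L^2(\rn)}\leq \sup_{k\geq l}\|u(\cdot,t+\tau_k)-u(\cdot,t)\|_{L^2(\rn)}\longrightarrow 0,
\end{equation*}
using that $\sum_{k=l}^{N(l)}\lambda_{k,l}=1$. Comparing the two limits identifies $u(\cdot,t)=-\mathcal{D}^{\L,+}_tf+\mathcal{S}^{\L}_tg$ in $L^2(\rn)$ for every $t>0$, which is the claim.

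The main obstacle---and the reason this lemma is delicate compared to its $\mathcal{N}/\mathcal{R}$ counterpart (Lemma \ref{NRuniquelem3.lem})---is that here the boundary data live in \emph{weaker} spaces: $f_\tau$ is only in $L^2(\rn)$ (not in $Y^{1,2}(\rn)$) and $g_\tau$ is only in $[Y^{1,2}(\rn)]^*$ (not in $L^2(\rn)$). Thus one cannot use the more elementary mapping properties of the layer potentials from Proposition \ref{mappingpropl2.prop} alone to handle the single layer term; the key is the upgraded bound $\mathcal{S}^{\L}:[Y^{1,2}(\rn)]^*\to D^2_+$ of Lemma \ref{Duniquelem1.lem}, which was itself obtained using Hypothesis B (specifically the invertibility of $-\tfrac12 I+K$ and of $\mathcal{S}^{\L}_0,\mathcal{S}^{\L^*}_0$). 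Once this mapping is in hand, the weak-to-strong convergence argument proceeds cleanly via Mazur's lemma as above.
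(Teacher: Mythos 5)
Your proposal is correct and follows essentially the same route as the paper's proof: Green's formula for $u_\tau\in Y^{1,2}(\reu)$ at each height $\tau>0$, extraction of weak limits, Mazur's lemma to upgrade to strong convergence of convex combinations, and then identification of the two strong limits of $u_l(\cdot,t)$ using the mappings $\mathcal{D}^{\L,+}:L^2(\rn)\to \dltp$ and $\mathcal{S}^{\L}:[Y^{1,2}(\rn)]^*\to \dltp$ (the latter from Lemma \ref{Duniquelem1.lem}) together with the continuity of $t\mapsto u(\cdot,t)$ in $L^2(\rn)$. Your closing remark correctly pinpoints why the single-layer mapping property on $[Y^{1,2}(\rn)]^*$ is the essential new ingredient compared to the $\mathcal{N}/\mathcal{R}$ case.
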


\begin{remark}
We note that the existence of at least one such limiting pair $(f,g)$ is guaranteed by the fact that $f_\tau$, $g_\tau$ are uniformly bounded in $\ltrn$ and $\dyotn$ respectively (the first by the hypothesis $u\in \dltp$ and the second by Lemma \ref{Duniquelem2.lem}) together with the fact that both of these spaces are reflexive. 

Unlike the case of good $\mathcal{N}/\mathcal{R}$ solutions, here we make no assertion about the uniqueness of such a limiting pair.
\end{remark}

\begin{proof}

The proof is quite similar to Lemma \ref{NRuniquelem3.lem}, but we provide the details here. We have that $u_{\tau} \in Y^{1,2}(\ree_+)$ with $\cL u_\tau = 0$ we have
$$u_\tau = -\mathcal D^{\L,+}(\text{Tr}_0 u_\tau) + S^{\L}(\partial_\nu u_\tau)$$
for all $\tau > 0$. Let $\vec{h}_{\tau_k}:= (f_{\tau_k}, g_{\tau_k}) \rightharpoonup (f,g) =: \vec{h} \in L^2(\rn) \times [Y^{1,2}(\rn)]^*$ be as in the statement of the lemma. Using Mazur's lemma there exists a sequence $\{\widetilde{h}_l\}_{l = 1}^\infty \subset L^2(\rn) \times [Y^{1,2}(\rn)]^*$ such that
$\widetilde{h}_l \to \vec{h}$ {\it strongly} in $L^2(\rn) \times [Y^{1,2}(\rn)]^*$ with
$$\widetilde{h}_l = \sum_{k = l}^{N(l)} \lambda_{k,l} \vec{h}_{\tau_k},$$
where $l \le N(l) < \infty$,  $\lambda_{k,l} \in [0,1]$ and  $\sum_{k = l}^{N(l)} \lambda_{k,l} = 1$. 
Set $$\widetilde{u} := \mathcal D^{\L,+} f + \mathcal S^{\L}g$$
and
$$u_l := \sum_{k = l}^{N(l)} \lambda_{k,l} u_{\tau_k}.$$
We show $u_l(\cdot, t)$ converges strongly to both $u(\cdot, t)$ and $\widetilde u(\cdot, t)$ in $L^2(\rn)$. From the bounded mappings $\mathcal D^{\L,+} : L^2(\rn) \to \dltp$ and $\mathcal S^{\L} : [Y^{1,2}(\rn)]^* \to \dltp$
we have 
$$\| \widetilde{u}(\cdot, t) - u_l(\cdot, t) \|_{L^2(\rn)} \le \|\vec{h} - \widetilde{h}_l\|_{L^2(\rn) \times [Y^{1,2}(\rn)]^*} \to 0 \text{ as } l \to \infty.$$

To show $u_l(\cdot, t)$ converges strongly to $u(\cdot,t)$ in $L^2(\rn)$ we write for $l \ge 0$,
\begin{align*}
\| u_l(\cdot, t) -  u (\cdot, t)\|_{L^2(\rn)} &= \| \sum_{k = l}^{N(l)} \lambda_{k,l} [u_{\tau_k} - u](\cdot, t)\|_{L^2(\rn)}
\\ & \le   \sum_{k = l}^{N(l)} \lambda_{k,l} \| [u_{\tau_k} - u](\cdot, t)\|_{L^2(\rn)}
\\ & \le \sup_{k \ge l} \| u(\cdot, t + \tau_k) - u(\cdot, t)\|_{L^2(\rn)},
\end{align*}
where we used $\sum_{k = l}^{N(l)} \lambda_{k,l} = 1$ and $u(\cdot, \cdot + \tau) = u_{\tau}(\cdot, \cdot) = \mathcal D^{\L,+}(f_\tau) + \mathcal{S}^{\L}(g_\tau)$.
We can then use the continuity of $u(\cdot, t)$ in $L^2(\rn)$  (see \cite[Lemma 2.3]{bhlmp})\footnote{We may modify this Lemma, using now the function space $W^{1,2}(\Sigma_a^b)$ instead of $Y^{1,2}(\Sigma_a^b)$ to obtain the desired continuity in $L^2(\rn)$ instead of $L^{2^*}(\rn)$.}  along with $\tau_k \downarrow 0$ to obtain $\|  u_n(\cdot, t) -  u (\cdot, t)\|_{L^2(\rn)} \to 0$ as $n$ tends to infinity. Therefore $u = \widetilde{u}$ in  $\dltp$ and the lemma is shown.\end{proof}

\begin{theorem}[Uniqueness of the Dirichlet problem among good $\mathcal D$ solutions]\label{Dgooduniquethrm.thrm}
Suppose that $\L$ satisfies Hypothesis B (see Definition \ref{def-hypothesis-b}), and that $u$ is a good $\mathcal D$ solution, with $u(\cdot, t) \to 0$ weakly in $L^2(\rn)$. Then $u \equiv 0$.  
\end{theorem}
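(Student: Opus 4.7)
The starting point is Lemma~\ref{Duniquelem3.lem}, which provides a Green-type representation of every good $\mathcal{D}$ solution. The idea is to select a subsequence $\tau_k \downarrow 0$ along which both $f_{\tau_k} = \Tr_0 u_{\tau_k}$ and $g_{\tau_k} = \partial_{\nu_{\tau_k}} u$ converge weakly to boundary data $(f,g) \in L^2(\rn) \times [Y^{1,2}(\rn)]^*$; the existence of such a subsequence is guaranteed by the hypothesis $u \in D^2_+$ (for $f_\tau$) together with Lemma~\ref{Duniquelem2.lem} and reflexivity (for $g_\tau$). By Lemma~\ref{Duniquelem3.lem}, $u = -\mathcal D^{\L,+}f + \mathcal S^{\L}g$ in $\dltp$.

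Since by assumption $u(\cdot,t) \rightharpoonup 0$ weakly in $L^2(\rn)$ as $t\to 0^+$, the functions $f_\tau = u(\cdot,\tau)$ likewise converge weakly to $0$ in $L^2(\rn)$. By uniqueness of weak limits, $f = 0$, so the representation reduces to $u = \mathcal S^{\L}g$. It then remains to establish $g = 0$, which by the invertibility (from Hypothesis~B) of $\mathcal S_0^{\L^*}:L^2(\rn)\to Y^{1,2}(\rn)$, is equivalent to showing that $g$ annihilates every element of the range $\mathcal S_0^{\L^*}(L^2(\rn)) = Y^{1,2}(\rn)$.

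To extract this information from the hypothesis $u(\cdot,t)\rightharpoonup 0$ in $L^2(\rn)$, I would test against arbitrary $\varphi\in L^2(\rn)$ and use the adjoint relationship between the single layers for $\L$ and $\L^*$, namely
\[
\langle \mathcal S_t^{\L}g, \varphi\rangle_{L^2(\rn)} = \langle g, \mathcal S_t^{\L^*}\varphi\rangle_{[Y^{1,2}(\rn)]^*,\,Y^{1,2}(\rn)}.
\]
This identity is first established for $g,\varphi\in C_c^\infty(\rn)$ by the standard pairing $\int \mathcal S_t^{\L}g\cdot\bar\varphi = \int g \cdot \overline{\mathcal S_t^{\L^*}\varphi}$ and then extended to $g\in [Y^{1,2}(\rn)]^*$, $\varphi\in L^2(\rn)$ by density, invoking the bounded mappings $\mathcal S^{\L}:[Y^{1,2}(\rn)]^*\to \dltp$ (Lemma~\ref{Duniquelem1.lem}) and $\mathcal S^{\L^*}:L^2(\rn)\to \sltp$ with slices bounded in $Y^{1,2}(\rn)$ (Proposition~\ref{mappingpropl2.prop}, Theorem~\ref{thm-sup-on-slices}). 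The left-hand side tends to $0$ by hypothesis, while on the right-hand side, Corollary~\ref{boundaryvalues.cor} gives $\mathcal S_t^{\L^*}\varphi \rightharpoonup \mathcal S_0^{\L^*}\varphi$ weakly in $Y^{1,2}(\rn)$ as $t\to 0^+$, so the right-hand side tends to $\langle g, \mathcal S_0^{\L^*}\varphi\rangle$. Hence $\langle g, \mathcal S_0^{\L^*}\varphi\rangle = 0$ for every $\varphi\in L^2(\rn)$, and the surjectivity of $\mathcal S_0^{\L^*}$ onto $Y^{1,2}(\rn)$ forces $g = 0$, yielding $u \equiv 0$.

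The technical step I expect to need the most care is the density extension of the adjoint pairing $\langle \mathcal S_t^{\L}g,\varphi\rangle_{L^2} = \langle g, \mathcal S_t^{\L^*}\varphi\rangle$ to the dual pair $(g,\varphi)\in[Y^{1,2}(\rn)]^*\times L^2(\rn)$, since the definitions of $\mathcal S^{\L}$ on distributional dual spaces come through an abstract construction (Definition~\ref{def.sl}) and one must check that the two extensions of the bilinear form agree on the nonsmooth $g$; here the uniform slice estimates from Theorem~\ref{thm-sup-on-slices} and the mapping result of Lemma~\ref{Duniquelem1.lem} provide the quantitative bounds that make the density argument go through.
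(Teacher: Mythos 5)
Your proposal is correct, and it reaches the conclusion by a genuinely different route than the paper at the key step $g=0$. Both arguments begin identically: Lemma \ref{Duniquelem3.lem} plus the hypothesis $u(\cdot,t)\rightharpoonup 0$ forces $f=0$, leaving $u=\mathcal{S}^{\cL}g$ with $g\in[Y^{1,2}(\rn)]^*$. At that point the paper does not dualize; it invokes the identity \eqref{mapprops1.eq}, $\mathcal{S}_t^{\cL}g=\mathcal{D}_t^{\cL,+}\big(\mathcal{S}_0^{\cL}[-\tfrac12 I+\widetilde K]^{-1}g\big)$ (extended to $g\in[Y^{1,2}(\rn)]^*$), takes weak $L^2$ limits to get $0=[-\tfrac12 I+K]\,\mathcal{S}_0^{\cL}[-\tfrac12 I+\widetilde K]^{-1}g$, and then uses invertibility of $-\tfrac12 I+K$ on $L^2(\rn)$, of $\mathcal{S}_0^{\cL}:[Y^{1,2}(\rn)]^*\to L^2(\rn)$, and of $-\tfrac12 I+\widetilde K$ on $[Y^{1,2}(\rn)]^*$. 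Your route instead pairs $u(\cdot,t)$ against $\varphi\in L^2(\rn)$ and uses the adjoint single layer for $\cL^*$, so you only need surjectivity of $\mathcal{S}_0^{\cL^*}:L^2(\rn)\to Y^{1,2}(\rn)$ (part of Hypothesis B) together with the density/compatibility check you correctly flag; in exchange you must justify the duality pairing for rough $g$ and you need the lower half-space versions of the slice and boundary-limit results. Indeed, one small correction: by $t$-independence and the symmetry $E_{\cL}(x,t;y,0)=\overline{E_{\cL^*}(y,0;x,t)}$, the $L^2$-adjoint of $\mathcal{S}_t^{\cL}$ is $\mathcal{S}_{-t}^{\cL^*}$, not $\mathcal{S}_t^{\cL^*}$; so the correct identity is $\langle \mathcal{S}_t^{\cL}g,\varphi\rangle=\langle g,\mathcal{S}_{-t}^{\cL^*}\varphi\rangle$, and as $t\to 0^+$ you must use that $\mathcal{S}_{-t}^{\cL^*}\varphi\rightharpoonup \mathcal{S}_0^{\cL^*}\varphi$ weakly in $Y^{1,2}(\rn)$ from the lower half-space (the single layer has no jump across $\{t=0\}$, and the paper notes the lower half-space analogues of Proposition \ref{mappingpropl2.prop} and Corollary \ref{boundaryvalues.cor} hold), after which your argument goes through unchanged. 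So your approach trades the invertibility of the operators $\pm\tfrac12 I+\widetilde K$ and $-\tfrac12 I+K$ used by the paper for the adjoint duality and the two-sided boundary behavior of the single layer; both are legitimate, and the paper's choice has the advantage of staying entirely within the upper half-space machinery already assembled in Lemma \ref{Duniquelem1.lem}.
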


\begin{proof}
By Lemma \ref{Duniquelem3.lem}, we have that $u = \mathcal S^{\L}g$ for some $g \in [Y^{1,2}(\rn)]^*$, where $g \in [Y^{1,2}(\rn)]^*$ is any weak limit of $g_{\tau_k} = \partial_{\nu^{\cL,+}} u_{\tau_k}$, $\tau_k \downarrow 0$ as in Lemma \ref{Duniquelem3.lem}. We also have (see \eqref{mapprops1.eq})
$$\mathcal S_t^{\L} g = \mathcal D_t^{\L,+}(\mathcal{S}_0^{\L}[-\tfrac{1}{2}I + \widetilde{K}]^{-1}g),$$
where we used $[-\tfrac{1}{2}I + \widetilde{K}]^{-1}: [Y^{1,2}(\rn)]^* \to [Y^{1,2}(\rn)]^*$ and $\mathcal{S}^{\L}_0:  [Y^{1,2}(\rn)]^* \to L^2(\rn)$. Taking weak limits in $L^2(\rn)$ we obtain 
$$0 = [-\tfrac{1}{2}I + K]\mathcal{S}^{\L}_0[-\tfrac{1}{2}I + \widetilde{K}]^{-1}g$$
The invertibility of the mappings $-\tfrac{1}{2}I + K : L^2(\rn) \to L^2(\rn)$, $\mathcal{S}^{\L}_0:  [Y^{1,2}(\rn)]^* \to L^2(\rn)$ and  $[-\tfrac{1}{2}I + \widetilde{K}]^{-1}: [Y^{1,2}(\rn)]^* \to [Y^{1,2}(\rn)]^*$ give that $g = 0$ in $[Y^{1,2}(\rn)]^*$ and hence $u \equiv 0$. \end{proof}

\section{$L^p$ solvability for $p$ in a window around $2$}\label{sec.lp}

In this final section, we  extend the $L^2$ existence and uniqueness results of the last two sections to an $L^p$ solvability result for the boundary value problems considered, provided that $p$ is close to $2$, hence finishing the proof of Theorem \ref{L2Solv2.thrm}.

The square function and non-tangential maximal function $L^p$ estimates for the case $p\in(2-\ep_0,2+\ep_0)$ for $\ep_0$ small enough are already contained in Theorem \ref{thm-ntmax-estimates-final-version}; and thus, if one assumes that $f\in C_c^{\infty}(\bb R^n)$, then the boundary value problems considered admit a solution, represented via layer potentials, with the appropriate $L^p$ estimates. If $f\in L^p(\bb R^n)$ is not smooth and $p\neq2$, then an approximation argument via smooth $f_k\in C_c^{\infty}(\bb R^n)$ will work once we have established the mapping properties of the layer potentials and the jump relations that work with boundary data in the $L^p$ space. As the methods here are very similar to those of Section 4 of \cite{bhlmp} and Sections \ref{existence.sect} and \ref{sec.uniqueness} of the present document, we will omit many details for the sake of brevity.

 \noindent\emph{Proof of Theorem \ref{L2Solv2.thrm}, case $p\neq2$.} Recall that the space $Y^{1,p}(\bb R^{n+1})$ has been defined in (\ref{eq.y1p}), and let $q$ be the H\"older conjugate of $p$, so that $\frac1p+\frac1q=1$. When $\rho_0$ is small enough, the operator $\L$ associated to the sesquilinear form $B_\L$ defined in Section \ref{sec.prop} maps  $Y^{1,p}(\bb R^{n+1})\ra(Y^{1,q}(\bb R^{n+1})^*$ and is a bounded, linear invertible operator by the Lax-Milgram theorem.  The horizontal traces of $Y^{1,p}(\bb R^{n+1})$ continuously embed into the Besov space $B_0^{1-\frac1p,p}(\bb R^n)$ consisting of functions vanishing at infinity and with finite $B^{1-\frac1p,p}(\bb R^n)$ seminorm\footnote{For a definition of the Besov space, see Definition 14.1 of \cite{L}} (see \cite[Theorem 15.20]{L}). Moreover, given a function $f\in B_0^{1-\frac1p,p}(\bb R^n)$, there exists an extension $F\in Y^{1,p}(\bb R^{n+1}_+)$ with $\Tr\, F=f$ on $\partial\bb R^{n+1}_+$ (\cite[Theorem 15.21]{L}\footnote{Technically, Leoni considers the non-homogeneous case; however his proof easily gives the result stated here}). We also have the Sobolev embedding
  \[
  B_0^{1-\frac1p,p}(\bb R^n)\hookrightarrow L^{\frac{np}{n+1-p}}(\bb R^n),
  \]
  whenever $p<n+1$ (\cite[Theorem 14.29]{L}), and $C_c^{\infty}(\bb R^n)$ is dense in $B_0^{1-\frac1p,p}(\bb R^n)$ (see \cite[Theorem 3.1]{bgcv21}).
  
   The   properties mentioned above ensure that the single layer potential
  \[
  \sl:(B_0^{1-\frac1q,q}(\bb R^n))^*\ra Y^{1,p}(\bb R^n),
  \]
  defined via the formula (\ref{eq.slp}),  is still a well-defined bounded linear operator. Similarly, one may check that the double layer potential 
  \[
  \mathcal{D}^{\L,+}:B_0^{1-\frac1p,p}(\bb R^n)\ra Y^{1,p}(\bb R^{n+1}_+),
  \]
  is well-defined via the formula (\ref{eq.dlp}), with the appropriate modifications. Then, via a density argument and using the slice estimates Theorems \ref{thm-sup-on-slices} and \ref{thm-slices-double-layer}, we may show that the layer potentials extend uniquely as
  \[
  \sl:L^p(\bb R^n)\ra S^p_+,\qquad\mathcal{D}^{\L,+}:L^p(\bb R^n)\ra D^p_+,
  \]
  where $S^p_+$ and $D^p_+$ are the slice spaces of Definition \ref{def.slicespace}. Furthermore, Theorem \ref{thm-ntmax-estimates-final-version} ensures that we have the required square function bounds.
  
  We now briefly sketch the existence argument. We are able to carry out the arguments from Section \ref{existence.sect}   and obtain the boundary operators $\sl_0:L^p(\bb R^n)\ra Y^{1,p}(\bb R^n)$, $\mathcal{D}^{\L,+}_0: L^p(\bb R^n)\ra L^p(\bb R^n)$, which are given by the formulas of Corollary \ref{boundaryvalues.cor}. On the other hand, the conormal derivative $\partial_\nu^{\L,\pm}:Y^{1,p}(\bb R^{n+1}_\pm)\ra(B_0^{1-\frac1q,q}(\bb R^n))^*$ is also well-defined (as in \cite[Definition 4.9]{bhlmp}, with appropriate modifications), and one may thus show that the several variations of Green's formula from \cite[Theorem 4.16]{bhlmp} and the jump relations \cite[Theorem 4.22]{bhlmp} hold in this setting, with essentially the same proofs. These facts allow us to construct, as in Lemma \ref{jumprelation.lem}, the bounded linear operators $K, \widetilde K: L^p(\bb R^n)\ra L^p(\bb R^n)$ which satisfy the identities
  \[
  (\pm\tfrac12I+\widetilde K)g=\partial_\nu^{\L,\pm}(\sl g),\qquad (\mp\tfrac12I+K)f=\mathcal{D}_0^{\L,\pm}f
  \]
  for any $f,g\in L^p(\bb R^n)$. Then  one may prove proper analogues  of Corollary \ref{NRcorollary.cor}, Proposition \ref{jumprelationsy12.prop}, and Lemma \ref{Duniquelem1.lem} under the assumption of bounded invertibility of the boundary operators $\sl_0:L^p(\bb R^n)\ra Y^{1,p}(\bb R^n)$, $\mathcal S^{\L^*}_0:L^q(\bb R^n)\ra Y^{1,q}(\bb R^n)$, and $(-\frac12I+K):L^p(\bb R^n)\ra L^p(\bb R^n)$. This assumption (which is an analogue of hypothesis B from Definition \ref{def-hypothesis-b}) is satisfied for the operator $\L_0=-\div A_0\nabla$  when $A_0$ is either Hermitian, block form or constant\footnote{An application of Sneiberg's Lemma, and the known $L^2$ results (see the introduction), reduces the invertibility of the boundary operators in Hypothesis B to the uniform boundedness of said operators in $L^p$ for $p$ in a neighborhood of $2$. In turn this last is achieved by the methods of this paper.}, and by the method of analytic perturbations in Theorem \ref{invertibility.thm}, the operator $\L$ also satisfies this assumption, showing the invertibility of layer potentials.
  
  With the invertibility of the layer potentials and the appropriate analogues of the mapping properties at hand, we may finally obtain the existence of solutions for the problems $\Di_p$, $\Ne_p$, and $\Reg_p$ in the same way as in Theorem \ref{existence.thm}. 
  
  We turn to the uniqueness of the solutions to the boundary value problems here considered. As in the case of $p=2$, we are able to consider uniqueness in the wider class of good $\mathcal D$ solutions (Definition \ref{def-good-d-solutions}) for the Dirichlet problem with exponent $p$, and good $\mathcal N$/$\mathcal R$ solutions (Definition \ref{def-good-nr-solutions}) for the Neumann and regularity problems with exponent $p$\footnote{Of course, the definitions of good $\mathcal D$/$\mathcal N$/$\mathcal R$ solutions have to be appropriately modified to work with the exponent $p$ and the slice spaces $D^p_+$ and $S^p_+$.}. Once again, the methods of Section \ref{sec.uniqueness} work in this setting, with very little change , since we have the appropriate analogues of the Green's formulas from \cite[Theorem 4.16]{bhlmp} and the various analogues of the mapping properties and jump relations from Section \ref{existence.sect}. We omit further details.\hfill{$\square$}

\hypersetup{linkcolor=toc}

\bibliography{bhlmprefs} 

\begin{thebibliography}{AAAHK11}

\bibitem[ADN59]{adn}
S.~Agmon, A.~Douglis, and L.~Nirenberg.
\newblock Estimates near the boundary for solutions of elliptic partial
  differential equations satisfying general boundary conditions. {I}.
\newblock {\em Comm. Pure Appl. Math.}, 12:623--727, 1959.

\bibitem[AHMT]{ahmt}
M.~Akman, S.~Hofmann, J.~M. Martell, and T.~Toro.
\newblock Perturbation of elliptic operators in 1-sided {NTA} domains
  satisfying the capacity density condition.
\newblock Preprint. January 2019. arXiv:1901.08261.

\bibitem[AAAHK11]{AAAHK}
M.~A. Alfonseca, P.~Auscher, A.~Axelsson, S.~Hofmann, and S.~Kim.
\newblock Analyticity of layer potentials and {$L^2$} solvability of boundary
  value problems for divergence form elliptic equations with complex
  {$L^\infty$} coefficients.
\newblock {\em Adv. Math.}, 226(5):4533--4606, 2011.

\bibitem[AA11]{AA}
P.~Auscher and A.~Axelsson.
\newblock Weighted maximal regularity estimates and solvability of non-smooth
  elliptic systems {I}.
\newblock {\em Invent. Math.}, 184(1):47--115, 2011.

\bibitem[AAH08]{Aus-Axel-Hof}
P.~Auscher, A.~Axelsson, and S.~Hofmann.
\newblock Functional calculus of {D}irac operators and complex perturbations of
  {N}eumann and {D}irichlet problems.
\newblock {\em J. Funct. Anal.}, 255(2):374--448, 2008.

\bibitem[AAM10a]{Aus-Axel-McI}
P.~Auscher, A.~Axelsson, and A.~McIntosh.
\newblock On a quadratic estimate related to the {K}ato conjecture and boundary
  value problems.
\newblock In {\em Harmonic analysis and partial differential equations}, volume
  505 of {\em Contemp. Math.}, pages 105--129. Amer. Math. Soc., Providence,
  RI, 2010.

\bibitem[AAM10b]{aam10}
P.~Auscher, A.~Axelsson, and A.~McIntosh.
\newblock Solvability of elliptic systems with square integrable boundary data.
\newblock {\em Ark. Mat.}, 48(2):253--287, 2010.

\bibitem[AHM12]{ahm}
P.~Auscher, S.~Hofmann, and J.-M. Martell.
\newblock Vertical versus conical square functions.
\newblock {\em Trans. Amer. Math. Soc.}, 364(10):5469--5489, 2012.

\bibitem[AP17]{apa17}
P.~Auscher and C.~{Prisuelos Arribas}.
\newblock Tent space boundedness via extrapolation.
\newblock {\em Math. Z.}, 286(3-4):1575--1604, 2017.

\bibitem[BD]{bd22}
A.~Barton and M.~Duffy.
\newblock Gradient estimates and the fundamental solution for higher-order
  elliptic systems with lower-order terms.
\newblock Preprint. October 2022. arXiv: 2210.09207.

\bibitem[BES19]{bes}
S.~Bortz, M.~Egert, and O.~Saari.
\newblock Sobolev contractivity of gradient flow maximal functions, 2019.

\bibitem[BHLMP22]{bhlmp}
S.~Bortz, S.~Hofmann, J.~L. {Luna Garc\'{\i}a}, S.~Mayboroda, and B.~Poggi.
\newblock Critical perturbations for second-order elliptic operators, {I}:
  {S}quare function bounds for layer potentials.
\newblock {\em Anal. PDE}, 15(5):1215--1286, 2022.

\bibitem[BGCV21]{bgcv21}
L.~Brasco, D.~G\'{o}mez-Castro, and J.~L. V\'{a}zquez.
\newblock Characterisation of homogeneous fractional {S}obolev spaces.
\newblock {\em Calc. Var. Partial Differential Equations}, 60(2):Paper No. 60,
  40, 2021.

\bibitem[CZ56]{cz56}
A.~P. Calder\'{o}n and A.~Zygmund.
\newblock A note on the interpolation of sublinear operations.
\newblock {\em Amer. J. Math.}, 78:282--288, 1956.

\bibitem[CH98]{CH}
T.~Cazenave and A.~Haraux.
\newblock An introduction to semilinear evolution equations, volume~13 of {\em
  Oxford Lecture Series in Mathematics and its Applications}.
\newblock The Clarendon Press, Oxford University Press, New York, 1998.
\newblock Translated from the 1990 French original by Yvan Martel and revised
  by the authors.

\bibitem[CMP20]{chmpa}
L.~Chen, J.~M. Martell, and C.~{Prisuelos Arribas}.
\newblock Conical square functions for degenerate elliptic operators.
\newblock {\em Adv. Calc. Var.}, 13(1):75--113, 2020.

\bibitem[CF74]{cf74}
R.~R. Coifman and C.~Fefferman.
\newblock Weighted norm inequalities for maximal functions and singular
  integrals.
\newblock {\em Studia Math.}, 51:241--250, 1974.

\bibitem[CMS85]{cms}
R.~R. Coifman, Y.~Meyer, and E.~M. Stein.
\newblock Some new function spaces and their applications to harmonic analysis.
\newblock {\em J. Funct. Anal.}, 62(2):304--335, 1985.

\bibitem[CM86]{CM}
R.~R. Coifman and Y.~Meyer.
\newblock Nonlinear harmonic analysis, operator theory and {P}.{D}.{E}.
\newblock In {\em Beijing lectures in harmonic analysis ({B}eijing, 1984)},
  volume 112 of {\em Ann. of Math. Stud.}, pages 3--45. Princeton Univ. Press,
  Princeton, NJ, 1986.

\bibitem[CR80]{cr80}
R.~R. Coifman and R.~Rochberg.
\newblock Another characterization of {BMO}.
\newblock {\em Proc. Amer. Math. Soc.}, 79(2):249--254, 1980.

\bibitem[CMP12]{cump12}
D.~{Cruz-Uribe}, J.~M. Martell, and C.~P\'{e}rez.
\newblock Sharp weighted estimates for classical operators.
\newblock {\em Adv. Math.}, 229(1):408--441, 2012.

\bibitem[CMR18]{cumr18}
D.~{Cruz-Uribe}, J.~M. Martell, and C.~Rios.
\newblock On the {K}ato problem and extensions for degenerate elliptic
  operators.
\newblock {\em Anal. PDE}, 11(3):609--660, 2018.

\bibitem[CMP11]{cump}
D.~V. {Cruz-Uribe}, J.~M. Martell, and C.~P\'{e}rez.
\newblock Weights, extrapolation and the theory of {R}ubio de {F}rancia, volume
  215 of {\em Operator Theory: Advances and Applications}.
\newblock Birkh\"{a}user/Springer Basel AG, Basel, 2011.

\bibitem[DHP]{dhp22}
M.~Dindos, S.~Hofmann, and J.~Pipher.
\newblock Regularity and {N}eumann problems for operators with real
  coefficients satisfying {C}arleson condition.
\newblock Preprint. July 2022 (v1), December 2022 (v2). arXiv: 2207.10366.

\bibitem[DPP07]{DPP}
M.~Dindos, S.~Petermichl, and J.~Pipher.
\newblock The {$L^p$} {D}irichlet problem for second order elliptic operators
  and a {$p$}-adapted square function.
\newblock {\em J. Funct. Anal.}, 249(2):372--392, 2007.

\bibitem[DP19]{dp2019}
M.~Dindos and J.~Pipher.
\newblock Perturbation theory for solutions to second order elliptic operators
  with complex coefficients and the {$L^p$} {D}irichlet problem.
\newblock {\em Acta Math. Sin. (Engl. Ser.)}, 35(6):749--770, 2019.

\bibitem[DR86]{drdf86}
J.~Duoandikoetxea and J.~L. {Rubio de Francia}.
\newblock Maximal and singular integral operators via {F}ourier transform
  estimates.
\newblock {\em Invent. Math.}, 84(3):541--561, 1986.

\bibitem[FJK84]{FJK}
E.~B. Fabes, D.~S. Jerison, and C.~E. Kenig.
\newblock Necessary and sufficient conditions for absolute continuity of
  elliptic-harmonic measure.
\newblock {\em Ann. of Math. (2)}, 119(1):121--141, 1984.

\bibitem[FKP91]{FKP}
R.~A. Fefferman, C.~E. Kenig, and J.~Pipher.
\newblock The theory of weights and the {D}irichlet problem for elliptic
  equations.
\newblock {\em Ann. of Math. (2)}, 134(1):65--124, 1991.

\bibitem[GR85]{gcrdf85}
J.~{Garc\'{\i}a Cuerva} and J.~L. {Rubio de Francia}.
\newblock Weighted norm inequalities and related topics, volume 116 of {\em
  North-Holland Mathematics Studies}.
\newblock North-Holland Publishing Co., Amsterdam, 1985.
\newblock Notas de Matem\'{a}tica [Mathematical Notes], 104.

\bibitem[Gra14]{grafakosmfa}
L.~Grafakos.
\newblock Modern {F}ourier analysis, volume 250 of {\em Graduate Texts in
  Mathematics}.
\newblock Springer, New York, third edition, 2014.

\bibitem[GH17]{gdlhh17}
A.~{Grau De La Herr\'{a}n} and S.~Hofmann.
\newblock Generalized local {$Tb$} theorems for square functions.
\newblock {\em Mathematika}, 63(1):1--28, 2017.

\bibitem[HL97]{Han-Lin}
Q.~Han and F.~Lin.
\newblock Elliptic partial differential equations, volume~1 of {\em Courant
  Lecture Notes in Mathematics}.
\newblock New York University, Courant Institute of Mathematical Sciences, New
  York; American Mathematical Society, Providence, RI, 1997.

\bibitem[HL01]{hoflew}
S.~Hofmann and J.~L. Lewis.
\newblock The {D}irichlet problem for parabolic operators with singular drift
  terms.
\newblock {\em Mem. Amer. Math. Soc.}, 151(719):viii+113, 2001.

\bibitem[HMM15a]{Hof-May-Mour}
S.~Hofmann, S.~Mayboroda, and M.~Mourgoglou.
\newblock Layer potentials and boundary value problems for elliptic equations
  with complex {$L^\infty$} coefficients satisfying the small {C}arleson
  measure norm condition.
\newblock {\em Adv. Math.}, 270:480--564, 2015.

\bibitem[HMM15b]{hmm}
S.~Hofmann, M.~Mitrea, and A.~J. Morris.
\newblock The method of layer potentials in {$L^p$} and endpoint spaces for
  elliptic operators with {$L^\infty$} coefficients.
\newblock {\em Proc. Lond. Math. Soc. (3)}, 111(3):681--716, 2015.

\bibitem[JK81]{JK1}
D.~S. Jerison and C.~E. Kenig.
\newblock The {D}irichlet problem in nonsmooth domains.
\newblock {\em Ann. of Math. (2)}, 113(2):367--382, 1981.

\bibitem[Jon80]{JonesFactor}
P.~W. Jones.
\newblock Factorization of {$A_{p}$} weights.
\newblock {\em Ann. of Math. (2)}, 111(3):511--530, 1980.

\bibitem[Kat95]{K}
T.~Kato.
\newblock Perturbation theory for linear operators.
\newblock Classics in Mathematics. Springer-Verlag, Berlin, 1995.
\newblock Reprint of the 1980 edition.

\bibitem[Ken94]{kbook}
C.~E. Kenig.
\newblock Harmonic analysis techniques for second order elliptic boundary value
  problems, volume~83 of {\em CBMS Regional Conference Series in Mathematics}.
\newblock Published for the Conference Board of the Mathematical Sciences,
  Washington, DC; by the American Mathematical Society, Providence, RI, 1994.

\bibitem[KP93]{KP1}
C.~E. Kenig and J.~Pipher.
\newblock The {N}eumann problem for elliptic equations with nonsmooth
  coefficients.
\newblock {\em Invent. Math.}, 113(3):447--509, 1993.

\bibitem[KP95]{KP2}
C.~E. Kenig and J.~Pipher.
\newblock The {N}eumann problem for elliptic equations with nonsmooth
  coefficients. {II}.
\newblock {\em Duke Math. J.}, 81(1):227--250 (1996), 1995.
\newblock A celebration of John F. Nash, Jr.

\bibitem[KP01]{KP3}
C.~E. Kenig and J.~Pipher.
\newblock The {D}irichlet problem for elliptic equations with drift terms.
\newblock {\em Publ. Mat.}, 45(1):199--217, 2001.

\bibitem[KS]{ks23}
S.~Kim and G.~Sakellaris.
\newblock The {N}eumann {G}reen function and scale invariant regularity
  estimates for elliptic equations with {N}eumann data in {L}ipschitz domains.
\newblock Preprint. January 2023. arXiv: 2302.00132.

\bibitem[LMPT10]{lmpt10}
M.~T. Lacey, K.~Moen, C.~P\'{e}rez, and R.~H. Torres.
\newblock Sharp weighted bounds for fractional integral operators.
\newblock {\em J. Funct. Anal.}, 259(5):1073--1097, 2010.

\bibitem[Leo17]{L}
G.~Leoni.
\newblock A first course in {S}obolev spaces, volume 181 of {\em Graduate
  Studies in Mathematics}.
\newblock American Mathematical Society, Providence, RI, second edition, 2017.

\bibitem[MMMM17]{mmm}
J.~M. Martell, D.~Mitrea, I.~Mitrea, and M.~Mitrea.
\newblock On the {$L^p$}-{P}oisson semigroup associated with elliptic systems.
\newblock {\em Potential Anal.}, 47(4):401--445, 2017.

\bibitem[MT]{mt}
A.~Morris and A.~Turner.
\newblock Solvability for non-smooth {S}chr\"odinger equations with singular
  potentials and square integrable data.
\newblock Preprint. January 2020. arXiv:2001.11901.

\bibitem[Mou]{mourg}
M.~Mourgoglou.
\newblock Regularity theory and {G}reen's function for elliptic equations with
  lower order terms in unbounded domains.
\newblock Preprint. April 2019. arXiv:1904.04722.

\bibitem[MPT]{mpt22}
M.~Mourgoglou, B.~Poggi, and X.~Tolsa.
\newblock ${L}^p$-solvability of the {P}oisson-{D}irichlet problem and its
  applications to the regularity problem.
\newblock Preprint. July 2022 (v1). arXiv: 2207.10554.

\bibitem[Muc72]{m72}
B.~Muckenhoupt.
\newblock Weighted norm inequalities for the {H}ardy maximal function.
\newblock {\em Trans. Amer. Math. Soc.}, 165:207--226, 1972.

\bibitem[MW74]{mw74}
B.~Muckenhoupt and R.~Wheeden.
\newblock Weighted norm inequalities for fractional integrals.
\newblock {\em Trans. Amer. Math. Soc.}, 192:261--274, 1974.

\bibitem[Pet08]{p08}
S.~Petermichl.
\newblock The sharp weighted bound for the {R}iesz transforms.
\newblock {\em Proc. Amer. Math. Soc.}, 136(4):1237--1249, 2008.

\bibitem[{Pri}19]{pa19}
C.~{Prisuelos Arribas}.
\newblock Vertical square functions and other operators associated with an
  elliptic operator.
\newblock {\em J. Funct. Anal.}, 277(12):108296, 63, 2019.

\bibitem[{Rub}84]{rdf84}
J.~L. {Rubio de Francia}.
\newblock Factorization theory and {$A_{p}$} weights.
\newblock {\em Amer. J. Math.}, 106(3):533--547, 1984.

\bibitem[{Rub}83]{rdf83}
J.~L. {Rubio de Francia}.
\newblock A new technique in the theory of {$A_{p}$} weights.
\newblock In {\em Topics in modern harmonic analysis, {V}ol. {I}, {II}
  ({T}urin/{M}ilan, 1982)}, pages 571--579. Ist. Naz. Alta Mat. Francesco
  Severi, Rome, 1983.

\bibitem[Sak19]{sak}
G.~Sakellaris.
\newblock Boundary value problems in {L}ipschitz domains for equations with
  lower order coefficients.
\newblock {\em Trans. Amer. Math. Soc.}, 372(8):5947--5989, 2019.

\bibitem[She94]{shenn}
Z.~Shen.
\newblock On the {N}eumann problem for {S}chr\"odinger operators in {L}ipschitz
  domains.
\newblock {\em Indiana Univ. Math. J.}, 43(1):143--176, 1994.

\bibitem[SW58]{sw58}
E.~M. Stein and G.~Weiss.
\newblock Interpolation of operators with change of measures.
\newblock {\em Trans. Amer. Math. Soc.}, 87:159--172, 1958.

\bibitem[Ste93]{s93}
E.~M. Stein.
\newblock Harmonic analysis: real-variable methods, orthogonality, and
  oscillatory integrals, volume~43 of {\em Princeton Mathematical Series}.
\newblock Princeton University Press, Princeton, NJ, 1993.
\newblock With the assistance of Timothy S. Murphy, Monographs in Harmonic
  Analysis, III.

\bibitem[Ver84]{V}
G.~Verchota.
\newblock Layer potentials and regularity for the {D}irichlet problem for
  {L}aplace's equation in {L}ipschitz domains.
\newblock {\em J. Funct. Anal.}, 59(3):572--611, 1984.

\end{thebibliography}
\bibliographystyle{alpha-sort-max} 

\end{document}